\documentclass[a4paper,10pt]{article}
\usepackage[english]{babel}
\usepackage[utf8]{inputenc}
\usepackage{anyfontsize}
\usepackage{amsmath, amsfonts, amsthm, amssymb, dsfont, mathrsfs}
\usepackage{indentfirst}
\usepackage{xcolor} 
\usepackage{graphicx} 
\usepackage{enumerate}
\usepackage{braket}
\usepackage{xfrac}
\usepackage[T1]{fontenc}
\definecolor{darkblue}{RGB}{0,0,155} 
\definecolor{brightbordeaux}{RGB}{200, 0, 70} 
\usepackage{hyperref}
\usepackage{amsthm}
\usepackage{type1cm}
\usepackage{cleveref}

\hypersetup{ 
    colorlinks=true,
    linkcolor=darkblue,
    citecolor=brightbordeaux, 
    urlcolor=blue
}
\usepackage{calligra}
\usepackage[a4paper, left=2.5cm, right=2.5cm, top=2cm, bottom=2cm]{geometry}
\usepackage{microtype} 
\usepackage[safe]{tipa}
\usepackage{bm} 
\makeatletter
\newcommand*\bigcdot{\mathpalette\bigcdot@{.5}}
\newcommand*\bigcdot@[2]{\mathbin{\vcenter{\hbox{\scalebox{#2}{$\m@th#1\bullet$}}}}}
\makeatother
\newtheorem{lemma}{Lemma}[section]
\newtheorem{theorem}[lemma]{Theorem}
\newtheorem{proposition}[lemma]{Proposition}
\newtheorem{corollary}[lemma]{Corollary}

\newtheorem{definition}[lemma]{Definition}
\theoremstyle{remark}

\newtheorem{remark}[lemma]{Remark}
\numberwithin{equation}{section}
\renewcommand{\th}{\mathtt{h}}

\newcommand{\wt}[1]{{\widetilde{#1}}}

\newcommand{\wh}[1]{{\widehat{#1}}}

\newcommand{\C}{{\mathbb C}}

\newcommand{\N}{{\mathbb N}}

\newcommand{\R}{{\mathbb R}}

\newcommand{\T}{{\mathbb T}}
\newcommand{\Z}{{\mathbb Z}}

\newcommand{\cC}{{\mathcal C}}
\newcommand{\cD}{{\mathcal D}}
\newcommand{\cE}{{\mathcal E}}
\newcommand{\cF}{{\mathcal F}}

\newcommand{\cH}{{\mathcal H}}

\newcommand{\cL}{{\mathcal L}}
\newcommand{\cM}{{\mathcal M}}
\newcommand{\cN}{{\mathcal N}}

\newcommand{\cP}{{\mathcal P}}
\newcommand{\cQ}{{\mathcal Q}}
\newcommand{\cR}{{\mathcal R}}
\newcommand{\cS}{{\mathcal S}}

\newcommand{\cU}{{\mathcal U}}

\newcommand{\cW}{{\mathcal W}}

\newcommand{\cZ}{{\mathcal Z}}

\newcommand{\ta}{{\mathtt a}}
\newcommand{\tb}{{\mathtt b}}
\newcommand{\tc}{{\mathtt c}}
\newcommand{\td}{{\mathtt d}}

\newcommand{\tg}{{\mathtt g}}

\newcommand{\tm}{{\mathtt m}}

\newcommand{\tq}{{\mathtt q}}
\newcommand{\tr}{{\mathtt r}}
\newcommand{\ts}{{\mathtt s}}

\newcommand{\tB}{{\mathtt B}}

\newcommand{\tE}{{\mathtt E}}
\newcommand{\tF}{{\mathtt F}}
\newcommand{\tG}{{\mathtt G}}

\newcommand{\tK}{{\mathtt K}}
\newcommand{\tL}{{\mathtt L}}
\newcommand{\tM}{{\mathtt M}}

\newcommand{\tQ}{{\mathtt Q}}
\newcommand{\tR}{{\mathtt R}}
\newcommand{\tS}{{\mathtt S}}

\newcommand{\tV}{{\mathtt V}}

\newcommand{\tX}{{\mathtt X}}

\newcommand{\be}{{\bm{e}}}

\newcommand{\bA}{{\bm{A}}}
\newcommand{\bB}{{\bm{B}}}

\newcommand{\bF}{{\bm{F}}}
\newcommand{\bG}{{\bm{G}}}

\newcommand{\bR}{{\bm{R}}}

\newcommand{\bT}{{\bm{T}}}

\newcommand{\bPhi}{{\bf \Phi}}

\newcommand{\e}{{\epsilon}}
\newcommand{\s}{{\sigma}}

\newcommand{\hamvec}[1]{\mathcal{X}_{\scriptstyle #1}}
\newcommand{\bcQ}{\boldsymbol{\cQ}}
\renewcommand{\div}{\mathrm{div}}

\newcommand{\norm}[1]{\left\| #1 \right\|}

\newcommand{\uno}{\mathrm{Id}}
\newcommand{\im}{{ \mathrm{i}}}
\newcommand{\ii}{{\im}}

\newcommand{\und}[1]{\underline{#1}}

\newcommand{\di}{{\mathrm{d}}}

\newcommand{\vphi}{\varphi}
\newcommand{\vertiii}[1]{{\left\vert\kern-0.25ex\left\vert\kern-0.25ex\left\vert #1 
    \right\vert\kern-0.25ex\right\vert\kern-0.25ex\right\vert}}
\newcommand{\opnorm}[1]{{\left\vert\kern-0.25ex\left\vert\kern-0.25ex\left\vert #1 
    \right\vert\kern-0.25ex\right\vert\kern-0.25ex\right\vert}}

\newcommand{\Id}{\mathrm{Id}}
\newcommand{\zetina}{\text{\normalfont\textctyogh}}
\newcommand{\vomega}{{\boldsymbol{\Lambda}}}
\renewcommand{\be}{\begin{equation}}
\newcommand{\ee}{\end{equation}}
\newcommand{\sha}[1]{\, \#_{#1}\,}

\newcommand{\vare}{{\varepsilon}}
\renewcommand{\bar}{\overline}

\renewcommand{\Re}{\mathrm{Re}\, }

\newcommand{\supp}{{\mathrm{supp} }\,}
\newcommand{\fs}{\mathfrak{s}}

\newcommand{\normLa}[1]{\Big\| e^{a|y|} #1 \Big\|_{L^2_y}}

\newcommand{\kap}{\kappa} 
\newcommand{\zak}{\mathfrak{Z}} 
\newcommand{\sla}[2]{{#1\mkern-2mu/\mkern-1.5mu #2}}

\newcommand{\Opw}[1]{{\mathrm{Op}}^W\!\left(#1\right)}

\newcommand{\Opbw}[1]{{{{\mathrm{Op}}^{{\scriptscriptstyle{\mathrm BW}}}}\left(#1\right)}}

\newcommand{\cOpbw}[1]{{{{\mathrm{Op}}^{{\scriptscriptstyle{\mathrm BW}}}}\left(#1\right)}}

\newcommand{\vOpbw}[1]{{\mathrm{Op}}_{\mathtt{vec}}^{{\scriptscriptstyle{\mathrm BW}}}\!\left(#1\right)}

\newcommand{\zOpbw}[1]{{\mathrm{Op}}_{\mathtt{out}}^{{\scriptscriptstyle{\mathrm BW}}}\!\left(#1\right)}

\newcommand{\sym}[3]{#1_{\scs{#2}}^{{\scs{(#3)}}}}
\newcommand{\fun}[2]{#1_{\scs{#2}}}

\newcommand{\scs}[1]{\scriptscriptstyle{#1}}

\newcommand{\pa}{\partial}

\renewcommand{\s}{{\sigma}}
\newcommand{\x}{{\xi}}

\newcommand{\id}{{\mathrm{Id} }}
\newcommand{\vr}{\varrho}
\newcommand{\Ad}{\mathrm{Ad}}

\newcommand{\vect}[2]{{\begin{pmatrix}#1 \\ #2\end{pmatrix}}}
\newcommand{\ov}{\overline}

\newcommand{\pare}[1]{( #1)}
\renewcommand{\be}{\begin{equation}}
\renewcommand{\ee}{\end{equation}}

\newcommand{\mbra}[3][]{\left\{\!\!\left\{#2,#3\right\}\!\!\right\}_{\!#1}}

\title{{\fontsize{14}{10}\bf \scshape quasi-resonant normal form and quadratic lifespan for
3D gravity-capillary water waves}}
\author{ \fontsize{9.8}{12}\scshape Roberto Feola,\thanks{Universit\`a degli Studi RomaTre, Dept. of Mathematics and Physics, Largo San 
    Leonardo Murialdo 1, 00146 Roma, Italy. \textit{Email:} \texttt{roberto.feola@uniroma3.it}} \ Riccardo Montalto,\thanks{Università degli studi di Milano, Dept. of Mathematics Federigo Enriques,
	Via Saldini 50, 20133 Milano, Italy. \newline \textit{Emails:} {\texttt{riccardo.montalto@unimi.it}}, 
{\texttt{federico.murgante@unimi.it}}} 
\ federico murgante \thanks{Università degli studi di Milano, Dept. of Mathematics Federigo Enriques,
Via Saldini 50, 20133 Milano, Italy 
\newline
Fields Institute for Research in Mathematical Sciences, 222 College St, Toronto, Canada
 \newline
 \textit{Email:} {\texttt{murgante@fieldsinstitute.ca}}, 
{\texttt{federico.murgante@unimi.it}}}
    }
\date{}
\begin{document} 
	\maketitle

\begin{abstract}
\small 
\noindent We study the long-time dynamics of 
small-amplitude solutions to the 
three-dimensional gravity–capillary water waves 
equations
for an inviscid and irrotational fluid 
with periodic boundary conditions. 
We prove that, for almost all values of the 
surface tension parameter, solutions with initial size 
$\varepsilon$ exist and remain  small over 
time intervals of order $\varepsilon^{-2}$.

\noindent
A major difficulty arises from the loss 
of derivatives caused by the quasilinear 
nature of the equations combined 
with severe quadratic and cubic 
small-divisor interactions in high space dimensions.
Classical normal form methods applied 
to 3D water waves system  
typically fail to prevent derivative 
loss due to the accumulation of near-resonances.

\noindent To overcome this obstruction, we develop 
a new analytical strategy that combines 
a sharp frequency partition with 
a quasi-resonant normal form transformation  
acting only on selected interaction scales. 
Our microlocal analysis reveals that 
the potentially dangerous interactions terms exhibit 
a block-diagonal structure, 
which stems from both the geometric properties 
of the quasi-resonant frequency sets and 
the Hamiltonian structure of the 
water waves system. 
As a consequence, these operators preserve 
Sobolev norms and do not produce energy growth. 
This structural insight, together with the 
quasi-resonant normal-form transformation, 
allows us to prevent derivative-loss mechanisms 
while avoiding the accumulation of harmful small denominators.
\end{abstract}

{\footnotesize
\noindent
{\sc keywords}: Fluid Mechanics, Water Waves, capillarity, long-time stability, Microlocal Analysis, 
Energy methods, quasi-resonant normal forms, high/low frequencies decomposition.

\noindent
{\sc 2020 Mathematics Subject Classification:}{ 35Q35, 35B35, 35B40, 35S50, 76B45}
}

\begingroup
\hypersetup{
linkcolor=black,
}
	\tableofcontents
    \endgroup
	\section{Introduction}
The gravity–capillary water waves equations describe the motion of an
incompressible, irrotational fluid with a free surface under the
combined action of gravity and surface tension.
Despite their long history, the long-time stability of small-amplitude
solutions remains largely open, especially on compact domains.
While local well-posedness is by now well established, understanding
the behaviour of solutions over nonlinear time scales poses
substantial additional challenges.
These difficulties become more pronounced in three spatial dimensions,
where the nonlinear interactions are
more intricate.

With this motivation in mind, we consider an
irrotational perfect fluid under the action of gravity and capillarity
forces, occupying at time $t$ a three-dimensional domain with infinite
depth, given by
\begin{equation*}
\Omega := \big\{ (x,y)\in \T^2 \times\R \, ;  
\ - \infty <y<\eta(t,x) \big\},  \quad 
  \T^2 := \R^2 \slash (2 \pi \Z)^2 \, ,
\end{equation*}
where $\eta$ 
is a regular enough function. 
The velocity field in the time dependent domain 
$ \Omega $ is the gradient of a harmonic function 
$\Phi$, called the velocity potential. 
The time-evolution of the fluid is determined 
by a system of equations for 
the two functions $(t,x)\to \eta(t,x) $, $ (t,x,y)\to \Phi(t,x,y)$.
Following Zakharov \cite{Zakharov1968} and Craig-Sulem \cite{CrSu}, 
given 
$\eta(t,x)$ and 
the restriction $ \psi(t,x) := \Phi(t,x,\eta(t,x))$ 
of the velocity potential at the top boundary, 
one can recover  
$\Phi(t,x,y)$ as the unique solution of the elliptic problem 
\begin{equation*} 
\Delta \Phi = 0  \ \text{in } \,\Omega\,, \quad
\partial_y \Phi \to 0  \ \text{as } y \to - \infty \, , \quad 
\Phi = \psi \  \text{on } \{y = \eta(t,x)\}. 
\end{equation*}
In terms of the variables $(\eta,\psi)$, the gravity–capillary water waves 
equations take the Zakharov–Craig–Sulem form:
\begin{equation}\label{eq:1.2}
\left\{\begin{aligned}
\partial_t \eta &= G(\eta) \psi
	\\
\partial_t \psi &=- g\eta  - \frac{1}{2} |\nabla \psi|^2 
+ \frac{1}{2} \frac{(\nabla \eta \cdot \nabla \psi 
+ G(\eta) \psi)^2}{1 + |\nabla \eta|^2}
+ \kap \div\left(\frac{\nabla \eta}{\sqrt{1+|\nabla\eta|^2}}\right)\, ,
\end{aligned}\right.
\end{equation}
where $g>0$ is the gravity constant, $\kap >0$ 
is the surface tension coefficient, 
and where $ G(\eta)\psi $ is the Dirichlet-Neumann operator  
\begin{equation}\label{eq:112aINTRO}
  G(\eta)\psi := 
  \sqrt{1+|\nabla\eta|^2 
   } (\partial_n\Phi)\vert_{y=\eta(t,x)}
  = (\partial_y\Phi 
  -\nabla\eta\cdot\nabla\Phi)(t,x,\eta(t,x))\,,
\end{equation}
and $n$ is the outward unit normal at the 
free interface $y= \eta(t,x)$.
As observed by Zakharov 
\cite{Zakharov1968}, the equations \eqref{eq:1.2} are the Hamiltonian system associated to the Hamiltonian energy functional
\begin{equation*}
H(\eta, \psi) := \frac12 \int_{\T^2} \psi \, 
G(\eta ) \psi \, dx + \frac{g}{2} \int_{\T^2} \eta^2  \, dx
+\kap\int_{\T^{2}}\big(\sqrt{1+|\nabla\eta|^2}-1\big) dx,
\end{equation*}
given by the sum of the kinetic 
and potential energy of the fluid 
due to gravity and surface tension.
The mass $\int_{\T^2} \eta \, dx$ 
is a prime integral of \eqref{eq:1.2} and, 
with no loss of generality, we can fix it to zero 
by shifting the $y$ coordinate. 
Moreover \eqref{eq:1.2} is invariant under spatial translations
and Noether's theorem implies that 
the momenta 
$ \mathtt M_i (\eta, \psi) := 
\int_{\T^2} \eta_{x_i} (x) \psi (x)  \, dx $, $i = 1,2 $
are prime integrals of \eqref{eq:1.2}. 
Let $ H^s (\T^2;\R)$, $ s\in \R $, 
be the standard Sobolev spaces of $ 2 \pi $-periodic 
functions of $ x\in\T^{2} $. 
It is natural to consider 
$ \eta $ in the subspace of zero average functions
$H^s_0 (\T^2;\R) \subset H^s (\T^2;\R)$, 
and $ \psi$ in the standard homogeneous 
Sobolev space $  {\dot H}^s  (\T^2;\R)  $\footnote{The spaces  
$ {\dot H}^s  (\T^2;\R) $ and $ H^s_0  (\T^2;\R) $ are 
isometric.
Thus we will conveniently identify 
$ \psi $ with a zero average function.}.

Our main goal is to establish an 
\emph{extended lifespan} result for the system 
\eqref{eq:1.2}. We show that solutions with 
sufficiently high Sobolev regularity can be extended 
far beyond the time threshold predicted by 
local existence theory, which, for initial 
data of size $0\leq\vare\ll1$, is of order $O(\vare^{-1})$. 
More precisely, we obtain a \emph{quadratic} 
lifespan, i.e. of order $O(\vare^{-2})$, for solutions 
evolving from $\vare$-small initial data, extending 
the previous result \cite{IoP}, where a time of existence 
of order $O(\vare^{-\sla{5}{3}^{+}})$ was proved for the system 
\eqref{eq:1.2}. 

The time scale $O(\varepsilon^{-2})$ is the natural stability scale 
for equations with non-resonant quadratic nonlinearities. 
Actually, in general, the gravity–capillary dispersion relation 
admits nontrivial three-wave resonances. These resonances are absent 
for almost every value of the surface tension coefficient, 
which explains why our result (as in \cite{IoP}) holds for 
$\kappa$ outside a zero-measure exceptional set.

However, even in this non-resonant regime, reaching the quadratic time scale 
remains highly nontrivial. The main obstruction is the presence of 
strong small divisor effects generated by quasi-resonant three- 
and four-wave interactions, whose strength increases with the largest 
frequency. These quasi-resonances produce derivative losses that 
prevent the closure of classical normal form reductions and 
limit the lifespan in \cite{IoP}.

The main novelty of the present work is the introduction of a
quasi-resonant normal form approach, tailored to quasilinear problems on 
higher-dimensional tori with weak small divisor control. This approach allows us to
reach the quadratic lifespan stated in the main theorem below. Given a Banach space $(X, \| \cdot \|_X)$ and a compact interval $I = [a, b]$, we denote by $C^0(I; X)$ the space of continuous functions $u : I \to X$ equipped with the norm
$\| u \|_{C^0([a, b]; X)} := \sup_{t \in I} \| u(t) \|_X$ and for any integer $k \geq 1$, we denote by $C^k([a, b]; X)$, the space of $C^k$ functions $u : I \to X$ equipped with the norm $\| u \|_{C^k([a, b]; X)} := \sum_{n = 0}^k \| \partial_t^n u \|_{C^0([a, b]; X)}$. 

\begin{theorem}
\label{thm:main}
    Let $g>0$. There are $s_0>0$ and a zero measure set 
    $\mathscr{N}\subset \R^+$ 
    such that for any $\kap \in \R^+ \setminus \mathscr{N}$ the following holds. 
    For any $s\geq s_0$ there are $\e_0=\e_0(s,\kap)>0$ 
    and two constants $\mathtt{c}=\mathtt{c}_s>0$, $C=C_s>0$ 
    such that for any initial data 
    $(\eta_0,\psi_0) \in H^{s+\sla{1}{4}}_0(\T^2;\R)\times \dot H^{s-\sla{1}{4}}(\T^2;\R)$ 
    with 
    \begin{equation}\label{piccolezzaDati}
        \| \eta_0\|_{H^{s_0+\sla{1}{4}}_0(\T^2;\R)}
        + \| \psi_0\|_{\dot H^{s_0-\sla{1}{4}}(\T^2;\R)}\leq \vare
        \leq \e_0\,,
    \end{equation}
    there is a unique long time classical solution 
    \begin{equation}\label{timetime}
   (\eta,\psi) \in C^0\left([0,T_\vare]; H^{s+\sla{1}{4}}_0(\T^2;\R)\times \dot H^{s-\sla{1}{4}}(\T^2;\R)\right), \qquad \text{with} \quad T_\vare\geq \mathtt{c} \vare^{-2}\,.
   \end{equation}   
    Moreover up to time $ T_\vare$ one has the bound 
    \begin{align}
       &\sup_{t \in [0,T_\vare]} 
       \|  \eta(t)\|_{H^{s_0+\sla{1}{4}}_0(\T^2;\R)}
       + \| \psi(t)\|_{\dot H^{s_0-\sla{1}{4}}(\T^2;\R)}
       \leq C\vare\,,\label{stimaFinalemain}
       \\
       &\sup_{t \in [0,T_\vare]} 
       \|  \eta(t)\|_{H^{s+\sla{1}{4}}_0(\T^2;\R)}
       + \| \psi(t)\|_{\dot H^{s-\sla{1}{4}}(\T^2;\R)}
       \leq C\left(\|  \eta_0\|_{H^{s+\sla{1}{4}}_0(\T^2;\R)}
       + \| \psi_0\|_{\dot H^{s-\sla{1}{4}}(\T^2;\R)}\right)\,.
       \label{stimaFinalemain2}
    \end{align}
\end{theorem}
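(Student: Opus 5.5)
The plan is a paradifferential energy method combined with a quasi-resonant normal form, followed by a bootstrap.

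\emph{Step 1: paralinearization and symmetrization.} Using the paradifferential calculus (Bony--Weyl quantization $\Opbw{\cdot}$) and the good unknown of Alinhac, we rewrite \eqref{eq:1.2} as a complex quasilinear system for
\[
u=\Lambda^{1/2}\eta+\im\,\Lambda^{-1/2}\omega\,,\qquad \Lambda(\xi)=\sqrt{|\xi|(g+\kap|\xi|^2)}\,,
\]
where $\omega$ is the good unknown. The system takes the form
\[
\partial_t u=\im\,\Opbw{\Lambda(\xi)(1+\zeta(u;x,\xi))+V(u;x)\cdot\xi+\dots}u+R(u)\,,
\]
with real-valued symbols up to order zero. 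This reflects the Hamiltonian/reversible structure. We then apply a paradifferential change of variables (diffeomorphism of $\T^2$ plus a Fourier-integral-type reduction) that removes the $x$-dependence of the highest order symbol at quadratic level.

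\emph{Step 2: energy estimates.} For the transformed unknown, $\frac{d}{dt}\|u\|_{H^s}^2$ has a quadratic contribution $\int \Re(\cdots)$ which is a trilinear form in $u$. A cubic-in-$u$ energy identity alone gives only $T\sim\vare^{-1}$.

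\emph{Step 3: removing the cubic terms.} We split the trilinear terms into frequency regimes high--high--low and comparable frequencies. Comparable-frequency terms are smoothing and are removed by a bounded Poincaré--Birkhoff normal form. Here we use the lower bound on three-wave phases $|\Lambda(\xi)\pm\Lambda(\eta)\pm\Lambda(\xi-\eta)|\gtrsim \max(|\xi|,|\eta|)^{-N}$, valid for $\kap\notin\mathscr N$; this bound is obtained by a measure/Borel--Cantelli argument on analytic functions of $\kap$, which defines $\mathscr N$. In the high--high--low regime, with the low frequency $|\eta|\ll|\xi|^{\delta}$, the phase behaves like $\nabla\Lambda(\xi)\cdot\eta\pm\Lambda(\eta)$, and this can be too small. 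We partition the frequencies and perform the normal form only on the \emph{non quasi-resonant} set, where this phase is $\gtrsim |\xi|^{-\tau}$ with a small $\tau$, so that the derivative loss is compensated by the paradifferential (quasilinear) structure at order $\le 1$.

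\emph{Step 4 (main obstacle): the quasi-resonant remainder.} The remaining quasi-resonant quadratic operators are the heart of the problem. We would show that the set $\{\xi:|\nabla\Lambda(\xi)\cdot\eta\pm\Lambda(\eta)|\lesssim|\xi|^{-\tau}\}$ is a union of thin shells/strips transversal to $\eta$. The resulting operator then couples only $\xi$ and $\xi+\eta$ lying in the same block of a partition with comparable $|\xi|$. Combining this block-diagonal structure with the self-adjointness coming from the Hamiltonian structure, the operator is skew-adjoint on each block and contributes zero to $\frac d{dt}\|u\|_{H^s}^2$ up to commutators of order $|\xi|^{-1}\times$(block size), which are bounded. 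This is where I expect the real difficulty: controlling the block sizes uniformly, and verifying that the Sobolev weight is essentially constant on blocks.

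\emph{Step 5: quartic terms and bootstrap.} The quartic terms produced by the normal form are estimated directly, possibly after a partial cubic normal form for the low-frequency part. The outcome is a modified energy $E_s\sim\|u\|_{H^s}^2$ satisfying
\[
\frac{d}{dt}E_s\lesssim \|u\|_{H^{s_0}}^2E_s\,.
\]
A bootstrap at $s=s_0$ gives \eqref{stimaFinalemain} for $t\le \mathtt c\vare^{-2}$. Then \eqref{stimaFinalemain2} follows from Grönwall at level $s$, and uniqueness and continuity follow from local well-posedness theory.
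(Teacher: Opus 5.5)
There are genuine gaps, and they sit exactly where the problem on $\T^2$ departs from the one-dimensional theory.

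\textbf{Step 1 and the quasilinear term.} The reduction in your Step 1 is not available. On $\T^2$ no diffeomorphism, and no paradifferential or Fourier-integral conjugation, makes the quadratic part of the principal symbol $x$-independent. The equation \eqref{eq diffeo toro in 2d} cannot be solved, and the linearized homological equation $\nabla_\xi\Lambda(\xi)\cdot\nabla_x g=\dots$ has resonances $k\cdot\xi=0$. This is precisely why the Berti--Delort scheme fails in higher dimension.

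Hence an $x$-dependent quadratic symbol of order $\frac32$, not in normal form, survives, and nothing in your Steps 4--5 absorbs it. It is not block-diagonal, and its commutator with $|D|^{2s}$ has order $2s+\frac12$, so it loses derivatives. A quasilinear energy $\langle\Opbw{L^{2s}}u,u\rangle$ cures the loss, but it is incompatible with a block-by-block cancellation.

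The missing idea is the frequency split at $\mathtt{R}=\varepsilon^{-1}$.
On $|\xi|\lesssim\varepsilon^{-1}$ one has $\varepsilon^2|\xi|^{3/2}\lesssim\varepsilon|\xi|^{1/2}$. So the quasilinear symbol can be put into quasi-resonant normal form together with the linear order-$\frac12$ symbol, and then a cubic one of order $1+\mu$. This uses $\mathtt{R}$-localized flows of norm $\lesssim\exp\big(C(\mathtt{R}\|\mathfrak{Z}\|_{s_0})^2\big)$, which is $O(1)$ only because $\mathtt{R}\|\mathfrak{Z}\|_{s_0}\lesssim K$. By \eqref{stima:exp_opbw}, a $p$-homogeneous $\mathtt{R}$-localized symbol of order $m$ is harmless once $p\ge m+2$, which is why exactly these three rounds are needed.
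On $|\xi|\gtrsim\varepsilon^{-1}$ no normal form is needed, since the cubic terms in the derivative of the energy \eqref{def:mod_ene} gain $|\xi|^{-1}\lesssim\varepsilon$.

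For the same reason, the uniform quartic inequality $\frac{d}{dt}E_s\lesssim\|u\|_{s_0}^2E_s$ of your Step 5 is out of reach. What one actually proves is $|\frac{d}{dt}E|\le C_{s,K}\varepsilon^2\|Z\|_s^2$ under the bootstrap hypothesis $\|\mathfrak{Z}\|_{s_0}\le K\varepsilon$, and that is all the bootstrap requires.

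\textbf{Step 4.} The partition only gives dyadic blocks, $\max_{\Omega_\alpha}|\xi|\le2\min_{\Omega_\alpha}|\xi|$, so $|\xi|^{2s}$ is not essentially constant on them, and ``bounded'' leftovers are not enough. For a normal-form symbol $z$ of order $m$, the principal symbol $\{|\xi|^{2s},z\}$ of $[|D|^{2s},\Opbw{z}]$ has order $2s+m-2+\delta$, because $|k\cdot\xi|\le\langle\xi\rangle^{\delta}|k|^{-\tau}$ on its support. For the linear resonant symbol $\mathcal{Z}_1^{(1/2)}$ this is a cubic term of size $\varepsilon\|u\|_s^2$, with no extra smallness at low frequency, so it gives only times $\varepsilon^{-1}$. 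For $\mathcal{Z}^{(3/2)}_2$ the commutator even has positive order.

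You therefore need \emph{exact} cancellation. The paper gets it by replacing $\|\Pi_{\le\varepsilon^{-1}}u\|_s^2$ with the equivalent block-constant energy $\mathtt{E}^{(s)}(u)=\sum_{\alpha}\mathtt{M}_\alpha^{2s}\|\Pi_{\Omega_\alpha}u\|_{L^2}^2$ of \eqref{def:Energy_low}. On this energy, real operators commuting with all $\Pi_{\Omega_\alpha}$ contribute identically zero.

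Finally, your threshold ``phase $\gtrsim|\xi|^{-\tau}$'' yields generators of order $>1$ for the transport term $V\cdot\xi$, and their flows are not bounded on $H^s$. The paper instead uses $|k\cdot\xi|\gtrsim|k|^{-\tau}\langle\xi\rangle^{\delta}$ with $\delta>\frac12$, so each generator gains $\delta-\frac12$. It also removes $\nabla\psi\cdot\xi$ exactly, exploiting its gradient structure, with the order-$\frac12$ generator $\psi\,\mathtt{m}^{(1/2)}(\xi)$.
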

Some comments  on \Cref{thm:main} are in order.
\noindent
The local well-posedness of the water waves and free boundary 
Euler equations has been addressed by many authors and it is presently well-understood.  
We mention for instance
\cite{Nali, Yosi, Crai, Wu0, Wu1, BeGun, CrisLin, AMS, Lannes, Sch, Lind1, Com, MZ, SZ1, SZ2}
and the more recent results
\cite{ABZ2011_2, ABZ2014, ABZnl}. In particular we remark that 
Local well-posedness theory yields an existence time depending on the size 
of the initial data.  
For data of size $\varepsilon$, one typically obtains a lifespan 
$T \sim \varepsilon^{-1}$. 

Theorem \ref{thm:main} shows that, for almost every value of the surface 
tension coefficient, this time scale can be improved to the quadratic 
scale $T \sim \varepsilon^{-2}$. As discussed above, this is the natural 
nonlinear stability scale in the absence of three-wave resonances. 
Achieving it on the torus is particularly delicate due to the lack 
of dispersive decay and the presence of strong small divisor effects.

Long-time stability for quasilinear PDEs on one-dimensional 
tori is by now well understood, following the paradigm 
introduced by Berti–Delort \cite{BD2018} and further developed 
in \cite{FI2, BFP, BFF, BMaMu, MMS24, MRS}. 
In higher dimensions, however, substantially new mechanisms are required. 
The principal obstruction comes from the superlinear 
dispersion relation, 
which generates strong small divisor effects through 
almost-resonant three- and four-wave interactions. 
These near-resonances induce derivative losses 
in the energy estimates and prevent the closure 
of classical normal form reductions or purely energy-based 
arguments at the quadratic time scale. 
In the three-dimensional water waves setting, 
the only previous result in this direction is \cite{IoP}, 
where an intermediate lifespan 
$T \sim \varepsilon^{-5/3+}$ was obtained.

The main novelty of the present work 
is a framework tailored to quasilinear problems on higher dimensional 
tori with superlinear dispersion and weak small divisor control. 
We combine a high–low frequency decomposition 
with a quasi-resonant normal form analysis. 
Rather than eliminating all quadratic interactions, 
as in a classical Birkhoff scheme, 
we isolate and retain the quasi-resonant component, 
which captures the effective long-time dynamics in the high-frequency regime 
and allows us to close the energy estimates. 
The analysis splits naturally into two parts. 
For low frequencies, a refined Bourgain-type 
block decomposition yields control up to the sharp frequency 
threshold $\tR \sim \varepsilon^{-1}$, significantly improving 
the frequencies scale $\tR \sim \varepsilon^{-2/3}$ achieved in \cite{IoP}. 
For high frequencies, we construct 
a quasilinear modified energy functional that controls the evolution.

The final outcome is a modified energy equivalent to the Sobolev norm 
and almost conserved over times of order $\varepsilon^{-2}$. 
Unlike the energies produced by classical normal form constructions, 
our functional is structurally non-perturbative 
at low frequencies (see \eqref{def:Energy_low}), 
reflecting the genuinely nonlinear 
character of the effective dynamics.

\begin{remark}
      Define, for $ \lambda >0$, 
\begin{equation*}
(\eta_\lambda(t,x), \psi_\lambda(t,x)):= 
( \eta(\lambda t,x), \lambda \psi(\lambda t , x)) \,.
\end{equation*}
If $(\eta,\psi)$ is a solution of \eqref{eq:1.2} 
then $(\eta_\lambda, \psi_\lambda)$ 
is a solution of \eqref{eq:1.2} with 
$(g,\gamma)\leadsto (\lambda^2 g , \lambda^2 \kap)$. 
For this reason, 
we henceforth fix $g = 1$, without loss of generality.

\end{remark}

\smallskip

{\sc Periodic setting vs Euclidean space.} 
Let us mention also that normal forms methods has been successfully applied 
to study long time existence of solutions of dispersive PDEs on $\R^d$, for 
smooth Cauchy data  that decay at infinity.
In the seminal work \cite{Sha1985} by Shatah, this approach 
has been introduced to prove global existence of solutions of nonlinear Klein-Gordon equations.
More recently this approach has been extended to several different situations. 
We mention for instance 
global 
in time results (for  
sufficiently small,  localized, 
and  smooth  
 initial data) 
\cite{Wu, IP, AlDe1, IFRT, IP3}  
have been obtained for irrotational water 
waves equations in  $2D$,
and \cite{GMS, GMS2, Wu2, DIPP} for the $3D$ case. 
In the aforementioned papers, the dispersive effects of the linear flow are exploited.
The periodic setting  is deeply different, 
as the linear waves  oscillate without decaying in time, 
and the long time dynamics of the solutions strongly depends
on 
the presence of $N$-wave resonant interactions and  
the Hamiltonian and reversible nature of the equations. 
In this periodic setting there are results about stability over a finite, but very long, time scale for the water waves on the one dimensional torus fro solution with initial data of size 
$O(\vare)$. We mention for instance 
\cite{ifrTat, IFT, HuIT, HarIT, BFF} 
where times of stability of order $O(\vare^{-2})$
have been proved for the 2D water waves in various contexts.
It is also worth mentioning \cite{BFP}
where the authors solved the Zakharov conjecture 
\cite{Zak2}
for 2D pure gravity Water Waves, 
namely they proved that for small data of size $O(\vare)$, 
the solutions stay of order $O(\vare)$ over 
a time interval of size $O(\vare^{- 3})$. See also \cite{Wu3} for an extension to more general intial data.
For the gravity capillary water waves we quote the paper
 \cite{BD2018}, Berti-Delort where the authors proved 
 almost global existence, namely time of stability $O(\vare^{-N})$
 for any $N\gg1$. This latter result
 has been extended in \cite{BMaMu}.
Regarding the 3D the situation is more involved and the only available result for the water waves equations, up to now, is the seminal work by Ionescu-Pusateri \cite{IoP}. 
We refer to \cite{FM2022, FGI} for a similar result on different models of quasi-linear PDEs in high space dimension.

\smallskip
\noindent 
{\sc Periodic and Quasi-periodic solutions:}
So far no global existence is  known for the solutions of \eqref{eq:1.2}  with periodic boundary conditions. However,
several families of time periodic/quasi-periodic 
 solutions of \eqref{eq:1.2} 
have been constructed in the last years in \cite{Wh2,BFM1,BM}.
Other KAM results for pure gravity water waves are  proved in
\cite{PlTo, IPT, BBHM, BFM2, FG}. The mentioned papers regards only the 2D case. Concerning the full 3d models which is much more complicated,
we refer to
\cite{CN, IP-Mem-2009, IP2} for the existence of bi-periodic traveling waves which are stationary in a moving frame. See also the more recent result \cite{GNPW1} and reference therein.
The case of general quasi-periodic travelling waves (not stationary in any moving frame)
has been addressed in \cite{FMTtravel}

\smallskip
\noindent
{\sc Long time regularity and normal forms 
for PDEs on compact manifolds.}
On tori (or more in general on compact manifolds) 
there are no dispersive effects that could help 
in controlling the behavior of 
the solutions
for long times. In order to clarify the main 
difficulties and to describe the main results 
let us consider a PDE on $\T^d$
\begin{equation*}
\partial_t u + \ii\Lambda(D) u 
+ {\mathcal Q}(u, u) = 0\,, \qquad x \in \T^d\,,
\end{equation*}
where $\Lambda(D)$ is a suitable dispersion 
relation with symbol $\Lambda(\xi)$, and 
${\mathcal Q}(u, u)$ is a quadratic 
nonlinearity of order smaller or equal to the 
order of $\Lambda$ (namely the same one of $\Lambda$). 
In order to prove that for small initial 
data of size $O(\vare)$, the solution stay
of order $O(\vare)$ over a time interval 
of size $O(\vare^{- 2})$, one has to perform a 
Birkhoff normal form step, 
namely to construct a change of coordinates 
$u = \Phi(w)$ such that the PDE in the 
new coordinates reads as 
\[
\partial_t w + \ii \Lambda(D) w = \text{cubic terms}\,. 
\]
In order to perform this change of coordinates, 
one has to control the "three waves interactions" 
\[
\Lambda (\xi_1) \pm \Lambda(\xi_2) \pm \Lambda(\xi_3) 
\qquad \text{with} 
\qquad \xi_1 \pm \xi_2 \pm \xi_3 = 0\,,
\]
where $\x_i\in\Z^{d}$.
In order to construct this change of coordinates,
one has to control in a good way the quantity
\begin{equation*}
\dfrac{q_{\pm \pm}(\xi_1, \xi_2, \xi_3)}{\pm \Lambda(\xi_1) 
\pm \Lambda(\xi_2) \pm \Lambda(\xi_3) }\,,
\end{equation*}
where $q_{\pm \pm}$ are some coefficients associated 
to the bilinear form ${\mathcal Q}$.  
Hence, one needs to control two features in order to obtain good energy estimates in the new variables:
 1) to provide a good control 
on the divisors 
$\pm \Lambda(\xi_1) \pm \Lambda(\xi_2) \pm \Lambda(\xi_3)$. 
2) To control the growth of the coefficients 
$q_{\pm \pm}$ w.r. to the Fourier indices. 
The case of semilinear PDEs is much easier 
than the case of PDEs containing derivatives 
in the nonlinearity, since the coefficients 
$q_{\pm \pm}$ are bounded w.r.t. the Fourier indices. 
Moreover, if one has a control of 
the small divisors as 
\begin{equation*}
|\Lambda (\xi_1) \pm \Lambda(\xi_2) \pm \Lambda(\xi_3)| 
\gtrsim \dfrac{1}{{\rm min}(|\xi_1|, |\xi_2|, |\xi_3|)^\tau}\,, 
\qquad \tau \gg 0\,,
\end{equation*}
one is able to construct a well 
defined transformation such that in the new coordinates 
one can perform good energy estimates. 
The formal argument explained above could also be 
performed at any order, by controlling higher 
order resonances 
\[
\Lambda(\xi_1) \pm \Lambda(\xi_2) \pm \ldots \pm \Lambda(\xi_k)\,, 
\qquad \xi_1 \pm \ldots \pm \xi_k = 0 \,,
\]
and the corresponding estimate \eqref{stima max 3 divisors} 
for higher order resonance will be given by 
\begin{equation}\label{stima max 3 divisors}
|\Lambda (\xi_1) \pm \ldots \pm \Lambda(\xi_k)| 
\gtrsim 
\dfrac{1}{{\rm max}_3(|\xi_1|, \ldots, |\xi_k|)^\tau}\,, 
\qquad \tau \gg 0
\end{equation}
where
\[
{\rm max}_3(|\xi_1|, \ldots, |\xi_k|) := 
\text{the third largest number among} 
\qquad |\xi_1|, \ldots, |\xi_k|\,. 
\]

This approach has been successfully and widely used 
in the past starting from the study of semi-linear, one dimensional PDEs, see \cite{Bou96, Bam1, BG1} and some extension to the high dimensional case \cite{BDGS, DelortSzeft1}.
%
The situation becomes much more complicated 
when one has a bad control of the small divisors, 
namely 
\begin{equation}\label{stima max divisors}
|\Lambda (\xi_1) \pm \ldots \pm \Lambda(\xi_k)| 
\gtrsim \dfrac{1}{{\rm max}(|\xi_1|, \ldots, |\xi_k|)^\tau}\,, 
\qquad \tau \gg 0\,,
\end{equation}
which provides a very strong loss of regularity, 
which is an obstruction in order to define 
a well defined normal form transformation. 
Lower bounds of the form \eqref{stima max divisors} 
typically occur on general manifolds and actually, 
already on ``irrational tori''. 
This bad lower bounds is typically related to 
``bad separation properties'' 
of the spectrum of the linear part of the equation.
For this case, we mention 
\cite{DelortSzeft1, Delort-Imrekaz, FIM}
and reference therein,
where
the authors improve the lifespan provided by the local theory. 
An almost global existence result in the case of 
``bad'' small divisors satisfying \eqref{stima max divisors} has been obtained much more recently  in \cite{BaFM}.

\medskip

\noindent
The situation dramatically changes when the 
nonlinearity contains derivatives 
(even not of maximal order) and 
the methods developed in the aforementioned 
papers do not work in this case. 
However, 
for quasi-linear PDEs on tori a very general 
approach has been developed in the last years 
based on the combination of Normal Form techniques 
and para-differential calculus, 
with the main long term objective to 
understand the long time dynamics 
of 2D and 3D Water Waves equations. 
This approach actually developed a general 
method to deal with one-dimensional 
quasi-linear PDEs but the problem of 
understanding quasi-linear PDEs in dimension 
greater than one is totally open. 

\noindent  
For quasi-linear Klein-Gordon equation 
on one dimensional torus and on the sphere, 
long time existence results have 
been proved by Delort \cite{Del2,Del3}. 
In this case the dispersion relation is 
linear $\Lambda(\xi) \sim |\xi|$ 
and one is able to get good lower bounds 
on the divisors like in \eqref{stima max 3 divisors}. 
In \cite{BD2018}, Berti-Delort proved long 
time existence for 2D 
gravity capillary Water Waves (1d interface) 
for the gravity capillary water waves on $\mathbb{T}$ (see also \cite{FI2}
for quasi-linear Schr\"odinger type equations). 
This result was extended to the 
Hamiltonian framework in \cite{BMaMu}, 
by removing additional assumption on initial data 
(see also \cite{BFF}, for any value of the surface tension). 
We also mention \cite{ifrTat, IFT, HuIT, HarIT} 
where times of stability of order $O(\vare^{-2})$
have been proved for the 2D water waves in various contexts. 
Furthermore in \cite{BFP}, 
Berti-Feola-Pusateri  
solved the Zakharov conjecture 
\cite{Zak2}  (see also \cite{CW})
for 2D pure gravity Water Waves 
($\Lambda (\xi) \sim |\xi|^{\frac12}$, 
sublinear dispersion), 
namely they proved that for small data of size $O(\vare)$, 
the solutions stay of order $O(\vare)$ over 
a time interval of size $O(\vare^{- 3})$. We also refer to \cite{Wu3} for long-time results on $\R$, and to \cite{DIP,DIP2}, where the system is considered on a large box.
The result in \cite{BFP} rigorously justify the formal 
integrability of 2D pure gravity Water Waves 
equations up to order four. The approach developed in \cite{BD2018} 
is based on the fact that one can reduce, through a 
paradifferential change of coordinates, 
the Water Waves equation to 
\begin{equation}\label{paradiff intro 1d}
\partial_t u + \ii \Lambda(u; D) u + {\mathcal R}(u) =0 \,,
\end{equation}
(they actually perform it at any order 
but due to our purposes we explain only 
the quadratic reduction) where $\Lambda(u; D)$ 
is a  Fourier multiplier with real symbol and 
${\mathcal R}(u)$ is a nonlinear smoothing 
operator satisfying the estimate
\[
\| {\mathcal R}(u) \|_{H^{s + \rho}} 
\lesssim \| u \|_{H^s}^2\,, 
\qquad \rho \gg 0, \quad s \gg 0\,. 
\]
Then by imposing non-resonance conditions 
of the form \eqref{stima max divisors} 
(with the maximal loss of derivatives), 
by using $\kappa$ as parameter, 
by choosing $\tau \sim \rho$, 
one can construct a bounded normal form 
transformation that transforms \eqref{paradiff intro 1d} to 
\[
\partial_t u + \ii \Lambda(u; D) u  = O(\| u \|_{H^s}^3)\,. 
\]
This is sufficient to ensure long-time stability
on a time scale of order 
$O(\e^{- 2})$. In order to get this reduction, 
it is fundamental the use of paradifferential calculus. The first point is to reduce to constant coefficients the highest order, which is of the form 
$\ii {\rm Op}^{\scriptscriptstyle{BW}}(A(u; x) |\xi|^{\frac32})$ (we refer the reader to \eqref{BWnon} for a  precise definition of the \emph{Bony-Weyl} quantization). 
This is done by means of a bounded and invertible map 
$ \Phi^{\theta}$ as the flow of the paradifferential operator
$ \ii\Opbw{ b (u; \theta, x) \xi} $ where
$ b(u; \theta, x ) = \frac{\beta (u; \theta, x) }{1 + \theta \beta_x (u; \theta, x)} $, which is basically the paracomposition operator induced by the diffeomorphism of the one dimensional torus 
$$
\T \to \T, \quad x \mapsto x + \beta(u; x)\,. 
$$
In this one-dimensional case, in order to reduce the highest order term to constant coefficients, $\beta(u; x, \xi)$ has to solve 
\begin{equation}\label{eq diffeo toro beta 1d}
\Big[ A(u; y)\Big(\big( 1 + \partial_y \beta(u; y) \big) |\xi| \Big)^{\frac32} \Big]_{y = x + \breve \beta(u; x)} = \mathtt c_{\frac32} |\xi|^{\frac32}\,,
\end{equation}
where 
$$
\T \to \T, \quad x \mapsto x + \breve \beta(u; x)
$$
is the inverse diffeomorphism of $y \mapsto y + \beta(u; y)$ and $\mathtt c_{\frac32} \in \R$ is a suitable constant to be chosen. 
 The latter equation is clearly easily solvable by a simple integration. In order to reduce to constant coefficients the lower order terms, one considers the bounded and invertible map 
$ \Phi^{\theta}$ defined as the flow of a paradifferential operator
$ \ii\Opbw{ f(u; x, \xi)} $ where $f(u; x, \xi)$ is a symbol 
depending nonlinearly on $u$ of order smaller than one. 
The equations that such symbols have to satisfy take the form 
\begin{equation}\label{eq omologica ordini bassi 1d}
\partial_\xi \Lambda(\xi) \partial_x f(u; x, \xi) 
+ p(u; x, \xi) = \lambda_p(\xi)\,,
\end{equation}
where 
$\Lambda(\xi) = \sqrt{|\xi|(1 + \kappa |\xi|^2)} 
\sim |\xi|^{\frac32}$, $p(u; x, \xi)$ is 
a symbol of order $m$ depending 
nonlinearly on $u$ and 
\[
\lambda_p(\xi) := \frac{1}{2 \pi} \int_\T p(u; x, \xi)\, d x\,. 
\]
Also this equation is  solvable by a simple integration and using that $\partial_\xi \Lambda(\xi) \sim |\xi|^{\frac12}$ if $p$ is a symbol of order $m + \frac12$, then $f$ is a symbol of order $m$. This ideas developed by  Berti-Delort \cite{BD2018} actually fail in higher space dimension, since the homological equations \eqref{eq diffeo toro beta 1d}, \eqref{eq diffeo toro beta 1d} are not solvable in higher space dimension. Concerning the first equation, this has even a geometric interpretation: on the one dimensional torus all the metrics close to the flat can be transformed into the flat metric by a suitable diffeomorphisms of the torus. This is actually not true on higher dimensional tori. The equation \eqref{eq diffeo toro beta 1d} on the torus $\T^2$, takes the form 
\begin{equation}\label{eq diffeo toro in 2d}
\Big[ A(u; y) \Big| \Big({\rm Id} + \nabla_y \beta(u; y) \Big)[\xi] \Big|^{\frac32} \Big]_{y = x + \breve \beta(u; x)} = \mathtt c_{\frac32} |\xi|^{\frac32}, \quad y \in \T^2\,, 
\quad  \xi \in \R^2\,.
\end{equation}
This is an equation whose unknown is $\beta(u; \cdot) : \T^2 \to \T^2$, 
which one is not able to solve. 
Concerning equations of the second form 
\eqref{eq omologica ordini bassi 1d}, for 
$(x, \xi) \in \T^2 \times \R^2$, takes the form  
($\nabla \Lambda(\xi) \sim |\xi|^{- \frac12} \xi$)
\begin{equation}\label{eq omologica ordini bassi 2d}
|\xi|^{- \frac12} \xi \cdot \nabla_x f(u; x, \xi) + p(u; x, \xi) 
= \lambda_p(\xi)\,.
\end{equation}
This equation is not even  solvable in higher dimension. 
Indeed, by expanding in Fourier series with respect to $x$, 
one has to control the resonances and the quasi resonances 
$|\xi \cdot k| \ll 1$, $\xi \in \R^2$, $k \in \Z^2 \setminus \{ 0 \}$. 
This kind of lower order normal form has been developed 
for the first time in \cite{BML2019, BML2, BML3}, 
for linear  of Schr\"odinger equations with unbounded perturbations 
and it has been used in \cite{FM2022}  for proving long time existence 
for nonlinear Schr\"odinger equations with unbounded nonlinearity. 
On the other hand, this is not sufficient 
for dealing with quasi-linear 
cases such as 3D gravity capillary water waves. 
In the seminal
work Ionescu-Pusateri \cite{IoP}, 
the authors developed a strategy for going beyond the lifespan 
of the local existence (see also the extension to other models 
Feola-Gr\'ebert-Iandoli \cite{FGI}). We also quote Delort-Szeftel 
that proved a quadratic lifespan for quasi-linear 
Klein-Gordon equations on tori. 
Due to  the presence of small divisors, the strategy in \cite{IoP} 
does not allow to reach the natural stability time of a 
quadratic normal form step. 
In the next section we shall describe in detail the main 
ideas of the proof of our result. 

\medskip

\noindent
{\bfseries Notations}. 
Throughout the paper we shall use the following convention for natural numbers $\N:=\{1,2, \ldots\}$ and  $\mathbb{N}_0:=\mathbb{N}\cup\{0\}$.
Given some parameters $\sigma_1, \ldots, \sigma_k > 0$, we shall 
use the notation $A\lesssim_{\sigma_1, \ldots, \sigma_k} B$ to denote 
$A\le C B$ where $C \equiv C(\sigma_1, \ldots, \sigma_k) > 0$ is a positive constant
depending on $\sigma_1, \ldots, \sigma_k$. If the constant depends on the lowest Sobolev index or the orders of the operators involved in our proof, we simply write $\lesssim$. 
 We will only emphasize the dependence on the Sobolev index $s$ by writing $\lesssim_s$. Similarly we will write $A\sim B$ if  $A\lesssim B\lesssim A$ and $A\sim_s B $ if the implicit constants depend on $s$.
 We shall also write $H_{x}^{s}$ or $H^{s}$ instead of $H^{s}(\T^2;\C)$. Given two Banach spaces $X, Y$, we denote by ${\mathcal L}(X; Y)$ the space of bounded linear operators from $X$ to $Y$ equipped with the standard operator norm $\| \cdot \|_{{\mathcal L}(X; Y)}$. If $X = Y$ we often write ${\mathcal L}(X)$ instead of ${\mathcal L}(X; Y)$. 

\subsection{Ideas of the proof and 
outline of the strategy}\label{idee proof}
%

Following \cite{ABZ2014,AlDe}, 
we begin our analysis by paralinearizing the water waves system \eqref{eq:1.2}, 
 expressed in the variable $(\eta,\omega)$, where $\omega$ in \eqref{def:good}
 is the so called ``Good unknown'' of Alinhac. 
 This is the content of Proposition \ref{prop:paraWW}
whose proof is based on a paralinearization formula 
for the Dirichlet-Neumann operator
$G(\eta)$ in \eqref{eq:112aINTRO} 
provided in Appendix \ref{app:DN}. In particular  we show that $G(\eta)$
admits an expansion like (for the precise definitions and properties of 
para-differential operators, we refer to Section \ref{sezione-functional setting})
\begin{equation}         \label{eq_para:DNINTRO}
     G(\eta)\psi= \Opbw{\lambda(\eta;x,\xi)}\omega+ \Opbw{-\ii \tV\cdot \xi- \tfrac12\div(\tV)}\eta
     + R_{\geq 1}(\eta)\psi\,,
\end{equation}
where $\omega=\psi-\Opbw{\mathtt{B}}\eta$ is the good unknown in \eqref{def:good}, the functions 
$\mathtt{V}$ and $\mathtt{B}$ are respectively the  horizontal  and vertical 
components of the velocity field $(\nabla\Phi,\pa_{y}\Phi)$ (see  \eqref{def:V-B}),
and where $\lambda(\eta;x,\x)$ is a symbol (see \Cref{def:simbolohold}) of order one admitting an expansion
as
\[
\lambda(\eta;x,\x)=\sqrt{(1+|\nabla \eta|^2)|\xi|^2 - (\nabla \eta \cdot \xi)^2 }+{\rm l.o.t.}
\]
Moreover the operator $R_{\geq1}(\eta)[\cdot]$ in \eqref{eq_para:DNINTRO}
is a \emph{smoothing remainder} in the sense that it satisfies the following tame estimates
           \begin{align*}
               \| R_{\geq 1}(\eta)\psi\|_{H^{s+\vr}}
               \lesssim_s \| \eta\|_{H^{s_0}}\| \psi\|_{H^{s}}+ \| \eta\|_{H^{s}}\| \psi\|_{H^{s_0}},
               \qquad \rho \gg 0, \quad s \gg 0\,, \;\;s_0>1\,.
           \end{align*}
We remark that, a paradifferential 
expansion like \eqref{eq_para:DNINTRO} is somehow already known in literature, we refer
for instance to \cite{AlM} and \cite{DIPP}.
However, for our aims we need precise and sharp information on symbols and  remainders.
In particular we prove that the symbol $\lambda$ and the remainder $R_{\geq1}(\eta)[\cdot]$
admits Taylor expansions in the variable $\eta$ close the flat surface, see for instance 
\eqref{est_R:DN2}-\eqref{est_R:DN}. Such an expansion on the remainders is  necessary 
for the normal form reduction of \Cref{sezione birkhoff smoothing}.

As a second step, 
we express the system \eqref{eq:eta}-\eqref{eq:omega}
in complex coordinates \eqref{def:cQ} (see  \Cref{diag.ord0}), and then
we perform a (actually quite standard) 
diagonalization of the paralinearized system \eqref{eq:U}
up to smoothing remainders, 
obtaining system \eqref{diagonale}, see \Cref{prop:blockdecoupling}.

It is worth noting  that the diagonalization procedure
is performed under the following assumption:
given an initial condition 
\[
\|\eta_{0}\|_{H^{s+1/4}}+\|\psi_0\|_{H^{s-1/4}}\leq \vare\,,
\]
then there is a time $T>0$ and a solution $(\eta(t),\psi(t))$ of \eqref{eq:1.2} defined over the time interval 
$[0,T]$, such that
\[
\sup_{t\in[0,T]}\big(\|\psi(t,\cdot)\|_{H^{s+\frac{1}{2}}}+\|\eta(t,\cdot)\|_{H^{s-\frac{1}{2}}}\big)\lesssim_s \vare\,.
\]
This assumption is verified in view of the well-established local well-posedness theory for water waves, see 
for instance \cite{ABZ2011_2, ABZ2014}.
Passing to the complex coordinates  \Cref{diag.ord0} (see also \eqref{zak} 
and the equivalence \eqref{equiv:U_etaomega})
the assumption above implies that we shall work with solutions $z=z(t,x)$ of the complex 
water waves system 
\eqref{eq:U}
satisfying $\|z(0,\cdot)\|_{H^s}\lesssim \varepsilon$ and 
\begin{equation}\label{ansatz local existence intro}
\sup_{t\in[0,T]}\|z(t,\cdot)\|_{H^s}\lesssim_s \varepsilon\,.
\end{equation}
In particular, local theory implies that $T\sim \vare^{-1}$.

\noindent
In brief, after the reduction of  Sections
\ref{sec good unknown}, \ref{sec complex coordinates},
\ref{section block-decoupling}, the water waves system \eqref{eq:1.2} is transformed into 
(here, 
for simplicity, we retain only the diagonal terms)
\begin{equation*}
\partial_t z+ \ii \Lambda(D) z
+ \ii \Opbw{\sym{\Sigma}{\geq 2}{\sla{3}{2}}+\sym{a}{\geq 1}{\sla{1}{2}}} z
+  \Opbw{\ii \mathtt{V}\cdot\x} z
+ {\mathcal R}(z) = 0\,,
\end{equation*}

\begin{itemize}
\item $\Lambda(D):=\Opbw{\Lambda(\x)}$ with $\Lambda(\x):=\sqrt{|\x|(1+\kap|\x|^2)}$;
\item $\sym{\Sigma}{\geq 2}{\sla{3}{2}}:=\sym{\Sigma}{\geq 2}{\sla{3}{2}}( x, \xi)$ is a symbol of order 
$\frac{3}{2}$, and of size at least $O(\vare^{2})$
(see \eqref{def:sd}-\eqref{def:sdBIS});
\item $\sym{a}{\geq 1}{\sla{1}{2}}:=\sym{a}{\geq 1}{\sla{1}{2}}( x, \xi)$ 
is a symbol of order $\frac{1}{2}$, and of size at least  $O(\vare)$;
\item ${\mathcal R}(z)$ is a smoothing operator satisfying
\begin{equation}\label{stimaquadIntro}
\|{\mathcal R}(z)\|_{H^{s+\vr}} \lesssim_s \|z\|_{H^s}^2,
\qquad s \gg \vr \gg 0\,,
\end{equation}

\item $\mathtt{V}:=\nabla\Phi(x,\eta(x))$; in particular one has 
the homogeneity expansion (see \eqref{def:V-B}, \eqref{V:exp1})
$\mathtt{V}=\nabla\psi+\mathtt{V}_{\geq2}$ where $\mathtt{V}_{\geq2}$ is 
at least quadratic in $u$.
\end{itemize}
In \Cref{section-riduzione-trasporto}
we start to perform a normal form reduction for the $1$-homogeneous 
component of the transport term $-\ii \tV \cdot \xi$. 
Note that such $1$-homogeneous part is explicitly given by $ - \ii \nabla \psi\cdot \xi$.
To perform such a reduction we reason as follows. We look for a nonlinear change of variables of the form
$u=\Phi^1(z)z$, where $\Phi^\tau(z)$ denotes the flow of the
linear, $u$-dependent, variable-coefficient equation
\[
\partial_\tau \Phi^\tau(z)
=
\ii \Opbw{\sym{g}{1}{\sla{1}{2}}(x,\xi)}\, \Phi^\tau(z)\,,
\qquad
\Phi^0(z)=\uno\,,
\]
for a symbol $\sym{g}{1}{\sla{1}{2}}(x,\xi)$ of order $1/2$, linear in $\psi$, 
to be determined. The equation satisfied by $u$ takes the form
\[
\partial_t u+ \ii \Lambda(D) u
+ \ii \Opbw{\sym{a}{\geq 2}{\sla{3}{2}}+\sym{b}{\geq 1}{\sla{1}{2}}} u
+  \Opbw{\ii \sym{\mathtt{h}}{1}{1}} u
+ {\mathcal R}(u) = 0\,,
\]
where $\sym{a}{\geq 2}{\sla{3}{2}}$ is a symbol of order $3/2$ at least quadratic in $u$, 
$\sym{b}{\geq 1}{\sla{1}{2}}$ is of order $1/2$ and at least linear in $u$, $\mathcal{R}$ is a quadratic 
smoothing remainder (see \eqref{stimaquadIntro})
and where 
\[
\sym{\mathtt{h}}{1}{1}:=\nabla\psi\cdot\x+\{\sym{g}{1}{\sla{1}{2}}(x,\x) ,\Lambda(\x)\}=
\nabla\psi\cdot\x-(\pa_{\x}\Lambda(\x))\cdot\nabla\sym{g}{1}{\sla{1}{2}}(x,\x)\,.
\]
Our aim is to find $\sym{g}{1}{\sla{1}{2}}(x,\x)$ in such a way 
$\sym{\mathtt{h}}{1}{1}\equiv0$ (see \eqref{homo:g12}).
We look for  $\sym{g}{1}{\sla{1}{2}}$ of the form 
$\sym{g}{1}{\sla{1}{2}}(x,\x)=\psi \sym{\mathtt{m}}{}{\sla{1}{2}}(\x)$ for some 
Fourier multiplier $\sym{\mathtt{m}}{}{\sla{1}{2}}(\x)$. 
Therefore, by an explicit computation, we have
\[
\begin{aligned}
0&=\sym{\mathtt{h}}{1}{1}=\nabla\psi\cdot\x-(\pa_{\x}\Lambda(\x))\cdot\nabla\sym{g}{1}{\sla{1}{2}}(x,\x)
=\nabla\psi\cdot\x\Big(1-\frac{\sym{\mathtt{m}}{}{\sla{1}{2}}(\x)(|\x|^{-1}+3\kappa|\x|)}{2\Lambda(\x)}\Big)
\end{aligned}
\]
provided we define
\[
\sym{\mathtt{m}}{}{\sla{1}{2}}(\x):=\frac{2\Lambda(\x)}{|\x|^{-1}+3\kappa|\x|}\,.
\]
We remark that we are able to 
completely remove the term $\nabla\psi\cdot\x$, 
without small divisors issues, thanks to its gradient structure 
which is reminiscent of the vector potential in electrodynamics.
In \cite{DIPP, IoP}, this structure is observed at the level of the energy estimates, where it gives rise to a favorable factor, referred to as the \emph{depletion factor}. In our approach, we instead exploit it at the level of the equation to eliminate the first-order term up to a symbol of order $\sla{1}{2}$. We refer to \Cref{prop:nfquadtra} for details.

To summarize, 
after the  sequence of symmetrizations and 
paradifferential normal form reductions  of Sections
\ref{sec good unknown}, \ref{sec complex coordinates},
\ref{section block-decoupling}, \ref{section-riduzione-trasporto},
the water waves system can be reduced 
to the paradifferential equation
\begin{equation}\label{paradiff eq intro}
\partial_t u + \ii \Lambda(D) u
+ \ii \Opbw{\sym{a}{\geq 2}{\sla{3}{2}}(u; x, \xi)} u
+ \ii \Opbw{\sym{b}{\geq 1}{\sla{1}{2}}(u; x, \xi)} u
+ {\mathcal R}(u) = 0,
\end{equation}
for some quadratic smoothing remainder $\mathcal{R}$ (see \eqref{stimaquadIntro}), and 
where $\sym{a}{\geq 2}{\sla{3}{2}}$ is a symbol 
of order $3/2$ 
and of size $O(\vare^2)$,
$\sym{b}{\geq 1}{\sla{1}{2}}$ is of order $1/2$ and of size $O(\vare)$.
An important properties is also that the symbols 
$\sym{a}{\geq 2}{\sla{3}{2}}, \sym{b}{\geq 1}{\sla{1}{2}}$ are (at least at positive order)
real valued. This is a consequence of  the \emph{Hamiltonian} structure of the original problem
that we preserve (at least approximatively) along the reduction procedure described above.
This property will be crucial in what follows.

We now recall again  that local theory implies that  the bound \eqref{ansatz local existence intro} 
holds true for a times scale $T\sim \vare^{-1}$.
Our goal is to extend this control up to the natural quadratic time scale 
$T\sim \varepsilon^{-2}$.

\medskip

To this end, the first attempt could be to introduce the quasilinear modified energy
\[
\mathcal{E}^{(s)}(u)
:= \Big\langle
\Big[\Lambda(D)
+ \Opbw{\sym{a}{\geq 2}{\sla{3}{2}}}
+ \Opbw{\sym{b}{\geq 1}{\sla{1}{2}}}
\Big]^{s \frac{4}{3}} u,\, u
\Big\rangle_{L^2}
\sim \|u\|_{H^s}^2.
\]
A direct computation yields the energy estimate
\begin{equation}\label{stimaenergiaINTRO}
\frac{d}{dt}\mathcal{E}^{(s)}(u)
\lesssim_s
\|u\|_{H^{s_0}}^2 \|u\|_{H^s}^2
+
\|u\|_{H^{s_0}} \|u\|_{H^{s-1}}\|u\|_{H^s}.
\end{equation}

The second term in \eqref{stimaenergiaINTRO} prevents an improvement of the lifespan beyond $T\sim \varepsilon^{-1}$.  
However, this obstruction is essentially a low-frequency phenomenon.

\medskip

Let $u_{\geq \tR}$ denote the high-frequency part of $u$, supported in $\{|\xi|\geq \tR\}$.  
Let $u_{\geq \tR}$ denote the projection of $u$ onto frequencies
$|\xi|\geq \tR$. When the cubic term in \eqref{stimaenergiaINTRO}
is restricted to this high-frequency component, it satisfies
\[
\|u\|_{H^{s_0}}
\|u_{\geq \tR}\|_{H^{s-1}}
\|u_{\geq \tR}\|_{H^s}
\lesssim
\tR^{-1}
\|u\|_{H^{s_0}}
\|u_{\geq \tR}\|_{H^s}^2.
\]
Restricting the modified energy analysis to frequencies $|\xi|\geq \tR$, one obtains the effective time scale
\begin{equation}\label{intro_epsR}
T \sim \min\{\varepsilon^{-2},\, \varepsilon^{-1}\tR\}.
\end{equation}
Thus, choosing $\tR\sim \varepsilon^{-1}$ yields the desired quadratic lifespan.

The difficulty is therefore to control the solution up to the sharp frequency threshold $|\xi|\lesssim \varepsilon^{-1}$.  
In \cite{IoP}, severe small divisor effects (see \eqref{stima max divisors} with $k=3$ and $\tau=(3/2)^+$) limited the analysis to the intermediate scale $|\xi|\lesssim \varepsilon^{-2/3+}$, which, together with \eqref{intro_epsR}, explains the lifespan $T\sim \varepsilon^{-5/3+}$ obtained there.

\medskip

To reach the optimal scale $\tR\sim \varepsilon^{-1}$, instead of performing a direct normal form or integration by parts argument, we develop a quasi-resonant normal form method based on a completely different paradigm.

The first main important observation to keep in mind is that for $|\xi|\lesssim \varepsilon^{-1}$, the quadratic quasilinear symbol $\sym{a}{\geq 2}{\sla{3}{2}}$ behaves like the lower-order symbol $\sym{b}{\geq 1}{\sla{1}{2}}$:
\begin{align*}
|\sym{b}{\geq 1}{\sla{1}{2}}(u; x, \xi)|
&\lesssim \varepsilon |\xi|^{\sla{1}{2}}\,, 
\\
|\sym{a}{\geq 2}{\sla{3}{2}}(u; x, \xi)|
&\lesssim \varepsilon^2 |\xi|^{\sla{3}{2}}
\lesssim \varepsilon |\xi|^{\sla{1}{2}}\,.
\end{align*}
Conversely, for $|\xi|\gtrsim \varepsilon^{-1}$, one infers
\begin{align*}
|\sym{b}{\geq 1}{\sla{1}{2}}(u; x, \xi)|
&\lesssim \varepsilon |\xi|^{\sla{1}{2}}
\lesssim \varepsilon^2 |\xi|^{\sla{3}{2}}\,, 
\\
|\sym{a}{\geq 2}{\sla{3}{2}}(u; x, \xi)|
&\lesssim \varepsilon^2 |\xi|^{\sla{3}{2}}\,.
\end{align*}

We therefore fix $\tR=\varepsilon^{-1}$ and split each symbol into low- and high-frequency components.  
Writing $\sym{a}{}{m} = \sym{a}{\leq \varepsilon^{-1}}{m} + \sym{a}{>\varepsilon^{-1}}{m}$ according to the support in $\{|\xi|\lesssim \varepsilon^{-1}\}$ and $\{|\xi|\gtrsim \varepsilon^{-1}\}$, equation \eqref{paradiff eq intro} can be rewritten as
\begin{equation*}
\partial_t u + \ii \Lambda(D) u
+ \ii \Opbw{a_{\varepsilon}(t,x,\xi)}u
+ \ii \Opbw{b_{\varepsilon}(t,x,\xi)}u
+ {\mathcal R}(u) = 0\,,
\end{equation*}
where
\[
\supp \big(a_{\varepsilon}(t,x,\xi) \big)\subset 
\{|\xi|\gtrsim \varepsilon^{-1}\}\,,
\qquad
\supp \big(b_{\varepsilon}(t,x,\xi) \big)\subset
\{|\xi|\lesssim \varepsilon^{-1}\}\,,
\]
and
\[
|a_{\varepsilon}(t,x,\xi)| \lesssim \varepsilon^2 
|\xi|^{\sla{3}{2}}\,,
\qquad
|b_{\varepsilon}(t,x,\xi)| \lesssim 
\varepsilon |\xi|^{\sla{1}{2}}\,.
\]

We perform a quasi-resonant normal form reduction on the
low-frequency symbol $b_{\varepsilon}$, while keeping the
high-frequency quasilinear symbol $a_{\varepsilon}$ unchanged.

At a conceptual level, and in the spirit of \cite{BD2018} for
one-dimensional quasilinear PDEs, the goal is to carry out a
normal form procedure in decreasing orders, eliminating the
paradifferential part of the equation up to a regularizing remainder.
Equivalently, this corresponds to a microlocal normal form in which
the effective small parameter is heuristically $|\xi|^{-1}$. This fact is actually not possible in higher dimension because of much stronger resonant phenomena. In the spirit of \cite{BML2019}, \cite{BML2}, \cite{BML3}, \cite{FM2022}, one normalizes the symbol $b_\varepsilon$ up to a resonant part and a smoothing remainder. Whereas it is not possible to normalize the symbol of maximal order $a_\varepsilon$, which is actually treated by constructing a modified energy, using that it is supported on high frequencies. This procedure is carried out in Section \ref{QR-NormalForm} and in particular in subsections \ref{sec:linsym}, \ref{sec:quadsym}, \ref{sec:cubicsym}. We now describe the first normalization step of the symbol $b_\varepsilon$ of order $\frac12$, which is supported on $|\xi| \lesssim \varepsilon^{- 1}$. 

We seek a nonlinear change of variables of the form
$v=\Phi^1(u)u$, where $\Phi^\tau(u)$ denotes the flow of the
linear, $u$-dependent, variable-coefficients equation
\[
\partial_\tau \Phi^\tau(u)
=
\ii \Opbw{g(u; x,\xi)}\, \Phi^\tau(u)\,,
\qquad
\Phi^0(u)=\uno\,,
\]
for a symbol $g$ of order $1 - \delta$, with $\delta > 0$ 
very close to $1$, $0 < 1 - \delta \ll 1$ and size $\varepsilon$,
to be determined. More precisely,
\[
|g(u; x,\xi)| \lesssim \varepsilon |\xi|^{1 - \delta}\,.
\]
\noindent
Writing the equation satisfied by $v$, one finds that it takes the form 
\begin{equation}\label{eq dopo primo step normal form}
\partial_t v + \ii \Lambda(D) v + \ii {\rm Op}^{BW}\big(b_1(t, x, \xi) \big)[v] + \ii {\rm Op}^{BW}\big(a_1(t, x, \xi) \big)[v] + {\mathcal R}_1(v) = 0
\end{equation}
where the transformed symbol of order $\frac12$ is given by 
\begin{equation}\label{homo_intro}
b_1(t, x, \xi)
=
b_\varepsilon(t,x,\xi)
+
\nabla_\xi \Lambda(\xi)\cdot \nabla_x g(t,x,\xi)\,,
\end{equation}
$a_1 (t, x, \xi)$ is a symbol of order $\frac32$, 
of size $\varepsilon^2$ and supported on 
$|\xi | \gtrsim \varepsilon^{- 1}$, i.e.
\begin{equation*}
\begin{aligned}
& |a_1(t, x, \xi)| \lesssim \varepsilon^2 |\xi|^{\frac32}\,, 
\qquad  
{\rm supp}\big( a_1 \big) 
\subset \big\{ |\xi| \gtrsim \varepsilon^{- 1} \big\}\,,
\end{aligned}
\end{equation*}
and ${\mathcal R}_1(v)$ is a smoothing quadratic remainder, 
\[
\| {\mathcal R}_1(v) \|_{H^{s + \vr}} \lesssim_s \| v \|_{H^s}^2\,, 
\qquad s, \vr \gg 0\,. 
\]

As we mentioned above, In contrast to the one-dimensional case, it is no longer possible
in higher dimensions to choose the generator $g$ so that
$b_\varepsilon^{(1)}$ becomes independent of $x$.
Indeed, the operator $\nabla_\xi \Lambda(\xi)\cdot \nabla_x \sim |\xi|^{-\sla{1}{2}} \xi \cdot \nabla_x$
is not invertible and, more importantly,
is affected by severe small divisor phenomena,
which prevent a complete elimination of the $x$-dependence of the symbol. Indeed, one is led to the relation
\[
b_{\varepsilon}
=
\sum_{k\in\Z^2} \widehat b(k,\xi)\, e^{\ii k\cdot x},
\qquad
g
=
- (\nabla_\xi \Lambda(\xi)\cdot \nabla_x)^{-1}
b_{\varepsilon}\,.
\]
At the Fourier level this yields formally
\[
g(t,x,\xi)
\sim
\sum_{k\in\Z^2}
\frac{\widehat b(k,\xi)}
{\ii (k\cdot \xi)\, |\xi|^{-\sla{1}{2}}}
\, e^{\ii k\cdot x}\,.
\]

The denominator
$\ii (k\cdot \xi)\, |\xi|^{-\sla{1}{2}}$
may vanish when $k\cdot \xi=0$ and it can be very small for many $\xi \in \R^2$. In the one-dimensional case,
however, $k\cdot \xi = k\xi$, so that for $k\neq0$ the denominator
is of size $\sim |\xi|^{\sla{1}{2}}$ and therefore large at high
frequencies. In higher dimensions the situation is drastically
different: the quantity $(k\cdot \xi)|\xi|^{-\sla{1}{2}}$ can be arbitrarily small along
large sets of frequencies, producing severe small divisors that
accumulate to zero very fast.
Then, in order to analyze the symbol in \eqref{homo_intro}, we introduce
two parameters
\[
\delta,\nu\in (0,1)\,, \qquad 0 < 1-\delta \ll 1\,, 
0 < \quad \nu \ll 1\,,
\]
and decompose the symbol $b_{\varepsilon}$ into four components
\[
b_{\varepsilon}
=
\langle b_{\varepsilon} \rangle
+ b_{\varepsilon}^{(\mathtt{res})}
+ b_{\varepsilon}^{(\mathtt{nr})}
+ b_{\varepsilon}^{(\mathtt{S})}.
\]
For some $\tau \gg 1$ the components are defined as follows:
\begin{itemize}

\item $\langle b_{\varepsilon} \rangle$ is the spatial average of the symbol,
\[
\langle b_{\varepsilon} \rangle
:=
\widehat b(0,\xi)
=
\frac{1}{(2\pi)^2}
\int_{\T^2} b_{\varepsilon}(x,\xi)\,dx;
\]

\item $b_{\varepsilon}^{(\mathtt{res})}$ is the \emph{resonant} component and satisfies
\[
\supp\!\left(
\widehat{b_{\varepsilon}^{(\mathtt{res})}}(k,\xi)
\right)
\subseteq
\Big\{
(k,\xi)\in\Z^2\times\R^2 :
0 < |k|\le |\xi|^\nu,\;
|k\cdot\xi|\lesssim |k|^{-\tau}|\xi|^\delta
\Big\};
\]

\item $b_{\varepsilon}^{(\mathtt{nr})}$ is the \emph{non–resonant} component and satisfies
\begin{equation}\label{supporto_nr}
\supp\!\left(
\widehat{b_{\varepsilon}^{(\mathtt{nr})}}(k,\xi)
\right)
\subseteq
\Big\{
(k,\xi)\in\Z^2\times\R^2 :
0 < |k|\le |\xi|^\nu,\;
|k\cdot\xi|\gtrsim |k|^{-\tau}|\xi|^\delta
\Big\};
\end{equation}

\item $b_{\varepsilon}^{(\mathtt{S})}$ is the \emph{smoothing} component and satisfies
\[
\supp\!\left(
\widehat{b_{\varepsilon}^{(\mathtt{S})}}(k,\xi)
\right)
\subseteq
\Big\{
(k,\xi)\in\Z^2\times\R^2 :
|k|\ge |\xi|^\nu
\Big\}.
\]
\end{itemize}
We refer to subsection \ref{preliminary resonant normal form}, for details on the latter splitting. 

We retain the average and the resonant part in the normal form,
\[
\langle b_{\varepsilon} \rangle
+
b_{\varepsilon}^{(\mathtt{res})},
\]
since these terms encode the genuine resonant interactions and the
small–divisor structure of the three–dimensional water waves problem.

The smoothing component produces a smoothing operator,
\[
\Opbw{b_{\varepsilon}^{(\mathtt{S})}}
= \text{smoothing operator},
\]
which can therefore be absorbed into the remainder $\cR_1(u)$ in
\eqref{eq dopo primo step normal form}.

Finally, we remove the non–resonant component by exploiting its spectral
support, which provides a lower bound on the denominator.
This leads to the generator
\[
g
=
-(\nabla_\xi\Lambda(\xi)\cdot\nabla_x)^{-1}
b_{\varepsilon}^{(\mathtt{nr})}
\sim
\sum_{k\in\Z^2}
\frac{\widehat{b_{\varepsilon}^{(\mathtt{nr})}}(k,\xi)}
{\ii (k\cdot\xi)\,|\xi|^{-\sla{1}{2}}}
\,e^{\ii k\cdot x}.
\]

Using the lower bound \eqref{supporto_nr} on the support of
$b_{\varepsilon}^{(\mathtt{nr})}$, and recalling that and supported on $|\xi| \lesssim \varepsilon^{- 1}$
$|b_{\varepsilon}|\lesssim \varepsilon |\xi|^{\sla{1}{2}}$, we obtain that $g$ is a symbol of order $1 - \delta$, of size $\varepsilon$ and supported on $|\xi| \lesssim \varepsilon^{- 1}$, namely
\[
|g(t, x, \xi)|
\lesssim
\varepsilon |\xi|^{1-\delta}
\ll
\varepsilon |\xi|^{\sla{1}{2}}, \quad {\rm supp}(g) \subseteq \big\{ |\xi| \lesssim \varepsilon^{- 1} \big\}
\]
since $\delta$ is very close to $1$ (in particular $\delta>1/2$).

As a consequence, the new symbol in \eqref{homo_intro} becomes
\[
b_1 
=
\langle b_{\varepsilon} \rangle
+
b_{\varepsilon}^{(\mathtt{res})}\,. 
\]
the same procedure can be iterated. After an arbitrary number of this iterative steps, one deals with a para-differential equation of the form 
\begin{equation}\label{eq dopo normal form resonant}
\partial_t v + \ii \Lambda(D) v + \ii {\rm Op}^{BW}\big(z(t, x, \xi) \big)[v] + \ii {\rm Op}^{BW}\big(c(t, x, \xi) \big)[v] + {\mathcal Q}(v) = 0
\end{equation}
where 
\begin{itemize}
\item $z(t, x, \xi)$ is a symbol of order $\frac12$, of size $\varepsilon$, supported on $|\xi| \lesssim  \varepsilon^{- 1}$, namely 
$$
|z(t, x, \xi)| \lesssim \varepsilon |\xi|^{\frac12}, \quad {\rm supp}(z) \subseteq \big\{ |\xi| \lesssim \varepsilon^{- 1} \big\}\,
$$
and it is a symbol in {\bf normal form}, namely 
\[
\widehat z(t, k, \xi) \neq 0 \quad \Longrightarrow \quad 
|k| \leq |\xi|^\nu\,, 
\quad | \xi \cdot k| \leq |\xi|^\delta |k|^{- \tau}
\]
for some $\tau \gg 0$ large enough (see definition \ref{def:normalform}). 
\item The symbol $c$ is of order $\frac32$, of size $\varepsilon^2$ and supported on $|\xi| \gtrsim \varepsilon^{- 1}$, i.e. 
$$
\begin{aligned}
|c(t, x, \xi)| \lesssim \varepsilon^2 |\xi|^{\frac32}, \quad {\rm supp}(c) \subseteq \big\{ |\xi| \gtrsim \varepsilon^{- 1} \big\}\,. 
\end{aligned}
$$
\item ${\mathcal Q}(v)$ is a quadratic smoothing remainder, i.e. 
$$
\| {\mathcal Q}(v) \|_{H^{s + \rho}} \lesssim_s \| v \|_{H^s}^2, \quad s, \rho \gg 0\,. 
$$
\end{itemize}
Finally after a standard Birkhoff normal form step on the quadratic smoothing remainder ${\mathcal Q}(v)$ (see subsection \ref{sezione birkhoff smoothing}), using that for full measure set of surface tension parameters $\kappa$, the three waves interactions satisfy the lower bound 
$$
\begin{aligned}
&|\Lambda (\xi_1) \pm \Lambda(\xi_2) \pm \Lambda(\xi_3)| \geq \frac{\gamma}{\langle \xi_1 \rangle^\tau \langle \xi_2 \rangle^\tau}\,, \quad  \xi_1 \pm \xi_2 \pm \xi_3 = 0,
\end{aligned}
$$
by choosing the order of regularization $\rho$ as $\rho \gg \tau$, one reduces the PDE \eqref{eq dopo normal form resonant} to 
\begin{equation}\label{eq prima della modified energy}
\partial_t v + \ii \Lambda(D) v + \ii {\rm Op}^{BW}\big(z(t, x, \xi) \big)[v] + \ii {\rm Op}^{BW}\big(c(t, x, \xi) \big)[v] + {\mathcal C}(v) = 0
\end{equation}
where ${\mathcal C}(v)$ is a cubic remainder satisfying the estimate
$$
\| {\mathcal C}(v) \|_{H^s} \lesssim_s \| v \|_{H^s}^3\,. 
$$
The last part of the proof consist in the construction of a modified energy which is taylored to the resonant structure of $z$ and to the fact that $c$ is a symbol of order $\frac32$, supported on $|\xi| \gtrsim \varepsilon^{- 1}$ and of size $O(\varepsilon^2)$. The construction of the modified energy to get the desired energy estimate is performed in Section \ref{sez-energy-estimate}. First of all, we split $v$ as 
        \begin{equation}\label{splitting-alti-bassi-intro}
        \begin{aligned}
        & v = v_\varepsilon + v_\varepsilon^\bot \quad \text{where} \\
        & v_\varepsilon (x) := \sum_{|\xi| \leq \varepsilon^{- 1}}  v_\xi e^{\ii x \cdot \xi}, \quad v_\varepsilon^\bot (x) := \sum_{|\xi| > \varepsilon^{- 1}}  v_\xi e^{\ii x \cdot \xi}
        \end{aligned}
        \end{equation}
        and we perform separate energy estimates for the low modes $v_\varepsilon$ and for the high modes $v_\varepsilon^\bot$. 
We estimate the low modes $v_\varepsilon$ in Section \ref{sec:low_est}. The key point is the following partition of the resonant frequencies. From Corollary 5.11 of \cite{BML2} and Lemma 4.6 in \cite{BML3}, we have the following geometric block decomposition: there is a partition $\cP= \{ \Omega_\alpha\}_{\alpha \in \N_0}$ of 
       $\Z^2$ such that
        \begin{enumerate}
            \item {\bfseries Dyadic blocks:} There 
            is a constant $R=R(\delta)>0$ such that
            \begin{align}
                \Omega_0 \subset B_{\Z^2}(R)\,, 
                \qquad  
                \max_{\xi \in \Omega_\alpha} |\xi|\leq 2 \min_{\xi \in \Omega_\alpha}|\xi|\,, 
                \qquad \text{for any }\alpha \in \N\,;
                \label{size:block-intro}
            \end{align}
            \item {\bfseries Invariance of normal form operators:} Let $\cZ(x,\xi)$ be a normal form symbol, 
            then 
            \begin{align}
            \Pi_{\Omega_{\alpha}} \Opbw{\cZ(x,\xi)}= 
            \Opbw{\cZ(x,\xi)}\Pi_{\Omega_{\alpha}}\,, \quad \forall \alpha \in \mathbb N_0 
            \label{block_invariant-intro}
            \end{align}
        \end{enumerate}
        where for any subset $\Omega \subseteq \Z^2$, we define 
        $$
\Pi_\Omega u (x) := \sum_{\xi \in \Omega}  u_\xi e^{\ii x \cdot \xi}\,.
        $$
Since our normal form symbols are supported for frequencies $|\xi| \lesssim \varepsilon^{- 1},$ we also define the set of indices corresponding to 
blocks that intersect the ball $ B_{\mathbb{Z}^2}\big(1/\varepsilon\big)$ 
as
\begin{align}\label{I_vare-intro}
\mathbb{I}_\vare := \left\{ \alpha \in \mathbb{N}_0 \,\middle|\, 
\Omega_\alpha \cap B_{\mathbb{Z}^2}\big( {\varepsilon^{-1}} \big) 
\neq \emptyset \right\}\,.
\end{align}
We show that for $\varepsilon \ll 1$ small enough, one has that  
\begin{align}\label{inclu-intro}
            \Omega_\alpha \subset B_{\mathbb{Z}^2}\big(2\varepsilon^{-1}\big)\,, \quad \forall \alpha \in \mathbb{I}_\vare\,. 
        \end{align}
        
        Note that  the Sobolev norm of $v_\varepsilon$, 
        thanks to the diadic property \eqref{size:block-intro},
        is given by  
        $$
        \| v_\varepsilon \|_{H^s} \sim \tE^{(s)}(v):= \sum_{\alpha\in \mathbb{I}_\vare} \tM_\alpha^{2s} 
	\| \Pi_{\Omega_\alpha}v\|_{L^2}^2\,, 
	\qquad 
	\tM_{\alpha}:= \max_{\xi \in \Omega_\alpha} |\xi|\,. 
        $$
        Furthermore, since the symbol $z(t, x, \xi)$ is 
        in normal form, 
         the property \eqref{block_invariant-intro} implies 
         that the operator ${\rm Op}^{BW}(z(t, x, \xi))$ 
         is invariant on the blocks 
         $\Omega_\alpha$. 
         Using these properties, one is able 
         to prove the energy estimate
        \begin{equation}\label{stima energia modi bassi}
\Big|\frac{d}{d t} \tE^{(s)}(v) \Big| 
\lesssim_s \varepsilon^4\,, 
\quad \forall t \in [0, T], 
        \end{equation}
        see Lemma \ref{lem:est_dt_E_low}. 
        Therefore, for any $t \in [0, T]$, 
        $$
        \begin{aligned}
        \| v_\varepsilon(t) \|_{H^s}^2 
        & \sim \tE^{(s)}(v(t)) 
        \lesssim_s \tE^{(s)}(v(0)) + T \varepsilon^4 
        \lesssim_s    
        \| v_\varepsilon(0) \|_{H^s}^2 + T \varepsilon^4 
        \\& 
        \lesssim_s \varepsilon^2 (1 + T \varepsilon^2) 
        \lesssim_s \varepsilon^2\,.
        \end{aligned}
        $$
For $T \sim \varepsilon^{- 2}$ (see Proposition \ref{prop:stima_low}). Then one is led to estimate 
the high modes $v_\varepsilon^\bot$. This is done 
in Section \ref{sec:high_est}. 
We define a modified energy ${\mathcal E}^{(s)}(v)$, 
whose precise definition is given 
in \eqref{def:mod_ene}. Basically, according 
to the equation 
\eqref{eq prima della modified energy}, we define 
$$
L(t, x,\xi) := \Lambda(\xi) + z(t, x, \xi) + c(t, x, \xi)
$$
and 
$$
{\mathcal E}^{(s)} (v) := \Big\langle {\rm Op}^{BW}\Big(L(t, x,\xi)^{\frac43 s}\chi_\varepsilon(t, x,\xi) \Big) v\,,\, v\Big\rangle_{L^2}
$$
where $\chi_\varepsilon$ is a suitable cut off symbol (it is chosen as a function of $L$, i.e. $\chi_\varepsilon \equiv \chi_\varepsilon(L)$) that is identically zero for $|\xi| \lesssim \varepsilon^{- 1}$. With this choice, since $z(t, x, \xi) = 0$ for $|\xi| \lesssim \varepsilon^{- 1}$, essentially one has 
$$
\begin{aligned}
L(t, x,\xi)^{\frac43 s}\chi_\varepsilon(t, x,\xi) & \sim \Big( \Lambda(\xi) + c(t, x, \xi) \Big)^{\frac43 s} \sim |\xi|^{2s}\big( 1 + r_L(t, x, \xi) \big)\,, \\
|r_L(t, x, \xi)| & \lesssim \varepsilon^2, \quad {\rm supp}(r_L) \subset \big\{|\xi| \gtrsim \varepsilon^{- 1} \big\}\,. 
\end{aligned}
$$
Hence, one has that 
$$
{\mathcal E}^{(s)}(v) \sim \| v_\varepsilon^\bot \|_{H^s}^2\,. 
$$
Then in Lemma \ref{lem:est_dt_E}, it is proved that 
$$
\Big| \frac{d}{d t} {\mathcal E}^{(s)}(v) \Big| \lesssim_s \varepsilon^4 , \quad \forall t \in [0, T]\,.
$$
This is the crucial ingredient to prove Proposition \ref{prop:stima_high}, in which it is proved that 
$$
\begin{aligned}
\| v_\varepsilon^\bot(t) \|_{H^s}^2 & \sim {\mathcal E}^{(s)}(v(t)) \lesssim_s {\mathcal E}^{(s)}(v(0)) + \int_0^t \Big| \frac{d}{d \tau}{\mathcal E}^{(s)}(v(\tau)) \Big| \, d \tau  \\
& \lesssim_s \| v(0) \|_{H^s}^2 + T \varepsilon^4 \lesssim_s   \varepsilon^2 (1 + T \varepsilon^2) \lesssim_s \varepsilon^2
\end{aligned}
$$
for times $T \sim \varepsilon^{- 2}$. The proof is then concluded with a standard bootstrap argument, see Section \ref{sezione bootstrap argument}.

  \bigskip

\noindent {\bf Acknowledgements.} R. Montalto 
and F. Murgante
are supported by the ERC STARTING GRANT 2021 
`Hamiltonian Dynamics, 
Normal Forms and Water Waves'' (HamDyWWa), Project Number: 101039762. 
Views and opinions expressed are however those of the authors only and do not necessarily reflect those of the European Union or the European Research Council. Neither the European Union nor the granting authority can be held responsible for them.

R. Feola is also supported 
by ``GNAMPA - INdAM'', CUP E53C25002010001,
and 
``GNAMPA - INdAM'', CUP E5324001950001.

\medskip

\noindent
The authors warmly thank Alberto Enciso, Alberto Maspero and Fabio Pusateri for very useful discussions and comments.


\section{ Functional Setting}\label{sezione-functional setting}
In this section we introduce the functional framework and the paradifferential calculus tools used throughout the paper. 
We first fix the Sobolev setting and basic notation, then define the classes of symbols and associated operators, and finally introduce the multilinear symbol classes and smoothing remainders that will be used in the normal form procedure.

Any function $u(x)$, $x\in \mathbb{T}^2$, is expanded in Fourier series as\footnote{We also use the notation
$u_n^+ := u_n := \widehat{u}(n)$ and
$u_n^- := \ov{u_n} := \ov{\widehat{u}(n)}$.}
\begin{equation}\label{def:Fou_esp}
u(x) = 
\sum_{n \in \Z^2 } u_{n}e^{\ii n\cdot x }, 
\qquad 
u_{n} := \frac{1}{(2\pi)^2} \int_{\mathbb{T}^2} u(x) e^{-\ii n \cdot x } \, dx.
\end{equation}

For any subset $\Theta \subset \Z^2$, we denote
\begin{align}
\Pi_\Theta u := \sum_{n \in \Theta } u_{n}e^{\ii n\cdot x },
\qquad 
\Pi_\Theta^\bot u := \Pi_{\Theta^c} u.
\label{def:PiLambda}
\end{align}
For any $s\in \R$, we denote by $H^{s}(\mathbb{T}^2;\mathbb{C})$ the usual $L^{2}$-based 
Sobolev space 
on $\mathbb{T}^2$.
We set $\langle j \rangle:=\sqrt{1+|j|^{2}}$ f
or $j\in \mathbb{Z}^2$, and we equip 
$H^{s}(\mathbb{T}^2;\mathbb{C})$ with the norm
\begin{equation}\label{Sobnorm}
\|u\|_{s}^{2}
:=
\sum_{j\in \mathbb{Z}^2}\langle j\rangle^{2s}|u_{j}|^{2}\,.
\end{equation}

\noindent
We also introduce the translation operator
\begin{align}
(\tau_\upsilon u)(x):= u(x+ \upsilon),
\qquad 
\upsilon \in \R^2\,.
\label{tautras}
\end{align}

\noindent
For $r>0$ and $s\in \R$, we denote by
\[
B_{s}(r):=\{u\in H^{s}(\T^{2};\C)\,:\, \|u\|_{s}\leq r\}
\]
the ball of $H^s$ with radius $r$ centered at the origin.

\medskip

It is convenient to work on the product space $H^{s}(\T^2;\C)\times H^{s}(\T^2;\C)$, endowed with the natural product norm. In particular, we will use the real subspace
\begin{equation}\label{Hcic}
H^s_\R
:=
\{(u^{+},u^{-})\in H^{s}(\T^2;\C)\times H^{s}(\T^2;\C)\; : \; 
u^{-}=\overline{u^{+}}\}\,.
\end{equation}

\noindent
We denote by $B_{s,\R}(r)$ the ball of $H_{\R}^s$ 
with radius $r$ centered at the origin.

\begin{remark}
 We recall that the norm \eqref{Sobnorm} satisfies the following ``tame'' estimate:
 for any $s\geq {s}_0>1$ one has
 \begin{equation}\label{tameHsx}
 \|ab\|_{s}\lesssim_{s}\|a\|_{s}\|b\|_{s_0}+\|a\|_{{s}_0}\|b\|_{s}\,,\qquad \forall\, a,b\in H^{s}\,.
 \end{equation}
 \end{remark}
  \noindent

	\subsection{Paradifferential calculus}
In this section we introduce the basic tools from paradifferential calculus that will be used throughout the paper. We follow a standard paradifferential approach based on Bony–Weyl quantization.

		\paragraph{Symbols and quantization}
	We define the class of symbols with \emph{finite} regularity we shall use throughout the paper.
	\begin{definition}{\bfseries (H\"older symbols).}\label{def:simbolohold}
			Let $ m  \in \R $, $\delta\in [0,1]$ and $\sigma\geq 0$.
	We denote by $\cN^{m,\delta}_{\sigma}$ the space of 
			functions   $a(x, \xi): \T^2\times \R^2\to \C $,  
			such that, for any multi-indices $ \alpha ,\beta \in \N_0^2 $ with ${|\alpha| + |\beta| \leq \sigma}$, 
			there exists a constant $C_{\alpha,\beta} >0$ such that
			\begin{equation}\label{stima:symbols}
			  |\partial_x^{\alpha} \partial_\xi^{\beta} a( x, \xi) 
				|\leq C_{\alpha,\beta} \langle \xi \rangle^{ m - \delta |\beta|}\,, \quad \forall (x,\xi) \in \T^2\times\R^2 
			\end{equation}
			We endow $\cN^{m,\delta}_{\sigma}$ with the  family of   semi-norms 
			\begin{equation}\label{seminormSimbo}
				|a|_{m, \sigma} := \max_{|\alpha| + |\beta| \leq \sigma} 
				\sup_{ \xi\in \R^2} \| \partial_x^{\alpha} \partial_\xi^{\beta} a(\cdot , \xi) \|_{L^\infty(\T^2)}
				\langle \xi \rangle^{- m + \delta |\beta|}\,.
			\end{equation} 
				We also denote $\mathcal{N}_{\s}^{m}\equiv\mathcal{N}_{\s}^{m,1}$.
			\end{definition}
		
\begin{remark}
    We consider symbols with finite regularity in $\xi \in \mathbb{R}^2$. This unusual class is intrinsic to the quasi-resonant normal form procedure. Indeed, the solution of the homological equation (see, e.g., \eqref{eq:homologica}) has the property that taking derivatives with respect to $\xi$ also entails a loss of regularity in $x$.  
\end{remark}

    \vspace{0.5em}
	\noindent
	{\bfseries Paradifferential quantization.}
	Given a smooth symbol  $ a (x, \xi) $ 
	we define its Weyl quantization  as the operator
	acting on a
	$ 2 \pi $-periodic function
	$u(x)$ (written as in \eqref{def:Fou_esp})
	as
	\[
	\Opw{a}u= \sum_{\ell \in \Z^2}
	\Big(\sum_{j \in\Z^2}\hat{a}\big(\ell-j, \tfrac{\ell+j}{2}\big) u_j \Big){e^{\im \ell x}}\,,
	\]
	where $\hat{a}(k,\xi)$ is the $k$-{th} Fourier coefficient of the $2\pi-$periodic function $x\mapsto a(x,\xi)$.
	
	\noindent
	In order to introduce its associated  \emph{para-differential} operator
	we consider a smooth cut-off function
	$\chi\in C^{\infty}(\R^2\times\R^2;\R)$, 
	even with respect to each of its arguments, satisfying, for $0<\delta_0\leq \tfrac{1}{10}$,
	\begin{equation}
		{\supp}\, \chi \subset\{(\xi',\xi)\in\R^{2}\times\R^{2}; 
        \ |\xi'|\leq\delta_0 \langle\xi\rangle\} \,,
        \qquad \quad
		\chi(\xi',\xi) \equiv 1\,\,\, \mathrm{ for } \,\,\, |\xi'|\leq \delta_0   \langle\xi\rangle / 2 \, . 
        \label{cut off defin}
	\end{equation}
	Moreover, we assume that 
\[
|\partial_{\xi}^{\alpha}\partial_{\xi'}^{\beta}\chi(\xi',\xi)|
\leq C_{\alpha,\beta}\langle\xi\rangle^{-|\alpha|-|\beta|}\,, 
\ \forall \alpha , \,\beta\in\N_0^2 \, .
\]

	\begin{definition}{\bfseries (Bony-Weyl quantization)}\label{quantizationtotale}
		Given a symbol $a(x,\xi)\in \cN_{s_0}^{m,\delta}$
		we set
\begin{equation*}
a_{\chi}(x,\xi) :=\sum_{k\in \Z^2} 
\chi (k,\xi )\hat{a}(k,\xi)e^{\im k \cdot  x}\,, 
\qquad 
\hat{a}(k,\xi):= \frac{1}{(2\pi)^2} 
\int_{\T^2} {a}(x,\xi)e^{-\ii k x }\, \di x  \, 
\end{equation*}
and we define the \emph{Bony-Weyl} quantization  as 
\begin{align}
&\Opbw{a(x,\xi)}u= \Opw{ a_{\chi}(x,\xi)}u=
	\!\!\!\!\!\! 
    \sum_{(j,k)\in \Z^2} 
    \!\!\!  
    \chi \left(j-k,\frac{j+k}{2} \right)
\hat{a}\left( j-k, \frac{j+k}{2}\right)
u_k {e^{\im j x}} \,.
\label{BWnon}
		\end{align}
	\end{definition}
\begin{remark} \label{rem:symbols_modozero}

\noindent $\bullet$
Paradifferential operators map zero average functions into zero average functions, indeed 
 
\begin{align}
    \Pi_0 \cOpbw{a}= \cOpbw{a}\Pi_0= \hat a(0,0) \Pi_0 , \qquad \Pi_0^\bot \cOpbw{a}= \cOpbw{a}\Pi_0^\bot\,,
    \label{comm_copbw}
\end{align}
where $\Pi_0$ is the $L^2$-projector on the zero-th mode and $\Pi_0^{\perp}:={\id}-\Pi_0$.

\noindent $\bullet$ The action of
$ \Opbw{a} $ on  the spaces $ \dot H^s $ only depends
on the values of the symbol $  a(x,\xi)$
for $|\xi|\geqslant1$.
Therefore, we may identify two symbols $ a( x,\xi)$ and
$ b( x,\xi)$ if they agree for $|\xi| \geqslant\sla{1}{2}$.
In particular, whenever we encounter a symbol that is not smooth at $\xi=0 $,
such as, for example, $a = g(x)|\x|^{m}$ for $m\in \R^*$,
we will identify it with its smoothed out version
$\pare{1-\chi(\xi)}a\pare{x, \xi}$, where $\chi$ is a smooth cut off function such that 
\begin{align*}
\chi(\xi)=
    \begin{cases}
        1 & |\xi| \leq \frac14\\
        0 & |\xi| \geq \frac12.
    \end{cases}
\end{align*}
In this case we shall, with a slight abuse of notation, indicate 
\[
|a(x,\xi)|_{m,\s}\equiv | (1-\chi(\xi)) a(x,\xi)|_{m,\s}.
\]
This convention will be used systematically without further mention.

\noindent{$\bullet$} We choose the Weyl quantization as one easily verifies the fundamental algebraic property:
\begin{align*}
\Opbw{a(x,\xi)}^*= \Opbw{\ov{a(x,\xi)}}.
\end{align*}

\end{remark}

	The following classical result about the action of paradifferential operators on the scale of Sobolev spaces follows by standard arguments (see, for instance, \cite{BMM2021,FI2022}, Lemma 2.3 and Theorem 2.4 respectively). 
	\begin{lemma}{\bfseries (Action of a paradifferential operator).}\label{thm:action}
		Let $m \in \R$, $\delta\in [0,1]$ and $s_0 > 2$. Then for any $s \in \R $, the linear map 
		\[
		{\cal N}^{m,\delta}_{s_0} \to {\cal L}(H^{s }, H^{s-m}), \quad a \mapsto \Opbw{a}
		\]
		is continuous, namely 
		\begin{align}
		    \| \Opbw{a} v\|_{H^{s - m}} \lesssim_s |a|_{m, s_0}\| v\|_{H^s}, \quad \text{for any } \ v \in H^s\,. 
            \label{actionSob}
		\end{align}

	\end{lemma}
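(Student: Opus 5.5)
The plan is to write $\Opbw{a}u$ as a Fourier multiplier-type sum and estimate it mode by mode. By \eqref{BWnon}, the $j$-th Fourier coefficient of $\Opbw{a}u$ is
\[
(\Opbw{a}u)_j=\sum_{k\in\Z^2}\chi\big(j-k,\tfrac{j+k}{2}\big)\,\hat a\big(j-k,\tfrac{j+k}{2}\big)\,u_k .
\]
The proof then has three steps.

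\textbf{Step 1: frequency localization from the cut-off.} On the support of $\chi$ we have $|j-k|\le\delta_0\langle \frac{j+k}{2}\rangle$ with $\delta_0\le 1/10$. Hence $\langle j\rangle\sim\langle k\rangle\sim\langle\frac{j+k}{2}\rangle$, with absolute constants. This lets us trade the weights $\langle j\rangle^{s-m}$ for $\langle k\rangle^{s}\langle \frac{j+k}{2}\rangle^{-m}$, up to a constant $C^{|s|+|m|}$; the resulting dependence on $s$ is harmless.

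\textbf{Step 2: decay of Fourier coefficients in $x$.} Integrating by parts in $x$, using up to $s_0$ derivatives (take $\lfloor s_0\rfloor\ge 2$; if $s_0$ is not an integer, use the integer part, which is still $>2$ or equal to $2$ plus an interpolation), gives
\[
|\hat a(\ell,\xi)|\lesssim |a|_{m,s_0}\,\langle\xi\rangle^{m}\,\langle \ell\rangle^{-s_0}.
\]
Only $x$-derivatives ($\beta=0$) are used. This is why the statement holds for every $\delta\in[0,1]$: the $\xi$-regularity plays no role in boundedness.

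\textbf{Step 3: Schur/Young estimate.} Combining Steps 1 and 2, we obtain
\[
\langle j\rangle^{s-m}|(\Opbw{a}u)_j|\lesssim_s |a|_{m,s_0}\sum_k \langle j-k\rangle^{-s_0}\,\langle k\rangle^{s}|u_k| .
\]
This is a discrete convolution of the $\ell^2$ sequence $\langle k\rangle^s|u_k|$ with the kernel $\langle\ell\rangle^{-s_0}$. That kernel is in $\ell^1(\Z^2)$ precisely because $s_0>2$. Young's inequality $\ell^1*\ell^2\subset\ell^2$ then yields \eqref{actionSob}. Linearity of $a\mapsto\Opbw{a}$ gives the continuity of the map.

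\textbf{Main obstacle.} The only delicate point is the summability threshold: one needs $\sum_{\ell\in\Z^2}\langle\ell\rangle^{-s_0}<\infty$, which is exactly the hypothesis $s_0>2$. If $s_0$ is not an integer, the seminorm \eqref{seminormSimbo} controls only $\lfloor s_0\rfloor$ derivatives, and one must check that this still yields $\ell^1$ decay. If it does not, I would fall back on Cauchy–Schwarz in Step 3: use
\[
\sum_\ell \langle\ell\rangle^{2\sigma}|\hat a(\ell,\xi)|^2\lesssim \|a(\cdot,\xi)\|_{H^\sigma}^2
\]
together with $\sum_\ell\langle\ell\rangle^{-2\sigma}<\infty$ for $\sigma>1$. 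This requires only $H^{\sigma}$, $\sigma>1$, regularity in $x$, which is available since $s_0>2$.
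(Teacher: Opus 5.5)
For $\lfloor s_0\rfloor\ge 3$ your Steps 1--3 are a complete, self-contained proof. The paper itself gives no proof of this lemma and only refers to the literature.

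The gap is the range $2<s_0<3$, which the statement includes. The seminorm \eqref{seminormSimbo} involves only integer-order derivatives, so for such $s_0$ it controls just two $x$-derivatives. Step 2 then gives only $|\hat a(\ell,\xi)|\lesssim|a|_{m,s_0}\langle\xi\rangle^{m}\langle\ell\rangle^{-2}$, which is not summable on $\Z^2$. No interpolation can improve this, since $|a|_{m,s_0}$ contains no information beyond two derivatives.

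Your fallback does not close the gap as sketched. The entries of row $j$ are $\hat a(\ell,j-\ell/2)$, so the frequency argument moves with $\ell$. A row-wise Cauchy--Schwarz therefore produces $\sum_\ell\langle\ell\rangle^{2\sigma}|\hat a(\ell,j-\ell/2)|^2$, which is not $\|a(\cdot,\xi)\|_{H^\sigma}^2$ for any single $\xi$. From the fixed-$\xi$ bound you only get the pointwise decay $\langle\ell\rangle^{-\sigma}$ of each coefficient, and the Young step again needs $\sigma>2$.

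The idea can be rescued by applying Cauchy--Schwarz along anti-diagonals (fixed $\xi$) and then exploiting almost orthogonality dyadically. Pair with $v\in H^{m-s}$ and put $U_k:=\langle k\rangle^{s}|u_k|$, $V_j:=\langle j\rangle^{m-s}|v_j|$. Index the terms by $\xi=\tfrac{j+k}{2}\in\tfrac12\Z^2$ and $\ell=j-k$. By Step 1 and Cauchy--Schwarz in $\ell$ at fixed $\xi$, using $\sum_\ell\langle\ell\rangle^{4}|\hat a(\ell,\xi)|^2\lesssim\|a(\cdot,\xi)\|_{H^2}^2\lesssim\langle\xi\rangle^{2m}|a|_{m,s_0}^2$, one gets
\[
|\langle\Opbw{a}u,v\rangle_{L^2}|\lesssim_s|a|_{m,s_0}\sum_{\xi}F(\xi)\,,\qquad F(\xi)^2:=\sum_{\ell}\langle\ell\rangle^{-4}\,U_{\xi-\ell/2}^2\,V_{\xi+\ell/2}^2\,.
\]
Let $F_n$ denote the same sum restricted to $\langle\ell\rangle\sim 2^n$, so that $F\le\sum_n F_n$. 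Take a cube $Q$ of side $2^n$ in the $\xi$-variable, and let $Q^*$ be its threefold dilate with the same center. Cauchy--Schwarz over $Q$ and the injectivity of $(\xi,\ell)\mapsto(\xi-\ell/2,\xi+\ell/2)$ give $\sum_{\xi\in Q}F_n(\xi)\lesssim 2^{n}\,2^{-2n}\,\|U\|_{\ell^2(Q^*)}\|V\|_{\ell^2(Q^*)}$. Summing over the boundedly overlapping $Q^*$ and then over $n$ yields $\sum_\xi F(\xi)\lesssim\|u\|_{H^s}\|v\|_{H^{m-s}}$, which is \eqref{actionSob}.

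This argument uses only $\sup_\xi\langle\xi\rangle^{-m}\|a(\cdot,\xi)\|_{H^\sigma}$ with $\sigma>1$, which is what your fallback was aiming for. Alternatively, state the lemma for $\lfloor s_0\rfloor\ge 3$; since the paper always takes $s_0$ large, nothing downstream is affected.
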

		\paragraph{Compositions}\label{sez calcolo simbolico}
	In this section we provide some abstract lemmas on the classes 
	that we defined before that we shall apply in our normal form procedure. 
	We introduce the following differential operator
	\begin{equation}\label{sigmino}
	\s(D_{x},D_{\x},D_{y},D_{\eta}) := D_{\x}\cdot D_{y}-D_{x}\cdot D_{\eta}\,, 
	\end{equation}
	where $D_{x}:=\frac{1}{\ii}\nabla_{x}$ and $D_{\x},D_{y},D_{\eta}$ are similarly defined. 
    We introduce a truncated symbolic expansion of the composition, which is sufficient for our purposes.
	\begin{definition}{\bfseries (Asymptotic expansion of composition symbol).}
	Let $\vr\in \mathbb{N}$, $m_1,m_2\in \R$, $s_0 \geq\vr$ and 
    $a\in \cN^{m_1,\delta}_{s_0}$, 
	$b\in \cN^{m_2,\delta}_{s_0}$.
		We define the symbol
		\begin{align}
			(a\sha{\vr} b)(x,\x):=&\sum_{k=0}^{\vr-1}\frac{1}{k!}
			\left(
			\frac{\ii}{2}\s(D_{x},D_{\x},D_{y},D_{\eta})\right)^{k}
			\Big[a(x,\x)b(y,\eta)\Big]_{|_{\substack{x=y, \x=\eta}}}\, .
\label{espansione2}
		\end{align}
	\end{definition}
\begin{remark}\label{espansEsplic}
We collect some useful properties of the composition defined in \eqref{espansione2}.

\medskip
\noindent $\bullet$ {\bfseries Explicit expansion.}
In view of \eqref{sigmino}--\eqref{espansione2}, we have
\begin{equation}\label{def:pk}
(a\sha{\vr} b)(x,\x)
=
\sum_{n=0}^{\varrho-1} p_n(a,b),
\end{equation}
where
\[
p_n(a,b)
:=
\frac{1}{(2\ii)^n}
\sum_{|\alpha| + |\beta| = n}
\frac{(-1)^{|\beta|}}{\alpha!\beta!}
(\partial_x^\beta \partial_\xi^\alpha a)
(\partial_x^\alpha \partial_\xi^\beta b).
\]
Moreover, recalling \eqref{seminormSimbo}, one has
\begin{equation}\label{est:pk}
p_n(a,b) \in \cN_{s_0-n}^{m_1+m_2-\delta n}\,, 
\qquad
|p_n(a,b)|_{m_1+m_2-\delta n,\, s_0-n}
\lesssim_n |a|_{m_1,s_0} |b|_{m_2,s_0}\,.
\end{equation}
In particular,
\[
a\sha{\vr} b
=
ab + \frac{1}{2\ii}\{a,b\}
+ \cN_{s_0-\vr}^{m_1+m_2-2\delta},
\]
and
\begin{align*}
|a\sha{\vr} b - ab|_{m_1+m_2-\delta,\, s_0-\vr}
&\lesssim |a|_{m_1,s_0}|b|_{m_2,s_0}\,,
\\
\left| a\sha{\vr} b - ab - \frac{1}{2\ii}\{a,b\}
\right|_{m_1+m_2-2\delta,\, s_0-\vr}
&\lesssim |a|_{m_1,s_0}|b|_{m_2,s_0}\,.
\end{align*}

\medskip
\noindent $\bullet$ {\bfseries Algebraic properties.}
A direct computation shows that
\begin{align}
\ov{a\sha{\vr} b}
=
\ov{b}\sha{\vr} \ov{a}, 
\qquad 
\big(a\sha{\vr} b\big)^\vee
=
b^\vee \sha{\vr} a^\vee,
\label{prop:ov}
\end{align}
where $a^\vee(x,\xi):=a(x,-\xi)$. In particular,
\[
\overline{a^\vee}\sha{\vr} \overline{b^\vee}
=
\overline{a\sha{\vr} b}^\vee\,.
\]

\medskip
\noindent $\bullet$ {\bfseries Composition with Fourier multipliers.}
Let $\Lambda(\x):=\sqrt{|\x|(1+\kap|\x|^2)}\in \tilde \cN_{s_0}^{\sla{3}{2}, 1}$. Then
\[
p_k(a,\Lambda)\in \cN_{s_0-k}^{m_1+\sla{3}{2}-k, \delta}\,,
\qquad
|p_k(a,\Lambda)|_{m_1+\sla{3}{2}-k,\, s_0-k}
\lesssim |a|_{m_1,s_0}\,.
\]

\medskip
\noindent $\bullet$ {\bfseries Moyal bracket.}
For $\vr\geq 2$, we introduce the Moyal bracket, which corresponds to the commutator at the symbolic level,
\begin{align}\label{Moyal}
\mbra[\vr]{a}{b}
:=
\ii\big(a\sha{\vr} b - b\sha{\vr} a\big)
=
\sum_{\substack{k=1,\ldots,\vr-1 \\ k \ \mathrm{odd}}}
2\ii\, p_k(a,b)
=
\{a,b\} + \sym{r}{}{m_1+m_2-3\delta}\,,
\end{align}
where 
$\sym{r}{}{m_1+m_2-3\delta}\in \cN_{s_0-\vr}^{m_1+m_2-3\delta}$ and
\begin{align}
|\sym{r}{}{m_1+m_2-3\delta}|_{m_1+m_2-3\delta,\, s_0-\vr}
\lesssim |a|_{m_1,s_0}|b|_{m_2,s_0}\,.
\label{est:Moyal}
\end{align}
\medskip
\noindent $\bullet$ {\bfseries Iterated commutators.}
Let $p\in \N_0$. For any $\s \geq s_0$ and $b \in \cN_{\s+p \vr}^{m_1}$, define
\begin{align}
a \mapsto \mathtt{ad}_{\! \vr}(b)[a]
:=
\frac{1}{\ii} \mbra[\vr]{a}{b}\,,
\qquad
\mathtt{ad}^p_{\! \vr}(b)[a]
:=
\overbrace{\mathtt{ad}_{\! \vr}(b)\circ \ldots \circ \mathtt{ad}_{\! \vr}(b)}^{p\text{-times}}[a]\,.
\label{def:ad_sym}
\end{align}
Then
\[
\mathtt{ad}_{\! \vr}^p(b)\colon \cN_{\s+p\vr}^{m_2} \to \cN_{\s}^{m_2+p(m_1-\delta)},
\]
with estimate
\[
|\mathtt{ad}_{\! \vr}^p(b)[a]|_{m_2+p(m_1-\delta),\, \s}
\lesssim_\s |b|_{m_1,\s+\vr} |a|_{m_2,\s+\vr}\,.
\]
\end{remark}

	The following is a classical result about the composition of paradifferential operators, for a proof in the same setting as the present paper see \cite{FM2022}.
		
\begin{lemma}{\bfseries (Composition).}\label{compoparapara0}
Let $\delta \in (0,1)$, $\vr\geq 0$ and $m_1,m_2\in \R$. 
There is $s_0>\vr$ such that for any  
$\ta \in \cN_{s_0}^{m_1,\delta}$, $\tb\in \cN_{s_0}^{m_2,\delta}$, 
one has
\begin{align}
\cOpbw{\ta}\circ \cOpbw{\tb} = 
\cOpbw{\ta\sha{\vr}\tb}+ \mathcal{Q}(\ta,\tb)[\cdot]\,,
\label{espansionecompo}
\end{align}
where the bilinear map 
\begin{align}
\mathcal{Q}: \cN_{s_0}^{m_1}\times \cN_{s_0}^{m_2}
&\rightarrow 
\mathcal{L}\big(H^{s}(\mathbb{T}^{2};\mathbb{C}); H^{s+{\delta}\vr-m_1-m_2}(\mathbb{T}^{2};\mathbb{C}) \big)\,,
\qquad \forall s\in \R\,,
\nonumber
\\
(\ta,\tb) &\mapsto \mathcal{Q}(\ta,\tb)[\cdot]
\label{def:Q}
\end{align}
satisfies 
\begin{equation}\label{calma1}
\|\mathcal{Q}(\ta,\tb)[h]\|_{s-m_1-m_2+{\delta}\vr}\lesssim_{s}
|\ta|_{m_1,s_0}|\tb|_{m_2,s_0}
\|h\|_{{s}}\,,
\qquad 
\forall\, h\in H^{s}(\T^2;\C)\,.
\end{equation}
\end{lemma}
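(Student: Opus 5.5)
The plan is to work directly on the Fourier side, using the explicit formula \eqref{BWnon}. For $h\in H^s$ the $j$-th coefficient of $\cOpbw{\ta}\cOpbw{\tb}h$ is
\[
\sum_{k,\ell}\chi\big(j-\ell,\tfrac{j+\ell}{2}\big)\hat\ta\big(j-\ell,\tfrac{j+\ell}{2}\big)\,\chi\big(\ell-k,\tfrac{\ell+k}{2}\big)\hat\tb\big(\ell-k,\tfrac{\ell+k}{2}\big)h_k .
\]
The cut-offs force $|j-\ell|\le\delta_0\langle\ell\rangle$ and $|\ell-k|\le\delta_0\langle k\rangle$. Hence all of $j,\ell,k$ are comparable, and the total frequency shift $j-k$ is small compared with $\langle k\rangle$.

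In these variables, the kernel of $\cOpbw{\ta\sha{\vr}\tb}$ is the Fourier transform of the Moyal-type expansion \eqref{espansione2}. This expansion comes from Taylor expanding, in the $\xi$-variables, the two symbol evaluations $\hat\ta(\,\cdot\,,\frac{j+\ell}{2})$ and $\hat\tb(\,\cdot\,,\frac{\ell+k}{2})$ around the common midpoint $\frac{j+k}{2}$. The offsets are $\pm\frac{\ell-k}{2}$ and $\pm\frac{j-\ell}{2}$, and these become the $D_y$, $D_x$ factors in $\sigma$.

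The first step is to perform this Taylor expansion to order $\vr$ with integral remainder. The terms of order $n<\vr$ reproduce exactly $p_n(\ta,\tb)$ in \eqref{def:pk}. The discrepancy between the product of cut-offs $\chi\chi$ and the cut-off $\chi\big(j-k,\frac{j+k}{2}\big)$ of the composed symbol must be handled separately. On the region where these differ, $|j-k|\gtrsim\langle k\rangle$. There one trades $x$-derivatives of $\ta,\tb$ (up to $s_0$ of them) for decay $\langle j-\ell\rangle^{-s_0}\langle\ell-k\rangle^{-s_0}$, which yields a gain of $\langle k\rangle^{-(s_0-2)}\ge\langle k\rangle^{-\delta\vr}$ once $s_0$ is large. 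This is where the freedom to choose $s_0>\vr$ is spent.

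The second step is to bound the Taylor remainder. It involves $\partial_\xi^\alpha\hat\ta$ and $\partial_\xi^\beta\hat\tb$ with $|\alpha|+|\beta|=\vr$, evaluated at intermediate points, multiplied by $(\ell-k)^\alpha(j-\ell)^\beta$. By \eqref{stima:symbols} the symbols contribute $\langle\xi\rangle^{m_1+m_2-\delta\vr}$, since the intermediate points are comparable to $\langle k\rangle$. The polynomial factors are absorbed by the decay $\langle j-\ell\rangle^{-N}\langle\ell-k\rangle^{-N}$ obtained from $x$-regularity, with $N\le s_0-\vr$.

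The resulting kernel $K(j,k)$ then satisfies
\[
|K(j,k)|\lesssim |\ta|_{m_1,s_0}|\tb|_{m_2,s_0}\langle k\rangle^{m_1+m_2-\delta\vr}\langle j-k\rangle^{-2-}.
\]
Schur's lemma, together with $\langle j\rangle\sim\langle k\rangle$ on the support, gives \eqref{calma1}. Bilinearity of $\mathcal Q$ is immediate from the construction.

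The main obstacle is the bookkeeping of regularity, because the symbols have only finite regularity $s_0$ in both $x$ and $\xi$, and each $\xi$-derivative costs one $x$-derivative in the seminorm \eqref{seminormSimbo}. One must check that after taking $\vr$ derivatives in $\xi$ there remain at least $2+$ derivatives in $x$ to get summable kernel decay in dimension two. This is why one needs $s_0>\vr+2$ rather than just $s_0>\vr$, consistent with \Cref{thm:action} requiring $s_0>2$. It is also why the gain is only $\delta\vr$ rather than $\vr$: each $\xi$-derivative gains only $\langle\xi\rangle^{-\delta}$.
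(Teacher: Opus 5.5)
Your proposal is the standard Fourier-side proof of the classical composition theorem, which is exactly what the paper relies on (it gives no argument of its own, only a reference to Lemma~2.18 of \cite{FM2022}), and the argument is correct apart from one misstated step. In the cut-off step, where $\chi(j-\ell,\tfrac{j+\ell}{2})\chi(\ell-k,\tfrac{\ell+k}{2})\neq\chi(j-k,\tfrac{j+k}{2})$ you only have $\max\{|j-\ell|,|\ell-k|\}\gtrsim\langle k\rangle$, not $|j-k|\gtrsim\langle k\rangle$ (the convolution giving the Fourier coefficients of $p_n(\ta,\tb)$ contains terms with $j-\ell\approx-(\ell-k)$ both large); moreover, because the Taylor terms carry weights $(j-\ell)^\beta(\ell-k)^\alpha$ of degree $<\vr$, the gain there is weaker than $\langle k\rangle^{-(s_0-2)}$, though still $\lesssim\langle k\rangle^{-\delta\vr}$ once $s_0$ is a bit larger than $\vr+2$, which your freedom in choosing $s_0$ allows.
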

\begin{proof}
The result is the classical composition Theorem of para-differential operators. One can see Lemma $2.18$ in \cite{FM2022} for a proof in the very same setting of the present paper. 
\end{proof}
\begin{remark}
We make the following remarks.
\begin{itemize}
    \item{\bfseries Composition of two Fourier multipliers.} \label{rem:compo_Fou}   If $\ta\equiv \ta(\xi)$ and $\tb\equiv\tb(\xi)$ are Fourier multipliers, the smoothing remainder in \eqref{espansionecompo} vanishes, namely 
    \begin{align*}
        \cQ(\ta(\xi),\tb(\xi))\equiv 0\,.
    \end{align*}
    Moreover, if $\tb=\tb(\x)$ is such that $\pa_{\x}^{3}\tb(\x)\equiv0$, one has, for $\rho\geq 3$,
    \[
   \mathcal{Q}(a(x,\x),\tb(\xi))\equiv0\,. 
    \]

\item {\bfseries Composition of three Bony-Weyl operators.}
  It follows directly from \eqref{espansionecompo} that 
  there is $s_0>2\vr$ such that, for symbols 
  $\ta\in \cN_{s_0}^{m_1,\delta}$, 
  $ \tb\in \cN_{s_0}^{m_2,\delta}$ and 
  $\tc\in  \cN_{s_0}^{m_3,\delta}$ one has
\begin{equation*}
    \Opbw{\ta}\Opbw{\tb}\Opbw{\tc}= 
    \Opbw{\ta\sha{\vr} \tb\sha{\vr} \tc}+  \cQ^{(\mathtt{t})}(\ta,\tb,\tc)\,,
    \end{equation*}
    where 
    \begin{equation*}
    \begin{aligned}
   &\ta\sha{\vr} \tb\sha{\vr} \tc:= 
   \tfrac{1}{2}( \ta\sha{\vr}(\tb\sha{\vr} \tc)
   + (\ta\sha{\vr} \tb)\sha{\vr} \tc)\,, 
   \\
&\cQ^{(\mathtt{t})}(\ta, \tb,\tc):=
\tfrac12\big(\Opbw{\ta}\cQ(\tb,\tc) 
+ \cQ(\ta, \tb\sha{\vr} \tc)+ \cQ(\ta,\tb)\Opbw{\tc} 
+ \cQ(\ta\sha{\vr} \tb,\tc)\big)\,.
\end{aligned}
\end{equation*}
Combining \eqref{calma1} and \eqref{actionSob},  the trilinear map 
\begin{align*}
\mathcal{Q}^{(\mathtt{t})}: 
\cN_{s_0}^{m_1}
\times \cN_{s_0}^{m_2}\times \cN_{s_0}^{m_3}
&\rightarrow 
\mathcal{L}\big(H^{s}(\mathbb{T}^{2};\mathbb{C}); 
H^{s+{\delta}\vr-m_1-m_2-m_3}(\mathbb{T}^{2};\mathbb{C}) \big)\,,
\qquad \forall s\in \R\,,
\nonumber
\\
(\ta,\tb,\tc) &\mapsto 
\mathcal{Q}^{(\mathtt{t})}(\ta,\tb,\tc)[\cdot]
\end{align*}
satisfies 
\begin{equation*}
\|\mathcal{Q}^{(\mathtt{t})}(\ta,\tb,\tc)[h]
\|_{s-m_1-m_2-m_3+{\delta}\vr}
\lesssim_{s}
|\ta|_{m_1,s_0}|\tb|_{m_2,s_0}|\tc|_{m_3,s_0}
\|h\|_{{s}}\,,
\qquad 
\forall\, h\in H^{s}(\T^2;\C)\,.
\end{equation*}
Applying twice \eqref{prop:ov}, the symbol $\ta\sha{\vr}\tb \sha{\vr}\tc$ belongs to $ \cN_{s_0-2\vr}^{m_1+m_2+m_3}$ and  satisfies the algebraic property
\begin{align}
\overline{\ta\sha{\vr} \tb\sha{\vr} \tc}
= \ov{\tc}\sha{\vr} \ov{\tb}\sha{\vr} \ov{\ta}\,.
\label{prop:ov3}
\end{align}
\end{itemize}
\end{remark}

Given two operators $A$ and $B$ we define 
\begin{align*}
    \mathrm{Ad}_B[A]:= \left[B, A\right]= BA-AB\,.
\end{align*}

	\begin{corollary}
		Let $\delta\in (0,1)$,  $m_1<\delta$, $m_2\in \mathbb{R}$, $s_0>1$. For any  $p\in \N$ and  $N\geq \frac{2 m_1 p+m_2+p}{\delta}$
	 there is $ \td:= \td(p,N)>pN$ such that if $g\in \cN_{s_0+\td}^{m_1}$,  $a\in \cN_{s_0+\td}^{m_2}$, one has the commutator expansion 
      \begin{align*}
            \mathrm{Ad}^p_{\Opbw{g}}\left[\Opbw{a}\right]= \Opbw{\mathtt{ad}_{\! N}^p(g)[a]}+ \cC_p(a,g)\,,
        \end{align*}
     where 
     $ \cC_p(a,g) \in \cL\left(H^s;
     H^{s-m_2- p m_1 +N\delta}\right)$ 
     for any $s\in \R$, with estimates 
    \begin{align}
\| \cC_p(a,g)\|_{\cL\left(H^s;
H^{s-m_2- p m_1 + \delta N}\right)}
\lesssim_s |g|_{m_1,s_0+\td}^p|a|_{m_2,s_0+\td}\,.
         \label{stima:Cp}
     \end{align}
In particular one has 
			\begin{align}			
    \left\|\mathrm{Ad}^p_{\Opbw{g}}\left[\Opbw{a}\right]v\right\|_{s-m_2-p m_1+\delta p}
    \lesssim_s 
    | g |^p_{m_1, s_0+\tq}| a |_{m_2, s_0+\tq} \| v \|_s\, .
    \label{stima:AdpG}
			\end{align}
            \end{corollary}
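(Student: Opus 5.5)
The proof goes by induction on $p$, using only the composition Lemma \ref{compoparapara0} and the action Lemma \ref{thm:action}. For $p=1$ apply \eqref{espansionecompo} with $\vr=N$ to both products $\Opbw{g}\Opbw{a}$ and $\Opbw{a}\Opbw{g}$. Their difference is
\[
\Opbw{g\sha{N}a-a\sha{N}g}+\cQ(g,a)-\cQ(a,g)\,.
\]
By \eqref{Moyal} and \eqref{def:ad_sym}, up to the sign convention of $\mathtt{ad}_N$ (which is fixed so that $\Opbw{\mathtt{ad}_N(g)[a]}$ is the principal part of $[\Opbw{g},\Opbw{a}]$), the symbolic part is $\Opbw{\mathtt{ad}_N(g)[a]}$. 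By \eqref{calma1} the two remainders map $H^s\to H^{s-m_1-m_2+\delta N}$ with bound $|g|_{m_1,s_0}|a|_{m_2,s_0}$. This gives $\cC_1$, with $\td\geq N$ needed so that the seminorms with $s_0+\td$ derivatives control everything.

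For the inductive step, write
\[
\mathrm{Ad}^{p}_{\Opbw{g}}[\Opbw{a}]=\mathrm{Ad}_{\Opbw{g}}\big[\Opbw{a_{p-1}}+\cC_{p-1}\big],\qquad a_{p-1}:=\mathtt{ad}_N^{p-1}(g)[a].
\]
By Remark \ref{espansEsplic}, $a_{p-1}\in\cN^{m_2+(p-1)(m_1-\delta)}_{s_0+\td-(p-1)N}$, with seminorm bounded by $|g|^{p-1}|a|$; this is why $\td>pN$ is needed, since each bracket costs $N$ derivatives in $x$. Apply the case $p=1$ to $\Opbw{a_{p-1}}$. This produces $\Opbw{\mathtt{ad}^p_N(g)[a]}$ plus a remainder gaining $\delta N$ relative to order $m_1+m_2+(p-1)(m_1-\delta)\leq m_2+pm_1$. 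The term $[\Opbw{g},\cC_{p-1}]$ is bounded by composing: $\Opbw{g}$ loses $m_1$ by \eqref{actionSob} and $\cC_{p-1}$ loses $m_2+(p-1)m_1-\delta N$. The total loss is $m_2+pm_1-\delta N$, as required for \eqref{stima:Cp}. Multilinearity of the bounds gives the factor $|g|^p|a|$.

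For \eqref{stima:AdpG}, estimate the symbolic part with Lemma \ref{thm:action}. Its order is $m_2+p(m_1-\delta)$, so it maps $H^s\to H^{s-m_2-pm_1+p\delta}$. The remainder $\cC_p$ gains $\delta N$. The hypothesis on $N$ guarantees $\delta N\geq p\delta$ (indeed $N\delta\geq 2m_1p+m_2+p$; if some of these quantities are negative one simply takes $N\geq p$ as well). Hence $\cC_p$ maps into the same space, and the estimate follows.

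The main obstacle is bookkeeping the regularity in $x$, so that each iterated bracket stays within the finite-regularity classes $\cN_\sigma^{m,\delta}$ where the composition lemma applies. This forces the choice of $\td$, taken as $pN$ plus the $s_0$ required by Lemma \ref{compoparapara0} for $\vr=N$. It also requires checking that the Moyal remainders keep the order decrement $\delta$ per bracket, since $g$ has only $\delta$-type decay in $\xi$.
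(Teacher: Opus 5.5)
Your proposal is correct and is essentially the paper's proof. Both argue by induction on $p$: the base case comes from Lemma \ref{compoparapara0} with $\vr=N$, and the inductive step applies the base case to $\mathtt{ad}^{p-1}_N(g)[a]$ and bounds $[\Opbw{g},\cC_{p-1}]$ by composition, with the same regularity budget (the paper takes $\td(p,N)=(p-1)N+\tq(N)$). Your remarks on the sign convention for $\mathtt{ad}_N$, and on choosing $N\geq p$ to deduce \eqref{stima:AdpG}, address points the paper leaves implicit.
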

	\begin{proof}
The proof is by induction on $p$.

\noindent { \bfseries Case $p=1$.} Using \eqref{espansionecompo} with $ \vr \leadsto N $ and recalling \eqref{Moyal}, \eqref{def:ad_sym},  we get
\begin{align*}
    \Ad_\Opbw{g}\left[\Opbw{a}\right]
    = \Opbw{\mathtt{ad}_{\! N}(g)[a]}+\cC_1(a,b)\,,
    \qquad \cC_1(a,g):=\cQ(a,g)-\cQ(g,a)\,.
\end{align*}
Thanks to \eqref{def:Q} and \eqref{calma1} we obtain estimate \eqref{stima:Cp} with $p=1$.

\noindent {\bfseries Case $p\implies p+1$.} For $N\geq \frac{2 m_1 (p+1)+m_2+p+1}{\delta}$ and consider the parameter $ \tq(N)$ obtained applying \Cref{compoparapara0} with $\vr \leadsto N$. We set $ \td\equiv \td(p+1,N):=  Np+\tq(N)$. 
First we note that if $a\in \cN_{s_0+\td }^{m_1}$ and $g\in \cN_{s_0+\td}^{m_1}$  then $\mathrm{ad}_{\! N}^p(g)[a]\in \cN_{s_0+\tq}^{pm_1 + m_2- p \delta}$ with estimates
\begin{align}
    \left| \mathrm{ad}_{\! N}^p(g)[a]
    \right|_{pm_1+m_2 -\delta p, s_0+\tq}
    \lesssim | g |_{m_1, s_0+\td}^p | a |_{m_2,s_0+\td}\,. 
    \label{stima:adp}
\end{align}
Then we expand $\Ad_\Opbw{g}^{p+1}\left[\Opbw{a}\right]$ using \Cref{compoparapara0} and the inductive hypothesis 
\begin{align*}
\Ad_\Opbw{g}^{p+1}\left[\Opbw{a}\right]&= 
\left[\Opbw{g}, \Ad_\Opbw{g}^{p}\left[\Opbw{a}\right]\right]
\\
&= 
\left[\Opbw{g}, 
\Opbw{\mathtt{ad}_{\! N}^p(g)[a]}\right]
+\left[\Opbw{g}, \cC_p(g,a)\right]
\\
&= \Opbw{\mathtt{ad}_{\! N}^{p+1}(g)[a]}+ \cC_{p+1}(g,a)\,,
\end{align*}
where 
\[
\cC_{p+1}(g,a):= 
\cQ\left(g,\mathtt{ad}_{\! N}^p(g)[a]\right)
-\cQ\left(\mathtt{ad}_{\! N}^p(g)[a],g\right)
+ \left[\Opbw{g}, \cC_p(g,a)\right]\,.
\]
Then combining \eqref{stima:Cp} for $\cC_p$, \eqref{actionSob} for $\Opbw{g}$, \eqref{calma1} and \eqref{stima:adp} we eventually conclude  that $\cC_{p+1}(g,a)$ satisfies \eqref{stima:Cp} with $p\leadsto p+1$.
	\end{proof}
\subsection{Multilinear symbols and operators}	

	We define  the class of symbols which we will use throughout the paper. 
	They correspond to the autonomous symbols of Definition 3.3 in \cite{BD2018}, where the dependence on time enters only through the function $U=U(t)$. In view of this, we do not need to keep track on the regularity indexes in time and we fix $K = K' =0$ with respect to Definition 3.3 of \cite{BD2018}.
	\begin{definition}[Symbols]\label{def:sfr}
		Let $ m  \in \R $, $\delta\in [0,1]$, $N \in \N_0$, $p \in \N$, $r>0$.
		\begin{enumerate} 
			\item {\bfseries $p$-Homogeneous symbols.} We denote by $\wt {\Gamma}^{m,\delta}_p$ 
			the space of $\zak$-dependent symbols $\sym{a}{p}{m}(\zak; x,\x)$ with the following property: there is $s_0>0$ such that  for any $\sigma\geq s_0$ there is $\mu \equiv \mu(\sigma)\geq 0$ such that $a_p(\zak; x,\x)$ is a  $p$-homogeneous polynomial 
           \begin{align}\label{def:pol_sym}
               H_\R^{\s+\mu}\ni \zak \mapsto
 \sym{a}{p}{m}(\zak; x,\x)\in \mathcal{N}_{\s}^{m,\delta}\,, 
           \end{align}
with estimates 
            \begin{align}
                \left| \sym{a}{p}{m}(\zak; x, \xi)\right|_{m,\sigma}\lesssim_\sigma \| \zak\|_{\sigma+\mu}^p.
                \label{bound:homosym}
            \end{align}
			Moreover we require the  translations invariant property
        \begin{align}
             \tau_\upsilon (\sym{a}{p}{m})(\zak;x,\xi):=\sym{a}{p}{m}(\zak;x+\upsilon,\xi) = \sym{a}{p}{m}(\tau_\upsilon\zak;x,\xi)\,, 
             \qquad \forall \upsilon \in \R^2\,,
             \label{def:sym_mome}
        \end{align}
        where the operator $\tau_\upsilon$ is defined in \eqref{tautras}.
			\item {\bfseries Non-homogeneous symbols.} We denote by $\Gamma_{\geq N}^{m,\delta}[r]$ 
			the space of symbols  
\[
B_{s_0+\mu_0,\R}(r)\ni \zak \mapsto
 \sym{a}{\geq N}{m}(\zak; x,\x)\in \mathcal{N}_{s_0}^{m,\delta}\,, \qquad 
\mathrm{for\,\,some\,\,}s_0>0 \ \text{and } \mu_0\geq 0,
\] 
 having the following property: for any $ \sigma \geq s_0$ there exist
 $\mu\geq 0$ and $0<r'\leq r$ such that  for any $ \zak \in {H}_\R^{\sigma +\mu}\cap B_{s_0,\R}(r')$ one has  
			\begin{align}
			    |\sym{a}{\geq N}{m}(\zak; x, \xi)|_{m,\sigma}
                \lesssim_{\s} \| \zak\|_{\sigma +\mu}^N\,.
			     \label{stima:nonhom}
            \end{align}
            
            \item {\bfseries Symbols.} Given positive integers $0\leq p\leq N$, we denote by 
            $\Sigma\Gamma_{p}^{m,\delta}[r,N]$,
            the class of symbols which admits the expansion
            \[
                    \sym{a}{}{m}(\zak;x,\xi)= \sum_{q=p}^{N-1} \sym{a}{q}{m}(\zak;x,\xi)+ \sym{a}{\geq N}{m}(\zak;x,\xi)\,, 
                    \qquad  a_q\in \wt \Gamma^{m,\delta}_q, \quad a_{\geq N} \in \Gamma^{m,\delta}_{\geq N}[r]\,.
             \]

\item {\bfseries Functions}.
		We  denote by $\wt \cF_p$ (respectively $\cF_{\geq N}[r]$)  the subspace of $\wt \Gamma^{0,1}_p$ (respectively $\Gamma^{0,1}_{\geq N}[r]$)   made of those
		symbols which are independent of $\xi$.		
		\end{enumerate}
	\end{definition}
\begin{remark}
    The space $H^{\s+\mu}_\R$ is a \emph{real} Hilbert space, whereas $\cN_\s^{m,\delta}$ is a complex Banach space. Accordingly, the polynomial map in \eqref{def:pol_sym} should be understood as a $p$-homogeneous polynomial in the real sense.
\end{remark}
\begin{remark}
    Let $ \tau_\upsilon$ be the translation operator in \eqref{tautras}. Then, by definition \eqref{BWnon} and by the fact that 
\begin{align}
    \tau_\upsilon (e^{\ii k\cdot x}) (x)= e^{\ii k \cdot \upsilon} e^{\ii k \cdot x}, \qquad \forall k \in \Z^2\,, \quad x \in \T^2\,,\quad \upsilon\in\R^2\,,
    \label{trans:fou}
\end{align}
one has 
    \begin{align*}
        \tau_\upsilon \Opbw{a(x,\xi)}= \Opbw{(\tau_\upsilon a) (x,\xi)}\tau_\upsilon\,.
    \end{align*}
    As a consequence if $\sym{a}{p}{m}$ is a translation invariant symbol,  the operator $A(\zak):= \Opbw{\sym{a}{p}{m}(\zak;x,\xi)}$ is translation invariant, namely 
    \begin{align*}
        \tau_\upsilon A(\zak)= A(\tau_\upsilon \zak)\tau_\upsilon
    \end{align*}
\end{remark}

We also need the following definition.

\begin{definition}{\bfseries (Real symbols).}
      We say that a symbol  $a(\zak;x,\xi)\in \Sigma\Gamma_{p}^{m,\delta}[r,N] $ 
      is \emph{real} if it is real valued for any
		$ \zak \in B_{s_0,\R}(I;r)$.
		We denote  by $\wt \cF^\R_p$ (respectively $\cF^\R_{\geq N}[r]$)  the subspace of  functions in 
		$\wt \cF_p$ (respectively $\cF_{\geq N}[r]$)  which are real valued.
\end{definition}	
	
	\begin{remark}
We shall deal with symbols depending on the real variables 
$(\eta, \psi) \in H^{s_0}(\T^2; \R^2)$.
With a slight abuse of notation, we shall write 
$a(\eta,\psi; x,\xi) \in \Sigma \Gamma_p^m[r,N]$ whenever 
$\tilde a(\zak;x,\xi):=a(\frac12(\zetina+\ov{\zetina}), \frac{1}{2\ii}(\zetina-\ov{\zetina}); x,\xi)$ 
belongs to $\Sigma \Gamma_p^m[r,N]$ according to \Cref{def:sfr}.

If $a(\eta,\psi; x,\xi)$ fulfills \eqref{bound:homosym} or \eqref{stima:nonhom} with $\zak \leadsto (\eta,\psi)$ then $\tilde a(\zak;x,\xi)$  defined above is a symbol in $ \wt \Gamma_p^m$ or $\Gamma_{\geq p}^m[r]$ respectively.
\end{remark}
	\begin{remark}\label{rmk:inclusioneclassi}
	    Some comments are in order:
        \begin{itemize}
        \item If $m\leq m'$ then $\Sigma\Gamma_p^{m}[r,N]\subseteq \Sigma\Gamma_p^{m'}[r,N]$;
            \item If $a\in \Sigma\Gamma_p^m[r,N]$ and $b\in \Sigma\Gamma_{p'}^{m'}[r,N]$ then, recalling \eqref{def:pk}, one has 
            \begin{align*}
    p_n(a,b) \in \Sigma\Gamma_{p+p'}^{m+m'-n}[r,N]\,,
\qquad a\sha{\vr} b \in \Sigma\Gamma_{p+p'}^{m+m'}[r,N]\,, 
\quad \forall n \in \N_0\,.
            \end{align*}
            Indeed we note that $ \tau_\upsilon p_n(a,b) = p_n(\tau_\upsilon a, \tau_\upsilon b)$.
        \end{itemize}
	\end{remark}

		We now define the classes of remainders that we use in our procedure. 
	\begin{definition}\label{def:smoothing}
		Let $\vr\geq 0$, $ \vare_0>0$ and $r>0$. 
		
		\begin{itemize}
			\item {\bfseries $1$-homogeneous smoothing operators: }  We denote by
			$\wt \cR^{-\vr}_1$ the class of 
			$\zak$-dependent operators $R_1(\zak)$ with the following property: there is $s_0>0$ such that, for any $s\geq s_0$, the map 
            \begin{align*}
                H^s_\R \ni \zak \mapsto R_1(\zak) \in \cL\left(\dot H^s(\T^2;\C);\dot H^{s+\vr}(\T^2;\C)\right),
            \end{align*}
           is linear and continuous, with estimate
                    \begin{align}
                        \| R_1(\zak)v \|_{s+\vr} \lesssim_s \left(\| \zak\|_{s_0}\| v\|_s+ \| \zak\|_{s}\| v\|_{s_0}\right), \qquad \forall \zak \in H^s_\R, \ v \in \dot H^s(\T^2;\C).
                        \label{bound:smoo}
                    \end{align}
                    Moreover we require the translation invariant property 
            \begin{align}
                \tau_\upsilon R_1(\zak)= R_1(\tau_\upsilon \zak)\tau_\upsilon, \qquad \forall \upsilon \in \R^2.
                \label{tra:smoo}
            \end{align}
			\item {\bfseries Quadratic smoothing remainders:} We denote by $ \cR^{-\vr}_{\geq 2}[r]$ 
            the space of functions 
$ B_{\geq 2}(\zak): B_{s_0,\R}(r)  
\mapsto \cL(\dot H^s,\dot H^{s+\vr})$ for any 
$s\geq s_0$ with estimates 
\[
\| B_{\geq 2}(\zak)v \|_{s+\vr}
\lesssim_s 
\| \zak\|_{s_0}^2 \| v\|_s+ \| \zak\|_{s_0} 
\| \zak \|_{s}\| v\|_{s_0}\,, 
\quad \text{for any } \  
\zak \in B_{s_0,\R}(r)\,, \ v \in \dot H^{s}\,.
\]
		\end{itemize}
	\end{definition}
	The following lemma characterizes a $1$-homogeneous smoothing operator 
    \begin{lemma}
    \label{lem:smoohomo}
        Let $\vr\geq 0$ and $R_1(\zak)$ a $1$-homogeneous, smoothing operator in $ \wt \cR^{-\vr}_1$. Then one has the Fourier expansion
			\begin{align}
				\label{Mp}
				R_1(\zak)v:= 
				\sum_{\substack{  j, k \in \Z^2\setminus \{0\} 
                \\ \sigma\in \{\pm\} } }
				R_{k, j}^{\sigma} \,
                \zetina_k^\sigma\,
                v_{j}  \, {e^{\ii (\sigma k+j) \cdot  x} }\,, 
			\end{align}  
			that satisfy the following. There are $\mu\geq0$, $C>0$ such that 
			for any $k,j \in \Z^2 $,   $  \sigma\in \{ \pm \} $,  one has 
			\be\label{smoocara}
			|R_{k, j }^{ \sigma} |\leq C \, 
			\mathrm{min}\{ |k|,|j|\}^{\mu}\, \max\{|k|,|j|\}^{-\vr}   \, . 
			\ee
            In general, if a family of complex coefficients $\{R_{j,k}^\sigma\}_{j,k,\sigma}$ satisfies the estimate \eqref{smoocara} for some $\mu > 0$, then the expression in \eqref{Mp} defines a $1$-homogeneous, smoothing operator belonging to $\wt{\cR}_1^{-\vr}$.
    \end{lemma}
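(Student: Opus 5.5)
The plan is to prove the two directions separately. In both, the argument rests on three ingredients: linearity in $\zak$, translation invariance, and testing the estimate \eqref{bound:smoo} on single Fourier modes. For the forward direction, recall that $\zak=(\zetina,\ov{\zetina})\in H^s_\R$, so a real-linear map in $\zak$ can be written as a complex-linear map in $\zetina$ plus a complex-linear map in $\ov\zetina$. Expanding $\zetina=\sum_k \zetina_k e^{\ii k\cdot x}$ and $v=\sum_j v_j e^{\ii j\cdot x}$, continuity and linearity give
\[
R_1(\zak)v=\sum_{k,j,\sigma}\zetina_k^\sigma v_j\,R_1(e^{\ii\sigma k\cdot x}\text{-mode})[e^{\ii j\cdot x}].
\]
It therefore suffices to compute $R_1$ on a single $\zetina$-mode and a single $v$-mode. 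The sums run over nonzero indices, since we work in zero-average spaces, which is consistent with \eqref{comm_copbw}.

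The key step is translation invariance \eqref{tra:smoo}. Take $\zak$ generated by one mode $\zetina_k^\sigma e^{\ii\sigma k\cdot x}$ and take $v=e^{\ii j\cdot x}$. By \eqref{trans:fou}, $\tau_\upsilon$ multiplies these by $e^{\ii\sigma k\cdot\upsilon}$ and $e^{\ii j\cdot\upsilon}$ respectively. Write $w:=R_1(\zak)v$. Then \eqref{tra:smoo} gives
\[
\tau_\upsilon w=e^{\ii(\sigma k+j)\cdot\upsilon}\,w \qquad \forall\,\upsilon\in\R^2 .
\]
Comparing Fourier coefficients, each mode $w_\ell$ with $\ell\neq\sigma k+j$ must vanish. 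Hence $w=R^\sigma_{k,j}\zetina^\sigma_k e^{\ii(\sigma k+j)\cdot x}$ for some scalar $R^\sigma_{k,j}$, and summing over modes gives \eqref{Mp}.

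For the bound \eqref{smoocara}, insert these single modes into \eqref{bound:smoo}. The left-hand side is $\langle\sigma k+j\rangle^{s+\vr}|R^\sigma_{k,j}|$ and the right-hand side is $\lesssim_s |k|^{s_0}|j|^{s}+|k|^{s}|j|^{s_0}$. I would split into two cases.

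\emph{Case $|k|\le|j|$.} If also $|k|\le|j|/2$, then $|\sigma k+j|\sim|j|$. Choosing $s=s_0$ gives $|R|\lesssim |k|^{s_0}|j|^{-\vr}$, which is \eqref{smoocara} with $\mu=s_0$.

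\emph{Case of comparable frequencies with cancellation.} This is the main obstacle: when $|k|\sim|j|$, the output frequency $\sigma k+j$ can be small. Here I would exploit the freedom to take $s$ large (fixing $s=s_0+\vr$, say). The right-hand side is then $\lesssim \max(|k|,|j|)^{s_0+s}$, and $\langle\sigma k+j\rangle\ge 1$ on the left. This gives $|R|\lesssim \max^{s_0+s}$. Since $\min\sim\max$ in this regime, we can write $\max^{s_0+s}\lesssim \min^{\mu}\max^{-\vr}$ with $\mu=s_0+s+\vr$.

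The case $|j|<|k|$ is symmetric. Taking $\mu$ as the largest of the exponents produced in the cases gives \eqref{smoocara}.

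For the converse, start from coefficients obeying \eqref{smoocara} and define $R_1$ by \eqref{Mp}. Real-linearity in $\zak$ and translation invariance are immediate from \eqref{trans:fou}, since each term carries the phase $e^{\ii(\sigma k+j)\cdot\upsilon}$. The work is the tame estimate. Write the $H^{s+\vr}$ norm of the output and use $\langle\sigma k+j\rangle^{s+\vr}\lesssim \max(|k|,|j|)^{s+\vr}$. In the region $|k|\le|j|$, \eqref{smoocara} gives a coefficient bound $\lesssim |k|^{\mu}|j|^{s}\,|\zetina_k||v_j|$. Summing over $k$ by Cauchy–Schwarz, $\sum_k |k|^{\mu}|\zetina_k|\lesssim\|\zetina\|_{s_0}$ as long as $s_0>\mu+1$. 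This yields, in Young-convolution form, the term $\|\zak\|_{s_0}\|v\|_s$. The region $|j|<|k|$ is symmetric and yields $\|\zak\|_s\|v\|_{s_0}$. This choice of $s_0$ (larger than $\mu+1$, where $1$ accounts for the dimension $d=2$) is where the threshold $s_0$ in \Cref{def:smoothing} is fixed.
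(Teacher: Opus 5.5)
Your proposal is correct and follows the paper's proof. Real-linearity plus translation invariance gives \eqref{Mp}; testing \eqref{bound:smoo} on single modes gives \eqref{smoocara}; and Young's inequality with $s_0>\mu+1$ gives the converse (the paper takes $s_0=2+\mu$). One precision: the translation step must be run on the $\C$-linear and $\C$-antilinear parts separately, as the paper does through the combination $\frac12[R_1(\cdot)-\sigma\ii R_1(\ii\,\cdot)]$, since $\tau_\upsilon$ does not multiply a real single-mode $\zak$ by a scalar phase.

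The only other difference is that the paper avoids your case split. It uses $s=s_0$ throughout together with $\max\{|k|,|j|\}\le 2\min\{|k|,|j|\}\,\langle\sigma k+j\rangle$, so the comparable-frequency regime is no real obstacle, and taking $s$ larger there gains nothing.
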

    \begin{proof}
We first prove that the $\R$-linearity of $R_1$ and the translation invariance property \eqref{tra:smoo} give the expression \eqref{Mp}, then we use the bound \eqref{bound:smoo} to obtain \eqref{smoocara}. Finally we prove the converse, stated in the final part of the statement. 
We start writing the Fourier expansions   
\begin{align*}
   R_1(\zak)v= \sum_{\ell \in \Z^2 \setminus \{ 0\}}  (R_1(\zak)v)_\ell e^{\ii \ell\cdot x}, \qquad \zak= \sum_{k \in \Z^2 \setminus \{ 0\}} \vect{\zetina_k e^{\ii k \cdot x}}{\ov{\zetina_k} e^{-\ii k \cdot x}}, \qquad  v=\sum_{j\in \Z^2\setminus \{0\}} v_j e^{\ii j\cdot x}.
\end{align*}
Using also the $\R$-linearity with respect to $ \zak$ and the $\C$-linearity with respect to $v$, we get 
\begin{align}
    R_1(\zak)v= 
    \sum_{\substack{\ell, k,\,j \in \Z^2 \setminus \{ 0\}\\ \sigma\in \{\pm\}}} M_{j, k, \ell}^\sigma 
    \zetina_k^\sigma v_j e^{\ii \ell \cdot x},
    \label{esp:proof}
\end{align}
where, for $\lambda \in \C$ we used the notation $ \lambda^+:=\lambda$, $\lambda^-:= \ov{\lambda}$, and 
\begin{align}
    M_{j, k, \ell}^\sigma:= \frac{1}{(2\pi)^2} \int_{\T^2} \frac12 \left[ R_1\left( \vect{e^{\ii k \cdot x }}{e^{-\ii k \cdot x }}\right)-\sigma \ii R_1\left( \vect{\ii e^{\ii k \cdot x }}{-\ii e^{-\ii k \cdot x }}\right)\right] e^{\ii j\cdot x}\, e^{-\ii \ell \cdot x} \, \di x.
    \label{R1coeff}
\end{align}
Note that to get \eqref{esp:proof} we also used that, by $\R$-linearity, one has 
\begin{align}
     R_1\left( \vect{\lambda e^{\ii k \cdot x }}{\ov{\lambda}e^{-\ii k \cdot x }}\right)-\sigma \ii R_1\left( \vect{\ii \lambda e^{\ii k \cdot x }}{-\ii \ov{\lambda} e^{-\ii k \cdot x }}\right)= \lambda^\sigma \left[ R_1\left( \vect{e^{\ii k \cdot x }}{e^{-\ii k \cdot x }}\right)-\sigma \ii R_1\left( \vect{\ii e^{\ii k \cdot x }}{-\ii e^{-\ii k \cdot x }}\right)\right].
     \label{ClinR1}
\end{align}
Using \eqref{R1coeff}, \eqref{trans:fou} and the $\C$-linearity in \eqref{ClinR1},
we obtain the restriction 
\begin{align}
    M_{j, k, \ell}^\sigma\not=0 \quad \implies \quad \ell= \sigma k + j.
    \label{Mjkell}
\end{align}
Then the expansion in \eqref{Mp} follows with 
\begin{align*}
    R_{j,k}^\sigma:= \left.M_{j, k, \ell}^\sigma\right|_{\ell=\sigma k + j}.
\end{align*}
Then, applying \eqref{bound:smoo} with $s=s_0$,  $ \zak\leadsto \vect{e^{\ii k \cdot x }}{e^{-\ii k \cdot x }}$ and $ \zak\leadsto \vect{\ii e^{\ii k \cdot x }}{-\ii e^{-\ii k \cdot x }}$, $v\leadsto e^{\ii j \cdot x}$ and Cauchy-Schwarz inequality to \eqref{Mjkell} with $\ell= \sigma k +j$, we obtain 
\begin{align*}
    |R_{j,k}^\sigma| \lesssim \langle\sigma k +j\rangle^{-(s_0+\vr)}  |k|^{s_0}|j|^{s_0}\leq 2^{2s_0+\vr} \mathrm{min}\{ |j|,|k|\}^{2s_0+\vr}\max\{ |j|,|k|\}^{-\vr},
\end{align*}
proving \eqref{smoocara} with $\mu:= 2 s_0+\vr$.

 \noindent
 Finally assuming \eqref{Mp} and the bound \eqref{smoocara} for the 
 coefficients $R_{k, j}^{\sigma}$, by  
 Young's inequality for convolutions of sequences, we get 
  \begin{align*}
 \| R_1(\zak)v \|_{s+\vr}^2 &=   \sum_{\ell \in \Z^2\setminus\{ 0\}}
|\ell|^{2(s+\vr)}
\left| \sum_{\substack{\sigma\in \{\pm\}\\ \sigma k+j=\ell}} 
R_{k, j}^{\sigma} \, \zetina_{k}^{ \sigma}\,  v_{j}\right|^2
\\
& \lesssim_s 
\sum_{\sigma\in \{\pm\}} \sum_{\ell \in \Z^2\setminus\{ 0\}}
\left| \sum_{ \sigma k+j=\ell}\mathrm{min}\{ |k|,|j|\}^{\mu}\, \max\{|k|,|j|\}^{s} 
|\zetina_{k}^{ \sigma}|\,  |v_{j}|\right|^2
\\
&\lesssim_s  
\sum_{\sigma\in \{\pm\}} \sum_{\ell \in \Z^2\setminus\{ 0\}}
\left| \sum_{  k+j=\ell} |k|^\mu |j|^{s-\vr} |\zetina_{\sigma k}^{ \sigma}|\,  |v_{j}|\right|^2
+ \sum_{\sigma\in \{\pm\}} \sum_{\ell \in \Z^2\setminus\{ 0\}}\left| 
\sum_{  k+j=\ell} |j|^\mu |k|^{s} |\zetina_{\sigma k}^{ \sigma}|\,  
|v_{j}|\right|^2
\\
&= 
\sum_{\sigma\in \{\pm\}} \||k|^\mu |\zetina_{k}^\sigma|\|_{\ell^1_k(\Z^2)}^2 \||k|^{s} v_k\|_{\ell^2_k(\Z^2)}^2+ \||k|^s|\zetina_{k}^\sigma|\|_{\ell^2_k(\Z^2)}^2 \||k|^{\mu} v_k\|_{\ell^1_k(\Z^2)}^2 
\\&
\lesssim_s 
\| \zak\|_{2+\mu}^2\| v\|_s^2+ \| \zak\|_s^2 \| v\|_{2+\mu}^2\,,
\end{align*}
proving that \eqref{Mp} defines a $1$-homogeneous smoothing operator satisfying \eqref{bound:smoo}  with $s_0:= 2+\mu$. Moreover the translation invariant property follows applying \eqref{trans:fou} to the expression \eqref{Mp}.
    \end{proof}

    \paragraph{Real to real matrices of operators} A $\zak$-dependent matrix of operators of the form  
     \begin{align*}
        \bA(\zak)= \begin{pmatrix}
            A^{+,+}(\zak) & A^{-,+}(\zak) \\ A^{+,-}(\zak)& A^{-,-}(\zak)
        \end{pmatrix},
    \end{align*}
    is said to be \emph{real to real} if 
    \begin{align*}
\ov{A^{\sigma , \sigma'}(\zak)}= 
A^{-\sigma , -\sigma'}(\zak)\,, 
\qquad \forall \sigma, \sigma'\in \{\pm\}\,, 
\  \zak= (\zetina, \ov{\zetina})^\top\,.
    \end{align*}
   A real to real operator leaves invariant the space of couple of complex functions $V=(v^+, v^-)$ such that $ \ov{v^+}=v^-$.
If $\bA(\zak)$ is a paradifferential matrix of real-to-real operators  then it has the form 
\begin{align*}
    \Opbw{\begin{bmatrix}a(x,\xi) &b(x,\xi)\\ \ov{b(x,-\xi)}& \ov{a(x,-\xi)}\end{bmatrix}}=\vOpbw{a(x,\xi)}+ \zOpbw{b(x,\xi)},
\end{align*}
where we used the notation 
\begin{equation}\label{def:vec_out}
\begin{aligned}
    \vOpbw{a(x,\xi)}&:= \Opbw{\begin{bmatrix}a(x,\xi) &0\\ 0& \ov{a(x,-\xi)}\end{bmatrix}}\,,
    \\
    \zOpbw{b(x,\xi)}&:= \Opbw{\begin{bmatrix}0 &b(x,\xi)\\ \ov{b(x,-\xi)}& 0\end{bmatrix}}\,.
\end{aligned}
\end{equation}

       \begin{remark}
       \label{rem:stima_homo}
        Let $\vr\geq 0$. In view of \Cref{lem:smoohomo}, if $\bR_1(\zak)$ is a matrix of real to real smoothing operators in $ \wt \cR^{-\vr}_1$ then it has the expansion 
   \begin{align}
       (\bR_1(\zak)V)^{\sigma}= 
				\sum_{\substack{  j, k \in \Z^2\setminus \{0\} \\ \sigma_1,\sigma_2,\sigma\in \{\pm\} } }
				R_{k, j}^{\sigma_1,\sigma_2,\sigma} \, \zetina_{k}^{ \sigma_1}\,  v_{j}^{\sigma_2}  \, {e^{\ii\sigma (\sigma_1 k+\sigma_2 j) \cdot  x} },\quad \text{with} \quad \ov{R_{k, j}^{\sigma_1,\sigma_2,\sigma}} =R_{k, j}^{-\sigma_1,-\sigma_2,-\sigma}.
                \label{esp:mat_smoo}
   \end{align}
   Note that, for $ \sigma_2, \sigma\in \{\pm\}$, the coefficients $R_{k, j}^{\sigma_1,\sigma_2,\sigma}$ coincide with the coefficients $R_{k,j}^{\sigma_1}$ in the expansion \eqref{Mp} relative to the smoothing operator $ (R_1(\zak))^{\sigma_2, \sigma}$.
        \end{remark}
        
   The following lemma provides the expansions of the compositions between some real to real operators. 
   \begin{lemma}[\textbf{Compositions}]
       Let $\vr \geq 0$, $m\in \R$, $\delta \in (0,1]$ and $ N\in \N$ with $N\geq \vr+m+\frac32$. If $\sym{g}{1}{m,\delta}=\sym{g}{1}{m,\delta}(\zak;x,\xi)$ is a $1$-homogeneous symbol in $\wt \Gamma_1^{m,\delta}$ then 
       \begin{align}
           &\cQ(\sym{g}{1}{m,\delta}, \Lambda(\xi))=\Opbw{\sym{g}{1}{m}}\Lambda(D)- \Opbw{\sym{g}{1}{m,\delta}\sha{N}\Lambda(\xi)}\in \wt \cR^{-\vr}_1,\label{eq:Gomega}
           \\
           &\cQ(\Lambda(\xi), \sym{g}{1}{m,\delta})=\Lambda(D)\Opbw{\sym{g}{1}{m,\delta}}-\Opbw{\Lambda(\xi)\sha{N}\sym{g}{1}{m,\delta}}\in \wt \cR^{-\vr}_1.
           \label{eq:omegaG}
       \end{align}
   \end{lemma}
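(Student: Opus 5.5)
We prove \eqref{eq:Gomega}; \eqref{eq:omegaG} is identical with the roles of the factors exchanged. The plan is to compute the remainder explicitly in Fourier variables, read off its matrix coefficients, and then invoke the characterization in \Cref{lem:smoohomo}. Since $g:=\sym{g}{1}{m,\delta}$ is $1$-homogeneous and translation invariant, the same argument as in \Cref{lem:smoohomo} gives a Fourier expansion
\[
\hat g(k,\xi)=\sum_{\sigma\in\{\pm\}} \zetina^\sigma_{\sigma k}\, G^\sigma(k,\xi), \qquad |\partial_\xi^\beta G^\sigma(k,\xi)|\lesssim \langle k\rangle^{\mu}\langle\xi\rangle^{m-\delta|\beta|}, \quad |\beta|\le \s,
\]
with $\s$ as large as we like, at the price of a larger $\mu$. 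This follows from \eqref{bound:homosym}: test on $\zak=(e^{\ii k\cdot x},e^{-\ii k\cdot x})$ and $(\ii e^{\ii k\cdot x},-\ii e^{-\ii k\cdot x})$, and use translation invariance to localize the $x$-Fourier mode at $\sigma k$.

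By \eqref{BWnon}, and because $\Lambda$ is a Fourier multiplier, the $(j,k')$ matrix element of $\Opbw{g}\Lambda(D)$ is $\chi(j-k',\tfrac{j+k'}{2})\hat g(j-k',\tfrac{j+k'}{2})\Lambda(k')$. The element of $\Opbw{g\sha{N}\Lambda}$ is computed from \eqref{def:pk}. Since $\Lambda$ is independent of $x$, only the terms $\frac{1}{n!}(\tfrac{-\ii}{2}\partial_x\cdot\partial_\xi)^n$ acting on $(g,\Lambda)$ survive, and the $x$-derivative on $g$ becomes multiplication by the Fourier variable $\ell=j-k'$. Thus the difference has coefficient
\[
\chi(\ell,\bar\xi)\,\hat g(\ell,\bar\xi)\Big[\Lambda(\bar\xi-\tfrac{\ell}{2})-\sum_{n<N}\frac{1}{n!}\big(-\tfrac{\ell}{2}\cdot\nabla\big)^n\Lambda(\bar\xi)\Big],\qquad \bar\xi=\tfrac{j+k'}{2},
\]
which is exactly a Taylor remainder of order $N$ of $\Lambda$ at $\bar\xi$.

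On the support of $\chi$ we have $|\ell|\le\delta_0\langle\bar\xi\rangle$, so the segment between $\bar\xi$ and $\bar\xi-\ell/2$ stays at distance $\sim\langle\bar\xi\rangle$ from the origin. The symbol estimates $|\partial^N\Lambda(\xi)|\lesssim\langle\xi\rangle^{3/2-N}$ then bound the bracket by $|\ell|^N\langle\bar\xi\rangle^{3/2-N}$; the singularity of $\Lambda$ at $\xi=0$ is irrelevant, by the smoothing-out convention of \Cref{rem:symbols_modozero}. Combining this with the bound on $G^\sigma$, the coefficients $R^\sigma_{k,j}$ in the form \eqref{Mp} satisfy
\[
|R^\sigma_{k,j}|\lesssim \langle k\rangle^{\mu+N}\langle j\rangle^{m+3/2-N}.
\]
Since $|k|\lesssim|j|$ on the support of $\chi$, we have $\min\{|k|,|j|\}=|k|$ and $\max\{|k|,|j|\}\sim|j|$. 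The hypothesis $N\ge\vr+m+\tfrac32$ then gives \eqref{smoocara} with $\mu\leadsto\mu+N$. The converse part of \Cref{lem:smoohomo} yields membership in $\wt\cR^{-\vr}_1$, and translation invariance is inherited from $g$. The zero mode is harmless by \eqref{comm_copbw}.

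The main obstacle is the first step: extracting the coefficient bounds on $G^\sigma$, including the $\xi$-derivatives needed for the Taylor argument, from the abstract estimate \eqref{bound:homosym}. This requires only finitely many $\xi$-derivatives, consistent with the finite regularity class $\cN^{m,\delta}_\s$. If it proves awkward, I would instead use \Cref{compoparapara0}, which gives an operator bound with gain $\delta N$. For a Fourier multiplier, however, the exact gain is $N$ rather than $\delta N$, as the Taylor computation above shows. I would still check the tame form \eqref{bound:smoo} directly through the coefficient route, since that is what \Cref{lem:smoohomo} requires.
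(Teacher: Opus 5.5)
Your argument is correct, but it is not the route the paper takes. The paper's proof is two lines. \Cref{compoparapara0} with truncation $N$ shows that $\cQ(g,\Lambda)$ and $\cQ(\Lambda,g)$ are smoothing. Then \eqref{calma1} combined with the seminorm bound \eqref{bound:homosym} gives $\|\cQ(g,\Lambda)v\|_{s+\vr}\lesssim_s |g|_{m,s_0}\|v\|_s\lesssim_s \|\zak\|_{s_0+\mu}\|v\|_s$, which is \eqref{bound:smoo}. No matrix coefficients are computed and \Cref{lem:smoohomo} is not used.

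You instead identify the remainder's entries exactly as $\chi(\ell,\bar\xi)\hat g(\ell,\bar\xi)$ times the order-$N$ Taylor remainder of $\Lambda$, and conclude through the converse part of \Cref{lem:smoohomo}. What this buys is sharpness. Read literally, \Cref{compoparapara0} is stated with a common $\delta$ for both symbols and yields only the gain $\delta N-m-\frac32$. That reaches $\vr$ under the hypothesis $N\geq\vr+m+\frac32$ only if $\delta=1$, yet the lemma is later used with $\delta\in(\frac78,1)$ (proof of \Cref{prop:NF_linear}). Your computation shows why the stated hypothesis is still the right one: in $p_n(g,\Lambda)$ every $\xi$-derivative falls on $\Lambda$, which is a $\delta=1$ symbol, so the gain is exactly $N$ for every $\delta\in(0,1]$. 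The paper's route is shorter; yours is the one that actually matches the statement as written.

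Two small points.
\begin{itemize}
\item The step you flag as the main obstacle is not one. The Taylor remainder involves only derivatives of $\Lambda$, so you need only the pointwise bound $|\hat g(\ell,\xi)|\lesssim\langle\ell\rangle^{\mu}\langle\xi\rangle^{m}\sum_{\sigma}|\zetina^{\sigma}_{\sigma\ell}|$. This follows from the $L^\infty$ part of \eqref{bound:homosym} tested on single modes, and no $\xi$-derivatives of $G^\sigma$ are required.
\item The operator generating $p_n(g,\Lambda)$ is $\frac{1}{n!}(\frac{\ii}{2}\partial_x\cdot\partial_\xi)^n$, not the version with $-\frac{\ii}{2}$. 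Your final expression $(-\frac{\ell}{2}\cdot\nabla)^n\Lambda(\bar\xi)$ is nonetheless correct and consistent with the exact entry $\Lambda(\bar\xi-\frac{\ell}{2})$.
\end{itemize}
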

   \begin{proof}
       Thanks to \Cref{compoparapara0} we know that $\cQ(\sym{g}{1}{m,\delta}, \Lambda(\xi))$ and $\cQ(\Lambda(\xi), \sym{g}{1}{m,\delta})$ are smoothing operators in $\cL(H^s;H^{s+\vr})$. The estimate \eqref{bound:smoo} follows combining \eqref{calma1} with \eqref{bound:homosym}. 
   \end{proof}
   \subsection{$\tR$-localized symbols}

In the normal form procedure developed in \Cref{QR-NormalForm}, we act only on the low--frequency component of the symbols, while leaving the high--frequency modes untouched. More precisely, the normal form transformation is performed on Fourier modes satisfying
\[
|\xi| \leq 11\tR,
\]
where $\tR \gg 1$ is a large parameter fixed throughout the analysis.  
This motivates the introduction of a class of symbols whose spectral support is localized to this low--frequency region.

In this subsection we introduce the notion of $\tR$-localized symbols and establish their basic properties, which will be repeatedly used in the construction and control of the normal form transformation.
\begin{definition}[$\tR$-localized symbols]
\label{def:Rloc}
		Let $s_0$, $m\in \R$  and $\delta \in (0,1)$. 
        A $\tR$-localized symbol in $\cN_{s_0}^{m,\delta}$ is a family of symbols $$[1,+\infty)\ni\tR\mapsto b_\tR(x,\xi)\in L^{\infty}\left([1,+\infty);\cN_{s_0}^{m,\delta}\right),$$
      such that, for any $ \tR\geq 1$, one has 
      \begin{align}
\supp b_\tR(x,\xi) \subset \left\{( x, \xi) \in \T^2 \times \R^2    \colon | \xi| \leq 11 \tR\right\}.
          \label{spec:supp}
      \end{align}
\end{definition}
 \begin{remark}
\label{rem:stimaesp}
Let $s_0>0$, for any $\mu\in [0,1]$ and  $\alpha\in (0,2)$ there are constants $C_\mu>0$  and $C_\alpha >0$ such that 
    \begin{align}
        \left(\tR \| \zak\|_{s_0}\right)^\alpha \leq C_\alpha \mathrm{exp}\Big({(\tR \|\zak\|_{s_0})^2}\Big), \qquad \mathrm{exp}\Big({(\tR \|\zak\|_{s_0})^\mu}\Big)\leq C_\mu \mathrm{exp}\Big({(\tR \|\zak\|_{s_0})^2}\Big).
        \label{est:power}
    \end{align}
\end{remark}
	The fundamental property of $\tR$-localized symbols is that they 
	are symbols in $\cN^{m-\sigma, \delta}_{s_0}$ for any $\sigma\geq 0$. 
	The price to pay is that their $m-\sigma$ norm  amplificates by 
	a factor $\tR^\sigma$. This property will be 
	used to bound positive order symbols with small norm and  
	is the content of the following lemma.
	\begin{lemma}
    \label{lem:perdita}
    Let $s_0>1$, $m\in \R$  and $\delta \in (0,1)$.
		Let $b_\tR(x,\xi)$be a $\tR$-localized symbol in $\cN^{m,\delta}_{s_0}$  according to \Cref{def:Rloc}. Then, for any $ \tR>1$, $b_\tR(x,\xi)$ belongs to $ \cN^{m-\sigma,\delta}_{s_0}$ for any $ \sigma\geq 0$ with estimate 
		\be \label{guadagno:loc}
		| b_\tR(x,\xi)|_{m-\sigma,s_0}
        \leq C\tR^\sigma	| b_\tR(x,\xi)|_{m,s_0}\,,
		\ee
        for some $C=C_\s>0$.  Moreover if the $\tR$-localized symbol $b_\tR$ depends on $\zak$ and belongs to $ \Gamma_{\geq p}^m[r]$ then
	\begin{align}
	| b_\tR(\zak;x,\xi)|_{m-\sigma,s_0}
    \lesssim_{\s} \tR^{\sigma}\| \zak\|_{s_0}^p\,.
    \label{est:symspec}
	\end{align}
    In addition, for $m\geq 0$ and $p\geq 2+m$,  we have 
    \begin{align}
        \| \vOpbw{b_\tR(\zak;x,\xi)}V\|_{s}
        \lesssim_s (\tR\| \zak\|_{s_0})^m 
        \| \zak\|_{s_0}^2\|V\|_{s}
        \lesssim_s 
    e^{(\tR \| \zak\|_{s_0})^2}\| \zak\|_{s_0}^2 \| V\|_s\,.
        \label{stima:exp_opbw}
    \end{align}
	\end{lemma}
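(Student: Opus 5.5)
The plan is to reduce everything to a pointwise comparison of weights on the support. By \eqref{spec:supp}, every $\xi$-derivative $\partial_\xi^\beta b_\tR(x,\xi)$ vanishes outside $\{|\xi|\le 11\tR\}$. This holds because the support of a derivative is contained in the support of the function; we may take the closure, which is still inside $\{|\xi|\le 11\tR\}$ up to harmless constants. On that set, $\langle\xi\rangle\le\langle 11\tR\rangle\le 12\tR$ for $\tR\ge1$, hence $\langle\xi\rangle^{\sigma}\le (12\tR)^{\sigma}$. For $|\alpha|+|\beta|\le s_0$ we then get
\[
\|\partial_x^\alpha\partial_\xi^\beta b_\tR(\cdot,\xi)\|_{L^\infty}\langle\xi\rangle^{-(m-\sigma)+\delta|\beta|}
=\|\partial_x^\alpha\partial_\xi^\beta b_\tR(\cdot,\xi)\|_{L^\infty}\langle\xi\rangle^{-m+\delta|\beta|}\langle\xi\rangle^{\sigma}\le (12\tR)^\sigma |b_\tR|_{m,s_0}.
\]
The left side is zero off the support. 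Taking the supremum over $\xi$ and the maximum over the multi-indices gives \eqref{guadagno:loc} with $C=12^\sigma$. The estimate \eqref{est:symspec} then follows at once by combining \eqref{guadagno:loc} with the defining bound \eqref{stima:nonhom} for $\Gamma^m_{\ge p}[r]$. Here I will evaluate \eqref{stima:nonhom} at the base index; the loss $\mu$ in the norm of $\zak$ is suppressed, as the statement implicitly does, by reading $\|\zak\|_{s_0}$ as the norm at the regularity for which the symbol class is defined.

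For \eqref{stima:exp_opbw}, take $\sigma=m\ge0$ in \eqref{est:symspec}. Then $b_\tR$ is a symbol of order $0$ with $|b_\tR|_{0,s_0}\lesssim \tR^m\|\zak\|_{s_0}^p$. The same holds for the conjugate reflected entry $\ov{b_\tR(x,-\xi)}$ of the matrix in \eqref{def:vec_out}, since reflection and conjugation preserve seminorms and the support condition. Lemma \ref{thm:action} with order $0$ then yields
\[
\|\vOpbw{b_\tR}V\|_s\lesssim_s \tR^m\|\zak\|_{s_0}^p\|V\|_s .
\]
We write $\tR^m\|\zak\|_{s_0}^p=(\tR\|\zak\|_{s_0})^m\|\zak\|_{s_0}^{p-m}$. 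Since $p-m\ge 2$ and $\|\zak\|_{s_0}\le r\lesssim1$ on the ball where the symbol is defined, this is $\le(\tR\|\zak\|_{s_0})^m\|\zak\|_{s_0}^2$, which is the first inequality.

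The second inequality is the elementary bound $x^m\lesssim_m e^{x^2}$ for $x\ge0$. This extends Remark \ref{rem:stimaesp}, which is stated only for exponents in $(0,2)$. For general $m\ge0$, either observe that $\sup_{x\ge0}x^me^{-x^2}<\infty$, or treat $x\le1$ and $x\ge1$ separately.

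I expect no real obstacle. The only points needing care are the bookkeeping of the constant's dependence on $\sigma$, including the factor $11$, and making sure the norm index in \eqref{stima:nonhom} matches the one in the statement.
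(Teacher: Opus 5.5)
Your proposal is correct and follows the paper's proof. The support condition kills all $\xi$-derivatives for $|\xi|>11\tR$, so the weight $\langle\xi\rangle^{\sigma}$ is bounded by a constant times $\tR^\sigma$ on the support (you get $12^\sigma$, the paper gets $22^\sigma$), and the remaining bounds follow from \eqref{stima:nonhom} and from Lemma \ref{thm:action} with $\sigma=m$; your extra remarks only make explicit steps the paper leaves implicit, namely reducing $\|\zak\|_{s_0}^{p-m}$ to $\|\zak\|_{s_0}^2$ by smallness and using $x^m\lesssim_m e^{x^2}$ for every $m\ge0$, which goes beyond Remark \ref{rem:stimaesp}.
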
 
	\begin{proof}
From the support property \eqref{spec:supp} of $b_\tR(x,\xi)$ we have, for any multi-index $ \beta \in \N^2_0$,  
\begin{align*}
    \pa_\xi^\beta b_\tR(x,\xi)=0, \qquad \forall |\xi|>11 \tR.
\end{align*}
Therefore we get
\begin{align*}
| b_\tR(x,\xi)|_{m-\sigma,s_0}&=
\max_{|\alpha|+|\beta|\leq s_0} \sup_{|\xi| \leq 11 \tR } 
\left| \pa_x^\alpha \pa_\xi^\beta b_\tR(x,\xi)\right| 
\langle \xi\rangle^{-m+ \sigma + \delta | \beta|}
\\
&\leq 
\langle 11 \tR\rangle^{\sigma}| b_\tR(x,\xi)|_{m,s_0}
\leq 
22^\sigma \tR^\sigma | b_\tR(x,\xi)|_{m,s_0}\,,
\end{align*}
which implies \eqref{guadagno:loc}. Estimate \eqref{est:symspec} is a direct consequence of \eqref{guadagno:loc} and \eqref{stima:symbols}. Finally \eqref{stima:exp_opbw} follows combining \eqref{est:symspec} (with $ \sigma\leadsto m$) and \eqref{actionSob}.
	\end{proof}
A consequence of \Cref{rem:stimaesp} and \Cref{lem:perdita} is the following lemma. 
    \begin{lemma}
    \label{lem:stima_comm}
       Let $r>0$ and  $p_1, p_2 \in \N$, $m_1,m_2 \in \R$ and $\delta\in (\frac56,1)$ and define $ \sigma:= \max\{0,m_1+m_2-\delta\}$. If $\sym{a}{\geq p_1}{m_1} \in \Gamma_{\geq p_1}^{m_1,\delta}[r]$ is a $\tR$-localized symbol and $\sym{a}{\geq p_2}{m_2,\delta} \in \Gamma_{\geq p_2}^{m_2}[r]$ and $p_1+p_2\geq 2+\sigma$, then
        \begin{itemize}
            \item for any $s\in \R$ there is $C_s>0$ such that for any $ \tR>0$ one has 
        \begin{align}
\| \left[\vOpbw{\sym{a}{\geq p_1}{m_1}},
\vOpbw{\sym{a}{\geq p_2}{m_2}}\right]\|_{\cL(H^s_\R;H^s_\R)}
\leq C_s 
\mathrm{exp}\Big({C_s(\tR \| \zak\|_{s_0})^2} \Big)
\| \zak\|_{s_0}^2\,;
            \label{eq:stima_comm}
        \end{align}
        \item if $\vr \geq m_2$ and $ \bR_1(\zak)$ belongs to $ \wt \cR^{-\vr}_1$ there is $s_0>0$ such that, for any $s\geq s_0$, there is a constant $C_s>0$ such that 
        \begin{align}
            \| \left[\bR_1(\zak),\Opbw{\sym{a}{\geq p_2}{m_2}}\right]V\|_s
            \leq 
            C_s \left( \| \zak\|_{s_0}^2 \| V\|_s + \| \zak\|_{s_0} \| \zak\|_s \| V\|_{s_0}\right)\,.
            \label{eq:stima_comm_smoothing}
        \end{align}
        \end{itemize}
    \end{lemma}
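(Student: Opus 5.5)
The plan is to expand the commutator with the composition theorem, use the $\tR$-localization of $\sym{a}{\geq p_1}{m_1}$ to pay for the positive orders with powers of $\tR$, and convert those powers into the exponential via \Cref{rem:stimaesp}.

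\textbf{First bullet.} The vector operators $\vOpbw{\cdot}$ are diagonal, so it suffices to treat the commutator $[\Opbw{a},\Opbw{b}]$ of the scalar entries. Here $a=\sym{a}{\geq p_1}{m_1}$ and $b=\sym{a}{\geq p_2}{m_2}$, or their conjugate-reflected versions $\ov{a(x,-\xi)}$; these are again $\tR$-localized and satisfy the same bounds.

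\begin{enumerate}
\item Apply \Cref{compoparapara0} with some $\vr\in\N$ such that $\delta\vr\geq m_1+m_2$. Using \eqref{Moyal}, this gives
\[
[\Opbw{a},\Opbw{b}]=-\ii\,\Opbw{\mbra[\vr]{a}{b}}+\cQ(a,b)-\cQ(b,a).
\]
\item By \eqref{calma1}, the remainders $\cQ(a,b)-\cQ(b,a)$ are bounded on $H^s$ with norm $\lesssim_s |a|_{m_1,s_0}|b|_{m_2,s_0}\lesssim \|\zak\|_{s_0}^{p_1+p_2}$. This is at most $\|\zak\|_{s_0}^2$ for $\|\zak\|_{s_0}\leq r$, since $p_1+p_2\geq 2$.
\item Every term $p_k(a,b)$ in $\mbra[\vr]{a}{b}$ contains at least one $x$-derivative and one $\xi$-derivative of $a$ or $b$. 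It is therefore a symbol of order $m_1+m_2-\delta k\leq m_1+m_2-\delta$, with $k$ odd and $k\geq 1$. Moreover it is supported in $|\xi|\leq 11\tR$, since every $p_k(a,b)$ carries a factor of $a$ or of its derivatives.
\item Apply \eqref{guadagno:loc} to $p_k(a,b)$ with the loss $\sigma=\max\{0,m_1+m_2-\delta\}$. This turns it into an order-zero symbol with seminorm $\lesssim \tR^{\sigma}\|\zak\|_{s_0}^{p_1+p_2}$. The $x$-regularity index decreases by $\vr$; I absorb this by taking $s_0$ large, which the multilinear classes allow.
\item By \eqref{actionSob} with $m=0$, the principal term is then bounded on $H^s$ by $\tR^{\sigma}\|\zak\|_{s_0}^{p_1+p_2}$.
\end{enumerate}

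Write $\tR^{\sigma}\|\zak\|_{s_0}^{p_1+p_2}=(\tR\|\zak\|_{s_0})^{\sigma}\|\zak\|_{s_0}^{p_1+p_2-\sigma}$. The hypothesis $p_1+p_2\geq 2+\sigma$ gives $\|\zak\|_{s_0}^{p_1+p_2-\sigma}\lesssim\|\zak\|_{s_0}^2$. For the first factor, use $x^\sigma\leq C e^{C x^2}$. This holds for every $\sigma\geq0$; \eqref{est:power} covers $\sigma<2$, and for larger $\sigma$ one enlarges the constant in the exponent. With $\delta>5/6$ and the orders appearing in practice, $\sigma<2$ anyway. This yields \eqref{eq:stima_comm}.

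\textbf{Second bullet.} Expand the commutator as
\[
[\bR_1(\zak),\Opbw{b}]=\bR_1(\zak)\Opbw{b}-\Opbw{b}\bR_1(\zak),
\]
and estimate each term separately using \eqref{actionSob} and \eqref{bound:smoo}. No cancellation is needed, since the smoothing gain $\vr\geq m_2$ compensates the order $m_2$ of $\Opbw{b}$.

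\begin{enumerate}
\item For the first term, $\|\Opbw{b}V\|_{s-m_2}\lesssim_s \|\zak\|_{s_0}^{p_2}\|V\|_s$. Then
\[
\|\bR_1(\zak)W\|_{s}\leq\|\bR_1(\zak)W\|_{s-m_2+\vr}\lesssim \|\zak\|_{s_0}\|W\|_{s-m_2}+\|\zak\|_{s-m_2}\|W\|_{s_0},
\]
applied with $W=\Opbw{b}V$. Since the only requirement on $s_0$ is to be large enough, one may choose $s_0\geq m_2$ plus the index needed for \eqref{bound:smoo}.
\item For the second term,
\[
\|\Opbw{b}\bR_1 V\|_s\lesssim\|\zak\|_{s_0}^{p_2}\,\|\bR_1V\|_{s+m_2}\lesssim\|\zak\|_{s_0}^{p_2}\big(\|\zak\|_{s_0}\|V\|_s+\|\zak\|_s\|V\|_{s_0}\big),
\]
using $\vr\geq m_2$ in the last step.
\item Since $p_2\geq1$, both terms are bounded by the right-hand side of \eqref{eq:stima_comm_smoothing}.
\end{enumerate}

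The only delicate point in the second bullet is the low-norm index bookkeeping, where one uses the symbol seminorm $|b|_{m_2,s_0}$ rather than a high seminorm. The main obstacle overall is the first bullet: one must check that the Moyal expansion indeed starts at order $m_1+m_2-\delta$, because the zeroth-order terms $ab-ba$ cancel. One must also check that $\tR$-localization survives all composition terms, so that \eqref{guadagno:loc} applies uniformly.
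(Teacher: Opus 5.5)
Your proposal is correct and is essentially the paper's argument. The paper first uses \eqref{guadagno:loc} to view the $\tR$-localized symbol as having order $m_1-\sigma$ (with seminorm $\lesssim \tR^{\sigma}\|\zak\|_{s_0}^{p_1}$) and then applies the commutator bound \eqref{stima:AdpG}. You instead expand with \Cref{compoparapara0} and localize the Moyal terms afterwards, which is the same mechanism in the opposite order. Your second bullet is exactly the paper's combination of \eqref{bound:smoo}, \eqref{actionSob} and \eqref{stima:nonhom}.

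One bookkeeping caveat: if $m_2<0$, apply \eqref{bound:smoo} at level $s$ rather than $s-m_2$ in the term $\bR_1(\zak)\Opbw{b}V$. Otherwise $\|\zak\|_{s-m_2}$ appears, and it is not controlled by the right-hand side of \eqref{eq:stima_comm_smoothing}.
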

    \begin{proof}
We apply estimate \eqref{stima:AdpG} to the symbols $\sym{a}{\geq p_2}{m_2}$ and $\sym{a}{\geq p_1}{m_1}$, which belong to $\cN_{s_0}^{m_1-\sigma}$ by \Cref{lem:perdita}. Using in addition the fact that $m_1-\sigma+m_2-\delta \leq 0$, we obtain
\begin{equation*}
\begin{aligned}
 \| \left[\vOpbw{\sym{a}{\geq p_1}{m_1}},\vOpbw{\sym{a}{\geq p_2}{m_2}}\right]V \|_s
 &\lesssim_s  | \sym{a}{\geq p_1}{m_1}|_{m_1-\sigma,s_0}
 | \sym{a}{\geq p_2}{m_2}|_{m_2,s_0}\|V\|_s
 \lesssim_s \tR^{\sigma} \| \zak\|_{s_0}^{p_1+p_2}\|V\|_s
 \\
  &\lesssim_s  
  (\tR \| \zak\|_{s_0})^{\sigma}\| \zak\|_{s_0}^2\|V\|_s
  \leq C_s \mathrm{exp}\Big({C_s(\tR \| \zak\|_{s_0})^2}\Big)\| \zak\|_{s_0}^2\| V\|_s\,,
    \end{aligned}
    \end{equation*}
    for some $C_s \gg 0$ large enough, where in the last inequality we used \eqref{est:power}.
    This proves \eqref{eq:stima_comm}. Moreover 
    estimate \eqref{eq:stima_comm_smoothing} 
    follows by combining \eqref{bound:smoo} 
    with \eqref{actionSob} and \eqref{stima:nonhom}. 
    \end{proof}

  \subsection{Flows and conjugations}
  In this section we collect all the conjugation rules needed in the paper.
 Consider general flow  $\bm{\Phi}^\tau(\zak)$, $\tau\in[-1,1]$ 
\be\label{flusso2para}
\begin{cases} 
	\partial_\tau { \bm{\Phi}}^\tau(\zak)= \bG(\zak){\bm{\Phi}}^\tau(\zak)\\
	{\bm{\Phi}}^0(\zak)=\uno\, .
\end{cases}
\ee
\begin{lemma}[Flow of a standard para-differential operator]\label{lem:flusso}
    Let $r>0$. Given a real valued symbol  $f(\zak; x, \xi)\in \Gamma^{\sla{1}{2}}_{\geq 1}[r]$,   and a symbol $ g \in \Gamma_{\geq 1}^0[r]$,  consider the flow in \eqref{flusso2para} with 
\[
    \bG(\zak):= \vOpbw{\im f(\zak;x,\xi) } 
    \qquad \text{or}  
    \qquad\bG(\zak):=\zOpbw{ g(\zak;x,\xi) }\,.
\] 
There is $s_0>0$ such that for any 
$ s\in \R$ there is $r_s\in (0,\frac14)$ and 
$C_s>0$ such that for any 
$ \tau\in [-1,1]$ and $ \zak \in B_{s_0,\R}(r_s)$, 
the linear flow $ \Phi^\tau(\zak)$  
and  its inverse  $\Phi^{-\tau}(\zak)$ 
belong to $\cL(H^s_\R;H^s_\R)$ with estimates
\be
\| \Phi^{\pm \tau}(\zak)\|_{\cL(H^s_\R;H^s_\R)}\leq C_s\,.
\label{stima:phistd}
\ee  
\end{lemma}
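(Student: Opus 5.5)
The approach is a standard energy argument for linear flows generated by paradifferential operators, carried out separately in the two cases. We fix $\zak\in B_{s_0,\R}(r_s)$ and regard $\tau\mapsto\Phi^\tau(\zak)V$ as the solution of the linear ODE $\partial_\tau W=\bG(\zak)W$, $W(0)=V$, in the Banach space $H^s_\R$. When $\bG(\zak)$ is bounded on $H^s_\R$, existence and uniqueness of the flow for $\tau\in[-1,1]$ are immediate, and the bound follows from Gronwall.

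\emph{Case $\bG=\zOpbw{g}$ with $g\in\Gamma^0_{\geq1}[r]$.} Lemma \ref{thm:action} together with \eqref{stima:nonhom} gives $\|\bG(\zak)\|_{\cL(H^s_\R)}\lesssim_s|g|_{0,s_0}\lesssim_s\|\zak\|_{s_0+\mu}$. The flow is then the exponential series $e^{\tau\bG}$, so $\|\Phi^{\pm\tau}\|\le e^{C_s r_s}\le C_s$. One bookkeeping point: the seminorm at index $s_0$ only involves $\|\zak\|_{s_0+\mu_0}$. To keep the statement clean we enlarge $s_0$ (the $s_0$ of the lemma) so that this is controlled by the ball radius.

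\emph{Case $\bG=\vOpbw{\ii f}$ with $f$ real of order $\sla12$.} Now $\bG$ is unbounded, of order $\sla12$, so we cannot argue directly. The plan has three steps.

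First, regularize. We replace $f$ by $f_n:=f\,\chi(\xi/n)$, which gives a bounded generator. The resulting flows $\Phi_n^\tau$ exist on every $H^s$.

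Second, derive an estimate uniform in $n$ via an energy argument. Take $\Lambda_s:=\langle D\rangle^s$, a Fourier multiplier, and compute
\[
\partial_\tau\|\Lambda_s W\|_{L^2}^2=2\Re\langle \Lambda_s\vOpbw{\ii f_n}W,\Lambda_sW\rangle .
\]
Split this as $2\Re\langle\vOpbw{\ii f_n}\Lambda_sW,\Lambda_sW\rangle$ plus a commutator term. The first piece vanishes because $f_n$ is real: by Remark \ref{rem:symbols_modozero}, $\Opbw{\ii f_n}^*=-\Opbw{\ii f_n}$, and likewise on the conjugate component, since $\ov{\ii f(x,-\xi)}$ reversed keeps skew-adjointness. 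For the second piece, by Lemma \ref{compoparapara0} (with $\delta=1$) the commutator $[\Lambda_s,\vOpbw{\ii f_n}]$ has principal symbol $\{\langle\xi\rangle^s,f_n\}$, of order $s+\sla12-1=s-\sla12$, plus remainders of order at most $s$. Its $\cL(H^s,L^2)$ norm is therefore $\lesssim_s|f|_{\sla12,s_0}\lesssim_s r_s$, uniformly in $n$. Gronwall then gives $\|\Phi_n^\tau V\|_s\le e^{C_s r_s|\tau|}\|V\|_s$.

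Third, pass to the limit. Convergence $\Phi_n^\tau V\to\Phi^\tau V$ holds in $H^{s-\sla12}$: the difference solves an equation with source $\vOpbw{\ii(f_n-f)}\Phi^\tau_mV$, which is small in $H^{s-\sla12-\theta}$ for some $\theta>0$. Weak compactness then transfers the uniform bound to $H^s$. Uniqueness follows from the same $L^2$-type energy estimate in lower norm. The inverse $\Phi^{-\tau}$ is the flow run backward, so the same estimate applies with $\tau\mapsto-\tau$.

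The main obstacle is the commutator step. We must verify that the remainders from Lemma \ref{compoparapara0} and the cut-off $\chi(\xi/n)$ do not produce $n$-dependent losses. This uses that $\chi(\xi/n)$ is a symbol bounded uniformly in $n$ in $\cN^0_{\sigma}$ (its $\xi$-derivatives gain $\langle\xi\rangle^{-1}$ uniformly), together with the requirement that $s_0$ be large compared with the number of symbol derivatives used in the expansion. The choice $r_s<\sla14$ just ensures that $C_s r_s$ stays bounded. A cleaner way to avoid the regularization subtleties is to run the energy estimate directly on the $H^{s}$-valued ODE in $H^{s+\sla12}$, following the standard argument as in \cite{BD2018,FM2022}.
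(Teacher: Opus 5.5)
Your proposal is correct and is essentially the standard energy argument that the paper does not reproduce but simply invokes via \cite[Lemma 3.22]{BD2018}: for $\zOpbw{g}$ the generator is bounded and the exponential series suffices. For $\vOpbw{\ii f}$ with $f$ real, the skew-adjointness of the generator, the fact that $[\langle D\rangle^s,\vOpbw{\ii f}]$ has order $s-\frac12$, Gronwall, and a regularization and limiting step give the uniform $H^s$ bound. The paper additionally states explicitly that a real-to-real generator yields a real-to-real flow, so $H^s_\R$ is preserved, which you should also say. A small bookkeeping fix in your limit step: a source small in $H^{s-\frac12-\theta}$ gives convergence in $H^{s-\frac12-\theta}$, not $H^{s-\frac12}$, which is harmless for the weak-compactness argument.
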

\begin{proof}
It is standard (see e.g.  Lemma 3.22 in \cite{BD2018}) that, for any  
	$\zak \in B_{s_0,\R}(r)$ with $s_0>0$ sufficiently large and $r>0$ sufficiently small, 
	the operator 
	$ {\bm{\Phi}}^\tau (\zak)\in\cL\pare{H^s(\T, \C^2)} $ for any $s \in \R$ with the quantitative estimate:  there is a constant $C_s>0$ such that	for any 
    $ W\in H^{s}(\T, \C^2)$,
    \begin{align}
        \norm{ {\bm{\Phi}}^\tau (\zak) W }_{s}+\norm{ {\bm{\Phi}}^\tau (\zak)^{-1} W }_{s}
		 \leq   C_s  \| W \|_{s}\,.
         \label{stima:flusso}
    \end{align}
    Since, by construction the generator $\bG(\zak)$ is a real to real matrix of operators, its flow $\Phi^{\pm \tau}(\zak)$ is real-to-real as well and thus preserves the real subspace $H^s_\R$.
\end{proof}
	Let $ \delta \in (0,1)$ and $m\in [0,2]$.
	We now  study the properties of the flow \eqref{flusso2para} when the generator is the diagonal paradifferential operator associated to a $\tR$-localized symbol $\sym{g}{p}{m}\in \wt \Gamma_p^m$. In particular we prove the following lemma.

\begin{lemma}[Flow of a $\tR$-localized para-differential operator]
\label{lem:flussoR}
Let $\tR>1$, $\delta\in (0,1)$, $ m \in [0,2]$ and  $p\in \N$ with $ p> m+\tfrac12$. Let $\sym{g}{p}{m}\in  \wt \Gamma_{p}^{m, \delta}$ a $\tR$-localized symbol and consider the flow $\bPhi^{\tau}_{g_p}(\zak)$ generated as in \eqref{flusso2para} 
with generator 
\[
\bG(\zak):= \vOpbw{\sym{g}{p}{m}(\zak)}\,.
\]
		There is $s_0>0$ such that for any $ s\in \R$ there is $r\in (0,\tfrac14)$ such that for any $ \tau\in [-1,1]$ and $ \zak \in B_{s_0,\R}(r)$, the linear flow $ \bPhi^\tau_{\fun{g}{p}}(\zak)$ and  its inverse  $\bPhi^{-\tau}_{\fun{g}{p}}(\zak)$ belong to $\cL(H^s_\R;H^s_\R)$ with estimates
\be
\| \bPhi^{\pm \tau}(\zak)\|_{\cL(H^s_\R;H^s_\R)}
\leq 
3\exp\left(2(\tR \|\zak\|_{s_0})^2
\| \zak\|_{s_0}^{\sla{1}{2}}\right)
\leq 3\exp\left((\tR \|\zak\|_{s_0})^2\right)\,.
            \label{stima:phiexp}
\ee
			\end{lemma}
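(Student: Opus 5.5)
The plan is an energy estimate for the flow, splitting the generator into its anti-selfadjoint part, which preserves norms, and a bounded remainder. Set $W(\tau):=\bPhi^\tau_{g_p}(\zak)W_0$ and compute
\[
\partial_\tau \|\langle D\rangle^s W(\tau)\|_{L^2}^2 = 2\Re\big\langle \langle D\rangle^s\vOpbw{\sym{g}{p}{m}}W,\langle D\rangle^s W\big\rangle .
\]

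\textbf{Step 1: remove the top order.} By the adjoint rule $\Opbw{a}^*=\Opbw{\ov a}$ of \Cref{rem:symbols_modozero}, and since the flow estimate is only needed when the generator is (up to lower order) anti-selfadjoint, I write $\sym{g}{p}{m}=\ii f+h$. Here $f$ is real and $h$ is the remaining part; in the application $h$ is of lower order or vanishes. The contribution of $\ii f$ reduces to the commutator term
\[
\Re\big\langle [\langle D\rangle^s,\vOpbw{\ii f}]W,\langle D\rangle^sW\big\rangle .
\]

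\textbf{Step 2: bound the remainders.} The commutator $[\langle D\rangle^s,\vOpbw{\ii f}]$ has order $s+m-\delta$ by \eqref{stima:AdpG}. The operator $\vOpbw{h}$ is bounded on $H^s$ via \Cref{thm:action}. Both symbols are $\tR$-localized, so \Cref{lem:perdita} with $\sigma=m$ (and \eqref{est:symspec}) converts every positive-order loss into powers of $\tR$. This yields
\[
\Big|\partial_\tau \|W\|_s^2\Big|\le C_s\,\tR^{m}\|\zak\|_{s_0}^{p}\,\|W\|_s^2 .
\]

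\textbf{Step 3: bookkeeping of powers.} Since $p>m+\tfrac12$ and $m\le 2$, I write
\[
\tR^m\|\zak\|_{s_0}^p=(\tR\|\zak\|_{s_0})^m\|\zak\|_{s_0}^{p-m},\qquad p-m\ge\tfrac12,
\]
and use $\|\zak\|_{s_0}<r<1$, so that $\|\zak\|_{s_0}^{p-m}\le\|\zak\|_{s_0}^{1/2}$. Then I bound $(\tR\|\zak\|_{s_0})^m\le 1+(\tR\|\zak\|_{s_0})^2$ as in \Cref{rem:stimaesp}. Grönwall on $|\tau|\le1$ gives
\[
\|W(\tau)\|_s\le \exp\!\big(C_s\|\zak\|_{s_0}^{1/2}(1+(\tR\|\zak\|_{s_0})^2)\big)\|W_0\|_s .
\]
Choosing $r$ small so that $C_s\|\zak\|_{s_0}^{1/2}\le \min\{\log 3,\,2\}$ and $2\|\zak\|_{s_0}^{1/2}\le1$ produces the constant $3$ and both inequalities in \eqref{stima:phiexp}. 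The same argument with $\tau\mapsto-\tau$ handles the inverse.

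\textbf{Step 4: existence and real structure.} Since $\tR$-localized symbols give bounded operators for fixed $\tR$, the flow is simply the exponential of a bounded operator on $H^s$. The a priori estimate above then supplies the $\tR$-quantitative bound. Real-to-real-ness is inherited from the generator, so $H^s_\R$ is preserved.

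The main obstacle is Step 1: controlling the non-anti-selfadjoint part of $\sym{g}{p}{m}$ at top order $m$. If $g$ is not $\ii\times$real, its real part contributes at order $m$ with norm $\tR^m\|\zak\|^p$. This term is still handled by the same $\tR$-localization bound, so it should be absorbed in Step 2 without harm, which is why the exponential loss in $\tR$ appears at all.
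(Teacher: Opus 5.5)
Your argument has the same core as the paper's. Since $g_p^{(m)}$ is $\tR$-localized, \Cref{lem:perdita} with $\sigma=m$ and \Cref{thm:action} give $\|\bG(\zak)\|_{\cL(H^s_\R;H^s_\R)}\le C_s\tR^m\|\zak\|_{s_0}^p$. The paper then uses $\|\bPhi^{\pm\tau}(\zak)\|\le e^{|\tau|\,\|\bG(\zak)\|}$ for the exponential of a bounded $\tau$-independent operator, which is exactly what your Gr\"onwall step reproduces.

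Your Steps 1--2 are superfluous. Splitting off an anti-selfadjoint part and estimating $[\langle D\rangle^s,\vOpbw{\ii f}]$ gains nothing, because you finally bound every piece by the same crude quantity $\tR^m\|\zak\|_{s_0}^p$, as you note yourself in your last paragraph. You can bound $2\Re\langle \langle D\rangle^s\bG W,\langle D\rangle^s W\rangle$ by $2\|\bG\|\,\|W\|_s^2$ directly, and no reality assumption on the symbol is needed.

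The one step that does not work as written is the final bookkeeping. Having replaced $\|\zak\|_{s_0}^{p-m}$ by $\|\zak\|_{s_0}^{1/2}$, you are left with $\exp\big(C_s\|\zak\|_{s_0}^{1/2}(1+(\tR\|\zak\|_{s_0})^2)\big)$. No choice of $r$ turns $C_s\|\zak\|_{s_0}^{1/2}(\tR\|\zak\|_{s_0})^2$ into $2(\tR\|\zak\|_{s_0})^2\|\zak\|_{s_0}^{1/2}$ unless $C_s\le 2$. Your condition $C_s\|\zak\|_{s_0}^{1/2}\le 2$ only yields $3\exp\big(2(\tR\|\zak\|_{s_0})^2\big)$, which is weaker than both bounds in \eqref{stima:phiexp}.

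You discarded exactly the slack that is needed. Use the strict inequality $p>m+\frac12$ to write $\tR^m\|\zak\|_{s_0}^p\le r^{p-m-\frac12}(\tR\|\zak\|_{s_0})^m\|\zak\|_{s_0}^{1/2}$, and choose $r$ with $C_s r^{p-m-\frac12}\le 1$. Then $\|\bG(\zak)\|\le(\tR\|\zak\|_{s_0})^m\|\zak\|_{s_0}^{1/2}\le 1+(\tR\|\zak\|_{s_0})^2\|\zak\|_{s_0}^{1/2}$. To see this, split according to whether $\tR\|\zak\|_{s_0}\le 1$ or $>1$, using $m\le 2$ and $\|\zak\|_{s_0}^{1/2}\le\frac12$. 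Now $e<3$ gives the first inequality in \eqref{stima:phiexp}, and $2\|\zak\|_{s_0}^{1/2}\le 1$ gives the second. This is how the paper absorbs the constant $C_s$.
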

		\begin{proof}
Thanks to \Cref{lem:perdita} and \Cref{thm:action}, 
the generator $\vOpbw{\ii \sym{g}{p}{m}}$ belongs to $\cL(H^s_\R;H^s_\R)$ and
\begin{align*}
\| \vOpbw{\ii \sym{g}{p}{m}}\|_{\cL(H^s_\R;H^s_\R)} 
&\leq C_s | \sym{g}{p}{m}|_{0, s_0} 
\leq C_s \tR^m\| \zak\|_{s_0}^p
\\&\leq C_s r^{m-p-\sla{1}{2}}( \tR \| \zak\|_{s_0})^m
\| \zak\|_{s_0}^{\sla{1}{2}}\leq ( \tR \| \zak\|_{s_0})^m\| \zak\|_{s_0}^{\sla{1}{2}}\,,
\end{align*}
provided $C_s r^{m-p}\leq 1$. Then standard Banach spaces 
ODEs theory implies that  $ \bPhi^{\pm \tau}_{\fun{g}{p}}(\zak) \in\cL(H^s_\R;H^s_\R) $ 
with estimate 
\begin{align*}
    \| \bPhi^{\pm \tau}(\zak)\|_{\cL(H^s_\R;H^s_\R)}
    \leq 
    \exp\left( ( \tR \| \zak\|_{s_0})^m\| \zak\|_{s_0}^{\sla{1}{2}}\right)
    \leq 
    3\exp\left( 2( \tR \| \zak\|_{s_0})^2\| \zak\|_{s_0}^{\sla{1}{2}}\right)\,,
\end{align*}
where to get the last bound we used that, 
as $ \| \zak\|_{s_0}^{\sla{1}{2}}\leq r^{\sla{1}{2}}\leq \frac12$, 
one has 
\[
( \tR \| \zak\|_{s_0})^m\| \zak\|_{s_0}^{\sla{1}{2}}
\leq 
1+ 2( \tR \| \zak\|_{s_0})^2\| \zak\|_{s_0}^{\sla{1}{2}}\,,
\]
proving the first inequality in \eqref{stima:phiexp}. 
The second inequality follows using again the bound 
$ \| \zak\|_{s_0}^{\sla{1}{2}}\leq1/2$.
\end{proof}
Finally we have a bound for the flow of a $1$-homogeneous smoothing remainder.
\begin{lemma}
\label{lem:flow_smoo}
    Let $\vr>0$ and $\bG_1(\zak)$ be a real-to-real  matrix of smoothing operators in $\wt \cR^{-\vr}_1$. Consider the flow $\bPhi^\tau(\zak)$ generated as in \eqref{flusso2para} with generator
\[
\bG(\zak)=\bG_1(\zak)\,.
\]
There is $s_0 >0$ such that for any $s\geq s_0$ 
there is $r=r_s>0$ such that for any 
$\zak \in B_{s_0,\R}(r)\cap H^{s}_{\R}$ the maps 
$\bPhi^\tau(\zak)$  and its inverse $\bPhi^{-\tau}(\zak)$  
belong to $\cL\left( H^s_\R;H^s_\R\right)$ with estimates 
\begin{align}
\|\bPhi^{\pm \tau}(\zak)V\|_{s}
\lesssim_s 
\| V\|_s+ \| \zak\|_{s}\| V\|_{s_0}\,.
\label{stima:flussosmoothing}
\end{align} 
\end{lemma}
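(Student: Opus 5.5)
The plan is to treat the flow as a linear ODE in $H^s_\R$ driven by a bounded (indeed smoothing) generator, and to use the tame structure of the bound \eqref{bound:smoo} to obtain the estimate \eqref{stima:flussosmoothing}, which is linear in the high norm $\|\zak\|_s$. First, by \eqref{bound:smoo} with $\vr>0$, for every $s\geq s_0$ the generator satisfies
\begin{equation*}
\|\bG_1(\zak)V\|_{s}\leq \|\bG_1(\zak)V\|_{s+\vr}\leq C_s\big(\|\zak\|_{s_0}\|V\|_s+\|\zak\|_s\|V\|_{s_0}\big)\,,
\end{equation*}
so for $\zak\in H^s_\R$ the map $\bG_1(\zak)$ belongs to $\cL(H^s_\R;H^s_\R)$. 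Standard Banach-space ODE theory then gives existence of $\bPhi^\tau(\zak)$ for $\tau\in[-1,1]$ and its inverse $\bPhi^{-\tau}(\zak)$, the flow of $-\bG_1(\zak)$. Since the generator is real-to-real, the flow preserves $H^s_\R$, exactly as in \Cref{lem:flusso}.

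Next I prove the low-norm bound. Take $s=s_0$ in the estimate above: $\|\bG_1(\zak)V\|_{s_0}\leq 2C_{s_0}\|\zak\|_{s_0}\|V\|_{s_0}$. Writing $V(\tau):=\bPhi^\tau(\zak)V$ and applying Gronwall gives
\begin{equation*}
\|V(\tau)\|_{s_0}\leq e^{2C_{s_0} r}\|V\|_{s_0}\leq 2\|V\|_{s_0}
\end{equation*}
for $r$ small.

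Finally I prove the high-norm tame bound. From $V(\tau)=V+\int_0^\tau \bG_1(\zak)V(\tau')\,d\tau'$ one gets
\begin{equation*}
\|V(\tau)\|_s\leq \|V\|_s+C_s\int_0^{|\tau|}\big(\|\zak\|_{s_0}\|V(\tau')\|_s+\|\zak\|_s\|V(\tau')\|_{s_0}\big)d\tau'\,.
\end{equation*}
Insert the $s_0$-bound from the previous step and apply Gronwall again. This yields $\|V(\tau)\|_s\leq e^{C_s r}\big(\|V\|_s+2C_s\|\zak\|_s\|V\|_{s_0}\big)$, which is \eqref{stima:flussosmoothing}; here $r=r_s$ is chosen so that $C_s r\leq 1$. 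The same argument with $-\bG_1$ handles $\bPhi^{-\tau}$.

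The only delicate point is ordering the estimates: the $s_0$ bound must be closed first so that the high norm of $\zak$ enters linearly and never multiplies $\|V\|_s$. I expect no genuine obstacle beyond checking that $s_0$ is large enough for \eqref{bound:smoo} to apply at level $s_0$ itself.
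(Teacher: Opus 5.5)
Your proof is correct: you close the $H^{s_0}$ bound first by Gronwall, then insert it into the Duhamel formula at level $s$, so that $\|\zak\|_s$ enters only linearly and never multiplies $\|V\|_s$. The paper does not write this out; it invokes Banach-space ODE theory for boundedness and cites \cite[Lemma A.4]{MMS24} for the tame bound, which is essentially this standard argument. Like you, it notes that the real-to-real property of $\bG_1(\zak)$ lets the scalar argument carry over to the matrix case.
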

\begin{proof}
    As the operator $\bG_1(\zak)$ belongs to $\cL\left( H^s_\R;H^s_\R\right)$ with estimates \eqref{bound:smoo}, standard Banach spaces ODEs theory implies that the flow $\bPhi^\tau(\zak)$ belongs to $\cL\left( H^s_\R;H^s_\R\right)$. The proof of the tame estimates \eqref{stima:flussosmoothing} can be found in \cite[Lemma A.4]{MMS24}. 
The only difference is that in \cite[Lemma A.4]{MMS24} 
the generator is a scalar real operator, 
while the generator $\bG_1(\zak)$ is a matrix of 
smoothing operators. This does not affect 
the proof since, by hypothesis, 
$\bG_1(\zak)$ preserves the real subspace $H^s_\R$.

\end{proof}
\section{ Background on the Water Waves Problem}
In this section, we collect the basic properties of the water waves problem that will be used in our analysis. This includes the linearization at $(0,0)$, the analytic structure of the water waves system in complex coordinates, and the analysis of the relevant physical quantities, namely the horizontal and vertical components of the velocity field at the free boundary $y=\eta(x)$,
\begin{align}
\tV := \nabla \Phi (x,\eta(x))= 
\nabla\psi- \tB \nabla \eta\,, 
\qquad
\tB := \partial_y \Phi(x,\eta(x))
= \frac{G(\eta)\psi+\nabla \eta \cdot \nabla \psi}{1+ 
| \nabla \eta|^2}\,,
\label{def:V-B}
\end{align}
and the classical Taylor coefficient,
\begin{align}
\ta :=  \pa_t \tB + \tV\cdot \nabla \tB \,.
\label{def:a-Tay}
\end{align}
Moreover, we define the function 
\begin{align}\label{funz:bpiccolo}
    \tb:= \frac{1}{1+|\nabla \eta|^2}\,.
\end{align}
\subsection{Complex Hamiltonian formulation and non-resonance conditions}
We now collect some general properties 
of the water waves system. We begin the section 
with the linearization at $(\eta,\psi)=(0,0)$.

\vspace{0.5em}
\noindent
{\bfseries Linear water waves system.} The formal linearization of \eqref{eq:1.2} at the flat surface leads to 
the linear system\footnote{Here we denote by $|D| =|-\ii \nabla|$ the Fourier multiplier acting as $|D|e^{\ii j\cdot x}=|j|e^{\ii j\cdot x}$ for any $j\in \Z^{2}$.}
\begin{equation}\label{lineq1.2}
\left\{
\begin{aligned}
\pa_{t}\eta&=|D|\psi\,,
\\
\pa_{t}\psi&=-(1+\kap|D|^2)\eta\,.
\end{aligned}\right.
\end{equation}
By defining the \emph{linear dispersion relation}
\begin{equation}\label{def:dispersion}
\Lambda(D):=\sqrt{|D|(1+\kap|D|^2)}\,,
\end{equation}
and, introducing the complex variable
\begin{equation}
    \zetina:= \frac{1}{\sqrt{2}}\left(\tM(D)^{-1} \eta+ \ii \tM (D) \psi\right), \qquad 
\mathtt{M}(D):=\sqrt{\frac{|D|}{\Lambda(D)}}=\left(\frac{|D|}{1+\kap|D|^2}\right)^{\frac{1}{4}}\,, 
\label{zetina}
\end{equation}
we get that \eqref{lineq1.2} is equivalent to
\[
\pa_{t}\zetina=-\ii \Lambda(D)\zetina\,.
\]
\begin{remark}
In view of \Cref{rem:symbols_modozero}, 
the functions $\tM(\xi)$, $\tM^{-1}(\xi)$ and 
$\Lambda(\xi)$ are Fourier multipliers, namely 
    \begin{align*}
        \tM(\xi)\in \wt \Gamma_0^{\sla{1}{4}}\,, 
        \qquad  
        \tM^{-1}(\xi)\in \wt \Gamma_0^{-\sla{1}{4}}
        \quad \text{and} 
        \quad \Lambda(\xi)\in\wt \Gamma_0^{\sla{3}{2}}\,.
    \end{align*} 
\end{remark}

\vspace{0.5em}
\noindent
{\bfseries Nonlinear water waves system in complex coordinates.}
In this paragraph, we study the full nonlinear water waves system \eqref{eq:1.2} written with respect to the complex coordinates $\zetina$. Since the full nonlinear system for $ \zetina$ depends on both $\zetina$ and  $\ov{\zetina}$, it is convenient to introduce the vector of complex variables 
\begin{equation}
    \zak:= \vect{\zetina}{\ov{\zetina}} = \cM \vect{\eta}{\psi}, \qquad \cM=\frac{1}{\sqrt{2}}\begin{pmatrix}
        \tM(D)^{-1} &   \ii \tM(D) \\
        \tM(D)^{-1} &    -\ii\tM(D)
    \end{pmatrix}, \qquad \cM^{-1}:=\frac{1}{\sqrt{2}}\begin{pmatrix}
        \tM(D) &    \tM(D) \\
        -\ii \tM(D)^{-1} &    \ii\tM(D)^{-1}
    \end{pmatrix}\,,
    \label{zak}
\end{equation}
 We have the following result. 
\begin{proposition}[\textbf{Water waves equations for the complex Zakharov variables}]
\label{prop:compZak}
For any $ s\in \R$ the linear operators $\cM$ and $\cM^{-1}$ 
in \eqref{zak} belong respectively to \\ 
$\cL\left(H^{s+\sla{1}{4}}_0\times \dot H^{s+\sla{1}{4}};\dot H^{s}_\R(\T^2;\C^2)\right)$ 
and $\cL\left(\dot H^{s}_\R(\T^2;\C^2);H^{s+\sla{1}{4}}_0\times \dot H^{s+\sla{1}{4}}\right)$ 
with estimates
        \begin{align}\label{equiequi}
            \| \zak\|_s \lesssim \| \eta\|_{s+\sla{1}{4}}+ \| \psi\|_{s-\sla{1}{4}}\lesssim \| \zak\|_s.
        \end{align}
        Moreover there is $s_0>0$ such that for any $ s\geq s_0$ there is $r>0$  such that if $(\eta(t, \psi(t)))$ with  
        $\eta(t)\in B_{s_0}(r)\cap H^{s+\sla{1}{4}}_0$, $\psi(t)\in B_{s_0}(r)\cap  \dot H^{s+\sla{1}{4}}$ is a solution  of the water waves system \eqref{eq:1.2}, the variable $\zak\in H^{s}_\R(\T^2;\C^2)$ in \eqref{zak} solves 
        \begin{align}
            \pa_t \zak= \hamvec{\cH}(\zak)\,, 
            \qquad 
            \text{where} \quad 
            \hamvec{\cH}:  
B_{s_0}(r)\cap H^{s}_\R\to H^{s-\sla{3}{2}}_\R  
            \label{eq:zak}
        \end{align}
        is analytic with Taylor expansion 
        \begin{align}
\hamvec{\cH}(\zak)= -\ii \vomega(D) \zak + \hamvec{2}(\zak,\zak)+\hamvec{\geq 3}(\zak)\,, 
\label{X:zak_esp}
        \end{align}
        where 
        \begin{equation}
         \hamvec{2}(\cdot, \cdot )\colon H^s_\R \to H^{s-\sla{3}{2}}_\R    \quad  \ \text{is quadratic and }\quad \| \hamvec{\geq 3}(\zak)\|_{s-\sla{3}{2}}\lesssim_s \| \zak\|_{s}^3.
         \label{espansione_campoanal}
        \end{equation}
        In particular one has 
        \begin{align}
            \| \hamvec{\cH}(\zak)\|_{s-\sla{3}{2}}\lesssim_s \| \zak\|_s.
            \label{est:XH}
        \end{align}
Moreover the vector field 
\begin{equation*}
\hamvec{\geq 2}(\zak):= 
\hamvec{2}(\zak) + \hamvec{\geq 3}(\zak)\,,
\end{equation*}
satisfies,  for any 
$\zak \in B_{s_0, \R}(r) \cap H^{s}_\R$, 
        the quadratic tame estimates
        \begin{equation}
            \| \hamvec{\geq 2}(\zak) \|_{s-\sla{3}{2}} 
            \lesssim_{s} \| \zak\|_{s_0}\| \zak\|_s.
            \label{tameest:campo}
        \end{equation}
            	\end{proposition}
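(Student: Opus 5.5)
The plan has three parts: the linear bounds for $\cM$ and $\cM^{-1}$, then the transport of the nonlinear system to $\zak$, then the tame estimates.

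\emph{Linear part.} The bounds for $\cM$ and $\cM^{-1}$ are Fourier multiplier estimates. On zero-average functions, i.e.\ for $|\xi|\geq 1$, the symbols satisfy $\tM(\xi)\sim\langle\xi\rangle^{-\sla{1}{2}}$ and $\tM(\xi)^{-1}\sim\langle\xi\rangle^{\sla{1}{2}}$. Indeed $\tM(\xi)=(|\xi|/(1+\kap|\xi|^2))^{\sla14}$, with constants depending on $\kap$. Computing $\|\tM(D)^{-1}\eta\|_s$ and $\|\tM(D)\psi\|_s$ mode by mode gives \eqref{equiequi}, with the indices shifted consistently with $H^{s+\sla14}_0\times\dot H^{s-\sla14}$. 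The real-to-real structure is immediate: the second component of $\cM(\eta,\psi)^\top$ is the complex conjugate of the first, because $\eta,\psi$ are real and $\tM(D)$ is a real even multiplier.

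\emph{Nonlinear system.} I would write \eqref{eq:1.2} as $\pa_t(\eta,\psi)^\top=F(\eta,\psi)$ and set $\hamvec{\cH}(\zak):=\cM F(\cM^{-1}\zak)$. The linear part of $F$ is $(|D|\psi,-(1+\kap|D|^2)\eta)$. Conjugating it by $\cM$ gives $-\ii\vomega(D)\zak$ with $\vomega=\mathrm{diag}(\Lambda,-\Lambda)$, by the same computation as for \eqref{lineq1.2}.

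For analyticity I would invoke the known analyticity of $\eta\mapsto G(\eta)$ near $\eta=0$ (Craig--Sulem / Lannes). For $s_0$ large and $\|\eta\|_{s_0}$ small, $G(\eta)\psi$ is analytic from $H^{s+\sla14}\times \dot H^{s-\sla14}$ into $H^{s-\sla34}$, with tame estimates. The curvature term $\div(\nabla\eta/\sqrt{1+|\nabla\eta|^2})$ and the quadratic terms in $\nabla\psi$ and $G(\eta)\psi$ are analytic compositions of smooth functions with derivatives. Their loss is controlled by the algebra property \eqref{tameHsx} together with Moser composition estimates. Counting derivatives, the worst term is the curvature term, which sends $\eta\in H^{s+\sla14}$ into $H^{s-\sla74}$. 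After applying $\tM(D)$ (gain $\sla12$) it lands in $H^{s-\sla54}$, and this is compatible with the claimed $H^{s-\sla32}$.

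Expanding in Taylor series at the origin gives $\hamvec{2}$ as the second differential and $\hamvec{\geq3}$ as the analytic remainder. The bound $\|\hamvec{\geq3}(\zak)\|_{s-\sla32}\lesssim_s\|\zak\|_s^3$ follows from Cauchy estimates on a small ball.

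\emph{Tame estimates.} Estimate \eqref{est:XH} follows from the linear bound on $\vomega(D)$ together with the quadratic bound. The main point is \eqref{tameest:campo}, which requires a tame form of the Dirichlet--Neumann estimate: $\|G(\eta)\psi-|D|\psi\|_{s-\sla34}\lesssim_s\|\eta\|_{s_0}\|\psi\|_{s}+\|\eta\|_s\|\psi\|_{s_0}$, stated in the appropriate shifted indices. I expect this to be the main obstacle. I would first try to obtain it from the paralinearization \eqref{eq_para:DNINTRO} proved in the appendix: $G(\eta)\psi=\Opbw{\lambda}\omega+\Opbw{-\ii\tV\cdot\xi-\tfrac12\div\tV}\eta+R_{\geq1}\psi$. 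I would then use \eqref{actionSob} on each paradifferential term, with the symbol seminorms controlled by $\|\zak\|_{s_0}$, and the tame smoothing bound on $R_{\geq1}$. Otherwise I would cite the classical tame estimates of Lannes. Combining these with \eqref{tameHsx} on the product terms yields \eqref{tameest:campo}.
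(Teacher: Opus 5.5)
Your route is the paper's: conjugate the system by $\cM$, get analyticity of $\hamvec{\cH}$ from the analytic tame theory of $G(\eta)$ (\Cref{lem:an_DN} in the paper), and read off $-\ii\vomega(D)\zak$ and the bounds on $\hamvec{2},\hamvec{\geq 3}$ from analyticity on a small $H^s$-ball. You then obtain \eqref{tameest:campo}, which only assumes smallness in $H^{s_0}$, from the paralinearization together with \eqref{actionSob} and the smoothing bounds. The paper paralinearizes the whole system via \Cref{prop:paraWW}. Paralinearizing only $G(\eta)$ and handling the $\psi$-equation with \eqref{tameHsx} and Moser estimates, as you propose, works equally well.

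Your derivative bookkeeping, however, is wrong in two places and must be fixed.

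First, $\tM(\xi)=(|\xi|/(1+\kap|\xi|^2))^{1/4}\sim\kap^{-1/4}|\xi|^{-1/4}$, so $\tM(D)$ has order $-\tfrac14$, not $-\tfrac12$. With order $\mp\tfrac12$, your mode-by-mode computation would give $\|\eta\|_{s+1/2}+\|\psi\|_{s-1/2}$ rather than \eqref{equiequi}.

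Second, $G(\eta)$ has order one, so for $\psi\in\dot H^{s-1/4}$ one has $G(\eta)\psi\in H^{s-5/4}$, not $H^{s-3/4}$. Correspondingly, the tame bound you need is $\|G(\eta)\psi-|D|\psi\|_{s-5/4}\lesssim_s\|\eta\|_{s_0}\|\psi\|_{s-1/4}+\|\eta\|_{s+1/4}\|\psi\|_{s_0}$.

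With the correct orders, both $\tM(D)^{-1}G(\eta)\psi$ and $\tM(D)\,\kap\div\big(\nabla\eta/\sqrt{1+|\nabla\eta|^2}\big)$ land exactly in $H^{s-3/2}$. The loss of $\tfrac32$ is sharp, not one with room to spare, and the $\eta$-component is as critical as the curvature term. Your two errors happened to cancel in the $\eta$-component. Had you combined the correct mapping of $G(\eta)$ with your order for $\tM(D)^{-1}$, that component would have landed only in $H^{s-7/4}$ and the count would have failed.
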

\begin{proof}
We first recall that (see \eqref{eq:1.2}, \eqref{funz:bpiccolo} and \eqref{def:V-B})
\begin{align*}
    &\vect{\pa_t \eta}{\pa_t \psi}= \vect{\tX^{(\eta)}(\eta,\psi)}{\tX^{(\psi)}(\eta,\psi)}, \qquad \;\;
     \begin{aligned}
    \tX^{(\eta)}(\eta,\psi)&= G(\eta)\psi, \\
    \tX^{(\psi)}(\eta,\psi)&= 
    - \eta+ \kap \div \left( \tb^{\sla{1}{2}} \nabla \eta\right)-\tfrac12 |\nabla \psi|^2 + \tfrac12 \tb^{-1}\tB^2.
    \end{aligned}
\end{align*}
Thanks to \Cref{lem:an_DN}, for any $s\geq s_0$ there is $r'>0$ such that  
\begin{align}
    &\tX^{(\eta)}: B_{s_0}(r')\cap H^{s+\sla{1}{4}}_0(\T^2;\R)\times \dot H^{s-\sla{1}{4}}(\T^2;\R)\to H^{s-\sla{5}{4}}_0(\T^2;\R),\nonumber\\
    &\tX^{(\psi)}: B_{s_0}(r')\cap H^{s+\sla{1}{4}}_0(\T^2;\R)\times \dot H^{s-\sla{1}{4}}(\T^2;\R)\to \dot H^{s-\sla{7}{4}}(\T^2;\R).
    \label{XetaXpsi}
\end{align}
Moreover, by \eqref{zak}, we have $ \vect{\eta}{\psi}= \cM^{-1} \zak$, where 
\begin{align}
    \cM^{-1} \in \cL \left(H^s_\R;  H^{s+\sla{1}{4}}_0(\T^2;\R)\times \dot H^{s-\sla{1}{4}}(\T^2;\R)\right).
    \label{emme-1}
\end{align}
By boundedness of $ \cM^{-1}$ there is $ r>0$ such that we have also
\begin{align}
    \cM^{-1} \colon B_{s_0}(r)\cap H^s_\R \to  B_{s_0}(r')\cap H^{s+\sla{1}{4}}_0(\T^2;\R)\times \dot H^{s-\sla{1}{4}}(\T^2;\R).
    \label{emme-2}
\end{align}
Moreover we have 
\begin{align}
\pa_t \zak= \vect{\pa_t \zetina}{\pa_t \ov{\zetina}} 
= \frac{1}{\sqrt{2}}\vect{\tM(D)^{-1}\circ \tX^{(\eta)}\circ \cM^{-1}(\zak)+ \ii \tM(D) \circ \tX^{(\psi)}\circ \cM^{-1}(\zak)}{\tM(D)^{-1}\circ \tX^{(\eta)}\circ \cM^{-1}(\zak)- \ii \tM(D)\circ \tX^{(\psi)}\circ \cM^{-1}(\zak)}=: \hamvec{\cH}(\zak).
    \label{hamvecX}
\end{align}
Gathering \eqref{emme-2} and \eqref{XetaXpsi}, and using the very definition \eqref{hamvecX}, we deduce that
\begin{align}
    \hamvec{\cH}: B_{s_0}(r)\cap H^s_\R
    \to H^{s-\sla{3}{2}}_\R\,,
    \qquad \text{is analytic.}
    \label{ana:hamvec}
\end{align}
We finally prove the expansion in \eqref{X:zak_esp}. 
First one has $ \hamvec{X}(0)=0$. Moreover 
\begin{align*}
    \di_\zak \hamvec{X}(0)= \cM \begin{pmatrix}
        \di_\eta \tX^{(\eta)}(0)&\di_\psi \tX^{(\eta)}(0) \\
        \di_\eta \tX^{(\psi)}(0)& \di_\psi \tX^{(\psi)}(0)
    \end{pmatrix}\cM^{-1}= \begin{pmatrix}
        -\ii \Lambda(D)& 0 \\ 0& \ii \Lambda(D)
    \end{pmatrix}= -\ii \vomega(D).
\end{align*}
Then the expansion in \eqref{X:zak_esp} follows from the analyticity established in \eqref{ana:hamvec} which automatically provides the estimates in \eqref{espansione_campoanal}- \eqref{est:XH} for $\zak \in B_{s}(r)$. In order to prove the expansion and tame estimate \eqref{tameest:campo} for  $\zak \in B_{s_0}(r) \cap H^{s}_\R$ it is sufficient to combine the paralinearization of the system in \Cref{prop:paraWW}, the definition of the good unknown \eqref{def:good}, estimate \eqref{actionSob} for the paradifferential operators, \eqref{bound:smoo} with \eqref{emme-1}-\eqref{emme-2}. This concludes the proof. 
\end{proof}  
We conclude this section with recalling  the non-resonance conditions with loss of derivatives proved in \cite[Formula (2.3)]{IoP}.
\begin{lemma}
\label{lem:small_divisors}
    There is a zero measure set $\mathscr{N}\subset \R_+$ such that for any $\kap \in \R_{+}\setminus \mathscr{N}$ there is a constant $c>0$ such that the following holds. For any $j,k \in \Z^2\setminus \{0\}$ and $\sigma_1,\sigma_2 \in \{\pm\}$ one has the lower bound 
    \begin{align}
        | \sigma_1 \Lambda(j)+ \sigma_2\Lambda(k)- \Lambda(\sigma_1 j+ \sigma_2 k)| \geq \frac{c}{\max\{|j|,|k|\}^{2}\min\{|j|,|k|\}^{4}}\,.
        \label{eq:lower_bound}
    \end{align}
\end{lemma}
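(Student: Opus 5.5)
We follow the standard measure-theoretic scheme for three-wave non-resonance on $\T^2$, using $\kap$ as the parameter. Fix $\sigma_1,\sigma_2$ and $j,k\in\Z^2\setminus\{0\}$ and set $\ell:=\sigma_1 j+\sigma_2 k$. If $\ell=0$ the divisor equals $\Lambda(j)+\Lambda(k)$ or $\Lambda(j)-\Lambda(k)$ with $|j|=|k|$. In the first case it is bounded below trivially. In the second case the combination is identically zero, but $\ell=0$ corresponds to the zero mode, which is excluded by working on zero-average functions. So assume $\ell\neq0$ and consider
\[
\phi(\kap):=\sigma_1\Lambda_\kap(j)+\sigma_2\Lambda_\kap(k)-\Lambda_\kap(\ell),\qquad \Lambda_\kap(n)=\sqrt{|n|(1+\kap|n|^2)}.
\]
By symmetry we may assume $|k|\le|j|$, so that $|\ell|\le 2|j|$.

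The plan is to show that $\phi$ is non-degenerate in $\kap$ on any compact interval $[\kap_1,\kap_2]\subset(0,\infty)$. Then the sublevel set $\{\kap:|\phi(\kap)|<\gamma\,|j|^{-2}|k|^{-4}\}$ has measure $\lesssim(\gamma|j|^{-2}|k|^{-4})^{1/n}\times(\text{scale})$, where $n$ is the number of derivatives used in a Pyartli-type (or Rüssmann) lemma. We then sum over $(j,k,\sigma_1,\sigma_2)$ and let $\gamma\to0$. Borel–Cantelli gives the zero-measure set $\mathscr N$, and an exhaustion $\bigcup_m[1/m,m]$ finishes.

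For the non-degeneracy, the natural variables are $a_n:=|n|^3$ and $b_n:=|n|$. Then $\Lambda_\kap(n)=\sqrt{b_n+\kap a_n}$, and
\[
\partial_\kap^p\Lambda_\kap(n)=c_p\,a_n^p\,(b_n+\kap a_n)^{\frac12-p},
\]
with explicit nonzero constants $c_p$. Each summand behaves like $\kap^{1/2-p}|n|^{3/2}$ times a correction of relative size $1/(\kap|n|^2)$. Expanding $\phi$ for large frequencies gives
\[
\phi(\kap)=\sqrt\kap\,(\sigma_1|j|^{3/2}+\sigma_2|k|^{3/2}-|\ell|^{3/2})+\frac{1}{2\sqrt\kap}(\sigma_1|j|^{-1/2}+\dots)+O(\kap^{-3/2}\cdot\text{small}).
\]
Since $\sqrt\kap$ and $\kap^{-1/2}$ are independent functions, we use two derivatives. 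We apply the Wronskian of $\{\kap^{1/2},\kap^{-1/2},\dots\}$, or directly the $2\times2$ system $(\phi,\phi')$. This shows that $\max_{p\le 2}|\partial_\kap^p\phi|$ is bounded below by the larger of the two coefficient sizes.

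The main obstacle is a quantitative lower bound on these coefficients. Near-cancellation $\sigma_1|j|^{3/2}+\sigma_2|k|^{3/2}\approx|\ell|^{3/2}$ can occur at size $|j|^{3/2}$, but then the $\kap^{-1/2}$ coefficient $\sigma_1|j|^{-1/2}+\sigma_2|k|^{-1/2}-|\ell|^{-1/2}$ must be checked. Its size is governed by the smallest frequency, giving roughly $\gtrsim|j|^{-1/2}$ or, in the worst case, $\gtrsim|k|^{-1/2}\cdot$(a gap). The two coefficients cannot vanish simultaneously for integer vectors unless the triple is trivial, since $|j|^2,|k|^2,|\ell|^2\in\N$. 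A crude bound $\gtrsim|j|^{-3/2}|k|^{-2}$, obtained from integrality of squared norms via a rationalization $x-y=(x^2-y^2)/(x+y)$ applied twice, suffices for summability.

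With $n=2$ derivatives, the measure of the bad set for fixed $(j,k)$ is $\lesssim(\gamma|j|^{-2}|k|^{-4})\,/\,(\text{lower bound})^{\ldots}$. We track the exponents so that
\[
\sum_{j,k}\text{meas}\lesssim\gamma\sum|j|^{-2+\alpha}|k|^{-4+\beta}
\]
converges on $\Z^2\times\Z^2$ for the resulting $\alpha,\beta$. The exponents $2$ and $4$ in \eqref{eq:lower_bound} were presumably chosen exactly for this balance. Since we are allowed to quote \cite[Formula (2.3)]{IoP}, the cleanest route is to verify that our setting, $\T^2$ with $g=1$ and dispersion $\Lambda$ as in \eqref{def:dispersion}, coincides with theirs, and cite it. The argument above serves as the justification sketch.
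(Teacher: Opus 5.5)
Your closing suggestion, citing \cite[Formula (2.3)]{IoP}, is the paper's proof. The step you leave out is the reduction, which is all the paper's proof consists of. Formula (2.3) is stated for a normalized triple in which a designated frequency is the smallest of the three. The paper therefore splits into three cases according to which of $|j|$, $|k|$, $|\sigma_1 j+\sigma_2 k|$ is minimal, and chooses $(\xi,\eta)$ in (2.3) so that this frequency sits in the distinguished slot. It then uses that the smallest of the three is $\le\min\{|j|,|k|\}$ and the largest is $\le 2\max\{|j|,|k|\}$, so the sharper bound of \cite{IoP} implies \eqref{eq:lower_bound}. Your remark on $\ell=\sigma_1 j+\sigma_2 k=0$ is correct and more careful than the statement itself. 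For $j=k$, $\sigma_1=-\sigma_2$ the left-hand side vanishes, so this case must be excluded. That is harmless, because the remainders in Lemma~\ref{homo:solve0_finale_birk} take values in zero-average functions.

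The self-contained measure argument, however, has a genuine gap: a union bound over all pairs $(j,k)$ cannot converge. Already $\sum_{j\in\Z^2,\,|j|\ge|k|}|j|^{-2}=\infty$, so your claimed convergence of $\sum|j|^{-2+\alpha}|k|^{-4+\beta}$ fails for every $\alpha\ge0$. The $1/n$-th power from a two-derivative Pyartli/R\"ussmann lemma only makes this worse. No lower bound on derivatives of $\phi$ can rescue it, because of the exact identity
\[
\phi-2\kappa\,\partial_\kappa\phi=\sigma_1\frac{|j|}{\Lambda(j)}+\sigma_2\frac{|k|}{\Lambda(k)}-\frac{|\ell|}{\Lambda(\ell)}=O(1),
\]
which gives $|\partial_\kappa\phi|\lesssim1$ on $\{|\phi|<\epsilon\}$. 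Hence once $|\phi|<\epsilon/2$ at some interior point, the bad set contains an interval of length $\gtrsim\epsilon$. Convergence must therefore come from showing that for most pairs the bad set is \emph{empty}.

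Let $A:=\sigma_1|j|^{3/2}+\sigma_2|k|^{3/2}-|\ell|^{3/2}$, which does not depend on $\kappa$, and let $m$ be the smallest of $|j|,|k|,|\ell|$. Then $|\phi-\sqrt\kappa A|\lesssim m^{-1/2}$, so a bad $\kappa$ can exist only if $|A|\lesssim m^{-1/2}$. Fix the smallest frequency. The gradient of $A$ in the large variable is $\gtrsim mN^{-1/2}$, so the other frequencies of size $\sim N\gg m$ satisfying this lie in an $O(N^{1/2})$-neighbourhood of a curve, and there are only $O(N^{3/2})$ of them. Moreover $|\partial_\kappa\phi|\gtrsim m^{-1/2}$ on the bad set in this regime. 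Together these make the dyadic sum $\sum_N N^{3/2}N^{-2}$ converge.

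This lattice-point count of near-resonant pairs is the missing idea. Without it your sketch does not justify the estimate; what proves it is the citation together with the three-case reduction.
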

\begin{proof}
    We distinguish three cases: 
    \begin{enumerate}
        \item If $|j|\leq \min\{|k|, |\sigma_1 j+\sigma_2 k|\}$. Then applying formula \cite[Formula (2.3)]{IoP} with $ \eta\leadsto \sigma_2k$ and $\xi \leadsto \sigma_1 j + \sigma_2 k$ we get \eqref{eq:lower_bound};
         \item If $|k|\leq \min\{|j|, |\sigma_1 j+\sigma_2 k|\}$. Then applying formula \cite[Formula (2.3)]{IoP} with $ \eta\leadsto \sigma_1 j$ and $\xi \leadsto \sigma_1 j + \sigma_2 k$ we get \eqref{eq:lower_bound};
          \item If $|\sigma_1 j+\sigma_2 k|\leq \min\{|k|,|j| \}$. Then applying formula \cite[Formula (2.3)]{IoP} with $ \eta\leadsto -\sigma_2k$ and $\xi \leadsto \sigma_1 j$ we get \eqref{eq:lower_bound}.
    \end{enumerate}
    This concludes the proof.
\end{proof}
\begin{remark}
   In fact, \cite[Formula (2.3)]{IoP} yields a sharper estimate, where the large factor $\max{ |j|,|k|}^{2}$ in the denominator on the right-hand side of \eqref{eq:lower_bound} is replaced by
$\max{ |j|,|k|}^{3/2}\log(\max{ |j|,|k|})^{-1-\alpha}$ for some small $\alpha>0$.
Since this refinement is not needed in our analysis, we state the weaker bound \eqref{eq:lower_bound} for the sake of simplicity.
\end{remark}
	    \subsection{Water waves related bounds}
    In this section we collect all the properties of the fundamental water waves quantities. Let us recall (see \eqref{def:V-B}) 
    the functions
\be \label{V_B:def}
\tV:= (\nabla \Phi)_{|y=\eta(t,x)}= 
\nabla \psi - \tB \nabla \eta\,, 
\qquad 
\tB:=(\pa_y \Phi)_{|y=\eta(t,x)}= 
\frac{G(\eta)\psi+ \nabla \eta\cdot \nabla \psi}{1+|\nabla \eta|^2}\,,
\ee
    which are the horizontal and vertical component of the fluid velocity at the boundary $\{ y=\eta(t,x)\}$. 
    \begin{lemma}[Expansion and bounds for $\tV$ and $\tB$]
    \label{V_B:lemma}
 There exists  $s_0 > 0$ such that for every 
 $\sigma \geq s_0+\frac14$, 
 there exists $r > 0$ with the property that for all 
$
\eta \in B_{s_0}(r) \cap H^{\sigma+\tfrac{1}{4}}_0$ 
{and} $\psi \in \dot{H}^{\sigma - \tfrac{1}{4}},
$ the functions $\tV$ and $\tB$ in \eqref{V_B:def} belong to $ H^{\s-\sla{5}{4}}$ and:
\begin{itemize}
    \item {\bf Boundedness:}
    \begin{align}\label{est:V+B}
    \| \tV\|_{\s-\sla{5}{4}}+\| \tB\|_{\s-\sla{5}{4}}
    \lesssim_{\s} 
    \| \psi\|_{\sigma-\sla{1}{4}}
    + \| \eta\|_{\sigma+\sla{1}{4}} \|\psi\|_{s_0}\,;
\end{align}
\item {\bf Taylor's expansions:} For any $\s\geq s_0+\frac14$ there is $r'>0$ such that for any  $\eta \in B_{\sigma+\sla{1}{4}}(r')$ and $\psi\in \dot H^{\s-\sla{1}{4}}$ we have
\begin{align}
&\tV=\tV_1+ \tV_{\geq 2}\,, 
\qquad &&\tV_1=\nabla \psi\,, 
\qquad &&\| \tV_{\geq 2}\|_{\s-\sla{5}{4}}
\lesssim_{\s} 
\| \eta\|_{\sigma+\sla{1}{4}} \| \psi\|_{\sigma-\sla{1}{4}}\label{V:exp1}
\\
&\tV=\tV_1+\tV_2+ \tV_{\geq 3}\,, 
\quad &&\tV_2=|D|\psi \,\nabla \eta\,, 
\qquad &&\| \tV_{\geq 3}\|_{\s-\sla{5}{4}}
\lesssim_{\s} \| \eta\|_{\sigma+\sla{1}{4}}^2
\| \psi\|_{\sigma-\sla{1}{4}}\,,\label{V:exp2}
\\
&\tB=\tB_1+ \tB_{\geq 2}\,, 
\qquad &&\tB_1:=|D| \psi\,, 
\qquad &&\| \tB_{\geq 2}\|_{\s-\sla{5}{4}}
\lesssim_{\s} \| \eta\|_{\sigma+\sla{1}{4}} 
\| \psi\|_{\sigma-\sla{1}{4}}\,.\label{B:exp}
\end{align}
\end{itemize}
    \end{lemma}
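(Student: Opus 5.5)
The plan is to derive everything from the definitions \eqref{V_B:def} together with the analytic and tame properties of the Dirichlet--Neumann operator stated in \Cref{lem:an_DN} (Appendix \ref{app:DN}). We use the product (tame) estimate \eqref{tameHsx} throughout, and we use the smallness $\|\eta\|_{s_0}\le r$ to control the rational factor $\tb=(1+|\nabla\eta|^2)^{-1}$.

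\textbf{Step 1: boundedness.} By \Cref{lem:an_DN}, the map $(\eta,\psi)\mapsto G(\eta)\psi$ satisfies the tame bound
\[
\|G(\eta)\psi\|_{\s-\sla{5}{4}}\lesssim_\s \|\psi\|_{\s-\sla{1}{4}}+\|\eta\|_{\s+\sla{1}{4}}\|\psi\|_{s_0}.
\]
This is consistent with the mapping property \eqref{XetaXpsi}, since $G(\eta)$ has order one.

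Next, $\nabla\eta\cdot\nabla\psi$ is estimated by \eqref{tameHsx} in $H^{\s-\sla{5}{4}}$. Here $\nabla\eta\in H^{\s-\sla{3}{4}}$ and $\nabla\psi\in H^{\s-\sla{5}{4}}$, and the low norms are absorbed because $\|\eta\|_{s_0}\le r$.

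For $\tb$, we write it as the convergent series $\tb=\sum_n(-1)^n|\nabla\eta|^{2n}$. Iterating \eqref{tameHsx} gives
\[
\|\tb-1\|_{\s-\sla{3}{4}}\lesssim_\s\|\eta\|_{\s+\sla{1}{4}},
\]
provided $r$ is small enough for the series to converge. Multiplying the two estimates with \eqref{tameHsx} gives the bound \eqref{est:V+B} for $\tB$.

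For $\tV=\nabla\psi-\tB\nabla\eta$, the same product estimate applies. The term $\|\tB\|_{s_0}\|\eta\|_{\s+\sla{1}{4}}$ is bounded by $\|\psi\|_{s_0}\|\eta\|_{\s+\sla14}$ after enlarging $s_0$ suitably. This proves \eqref{est:V+B} for $\tV$.

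\textbf{Step 2: Taylor expansions.} We use the analyticity of $\eta\mapsto G(\eta)$ near $0$ with values in $\cL(\dot H^{\s-\sla14},H^{\s-\sla54})$, from \Cref{lem:an_DN}. It yields
\[
G(\eta)=|D|+G_1(\eta)+G_{\ge2}(\eta),\qquad G_1(\eta)\psi=-\div(\eta\nabla\psi)-|D|(\eta|D|\psi),
\]
the classical Craig--Sulem expansion. The remainders satisfy
\[
\|G_{\ge1}(\eta)\psi\|\lesssim\|\eta\|_{\s+\sla14}\|\psi\|_{\s-\sla14},\qquad \|G_{\ge2}(\eta)\psi\|\lesssim\|\eta\|_{\s+\sla14}^2\|\psi\|_{\s-\sla14},
\]
on $B_{\s+\sla14}(r')$. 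This is where the stronger smallness of $\eta$ in the high norm is used: these estimates are no longer tame.

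Substituting into \eqref{V_B:def} gives
\[
\tB=|D|\psi+\big(G_{\ge1}(\eta)\psi+\nabla\eta\cdot\nabla\psi\big)\tb+|D|\psi(\tb-1),
\]
and each term after the first is at least linear in $\eta$. This yields \eqref{B:exp} with $\tB_1=|D|\psi$.

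Then $\tV=\nabla\psi-\tB\nabla\eta$ gives \eqref{V:exp1} immediately. For \eqref{V:exp2} we collect the bilinear terms: $\tV_2=-\tB_1\nabla\eta$. The sign shown in \eqref{V:exp2} is a convention to double-check against the definition. The remainder $-\tB_{\ge2}\nabla\eta$ is quadratic in $\eta$ and linear in $\psi$ because $\tB_{\ge2}$ is linear in $\psi$ and at least linear in $\eta$, with norm controlled by Step 1 and \eqref{B:exp}.

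\textbf{Main obstacle.} The key difficulty is the Dirichlet--Neumann input: the tame estimate and the analytic Taylor remainder bounds for $G(\eta)$ with the exact loss of $\sla54$ derivatives. We take these from \Cref{lem:an_DN} rather than reproving them. Everything else is bookkeeping with \eqref{tameHsx}.
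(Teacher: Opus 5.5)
Your proposal is correct and follows essentially the same route as the paper: the tame bound and analyticity of $G(\eta)$ from \Cref{lem:an_DN}, tame product estimates (you handle $\tb-1$ by a Neumann series where the paper invokes Moser's composition estimate), and the expansion $G(\eta)=|D|+G_{\geq1}(\eta)$, which gives the paper's $\tB_{\geq2}$ up to rearrangement; the explicit Craig--Sulem form of $G_1$ and the $G_{\geq2}$ bound are not needed. Your sign $\tV_2=-|D|\psi\,\nabla\eta$, with $\tV_{\geq3}=-\tB_{\geq2}\nabla\eta$, is the correct one; the statement and the paper's proof (which writes $\tV_{\geq3}:=\tB_{\geq2}\nabla\eta$) both carry a sign slip there.
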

    \begin{proof}
        First of all it is classical (see e.g. \cite[Theorem 1.2]{BMV22}) that under the hypothesis of \Cref{V_B:lemma} the function
        \[
        \eta \mapsto G(\eta)\,, 
        \qquad 
        B_{s_0}(r)\cap H^{\s+\sla{1}{4}}_0
        \mapsto \cL\left(H^{\sigma-\sla{1}{4}};H^{\s-\sla{5}{4}}\right)\,,
        \]
        is analytic and satisfies tame estimates
        \begin{align}\label{stima:DN}
            \| G(\eta)\psi\|_{\s-\sla{5}{4}}\lesssim_\s 
            \| \psi\|_{\s-\sla{1}{4}}
            + \| \eta \|_{\s+\sla{1}{4}} \| \psi\|_{s_0}\,.
        \end{align}
        We start estimating $\tB$. From \eqref{V_B:def}, tame estimates for the product, 
        Moser's tame estimates and \eqref{stima:DN}, we get 
\begin{align}
\| \tB\|_{\s-\sla{5}{4}}&\lesssim_\s 
\left\| \frac{1}{1+|\nabla\eta|^2}\right\|_{s_0}
\left( \|G(\eta)\psi\|_{\s-\sla{5}{4}}
+\| \nabla \eta\cdot \nabla \psi\|_{\s-\sla{5}{4}}\right)
\nonumber
\\&\quad 
+\left\| \frac{1}{1+|\nabla\eta|^2}\right\|_{\s-\sla{5}{4}} 
\left( \|G(\eta)\psi\|_{2}
+\| \nabla \eta\cdot \nabla \psi\|_{2}\right)
\nonumber
\\
&\lesssim_\sigma 
\| \psi\|_{\s-\sla{1}{4}}
+ \| \eta \|_{\s+\sla{1}{4}} \| \psi\|_{s_0}
+ (1+ \| \eta\|_{3}\| \eta\|_{\s+\sla{1}{4}}) 
\| \psi\|_{s_0}
\nonumber
\\&
\lesssim_\sigma 
\| \psi\|_{\s-\sla{1}{4}}
+ \| \eta \|_{\s+\sla{1}{4}} \| \psi\|_{s_0}\,.
\label{est:B_proof}
\end{align}
This proves the estimate \eqref{est:V+B} 
for $ \tB$. 
Using tame estimates for the product 
and \eqref{est:B_proof} for $ \tB$, 
we get \eqref{est:V+B} also for $\tV$.

\noindent
We now prove the expansion in \eqref{V:exp1}. 
By \eqref{V_B:def}, we define 
$V_{\geq 2}:= \tB \nabla \eta$ that satisfies 
the quadratic estimate in \eqref{V:exp1} 
thanks to the algebra estimates for 
the product and \eqref{est:V+B}. 

\noindent
We then prove \eqref{B:exp}. We use the analyticity 
of $G(\eta)$ and we Taylor expand it 
\begin{align}\label{exp:DN}
G(\eta)= |D|+ G_{\geq 1}(\eta)\,, 
\qquad 
\| G_{\geq 1}(\eta)\|_{\cL(H^{\s-\sla{1}{4}};H^{\s-\sla{5}{4}})}
\lesssim_\sigma \| \eta\|_{\s+\sla{1}{4}}\,.
\end{align}
        Then, by \eqref{V_B:def}, we expand
        \begin{align}\label{B:exp_proof}
            \tB= |D|\psi+ \tB_{\geq 2}\,, 
            \qquad \tB_{\geq 2}:= G_{\geq 1}(\eta)\psi + \frac{\nabla \eta\cdot \nabla \psi-|\nabla \eta|^2 G(\eta)\psi}{1+|\nabla \eta|^2}\,.
        \end{align}
Using the algebra estimates of $H^{\s-\sla{5}{4}}$, the estimate in \eqref{exp:DN} and \eqref{stima:DN}, we get
\begin{align*}
    \| \tB_{\geq 2}\|_{\s-\sla{5}{4}}&\lesssim_\s 
    \| \eta \|_{\s+\sla{1}{4}} \| \psi\|_{\s-\sla{1}{4}} 
    + \left\|\frac{1}{1+|\nabla \eta|^2}\right\|_{\s-\sla{5}{4}}
    \left( \| \eta\|_{\s+\sla{1}{4}}\| \psi\|_{\s-\sla{1}{4}}
    + \| \eta\|_{\s+\sla{1}{4}}^2
    \| G(\eta)\psi\|_{\s-\sla{5}{4}} \right)
    \\
    &\lesssim_\s   
\| \eta \|_{\s+\sla{1}{4}} \| \psi\|_{\s-\sla{1}{4}}\,,
\end{align*}
        proving \eqref{B:exp}. We finally prove \eqref{V:exp2}. Thanks to \eqref{V_B:def} and \eqref{B:exp} we write
        \begin{align*}
\tV= \nabla \psi-\tB \nabla \eta= \tV_1+ \tV_2 +\tV_{\geq 3}\,, 
\qquad \tV_{\geq 3}:= \tB_{\geq 2}\nabla \eta\,.
    \end{align*}
        Then the estimate \eqref{V:exp2} for $\tV_{\geq 3}$ follows from the algebra estimate of $H^{\s-\sla{5}{4}}$ and  \eqref{B:exp}.
    \end{proof}
    

    For $\s\in \R$ we define also 
    $X^\s:= H^{\s+\sla{1}{4}}_0\times \dot H^{\s-\sla{1}{4}}$ with norm
    $$
    \| (\eta,\psi)\|_{X^\s}= \| \eta\|_{\s+\sla{1}{4}}+\| \psi\|_{\s-\sla{1}{4}}\,.
    $$
\begin{lemma}[Expansion and bound for $\ta$]
There is $s_0>0$ such that for 
any $ \s\geq s_0+\frac14$ there is $r>0$ 
and a function
\begin{align}\label{def:Tayfunctional}
\ta: \Big( B_{\s+\sla{1}{4}}(r) \cap H^{\s+ \sla{1}{4}}_0 \Big) \times \Big( B_{\s- \sla{1}{4}}(r) \cap
\dot H^{\s-\sla{1}{4}} \Big) \mapsto H^{\s-\sla{11}{4}}\,, 
\qquad (\eta,\psi) \mapsto \ta(\eta,\psi;x) \,,
\end{align}
with the property that for any solution 
$(\eta(t),\psi(t)) \in \Big( B_{\s+\sla{1}{4}}(r) \cap H^{\s+ \sla{1}{4}}_0 \Big) \times \Big( B_{\s- \sla{1}{4}}(r) \cap \dot H^{\s-\sla{1}{4}} \Big)$ of the system \eqref{eq:1.2}, 
the function $\ta$ defined in \eqref{def:a-Tay} 
is in $H^{\s-\sla{11}{4}}$, 
coincides with \eqref{def:Tayfunctional} and 
\begin{itemize}
    \item {\bf Boundedness:}
\begin{align*}
\| \ta\|_{\s-\sla{11}{4}}
\lesssim_\sigma 
\| (\eta,\psi)\|_{X^\s}+\| (\eta,\psi)\|_{X^\s}^2 \,;
\end{align*}
     \item {\bf Taylor's expansion:}
\begin{align*}
&\ta= \ta_1 +\ta_{\geq 2}\,, 
&& \ta_1:= -(|D|+\kap|D|^3)\eta\,, 
&& \| \ta_{\geq 2}\|_{\s-\sla{11}{4}}
\lesssim_\s \| (\eta,\psi)\|_{X^\s}^2\,.
\end{align*}
\end{itemize}
\end{lemma}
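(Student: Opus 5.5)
The plan is to remove the time derivative from $\ta=\pa_t\tB+\tV\cdot\nabla\tB$ by using the equations \eqref{eq:1.2}, so that $\ta$ becomes an explicit nonlinear functional of $(\eta,\psi)$ alone. The starting point is the formula $\tB=\tB(\eta,\psi)$ in \eqref{V_B:def}, which is analytic in $(\eta,\psi)$ on the indicated balls, as shown in \Cref{V_B:lemma}. By the chain rule,
\[
\pa_t\tB=\di_\eta\tB(\eta,\psi)[\pa_t\eta]+\di_\psi\tB(\eta,\psi)[\pa_t\psi].
\]
Along solutions we replace $\pa_t\eta=\tX^{(\eta)}(\eta,\psi)=G(\eta)\psi$ and $\pa_t\psi=\tX^{(\psi)}(\eta,\psi)$, as in the proof of \Cref{prop:compZak}. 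This gives the functional
\[
\ta(\eta,\psi):=\di_\eta\tB(\eta,\psi)[\tX^{(\eta)}]+\di_\psi\tB(\eta,\psi)[\tX^{(\psi)}]+\tV\cdot\nabla\tB,
\]
which depends only on $(\eta,\psi)$. By construction it coincides with \eqref{def:a-Tay} along solutions.

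The regularity counting goes as follows. From \eqref{XetaXpsi} we have $\tX^{(\eta)}\in H^{\s-\sla{5}{4}}$ and $\tX^{(\psi)}\in H^{\s-\sla{7}{4}}$. The differential of $\tB$ loses one derivative in each variable, because $\tB$ contains $G(\eta)\psi$ and $\nabla\eta\cdot\nabla\psi$. Therefore $\di_\psi\tB[\tX^{(\psi)}]\in H^{\s-\sla{11}{4}}$, and this term determines the final index. For the term $\di_\eta\tB[G(\eta)\psi]$ we use the shape derivative formula $\di_\eta G(\eta)\psi[h]=-G(\eta)(\tB h)-\div(\tV h)$. In this term the increment $h=G(\eta)\psi$ lies in $H^{\s-\sla{5}{4}}$, so the term costs one more derivative and lands in $H^{\s-\sla{9}{4}}\subset H^{\s-\sla{11}{4}}$.

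To justify these claims we use the analyticity and tame estimates \eqref{stima:DN} for $\eta\mapsto G(\eta)$, valid on a scale of indices by \cite{BMV22}, together with Moser estimates for $(1+|\nabla\eta|^2)^{-1}$ and \eqref{est:V+B} for $\tV\cdot\nabla\tB$. These give the bound $\|\ta\|_{\s-\sla{11}{4}}\lesssim_\s\|(\eta,\psi)\|_{X^\s}+\|(\eta,\psi)\|_{X^\s}^2$. The linear part of the bound comes only from $\di_\psi\tB$ applied to the linear part of $\tX^{(\psi)}$; every other contribution is at least quadratic on the small ball.

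For the Taylor expansion we use \eqref{B:exp}, namely $\tB=|D|\psi+\tB_{\geq2}$. At first order this gives $\pa_t\tB=|D|\pa_t\psi+\dots$. The linear part of $\pa_t\psi$ is $-(1+\kap|D|^2)\eta$, which yields $\ta_1=-(|D|+\kap|D|^3)\eta$. The remaining contributions are collected into $\ta_{\geq2}$:
\begin{itemize}
\item $|D|$ applied to the nonlinear part of $\tX^{(\psi)}$, which is quadratic by the tame product estimates and \eqref{B:exp};
\item the derivative of $\tB_{\geq2}$ applied to $\tX$, which is at least quadratic because $\tB_{\geq2}$ vanishes to second order;
\item the product $\tV\cdot\nabla\tB$, which is quadratic by \eqref{est:V+B}.
\end{itemize}
Each of these is bounded by $\|(\eta,\psi)\|_{X^\s}^2$ in $H^{\s-\sla{11}{4}}$.

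The main obstacle is controlling $\di_\eta\tB$, that is, the shape derivative of the Dirichlet--Neumann operator, with tame bounds and the correct loss of derivatives. I would try first to cite the analyticity of $G$ from \cite{BMV22} and use Cauchy estimates for the derivative on a slightly smaller ball. If that is not enough, I would fall back on the explicit shape derivative formula above.
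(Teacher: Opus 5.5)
Your proposal is correct and follows essentially the paper's argument. The paper likewise inserts $\pa_t\eta=G(\eta)\psi$ and $\pa_t\psi=\tX^{(\psi)}$ into the chain rule for $\tB=|D|\psi+\tB_{\geq 2}$, and extracts $\ta_1=-(|D|+\kap|D|^3)\eta$ from $|D|$ applied to the linear part of $\tX^{(\psi)}$. It then bounds $|D|\tX^{(\psi)}_{\geq 2}$, $\di_\psi\tB_{\geq 2}[\tX^{(\psi)}]$, $\di_\eta\tB_{\geq 2}[G(\eta)\psi]$ and $\tV\cdot\nabla\tB$, which are exactly your three bullets. The only differences are cosmetic: the paper Taylor-expands $\tB$ before differentiating, and it controls $\di_\eta\tB_{\geq 2}$ through the analyticity of $G(\eta)$ rather than the explicit shape-derivative formula. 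Both routes give the same regularity count, and your $H^{\s-9/4}$ bound for that term is slightly sharper.
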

	\begin{proof}
    First, recalling \eqref{eq:1.2} we introduce 
    the notation
    \begin{align}\label{def:meancurv}
\div\left(\frac{\nabla \eta}{\sqrt{1+|\nabla\eta|^2}}\right)
= \Delta \eta- \div\left(\frac{\nabla \eta|\nabla \eta|^2}{\sqrt{1+|\nabla\eta|^2}
\left(1+\sqrt{1+|\nabla\eta|^2}\right)}\right)
=\Delta \eta+ H_{\geq 3}(\eta)\,.
\end{align}
Then, using the Taylor expansion in \eqref{B:exp} and
the explicit formula \eqref{B:exp_proof}, 
we compute the time derivative 
    \begin{align}
\pa_t \tB&= 
| D |\pa_t \psi+ \di_\psi \tB_{\geq 2}(\eta,\psi)[\pa_t \psi]
+ \di_\eta \tB_{\geq 2}(\eta,\psi)[\pa_t \eta]
\nonumber
\\
&= -(|D| +\kap |D|^3) \eta
+ | D| \tX_{\geq 2}^{(\psi)}(\eta,\psi)
+ \di_\psi \tB_{\geq 2}(\eta,\psi)[\tX^{(\psi)}(\eta,\psi)]
+\di_\eta \tB_{\geq 2}(\eta,\psi)[G(\eta)\psi]
\nonumber
\\
&= -(|D| +\kap |D|^3) \eta+R_1+R_2+R_3\,,
\label{exp:patB_proof}
\end{align}
       where (recall also \eqref{eq:1.2}-\eqref{def:meancurv})
       \begin{align*}
           \tX_{\geq 2}^{(\psi)}(\eta,\psi):=\kap  H_{\geq 3}(\eta) + \frac12 | \nabla \psi|^2
           + (1+|\nabla \eta|^2)\tB^2\,.
       \end{align*}
       Using the algebra estimate for the product, the 
Moser's composition estimate and \eqref{est:V+B}, we have
\begin{align}
\| \tX_{\geq 2}^{(\psi)}(\eta,\psi)\|_{\sigma-\sla{7}{4}}
&\lesssim_\sigma 
\left\| 
\frac{\nabla \eta|\nabla \eta|^2}
{\sqrt{1+|\nabla\eta|^2}(1+\sqrt{1+|\nabla\eta|^2})}
\right\|_{\sigma -\frac34}
+ \| \psi\|_{\s-\sla{1}{4}}^2+ \| \tB\|_{\s-\sla{7}{4}}^2
\notag
\\
&\lesssim_\sigma 
\| \eta\|_{\s+\sla{1}{4}}^3
+ \| \psi\|_{\s-\sla{1}{4}}^2
\lesssim_\s \| (\eta,\psi)\|_{X^\s}^2\,, 
\label{est:Xpsi}
\end{align}
         and
         \begin{align}
             \| \tX^{(\psi)}(\eta, \psi)\|_{\sigma-\sla{7}{4}}\lesssim_\s \| \eta+\kap\Delta \eta\|_{\sigma-\sla{7}{4}}+ \| \tX_{\geq 2}^{(\psi)}\|_{\sigma-\sla{7}{4}}\lesssim_\s \|\eta\|_{\s+\sla{1}{4}}+ \| (\eta,\psi)\|_{X^\s}^2\lesssim_\s \| (\eta,\psi)\|_{X^\s}.\label{est:Xpsi2}
         \end{align}
         We now estimate each term $R_j$, $j=1,\, 2,\, 3$. First of all, using \eqref{est:Xpsi}, we get
         \begin{align}
         \| R_1\|_{\s-\sla{11}{4}}\lesssim_\s \| \tX_{\geq 2}^{(\psi)}(\eta,\psi)\|_{\sigma-\sla{7}{4}}\lesssim_\s  \| (\eta,\psi)\|_{X^\s}^2.\label{R1}
         \end{align}
Then, using the explicit expression of $ \tB_{\geq 2}$ in \eqref{B:exp_proof}, we get 
\begin{align}
\| R_2\|_{\s-\sla{11}{4}}&= 
\left\|  G_{\geq 1}(\eta)\tX^{(\psi)} 
+ \frac{\nabla \eta\cdot \nabla \tX^{(\psi)}
-|\nabla \eta|^2 G(\eta)\tX^{(\psi)}}{1+|\nabla \eta|^2}
\right\|_{\s-\sla{11}{4}}
\nonumber
    \\
& \lesssim_\s  
\|  G_{\geq 1}(\eta)\tX^{(\psi)}\|_{\s-\sla{11}{4}}
+ \left\|\frac{\nabla \eta\cdot \nabla \tX^{(\psi)}
-|\nabla \eta|^2 G(\eta)\tX^{(\psi)}}{1+|\nabla \eta|^2}
\right\|_{\s-\sla{11}{4}}
\nonumber
\\
&\overset{\eqref{exp:DN}}{\lesssim_{\s}}  
\| \eta\|_{\s+\sla{1}{4}}\| \tX^{(\psi)}\|_{\s-\sla{7}{4}}
+ \| \eta\|_{\s-\sla{7}{4}}\| \tX^{(\psi)}\|_{\s-\sla{7}{4}}
+ \| \eta\|_{\s-\sla{7}{4}}^2
\| \tX^{(\psi)}\|_{\s-\sla{7}{4}} 
\nonumber
\\
& \overset{\eqref{est:Xpsi2}}{\lesssim_{\s}}  
\| (\eta,\psi)\|_{X^\s}^2\,.
\label{R2}
\end{align}
To estimate $ R_3$
we first observe that,  
by the analyticity of $G(\eta)$, we get
\begin{align*}
\|\di_\eta G_{\geq 1}(\eta)[\hat \eta] 
\psi\|_{ \s-\sla{11}{4}} 
\lesssim_{\s} 
\| \hat \eta \|_{\s- \sla{7}{4}} 
\|\psi \|_{\s- \sla{7}{4}}\,.
\end{align*}
Then, by \eqref{B:exp_proof}, $\tB_{\geq2}$ 
is analytic with respect to $ \eta$ with estimate
\begin{align*}
\| \di_\eta\tB_{\geq2}[ \hat \eta] \|_{\s-\sla{11}{4}} 
\lesssim_{\s} 
\|\hat \eta\|_{\s-\sla{7}{4}}\| (\eta,\psi)\|_{X^\s}\,.
\end{align*}
Then, using also \eqref{stima:DN}, we finally get
\begin{align}
\| R_3 \|_{\s-\sla{11}{4}}\lesssim_{\s} 
\|G(\eta)\psi\|_{\s-\sla{7}{4}}\| (\eta,\psi)\|_{\s}
\lesssim_{\s} \| (\eta,\psi)\|_{X^\s}^2.\label{R3}
\end{align}
    Finally, by \eqref{exp:patB_proof}, we have 
    \begin{equation*}
        \ta= \ta_1+ \ta_{\geq 2}\,, \qquad \ta_{\geq 2}:= R_1+R_2+R_3+ \tV\cdot \nabla \tB\,. 
    \end{equation*}
    Moreover, by algebra estimate of $H^{\s-\sla{11}{4}} $, we get 
    \begin{align}
    \| \tV\cdot \nabla \tB\|_{\s-\sla{11}{4}} 
        \lesssim_{\s} 
        \| \tV \|_{\s-\sla{11}{4}}
        \|\tB\|_{\s-\sla{7}{4}}
        \overset{\eqref{est:V+B}}{\lesssim_{\s}} 
        \| (\eta,\psi)\|_{X^\s}^2\,.
        \label{VB_ultima}
    \end{align}
    Gathering \cref{R1,R2,R3,VB_ultima} we obtain the thesis.
	\end{proof}

	\section{The Complex Good Variable}\label{sec:NFclassica}

In this section, we reformulate the water waves system in paradifferential form and introduce a suitable complex change of variables that symmetrizes the principal part of the equations. 

This allows us to reduce the system to a diagonal form up to lower order and smoothing remainders, which is the starting point for the analysis of the dynamics.

In addition, we perform a normal form reduction of the quadratic transport term, removing its $1$-homogeneous component without encountering small divisor issues, thanks to its gradient structure.
	\subsection{Paralinearization and Alinhac's good unknown}\label{sec good unknown}
    Following \cite{ABZ2011_2}, we introduce Alinhac’s celebrated good unknown:
\begin{equation}
\omega:= \psi- \Opbw{\tB}\eta\,,
\label{def:good}
\end{equation}
where $ \tB$ is the vertical component 
of the velocity field defined in \eqref{def:V-B}.  
The variable $\omega$ enables us to 
recast the water waves equations 
in paradifferential form. 
In particular, the following proposition, 
whose proof is contained in \Cref{app:paraWW}, holds.
    \begin{proposition}\label{prop:paraWW}
        The water waves system, for the variables $(\eta,\omega)$ defined in \eqref{def:good}, reads
\begin{align}
\pa_t \eta&= \Opbw{\lambda}\omega
+ \Opbw{-\ii \tV\cdot \xi-\tfrac12 \div (\tV)}\eta
+ R_1(\eta) \omega 
+ R_{\geq 2}(\eta)\omega
+R_{\geq 2} (\eta,\psi)\eta
\label{eq:eta}
\\
\pa_t \omega &= - \Opbw{{\kap} \sym{\th}{}{2}+1
+\ta+ \kap \sym{\th}{\geq 2}{0}}\eta
+\Opbw{-\ii \tV\cdot \xi+\tfrac12\div(\tV)}\omega
+ R_1(\eta, \psi)\omega 
+ \breve{R}_{\geq 2}(\eta,\psi)\eta
\label{eq:omega}
\end{align}
where 
        \begin{itemize}
            \item The symbol 
            $\fun{\lambda}{}(\eta;x,\xi)$ expands as 
            \begin{align}
    \lambda(\eta;x,\xi)= 
    \sym{\lambda}{}{1}(\eta;x,\xi)
    + \sym{\lambda}{}{0}(\eta;x,\xi)
    +\sym{\lambda}{}{-1}(\eta;x,\xi)\,,
           \label{esp:lambda}
            \end{align}
           where:
           \begin{itemize}
            \item $\sym{\lambda}{}{1}$ 
            belongs to $\Sigma \Gamma_0^{1}[r,4]$ 
having the explicit form
            \begin{align}
                \sym{\lambda}{}{1}:= 
                \sqrt{(1+|\nabla \eta|^2)|\xi|^2 
                - (\nabla \eta \cdot \xi)^2 }\,;   
                \label{def:lam1}
            \end{align}
               \item  $\sym{\lambda}{}{0}$ belongs to $\Sigma \Gamma_1^{0}[r,4]$ having the explicit form
               \begin{align}
                   \sym{\lambda}{}{0} := 
\frac{(1 + |\nabla \eta|^2)^2}{2\sym{\lambda}{}{1}}
\left\{
\frac{\sym{\lambda}{}{1}}{1 + |\nabla \eta|^2},
\frac{\xi \cdot \nabla \eta}{1 + |\nabla \eta|^2}
\right\}
+ \tfrac{1}{2} \Delta \eta\,; 
\label{def:lam0}
               \end{align}
               \item $\sym{\lambda}{}{-1}$ belongs to $ \Sigma\Gamma_1^{-1}[r,2]$, with expansion 
               \begin{align*}
                   \sym{\lambda}{}{-1}=\sym{\lambda}{1}{-1}+ \sym{\lambda}{\geq 2}{-1}\,,  
               \end{align*}
where $\sym{\lambda}{1}{-1} \in \wt \Gamma_1^{-1}$ 
is real and 
$\sym{\lambda}{\geq 2}{-1} \in \Gamma_{\geq 2}^{-1}[r]$;
    \end{itemize}
        
            \item the symbol $\sym{\th}{}{2}=\sym{\th}{}{2}(\eta;x,\xi)$  belongs to $\Sigma \Gamma_0^2[r,4]$ and is explicitly given by 
      \begin{align}
          \th^{(2)}:=\frac{1}{(1+|\nabla\eta|^2)^\sla{1}{2}}\left( | \xi|^2 - \frac{(\nabla \eta \cdot \xi)^2}{1+|\nabla\eta|^2}\right)=\frac{[\lambda^{(1)}]^2}{(1+|\nabla\eta|^2)^\sla{3}{2}}\,;\label{def:h}
      \end{align}      
      
            \item the function $ \ta=\ta(\eta,\psi;x) \in \Sigma\cF^{\R}_{1}[r,2] $ is the Taylor coefficient defined in \eqref{def:a-Tay} and the function $\sym{\th}{\geq 2}{0}$ belongs to the class $\cF_{\geq 2}^{\R}[r]$ (see formula \eqref{def:h0});
            \item $R_1(\eta)$ and $R_1(\eta,\psi)$ are $1$-homogeneous smoothing remainders in $\wt \cR^{-\vr}_1$;
            \item $R_{\geq 2} (\eta)$, $R_{\geq 2} (\eta,\psi)$ and $\breve{R}_{\geq 2} (\eta,\psi)$ are non-homogeneous smoothing remainders in $ \cR^{-\vr}_{\geq 2}[r]$.
        \end{itemize}
    \end{proposition}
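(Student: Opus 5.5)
The plan is the classical Alinhac–Alazard–Burq–Zuily paralinearization, run in the periodic 2D setting with sharp remainder control. It has three steps.

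\textbf{Step 1: paralinearize the Dirichlet–Neumann operator.} This is the content of \Cref{app:DN}. We flatten the domain by a diffeomorphism $(x,z)\mapsto(x,z+\eta_\chi)$ with a smoothed $\eta$. The harmonic extension then satisfies an elliptic equation with coefficients depending on $\nabla\eta$. Following the paradifferential elliptic theory, we paralinearize this equation and factor it as
\[
(\partial_z - \Opbw{a^{(-)}})(\partial_z-\Opbw{A})\tilde\Phi=\text{smoothing},
\]
up to order $-\vr$. Here $A$ is computed by symbolic calculus, and its principal part is exactly $\lambda^{(1)}$ in \eqref{def:lam1}. The subprincipal terms $\lambda^{(0)},\lambda^{(-1)}$ are obtained by the usual recursive formula, which gives \eqref{def:lam0}.

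Evaluating at the boundary yields
\[
G(\eta)\psi=\Opbw{\lambda}(\psi-\Opbw{\tB}\eta)-\Opbw{\tV}\cdot\nabla\eta+\text{smoothing}.
\]
We then rewrite the transport term in Weyl form. The symbol of $\Opbw{\tV}\cdot\nabla$ in Weyl quantization is $\ii\tV\cdot\xi+\tfrac12\div\tV$ up to smoothing terms, which produces the term $-\ii\tV\cdot\xi-\tfrac12\div\tV$ in \eqref{eq:eta}.

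To obtain the remainder structure, we Taylor expand in $\eta$ around $\eta=0$. The linear parts of the remainders are translation-invariant and $\R$-linear, so by \Cref{lem:smoohomo} they lie in $\wt\cR^{-\vr}_1$. The tails lie in $\cR^{-\vr}_{\geq2}[r]$ by the tame estimates \eqref{tameHsx} and the analyticity of $G(\eta)$ (\Cref{V_B:lemma}).

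\textbf{Step 2: the equation for $\omega$.} We apply Bony's paraproduct decomposition $fg=\Opbw{f}g+\Opbw{g}f+R(f,g)$ to the nonlinear terms in the second equation of \eqref{eq:1.2}. Substituting $\psi=\omega+\Opbw{\tB}\eta$ gives:
\begin{itemize}
\item $\tfrac12|\nabla\psi|^2$ contributes $\Opbw{\tV}\cdot\nabla\psi$;
\item the term $\tfrac12(1+|\nabla\eta|^2)\tB^2$ contributes $\Opbw{\tB}\,G(\eta)\psi$ and related terms.
\end{itemize}
By the classical cancellation, the combination $\pa_t\omega=\pa_t\psi-\Opbw{\pa_t\tB}\eta-\Opbw{\tB}\pa_t\eta$ leaves the transport term $-\Opbw{\tV}\cdot\nabla\omega$ together with $-\Opbw{\pa_t\tB+\tV\cdot\nabla\tB}\eta=-\Opbw{\ta}\eta$. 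Here we use \eqref{def:a-Tay} and the lemma on $\ta$ for its expansion and bounds. Passing to Weyl form then gives $-\ii\tV\cdot\xi+\tfrac12\div\tV$; the sign of the divergence term flips because we now compose on the other side.

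\textbf{Step 3: the curvature term.} We paralinearize $\kap\,\div(\nabla\eta/\sqrt{1+|\nabla\eta|^2})$ by the paradifferential chain rule. Its principal symbol is $-\kap\,\th^{(2)}$ as in \eqref{def:h}; the lower-order terms give $\th^{(0)}_{\geq2}$, which is real and at least quadratic since the curvature is odd and cubic beyond the linear part, see \eqref{def:meancurv}. The remainders are sorted by homogeneity exactly as in Step 1.

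The main obstacle is Step 1. One needs the factorization with explicit symbols $\lambda^{(0)},\lambda^{(-1)}$, including the reality of $\lambda^{(-1)}_1$. One also needs remainders that are genuinely smoothing of arbitrary order $\vr$, with a clean separation into a $1$-homogeneous translation-invariant part and a tame quadratic part in the periodic setting. I would obtain the homogeneity splitting by differentiating the analytic remainder map at $\eta=0$, and derive the reality from the self-adjointness of $G(\eta)$.
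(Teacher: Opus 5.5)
Your overall route (flatten, factor, reduce to the boundary, then paraproducts and the Taylor-coefficient cancellation for $\omega$, and reality of $\lambda^{(-1)}_1$ from self-adjointness) is the paper's. However, Step~1 contains a genuine error in the transport term acting on $\eta$, and it is hidden by a second, compensating error.

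First, the Weyl symbol you give for $\Opbw{\tV}\cdot\nabla$ is wrong. In \eqref{BWnon}, $\Opbw{\tV}\cdot\nabla u$ has coefficients $\chi(\cdots)\hat{\tV}(j-k)\cdot\ii k\,u_k$. Writing $\ii k=\ii\frac{j+k}{2}-\ii\frac{j-k}{2}$ gives $\Opbw{\tV}\cdot\nabla=\Opbw{\ii\tV\cdot\xi-\frac12\div\tV}$, which is exactly what the paper uses in Appendix~\ref{app:paraWW}. The symbol $\ii\tV\cdot\xi+\frac12\div\tV$ belongs to the other composition, $\div(\Opbw{\tV}\,\cdot\,)$.

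Second, your boundary formula $G(\eta)\psi=\Opbw{\lambda}\omega-\Opbw{\tV}\cdot\nabla\eta+\text{smoothing}$ is false modulo $\vr$-smoothing operators. With the correct symbol it yields $\Opbw{-\ii\tV\cdot\xi+\frac12\div\tV}\eta$ in \eqref{eq:eta}. This is off by $-\Opbw{\div\tV}\eta$, an order-zero paradifferential operator on $\eta$, which is not smoothing. You can see this already at linear order: $G_1(\eta)\psi=-|D|(\eta|D|\psi)-\div(\eta\nabla\psi)$, whose part paradifferential in $\eta$ is $-|D|\Opbw{|D|\psi}\eta-\div(\Opbw{\nabla\psi}\eta)$, which is in divergence form. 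The inconsistency surfaces in your Step~2 remark that the sign ``flips because we compose on the other side''. In fact $-\Opbw{\tV}\cdot\nabla\omega$ is the same composition as in Step~1, and there you correctly use the opposite symbol.

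The missing ingredient is the term $-\Opbw{\div\tV}\eta$ in the boundary formula, so that $G(\eta)\psi=\Opbw{\lambda}\omega-\div(\Opbw{\tV}\eta)+R$, as in \eqref{eq:112aquatuorBIS}. It comes from the interior. Write $\varphi=w+\Opbw{\pa_y\varphi}\eta$ in \eqref{eq:112aTRIS}. The paradifferential term acting on $\eta$ is then $\Opbw{(1+|\nabla\eta|^2)\pa_y^2\varphi-\nabla\eta\cdot\nabla\pa_y\varphi}\eta$. The elliptic equation turns this symbol into $-\div(\nabla\varphi-\pa_y\varphi\nabla\eta)$, which equals $-\div\tV$ at $y=0$. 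In the classical formula, whose remainder only needs to lie in $H^{s+\frac12}$, this term is harmless and is absorbed into the remainder, which is why it is absent there. Here $\vr$ is arbitrary, so it must be kept.

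Relatedly, the factorization in Step~1 must be applied to the interior good unknown $w=\varphi-\Opbw{\pa_y\varphi}\eta$, not to the flattened potential itself. Only for $w$ do the paradifferential terms acting on $\eta$ in the paralinearized interior equation combine into $\Opbw{f}\eta$ with $f=\pa_y(\text{interior equation})\equiv 0$. This leaves a $\vr$-smoothing right-hand side. You then still need the backward parabolic estimate for $\mathcal{W}=(\pa_y-\Opbw{A})w$ to conclude that its trace at $y=0$ is smoothing. With these corrections, your Steps~2--3 go through as in the paper.
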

    The next lemma provides some specific properties of the symbols appearing 
    in \Cref{prop:paraWW}.
\begin{lemma}[Expansion and bounds for the symbols 
$\sym{\lambda}{}{1}$, $\sym{\lambda}{}{0}$ and $\sym{\th}{}{2}$]\label{Lem:lamh}
There is $r>0$ such that $\sym{\lambda}{}{1}\in \Sigma \Gamma_{0}^1[r,4]$, 
$\sym{\lambda}{}{0}\in \Sigma \Gamma_{1}^0[r,4]$ and 
$\sym{\th}{}{2}\in \Sigma \Gamma_{0}^2[r,4]$ admit the expansions
\begin{align}
&\sym{\lambda}{}{1}= 
| \xi| + \sym{\lambda}{2}{1}+ \sym{\lambda}{\geq 4}{1}\,, 
    && 
    \sym{\lambda}{2}{1}\in \wt \Gamma_2^1\,, 
\quad 
    \sym{\lambda}{\geq 4}{1}\in \Gamma_{\geq 4}^1[r]\,;
    \label{exp:lam12}
    \\
    &\sym{\lambda}{}{0}=
    \sym{\lambda}{1}{0}+ \sym{\lambda}{\geq 3}{0}\,, 
    && 
    \sym{\lambda}{1}{0}\in \wt\Gamma_1^0\,,
    \quad \sym{\lambda}{\geq 3}{0}\in 
    \Gamma_{\geq 3}^0[r];
    \label{exp:lam0}
    \\
    &\sym{\th}{}{2}= |\xi|^2 + \sym{\th}{2}{2}+ \sym{\th}{\geq 4}{2}\,, 
    && \sym{\th}{2}{2}\in \wt \Gamma_2^2\,, 
    \quad \sym{\th}{\geq 4}{2}\in \Gamma_{\geq 4}^2[r]\,.
    \label{exp:h2}
\end{align}

In addition, there is $s_0>2$ such that for any 
$ \s \geq s_0$ there is $0<r'\leq r$ 
such that the symbols $\sym{\lambda}{}{1}(\eta;\bigcdot)$, $\sym{\lambda}{}{0}(\eta;\bigcdot)$ and $\sym{\th}{}{2}(\eta;\bigcdot)$ are analytic with respect to  $\eta\in B_{\s+\sla{1}{4}}(r')$, fulfilling the estimates
            \begin{align}
                &|\di_\eta \sym{\lambda}{}{1}(\eta;x,\xi)[\hat \eta]|_{1,\s-\sla{7}{4}}+|\di_\eta\sym{\th}{}{2}(\eta;x,\xi)[\hat \eta]|_{2,\s-\sla{7}{4}}\lesssim_\s \| \hat \eta \|_{\s+\sla{1}{4}}\|  \eta \|_{\s+\sla{1}{4}}\nonumber\\
               &|\di_\eta \sym{\lambda}{\geq 3}{0}(\eta;x,\xi)[\hat \eta]|_{0,\s-\sla{7}{4}}\lesssim_{\s}\| \eta\|_{\s+\sla{1}{4}}^2\| \hat \eta\|_{\s+\sla{1}{4}}.
               \label{stima:difflambda}
            \end{align}
    \end{lemma}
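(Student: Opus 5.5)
The plan is to work directly from the explicit formulas \eqref{def:lam1}, \eqref{def:lam0}, \eqref{def:h} and to Taylor expand in the small real variable $\nabla\eta$, uniformly in $\xi$ on $|\xi|\geq \sla12$. We write
\[
\sym{\lambda}{}{1}=|\xi|\sqrt{1+q(\eta;x,\xi)},\qquad q:=|\nabla\eta|^2-\frac{(\nabla\eta\cdot\xi)^2}{|\xi|^2}.
\]
The function $q$ is a quadratic form in $\nabla\eta$ whose coefficients are $0$-homogeneous in $\xi$, and $0\le q\le|\nabla\eta|^2$. Expanding $\sqrt{1+q}=1+\tfrac12 q+O(q^2)$ gives \eqref{exp:lam12} with
\[
\sym{\lambda}{2}{1}:=\tfrac12|\xi| q\in\wt\Gamma_2^1 .
\]
The remainder $\sym{\lambda}{\geq 4}{1}=|\xi|\,q^2 F(q)$, with $F$ analytic near $0$, is of degree at least four in $\nabla\eta$.

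For the symbol bounds we use the following facts.
\begin{itemize}
\item Each $\partial_\xi$ acting on a $0$-homogeneous coefficient gains $|\xi|^{-1}$, so we are in the class with $\delta=1$.
\item Each $\partial_x$ falls on $\nabla\eta$. By Sobolev embedding, $|\partial_x^\alpha\nabla\eta|_{L^\infty}\lesssim\|\eta\|_{\s+\sla14}$ for $|\alpha|\leq\s-\sla74$ with $s_0>2$. This explains the loss $\mu$ in Definition \ref{def:sfr}.
\item Moser/Fa\`a di Bruno estimates for the analytic function $F$ on the ball $\|\eta\|_{s_0+\sla14}<r$ give \eqref{stima:nonhom} with $N=4$.
\item Translation invariance \eqref{def:sym_mome} is automatic, since every term is a pointwise function of $\nabla\eta(x)$ and $\xi$.
\end{itemize}

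The same argument handles $\sym{\th}{}{2}$. We have $\sym{\th}{}{2}=|\xi|^2(1+q)(1+|\nabla\eta|^2)^{-\sla32}$, and expanding $(1+|\nabla\eta|^2)^{-\sla32}$ gives \eqref{exp:h2} with
\[
\sym{\th}{2}{2}=|\xi|^2\big(q-\tfrac32|\nabla\eta|^2\big).
\]
The remainder is of degree at least four. There are no odd terms because everything is even in $\nabla\eta$.

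For $\sym{\lambda}{}{0}$ we compute the Poisson bracket in \eqref{def:lam0}. One factor, $\xi\cdot\nabla\eta/(1+|\nabla\eta|^2)$, is odd in $\eta$. The other, $\sym{\lambda}{}{1}/(1+|\nabla\eta|^2)$, equals $|\xi|$ plus even terms of degree at least two. The bracket pairs a $\xi$-derivative with an $x$-derivative, so it produces terms containing $\partial_x\nabla\eta$ (that is, second derivatives of $\eta$) times analytic functions of $\nabla\eta$, and these are odd in $\eta$. The $\tfrac12\Delta\eta$ term is linear. Hence $\sym{\lambda}{}{0}$ is odd in $\eta$. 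Its linear part is
\[
\sym{\lambda}{1}{0}=\tfrac{1}{2|\xi|}\{|\xi|,\xi\cdot\nabla\eta\}+\tfrac12\Delta\eta\in\wt\Gamma_1^0 ,
\]
and the next contribution is cubic, which gives \eqref{exp:lam0}. The order count is $1+1-1-1=0$ (orders of the two factors, minus one for the bracket, minus one for the prefactor $1/\sym{\lambda}{}{1}$). The second derivatives of $\eta$ account for the extra $x$-regularity loss; they are handled as above using $\|\eta\|_{\s+\sla14}$.

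The analyticity and the differential bounds \eqref{stima:difflambda} follow by differentiating the explicit formulas in $\eta$. The key observation for $\di_\eta\sym{\lambda}{}{1}$ and $\di_\eta\sym{\th}{}{2}$ is that these symbols depend on $\eta$ only through even expressions vanishing to second order. Their differential therefore contains at least one undifferentiated factor $\nabla\eta$, which yields the product $\|\hat\eta\|\,\|\eta\|$. For $\sym{\lambda}{\geq3}{0}$, which vanishes to third order, the differential contains at least two undifferentiated factors, which yields $\|\eta\|^2\|\hat\eta\|$. The seminorms are taken with $\s-\sla74$ derivatives, so each factor, including second derivatives of $\eta$, is controlled by $\|\cdot\|_{\s+\sla14}$ through the embedding $H^{\s+\sla14}\subset C^{\s-\sla74+2-}$, with room to spare since $\dim=2$.

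The main obstacle is bookkeeping rather than conceptual. We must make sure the remainder symbols $\sym{\lambda}{\geq 4}{1}$ and $\sym{\lambda}{\geq3}{0}$ satisfy the tame-type bound \eqref{stima:nonhom} with the correct power $N$ on a small ball. We also need uniformity in $\xi$ near the non-smooth points $\xi=0$; this is handled by the cut-off convention of Remark \ref{rem:symbols_modozero}, since all coefficients are homogeneous in $\xi$.
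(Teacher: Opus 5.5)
Your algebra is right, and it is exactly the ``direct inspection'' the paper invokes. The factorization $\lambda^{(1)}=|\xi|\sqrt{1+q}$ with $0\le q\le|\nabla\eta|^2$ is fine. So is the parity argument ($\lambda^{(1)}$ and $\mathtt{h}^{(2)}$ even in $\eta$, $\lambda^{(0)}$ odd), as are the formulas for $\lambda^{(1)}_2$, $\mathtt{h}^{(2)}_2$, $\lambda^{(0)}_1$, the order count, and membership in the $\Sigma\Gamma$ classes (where the loss $\mu$ is at your disposal).

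The gap is the regularity count behind \eqref{stima:difflambda}. You assert $H^{\sigma+1/4}(\mathbb{T}^2)\subset C^{\sigma-7/4+2-}=C^{\sigma+1/4-}$ ``with room to spare since $\dim=2$''. This is false: on $\mathbb{T}^2$ one only has $H^{s}\subset C^{k}$ for $k<s-1$. With the correct embedding, $\sup_x|\partial_x^\alpha\nabla\eta|\lesssim\|\eta\|_{\sigma+1/4}$ for all $|\alpha|\le\sigma-7/4$ holds only when $\sigma-7/4\notin\N$. At integer values it fails, because $H^1(\mathbb{T}^2)\not\subset L^\infty$. So already for $\lambda^{(1)}$ and $\mathtt{h}^{(2)}$ you need an $\epsilon$ of room.

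For $\lambda^{(0)}$ the loss is a full derivative, because $\nabla^2\eta$ enters essentially. With the bracket of \eqref{def:pk}, formula \eqref{def:lam0} gives
\[
\lambda^{(0)}=\frac{(1+|\nabla\eta|^2)\,\xi^{T}\nabla^2\eta\,\xi-2(\nabla\eta\cdot\xi)\,\xi^{T}\nabla^2\eta\,\nabla\eta+|\xi|^2\,(\nabla\eta)^{T}\nabla^2\eta\,\nabla\eta}{2(\lambda^{(1)})^2}+\tfrac12\Delta\eta\,.
\]
Its cubic part is $\tfrac12\,w^{T}\nabla^2\eta\,w$, where $w$ is the component of $\nabla\eta$ orthogonal to $\xi$; for instance it equals $\tfrac12(\partial_2\eta)^2\partial_{22}\eta$ at $\xi=e_1$. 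Hence $\mathrm{d}_\eta\lambda^{(0)}_{\geq3}[\hat\eta]$ contains $\tfrac12(\partial_2\eta)^2\partial_{22}\hat\eta$. Its $|\cdot|_{0,\sigma-7/4}$ seminorm therefore requires sup bounds on $\lfloor\sigma-7/4\rfloor+2$ derivatives of $\hat\eta$. But $H^{\sigma+1/4}(\mathbb{T}^2)$ only controls in $L^\infty$ fewer than $\sigma-3/4<\lfloor\sigma-7/4\rfloor+2$ derivatives. So the last bound in \eqref{stima:difflambda} does not follow from your argument, and as written it is in fact false by one derivative.

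What your method does prove is the same estimate with one extra derivative of loss for $\lambda^{(0)}$: either $|\cdot|_{0,\sigma-11/4-\epsilon}$ on the left, or $\|\cdot\|_{\sigma+5/4+\epsilon}$ on the right. For $\lambda^{(1)}$ and $\mathtt{h}^{(2)}$ it gives the bound with an $\epsilon$-loss. That weaker form is all the paper needs, since the later uses (e.g.\ \eqref{stima:g1d} and \eqref{stima:diffb3}) involve only an unspecified loss $\mu$. The paper's one-line ``direct inspection'' glosses over the same point. You should state and prove these corrected bounds rather than claim the ones with $\sigma-7/4$.
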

     \begin{proof}
It follows by direct inspection using formulas\, 
\eqref{def:lam1}, \eqref{def:lam0} and \eqref{def:h}.
         \end{proof}
    
	\subsection{Symmetrization and complex coordinates}\label{sec complex coordinates}
    
	In this section we symmetrize equation \eqref{eq:eta}-\eqref{eq:omega}. To this end we introduce the symbol 
    \begin{align}
\tQ\equiv \tQ(\eta;x,\xi):=
\left( \frac{\sym{\lambda}{}{1}(\eta;x,\xi)}{\kap \sym{\th}{}{2}(\eta;x,\xi)+1}\right)^{\sla{1}{4}}\,,
        \label{def:MQ}
    \end{align}
	which is a perturbation of the symmetrizer 
    $ \tM(\xi)$ defined in \eqref{zetina}. 
    The symbol $\tQ$ symmetrizes the  principal 
    matrix of symbols in the system  
    \eqref{eq:eta}-\eqref{eq:omega}, namely 
\begin{align*}
    \begin{pmatrix}
        \tQ^{-1} & 0 \\ 0 & \tQ
    \end{pmatrix}\begin{pmatrix}
        0& \sym{\lambda}{}{1} \\ -(1+ \kap \sym{\th}{}{2}) &0
    \end{pmatrix} \begin{pmatrix}
        \tQ & 0 \\ 0 &\tQ^{-1}
    \end{pmatrix}= \begin{pmatrix}
        0 & \sym{\Sigma}{}{\sla{3}{2}}\\ 
        - \sym{\Sigma}{}{\sla{3}{2}} & 0
    \end{pmatrix}
\end{align*}
where the modified dispersion relation 
$\sym{\Sigma}{}{\sla{3}{2}}$ is defined as 
  \begin{align}
            \sym{\Sigma}{}{\sla{3}{2}}:=\sqrt{\sym{\lambda}{}{1} (\kap\sym{\th}{}{2}+1)}\,. 
            \label{def:sd}
        \end{align}
       In view of the expansions 
       \eqref{exp:lam12}-\eqref{exp:h2} 
       in Lemma \ref{Lem:lamh}, one checks that the symbols in \eqref{def:sd}
admits the expansion
\begin{equation}\label{def:sdBIS}
\sym{\Sigma}{}{\sla{3}{2}}=\Lambda(\x)+\sym{\Sigma}{\geq2}{\sla{3}{2}}\,,
\qquad 
\sym{\Sigma}{\geq2}{\sla{3}{2}}\in 
\Gamma^{\sla{3}{2}}_{\geq 2}[r]\,,
\end{equation}
where $\Lambda(\x)$ is given in \eqref{def:dispersion}.

The main  result of the section is the following.
\begin{proposition}[Symmetrization] 
 \label{diag.ord0}
 Let $\vr >1$. There is  $s_0> 0$ and a bounded and 
 invertible linear transformation $\bcQ(\eta)$ such 
 that for any $s \in \R$  there is  
 $r=r_s  >0 $ such that for any $\eta \in B_{s_0}(r)$ 
  \begin{align}
\bcQ(\eta)\colon H_0^{s+\sla{1}{4}}(\T^2;\R)\times 
\dot H^{s-\sla{1}{4}}(\T^2;\R) 
\to H^s_\R\,,
\label{QQspazietti}
\end{align}
and if $(\eta, \psi)(t) \in B_{s_0}(r) \times B_{s_0}(r)$ solves \eqref{eq:1.2} then the variable
  $U:= \bcQ(\eta)\vect{\eta}{\omega}$, 
  with $\omega$ defined in  \eqref{def:good},  solves (recall \eqref{zak} and the notation in \eqref{def:vec_out})
  \begin{equation}\label{eq:U}
\begin{aligned}
\pa_t U&= \vOpbw{-\ii \left[\sym{\Sigma}{}{\sla{3}{2}}
+ \tV\cdot \xi+\tfrac12 \tQ^{-2} \sym{\lambda}{}{0}
+ \sym{a}{1}{0}\right]}U+ \zOpbw{\tfrac{\ii}{2}  \tQ^{-2} \sym{\lambda}{}{0}
+ \sym{b}{1}{0} } U 
\\
&+ \bR_1(\zak)U+ \bB_{\geq 2}(\zak)U\,, 
\end{aligned}
\end{equation}
where the real vector of functions  $ \tV \in  \cF^{\R}_{\geq 1}[r]$ is defined in \eqref{def:V-B}, the real symbol  $ \sym{\lambda}{}{0} \in \Gamma_{\geq 1}^0[r]$ is defined in \eqref{def:lam0} and 
\begin{itemize}
\item the real symbol $\sym{\Sigma}{}{\sla{3}{2}}$ is defined in 
\eqref{def:sd}-\eqref{def:sdBIS}, 
the symbol $\fun{\tQ}{}$ in \eqref{def:MQ} belongs to $\Sigma \Gamma_0^{-\sla{1}{4}}[r,4]$;
    \item $\sym{a}{1}{0}$, $\sym{b}{1}{0}$ are $1$-homogeneous  symbols in $\wt \Gamma^0_{1}$ and  $\sym{a}{1}{0}$ is real;
    \item $\bR_1(\zak)$  is a real-to-real matrix of smoothing remainders in $ \wt\cR^{-\vr}_1$;
      \item For any $\zak\in B_{s_0, \R}(r)\cap H^s_\R$, $\bB_{\geq 2}(\zak)$ is a real-to-real a matrix of bounded operators in $ \cL(H^s_\R, H^s_\R)$ satisfying
      \begin{equation}
      \| \bB_{\geq 2}(\zak) U \|_s \lesssim_s 
      \| \zak\|_{s_0}^2 \| U\|_s
      + \| \zak\|_s\| \zak\|_{s_0} \| U\|_{s_0} \,, 
      \qquad \forall U\in H^s_\R\,.
      \label{stima_quadratica0}
      \end{equation}
\end{itemize}
Moreover the map $\Opbw{\fun{\tQ}{}}$ is invertible and  the complex variable $U$ has the form
\begin{equation} \label{def:cQ}
U=\vect{u}{\bar{u}}=\bcQ(\eta)\vect{\eta}{\omega}\qquad {\rm with }\qquad 
u:= \frac{1}{\sqrt{2}}\left[\Opbw{\tQ(\eta;x,\xi)}^{-1}\eta+\ii \Opbw{\tQ(\eta;x,\xi)}\omega\right]\,.
\end{equation}
In particular, for any $s\geq s_0$, the following are equivalent: $(\eta,\omega)\in H^{s+\sla{1}{4}}\times H^{s-\sla{1}{4}}$,  $U\in H^s_\R$, $\zak\in H^s_\R$, with estimates
\begin{align}
\| U\|_{s}\sim_s 
\| \eta\|_{s+\sla{1}{4}}
+ \| \omega\|_{s-\sla{1}{4}}
\sim_s \| \zak\|_s\,.
   \label{equiv:U_etaomega}
\end{align}
\end{proposition}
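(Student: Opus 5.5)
We will build $\bcQ(\eta)$ as the composition of the symmetrizer $\mathrm{diag}(\Opbw{\tQ}^{-1},\Opbw{\tQ})$ with the complex change of variables $\frac{1}{\sqrt2}\begin{pmatrix}1&\ii\\1&-\ii\end{pmatrix}$. This gives exactly \eqref{def:cQ}, with $\bar u$ in the second component. Since $\tQ\in\Sigma\Gamma_0^{-\sla{1}{4}}[r,4]$ is a small perturbation of $\tM(\xi)$, the operator $\Opbw{\tQ}$ is invertible for $\eta\in B_{s_0}(r)$. To see this, write $\Opbw{\tQ}=\tM(D)(\Id+\tM(D)^{-1}\Opbw{\tQ-\tM})$. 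The term $\Opbw{\tQ}\circ\Opbw{\tQ^{-1}}-\Id$ is, by \Cref{compoparapara0}, of order $-1$ and small, and $\tQ-\tM$ has order $-\sla14$ and is $O(\|\eta\|_{s_0}^2)$. A Neumann series then inverts the operator, giving \eqref{QQspazietti}. The equivalence \eqref{equiv:U_etaomega} follows from \eqref{actionSob}, from $\omega=\psi-\Opbw{\tB}\eta$ with $\tB$ bounded via \eqref{est:V+B}, and from \eqref{equiequi}.

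To derive the equation, differentiate $u$ in time, using $\pa_t\eta$, $\pa_t\omega$ from \Cref{prop:paraWW}. Three kinds of terms appear.

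\begin{enumerate}
\item The principal terms are $\Opbw{\tQ}^{-1}\Opbw{\lambda^{(1)}}\Opbw{\tQ}^{-1}$ and $\Opbw{\tQ}\Opbw{\kap\th^{(2)}+1}\Opbw{\tQ}$, acting on the new variables. By symbolic calculus these equal $\Opbw{\Sigma^{(\sla32)}}$ up to order-$\sla12$ corrections, since $\tQ^{-2}\lambda^{(1)}=\tQ^2(1+\kap\th^{(2)})=\Sigma^{(\sla32)}$. The first-order Poisson-bracket corrections cancel by the Weyl symmetry of the conjugation $Q^{-1}\sharp\lambda\sharp Q^{-1}$: the expansion \eqref{espansione2} with the sandwich structure kills the odd terms $p_1$ (cf. \eqref{prop:ov3}). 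What remains consists of $\vOpbw{}$ and $\zOpbw{}$ contributions at order $\sla32-2\delta$. These must be shown to be either $1$-homogeneous of order $0$ (giving $a_1^{(0)},b_1^{(0)}$), or quadratic; the latter go into $\bB_{\geq2}$. Since the symbols are $\eta$-dependent only through $\nabla\eta$, and the $\xi$-derivatives of $|\xi|^{\sla32}$-type symbols decay, the $1$-homogeneous parts of $\lambda^{(1)},\th^{(2)}$ vanish by \eqref{exp:lam12}, \eqref{exp:h2}. Hence the linear-in-$\eta$ remainders come from only $\tQ$'s linear part, which is also zero. I expect the order bookkeeping here to be routine.
\item The transport terms $\Opbw{-\ii\tV\cdot\xi\mp\frac12\div\tV}$ commute with $\Opbw{\tQ^{\pm1}}$ up to order $0$ and $\eta$-quadratic errors. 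They combine into $\vOpbw{-\ii\tV\cdot\xi}$. The $\pm\frac12\div\tV$ pieces cancel in the diagonal and pair off-diagonally with commutator terms, producing real order-$0$ contributions.
\item The terms $\tfrac12\tQ^{-2}\lambda^{(0)}$ arise from $\Opbw{\lambda^{(0)}}\omega$ by the same change of coordinates. Splitting $\eta,\omega$ into $u\pm\bar u$ yields both the diagonal and the off-diagonal coefficient. Likewise $\ta$, $\kap\th^{(0)}_{\geq2}$ and $\Opbw{\lambda^{(-1)}}$ give order-$0$ contributions absorbed into $a_1^{(0)},b_1^{(0)}$ (for $\ta_1$, the factor $\tM$ makes $\ta_1\tM\tM$ of order $\le0$ as a symbol in $\eta$ through $\Opbw{}$ acting as a multiplication).
\end{enumerate}

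The main obstacle is the time derivative $\pa_t\Opbw{\tQ^{\pm1}}$, which equals $\Opbw{\di_\eta\tQ[\pa_t\eta]}$. Its symbol has order $-\sla14$ but depends on $\pa_t\eta=G(\eta)\psi$, which loses a derivative. Because $\tQ$ depends on $\eta$ only at least quadratically (through $|\nabla\eta|^2$ and $(\nabla\eta\cdot\xi)^2$), the derivative $\di_\eta\tQ[\hat\eta]$ is linear in $\eta$ times $\hat\eta$. Using \eqref{stima:difflambda} and \eqref{stima:DN}, these terms are bounded as in \eqref{stima_quadratica0} and go into $\bB_{\geq2}$.

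Finally, the smoothing remainders $R_1,R_{\geq2}$ of \Cref{prop:paraWW} are conjugated by $\Opbw{\tQ^{\pm1}}$ and re-expressed in $\zak$ through $\cM^{-1}$. By \Cref{lem:smoohomo}, the linear parts give $\bR_1\in\wt\cR^{-\vr}_1$ and the rest give $\bB_{\geq2}$. The real-to-real structure is automatic from the construction, because $U$ has the form $(u,\bar u)$, and the realness of $a_1^{(0)}$ comes from the Hamiltonian (self-adjoint) structure of $\Opbw{}$ noted in \Cref{rem:symbols_modozero}.
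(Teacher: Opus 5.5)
Your proposal follows essentially the same route as the paper: the same map $\bcQ(\eta)$ inverted by a Neumann series, the symmetric sandwiches $\Opbw{\tQ}^{\mp1}(\cdot)\Opbw{\tQ}^{\mp1}$ expanded by symbolic calculus with realness coming from the sandwich structure (the paper's \eqref{prop:ov3}), $\pa_t\tQ=\di_\eta\tQ[G(\eta)\psi]$ absorbed into $\bB_{\geq 2}$ because $\tQ-\tM$ is quadratic, and the conjugated smoothing remainders split into $\bR_1$ and $\bB_{\geq 2}$. The one point to fix is that $\Opbw{\tQ}\Opbw{\tQ^{-1}}-\Id$ is of order $-2$, not merely $-1$, since $\{\tQ,\tQ^{-1}\}=0$. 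You need this, as in \eqref{esp:Q-1}, to replace $\Opbw{\tQ}^{-1}$ by $\Opbw{\tQ^{-1}}$ in $\Opbw{\tQ}^{-1}\Opbw{\lambda}\Opbw{\tQ}^{-1}$ with a bounded error: a gain of only $\tfrac34$ derivatives would leave an error of order $\tfrac12$ that cannot go into $\bB_{\geq 2}$.
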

In order to prove the proposition above 
we need a preliminary result about the  
properties of the symbol $\tQ$, $\tQ^{-1}$ and 
$\sym{\Sigma}{}{\sla{3}{2}}$ 
in the sense of  \Cref{rem:symbols_modozero}. 
In particular in the next lemma  
we construct the conjugating map $\bcQ(\eta)$.
\begin{lemma}[Conjugating map]\label{lem:Q-Qinv}
There is $r>0$ such that (recall \eqref{def:MQ}, \eqref{def:sd}) 
the symbol $\tQ(\eta;x,\xi)$ belongs to 
$ \Sigma \Gamma_{0}^{-\sla{1}{4}}[r,4]$,  
the symbol $\tQ^{-1}(\eta;x,\xi)$ belongs to 
$\Sigma \Gamma_0^{\sla{1}{4}}[r,4]$ 
and the symbol $ \sym{\Sigma}{}{\sla{3}{2}}$ 
belongs to $ \Sigma \Gamma_0^{\sla{3}{2}}[r,4]$ 
and they admit the expansions
\begin{align}
&\tQ(\eta;x,\xi)= 
\tM(\xi)+ \fun{\tQ}{\geq 2}(\eta;x,\xi)\,, 
&& \fun{\tQ}{\geq 2}(\eta;x,\xi)
\in \Sigma \Gamma_2^{-\sla{1}{4}}[r,4];
\label{esp:Q}
    \\
&\tQ^{-1}(\eta;x,\xi)= \tM^{-1}(\xi)
+ \fun{\tQ^-}{\geq 2}(\eta;x,\xi)\,, 
&& \fun{\tQ^-}{\geq 2}(\eta;x,\xi)
\in \Sigma \Gamma_2^{\sla{1}{4}}[r,4]\,;
\nonumber 
\\
&\sym{\Sigma}{}{\sla{3}{2}}(\eta;x,\xi)= 
\Lambda(\xi)+ \sym{\Sigma}{2}{\sla{3}{2}}(\eta;x,\xi)
+\sym{\Sigma}{\geq 4}{\sla{3}{2}}(\eta;x,\xi)\,, 
&& \sym{\Sigma}{2}{\sla{3}{2}}\in  \wt\Gamma_2^{\sla{3}{2}}\,, 
\quad 
\sym{\Sigma}{\geq 4}{\sla{3}{2}} 
\in  \Sigma \Gamma_{\geq 4}^{\sla{3}{2}}[r]\,.
\label{esp:sigma}
\end{align}
Moreover  there is $s_0> 0$ large enough such that for any 
$ \s \geq s_0$ there is $0<r'\leq r$ such that the symbols 
$\fun{\tQ}{}(\eta;\bigcdot)$, $\fun{\tQ^{-1}}{}(\eta;\bigcdot)$ 
and $\sym{\Sigma}{}{\sla{3}{2}}(\eta;\bigcdot)$ 
are analytic with respect to  
$\eta\in B_{\s+\sla{1}{4}}(r')$, fulfilling the estimates
\be \label{stima:dtQ}
\begin{aligned}
| \di_\eta \tQ(\eta;x,\xi)[ \hat \eta] |_{-\sla{1}{4},\s-\sla{7}{4}}
+| \di_\eta \tQ^{-1}(\eta;x,\xi)[ \hat \eta] |_{\sla{1}{4},\s-\sla{7}{4}}
&+| \di_\eta \sym{\Sigma}{}{\sla{3}{2}}(\eta;x,\xi)[ \hat \eta] |_{\sla{3}{2},\s-\sla{7}{4}}
\\
&\qquad \quad\qquad 
\lesssim_\s \| \eta\|_{\s+\sla{1}{4}}
\|\hat \eta \|_{\s+\sla{1}{4}}\,.
\end{aligned}
\ee
             In addition to this,
               for any $ s\in \R$, there is $ 0<r_s\leq r$ such that if 
               $\eta \in B_{s_0}(r_s)$, the operator 
               $ \Opbw{\tQ}\in \cL\left(\dot H^{s-\sla{1}{4}}(\T^2;\R); H^{s}_0(\T^2;\R)\right) $ 
               is invertible with inverse in 
               $\cL\left( H^{s}_0(\T^2;\R);\dot H^{s-\sla{1}{4}}(\T^2;\R)\right)$. 
               Moreover one has the estimates 
               \begin{align}
          \| \Opbw{\tQ}\|_{\cL(\dot H^{s-\sla{1}{4}};H^s_0)} + \| \Opbw{\tQ}^{-1}\|_{\cL(H^s_0;\dot H^{s-\sla{1}{4}})} \lesssim_s 1 \,,
          \qquad \textnormal{for any } s \in \R\,.\label{stima:Qinv}
       \end{align}
Finally  one has the expansion 
\be 
\Opbw{\tQ(\eta;x,\xi)}^{-1}= 
\Opbw{ \tQ^{-1}(\eta;x,\xi)}
+ R^{(-\sla{7}{4})}_{\geq 2}(\eta)\,, 
\qquad  
R^{(-\sla{7}{4})}_{\geq 2}(\eta)
\in\cL(H^s_0,\dot{H}^{s+\sla{7}{4}})\,,
\label{esp:Q-1}
\ee
         where the operator $R^{(-\sla{7}{4})}_{\geq 2}$ satisfies the estimates
        \begin{align}   
\| R^{(-\sla{7}{4})}_{\geq 2}(\eta)
\|_{\cL( H^s_0,\dot H^{s+\sla{7}{4}})} 
    \lesssim_s \|\eta\|_{s_0}^2\,,
    \qquad \textnormal{for any } s\in \R\,.
                \label{stima:RestoQinv}
        \end{align}
         \end{lemma}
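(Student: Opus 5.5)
The plan has three parts: first the symbol expansions, then the analytic dependence on $\eta$, and finally the inversion of $\Opbw{\tQ}$ by a Neumann series. Throughout, $\tQ$, $\tQ^{-1}$ and $\sym{\Sigma}{}{\sla{3}{2}}$ are smooth functions of the two symbols $\sym{\lambda}{}{1}$ and $\sym{\th}{}{2}$. By Lemma \ref{Lem:lamh}, these are even perturbations of $|\xi|$ and $|\xi|^2$: their first corrections are quadratic in $\eta$, because they depend on $\eta$ only through $|\nabla\eta|^2$ and $(\nabla\eta\cdot\xi)^2$.

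\textbf{Expansions.} Write
\[
\sym{\lambda}{}{1}=|\xi|(1+\alpha), \qquad \kap\sym{\th}{}{2}+1=(1+\kap|\xi|^2)(1+\beta),
\]
with
\[
\alpha:=\big(\sym{\lambda}{2}{1}+\sym{\lambda}{\geq 4}{1}\big)|\xi|^{-1},\qquad \beta:=\kap\big(\sym{\th}{2}{2}+\sym{\th}{\geq 4}{2}\big)(1+\kap|\xi|^2)^{-1}.
\]
The symbols $\alpha,\beta$ are of order $0$, start at homogeneity $2$, and satisfy $|\alpha|_{0,\s},|\beta|_{0,\s}\lesssim \|\eta\|^2_{\s+\mu}$. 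Hence
\[
\tQ=\tM(\xi)\,(1+\alpha)^{\sla14}(1+\beta)^{-\sla14},\qquad \sym{\Sigma}{}{\sla32}=\Lambda(\xi)\,(1+\alpha)^{\sla12}(1+\beta)^{\sla12}.
\]
For $r$ small, $|\alpha|,|\beta|\le \frac12$, so a Taylor expansion of $(1+t)^{\gamma}$ with integral remainder gives the following.
\begin{itemize}
\item The terms of homogeneity $2$ in $\tQ-\tM$ define $\fun{\tQ}{\geq 2}$, and the terms up to homogeneity $3$ give the $\Sigma\Gamma_2^{-\sla14}[r,4]$ structure; the analogous statement holds for $\tQ^{-1}$.
\item For $\sym{\Sigma}{}{\sla32}$, the linear term in $\alpha,\beta$ gives $\sym{\Sigma}{2}{\sla32}\in\wt\Gamma_2^{\sla32}$. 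There are no odd terms, since everything depends on $\eta$ through even quantities. This yields the form \eqref{esp:sigma}.
\end{itemize}
The symbol seminorms in \eqref{seminormSimbo} of compositions $F(\alpha,\beta)$ with analytic $F$ follow from the Leibniz and Faà di Bruno rules, since $x$- and $\xi$-derivatives distribute over products of order-zero symbols. Translation invariance \eqref{def:sym_mome} is inherited, because all operations are pointwise in $(x,\xi)$.

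\textbf{Analyticity and \eqref{stima:dtQ}.} Analyticity in $\eta\in B_{\s+\sla14}(r')$ follows from the analyticity of $\sym{\lambda}{}{1}$ and $\sym{\th}{}{2}$ (Lemma \ref{Lem:lamh}) composed with the analytic maps $t\mapsto(1+t)^\gamma$. The chain rule gives, for instance,
\[
\di_\eta\tQ[\hat\eta]=\tfrac14\tQ\Big(\frac{\di_\eta\sym{\lambda}{}{1}[\hat\eta]}{\sym{\lambda}{}{1}}-\frac{\kap\,\di_\eta\sym{\th}{}{2}[\hat\eta]}{\kap\sym{\th}{}{2}+1}\Big).
\]
Estimate \eqref{stima:difflambda} then yields the bound $\|\eta\|_{\s+\sla14}\|\hat\eta\|_{\s+\sla14}$, with the orders matching: $\tQ$ has order $-\sla14$ and the bracket has order $0$. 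Here we use that $1/\sym{\lambda}{}{1}$ and $1/(\kap\sym{\th}{}{2}+1)$ are elliptic symbols of orders $-1$ and $-2$ with bounded seminorms.

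\textbf{Inversion of $\Opbw{\tQ}$.} This is the step needing the most care. Lemma \ref{thm:action} gives the boundedness part of \eqref{stima:Qinv}. For the inverse, apply Lemma \ref{compoparapara0} with some $\vr$ such that $\vr\delta\ge \sla74$ (take $\delta=1$):
\[
\Opbw{\tQ}\Opbw{\tQ^{-1}}=\Opbw{\tQ\sha{\vr}\tQ^{-1}}+\cQ(\tQ,\tQ^{-1}).
\]
Since $\tQ\,\tQ^{-1}=1$, all higher terms $p_n(\tQ,\tQ^{-1})$ with $n\ge1$ involve at least one $x$-derivative. As $\tM$ is a Fourier multiplier, each of these terms contains a factor from $\fun{\tQ}{\geq 2}$ or $\fun{\tQ^-}{\geq 2}$, hence is $O(\|\eta\|_{s_0}^2)$ and of order $-n\le -1$. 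The same applies to $\cQ$, because $\cQ(\tM,\tM^{-1})=0$ by the Fourier-multiplier remark. Thus
\[
\Opbw{\tQ}\Opbw{\tQ^{-1}}=\Id+K(\eta),
\]
where $K$ is a sum of operators of order $\le -1$ with norm $\lesssim_s\|\eta\|_{s_0}^2$; in particular it is small in $\cL(H^s_0)$. The same holds on the left.

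For $r_s$ small, a Neumann series gives
\[
\Opbw{\tQ}^{-1}=\Opbw{\tQ^{-1}}(\Id+K)^{-1},
\]
which proves \eqref{stima:Qinv}. Writing $(\Id+K)^{-1}=\Id-K(\Id+K)^{-1}$, the error is $\Opbw{\tQ^{-1}}K(\Id+K)^{-1}$. To get the $\sla74$ gain in \eqref{esp:Q-1}, I would instead use a parametrix: construct an order $\sla14$ symbol $q=\tQ^{-1}+q_{-1}+\dots+q_{-\vr}$ with $\tQ\sha{\vr}q=1+\mathcal N^{-\vr}$ by the usual recursion. The corrections $q_{-n}$ are quadratic in $\eta$ and have order $\sla14-n$, so for $n\ge 2$ they are absorbed into $R^{(-\sla74)}_{\geq2}$.

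The main obstacle is the single term $q_{-1}$, which has order $-\sla34$ and would give only a gain of $\sla34$ relative to order $\sla14$. The plan is to check that $q_{-1}$ vanishes: it equals $-\tQ^{-1}\,p_1(\tQ,\tQ^{-1})$, and $p_1$ is the Poisson bracket term $\frac{1}{2\ii}\{\tQ,\tQ^{-1}\}$. This bracket vanishes identically because $\tQ^{-1}$ is a function of $\tQ$. The remaining error is then of order $\sla14-2=-\sla74$ and is quadratic in $\eta$, which is exactly \eqref{stima:RestoQinv}. The mapping between $H^s_0$ and $\dot H^s$ is preserved by \eqref{comm_copbw}.
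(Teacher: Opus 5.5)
Your proposal is correct and follows essentially the paper's route: the symbolic statements come from direct inspection of \eqref{def:MQ}--\eqref{def:sd}, and the inversion combines a Neumann series with the composition $\Opbw{\tQ}\Opbw{\tQ^{-1}}=\Id+K$. The paper tacitly uses your observation $\{\tQ,\tQ^{-1}\}=0$ to take $K$ of order $-2$ from the start, and it sets up the Neumann series slightly differently, as $\Opbw{\tQ}=\tM(D)(\Id+A_0)$ with $A_0=\tM^{-1}(D)\Opbw{\fun{\tQ}{\geq 2}}$. Consequently your full parametrix is unnecessary: once $p_1(\tQ,\tQ^{-1})=0$, the identity $\Opbw{\tQ}^{-1}=\Opbw{\tQ^{-1}}-\Opbw{\tQ}^{-1}K$, with $K$ of order $-2$ and size $\|\eta\|_{s_0}^2$, already gives the $\sla{7}{4}$ gain in \eqref{esp:Q-1}--\eqref{stima:RestoQinv}.
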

         \begin{proof}
         The expansions \eqref{esp:Q}-\eqref{esp:sigma} 
         and the estimate \eqref{stima:dtQ} follow by direct inspection using the explicit formulas\, 
         \eqref{def:MQ}-\eqref{def:sd}.

\noindent
We now consider the operator $\Opbw{\tQ}$.
Since $\eta$ is small, the invertibility 
follows from a Neumann series argument. 
Indeed, recalling \eqref{esp:Q}, we have 
\[         
\Opbw{\tQ}= \tM(D) \left( \Id + A_0\right)\,, 
\qquad A_0:=\tM^{-1}(D)\Opbw{\fun{\tQ}{\geq 2}}\,.
\]
Note that, by \Cref{thm:action}, 
we get that $A_0\in \cL(\dot H^s; H^s_0)$ and, 
by \eqref{esp:Q},\eqref{stima:nonhom}, for some $s_0>0$, 
we have
\begin{align}
\| A_0\|_{\cL(\dot H^s;H^s_0)}\lesssim_s 
| \fun{\tQ}{\geq 2}|_{-\sla{1}{4},3} 
\stackrel{\eqref{stima:nonhom}}{\lesssim_s} 
\| \eta\|^2_{s_0}\,, 
\qquad \text{for any } s \in \R\,, \label{Qmeno}
\end{align}
for some $C_s>0$.
       Thus, for $\eta \in B_{s_0}(r)$ and $C_s r\leq \tfrac12$,  we invert $ \Id+A_0$ by Neumann series, proving that $ \Opbw{\tQ}^{-1}= (\Id+A_0)^{-1}\tM^{-1}(D)$ belongs to $\cL(H^s;H^{s-\sla{1}{4}})$ with estimate \eqref{stima:Qinv}. The corresponding estimate \eqref{stima:Qinv} for $\Opbw{\fun{\tQ}{}}$ follows directly from \Cref{thm:action}.
         Moreover, by symbolic calculus \Cref{compoparapara0}
\begin{align}
\Opbw{\tQ}\Opbw{\tQ^{-1}}= \Id+ 
\Opbw{\sym{\tQ}{\geq 2}{-2}}
+ R_{\geq 2}(\eta)\,, 
\quad 
\sym{\tQ}{\geq 2}{-2}\in \Gamma_{\geq 2}^{-2}[r]\,, 
\quad  
R_{\geq 2}(\eta)\in \cR^{-\vr }_{\geq 2}[r]\,. 
\label{tricketto} 
\end{align}
In particular $R_{\geq 2}(\eta)$ satisfies 
\begin{align}
\| R_{\geq 2}(\eta)\|_{\cL(H^s;H^{s+\vr})}\lesssim_s 
\| \eta \|_{s_0}^2\,.
    \label{errone}
\end{align}
Then, applying $\Opbw{\tQ}^{-1}$ to \eqref{tricketto}, we get
\begin{gather*}
\Opbw{\tQ}^{-1}= \Opbw{\tQ^{-1}}+ R^{(-\sla{7}{4})}_{\geq 2}\,, 
\\
R^{(-\sla{7}{4})}_{\geq 2}:= -\Opbw{\tQ}^{-1}\Opbw{\sym{\tQ}{\geq 2}{-2}}-\Opbw{\tQ}^{-1} R_{\geq 2}(\eta)\,.
\end{gather*}
Finally, combining \eqref{Qmeno}, \eqref{actionSob}, \eqref{stima:nonhom} for $\sym{\tQ}{\geq 2}{-2}$ and \eqref{errone}, we obtain \eqref{stima:RestoQinv}. This concludes the proof.
     \end{proof}
We are now in position to prove the main result of this section.

\begin{proof}[{\bf Proof of Proposition \ref{diag.ord0}}]
In view of Lemma \ref{lem:Q-Qinv} the map (see \eqref{def:cQ})
    \begin{gather}
U:= \vect{u}{\bar u}= \bcQ(\eta)\vect{\eta}{\omega}\,, 
\qquad 
 \bcQ(\eta):=\frac{1}{\sqrt{2}} \begin{pmatrix}
     \Opbw{\tQ}^{-1}&  \ii \Opbw{\tQ}   \\
     \Opbw{\tQ}^{-1}& -\ii \Opbw{\tQ}
 \end{pmatrix}\notag
     \end{gather}
     is well-defined and satisfies \eqref{QQspazietti}.
Note also that $ \bcQ$ is invertible with inverse 
\begin{align*}
\bcQ(\eta)^{-1}= \frac{1}{\sqrt{2}} 
\begin{pmatrix}
\Opbw{\tQ}&  \Opbw{\tQ}   \\
     -\ii \Opbw{\tQ}^{-1}& \ii \Opbw{\tQ}^{-1}
 \end{pmatrix}\colon H^s_\R(\T^2;\C^2) 
 \to H_0^{s+\sla{1}{4}}(\T^2;\R)\times 
 \dot H^{s-\sla{1}{4}}(\T^2;\R)
\end{align*}

 Therefore, by setting   
 \begin{equation}\label{def:AA12}
 \begin{aligned}
 &A_1:= \Opbw{-\ii \tV\cdot \xi-\tfrac{1}{2}\div(\tV)}\,, 
 \qquad 
 A_4:=\Opbw{-\ii \tV\cdot \xi+\tfrac{1}{2}\div(\tV)}\,,
 \\
 &A_2:= \Opbw{\lambda}\,,
 \qquad 
 A_3:=- \Opbw{ \kap \sym{\th}{}{2} +1+\ta+\sym{\th}{\geq 2}{0}}\,, 
 \end{aligned}
 \end{equation}
 in view a equations \eqref{eq:eta}-\eqref{eq:omega} for $(\eta,\omega)$, the variable $U$ solves the equation 
         \begin{equation}
         \pa_t U=
\bcQ
\begin{bmatrix}
A_1 & A_2 \\
A_3 & A_4
\end{bmatrix}
 \bcQ^{-1}U+\bcQ
\begin{bmatrix}
R_{\geq 2}(\eta,\psi) & R_1(\eta)+ R_{\geq 2}(\eta) \\
\breve{R}_{\geq 2}(\eta,\psi) & R_1(\eta,\psi)
\end{bmatrix}
 \bcQ^{-1}U +\pa_t \bcQ \bcQ^{-1}U\,.\label{eq:QQ-1}
  \end{equation}
  Explicit computations give
  \begin{align}
    \bcQ
\begin{bmatrix}
A_1 & A_2 \\
A_3 & A_4
\end{bmatrix}
 \bcQ^{-1}  = \frac12\begin{bmatrix}
A_+ & A_- \\
\overline{A_-} & \overline{A_+}
\end{bmatrix}\,, \qquad \bcQ
\begin{bmatrix}
R_{\geq 2}(\eta,\psi) & R_1(\eta)+ R_{\geq 2}(\eta) \\
\breve{R}_{\geq 2}(\eta,\psi) & R_1(\eta,\psi)
\end{bmatrix}
 \bcQ^{-1}= \frac12\begin{bmatrix}
R_+ & R_- \\
\overline{R_-} & \overline{R_+}
\end{bmatrix}
         \label{eq:conj_complex_Matrix}
  \end{align}
  where we defined 
  \begin{align*}
A_{\s}&:=
 \Opbw{\tQ}^{-1} A_1  \Opbw{\tQ} + \s \Opbw{\tQ} A_4  \Opbw{\tQ}^{-1} 
 \\&\qquad + \ii  \Opbw{\tQ} A_3  \Opbw{\tQ} 
 - \s \ii  \Opbw{\tQ}^{-1} A_2  \Opbw{\tQ}^{-1}\,,\\
 R_{\s}&:=
 \Opbw{\tQ}^{-1} R_{\geq 2}(\eta,\psi) \Opbw{\tQ} + \s \Opbw{\tQ} R_1(\eta,\psi)  \Opbw{\tQ}^{-1} 
 \\&\qquad + \ii  \Opbw{\tQ} \breve{R}_{\geq 2}(\eta,\psi)  \Opbw{\tQ} 
 - \s \ii  \Opbw{\tQ}^{-1} [R_1(\eta)+ R_{\geq 2}(\eta)]  \Opbw{\tQ}^{-1}\,,
 \qquad \s\in\{\pm\}\,,
  \end{align*}
       
We analyze each term.  
Recalling \eqref{def:AA12} we have, using 
 \Cref{compoparapara0}, 
\begin{align}
&\Opbw{\tQ}^{-1} A_1 \Opbw{\tQ}
+\s  \Opbw{\tQ}A_4\Opbw{\tQ}^{-1}
\notag
\\
&\qquad
= \Opbw{\tQ^{-1}\sha{\vr} (-\ii \tV\cdot \xi
-\tfrac12\div(\tV))\sha{\vr} \tQ+\s \tQ\sha{\vr} 
(-\ii \tV\cdot \xi+\tfrac12\div(\tV))\sha{\vr} 
\tQ^{-1}}+R_{\geq 1}(\zak) 
\label{intermedia_real}
\\
&\qquad
+ R^{(-\sla{7}{4})}_{\geq 2}(\eta) A_1\Opbw{\tQ}
+\sigma\Opbw{\tQ}A_4
R^{(-\sla{7}{4})}_{\geq 2}(\eta)
\notag
\\
&\qquad
= \Opbw{-\ii \tV\cdot \xi (1+\s)-\ii \sym{r}{\geq 1}{0,\sigma}}
+ R_1(\zak)+ R_{\geq 2}^{(-1)}(\zak)\,, 
\notag
\end{align}
where  $ \sym{r}{\geq 1}{0,+}$ is a  symbol in 
$\Sigma \Gamma_{1}^{-1}[r,2]$, $\sym{r}{\geq 1}{0,-}$ 
is a symbol in $\Sigma \Gamma_1^0[r,2]$, $R_1\in \wt \cR^{-\vr}_1$ 
and $R_{\geq 2}^{(-1)}\in \cR_{\geq 2}^{-1}[r]$ is an operator in $ \cL(H^s; H^{s+1})$, fulfilling the estimate
\begin{align*}
    \| R_{\geq 2}^{(-1)}(\zak)\|_{\cL(H^s, H^{s+1})}
    \lesssim_s \|\zak\|_{s_0}^2\,, 
    \quad \text{for any } s\in \R\,.
\end{align*}
In view of \eqref{prop:ov3}, the symbol $\tQ^{-1}\sha{\vr} (-\ii \tV\cdot \xi-\tfrac12\div(\tV))\sha{\vr} \tQ+ \tQ\sha{\vr} (-\ii \tV\cdot \xi+\tfrac12\div(\tV))\sha{\vr} \tQ^{-1}$,  appearing in \eqref{intermedia_real} when $\s=+$, is real. Then the symbol $\sym{r}{\geq 1}{0,+}$ is real as well by difference.
Then, recalling \eqref{def:AA12}, expansion \eqref{esp:lambda} and using again \Cref{compoparapara0}, we get
\begin{align*}
    \Opbw{\tQ}^{-1} A_2  \Opbw{\tQ}^{-1} &= \Opbw{\tQ^{-1}\sha{\vr}(\sym{\lambda}{}{1}+\sym{\lambda}{}{0}+\sym{\lambda}{1}{-1})\sha{\vr} \tQ^{-1} }\\
    &+\Opbw{\tQ}^{-1} \Opbw{\sym{\lambda}{\geq 2}{-1}}  \Opbw{\tQ}^{-1}+  R^{(-\sla{7}{4})}_{\geq 2}(\eta)A_2  \Opbw{\tQ}^{-1}\\
    &+ \Opbw{\tQ^{-1}}A_2R^{(-\sla{7}{4})}_{\geq 2}(\eta)+ R_1(\eta)+R_{\geq 2}(\eta)\\
   =& \Opbw{\tQ^{-1}\sha{\vr}(\sym{\lambda}{}{1}+\sym{\lambda}{}{0}+\sym{\lambda}{1}{-1})\sha{\vr} \tQ^{-1} }+ R_{\geq 2}^{(-\sla{1}{2})}(\eta)\\
   \stackrel{\eqref{def:MQ},\eqref{def:sd}}{=}& \Opbw{\sym{\Sigma}{}{\sla{3}{2}}+\tQ^{-2} \sym{\lambda}{}{0}+\sym{r}{1}{-\sla{1}{2}}}+ R_1(\zak)+\breve{R}_{\geq 2}^{(-\sla{1}{2})}(\zak)\,,
 \end{align*}
 where $\sym{r}{1}{-\sla{1}{2}} \in \wt \Gamma_1^{-\sla{1}{2}}$.
 In view of \eqref{prop:ov3}, 
 the symbol $\tQ^{-1}\sha{\vr}(\sym{\lambda}{}{1}+\sym{\lambda}{}{0}+\sym{\lambda}{1}{-1})\sha{\vr} \tQ^{-1}$ which appear in the paradifferential operator above is real valued as well as $\sym{\lambda}{}{1}$, $\sym{\lambda}{}{0}$ and $\sym{\lambda}{1}{-1}$. Then the symbol $ \sym{r}{1}{-\sla{1}{2}}$ is real valued as well by difference. Moreover $R_1(\zak)$  is obtained by substituting $\eta$ with \eqref{zak} to the $1$-homogeneous component of the remainder coming from \Cref{compoparapara0} and is thus a smoothing remainder in $ \wt \cR^{-\vr}_1$. In the same way the remainder  $\breve{R}_{\geq 2}^{(-\sla{1}{2})}(\zak)$ is obtained collecting all the quadratic non-homogeneous regularizing  contributions and substituting $\eta$ with \eqref{zak}. Thus $\breve{R}_{\geq 2}^{(-\sla{1}{2})}$ satisfies the estimates
\begin{align}
    \|\breve{R}_{\geq 2}^{(-\sla{1}{2})}(\zak)\|_{\cL(H^s;H^{s+\sla{1}{2}})}\lesssim_s \| \zak \|_{s_0}^2\,, 
    \qquad \forall s \in \R\,.
    \label{Rbreveest}
\end{align}

Finally, recalling again \eqref{def:AA12},
\begin{align*}
    \Opbw{\tQ} A_3\Opbw{\tQ}=& -\Opbw{ \tQ\sha{\vr} (\kap \sym{\th}{}{2} +1+\ta+\sym{\th}{\geq 2}{0})\sha{\vr} \tQ}+ R_1(\eta,\psi)+ R_{\geq 2}(\eta,\psi)
    \\
    \stackrel{\eqref{def:MQ},\eqref{def:sd}}{=}& 
    - \Opbw{\sym{\Sigma}{}{\sla{3}{2}}+\sym{\breve{r}}{1}{-\sla{1}{2}}}+ S_1(\zak) +{S}_{\geq 2}^{(-\sla{1}{2})}(\zak)\,,
\end{align*}
 In view of \eqref{prop:ov3}, the symbol $\tQ\sha{\vr} (\kap \sym{\th}{}{2} +1+\ta+\sym{\th}{\geq 2}{0})\sha{\vr} \tQ$, which appear in the paradifferential operator above, is real valued as well as $\sym{\th}{}{2}$, $\ta$, $\sym{\th}{\geq 2}{0}$ and $\sym{\Sigma}{}{\sla{3}{2}}$. Then $\sym{\breve{r}}{1}{-\sla{1}{2}}$ is real valued as well by difference.
Moreover $S_1(\zak)$  is obtained by substituting $(\eta,\psi)$ with \eqref{zak} to the $1$-homogeneous component of the remainder coming from \Cref{compoparapara0} and is thus a smoothing remainder in $ \wt \cR^{-\vr}_1$. In the same way the remainder  $S_{\geq 2}^{(-\sla{1}{2})}(\zak)$ is obtained collecting all the quadratic non-homogeneous regularizing  contributions and substituting $(\eta,\psi)$ with \eqref{zak}. Thus $Q_{\geq 2}^{(-\sla{1}{2})}$ satisfies the same estimates as in \eqref{Rbreveest}.
Gathering all the above  we get 
\begin{equation}
\begin{aligned}
A_+&= 2 \Opbw{ -\ii\Big[ \sym{\Sigma}{}{\sla{3}{2}}+ \tV\cdot\x 
+\sym{r}{\geq 1}{0,+}+ \tfrac12\tQ^{-2} \sym{\lambda}{}{0}+\sym{r}{1}{-\sla{1}{2}}+ \tfrac{1}{2}\tQ^2 \ta\Big]}+ \cR_1^+(\zak) + B_{\geq 2}^+(\zak)\,,
\\
A_{-}&= 2\Opbw{\tfrac{\ii}{2} \tQ^{-2} \sym{\lambda}{}{0}+\tfrac{\ii}{2} \Big[ -\sym{r}{\geq 1}{0,-}
+\sym{r}{1}{-\sla{1}{2}}-  \tQ^2 \ta\Big]}+\cR_1^-(\zak)+ B_{\geq 2}^+(\zak)\,,
\end{aligned}
\label{comp:Apm}
\end{equation}
where $\cR^\pm_1(\zak)\in \wt \cR^{-\vr}$ and 
\begin{align}
    \|B_{\geq 2}^\pm(\zak)U\|_{s}
    \lesssim_s \|\zak\|_{s_0}^2\| U\|_s\,,
    \qquad \forall s\in \R\,.
    \label{stima:Bpm}
\end{align}
On the other hand (recall also \eqref{zak})
\begin{gather}
    \pa_t \bcQ \bcQ^{-1}= \begin{pmatrix}
        0& \cD(\zak)\\ \cD(\zak)&0
    \end{pmatrix},\nonumber\\ \cD(\zak):=-\sla{1}{2} \left(\Opbw{\pa_t \tQ}\Opbw{\tQ}^{-1}+\Opbw{\tQ}^{-1}\Opbw{\pa_t \tQ}\right)_{|(\eta,\psi)= \cM^{-1} \zak}.
    \label{patQ}
\end{gather}
Using the equation \eqref{eq:1.2} for $\eta$ we have 
$ \pa_t \tQ= \di_\eta \tQ (\eta;x,\xi)[ G(\eta)\psi]$. 
Then, substituting $(\eta,\psi)=\cM^{-1}\zak$ and gathering \cref{stima:dtQ,stima:Qinv} and \Cref{thm:action}, we get 
\be 
\| \cD(\zak)\|_{\cL(H^s_\R;H^s_\R)} \lesssim_s 
\| \zak \|_{s_0+\mu}^2\,.
\label{stima:cD}
\ee

Moreover by \eqref{esp:Q}, \eqref{esp:Q-1} and 
substituting again $(\eta,\psi)=\cM^{-1}\zak$, we also have
\begin{align}
    \bcQ
\begin{bmatrix}
R_{\geq 2}(\eta,\psi) & R_1(\eta)+ R_{\geq 2}(\eta) \\
\breve{R}_{\geq 2}(\eta,\psi) & R_1(\eta,\psi)
\end{bmatrix}
 \bcQ^{-1}= \frac12\begin{bmatrix}
R_+ & R_- \\
\overline{R_-} & \overline{R_+}
\end{bmatrix}= \bR_1(\zak)+\bR_{\geq 2}(\zak)
\label{conj:Rsmoo}
\end{align}
where $\bR_1(\zak)$ is a matrix of $1$-homogeneous smoothing remainders in $ \wt\cR^{-\vr}_1$ and $\bR_{\geq 2}(\zak)$ is a matrix in $\cR_{\geq 2}^{-\vr}[r]$.
Thus \eqref{eq:QQ-1}, combined with 
\eqref{eq:conj_complex_Matrix}, \eqref{comp:Apm}, 
\eqref{patQ}, \eqref{conj:Rsmoo} and the estimates 
\eqref{stima:Bpm}, \eqref{stima:cD}, concludes the proof of  \Cref{diag.ord0}.
\end{proof}

     \subsection{Para-differential block-decoupling}\label{section block-decoupling}
In this section, we diagonalize the matrix of 
paradifferential operators in \eqref{eq:U} through two steps: first, 
we apply a transformation to remove the term 
$\Opbw{\tfrac{\ii}{2}\tQ^{-2}\sym{\lambda}{}{0}}$ of order $\tfrac12$ 
(see Lemma \ref{diagoPosorders}); 
then, we introduce a second transformation to 
eliminate the off-diagonal symbol of order $0$, 
up to a smoothing remainder (see Lemma \ref{diagoNegativeorders}).
The main outcome is the following proposition. 
     \begin{proposition}[Block-decoupling up to smoothing reminders]
     \label{prop:blockdecoupling}
     Let $\vr >1$. 
There are $s_0>0 $ and a bounded and invertible linear transformation $\bF(\zak)$ such that for any $s\geq s_0$ there is $r=r_s>0$ such that for any $\zak \in B_{s_0,\R}(r)$ the maps $\bF(\zak)$ and $\bF^{-1}(\zak)$ belong to $\cL\left( H^s_\R;H^s_\R\right)$ with estimates 
\begin{align}
\|\bF(\zak)\|_{\cL(H^s_\R;H^s_\R)}+ \|\bF^{-1}(\zak)\|_{\cL(H^s_\R;H^s_\R)}\lesssim_s 1\,, 
\label{stima:adm}
\end{align}
and if $\zak(t) \in B_{s_0, \R}(r)$ solves 
\eqref{eq:zak} then the variable
  $U_{1}= \bF(\zak)U$, 
  with $U$ defined in  \eqref{def:cQ} solving \eqref{eq:U}, 
   solves 
 \be \label{diagonale}
\pa_t U_{1} = \vOpbw{-\ii \left[\Sigma^{(\sla{3}{2})} + \tV\cdot \xi 
+ \tfrac12 \tQ^{-2} \sym{\lambda}{}{0} + \sym{a}{1}{0}\right]} U_{1} 
+ \bR_1(\zak)U_{1} + \bB_{\geq 2}(\zak)U_{1}\,,
        \ee
where  the real symbol 
$\Sigma^{(\sla{3}{2})}$ is defined in \eqref{def:sd}-\eqref{def:sdBIS}, 
the real vector of functions  $ \tV \in  \cF^{\R}_{\geq 1}[r]$ is defined in \eqref{def:V-B}, 
the real symbol  $ \sym{\lambda}{}{0} \in \Gamma_{\geq 1}^0[r]$ is defined in \eqref{def:lam0}. Moreover: 
\begin{itemize}
    \item $\sym{a}{1}{0}\in\wt \Gamma_1^{0}$ is  the real symbol defined in \Cref{diag.ord0};
    \item $\bR_1(\zak)$ is a real-to-real matrix of smoothing remainders in $ \wt\cR^{-\vr}_1$;
      \item $\bB_{\geq 2}(\zak)$ is a real-to-real matrix of bounded operators satisfying \eqref{stima_quadratica0} (with a possibly larger $s_0$).
\end{itemize}
     \end{proposition}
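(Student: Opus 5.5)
We will construct $\bF(\zak)$ as the composition $\bF:=\bF_2\circ\bF_1$ of two flows of the form \eqref{flusso2para}. The two steps are the ones announced before the statement (Lemma \ref{diagoPosorders} and Lemma \ref{diagoNegativeorders}), and for each we use the conjugation rules of \Cref{compoparapara0} and the commutator expansion of the Corollary on $\mathrm{Ad}^p$.

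\textbf{First step.} This step removes the off-diagonal symbol $\tfrac{\ii}{2}\tQ^{-2}\sym{\lambda}{}{0}$. The symbol $\sym{\lambda}{}{0}$ has order $0$ and $\tQ^{-2}$ has order $\sla{1}{2}$, so this off-diagonal part has order $\sla{1}{2}$. We take $\bF_1:=\bPhi^1$, where $\bPhi^\tau$ is generated by $\bG=\zOpbw{g(\zak;x,\xi)}$ with $g\in\Gamma^{-1}_{\geq1}[r]$ of order $-1$. Lemma \ref{lem:flusso} then gives \eqref{stima:adm} for $\bF_1^{\pm1}$.

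By the Lie expansion $\bPhi^1 \bA\bPhi^{-1}=\bA+[\bG,\bA]+\dots$, the leading commutator of $\bG$ with the diagonal dispersion $\vOpbw{-\ii\Sigma^{(\sla{3}{2})}}$ produces the off-diagonal symbol $2\ii g\,\Lambda(\xi)$ at leading order. This is because the off-diagonal structure turns the commutator into an anticommutator in the scalar entries, so no small divisor or transport operator appears. We therefore solve algebraically
\[
g=-\tfrac{1}{4}\tQ^{-2}\sym{\lambda}{}{0}\,\Sigma^{(\sla{3}{2})\,-1}\,,
\]
which is a genuine symbol of order $-1$ that is at least linear in $\zak$.

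The remaining terms are handled as follows. The $\pa_t\bPhi$ contribution is $O(\|\zak\|^2)$, because $\pa_t g$ is computed through \eqref{eq:zak} and costs no derivatives in $\xi$. The iterated commutators are of order $\le \sla{3}{2}-2<0$ and at least quadratic, so they go into $\bB_{\ge2}$. The linear parts of the lower-order commutators are of order $\le 0$, and they stay as new off-diagonal symbols of order $0$ together with $\sym{b}{1}{0}$.

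\textbf{Second step.} This step removes the off-diagonal symbols of order $\le0$ up to smoothing remainders. We iterate a finite number of times, about $\vr+2$, using generators $\zOpbw{g_k}$ with $g_k$ of order $-\sla{3}{2}-k/2$ (or better). At each stage we solve the same algebraic equation $2\ii g_k\Lambda=-b_k$. Each conjugation lowers the order of the off-diagonal part by at least $\sla{1}{2}$. Once the order falls below $-\vr$, the off-diagonal paradifferential operator is put into $\bR_1+\bB_{\ge2}$; for its linear part this is justified through Lemma \ref{lem:smoohomo}.

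\textbf{Diagonal parts.} We check that the diagonal symbols are unchanged up to admissible errors. Commutators between the off-diagonal generators and the diagonal transport term $\tV\cdot\xi$ have order at most $1+(-1)=0$ and are off-diagonal, so they are absorbed in the next step. Diagonal contributions arise only from products of two off-diagonal factors, hence they are quadratic and bounded, and they go into $\bB_{\ge2}$ via \eqref{stima_quadratica0}. Consequently the diagonal symbols $\Sigma^{(\sla{3}{2})}$, $\tV\cdot\xi$, $\tfrac12\tQ^{-2}\sym{\lambda}{}{0}$ and $\sym{a}{1}{0}$ survive unchanged.

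\textbf{Main obstacle.} The delicate point is the remainders. First, the remainders from \Cref{compoparapara0} must be split into a $1$-homogeneous smoothing part, in $\wt\cR^{-\vr}_1$, and a quadratic part with the tame bound \eqref{stima_quadratica0}. To do this we expand each $g_k$ into its linear component plus a $\Gamma_{\ge2}$ component, and then apply \eqref{eq:Gomega}-\eqref{eq:omegaG}. Second, conjugating the already existing smoothing terms $\bR_1$ requires the commutator estimate \eqref{eq:stima_comm_smoothing}, which yields quadratic tame errors. Finally, the real-to-real structure is preserved because every generator is of the form $\zOpbw{\cdot}$, and the reality of $\sym{a}{1}{0}$ is inherited directly from \Cref{diag.ord0}.
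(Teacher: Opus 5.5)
Your two-step architecture is the paper's: Lemma \ref{diagoPosorders}, then Lemma \ref{diagoNegativeorders}. Using several flows with algebraic equations in the second step, instead of one flow whose symbol is built recursively as in Lemma \ref{equaHomodiago}, would be an inessential difference. There is, however, one step that fails as written: the claim that the $\pa_t\bPhi$ contribution is $O(\|\zak\|^2)$.

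The generator is at least linear in $\zak$, so $\pa_t g=\di_\zak g[\pa_t\zak]$, and by \eqref{X:zak_esp} $\pa_t\zak=-\ii\vomega(D)\zak+\hamvec{\geq 2}(\zak)$. Writing $g_1$ for the linear part of $g$, this gives
\[
\pa_t\bPhi\,\bPhi^{-1}=\zOpbw{g_1(-\ii\vomega(D)\zak;x,\xi)}+\bB_{\geq 2}(\zak)\,.
\]
The first term is $1$-homogeneous, off-diagonal, and of the same order as the generator. It cannot go into $\bB_{\geq 2}$, because \eqref{stima_quadratica0} is a quadratic bound. It cannot go into $\bR_1$ either, unless its order is below $-\vr$. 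In your first step it is a linear off-diagonal symbol of order $-1$. It must be added to the order-$\le 0$ off-diagonal part handed to the second step; the paper calls it $\sym{r}{1}{-1}$ and absorbs it into $\sym{b}{1}{0}$ in \eqref{NuovoParaprod}. In your second step, every stage produces such a term again.

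Once this is accounted for, your scheme closes and becomes exactly the paper's argument. The equation one really solves for the last transformation is \eqref{eq:homo_out}. Besides the anticommutator with $\Lambda$, it contains $\td[g]:=g(-\ii\vomega(D)\zak;x,\xi)$, so it is not purely algebraic.

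Suppose at stage $k$ you only impose $2\ii g_k\Lambda=-b_k$, with $b_k$ of order $m_k$. The new linear off-diagonal errors are then:
\begin{itemize}
\item $\ts[g_k]:=\Lambda\sha{\vr}g_k+g_k\sha{\vr}\Lambda-2\Lambda g_k$, of order $m_k-2$;
\item $\td[g_k]$, of order $m_k-\frac32$, since $\td$ preserves the order of $g_k$ and only costs regularity in $\zak$.
\end{itemize}
So the off-diagonal linear order drops by $\frac32$ per stage, and finitely many stages bring it below $-\vr$. This is precisely the recursion $h_\ell=(-\ii\ts[h_{\ell-1}]+\td[h_{\ell-1}])/(2\ii\Lambda)$ in the proof of Lemma \ref{equaHomodiago}. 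The rest of your bookkeeping — the splitting of the composition remainders via \eqref{eq:Gomega}--\eqref{eq:omegaG}, the diagonal terms being quadratic and bounded, and the real-to-real structure — is consistent with the paper.
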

     The first step is to remove the positive order out diagonal term $ \zOpbw{\frac{\ii}{2} Q^{-2} \sym{\lambda}{}{0}}$ in \eqref{eq:U}. This is the content of the following lemma.

  \begin{lemma}[Diagonalization of the positive orders]\label{diagoPosorders}
      Let  $\vr \geq  0$. There exist $s_0 >0$ and a bounded 
      and invertible transformation $\Psi_0(\zak)$   
      such that for any $s\geq s_0$ there is $r=r_s>0$ such that for any 
      $\zak \in B_{s_0,\R}(r)$ the map ${\bf \Psi}_0(\zak)$ verifies \eqref{stima:adm}. 
      Moreover  if $\zak(t) \in B_{s_0, \R}(r)$ solves \eqref{eq:zak} then the variable
  \begin{align*}
      U_0:= \mathbf{\Psi}_0(\zak)U\,, 
      \qquad \text{where  $U$ solves \eqref{eq:U}\,,} 
  \end{align*}
   solves the system
     \begin{align}
\pa_t U_0 = 
\vOpbw{-\ii \left[\sym{\Sigma}{}{\sla{3}{2}} 
+ \tV\cdot \xi + \tfrac12 \tQ^{-2} \lambda^{\scs{(0)}}
+ \sym{a}{1}{0}\right]} U_0
+ \zOpbw{ \sym{b}{1}{0}}U_0 + \bR_1(\zak)U_0 
+ \bB_{\geq 2}(\zak)U_0\,,
         \label{NuovoParaprod}
\end{align}
     where 
    \begin{itemize}
        \item $\sym{a}{1}{0}\in\wt \Gamma_1^{0}$ is  the real symbol defined in \Cref{diag.ord0}  while $\sym{b}{1}{0}$ is a symbol in $\wt \Gamma_1^{0}$;
        \item $\bR_1(\zak)$ is a real-to-real matrix of real-to-real smoothing operators in $ \wt \cR^{-\vr}_1$;
        \item $\bB_{\geq 2}(\zak)$ is a real-to-real  matrix of bounded operators, satisfying \eqref{stima_quadratica0} (with a possibly larger $s_0$).
        \end{itemize}
  \end{lemma}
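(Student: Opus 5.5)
We conjugate \eqref{eq:U} by the flow of an off-diagonal paradifferential generator, following \Cref{lem:flusso}. Set
\[
\bG(\zak):=\zOpbw{g(\zak;x,\xi)}\,,
\]
with $g\in\Gamma^{0}_{\geq1}[r]$ (or an order $-\sla{1}{2}$ symbol) to be chosen. Let $\bPhi^\tau(\zak)$ be its flow \eqref{flusso2para} and put ${\bf\Psi}_0(\zak):=\bPhi^1(\zak)$. By \Cref{lem:flusso}, ${\bf\Psi}_0$ and its inverse are bounded on $H^s_\R$ uniformly for $\zak\in B_{s_0,\R}(r)$, which gives \eqref{stima:adm}. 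They are also real-to-real, since $\bG$ is.

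The new variable $U_0={\bf\Psi}_0U$ satisfies
\[
\pa_tU_0=\Big({\bf\Psi}_0\,\mathcal{A}\,{\bf\Psi}_0^{-1}+(\pa_t{\bf\Psi}_0){\bf\Psi}_0^{-1}\Big)U_0\,,
\]
where $\mathcal A$ is the operator on the right of \eqref{eq:U}. We expand the conjugation by the Lie series
\[
{\bf\Psi}_0\mathcal A{\bf\Psi}_0^{-1}=\mathcal A+[\bG,\mathcal A]+\int_0^1(1-\tau)\,\bPhi^\tau[\bG,[\bG,\mathcal A]]\bPhi^{-\tau}\,d\tau\,.
\]
The key term is $[\bG,\vOpbw{-\ii\Sigma^{(\sla32)}}]$. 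By \Cref{compoparapara0}, at principal order it equals
\[
\zOpbw{\ii\, g\,(\Sigma^{(\sla32)}(x,\xi)+\Sigma^{(\sla32)}(x,-\xi))}+\text{lower order}\,.
\]
The vec/out structure makes the two diagonal entries enter with the same sign off the diagonal, because the lower-right block carries $\ov{a(x,-\xi)}$. The Poisson-bracket terms are only of order $\sla32-1=\sla12$, but with an order-$0$ generator they are of the same order as the term being removed. To avoid this we take the generator of order $-1$.

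\textbf{Choice of $g$.} We solve
\[
2\ii\,g\,\Sigma^{(\sla32)}+\tfrac{\ii}{2}\tQ^{-2}\sym{\lambda}{}{0}=0\,,
\qquad\text{i.e.}\qquad
g:=-\tfrac14\tQ^{-2}\sym{\lambda}{}{0}\big(\Sigma^{(\sla32)}\big)^{-1}\in\Gamma^{-1}_{\geq1}[r]\,,
\]
using that $\Sigma^{(\sla32)}$ is even in $\xi$ at principal order. Since $g$ is a bounded operator of negative order, \Cref{lem:flusso} (or an even easier ODE argument) applies.

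\textbf{Remaining terms.}
\begin{itemize}
\item The brackets $[\bG,\vOpbw{\tV\cdot\xi+\dots}]$ and $[\bG,\zOpbw{\cdot}]$ have order $\le0$. Their diagonal components are of order $\le -\sla12$ or quadratic. The out-diagonal order-$0$ parts are collected into $\sym{b}{1}{0}$. The diagonal pieces of order $<0$ that are $1$-homogeneous are absorbed either into $\sym{a}{1}{0}$ or into the operators $\bB_{\ge2}$, $\bR_1$. The statement only requires $\sym{b}{1}{0}$ to be some symbol of $\wt\Gamma^0_1$, which leaves room for this.
\item The double commutator is quadratic in $\zak$ and bounded, so it goes into $\bB_{\geq2}$.
\item The term $(\pa_t{\bf\Psi}_0){\bf\Psi}_0^{-1}$ involves $\pa_t g=\di_\zak g[\pa_t\zak]$. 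With \eqref{est:XH} it is bounded, and it is quadratic because $\pa_t\zak=-\ii\vomega\zak+O(\zak^2)$.
\item The smoothing remainders of \Cref{compoparapara0} are split into $1$-homogeneous parts in $\wt\cR^{-\vr}_1$ and quadratic parts, exactly as in the proof of \Cref{diag.ord0}.
\end{itemize}

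\textbf{Main obstacle.} The delicate step is bookkeeping the homogeneity: the linear-in-$\zak$ part of $\pa_t g$ contributes the $1$-homogeneous term $\di_\zak g[-\ii\vomega\zak]$, of order $-1+\sla32$. We must check that its out-diagonal order-$\sla12$ component is handled. We would include it either by solving the homological equation with $g$ chosen so that the full linear term $g_1\Lambda$-commutator plus $\pa_t g_1$ cancels, or by noting that after replacing $\Sigma^{(\sla32)}$ by $\Lambda$ this contribution is of order $\sla12$ and can be cancelled by adjusting $g$ at the linear level. If this fails, we would fall back to a generator with Fourier-multiplier structure in the linear part, $g_1=\psi\,\mathtt m(\xi)$, as done for the transport term.
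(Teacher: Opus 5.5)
Your construction is the paper's. You use an off-diagonal generator $\bG=\zOpbw{g}$ of order $-1$, built from the full non-homogeneous symbol $g=-\tfrac14\tQ^{-2}\sym{\lambda}{}{0}\big(\sym{\Sigma}{}{\sla{3}{2}}\big)^{-1}\in\Gamma^{-1}_{\geq 1}[r]$. Up to the sign convention for the off-diagonal commutator, this is the paper's $\sym{g}{\geq 1}{-1}$, and using the full symbol also removes the quadratic, still unbounded, part of $\tfrac{\ii}{2}\tQ^{-2}\sym{\lambda}{}{0}$. The off-diagonal entry of $[\bG,\vOpbw{-\ii\sym{\Sigma}{}{\sla{3}{2}}}]$ is an anticommutator, so the Poisson bracket drops out and the leftover has order $-\sla{3}{2}$.

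The flaw is in your homogeneity bookkeeping, and it leaves the proposal ending on an obstacle that does not exist.

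\textbf{The time-derivative term.} You first call $(\pa_t\mathbf{\Psi}_0)\mathbf{\Psi}_0^{-1}$ quadratic. That is false: $g$ has a linear part $g_1$ and $\pa_t\zak=-\ii\vomega(D)\zak+O(\zak^2)$. You then assign its linear piece $g_1(-\ii\vomega(D)\zak;x,\xi)$ the order $-1+\sla{3}{2}$. This confuses the order in $\xi$ with the regularity of the argument. The $\sla{3}{2}$ derivatives fall on $\zak$ and are absorbed in the loss $\mu$ of Definition~\ref{def:sfr} (that is, a larger $s_0$), while the $\xi$-order stays $-1$. Hence, by \eqref{Lie:dt},
$(\pa_t\mathbf{\Psi}_0)\mathbf{\Psi}_0^{-1}=\zOpbw{\sym{r}{1}{-1}}+\bB_{\geq 2}(\zak)$ with $\sym{r}{1}{-1}=g_1(-\ii\vomega(D)\zak)\in\wt\Gamma^{-1}_1$. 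This term is simply absorbed into the new $\sym{b}{1}{0}$, exactly as in the paper. No readjustment of $g$ and no fallback are needed; a homological equation of that kind is solved only in the next step, \Cref{equaHomodiago}.

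\textbf{The remaining commutators.} The brackets of $\bG$ with the $O(\zak)$ parts of the operator in \eqref{eq:U}, and all double commutators, are at least quadratic and of order $\leq 0$. They therefore go entirely into $\bB_{\geq 2}$, not into $\sym{b}{1}{0}$. The only new $1$-homogeneous terms are the off-diagonal ones from $[\zOpbw{g_1},-\ii\vomega(D)]$ and from $\pa_t\zOpbw{g_1}$, so no $1$-homogeneous diagonal term is created. You should state this rather than propose absorbing such terms into $\sym{a}{1}{0}$, $\bB_{\geq 2}$ or $\bR_1$. The statement keeps $\sym{a}{1}{0}$ unchanged, $\bB_{\geq 2}$ must be quadratic, and a $1$-homogeneous paradifferential operator of finite negative order is not in $\wt\cR^{-\vr}_1$ for large $\vr$.
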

  \begin{proof}
      We consider the flow 
\begin{gather}
\pa_\tau \Phi^\tau(\zak)= \bG(\zak) \Phi^\tau(\zak)\,, 
\qquad \Phi^0(\zak)=\uno\,, 
\notag
\\
\bG(\zak):=\zOpbw{\sym{g}{\geq 1}{-1}(\eta;x,\xi)}\,,
\qquad 
\sym{g}{\geq 1}{-1}:= 
\frac{\tQ^{-2}\lambda^{\scs{(0)}}}{4 \sym{\Sigma}{}{\sla{3}{2}}}\,.
\label{flusso:-1out}
\end{gather}
Note that by its definition, the symbol $\sym{g}{\geq q}{-1}$ 
solves the homological equation
\begin{align}
-\ii \left[\sym{\Sigma}{}{\sla{3}{2}}
\sha{\vr} \sym{g}{\geq 1}{-1}
+\sym{g}{\geq 1}{-1}\sha{\vr} \sym{\Sigma}{}{\sla{3}{2}} \right]
+ \tfrac{\ii}{2} \tQ^{-2}\sym{\lambda}{}{0} 
\in \Sigma \Gamma_{1}^{-\sla{3}{2}}[r,2]\,.
    \label{homo:g-1}
\end{align}
Note that, by \Cref{lem:Q-Qinv} and \Cref{Lem:lamh} and 
recalling also the discussion in \Cref{rem:symbols_modozero}, one has 
$\sym{g}{\geq 1}{-1}\in \Sigma \Gamma_{1}^{-1}[r,3]$ with expansion 
\[
\sym{g}{\geq 1}{-1}= \sym{g}{1}{-1}+ \sym{g}{\geq 3}{-1}\,, 
\qquad 
\sym{g}{1}{-1}\in\wt{\Gamma}_1^{-1}, \qquad \sym{g}{\geq 3}{-1}
\in \Gamma_{\geq 3}^{-1}[r]\,.
\] 
Moreover
there is  $\mu>0$ such that
\begin{align}
\big| \di_\eta \sym{g}{\geq 1}{-1}[\hat\eta]\big|_{-1,\s}
\lesssim_{\s} 
\| \hat \eta\|_{\s+\mu}\,,
\qquad
\big| \di_\eta \sym{g}{\geq 3}{-1}[\hat\eta]\big|_{-1,\s}\lesssim_{\s}\| \eta\|_{\s+\mu}^2
\| \hat \eta\|_{\s+\mu}\,.
\label{stima:g1d}
\end{align}
Then \Cref{lem:flusso} ensures that $\Psi^\tau(\zak)$ 
is well defined and real-to-real and, 
by \eqref{stima:phistd}, satisfies 
the required estimates \eqref{stima:adm}. 
If $U$ solves \eqref{eq:U}, the variable 
\begin{align}\label{mappaPsi0}
    U_0:= {\bf \Psi}_0(\zak) U\,, 
    \qquad 
    \text{where} 
    \qquad  {\bf \Psi}_0(\zak):=\Phi^\tau(\zak)_{|\tau=1}\,,
\end{align}
solves 
\begin{equation}\label{coniugata0}
\begin{aligned}
    \pa_t U_0 &= {\bf \Psi}_0(\zak) \bA(\zak) {\bf \Psi}_0(\zak)^{-1}U_0 
    + \pa_t {\bf \Psi}_0(\zak){\bf \Psi}_0(\zak)^{-1} U_0
    +{\bf \Psi}_0(\zak) 
    \bR_1(\zak) {\bf \Psi}_0^{-1}(\zak)U_0
    \\&\qquad 
    + {\bf \Psi}_0(\zak) \bB_{\geq 2}(\zak) {\bf \Psi}_0^{-1}(\zak)U_0\,,
\end{aligned}
\end{equation}
where 
 \begin{align}
 \bA(\zak)&:= \bA_1(\zak)+\bA_2(\zak)+\bA_3(\zak)\,,\nonumber
 \\
 \bA_1(\zak)&:=\vOpbw{-\ii \sym{\Sigma}{}{\sla{3}{2}}}\,,\label{bfA1}
 \\
 \bA_2(\zak)&:=\vOpbw{-\ii \left[\tV\cdot \xi + \tfrac12 \tQ^{-2} \sym{\lambda}{}{0}
 +\sym{a}{1}{0}\right]} \,,\label{bfA2}
 \\
 \bA_3(\zak)&:=
 \zOpbw{\tfrac{\ii}{2}\tQ^{-2} \sym{\lambda}{}{0}+\sym{b}{1}{0}}\,.\label{bfA3}
\end{align}
         Note that, since $\bA(\zak)$, $\bR_1(\zak)$ and $\bB_{\geq 2}(\zak)$ 
         are real-to-real as well as ${\bf \Psi}_0(\zak)$ and ${\bf \Psi}_0^{-1}(\zak)$, 
         the operator in the right hand side of \eqref{coniugata0} is real-to-real. 
         We now expand it term by term.
         
         \smallskip
         \noindent{\bfseries Expansion of 
         ${\bf \Psi}_0(\zak) \bA(\zak) {\bf \Psi}_0(\zak)^{-1} $.}
        Classical Lie expansion (see e.g \cite[Formula A.3]{BFP}), 
        gives (see \eqref{mappaPsi0})
\begin{align}
\left(\Phi^{\tau}(\zak) \bA(\zak) \Phi^{\tau}(\zak)^{-1}\right)_{| \tau=1} &= 
\bA(\zak) + \int_0^{1} \Phi^\theta(\zak)\left[ \bG(\zak),  \bA(\zak)\right]  
\Phi^{-\theta}(\zak)\, \di \theta 
\label{Lie:1}
\\
&= \bA(\zak) + \left[ \bG(\zak),  \bA(\zak)\right] 
+\int_{0}^1 (1-\theta) \Phi^\theta(\zak){\mathrm{ Ad}}_{\bG}^2[ \bA(\zak)]  
\Phi^{-\theta}(\zak)\, \di \theta\,.
\label{Lie:2}
\end{align}
First, thanks to \Cref{thm:action} and estimates \eqref{bound:homosym} 
and \eqref{stima:nonhom} for $\sym{g}{\geq 1}{-1}$,  the generator 
$\bG(\zak)$ in \eqref{flusso:-1out} belongs to 
$\cL(H^s_\R;H^{s+1}_\R)$ for any $s\in \R$, with estimates 
    \be\label{stima:gen-1}
        \| \bG(\zak)\|_{\cL(H^s_\R;H^{s+1}_\R)}
        \lesssim_s \| \zak\|_{s_0}\,.
        \ee
We then compute the conjugation rule \eqref{Lie:1}, 
        \eqref{Lie:2} starting from 
        $ \bA(\zak)\leadsto\bA_1(\zak)$ in \eqref{bfA1}.
First, note that, by \Cref{thm:action}, it  belongs to 
        $\cL(H^s_\R;H^{s-\sla{3}{2}}_\R)$ with estimates 
        \begin{align}
            \| \bA_1(\zak)\|_{\cL(H^s_\R;H^{s-\sla{3}{2}}_\R)}\lesssim_s 1\,.
            \label{stima:A11}
        \end{align}
        Then, by \eqref{stima:A11} and \eqref{stima:gen-1}, we have $\left[ \bG(\zak),  \bA_1(\zak)\right] \in \cL(H^s_\R;H^{s-\sla{1}{2}}_\R)$ with estimates
        \begin{align}\label{stima:commGA1}
        \| \left[ \bG(\zak),  \bA_1(\zak)\right]\|_{\cL(H^s_\R;H^{s-\sla{1}{2}}_\R)}
        \lesssim_s \| \zak\|_{s_0}, \quad \textnormal{for any }s \in \R\,.
        \end{align}
        Finally, by \eqref{stima:A11} and \eqref{stima:commGA1}, we have that 
        ${\mathrm{Ad}}_{\bG}^2[ \bA_1(\zak)]\in \cL(H^s_\R;H^{s+\sla{1}{2}}_\R)$ 
        with estimates 
        \begin{align}\label{stima:2commGA1} 
        \| {\mathrm{Ad}}_{\bG}^2[ \bA_1(\zak)]\|_{\cL(H^s;H^{s+\sla{1}{2}})}
        \lesssim_s \| \zak\|_{s_0}^2\,.
      \end{align}
        Gathering \eqref{Lie:2},  \eqref{stima:2commGA1} and \Cref{compoparapara0}, we get
        \begin{align}
 \Phi^{1}(\zak) \bA_1(\zak) \Phi^{1}(\zak)^{-1}&=  
 \bA_1(\zak)+ \zOpbw{-\ii \left[\sym{\Sigma}{}{\sla{3}{2}} \sha{\vr} \sym{g}{\geq 1}{-1}
 +\sym{g}{\geq 1}{-1}\sha{\vr} \sym{\Sigma}{}{\sla{3}{2}} \right]}
 + \bR_1(\zak)+ \bB_{\geq 2}(\zak)\notag
 \\
   &\stackrel{\eqref{homo:g-1}}{=}\vOpbw{-\ii \sym{\Sigma}{}{\sla{3}{2}} }
   + \zOpbw{- \tfrac{\ii}{2} \tQ^{-2}\sym{\lambda}{}{0} 
   + \sym{r}{1}{-\sla{3}{2}}}
            + \bR_1(\zak)+ \breve{\bB}_{\geq 2}(\zak)\,,
            \label{conj:A1}
        \end{align}
        where $\bR_1(\zak) $ is a $1$-homogeneous smoothing remainder 
        in $\wt \cR^{-\vr}_1$ and $ \bB_{\geq 2}(\zak)$ and 
        $\breve{\bB}(\zak)$ belong to $ \cL(H^s_\R;H^s_\R)$ 
        for any $s\in \R$ with estimates 
        \begin{align}\label{stima:bione}
        \|  \bB_{\geq 2}(\zak)\|_{\cL(H^s_\R;H^s_\R)}
        \lesssim_s \| \zak\|_{s_0}^2\,.
        \end{align}
        We now compute the conjugation rule for 
        $\bA_{2}(\zak)$ defined in \eqref{bfA2},
        which, 
        thanks to \Cref{thm:action},  belongs to 
        $\cL(H^s_\R; H^{s-1}_\R)$ for any $s\in \R$ with estimates 
        \begin{align}
            \| \bA_2(\zak)\|_{\cL(H^s; H^{s-1})}\lesssim_s \| \zak\|_{s_0}.
            \label{stima:A21}
        \end{align}
        Then, by \eqref{stima:gen-1} and \eqref{stima:A21}, we obtain that $\left[ \bG(\zak),  \bA_2(\zak)\right] \in \cL(H^s_\R;H^{s}_\R)$ with estimates
        \begin{align}
            \| \left[ \bG(\zak),  \bA_2(\zak)\right] \|_{\cL(H^s_\R;H^{s}_\R)}\lesssim_s \| \zak\|_{s_0}^2.
            \label{stima:commGA2}
        \end{align}
        Then, by \eqref{Lie:1}, \eqref{stima:commGA2} and estimate \eqref{stima:adm} for $ \Phi^\tau(\zak)$, we get 
        \begin{align}
            \Phi^{1}(\zak) \bA_2(\zak) \Phi^{1}(\zak)^{-1}= 
            \vOpbw{-\ii \left[\tV\cdot \xi 
            + \tfrac12 \tQ^{-2} \sym{\lambda}{}{0}\right]} + \bB_{\geq 2}(\zak)\,,
            \label{conj:A2}
        \end{align}
        where $\bB_{\geq 2}(\zak)\in \cL(H^s_\R;H^s_\R)$ 
fulfills \eqref{stima:commGA2}. 
        Similarly, the operator $\bA_3(\zak)$ in \eqref{bfA3}
        is in $\cL(H^s_\R; H^{s-\sla{1}{2}}_\R)$ 
        for any $s\in \R$ with estimates 
        \begin{align*}
            \| \bA_3(\zak)\|_{\cL(H^s_\R; H^{s-\sla{1}{2}}_\R)}
            \lesssim_s \| \zak\|_{s_0}\,.
        \end{align*}
        Then we obtain that $\left[ \bG(\zak),  \bA_3(\zak)\right] \in \cL(H^s_\R;H^{s}_\R)$ with the same estimates as in \eqref{stima:commGA2} and, by \eqref{Lie:1}, we get
        \begin{align}
               \Phi^{1}(\zak) \bA_3(\zak) \Phi^{1}(\zak)^{-1}=  
               \zOpbw{\tfrac{\ii}{2}\tQ^{-2} \sym{\lambda}{}{0}+\sym{b}{1}{0}}+ \bB_{\geq 2}(\zak)\,,
               \label{conj:A3}
        \end{align}
        where $\bB_{\geq 2}(\zak)\in \cL(H^s_\R;H^s_\R)$ with 
        estimates \eqref{stima:bione}. 
        

\smallskip
\noindent
{\bfseries Expansion of $\pa_t \Phi(\zak) \Phi(\zak)^{-1}$.}
         We then expand the time derivative using its 
         corresponding Lie expansion (see \cite[Formula A.4]{BFP}).
         Recalling \eqref{flusso:-1out}
 we get        
 \begin{align}
&\pa_t {\bf \Psi}_0(\zak) {\bf \Psi}_0^{-1}(\zak)= 
\pa_t \bG(\zak)+ \int_0^1 (1-\theta)\Phi^\theta(\zak) 
\left[\pa_t \bG(\zak), \bG(\zak)\right] \Phi^{-\theta}(\zak)\, \di \theta \label{Lie:dt}
\\
 =& \zOpbw{\di_\eta \sym{g}{\geq 1}{-1}[G(\eta)\psi]}
 + \int_0^1 (1-\theta)\Phi^\theta(\zak) 
 \left[\zOpbw{\di_\eta \sym{g}{\geq 1}{-1}[G(\eta)\psi]}, \zOpbw{\sym{g}{\geq 1}{-1}}\right] 
 \Phi^{-\theta}(\zak)\, \di \theta\,.
            \label{exp:dt-1}
        \end{align}
        By \eqref{stima:g1d}, \eqref{exp:DN} and \eqref{stimaDNa perdere}, 
        $\di_\eta \sym{g}{\geq 1}{-1}[G(\eta)\psi]$ is a symbol in 
        $\Sigma \Gamma_1^{-1}[r,2]$ expanding as 
        $\di_\eta \sym{g}{\geq 1}{-1}[G(\eta)\psi]= \sym{r}{1}{-1}+\sym{r}{\geq 2}{-1}$. 
        Moreover, by  \eqref{exp:dt-1} and 
        using \eqref{actionSob}, we get 
\begin{align*}
    \pa_t {\bf \Psi}_0(\zak){\bf \Psi}_0^{-1}(\zak)= 
    \zOpbw{\sym{r}{1}{-1}}+ \bB_{\geq 2}(\zak)\,,
    \end{align*}
   where the operator $ \bB_{\geq 2}(\zak)$ belongs to $\cL(H^s_\R;H^s_\R)$ 
   with estimates \eqref{stima:bione}. Note that we included the operator 
   $\zOpbw{\sym{r}{\geq 2}{-1}}$ in the quadratic remainder, 
   as it satisfies the quadratic estimate \eqref{stima:bione} by \eqref{actionSob} and \eqref{stima:nonhom}.

\smallskip
\noindent
{\bfseries Expansion of ${\bf \Psi}_0(\zak) 
    \bR_1(\zak) {\bf \Psi}_0^{-1}(\zak)$.} Using the Lie expansion we get 
    \begin{align}
       {\bf \Psi}_0(\zak) 
    \bR_1(\zak) {\bf \Psi}_0^{-1}(\zak)= \bR_1(\zak)+ \int_0^{1} \Phi^\theta(\zak)\left[ \bG(\zak),  \bR_1(\zak)\right]  \Phi^{-\theta}(\zak)\, \di \theta
    = \bR_1(\zak)+ \bR_{\geq 2}(\zak)\,,
    \label{conj:R1}
    \end{align}
    where, thanks to estimates \eqref{bound:smoo} for $ \bR_1(\zak)$,  \eqref{stima:adm} for $\Phi^\tau(\zak)$ and \eqref{stima:gen-1} for $\bG(\zak)$,  $\bR_{\geq 2}(\zak)$ belongs to $ \cL( H^s_\R; H^{s+\vr}_\R)$ with estimates 
    \begin{align*}
        \| \bR_{\geq 2}(\zak)V\|_{s+\vr}\lesssim_s 
        \| \zak\|_{s_0}^2\| V \|_s+ \| \zak\|_{s_0}\|\zak\|_s\|V\|_{s_0}\,, 
        \qquad \forall s\geq s_0\,.
    \end{align*}
    
    \smallskip
    \noindent
    {\bfseries Estimate of $ {\bf \Psi}_0(\zak) 
    \bB_{\geq 2}(\zak) {\bf \Psi}_0^{-1}(\zak)$.} 
    It follows from \eqref{mappaPsi0}, \eqref{stima:flusso}  and \eqref{stima_quadratica0} that 
    \begin{equation}\label{stima:B2}
    \begin{aligned}
        \| {\bf \Psi}_0(\zak) 
    \bB_{\geq 2}(\zak) {\bf \Psi}_0^{-1}(\zak)V \|_s &\lesssim_s
    \| \bB_{\geq 2}(\zak){\bf \Psi}_0^{-1}(\zak)V\|_s
    \\
   & \lesssim_s \| \zak\|_{s_0}^2\|  
     {\bf \Psi}_0^{-1}(\zak)V\|_s+ \| \zak \|_{s_0}
     \|\zak\|_s \| \|{\bf \Psi}_0^{-1}(\zak)V\|_{s_0}
     \\
   & \lesssim_s \| \zak\|_{s_0}^2\|  
     V\|_s+ \| \zak \|_{s_0}\|\zak\|_s  \|V\|_{s_0}\,.
    \end{aligned}
    \end{equation}
    Gathering \eqref{conj:A1},\eqref{conj:A2}, \eqref{conj:A3}, \eqref{conj:R1} and \eqref{stima:B2} we obtain the thesis.
  \end{proof}
The next step of our procedure is to eliminate, up to smoothing remainders, the
term $\zOpbw{ \sym{b}{1}{0}}$ appearing in \eqref{NuovoParaprod}.
We first need the following technical lemma.

\begin{lemma}\label{equaHomodiago}
Let $\sym{b}{1}{0}$ be  a symbol in $\wt \Gamma_1^{0}$.
    There exists a $1$-homogeneous symbol $ \sym{g}{1}{-\sla{3}{2}}$ in 
    $ \wt \Gamma^{-\sla{3}{2}}_1$ such that 
    \begin{align}
        \sym{b}{1}{0}- \ii \left[\Lambda(\xi) \sha{\vr} \sym{g}{ 1}{-\sla{3}{2}}+\sym{g}{ 1}{-\sla{3}{2}}\sha{\vr} \Lambda(\xi)\right]+\sym{g}{1}{-\sla{3}{2}}(-\ii \vomega(D)\zak;x,\xi)\in \wt\Gamma^{-\vr}_1.
        \label{eq:homo_out}
    \end{align}
\end{lemma}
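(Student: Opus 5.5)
The plan is to solve \eqref{eq:homo_out} exactly at the level of Fourier coefficients in $x$, exploiting that the equation is \emph{off-diagonal}. Because of this, $\Lambda(\xi)$ appears with the same sign on both sides of $\sym{g}{1}{-\sla{3}{2}}$, and the resulting divisor is of size $|\xi|^{\sla{3}{2}}$, with no small divisors.

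\emph{Fourier form of the data.} By homogeneity and translation invariance \eqref{def:sym_mome}, as in the proof of \Cref{lem:smoohomo}, any $1$-homogeneous symbol can be written as
\[
\sym{b}{1}{0}(\zak;x,\xi)=\sum_{k\neq0,\ \sigma\in\{\pm\}} b_k^\sigma(\xi)\,\zetina_k^\sigma e^{\ii\sigma k\cdot x},
\]
and $b_k^\sigma(\xi)$ obeys symbol bounds of order $0$ with a loss $|k|^{\mu}$. I look for $\sym{g}{1}{-\sla{3}{2}}$ of the same form with coefficients $g_k^\sigma(\xi)$.

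\emph{Computing each term on one Fourier mode.} Consider $g=g_k^\sigma(\xi)\zetina^\sigma_k e^{\ii\sigma k\cdot x}$.
\begin{itemize}
\item Time term: since $(-\ii\vomega(D)\zak)$ has $\sigma$-component $\mp\ii\Lambda(k)\zetina_k^\sigma$, the term $\sym{g}{1}{-\sla{3}{2}}(-\ii\vomega(D)\zak)$ contributes $-\ii\sigma\Lambda(k)\,g_k^\sigma(\xi)$ times the same monomial.
\item Composition term: using \eqref{def:pk}, for a Fourier multiplier one has $p_n(g,\Lambda)=(-1)^np_n(\Lambda,g)$. Hence
\[
\Lambda\sha{\vr}g+g\sha{\vr}\Lambda=2\sum_{n\ \mathrm{even},\,n<\vr}p_n(\Lambda,g).
\]
This is the truncated Taylor expansion of $\big(\Lambda(\xi+\tfrac{\sigma k}{2})+\Lambda(\xi-\tfrac{\sigma k}{2})\big)g_k^\sigma(\xi)$ in powers of $k$.
\end{itemize}

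\emph{Choice of coefficients.} Set
\[
D_k^\sigma(\xi):=2\sum_{n \text{ even}<\vr}\frac{1}{n!}\Big(\tfrac{\sigma k}{2}\cdot\nabla_\xi\Big)^n\Lambda(\xi)+\sigma\Lambda(k),
\qquad
g_k^\sigma(\xi):=\frac{b_k^\sigma(\xi)}{\ii D_k^\sigma(\xi)}\,\big(1-\chi(\xi)\big)\chi_1(k,\xi),
\]
where $\chi_1$ is a cutoff on $|k|\le 2\delta_0\langle\xi\rangle$. This is compatible with the paradifferential cutoff \eqref{cut off defin}: by Definition \ref{quantizationtotale}, modes with $|k|>\delta_0\langle\xi\rangle$ are killed by the quantization anyway. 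Dropping those modes produces only a contribution supported where $|k|\gtrsim\langle\xi\rangle$, which I absorb into $\wt\Gamma^{-\vr}_1$ by trading powers of $\langle\xi\rangle$ for powers of $|k|$, i.e.\ by enlarging $\mu$.

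\emph{Main step: lower bound on the divisor.} On the support $|k|\le2\delta_0\langle\xi\rangle$ one has
\[
D_k^\sigma(\xi)=2\Lambda(\xi)+O\big(|k|^2|\xi|^{-\sla{1}{2}}\big)+\sigma\Lambda(k),
\]
with $|\Lambda(k)|\lesssim\delta_0^{\sla{3}{2}}|\xi|^{\sla{3}{2}}$. Since $\delta_0\le\tfrac1{10}$, this gives $|D_k^\sigma(\xi)|\ge c|\xi|^{\sla{3}{2}}$. I also need derivative bounds $|\partial_\xi^\beta D_k^\sigma|\lesssim\langle k\rangle^{\mu}\langle\xi\rangle^{\sla{3}{2}-|\beta|}$. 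Together these show that $g_k^\sigma$ has order $-\tfrac32$ with polynomial loss in $|k|$, so $g\in\wt\Gamma_1^{-\sla{3}{2}}$ after enlarging $\mu$ in \eqref{def:pol_sym}.

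\emph{The remainder.} By construction, the left-hand side of \eqref{eq:homo_out} reduces to the cutoff error described above. I expect verifying the lower bound on $D_k^\sigma$ uniformly, together with bookkeeping of the $|k|$-losses in the seminorms \eqref{seminormSimbo}, to be the main, though routine, obstacle.

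If the exact Fourier inversion turns out to be awkward with the finite-regularity classes, the fallback is to iterate. Start from $g^{(0)}=b/(2\ii\Lambda)$. Each correction step then gains at least $\tfrac32$ in order: it comes either from the time term, which has order $-\tfrac32$ relative to $\Lambda g$, or from the even $p_n$ with $n\ge2$, which gain $2$. After finitely many steps, of order about $\vr$, the remainder lies in $\wt\Gamma_1^{-\vr}$.
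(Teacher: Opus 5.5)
Your main argument is correct but takes a different route from the paper; your fallback is essentially the paper's proof.

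\textbf{The paper's proof.} It writes $\Lambda\sha{\vr}g+g\sha{\vr}\Lambda=2\Lambda g+\ts[g]$. Here $\ts[g]=2\sum p_n(\Lambda,g)$ over even $n\ge2$ gains two orders relative to $\Lambda g$, and $\td[g]:=g(-\ii\vomega(D)\zak;x,\xi)$ has the same order as $g$. It then takes $g=\sum_{\ell=1}^{m}h_\ell$ with $h_1=\sym{b}{1}{0}/(2\ii\Lambda)$ and $h_\ell=(-\ii\ts[h_{\ell-1}]+\td[h_{\ell-1}])/(2\ii\Lambda)$. Each step gains at least $\tfrac32$, so the leftover $-\ii\ts[h_m]+\td[h_m]$ lies in $\wt\Gamma_1^{-\vr}$ once $\tfrac32 m\ge\vr$.

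\textbf{Your exact inversion.} The choice $g_k^\sigma=b_k^\sigma/(\ii D_k^\sigma)$ is the resummation of this Neumann series in $(D_k^\sigma-2\Lambda)/(2\Lambda)$. It solves the equation exactly on each mode. It also shows transparently why there are no small divisors: $D_k^\sigma$ is a truncation of the three-wave phase $\Lambda(\xi+\tfrac{\sigma k}{2})+\Lambda(\xi-\tfrac{\sigma k}{2})+\sigma\Lambda(k)$, in which the two large frequencies enter with the same sign.

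\textbf{What it costs.} You need the extra cutoff $\chi_1$ and a uniform lower bound on the truncated Taylor polynomial. That bound does hold. Since $k\neq0$, the support of $\chi_1$ forces $\langle\xi\rangle\ge 1/(2\delta_0)\ge5$ and $|k|\le 2\delta_0\langle\xi\rangle\le\tfrac15\langle\xi\rangle$. Hence the terms with $n\ge2$, together with $|\Lambda(k)|$, are bounded by a fixed fraction strictly less than one of $2\Lambda(\xi)$.

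\textbf{A precision.} The statement is purely symbolic, so the fact that the Bony--Weyl cutoff kills modes with $|k|>\delta_0\langle\xi\rangle$ plays no role. What you actually need, and correctly invoke, is that the discarded piece is supported where $\langle\xi\rangle\lesssim|k|$. It therefore belongs to $\wt\Gamma_1^{-\vr}$ at the cost of about $\vr$ extra derivatives on $\zak$.

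\textbf{Comparison.} The paper's iteration avoids any cutoff or divisor estimate and uses only the ellipticity of $\Lambda$ and the order bookkeeping of $\ts$ and $\td$. It pays $\tfrac32$ derivatives on $\zak$ per step through $\td$, which is comparable overall to your loss.
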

\begin{proof}
    Let $m\in \R$. For any symbol 
    $\sym{g}{1}{m}$ in $\wt \Gamma_1^m$ we define 
    \begin{align*}
 \ts[ \sym{g}{1}{m}]&:= \Lambda(\xi) \sha{\vr} \sym{g}{ 1}{m}
 +\sym{g}{ 1}{m}\sha{\vr} \Lambda(\xi)- 2 \sym{g}{1}{m} \Lambda(\xi)
 = 
 \sum_{\ell=2}^{\vr/2} 2 p_{2\ell}( \Lambda(\xi), \sym{g}{ 1}{m}) 
 \in \wt \Gamma_1^{m-\sla{1}{2}}
 \\
\td[ \sym{g}{1}{m}](\zak;x,\xi)&:= 
\sym{g}{1}{m}(-\ii \Lambda(D)\zak;x,\xi) \in \wt \Gamma_1^m\,.
\end{align*}
    Then equation \eqref{eq:homo_out} becomes 
    \begin{align}
        \sym{\tr}{1}{-\vr}:= \sym{b}{1}{0}-2 \ii \Lambda(\xi) \sym{g}{1}{-\sla{3}{2}}
        - \ii \ts[ \sym{g}{1}{-\sla{3}{2}}]+ \td[ \sym{g}{1}{-\sla{3}{2}}] \in \wt \Gamma^{-\vr}_1\,.
        \label{eq:homo_outproof}
    \end{align}
    We solve \eqref{eq:homo_outproof} iteratively. 
    Let $m:= \min\{ n\in \N \colon \sla{3}{2} n \geq \vr\}$, we define 
    \begin{align*}
        \sym{g}{1}{-\sla{3}{2}}=\sum_{\ell=1}^m \sym{h}{1}{-\sla{3}{2} \ell }\,, 
    \end{align*}
    where (recall the discussion in \Cref{rem:symbols_modozero})
    \begin{align*}
        \sym{h}{1}{-\sla{3}{2}}:= 
        \frac{\sym{b}{0}{1}}{2\ii \Lambda(\xi)}\in \wt \Gamma_1^{-\sla{3}{2}}\,, 
        \qquad 
        \sym{h}{1}{-\sla{3}{2}\ell}:= 
        \frac{- \ii \ts[ \sym{h}{1}{-\sla{3}{2}(\ell-1)}]
        + \td[ \sym{h}{1}{-\sla{3}{2}(\ell-1)}]}{2\ii \Lambda(\xi)}\in\wt \Gamma_1^{-\sla{3}{2}\ell}\, , 
        \quad \ell=2, \ldots, m\,.
    \end{align*}
    With this choice we get 
    \begin{align*}
         \sym{\tr}{1}{-\vr}= - \ii \ts[ \sym{h}{1}{-\sla{3}{2} m }]
         + \td[ \sym{h}{1}{-\sla{3}{2} m }]\in \wt \Gamma_1^{-\sla{3}{2}m} 
         \subset \wt \Gamma_1^{-\vr}\,. 
    \end{align*}
    This concludes the proof.
\end{proof}

\noindent
   We are now in position to complete the block-decoupling at negative orders.
\begin{lemma}[Diagonalization of the negative orders]
\label{diagoNegativeorders}
     Let $\vr>1$. 
There are $s_0  >0 $ and a bounded and invertible transformation  $\mathbf{\Psi}_1(\zak)$ such that
  such that for any $s\geq s_0$ there is $r=r_s>0$ such that for any $\zak \in B_{s_0,\R}(r)$ the map $\mathbf{\Psi}_1(\zak)$ verifies \eqref{stima:adm}. Moreover, if $\zak(t) \in B_{s_0, \R}(r)$ solves \eqref{eq:zak} then the variable
  $U_1:= \mathbf{\Psi}_1(\zak)U_0$, 
  with $U_0$ solving \eqref{NuovoParaprod},  solves the system
  \begin{align}
         \pa_t U_1 = \vOpbw{-\ii \left[\Sigma^{(\sla{3}{2})} + \tV\cdot \xi + \tfrac12 \tQ^{-2} \sym{\lambda}{}{0}+ \sym{a}{1}{0}\right]} U_1+ \bR_1(\zak)U_1 + \bB_{\geq 2}(\zak)U_1,
         \label{NuovoParaprod1}
    \end{align}
    where 
    \begin{itemize}
        \item $\sym{a}{1}{0}\in\wt \Gamma_1^{0}$ is  the real symbol defined in \Cref{diag.ord0};
  \item $\bR_1(\zak)$ is a real-to-real matrix of real-to-real smoothing operator in $ \wt \cR^{-\vr}_1$;
        \item $\bB_{\geq 2}(\zak)$ is a real-to-real  matrix of bounded operators, satisfying \eqref{stima_quadratica0} (with a possibly larger $s_0$).
    \end{itemize}
\end{lemma}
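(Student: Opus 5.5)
We conjugate \eqref{NuovoParaprod} by the time-one flow of an off-diagonal generator of order $-\sla{3}{2}$ built from \Cref{equaHomodiago}. Let $\sym{g}{1}{-\sla{3}{2}}\in\wt\Gamma_1^{-\sla{3}{2}}$ be the symbol given by \Cref{equaHomodiago} applied to the off-diagonal symbol $\sym{b}{1}{0}$ of \eqref{NuovoParaprod}. Set
\[
\bG(\zak):=\zOpbw{\sym{g}{1}{-\sla{3}{2}}(\zak;x,\xi)}\,,
\]
and let $\Phi^\tau(\zak)$ be its flow as in \eqref{flusso2para}, with $\mathbf{\Psi}_1(\zak):=\Phi^1(\zak)$. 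Since $\sym{g}{1}{-\sla{3}{2}}\in\Gamma^0_{\geq1}[r]$, \Cref{lem:flusso} makes $\mathbf{\Psi}_1^{\pm1}$ well defined, real-to-real and uniformly bounded on $H^s_\R$, which gives \eqref{stima:adm}. Moreover, \eqref{actionSob} shows $\|\bG(\zak)\|_{\cL(H^s;H^{s+\sla{3}{2}})}\lesssim_s\|\zak\|_{s_0}$.

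Writing $\bA$ for the full paradifferential part of \eqref{NuovoParaprod}, the new variable satisfies
\[
\pa_tU_1=\mathbf{\Psi}_1\bA\mathbf{\Psi}_1^{-1}U_1+\pa_t\mathbf{\Psi}_1\mathbf{\Psi}_1^{-1}U_1+\mathbf{\Psi}_1(\bR_1+\bB_{\geq2})\mathbf{\Psi}_1^{-1}U_1\,.
\]
Each term is expanded by Lie series as in \eqref{Lie:1}--\eqref{Lie:2}.

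\emph{Principal term $\vOpbw{-\ii\Sigma^{(\sla32)}}$.} Split $\Sigma^{(\sla32)}=\Lambda(\xi)+\sym{\Sigma}{\geq2}{\sla32}$. For the $\Lambda$ part, the commutator $[\bG,\vOpbw{-\ii\Lambda}]$ is exactly the off-diagonal operator with symbol $-\ii[\Lambda\sha{\vr}g+g\sha{\vr}\Lambda]$. Here the anticommutator structure comes from the off-diagonal entries, since the lower-right entry is $\ov{\Lambda(-\xi)}=\Lambda$ with a conjugate sign. By \Cref{compoparapara0} and \eqref{eq:Gomega}--\eqref{eq:omegaG}, this holds up to $1$-homogeneous smoothing remainders in $\wt\cR^{-\vr}_1$. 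The double commutator $\mathrm{Ad}^2_{\bG}[\vOpbw{\Lambda}]$ has order $-\sla32$ and is quadratic, so it goes into $\bB_{\geq2}$. The $\sym{\Sigma}{\geq2}{\sla32}$ part commuted with $\bG$ is quadratic and of order $0$, hence also lands in $\bB_{\geq2}$.

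\emph{Remaining terms of $\bA$.} The diagonal order-one terms ($\tV\cdot\xi$, $\tQ^{-2}\sym{\lambda}{}{0}$, $\sym{a}{1}{0}$) and $\zOpbw{\sym{b}{1}{0}}$ have size $O(\|\zak\|)$. Their commutators with $\bG$ are bounded and quadratic by \eqref{stima:AdpG}, so these terms are unchanged modulo $\bB_{\geq2}$.

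\emph{Time derivative.} The key contribution is $\pa_t\mathbf{\Psi}_1\mathbf{\Psi}_1^{-1}=\pa_t\bG+(\text{quadratic})$, with $\pa_t\bG=\zOpbw{\sym{g}{1}{-\sla32}(\pa_t\zak)}$. Substituting \eqref{eq:zak}--\eqref{X:zak_esp} gives $\pa_t\zak=-\ii\vomega(D)\zak+\hamvec{\geq2}(\zak)$. The linear part produces exactly $\sym{g}{1}{-\sla32}(-\ii\vomega(D)\zak)$, the third term of \eqref{eq:homo_out}. The $\hamvec{\geq2}$ part is quadratic by \eqref{tameest:campo}. That $\hamvec{\geq2}$ loses $\sla32$ derivatives is harmless: $g$ is a $1$-homogeneous symbol whose seminorms cost $\mu$ derivatives of $\zak$, so one simply enlarges $s_0$. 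This is the step needing the most care, because of the tame structure \eqref{stima_quadratica0}.

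Collecting the off-diagonal $1$-homogeneous symbols yields
\[
\sym{b}{1}{0}-\ii[\Lambda\sha{\vr}g+g\sha{\vr}\Lambda]+g(-\ii\vomega(D)\zak)\in\wt\Gamma_1^{-\vr}\,,
\]
by \Cref{equaHomodiago}. Its quantization is a $1$-homogeneous smoothing operator in $\wt\cR^{-\vr}_1$: the Fourier coefficients obey \eqref{smoocara}, since the symbol decays like $\langle\xi\rangle^{-\vr}$ with finitely many $x$-derivatives, and \Cref{lem:smoohomo} applies. The conjugated remainders $\mathbf{\Psi}_1\bR_1\mathbf{\Psi}_1^{-1}$ and $\mathbf{\Psi}_1\bB_{\geq2}\mathbf{\Psi}_1^{-1}$ are handled exactly as in \eqref{conj:R1} and \eqref{stima:B2}.

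The main obstacle is bookkeeping rather than an analytic difficulty. One must verify that every commutator carrying two factors of $\zak$ satisfies the tame quadratic bound \eqref{stima_quadratica0}, not merely a bound of the form $\|\zak\|_{s_0}^2\|U\|_s$. This matters for the terms involving $\pa_t\zak$ and the smoothing remainders, which are controlled using \eqref{bound:smoo} and \eqref{eq:stima_comm_smoothing}. Finally, real-to-real structure is preserved because $\bG$ is real-to-real, so $\mathbf{\Psi}_1^{\pm1}$ are real-to-real as well.
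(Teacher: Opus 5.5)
Your proposal is correct and follows the paper's proof essentially step by step. It uses the same generator $\bG(\zak)=\zOpbw{\sym{g}{1}{-\sla{3}{2}}}$ obtained from \Cref{equaHomodiago}, the same term-by-term Lie expansions of the conjugated operator and of $\pa_t\mathbf{\Psi}_1\mathbf{\Psi}_1^{-1}$, and the same final cancellation of the off-diagonal $1$-homogeneous symbol up to an element of $\wt\Gamma^{-\vr}_1$, whose quantization lies in $\wt\cR^{-\vr}_1$. The only cosmetic difference is that you split $\Sigma^{(\sla{3}{2})}=\Lambda(\xi)+\sym{\Sigma}{\geq2}{\sla{3}{2}}$ before commuting with $\bG$, whereas the paper commutes with $\vOpbw{-\ii\Sigma^{(\sla{3}{2})}}$ first and then replaces $\Sigma^{(\sla{3}{2})}$ by $\Lambda(\xi)$ modulo quadratic terms.
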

\begin{proof}
We consider the flow 
\begin{equation*}
\pa_\tau \Phi^\tau(\zak)= \bG(\zak) \Phi^\tau(\zak), \quad \Phi^0(\zak)=\uno, \qquad
\bG(\zak):=\zOpbw{\sym{g}{1}{-\sla{3}{2}}(\zak;x,\xi)},
\end{equation*}
where $\sym{g}{1}{-\sla{3}{2}}$ is the  $1$-homogeneous symbol in 
$\wt \Gamma_1^{-\sla{3}{2}}$
obtained by applying 
Lemma \ref{equaHomodiago} with $\sym{b}{1}{0}\in \wt \Gamma_1^{0}$ the symbol 
appearing in \eqref{NuovoParaprod}.
Then \Cref{lem:flusso} ensures that $\Phi^\tau(\zak)$ is well defined and real-to-real and, by \eqref{stima:phistd}, satisfies the required estimates \eqref{stima:adm}. 
If $ U_0$ solves \eqref{NuovoParaprod}, the variable 
\begin{align*}
    U_1:= {\bf \Psi}_1(\zak) U_0\,, 
    \quad \text{where} \quad  {\bf \Psi}_1(\zak):=\Phi^\tau(\zak)_{|\tau=1}\,,
\end{align*}
solves 
\begin{align*}
    \pa_t U_1 &= {\bf \Psi}_1(\zak) \bA(\zak) {\bf \Psi}_1^{-1}(\zak)U_1 
    + \pa_t {\bf \Psi}_1(\zak){\bf \Psi}_1^{-1}(\zak)U_1
    \\&\qquad 
    +{\bf \Psi}_1(\zak) 
    \bR_1(\zak) {\bf \Psi}_1^{-1}(\zak)U_1+ {\bf \Psi}_1(\zak) \bB_{\geq 2}(\zak) 
    {\bf \Psi}_1^{-1}(\zak)^{-1}U_1\,,
\end{align*}
where 
 \begin{align}
 \bA(\zak)&:=  \bA_1(\zak)+\bA_2(\zak)
 +\widetilde{\bA}_3(\zak)\,,
    \qquad 
      \widetilde{\bA}_3(\zak):=  \zOpbw{\sym{b}{1}{0}}\,,\label{bfA3tilde}
%
         \end{align}
         and where $\bA_1(\zak), \bA_2(\zak)$ are defined in 
         \eqref{bfA1}, \eqref{bfA2}.
         
         \noindent
      {\bf Expansion of}
     ${\bf \Psi}_1(\zak) \bA_{1}(\zak) {\bf \Psi}_1^{-1}(\zak)$.
        We use the expansions \eqref{Lie:1} and
         \eqref{Lie:2} with $\bG(\zak)=\zOpbw{\sym{g}{1}{-\sla{3}{2}}}$.
        First note that, by \Cref{thm:action}, 
        $\bG\in \cL(H^s_\R;H^{s+\sla{3}{2}}_\R)$ for any $s\in \R$ with estimates 
        \be\label{stima:gen-10}
        \| \bG(\zak)\|_{\cL(H^s_\R;H^{s+\sla{3}{2}}_\R)}\lesssim_s \| \zak\|_{s_0}\,.
        \ee
       
       \noindent
We start from  $\bA_1$ in \eqref{bfA1}.
By estimate \eqref{stima:A11} for $\bA_1$ and \eqref{stima:gen-10} for $\bG$,  
one has $\left[ \bG(\zak),  \bA_1(\zak)\right] \in \cL(H^s_\R;H^{s}_\R)$ 
with estimates
        \begin{align}\label{stima:commGA10}
        \| \left[ \bG(\zak),  \bA_1(\zak)\right]\|_{\cL(H^s_\R;H^{s}_\R)}
        \lesssim_s \| \zak\|_{s_0}\,, 
        \qquad \textnormal{for any }s \in \R\,.
        \end{align}
        Finally, by \eqref{stima:commGA10} and \eqref{stima:gen-10},  
        ${\mathrm{Ad}}_{\bG}^2[ \bA_1(\zak)]\in \cL(H^s_\R;H^{s+\sla{3}{2}}_\R)$ 
        with estimates 
        \begin{align}\label{stima:2commGA10} 
        \| {\mathrm{Ad}}_{\bG}^2[ \bA_1(\zak)]\|_{\cL(H^s;H^{s+\sla{3}{2}})}
        \lesssim_s \| \zak\|_{s_0}^2\,.
      \end{align}
        Gathering \eqref{Lie:2},  \eqref{stima:2commGA10}, 
        \Cref{compoparapara0} and 
        recalling that by \eqref{esp:sigma},  
        $\sym{\Sigma}{}{\sla{3}{2}}- \Lambda(\xi)\in \Gamma_{\geq 2}^{\sla{3}{2}}[r]$, 
        we get
        \begin{equation}\label{conj:A10}
        \begin{aligned}
            \Psi_{1}(\zak) \bA_1(\zak) \Psi_{1}(\zak)^{-1}&=  
            \bA_1(\zak)+ \zOpbw{-\ii \left[\sym{\Sigma}{}{\sla{3}{2}} \sha{\vr} 
            \sym{g}{ 1}{-\sla{3}{2}}+\sym{g}{ 1}{-\sla{3}{2}}\sha{\vr} \sym{\Sigma}{}{\sla{3}{2}} \right]}
            \\&\quad+ \bR_1(\zak)+ \bB_{\geq 2}(\zak)
          \\&  =\vOpbw{-\ii \sym{\Sigma}{}{\sla{3}{2}} }+ \zOpbw{-\ii\left[\Lambda(\xi) \sha{\vr} \sym{g}{ 1}{-\sla{3}{2}}+\sym{g}{ 1}{-\sla{3}{2}}\sha{\vr} \Lambda(\xi) \right] }
            \\&\quad + \bR_1(\zak)+ \bB_{\geq 2}(\zak)\,,
        \end{aligned}
        \end{equation}
        where $\bR_1(\zak) $ is a $1$-homogeneous smoothing remainder in 
        $\wt \cR^{-\vr}_1$ and $ \bB_{\geq 2}(\zak)$ belongs to $ \cL(H^s;H^s)$ 
        for any $s\in \R$ with estimates \eqref{stima:bione}. 
        Note that we have included the paradifferential operators 
        with symbols in $\Gamma_{\geq 2}^0[r]$ in the bounded quadratic 
        remainder $\bB_{\geq 2}(\zak)$, as they satisfy the bound \eqref{stima_quadratica0}.
        
        Since the operator $\bA_2(\zak)$ in \eqref{bfA2} satisfies the bound \eqref{stima:A21},
        we obtain that the commutator 
        $\left[ \bG(\zak),  \bA_2(\zak)\right]$ belongs to $\cL(H^s_\R;H^{s+\sla{1}{2}})$ 
        with estimates
        \begin{align}
            \| \left[ \bG(\zak),  \bA_2(\zak)\right] \|_{\cL(H^s;H^{s})}
            \lesssim_s \| \zak\|_{s_0}^2\,.
            \label{stima:commGA20}
        \end{align}
        Then, by \eqref{Lie:1}, \eqref{bfA2} and \eqref{stima:commGA20} we get 
        \begin{align}
            {\bf \Psi}_1(\zak) \bA_2(\zak) {\bf \Psi}_1^{-1}(\zak)=
             \vOpbw{-\ii \left[\tV\cdot \xi + \tfrac12 \tQ^{-2} \sym{\lambda}{}{0}+\sym{a}{1}{0}\right]} 
             + \bB_{\geq 2}(\zak)\,,
            \label{conj:A20}
        \end{align}
        where $\bB_{\geq 2}(\zak)\in \cL(H^s;H^s)$ 
        fulfilling \eqref{stima:commGA20}. 
        Similarly, the term $\widetilde{\bA}_3(\zak)=\zOpbw{ \sym{b}{1}{0}}$  in \eqref{bfA3tilde}
        belongs to $\cL(H^s; H^{s})$ for any $s\in \R$ with estimates 
        \begin{align*}
            \| \widetilde{\bA}_3(\zak)\|_{\cL(H^s; H^{s})}
            \lesssim_s \| \zak\|_{s_0}\,.
        \end{align*}
        Then we obtain that $\left[ \bG(\zak),  \widetilde{\bA}_3(\zak)\right] 
        \in \cL(H^s;H^{s+\sla{3}{2}})$ with the same estimates 
        as in \eqref{stima:commGA20} and, by \eqref{Lie:1}, we get
        \begin{align}
               {\bf \Psi}_1(\zak) \widetilde{\bA}_3(\zak) {\bf \Psi}_1^{-1}(\zak)=  
               \zOpbw{ \sym{b}{1}{0}}+ \bB_{\geq 2}(\zak)\,,
               \label{conj:A30}
        \end{align}
        where $\bB_{\geq 2}(\zak)\in \cL(H^s;H^s)$ with estimates \eqref{stima:bione}. 

\smallskip
\noindent
{\bfseries Expansion of $\pa_t {\bf \Psi}_1(\zak) {\bf \Psi}_1^{-1}(\zak)$.}
        Then we expand the time derivative using the expansion in \eqref{Lie:dt}. 
        Using also equation \eqref{eq:zak} for $\zak$, we get 
        \begin{align*}
            \pa_t {\bf \Psi}_1(\zak) {\bf \Psi}_1^{-1}(\zak)= &\zOpbw{\sym{g}{1}{-\sla{3}{2}}(\pa_t\zak;x,\xi)}\notag \\
            &+ \int_0^1 (1-\theta)\Phi^\theta(\zak) \left[\zOpbw{\sym{g}{1}{-\sla{3}{2}}(\pa_t\zak;x,\xi)}, \zOpbw{ \sym{g}{1}{-\sla{3}{2}}(\zak;x,\xi)}\right] \Phi^{-\theta}(\zak)\, \di \theta\notag\\
            =& \zOpbw{\sym{g}{1}{-\sla{3}{2}}(-\ii \vomega(D)\zak;x,\xi)}+ \bB_{\geq 2}(\zak),
        \end{align*}
where $\bB_{\geq 2}(\zak)\in \cL(H^s;H^s)$ with estimates \eqref{stima:bione}. 

\smallskip
\noindent{\bfseries Expansion of $ {\bf \Psi}_1(\zak) 
    \bR_1(\zak) {\bf \Psi}_1^{-1}(\zak)$.} Using the Lie expansion \eqref{Lie:1}, we get 
    \begin{align}
        {\bf \Psi}_1(\zak) 
    \bR_1(\zak) {\bf \Psi}_1^{-1}(\zak)= 
    \bR_1(\zak)+ \int_0^{1} \Phi^\theta(\zak)\left[ \bG(\zak),  
    \bA(\zak)\right]  \Phi^{-\theta}(\zak)\, \di \theta= \bR_1(\zak)+ \bR_{\geq 2}(\zak)\,. 
    \label{conj:R10}
    \end{align}
    where $\bR_{\geq 2}(\zak)$ belongs to $ \cL( H^s_\R; H^{s+\vr}_\R)$ with estimates 
    \begin{align*}
        \| \bR_{\geq 2}(\zak)V\|_{s+\vr}\lesssim_s \| \zak\|_{s_0}^2\| V \|_s+ \| \zak\|_{s_0}\|\zak\|_s\|V\|_{s_0}, \qquad \forall s\geq s_0.
    \end{align*}
    
    \smallskip    
    \noindent{\bfseries Estimate of ${\bf \Psi}_1(\zak) 
    \bB_{\geq 2}(\zak){\bf \Psi}_1^{-1}(\zak)$.} Arguing as in \eqref{stima:B2}, we get 
    \begin{align}
        \| {\bf \Psi}_1(\zak) 
    \bB_{\geq 2}(\zak) {\bf \Psi}_1^{-1}(\zak)V \|_s 
     \lesssim_s\| \zak\|_{s_0}^2\|  
     V\|_s+ \| \zak \|_{s_0}\|\zak\|_s  \|V\|_{s_0}\,.
     \label{stima:B20}
    \end{align}
    Gathering \eqref{conj:A10},\eqref{conj:A20}, \eqref{conj:A30}, 
    \eqref{conj:R10} and \eqref{stima:B20} we obtain 
    \begin{align*}
        \pa_t U_1 &= \vOpbw{-\ii \left[\Sigma^{(\sla{3}{2})} 
        + \tV\cdot \xi + \tfrac12 \tQ^{-2} \sym{\lambda}{}{0}+ \sym{a}{1}{0}\right]} U_1
        \\&
        + \zOpbw{\sym{b}{1}{0}- \ii \left[\Lambda(\xi) \sha{\vr} \sym{g}{ 1}{-\sla{3}{2}}
        +\sym{g}{ 1}{-\sla{3}{2}}\sha{\vr} \Lambda(\xi)\right]
        +\sym{g}{1}{-\sla{3}{2}}(-\ii \vomega(D)\zak;x,\xi)} 
        \\&+\bR_1(\zak)U_1 + \bB_{\geq 2}(\zak)U_1\,.
    \end{align*}
In view of Lemma \ref{equaHomodiago} (see \eqref{eq:homo_out})
we have chosen the symbol 
    $\sym{g}{1}{-\sla{3}{2}}$ in such a way that 
    \[
    \zOpbw{\sym{b}{1}{0}- \ii \left[\Lambda(\xi) \sha{\vr} \sym{g}{ 1}{-\sla{3}{2}}
    +\sym{g}{ 1}{-\sla{3}{2}}\sha{\vr} \Lambda(\xi)\right]
    +\sym{g}{1}{-\sla{3}{2}}(-\ii \vomega(D)\zak;x,\xi)}\,,
    \] 
    is a smoothing remainder in $ \wt \cR^{-\vr}_1$. 
    Therefore formula \eqref{NuovoParaprod1} follows. This concludes the proof.
    \end{proof}
    
\begin{proof}[{\bf Proof of Proposition \ref{prop:blockdecoupling}}.]
By combining Lemmata \ref{diagoPosorders} and \ref{diagoNegativeorders}, we define 
${\bf F}(\zak)[\cdot]:={\bf \Psi}_0(\zak)\circ {\bf \Psi}_{1}(\zak)$. 
Therefore the variable 
  $U_{1}= \bF(\zak)U$,  with $U$ solving  \eqref{eq:U}, solves the problem
  \eqref{diagonale} thanks to
  \eqref{NuovoParaprod1}, \eqref{NuovoParaprod}.
\end{proof}

	\subsection{Normal form for the quadratic transport}\label{section-riduzione-trasporto}

In this subsection, we perform a normal form reduction of the paradifferential diagonal operator $\vOpbw{\bigcdot}$ in \eqref{diagonale}, focusing on the $1$-homogeneous component of the transport term $-\ii \tV \cdot \xi$. 

By \eqref{V:exp1}, this contribution is explicitly given by
\[
- \ii \nabla \psi \cdot \xi\,.
\]
We show that this term can be completely removed without encountering small divisor issues, thanks to its gradient structure.
	\begin{proposition}
    \label{prop:nfquadtra}
		    Let $\vr>1$. 
There are $s_0  >0 $ and a bounded and invertible transformation  
transformation $\mathbf{\Psi}_2(\zak)$ such that 
for any $s\geq s_0$ there is $r=r_s>0$ such that for any 
$\zak \in B_{s_0,\R}(r)$ the map $\mathbf{\Psi}_2(\zak)$ 
verifies \eqref{stima:adm}. Moreover, 
if $\zak(t) \in B_{s_0, \R}(r)$ solves \eqref{eq:zak} then the variable then the variable
  $U_2:= \mathbf{\Psi}_2(\zak)U_1$, 
  with $U_1$ solving \eqref{diagonale},  
  solves the system
  \begin{align}
         \pa_t U_2 = \vOpbw{-\ii \left[\sym{\Sigma}{}{\sla{3}{2}} 
         + \sym{b}{\geq 2}{1}+  \sym{b}{1}{\sla{1}{2}}\right]} U_2
         + \bR_1(\zak)U_2 + \bB_{\geq 2}(\zak)U_2\,,
         \label{NuovoParaprod2}
    \end{align}
    where the real symbol 
$\Sigma^{(\sla{3}{2})}$
is defined in \eqref{def:sd}-\eqref{def:sdBIS}, and 
    \begin{itemize}
        \item $\sym{b}{\geq 2}{1}$ is a real symbol of the form 
        \begin{equation}\label{def:b1mag}
            \sym{b}{\geq 2}{1}= 
            \sym{b}{2}{1}+ \sym{b}{\geq 3}{1}\,, 
            \qquad \sym{b}{2}{1}\in \wt \Gamma_2^1\,, 
            \qquad 
            \sym{b}{\geq 3}{1}\in \Gamma_{\geq 3}^1[r]\,.
        \end{equation}
        Moreover $\sym{b}{\geq 3}{1}(\zak;\bigcdot)$ is differentiable and there is $\mu>0$ such that 
        \begin{align}
            | \di_{\zak} \sym{b}{\geq 3}{1}(\zak;\bigcdot)[\hat \zak]|_{1,\sigma}\lesssim_\sigma \| \zak\|_{\s+\mu}^2\| \hat \zak\|_{\s+\mu};
            \label{stima:diffb3}
        \end{align}
        \item $\sym{b}{1}{\sla{1}{2}}$ is a real symbol in $\wt\Gamma_1^\sla{1}{2}$;
        \item $\bR_1(\zak)$ is a real-to-real matrix of  smoothing operators in $ \wt \cR^{-\vr}_1$;
        \item $\bB_{\geq 2}(\zak)$ is a real-to-real matrix of  bounded operators, satisfying 
        \eqref{stima_quadratica0} (with a possibly larger $s_0$).
    \end{itemize}
	\end{proposition}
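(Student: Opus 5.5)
We conjugate \eqref{diagonale} by the flow of a diagonal paradifferential operator whose generator is linear in $\psi$, exactly as sketched in the introduction. Set
\[
\sym{g}{1}{\sla{1}{2}}(\zak;x,\xi):=\psi(x)\,\sym{\mathtt{m}}{}{\sla{1}{2}}(\xi)\,,\qquad \sym{\mathtt{m}}{}{\sla{1}{2}}(\xi):=\frac{2\Lambda(\xi)}{|\xi|^{-1}+3\kap|\xi|}\,,
\]
with $\psi$ expressed through \eqref{zak}, i.e.\ $\psi=\tfrac{1}{\sqrt2\,\ii}\tM(D)^{-1}(\zetina-\ov{\zetina})$. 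This is a real, $1$-homogeneous, translation invariant symbol in $\wt\Gamma_1^{\sla{1}{2}}$. We consider the flow \eqref{flusso2para} with $\bG(\zak):=\vOpbw{\ii\,\sym{g}{1}{\sla{1}{2}}}$ and set $\mathbf{\Psi}_2(\zak):=\Phi^1(\zak)$. By \Cref{lem:flusso}, applied with $f=\sym{g}{1}{\sla{1}{2}}$ real of order $\sla{1}{2}$, this map is well defined, real-to-real and satisfies \eqref{stima:adm}. The variable $U_2=\mathbf{\Psi}_2U_1$ then satisfies the analogue of \eqref{coniugata0}, and we expand each term with the Lie formulas \eqref{Lie:1}--\eqref{Lie:2} and \eqref{Lie:dt}.

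\textbf{Principal part.} Write $\bA_1=\vOpbw{-\ii\Lambda(\xi)}+\vOpbw{-\ii\sym{\Sigma}{\geq2}{\sla{3}{2}}}$.
\begin{itemize}
\item For the Fourier multiplier $\Lambda$, the first commutator has symbol $\mbra[\vr]{\sym{g}{1}{\sla{1}{2}}}{\Lambda}=\{\sym{g}{1}{\sla{1}{2}},\Lambda\}+\Gamma_1^{\sla{1}{2}-2}$, up to remainders in $\wt\cR_1^{-\vr}$ coming from \eqref{eq:Gomega}--\eqref{eq:omegaG}. By the choice of $\sym{\mathtt{m}}{}{\sla{1}{2}}$ one has $-\{\sym{g}{1}{\sla{1}{2}},\Lambda\}=\nabla_\xi\Lambda\cdot\nabla_x\sym{g}{1}{\sla{1}{2}}=\nabla\psi\cdot\xi$, the $1$-homogeneous part of $\tV\cdot\xi$. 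So the order-one transport term cancels exactly, and what is left is of order $\le -\sla{1}{2}$ and linear in $\zak$; it goes into $\sym{b}{1}{\sla{1}{2}}$.
\item The time-derivative term $\pa_t\mathbf{\Psi}_2\mathbf{\Psi}_2^{-1}$ contributes $\vOpbw{\ii\sym{g}{1}{\sla{1}{2}}(\pa_t\zak)}$, with $\pa_t\zak=-\ii\vomega(D)\zak+\hamvec{\geq2}$. Its linear part $\sym{g}{1}{\sla{1}{2}}(-\ii\vomega\zak)$ is an order-$\sla{1}{2}$, $1$-homogeneous real symbol and is added to $\sym{b}{1}{\sla{1}{2}}$.
\item The second commutator $\mathrm{Ad}^2_{\bG}[\vOpbw{\Lambda}]$ has order $\sla{3}{2}+2(\sla{1}{2}-1)=\sla{1}{2}$ and is quadratic. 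Together with $\mathrm{Ad}_{\bG}[\vOpbw{\sym{\Sigma}{\geq2}{\sla{3}{2}}}]$ (order $1$, cubic), $\mathrm{Ad}_{\bG}$ of the order-one parts of $\tV_{\geq2}\cdot\xi$, and the order-one pieces of $\tfrac12\tQ^{-2}\sym{\lambda}{}{0}+\sym{a}{1}{0}$, these produce, via the commutator expansion Corollary, the symbols $\sym{b}{2}{1}$ (quadratic, order $1$) and $\sym{b}{\geq3}{1}$.
\item All contributions of nonpositive order that are at least quadratic, and all smoothing pieces, go into $\bB_{\geq2}$ and $\bR_1$.
\end{itemize}

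\textbf{Reality.} This follows from the Weyl calculus. The generator is skew-adjoint because $g$ is real, so the flow is unitary at principal level. The Moyal brackets of real symbols are real up to odd-order terms, which are again real by \eqref{Moyal} and \eqref{prop:ov}. The zero-order symbols $\tfrac12\tQ^{-2}\sym{\lambda}{}{0}$ and $\sym{a}{1}{0}$ are real and bounded: their linear parts are merged into $\sym{b}{1}{\sla{1}{2}}$, since a real order-$0$ symbol is in particular of order $\sla{1}{2}$, and their higher parts go to $\bB_{\geq2}$. The differentiability \eqref{stima:diffb3} follows from the analyticity of $\tV$, $\sym{\Sigma}{}{\sla{3}{2}}$ and $\tQ$ in $\eta$, namely \eqref{stima:dtQ} and \Cref{V_B:lemma}, since $\sym{b}{\geq3}{1}$ is built polynomially from these.

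\textbf{Main obstacle.} The flow is generated by an order-$\sla{1}{2}$ operator, so it is not smoothing, and the bookkeeping of the Lie series must be done carefully.
\begin{itemize}
\item $\mathrm{Ad}^p_{\bG}[\Lambda]$ has order $\sla{3}{2}-\sla{p}{2}$. We expand up to $p=3$ (order $0$), and the integral remainder is bounded on $H^s$ with quadratic bounds.
\item The tame estimate \eqref{stima_quadratica0} for $\bB_{\geq2}$ must be checked: the flow and the commutator Corollary lose fixed $\mu$ derivatives only in $\zak$.
\item The commutator of $\bG$ with the smoothing $\bR_1$ requires \eqref{eq:stima_comm_smoothing}, or its analogue with $\vr\geq\sla12$.
\end{itemize}
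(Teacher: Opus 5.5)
Your proposal is correct and follows essentially the same route as the paper. It uses the same generator $\vOpbw{\ii\,\psi\,\sym{\tm}{}{\sla{1}{2}}(\xi)}$, with the flow controlled by \Cref{lem:flusso}, and the same homological equation $\nabla\psi\cdot\xi+\{\sym{g}{1}{\sla{1}{2}},\Lambda(\xi)\}=0$ to cancel the linear transport term. It also uses a third-order Lie expansion for the dispersive part and lower-order expansions for the remaining terms, and it sorts the contributions into $\sym{b}{1}{\sla{1}{2}}$, $\sym{b}{\geq 2}{1}$, $\bR_1$ and $\bB_{\geq 2}$ in the same way.

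There are only minor bookkeeping slips. The correction to the Poisson bracket in the Moyal expansion of the commutator with $\Lambda(\xi)$ has order $-1$, not $-\tfrac32$. Also, you should list three positive-order terms explicitly among the contributions to $\sym{b}{\geq 2}{1}$: the untouched quadratic term $\tV_{\geq 2}\cdot\xi$, the commutator term $\{\sym{g}{1}{\sla{1}{2}},\nabla\psi\cdot\xi\}$, and the time-derivative piece $\sym{g}{1}{\sla{1}{2}}(\hamvec{\geq 2}(\zak);x,\xi)$. Because they have positive order, they cannot be absorbed into $\bB_{\geq 2}$.
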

	\begin{proof}
First of all we define the symbol (recall \Cref{rem:symbols_modozero})
$$\sym{\tm}{}{\sla{1}{2}}(\xi):= \frac{2 \Lambda(\xi)}{3 \gamma | \xi|+ |\xi|^{-1}} \in \wt \Gamma_0^\sla{1}{2}\,.$$ 
Next we define the flow 
\begin{gather}
\pa_\tau \Phi^\tau(\zak)= \bG(\zak) \Phi^\tau(\zak)\,, 
\qquad \Phi^0(\zak)=\uno\,, 
\notag
\\
\bG(\zak):=\vOpbw{\ii \sym{g}{1}{\sla{1}{2}}(\zak;x,\xi)}\,,
\qquad 
\sym{g}{1}{\sla{1}{2}}:= \psi \sym{\tm}{}{\sla{1}{2}}(\xi)
= 
\tM(D)^{-1} \frac{(\zetina-\ov \zetina)}{\ii \sqrt{2}}\sym{\tm}{}{\sla{1}{2}}(\xi)
\in \tilde \Gamma_1^{\sla{1}{2}}\,.
\notag
\end{gather}
Since the symbol $\sym{g}{1}{\sla{1}{2}}$ is real valued, \Cref{lem:flusso} 
ensures that $\Phi^\tau(\zak)$ is well defined and real-to-real and, 
by \eqref{stima:phistd}, satisfies the required estimates \eqref{stima:adm}. 

\noindent
We note also that the generator $\sym{g}{1}{\sla{1}{2}}$ satisfies the homological equation 
\begin{align}
\nabla \psi\cdot \xi 
+ \left\{\sym{g}{1}{\sla{1}{2}},\Lambda(\xi)\right\}=0\,.
    \label{homo:g12}
\end{align}
Define $ {\bf \Psi}_2(\zak):= (\Phi^\tau)_{| \tau=1} $. 
As $U_1$ solves \eqref{diagonale},  the variable $U_2= \Psi_2(\zak)U_1$ 
solves 
		\begin{align}
\pa_t U_2  &= {\bf \Psi}_2(\zak)  
\bA(\zak){\bf \Psi}_2^{-1}(\zak)  \,  U_2  
+   \pa_t{\bf \Psi}_2(\zak)  {\bf \Psi}_2^{-1}(\zak)\, U_2
\notag
\\
&+ {\bf \Psi}_2(\zak)\bR_1(\zak) {\bf \Psi}_2^{-1}(\zak)\, U_2 
+{\bf \Psi}_2(\zak)\bB_{\geq 2}(\zak) 
{\bf \Psi}_2^{-1}(\zak)\, U_2\, , 
	\label{U2.eq}
		\end{align}
        where 
        \begin{align}
            \bA(\zak)&= \vOpbw{-\ii \sym{\Sigma}{}{\sla{3}{2}}}+ \vOpbw{-\ii \tV\cdot \xi}+  \vOpbw{-\ii \left[ \tfrac12 \tQ^{-2}\sym{\lambda}{}{0}+ \sym{a}{1}{0}\right]}\notag
            \\
            &= \bA_1(\zak)+\bA_2(\zak)+\bA_3(\zak)\,.
            \label{def:A}
        \end{align}
		We analyze equation \eqref{U2.eq} expanding each term with its Lie expansion.
        
        \smallskip
		\noindent{\bfseries Expansion of ${\bf \Psi}_2(\zak) \bA(\zak) {\bf \Psi}_2^{-1}(\zak) $.}
        Recalling  $ \bA$ in \eqref{def:A} we expand  
        ${\bf \Psi}_2(\zak) \bA_q(\zak) {\bf \Psi}_2^{-1}(\zak)$, $q=1,2,3$, separately.
        In order to expand ${\bf \Psi}_2(\zak) \bA_1(\zak) {\bf \Psi}_2^{-1}(\zak)$ , 
        we use  the following third order Lie's expansion 
        \begin{equation}\label{Lie:3}
        \begin{aligned}
\big(\Phi^\tau(\zak) \bA_1(\zak) \Phi^\tau(\zak)^{-1}\big)_{|\tau=1}
&= \bA_1(\zak)+ [\bG(\zak),\bA_1(\zak)]+ \frac12{\mathrm{Ad}}_\bG^2[ \bA_1](\zak)
\\
&+ \int_0^1 \frac{(1-\theta)^2}{2} \Phi^\theta(\zak){\mathrm{Ad}}_\bG^3[ \bA_1](\zak)\Phi^{-\theta}(\zak)\, \di\theta\,.
        \end{aligned}
        \end{equation}
        In view of the Lie expansion \eqref{Lie:3}, our first task is to expand the iterated commutators between  
$\bA_1=\vOpbw{-\ii \sym{\Sigma}{}{\sla{3}{2}}}$ and $\bG= \vOpbw{\ii \sym{g}{1}{\sla{1}{2}}}$. 
To this end, we employ the composition rule for paradifferential operators stated in \Cref{compoparapara0}, which allows us to compute compositions and commutators in terms of their symbols obtaining
        \begin{align}
    &{\mathrm{Ad}}_\bG[ \bA_1]= 
[  \bG, \bA_1 ]= 
\vOpbw{-\ii \left\{\sym{g}{1}{\sla{1}{2}}, 
\sym{\Sigma}{}{\sla{3}{2}} \right\}
+ \ii \sym{a}{1}{-1}+ \ii \sym{a}{\geq 2}{{-1}}}
+ \bR_1(\zak)+ \bR_{\geq 2}(\zak)\,,
\label{ad1}
\\
&{\mathrm{Ad}}^2_{\bG} [ \bA_1]= 
\vOpbw{-\ii \left\{\sym{g}{1}{\sla{1}{2}}, 
\left\{\sym{g}{1}{\sla{1}{2}}, \sym{\Sigma}{}{\sla{3}{2}} \right\}\right\}
+\ii \sym{a}{\geq 2}{-\sla{3}{2}} }+ \bR_{\geq 2}(\zak)\,,
\label{ad2}
\\
&{\mathrm{Ad}}^3_{\bG} [ \bA_1]= 
\vOpbw{\ii \sym{a}{\geq 3}{0} }+ \bR_{\geq 2}(\zak)\,,
\label{ad3}
\end{align}
        where $\bR(\zak)$ is a real-to-real matrix of smoothing remainders in $\wt{\cR}^{-\vr}_1$ and $\bR_{\geq 2}(\zak)$ is a real-to-real matrix of smoothing remainders in 
        ${\cR}^{-\vr}_{\geq 2}[r]$ which may change 
        from line to line.
Combining \eqref{Lie:3} with \eqref{ad1}, \eqref{ad2} 
and \eqref{ad3}, and  making use of 
the expansion \eqref{esp:sigma}, we get
\begin{align}
 {\bf \Psi}_2(\zak)  \bA_1(\zak){\bf \Psi}_2^{-1}(\zak)&=   
    \vOpbw{-\ii \left[ \sym{\Sigma}{}{\sla{3}{2}}
+\left\{\sym{g}{1}{\sla{1}{2}}, \sym{\Sigma}{}{\sla{3}{2}} \right\}
+ \left\{\sym{g}{1}{\sla{1}{2}}, \left\{\sym{g}{1}{\sla{1}{2}}, 
\sym{\Sigma}{}{\sla{3}{2}} \right\}\right\}\right]
+ \ii \sym{a}{1}{-1}}+ \bB_{\geq 2}(\zak)\,,
\notag
    \\
     &= \vOpbw{-\ii \left[ \sym{\Sigma}{}{\sla{3}{2}}+\left\{\sym{g}{1}{\sla{1}{2}},
      \Lambda(\xi)\right\}
      + \sym{a}{\geq 2}{1} \right]+ \ii \sym{a}{1}{-1}}
      + \bB_{\geq 2}(\zak)\,,
      \label{conj:A1_2}
\end{align}
where, defining 
$\sym{\Sigma}{\geq 2}{\sla{3}{2}}= 
\sym{\Sigma}{2}{\sla{3}{2}}+ \sym{\Sigma}{\geq 4}{\sla{3}{2}}$,
\begin{align*}
    \sym{a}{\geq 2}{1}:= 
    \left\{\sym{g}{1}{\sla{1}{2}}, \sym{\Sigma}{\geq 2}{\sla{3}{2}}\right\}
    + \left\{\sym{g}{1}{\sla{1}{2}}, \left\{\sym{g}{1}{\sla{1}{2}}, 
    \sym{\Sigma}{}{\sla{3}{2}} \right\}\right\},
\end{align*}
while the operator $\bB_{\geq 2}$ collects all bounded 
quadratic contributions arising in \eqref{ad1}, \eqref{ad2}, and \eqref{ad3}, namely
\begin{align*}
\bB_{\geq 2}(\zak) := \vOpbw{\ii \Big[ \sym{a}{\geq 2}{-1} 
+ \tfrac12\sym{a}{\geq 2}{-\sla{3}{2}} 
+ \sym{a}{\geq 3}{0} \Big]} + \bR_{\geq 2}\,.
\end{align*}
We next apply the second order Lie's expansion \eqref{Lie:2} to the term $\Psi_2(\zak)  \bA_2(\zak)\Psi_2(\zak)^{-1} $. To this end we compute the iterated commutators between the generator $ \tG$ and the operator $ \bA_2= \vOpbw{-\ii \tV \cdot \xi}$ using the composition rule of \Cref{compoparapara0} and the estimate \eqref{stima:AdpG}, we get 
\begin{align}
&{\mathrm{Ad}}_\bG[ \bA_2]= [  \bG, \bA_2 ]
= \vOpbw{-\ii \left\{\sym{g}{1}{\sla{1}{2}}, 
\tV\cdot\xi \right\}+ \ii \sym{a}{\geq 2}{-\sla{3}{2}}}
+ \bR_{\geq 2}(\zak)\,,
            \label{ad1_2}
\\
&{\mathrm{Ad}}^2_{\bG} [ \bA_1]= \bB_{\geq 2}(\zak)\,,
\label{ad2_2}
\end{align}
        where $\bR_{\geq 2}(\zak)\in \cR_{\geq 2}^{-\vr}[r]$ and $\bB_{\geq 2}(\zak)$ satisfies \eqref{stima_quadratica0}.
Then \eqref{Lie:2} with $ \eqref{ad1_2}$ and \eqref{ad2_2} 
imply that 
        \begin{align}
            {\bf \Psi}_2(\zak)  \bA_2(\zak){\bf \Psi}_2^{-1}(\zak)= &\vOpbw{-\ii \left[ \tV\cdot \xi 
            +\left\{\sym{g}{1}{\sla{1}{2}}, \tV\cdot\xi \right\}\right]}+\bB_{\geq 2}(\zak)\notag
            \\
            \stackrel{\eqref{V:exp1}}{=}& \vOpbw{-\ii \left[ \nabla \psi\cdot \xi
            +\fun{\tV}{\geq 2}\cdot \xi +\left\{\sym{g}{1}{\sla{1}{2}}, 
            \tV\cdot\xi \right\}\right]}+\bB_{\geq 2}(\zak)\,,
            \label{conj:A2_2}
        \end{align}
where $\fun{\tV}{\geq 2}= \tB \nabla \eta\in \Sigma\cF_{2}^\R[r,4]$.

\noindent
We finally apply the first order Lie's expansion \eqref{Lie:1} 
to the term ${\bf \Psi}_2(\zak)  \bA_3(\zak){\bf \Psi}_2^{-1}(\zak) $. 
To this end we 
compute the commutator between the generator 
$ \tG$ and the operator 
$ \bA_3= \vOpbw{-\ii \left[ \tfrac12 \tQ^{-2}\sym{\lambda}{}{0}+ \sym{a}{1}{0}\right]}$ 
using the commutator estimate \eqref{stima:AdpG} we get 
\begin{align}
& [  \bG, \bA_3 ]=\bB_{\geq 2}(\zak)\,.
\label{ad1_3}
\end{align}
Then \eqref{Lie:1} with \eqref{ad1_3} imply
\begin{align}
{\bf \Psi}_2(\zak)  \bA_3(\zak){\bf \Psi}_2^{-1}(\zak) = 
\vOpbw{-\ii \left[ \tfrac12 \tQ^{-2}\sym{\lambda}{}{0}
+ \sym{a}{1}{0}\right]}+ \bB_{\geq 2}(\zak)\,. 
\label{conj:A3_2}
\end{align}

\smallskip
\noindent{\bfseries Expansion of $\pa_t {\bf \Psi}_2(\zak) {\bf \Psi}_2^{-1}(\zak)$.} 
We apply the first order Lie's expansion \eqref{Lie:dt}. To this end 
we first compute the time derivative $\pa_t \bG$ of the generator, 
using that $\bG$ is linear with respect to the variable $\zak$ 
and the equation \eqref{eq:zak}-\eqref{X:zak_esp} for $\pa_t\zak$, we get 
\begin{align}
    \pa_t \bG(\zak)= \vOpbw{\ii \sym{g}{1}{\sla{1}{2}}(\hamvec{\cH}(\zak);x,\xi)}
    =\vOpbw{\ii \left[\sym{a}{1}{\sla{1}{2}}+ \sym{a}{2}{\sla{1}{2}}+\sym{a}{\geq 3}{\sla{1}{2}} \right]} \,,
    \label{dtG12}
\end{align}
	where
    \begin{gather*}
        \sym{a}{1}{\sla{1}{2}}(\zak;x,\xi):= 
        \sym{g}{1}{\sla{1}{2}}(-\ii \vomega(D)\zak;x,\xi)
        \in \wt \Gamma_1^\sla{1}{2}\,, 
        \qquad 
        \sym{a}{2}{\sla{1}{2}}(\zak;x,\xi):= 
        \sym{g}{1}{\sla{1}{2}}(\hamvec{2}(\zak);x,\xi)\in \wt \Gamma_2^\sla{1}{2}\,,
        \\
        \qquad \sym{a}{\geq 3}{\sla{1}{2}}(\zak;x,\xi):=
        \sym{g}{1}{\sla{1}{2}}(\hamvec{\geq 3}(\zak);x,\xi) \in \Gamma_{\geq 3}^\sla{1}{2}[r]\,.
    \end{gather*}
Then, using the expansion \eqref{dtG12} and the commutator estimate \eqref{stima:AdpG}, we get 
\begin{align}
[ \bG, \pa_t \bG]= \bB_{\geq 2}(\zak)\,.
\label{comm:dtGG}
\end{align}
Then \eqref{Lie:dt} with \eqref{dtG12} and \eqref{comm:dtGG} imply 
\begin{align}
    \pa_t {\bf \Psi}_2(\zak){\bf \Psi}_2^{-1}(\zak)= 
    \vOpbw{\ii \left[\sym{a}{1}{\sla{1}{2}}
    + \sym{a}{2}{\sla{1}{2}}+\sym{a}{\geq 3}{\sla{1}{2}} \right]}
    + \bB_{\geq 2}(\zak)\,.
    \label{conj:dt_2}
\end{align}

\smallskip
\noindent{ \bfseries Expansion of $ {\bf \Psi}_2(\zak)\bR_1(\zak){\bf \Psi}_2^{-1}(\zak)$}. 
Thanks to \eqref{Lie:1} we get 
		\begin{align}
			{\bf \Psi}_2(\zak)\bR_1(\zak){\bf \Psi}_2^{-1}(\zak)
			=\bR_1(\zak) + \bB_{\geq 2}(\zak)\,. 
            \label{conj:R1_2}
		\end{align}
		
\smallskip
\noindent {\bfseries Estimate of $ {\bf \Psi}_2(\zak)\bB_{\geq 2}(\zak) {\bf \Psi}_2^{-1}(\zak)$.} 
Arguing as in \eqref{stima:B2} we get 
		\begin{align}
			\|{\bf \Psi}_2(\zak)\bB_{\geq 2}(\zak){\bf \Psi}_2^{-1}(\zak)V \|_s 
			\lesssim_s 
			\| \zak \|_{s_0}^2 \|V\|_{s}+ \| \zak\|_{s_0}\| \zak\|_s \| V\|_{s_0}\,.
            \label{est:B2_2}
		\end{align}
In conclusion, combining \eqref{conj:A1_2}, \eqref{conj:A2_2}, 
\eqref{conj:A3_2}, \eqref{conj:dt_2}, \eqref{conj:R1_2}, and \eqref{est:B2_2}, 
and recalling that $\sym{g}{1}{\sla{1}{2}}$ solves the 
homological equation \eqref{homo:g12}, we obtain
\begin{align*}
    \pa_t U_2 =  
    \vOpbw{-\ii \left[\sym{\Sigma}{}{\sla{3}{2}} 
    +\sym{b}{\geq 2}{1}+ \sym{b}{1}{\sla{1}{2}}\right]}U_2 
    + \bR_1(\zak)U_2+ \bB_{\geq 2}(\zak)U_2\,,
\end{align*}
where the symbol $ \sym{b}{\geq 2}{1}\in \Gamma_{\geq 2}^1$ 
gathers all the quadratic contribution of order 
at least $1$ coming from \eqref{conj:A1_2}, 
\eqref{conj:A2_2}, \eqref{conj:A3_2}, \eqref{conj:dt_2}, namely
\begin{align}
    \sym{b}{\geq 2}{1}:= \sym{a}{\geq 2}{1}+\fun{\tV}{\geq 2}\cdot \xi +\left\{\sym{g}{1}{\sla{1}{2}}, \tV\cdot\xi \right\}+ \left[\tfrac12 \tQ^{-2}\sym{\lambda}{}{0}\right]_{\geq 2}+ \sym{a}{2}{\sla{1}{2}}+\sym{a}{\geq 3}{\sla{1}{2}}.
    \label{def:bgeq21}
\end{align}
While the symbol $ \sym{b}{1}{\sla{1}{2}}\in \wt \Gamma_1^\sla{1}{2}$ gathers all the linear contribution coming from \eqref{conj:A1_2}, \eqref{conj:A3_2} and \eqref{conj:dt_2}, namely 
\begin{align*}
    \sym{b}{1}{\sla{1}{2}}:= - \sym{a}{1}{-1}+ \left[\tfrac12 \tQ^{-2}\sym{\lambda}{}{0}\right]_1+ \sym{a}{1}{0}- \sym{a}{1}{\sla{1}{2}}.
\end{align*}
Finally note that the estimate \eqref{stima:diffb3} for the differential  follows 
by the explicit expression \eqref{def:bgeq21}, estimates \eqref{stima:dtQ}, 
\eqref{stima:difflambda} and the analyticity of $\tV$.  
This concludes the proof. 
\end{proof}

	\section{ Quasi-Resonant Normal Form}
    \label{QR-NormalForm}
In this section, we perform a quasi-resonant normal form reduction of the paradifferential system obtained in \Cref{sec:NFclassica}. 

The aim is to conjugate the operator to one whose symbol is in normal form, by removing non-resonant interactions up to smoothing remainders.

To this end, we decompose the symbols into resonant, non-resonant, and smoothing components, and solve suitable homological equations to eliminate the non-resonant contributions. The corresponding transformations are implemented through paradifferential flows generated by symbols of lower order.

This procedure is carried out iteratively at increasing homogeneity orders (linear, quadratic, and cubic), leading to a reduced system where only quasi-resonant interactions are retained.

More precisely, the following definition provides the notion of normal form symbols.
\begin{definition}[Normal form symbols]\label{def:normalform}
Let $s_0>0$, $ m\in \R$, $\delta\in (0,1)$,  $\nu\in (0,1)$ and $\tau\geq 0$. A symbol $z(x,\xi)$ in $\cN^{m,\delta}_{s_0}$ is said to be in normal form
 if
\[
z(x,\xi) = \sum_{k\in\mathbb{Z}^2} \hat z(k,\xi) e^{i k \cdot x}
\]
satisfies
\[
\hat z(k,\xi) \neq 0 \;\;\;\;\Longrightarrow\;\;\;\;
|k\cdot \xi| \leq \langle \xi \rangle^{\delta} |k|^{-\tau}
\quad \text{and} \quad
|k| \leq \langle \xi \rangle^{\nu},
\]
for any $k \neq 0$, $\xi \in \mathbb{R}^2$.
\end{definition}

We shall fix appropriately the parameters $\delta $,
$\tau$ and $\nu$ as
\begin{equation}\label{410}
 \delta \in (\tfrac{7}{8},1), 
\quad \tau > 2, 
\quad 0 < \nu < \tfrac{\delta}{\tau+1}, \qquad \mu:=1-\delta\in (0,\tfrac18) .
\end{equation}

Define a smooth cut-off function
\begin{align*}
    \theta(y):=\begin{cases}
        1& |y|\leq 10\\ 0& |y|\geq 11
    \end{cases}.
\end{align*}

The main result of  this section is the following quasi-resonant normal form result. 
\begin{proposition}
\label{prop:NF_finale}
      Let $\vr>1$ and $\delta$, $\nu$, $\tau$ fixed as in \eqref{410}. 
           Let $\kap \in \R_{+}\setminus \mathscr{N}$ where $\mathscr{N}$ is the zero measure set
     given by Lemma \ref{lem:small_divisors}.
There is $s_0  >\vr $ such that for any $\tR>1$ there is a bounded and invertible transformation  $\bT(\zak)$ 
such that for any $s\geq s_0$ there is $r=r_s>0$ such that for any $\zak \in B_{s_0,\R}(r)\cap H^{s}_{\R}$ the maps $\bT(\zak)$ and $\bT^{-1}(\zak)$ belong to $\cL\left( H^s_\R;H^s_\R\right)$ with estimates 
\begin{align}
\|\bT(\zak)V\|_{s}+ \|\bT^{-1}(\zak)V\|_{s}\lesssim_s \mathrm{exp}\Big((C\tR \|\zak\|_{s_0})^2 \| \zak\|_{s_0}^{\sla{1}{2}}\Big)\left[\| V\|_s+ \| \zak\|_{s}\| V\|_{s_0}\right], 
\label{stima:admR_nonspec}
\end{align}
for any $V \in H^s_\R$, and such that, if $\zak(t) \in B_{s_0, \R}(r)$ solves \eqref{eq:zak} then the variable
  $Z:= \bT(\zak)U_2$, 
  with $U_2$ solving \eqref{NuovoParaprod2},  solves the system
  \begin{equation}\label{QR-normalform}
  \begin{aligned}
   \pa_t Z&= \vOpbw{-\ii \left[\Lambda(\xi)+\sym{\cZ}{1}{1/ 2}+ 
         \sym{\tm}{2}{\sla{3}{2}}(\xi)
         +\sym{\cZ}{2}{\sla{3}{2}}+ \sym{\cZ}{3}{2-\delta}\right]}
         \\&\quad 
         +\vOpbw{- \ii  \left[\sym{\Sigma}{\geq 2}{\sla{3}{2}} 
         + \sym{b}{\geq 2}{1}
         +  \sym{b}{1}{\sla{1}{2}}\right]
         \left(1-\theta\left(\tR^{-1}\langle \xi\rangle \right)\right)} Z
         + \bB_{\geq 2}(\tR;\zak)Z\,,
    \end{aligned}
\end{equation}
        where   $\sym{\Sigma}{\geq 2}{\sla{3}{2}}\in \Gamma_{\geq 2}^{\sla{3}{2}}[r]$ is  in 
    \eqref{def:sdBIS},
    $ \sym{b}{\geq 2}{1} \in \Gamma_{\geq 2}^{\sla{1}{2}}[r]$ 
  is  in \eqref{def:b1mag}, $\sym{b}{1}{\sla{1}{2}}\in \wt \Gamma_1^{\sla{1}{2}}$ is in \eqref{NuovoParaprod2}
 and where
    \begin{itemize}
    \item  $\sym{\tm}{2}{\sla{3}{2}}$ is a $2$-homogeneous, $x$-independent,  
    $\tR$-localized,  real symbol in $\wt \Gamma_2^{\sla{3}{2}, \delta}$;
    
        \item for $(m,p)=(\sla{1}{2},1),\, (\sla{3}{2},2),\,(2-\delta,3)$,  
        $\sym{\cZ}{p}{m}$ is a $p$-homogeneous, $\tR$-localized,  
        real symbol in normal form (see \Cref{def:normalform}), 
        belonging to $\wt \Gamma_p^{m,\delta}$;
        
        \item $\bB_{\geq 2}(\tR;\zak)$ is a family of  matrices of real-to-real bounded operators, 
        satisfying the following: for any $s\geq s_0$ 
        there is a constant $C=C_s>0$ such that for any 
        $\tR>1$, $\zak(t) \in B_{s_0, \R}(r)\cap H^s_\R(\T^2;\C^2)$, 
        $V\in H^s_\R(\T^2;\C^2)$, one has 
        \begin{align}
            \| \bB_{\geq 2}(\tR;\zak) V \|_s \leq 
            C \mathrm{exp}\Big({C(\tR \| \zak\|_{s_0})^2}\Big) 
            \left( \| \zak\|_{s_0}^2\| V\|_s+ \| \zak\|_{s_0}\| \zak\|_s\| V\|_{s_0}\right)\,.
            \label{est:Bexp}
        \end{align}
    \end{itemize}
	\end{proposition}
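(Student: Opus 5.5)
The plan is to start from \eqref{NuovoParaprod2} and split every diagonal symbol with the cut-off $\theta(\tR^{-1}\langle\xi\rangle)$. We write
\[
\sym{\Sigma}{\geq 2}{\sla{3}{2}}+\sym{b}{\geq 2}{1}+\sym{b}{1}{\sla{1}{2}}
=\big[\cdots\big]\theta(\tR^{-1}\langle\xi\rangle)+\big[\cdots\big]\big(1-\theta(\tR^{-1}\langle\xi\rangle)\big).
\]
The high-frequency part is left untouched; it is exactly the second line of \eqref{QR-normalform}. The low-frequency parts are $\tR$-localized symbols in the sense of \Cref{def:Rloc}. By \Cref{lem:perdita}, any such symbol of homogeneity $p\geq 2+m$ gives a bounded operator whose norm is controlled by $e^{(\tR\|\zak\|_{s_0})^2}\|\zak\|_{s_0}^2$, so it can be thrown into $\bB_{\geq 2}(\tR;\zak)$ with the bound \eqref{est:Bexp}. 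What remains to be normalized is only finitely many homogeneous, $\tR$-localized pieces: the linear symbol $\theta\,\sym{b}{1}{\sla{1}{2}}$, the quadratic symbols $\theta(\sym{\Sigma}{2}{\sla{3}{2}}+\sym{b}{2}{1})$ of orders $\sla{3}{2}$ and $1$, and the cubic terms of order above $2-\delta$ that are generated along the way.

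\textbf{Step 1: homological equations.} For each such piece $a_p^{(m)}$ we split it as in the introduction into average, resonant, non-resonant and smoothing parts, using $\delta,\nu,\tau$ from \eqref{410}. We then solve
\[
\{g_p,\Lambda(\xi)\}+a_p^{(\mathtt{nr})}=0
\]
Fourier-wise in $x$, with $\widehat g_p(k,\xi)=\widehat a(k,\xi)/(\ii\,k\cdot\nabla\Lambda(\xi))$. On the non-resonant support one has $|k\cdot\xi|\gtrsim|k|^{-\tau}\langle\xi\rangle^\delta$, so $g_p$ has order $m+\sla{1}{2}-\delta$. Derivatives in $\xi$ cost powers of $|k|$, which are absorbed by $x$-regularity; this is why the symbol classes $\cN^{m,\delta}_\sigma$ with finite regularity are used. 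The piece with $|k|\geq\langle\xi\rangle^\nu$ is order $-\infty$ up to $x$-derivatives, so it goes into the remainders.

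\textbf{Step 2: conjugation.} We conjugate by the flow of $\vOpbw{\ii g_p}$, using \Cref{lem:flussoR} (with $p>m+\sla{1}{2}$ adjusted; if needed, we trade orders for powers of $\tR$ via \Cref{lem:perdita}). The Lie expansions \eqref{Lie:1}--\eqref{Lie:3} and the commutator bounds of \Cref{lem:stima_comm} are applied as in \Cref{prop:nfquadtra}. The time derivative term contributes $g_p(-\ii\vomega(D)\zak)$, which is of lower order in $\xi$ thanks to $\delta$, plus higher homogeneity. The commutators of $g_1$ with the remaining symbols produce new quadratic and cubic terms, which are normalized at the next step. We proceed in the order linear, then quadratic (orders $\sla{3}{2}$ down to $1$), then cubic down to order $2-\delta$. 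At each stage the $x$-average is kept; for $p=2$ the $x$-independent average gives $\sym{\tm}{2}{\sla{3}{2}}$, and the resonant parts form $\sym{\cZ}{p}{m}$. Reality of all the resulting symbols follows from the reality of the generators, as in \Cref{prop:nfquadtra}.

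\textbf{Step 3: smoothing remainder and the bound on $\bT$.} The $1$-homogeneous smoothing part $\bR_1(\zak)$ is removed by a Birkhoff step with a generator in $\wt\cR_1^{-\vr'}$, built from the small divisor bound \eqref{eq:lower_bound} of \Cref{lem:small_divisors}. Its coefficients are $R^{\sigma_1,\sigma_2,\sigma}_{k,j}$ divided by the three-wave phase. Since the divisor loses $\max^2\min^4$, the $\max^2$ loss is absorbed by taking $\vr$ large compared to the final smoothing order, and the $\min^4$ factor fits into $\mu$ in \eqref{smoocara}. \Cref{lem:flow_smoo} then gives the tame factor in \eqref{stima:admR_nonspec}. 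The map $\bT$ is the composition of all these flows, and \eqref{stima:admR_nonspec} follows by combining \eqref{stima:phiexp} with \eqref{stima:flussosmoothing}.

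I expect the main obstacle to be the bookkeeping in Step 2. Each commutator must be shown to land either in a class that the next step normalizes, or in $\bB_{\geq 2}$ with only an exponential loss $e^{C(\tR\|\zak\|_{s_0})^2}$. This requires checking that all orders decrease by $\delta-\sla{1}{2}$ per step and stay at or below the thresholds $\sla{1}{2},\sla{3}{2},2-\delta$ claimed for $p=1,2,3$. In particular, the quadratic term $\{g_1,\{g_1,\Lambda\}\}$ has order $\sla{3}{2}+2(\sla{1}{2}-\delta)$, and together with the cubic terms this has to be tracked carefully against the exponent $(\tR\|\zak\|)^2$.
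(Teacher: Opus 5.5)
Your plan matches the paper's proof. You split off high frequencies with $\theta(\tR^{-1}\langle\xi\rangle)$, then normalize the $\tR$-localized linear, quadratic (keeping the average as $\sym{\tm}{2}{\sla{3}{2}}$) and cubic symbols in turn, using the homological equation of \Cref{homo:solve0} and flows controlled by \Cref{lem:flussoR} and \Cref{lem:stima_comm}; you then remove $\bR_1(\zak)$ by the Birkhoff step of \Cref{homo:solve0_finale_birk} and compose the four maps.

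The only slip is in your obstacle paragraph. Since $g_1$ has order $\mu=1-\delta$, the single bracket $\{g_1,\Lambda\}$ already has order $\frac52-2\delta$, and the double commutator has order $\frac72-4\delta<0$ for $\delta>\frac78$. So it goes straight into $\bB_{\geq 2}$ with no $\tR$-loss. In the paper, all commutators of the generators with the other symbols are likewise absorbed into $\bB_{\geq 2}$ by \Cref{lem:stima_comm}. What is passed to the next step comes only from the pre-existing symbols and from $\pa_t\bG$, notably the cubic term $\di_\zak\sym{g}{2}{1+\mu}[\hamvec{2}(\zak)]$.
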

    The rest of the section is devoted to the proof of \Cref{prop:NF_finale}.

\subsection{Preliminaries}\label{preliminary resonant normal form}
In order to prove \Cref{prop:NF_finale}, we need some further symbolic calculus. 
First of all, we consider an even smooth cut-off function $\chi : \R \to [0,1]$ with the property 
that $\chi(y) = 1$ for all $|y| \leq \tfrac12$ and $\chi(y) = 0$ for all $|y| \geq 1$. Following \cite{FM2022} we introduce the following decomposition.

    \begin{definition}\label{def:45}
Given $\nu, \delta, \tau$ as in \eqref{410}, define the following functions:
\[
\chi_k(\xi) = \chi\!\left(\frac{2|k|^\tau k\cdot\xi}{\langle \xi \rangle^\delta}\right), 
\qquad
\tilde \chi_k(\xi) = \chi\!\left(\frac{|k|}{\langle \xi \rangle^\nu}\right), \quad k \in \mathbb{Z}^2 \setminus \{0\}.
\]
Correspondingly, given a symbol 
\[
\sym{a}{}{m}=\sum_{k \in \Z^2} \hat a(k,\xi)e^{\ii k\cdot x} \in \cN^{m,\delta}_{s_0}\,,
\] 
we decompose it as
\[
\sym{a}{}{m} = \langle \sym{a}{}{m} \rangle +\sym{a}{}{m, \mathtt res}+ \sym{a}{}{m, \mathtt nr}   
+ \sym{a}{}{m, \mathtt S}\,,
\]
where $\langle a \rangle$ is the $x$–average of the symbol, namely
\begin{align*}
\langle \sym{a}{}{m} \rangle(\xi) = \frac{1}{(2\pi)^2}\int_{\mathbb{T}^2} \sym{a}{}{m}(x,\xi)\, \di x\,,
\end{align*}
and
\begin{gather}
    \sym{a}{}{m, \mathtt res}(x,\xi) = \sum_{k \neq 0} \chi_k(\xi)\tilde\chi_k(\xi)\, \hat a(k,\xi)e^{\ii k\cdot x}, \qquad \sym{a}{}{m, \mathtt nr}(x,\xi) = \sum_{k \neq 0} (1-\chi_k(\xi))\tilde\chi_k(\xi)\, \hat a(k,\xi)e^{\ii k\cdot x}\label{def:res}\\
\sym{a}{}{m,\mathtt S}(x,\xi) = \sum_{k \neq 0} (1-\tilde \chi_k(\xi))\, \hat a(k,\xi)e^{ik\cdot x}.
\label{def:smoo}
\end{gather}
We also define
\begin{align}
\tg[\sym{a}{}{m}](x,\xi) := - \sum_{k \neq 0} \frac{1}{(k\cdot \xi)} 
(1-\chi_k(\xi))\tilde\chi_k(\xi)\, \hat a(k,\xi)e^{\ii k\cdot x}\,.
\label{def:g_pare}
\end{align}

\end{definition}
In \cite[Lemma 4.7]{FM2022} the following lemma was proved.
\begin{lemma}
\label{lem:decomposition_symbols}
Let $m \in \mathbb{R}$. Then, for any $s \geq 0$, the linear map
\[
\cN^m_s \to \cN^m_s, \quad \sym{a}{}{m}\mapsto \langle \sym{a}{}{m} \rangle
\]
is linear and continuous. For any $s \geq 0$, there exists $\sigma_s > s$ large enough such that the maps
\[
\cN^m_{\sigma_s} \to \cN^m_s\,, 
\qquad \sym{a}{}{m} \mapsto \sym{a}{}{m,\mathtt nr}\,, \;\;\; \sym{a}{}{m} \mapsto \sym{a}{}{m,\mathtt res}\,,
\]
\[
\cN^m_{\sigma_s} \to \cN^{m-\delta}_s, \quad \sym{a}{}{m} \mapsto g[\sym{a}{}{m}]\,,
\]
are linear and continuous. Moreover, let $N \in \mathbb{N}_0$. 
Then there exists $s_0 = s_0(N) > 0$ large enough such that, for any $s \geq s_0$, the map
\begin{align}
\cN^m_{s_0} \to \mathcal{L}(H^s,H^{s+N}), \quad \sym{a}{}{m} 
\mapsto R^{({\mathtt S})}(\sym{a}{}{m}) := \Opbw{\sym{a}{}{m,\mathtt S}}\,,
\label{lin:cont_smoo}
\end{align}
is linear and continuous.
\end{lemma}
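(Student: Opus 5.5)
The plan is to work mode by mode in $k$, since each of the four maps acts diagonally on the Fourier coefficients $\hat a(k,\xi)$: it multiplies them by a cut-off depending only on $(k,\xi)$. The seminorm $|\cdot|_{m,s}$ controls $\partial_x^\alpha\partial_\xi^\beta$ with $|\alpha|+|\beta|\le s$, and $x$-derivatives correspond to factors of $k$.

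\textbf{The average.} The map $a\mapsto\langle a\rangle=\hat a(0,\xi)$ is trivial. Indeed
\[
|\partial_\xi^\beta \langle a\rangle|\le \sup_x|\partial_\xi^\beta a| \le |a|_{m,s}\langle\xi\rangle^{m-\delta|\beta|},
\]
so it is continuous with no loss.

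\textbf{The resonant and non-resonant parts.} I would first record derivative bounds for the cut-offs. For $\chi_k$, each $\partial_\xi$ produces either a factor $|k|^\tau k\langle\xi\rangle^{-\delta}$ or a derivative of $\langle\xi\rangle^{-\delta}$. This gives
\[
|\partial_\xi^\beta\chi_k(\xi)|\lesssim_\beta |k|^{(\tau+1)|\beta|}\langle\xi\rangle^{-\delta|\beta|}.
\]
Similarly,
\[
|\partial_\xi^\beta\tilde\chi_k|\lesssim_\beta \langle\xi\rangle^{-\delta|\beta|}\quad\text{on }|k|\le\langle\xi\rangle^\nu,
\]
because there the $\xi$-derivatives of $|k|\langle\xi\rangle^{-\nu}$ cost $\langle\xi\rangle^{-1}\le\langle\xi\rangle^{-\delta}$. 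The only loss in the decay rate is the power of $|k|$.

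Next I use the decay of the coefficients. Integrating by parts in $x$ gives
\[
|\partial_\xi^\beta\hat a(k,\xi)|\lesssim |k|^{-\sigma}|a|_{m,\sigma}\langle\xi\rangle^{m-\delta|\beta|}.
\]
Applying Leibniz to $\chi_k\tilde\chi_k\hat a(k,\cdot)$ and summing over $k$, I get
\[
\sum_k |k|^{|\alpha|+(\tau+1)|\beta|-\sigma}<\infty
\]
as soon as $\sigma_s> s(\tau+2)+2$. This proves continuity into $\cN^m_s$.

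\textbf{The map $\tg$.} On the support of $1-\chi_k$ we have $|k\cdot\xi|\ge\tfrac14|k|^{-\tau}\langle\xi\rangle^\delta$, hence $|k\cdot\xi|^{-1}\le 4|k|^\tau\langle\xi\rangle^{-\delta}$. Each $\xi$-derivative of $(k\cdot\xi)^{-1}$ costs a factor $|k|\,|k\cdot\xi|^{-1}\lesssim |k|^{\tau+1}\langle\xi\rangle^{-\delta}$. So the multiplier $(k\cdot\xi)^{-1}(1-\chi_k)\tilde\chi_k$ is of order $-\delta$, with the correct $-\delta|\beta|$ decay in $\xi$-derivatives, at the price of a polynomial factor $|k|^{(\tau+1)(|\beta|+1)}$. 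This factor is again absorbed by taking $\sigma_s$ large, which gives $\tg[a]\in\cN^{m-\delta}_s$.

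\textbf{The smoothing part.} This is the step I expect to be the main obstacle. The key observation is that on the support of $1-\tilde\chi_k$ one has $|k|\ge\tfrac12\langle\xi\rangle^\nu$, so $\langle\xi\rangle\lesssim|k|^{1/\nu}$. I do not estimate the symbol seminorms of $a^{\mathtt S}$ directly. Instead I estimate the operator $\Opbw{a^{\mathtt S}}$ through its matrix elements in \eqref{BWnon}. With $n=j-k$ (the $x$-frequency, so $n\ne0$ since $\tilde\chi_0$ is not involved), the paradifferential cut-off forces $|n|\le\delta_0\langle\tfrac{j+k}{2}\rangle$, so $|j|\sim|k|\sim\langle\xi\rangle$. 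Combining with the support of $1-\tilde\chi_n$, which gives $\langle\xi\rangle\lesssim|n|^{1/\nu}$, yields
\[
\big|\widehat{a^{\mathtt S}}\big(n,\tfrac{j+k}{2}\big)\big|\lesssim |n|^{-s_0}|a|_{m,s_0}\langle j\rangle^{m}\lesssim |n|^{-s_0+(|m|+N)/\nu+3}\,\langle j\rangle^{-N}\langle n\rangle^{-3}|a|_{m,s_0}.
\]
Hence, choosing $s_0(N)>(|m|+N)/\nu+3$, the matrix is bounded by $\langle j\rangle^{-N}\langle n\rangle^{-3}$. Schur's test, or Young's inequality as in \Cref{lem:smoohomo}, then gives boundedness $H^s\to H^{s+N}$ for every $s$, linearly in $|a|_{m,s_0}$.

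The delicate points in this last step are twofold. First, one must use only the $x$-regularity of $a$ and none of its $\xi$-regularity. Second, one must check that the paradifferential cut-off $\chi(n,\cdot)$ makes $\langle\xi\rangle$ comparable to $\langle j\rangle$; this comparability is what turns the support bound in $|n|$ into the full $N$-smoothing.
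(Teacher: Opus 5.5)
Your proof is correct. The paper gives no argument of its own here: it imports the statement verbatim from Lemma 4.7 of FM2022. Your proof is a self-contained version of the standard argument behind that citation.

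For the average, the resonant and non-resonant parts, and $\tg$, you work mode by mode. The cut-off bounds $|\partial_\xi^\beta\chi_k|\lesssim|k|^{(\tau+1)|\beta|}\langle\xi\rangle^{-\delta|\beta|}$, together with $|k\cdot\xi|^{-1}\lesssim|k|^{\tau}\langle\xi\rangle^{-\delta}$ on $\supp(1-\chi_k)$, are traded against the $x$-decay of $\hat a(k,\xi)$. For the smoothing part, you bound the Bony--Weyl matrix entries directly and apply Schur's test. This uses $\langle\xi\rangle\lesssim|k|^{1/\nu}$ on $\supp(1-\tilde\chi_k)$ and $\langle j\rangle\sim\langle\xi\rangle$ from the paradifferential cut-off, and needs only $x$-regularity of the symbol.

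You implicitly read $\cN^m_s$ as the $\delta$-class $\cN^{m,\delta}_s$, and this is worth stating explicitly because it is the right reading. Under the paper's literal convention $\cN^m_s=\cN^{m,1}_s$, the resonant and non-resonant maps would not preserve the class. The reason is that $\partial_\xi\chi_k$ is only of size $|k|^{\tau+1}\langle\xi\rangle^{-\delta}$, not $\langle\xi\rangle^{-1}$.
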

\begin{lemma}
\label{lem:deco_homo}
    Let $m\in \R$ and $p\in \N$. If $\sym{a}{p}{m} \in \wt \Gamma_p^m$ then: 
    \begin{enumerate}[(i)]
        \item $\sym{a}{p}{m,\mathtt res}$ belongs to $ \wt \Gamma_p^m$; 
         \item\label{item:deco_g} $ \tg\left[ \sym{a}{p}{m}\right]$ 
         belongs to $\wt \Gamma_p^{m- \delta}$;
         \item \label{item:smooOpbw} If $p\geq 2$ the operator 
         $\vOpbw{\sym{a}{p}{m,\mathtt S }}$ belongs to $ \cL(H^s_\R;H^s_\R)$ 
         for any $ s\geq 0$ with estimates 
         \begin{align*}
             \| \vOpbw{\sym{a}{p}{m,\mathtt S }}V \|_s \lesssim_s \| \zak\|_{s_0}^2\| V\|_s\,,
         \end{align*}
         for any $ \zak\in H^{s_0}_\R$ and $ V \in H^s_\R$. 
         \item \label{item:smoothingS} for $p=1$ the matrix 
         $\vOpbw{\sym{a}{1}{m,\mathtt S }}$ is a matrix of real-to-real operators in 
         $ \wt \cR^{-\vr}_1$ for any $ \vr>1$.
    \end{enumerate}
\end{lemma}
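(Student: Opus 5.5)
The plan is to work with the Fourier representation of a $p$-homogeneous symbol. Each item then follows from the fact that the operations in \Cref{def:45} are Fourier multipliers in $k$ with $\xi$-dependent weights, acting linearly on $\hat a(k,\xi)$.

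For (i) and (ii), note first that $\sym{a}{p}{m,\mathtt res}$ and $\tg[\sym{a}{p}{m}]$ depend linearly on $\sym{a}{p}{m}$, because the cut-offs $\chi_k\tilde\chi_k$ and the factor $(k\cdot\xi)^{-1}$ do not depend on $\zak$. Composing with the $p$-homogeneous polynomial map $\zak\mapsto \sym{a}{p}{m}(\zak)$ therefore gives again a $p$-homogeneous polynomial. Translation invariance \eqref{def:sym_mome} is preserved, since $\tau_\upsilon$ multiplies $\hat a(k,\xi)$ by $e^{\ii k\cdot\upsilon}$, and this commutes with multiplication by any function of $(k,\xi)$. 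For the bounds \eqref{bound:homosym}, apply \Cref{lem:decomposition_symbols}. Given $\sigma$, choose $\sigma'=\sigma_\sigma$ and use the estimate of $\sym{a}{p}{m}$ in $\cN^{m,\delta}_{\sigma'}$, valid with the loss $\mu(\sigma')$. This yields $|\sym{a}{p}{m,\mathtt res}|_{m,\sigma}\lesssim \|\zak\|^p_{\sigma'+\mu(\sigma')}$, and the analogous bound for $\tg$ in $\cN^{m-\delta}_{\sigma}$. The enlarged loss $\mu$ is permitted by the definition.

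For (iii) and (iv), use \eqref{lin:cont_smoo}. With $N$ arbitrary and $s_0=s_0(N)$, the map $a\mapsto \Opbw{a^{(m,\mathtt S)}}$ is bounded from $\cN^m_{s_0}$ into $\cL(H^s,H^{s+N})$. Combined with \eqref{bound:homosym}, this gives $\|\Opbw{\sym{a}{p}{m,\mathtt S}}V\|_{s+N}\lesssim_s\|\zak\|^p_{s_0+\mu}\|V\|_s$. The same holds for the conjugate entry $\ov{a(x,-\xi)}$, since $\ov{\hat a(-k,-\xi)}$ carries the same cut-off because $\tilde\chi$ is even. For $p\ge2$ and $\|\zak\|\le r<1$ we bound $\|\zak\|^p\le\|\zak\|^2$, and taking $N=0$ gives (iii), after relabelling $s_0$ to include the loss $\mu$. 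The matrix is diagonal and of the form in \eqref{def:vec_out}, so it is real-to-real.

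For (iv), with $p=1$, fix $N\ge\vr$. The estimate above gives $\|R(\zak)V\|_{s+\vr}\lesssim\|\zak\|_{s_0}\|V\|_s$, which is stronger than \eqref{bound:smoo}. Linearity in $\zak$ and translation invariance follow as in (i), using the remark after \Cref{def:sfr} on the translation invariance of $\Opbw{\cdot}$ for translation-invariant symbols. The real-to-real structure is again automatic from \eqref{def:vec_out}.

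The main obstacle is bookkeeping the derivative loss in (ii). In $\tg$, the factor $(k\cdot\xi)^{-1}$ is controlled on the support of $(1-\chi_k)\tilde\chi_k$ only by $|k|^\tau\langle\xi\rangle^{-\delta}$. Derivatives in $\xi$ therefore cost powers of $|k|$, that is $x$-regularity, and this is exactly the reason $\sigma_s>s$ is needed. I would rely on \Cref{lem:decomposition_symbols} for this step and only check that the loss is uniform in $\zak$, which it is because everything is linear in $\hat a$.
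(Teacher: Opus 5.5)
Your proposal is correct and follows the paper's proof essentially line by line: (i)--(ii) come from \Cref{lem:decomposition_symbols} together with \eqref{bound:homosym} and the fact that the Fourier cut-offs commute with translations, and (iii)--(iv) come from \eqref{lin:cont_smoo} with $N=0$ and $N\geq\vr$ respectively, plus the real-to-real structure of $\vOpbw{\cdot}$. Your added smallness $\|\zak\|_{s_0}\le r<1$ in (iii) is in fact needed to pass from $\|\zak\|_{s_0}^p$ to $\|\zak\|_{s_0}^2$ when $p\ge 3$ (the statement glosses over this), and it costs nothing, since the lemma is only applied for $\zak\in B_{s_0,\R}(r)$.
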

	\begin{proof}
Items $(i)$ and $(ii)$ follow applying \Cref{lem:decomposition_symbols} and 
recalling \eqref{bound:homosym}. 
The translation invariant property \eqref{def:sym_mome} follows 
noting that by their definitions \eqref{def:res}-\eqref{def:g_pare} 
one has 
\[
\tau_\upsilon\big( \sym{a}{p}{m,\mathtt res}\big)= 
\sym{(\fun{\tau_\upsilon a}{p})}{}{m,\mathtt res}\,, 
\qquad 
\tau_\upsilon\big(\tg[ \sym{a}{p}{m}]\big)= 
\tg[\tau_\upsilon \sym{a}{p}{m}]\,.
\]
 Item $(iii)$ follows from \eqref{lin:cont_smoo} 
 with $N=0$ and the fact that the matrix 
 $\vOpbw{\sym{a}{p}{m,\tS}}$ is real-to-real.
 Finally item $(iv)$ follows from 
 \eqref{lin:cont_smoo} with $N=\vr$ and using also 
 \[
\tau_\upsilon\big( \vOpbw{\sym{a}{p}{m,\tS}}\big)=  
\vOpbw{\tau_\upsilon(\sym{a}{p}{m,\tS})}\tau_\upsilon
=\vOpbw{\sym{\big[\tau_\upsilon(\sym{a}{p}{m})\big]}{}{\tS}}\tau_\upsilon\,.
 \]
This concludes the proof. 
	\end{proof}
In the following we will collect some useful results that will be used in subsections 
\ref{sec:linsym}, \ref{sec:quadsym}, \ref{sec:cubicsym} to transform 
$R$-localized symbols into symbols in normal form.

\begin{lemma}[Homological equation]\label{homo:solve0}
 Fix $ \delta, \nu, \tau, \mu $ as in \eqref{410}.
		Let $ m\in \R$. For any $\tR$-localized real symbol 
		$\sym{a}{1}{m}\in \wt \Gamma_1^{m,\delta}$, 
		there is a $\tR$-localized real symbol 
		$\sym{g}{1}{m+\sla{1}{2}-\delta}$ 
		in $ \wt  \Gamma^{m+\sla{1}{2}-\delta,\delta}_{1}$
        such that the non-resonant component of $\sym{a}{1}{m}$ is removed, namely
		\be\label{eq:homologica}
		\sym{a}{1}{m}+ \{ \sym{g}{1}{m+\sla{1}{2}-\delta}, \Lambda(\xi) \}  
		= \sym{a}{1}{m, \mathtt res}+ \sym{a}{1}{m, \mathtt S}\,.
		\ee
		\end{lemma}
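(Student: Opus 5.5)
The generator will be defined explicitly through the operator $\tg$ of \Cref{def:45}, rescaled by the gradient of the dispersion relation. Since $\Lambda$ depends only on $\xi$, one has $\{g,\Lambda(\xi)\} = -\nabla_\xi\Lambda(\xi)\cdot\nabla_x g$. Moreover $\nabla_\xi\Lambda(\xi)=\lambda'(|\xi|)\,\xi/|\xi|$, where $\lambda(r):=\sqrt{r(1+\kap r^2)}$, so that $\lambda'(r)\sim r^{\sla{1}{2}}$ for $r\geq 1$. On the Fourier side, $-\nabla_\xi\Lambda\cdot\nabla_x$ acts on $e^{\ii k\cdot x}$ as multiplication by $-\ii\,\lambda'(|\xi|)\,(k\cdot\xi)/|\xi|$. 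I would therefore set
\[
\sym{g}{1}{m+\sla{1}{2}-\delta}(x,\xi):=\frac{|\xi|}{\ii\,\lambda'(|\xi|)}\,\tg[\sym{a}{1}{m}](x,\xi),
\]
modified near $\xi=0$ according to \Cref{rem:symbols_modozero}. Given this choice,
\[
\{g,\Lambda\}=-\sum_{k\neq0}(1-\chi_k)\tilde\chi_k\,\hat a(k,\xi)e^{\ii k\cdot x}=-\sym{a}{1}{m,\mathtt nr}.
\]
Using the decomposition $\sym{a}{1}{m}=\langle \sym{a}{1}{m}\rangle+\sym{a}{1}{m,\mathtt res}+\sym{a}{1}{m,\mathtt nr}+\sym{a}{1}{m,\mathtt S}$, identity \eqref{eq:homologica} then follows, with the average absorbed into the resonant part as a $k=0$ mode. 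If the statement is read strictly, I would note that the average is kept as part of the normal form, since $k=0$ trivially satisfies \Cref{def:normalform}.

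The remaining checks are the symbol class, the reality, the localization, and the translation invariance of $g$.

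\emph{Class.} By \Cref{lem:deco_homo}\,(\ref{item:deco_g}), $\tg[\sym{a}{1}{m}]\in\wt\Gamma_1^{m-\delta,\delta}$. The multiplier $|\xi|/\lambda'(|\xi|)$ is a classical symbol of order $\sla{1}{2}$, and each $\xi$-derivative of it gains a factor $\langle\xi\rangle^{-1}\leq\langle\xi\rangle^{-\delta}$. By the Leibniz rule, $g$ therefore lies in $\wt\Gamma_1^{m+\sla{1}{2}-\delta,\delta}$, with the bound \eqref{bound:homosym} inherited after a possible increase of $\mu$.

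\emph{Translation invariance.} This follows as in the proof of \Cref{lem:deco_homo}, because multiplication by a Fourier multiplier in $\xi$ commutes with $\tau_\upsilon$.

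\emph{Localization.} The quantities $\tg[\cdot]$ and $\hat a(k,\xi)$ are computed pointwise in $\xi$. Hence $\supp g\subset\supp\sym{a}{1}{m}\subset\{|\xi|\leq 11\tR\}$.

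\emph{Reality.} I expect this to be the only delicate step. Since $\sym{a}{1}{m}$ is real, $\overline{\hat a(k,\xi)}=\hat a(-k,\xi)$. The cut-offs satisfy $\chi_{-k}=\chi_k$ because $\chi$ is even, and $\tilde\chi_{-k}=\tilde\chi_k$. The factor $1/(k\cdot\xi)$ is odd in $k$. It follows that $\overline{\widehat{\tg[a]}(k,\xi)}=-\widehat{\tg[a]}(-k,\xi)$, so $\tg[a]$ is purely imaginary. Multiplying by $1/\ii$ times the real multiplier makes $g$ real. The same parity argument shows that $\sym{a}{1}{m,\mathtt res}$ and $\sym{a}{1}{m,\mathtt S}$ are real.

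Finally, I would double-check the sign convention of the Poisson bracket, $\{a,b\}=\nabla_\xi a\cdot\nabla_x b-\nabla_x a\cdot\nabla_\xi b$, against its use in \eqref{homo:g12}, and adjust the sign of $g$ if necessary.
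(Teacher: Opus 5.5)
Your construction is the paper's: the multiplier $|\xi|/\lambda'(|\xi|)=\frac{2|\xi|\Lambda(\xi)}{3\kap|\xi|^2+1}$ times $\tg[\sym{a}{1}{m}]$, with the class obtained from \Cref{lem:deco_homo}\,(ii) and the localization inherited pointwise in $\xi$. However, as written you do not prove \eqref{eq:homologica}. Your computation gives $\sym{a}{1}{m}+\{g,\Lambda\}=\langle\sym{a}{1}{m}\rangle+\sym{a}{1}{m,\mathtt{res}}+\sym{a}{1}{m,\mathtt{S}}$. ``Absorbing the average into the resonant part'' changes the statement, because in \Cref{def:45} the resonant part is a sum over $k\neq0$ only. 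You are missing one observation, which is the only structural point in the paper's proof: the average vanishes identically. By real-linearity in $\zak$ and the translation invariance \eqref{def:sym_mome} (argue as in \Cref{lem:smoohomo}), one has $\sym{a}{1}{m}(\zak;x,\xi)=\sum_{k\neq0,\,\sigma\in\{\pm\}}a_k^\sigma(\xi)\,\zetina_k^\sigma\, e^{\ii\sigma k\cdot x}$. Only $k\neq0$ occurs because $\zetina$ has zero mean, being built from $\eta\in H^s_0$ and $\psi\in\dot H^s$. Hence every $x$-frequency $\sigma k$ is nonzero, $\langle\sym{a}{1}{m}\rangle=0$, and \eqref{eq:homologica} holds literally.

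There is also a sign slip. With your choice $g=\frac{|\xi|}{\ii\lambda'(|\xi|)}\tg[\sym{a}{1}{m}]$, the Fourier factors multiply to $\big(-\ii\lambda'\tfrac{k\cdot\xi}{|\xi|}\big)\cdot\tfrac{|\xi|}{\ii\lambda'}\cdot\big(-\tfrac{1}{k\cdot\xi}\big)=+1$, so $\{g,\Lambda\}=+\sym{a}{1}{m,\mathtt{nr}}$. Your bracket convention is correct: it is the one in \eqref{def:pk}, and it reproduces \eqref{homo:g12}. The right generator is $g=\ii\,\frac{|\xi|}{\lambda'(|\xi|)}\,\tg[\sym{a}{1}{m}]$, which is still real by your parity argument. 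That parity argument is worth keeping, since it shows a factor $\pm\ii$ is unavoidable. The paper's proof takes $g=\frac{2|\xi|\Lambda(\xi)}{3\kap|\xi|^2+1}\tg[\sym{a}{1}{m}]$ with $\tg$ exactly as in \eqref{def:g_pare}. That symbol is purely imaginary, and it gives $\nabla_x g\cdot\nabla_\xi\Lambda=\xi\cdot\nabla_x\tg[\sym{a}{1}{m}]=-\ii\,\sym{a}{1}{m,\mathtt{nr}}$ rather than $\sym{a}{1}{m,\mathtt{nr}}$. So the paper's displayed formula needs the same factor $\ii$, and your sign-corrected generator is the correct one.
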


    \begin{proof}
    As $ \sym{a}{1}{m} \in \wt \Gamma_1^{m,\delta}$, we expand it 
    \begin{align*}
        \sym{a}{1}{m}(\zak;x,\xi)= 
        \sum_{\substack{k\in \Z^2\setminus\{0\}\\ \sigma \in \{\pm\}}} 
        a_k^\sigma(\xi) \zetina_k^\sigma e^{\ii \sigma k\cdot x}
        = 
        \sum_{k\in \Z^2\setminus\{0\}}\hat a(k,\xi)  e^{\ii k \cdot x }\,, 
    \end{align*}
    where 
    $ \hat{a}(k,\xi)= 
    a_k^+(\xi) \zetina_k^+ +a_{-k}^-(\xi) \ov{ \zetina_{-k}}$. 
    We note then that $\langle \sym{a}{1}{m} \rangle=0$. 
    We define, recalling \eqref{def:g_pare},
    \begin{align*}
        \sym{g}{1}{m+\sla{1}{2}-\delta}(\zak;x,\xi):=
        \frac{2|\xi|\Lambda(\xi)}{3\kap |\xi|^2+1 } \tg[ \sym{a}{1}{m}]
        \end{align*}
    Thanks to \Cref{lem:deco_homo} and the fact that  
    $ \frac{2|\xi|\Lambda(\xi)}{3\kap |\xi|^2+1 }$ belongs to 
    $ \wt \Gamma_0^\sla{1}{2}$, the symbol 
    $\sym{g}{1}{m+\sla{1}{2}-\delta}(\zak;x,\xi)$ belongs to 
    $\wt \Gamma_1^{m+\sla{1}{2}-\delta,\delta}$. 
    Moreover, since $\sym{a}{1}{m}$ is $\tR$-localized, 
    the symbol $\sym{g}{1}{m+\sla{1}{2}-\delta}$ is $\tR$-localized as well. 
    Finally an explicit computation, noting also that 
    $ \nabla_\xi \Lambda(\xi)= \left[\frac{3\kap |\xi|^2+1 }{2|\xi|\Lambda(\xi)}\right] \xi$ ,  
    gives 
        \begin{align*}
           \sym{a}{1}{m}+ \left\{ \sym{g}{1}{m+\sla{1}{2}-\delta}, 
           \Lambda(\xi)\right\}
           = \sym{a}{1}{m}- \nabla_x \sym{g}{1}{m+\sla{1}{2}
           -\delta}\cdot \nabla_\xi \Lambda(\xi)= \sym{a}{1}{m}
           - \sym{a}{1}{m,\mathtt nr}=\sym{a}{1}{m, \mathtt res}+ \sym{a}{1}{m, \mathtt S}\,,
        \end{align*}
        thus proving \eqref{eq:homologica}.
    \end{proof}
    
    Using \Cref{homo:solve0} we prove the following result. 
    \begin{lemma}[Linear $R$-localized symbols]
    \label{homo:solve0_finale0}
 Fix $ \delta, \nu, \tau, \mu $ as in \eqref{410}. Let $ \sym{a}{1}{\sla{1}{2}}$ 
 be a $\tR$-localized, real symbol in $\wt \Gamma_1^{\sla{1}{2},\delta}$ 
 then for any $ \vr>1$ and $ N\in \N$ with $N\geq 2$, 
 there exists a $\tR$-localized symbol  
 $\sym{g}{1}{\mu} \in \wt \Gamma_{1}^{\mu,\delta}$ such that 
 \begin{equation}\label{eq:homo_finale1}
 \begin{aligned}
&\vOpbw{
-\ii \Big[\sym{a}{1}{\sla{1}{2}}+\sym{g}{1}{\mu}(-\ii \vomega(D)\zak;x,\xi)
+ \ii \left(\sym{g}{1}{\mu}\sha{N} \Lambda(\xi)- \Lambda(\xi) \sha{N} \sym{g}{1}{\mu}\right)
\Big]}
\\&
\qquad \qquad\qquad\qquad\qquad \qquad\qquad\qquad\qquad
\qquad \qquad
= \vOpbw{-\ii \sym{\cZ}{1}{\sla{1}{2}}}+\bR_1(\zak)\,,
        \end{aligned}
        \end{equation}
        where $\sym{\cZ}{1}{\sla{1}{2}} $ is a $\tR$-localized, normal form, real symbol in $\wt \Gamma_1^{\frac{1}{2},\delta}$ and $\bR_1(\zak)$ is a matrix of real-to-real smoothing remainders in 
        $\wt \cR_1^{-\vr}$.
    \end{lemma}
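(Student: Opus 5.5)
Since $\sym{g}{1}{\mu}$ is $1$-homogeneous and $\tR$-localized, the natural choice is the one of \Cref{homo:solve0} with $m=\sla{1}{2}$, which gives order $\sla{1}{2}+\sla{1}{2}-\delta=1-\delta=\mu$. The plan is therefore to take $\sym{g}{1}{\mu}$ from \Cref{homo:solve0} applied to $\sym{a}{1}{\sla{1}{2}}$. The task is then to show that the remaining contributions in \eqref{eq:homo_finale1} are either normal form symbols or smoothing. There are two such contributions: the higher-order Moyal terms in $\ii(\sym{g}{1}{\mu}\sha{N}\Lambda-\Lambda\sha{N}\sym{g}{1}{\mu})$, and the time-derivative term $\sym{g}{1}{\mu}(-\ii\vomega(D)\zak;x,\xi)$.

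First, by \eqref{Moyal}, $\ii(g\sha{N}\Lambda-\Lambda\sha{N}g)=-\mbra[N]{\Lambda}{g}=\{g,\Lambda\}+r$. Because $\Lambda$ depends only on $\xi$, the remainder $r$ consists only of odd terms $p_k(g,\Lambda)$ with $k\geq 3$. These carry $k$ $\xi$-derivatives on $\Lambda$ and $k$ $x$-derivatives on $g$, so by Remark~\ref{espansEsplic} they have order $\mu+\sla{3}{2}-3\leq 0$.

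Combining this with \eqref{eq:homologica}, the full symbol in brackets becomes
\[
\sym{a}{1}{\sla{1}{2},\mathtt res}+\sym{a}{1}{\sla{1}{2},\mathtt S}+r+\sym{g}{1}{\mu}(-\ii\vomega(D)\zak;x,\xi).
\]
The piece $\sym{a}{1}{\sla{1}{2},\mathtt S}$ gives an element of $\wt\cR^{-\vr}_1$ by \Cref{lem:deco_homo}\eqref{item:smoothingS}. The resonant part is real, $\tR$-localized, in normal form, and lies in $\wt\Gamma_1^{\sla{1}{2},\delta}$ by \Cref{lem:deco_homo}(i). I would therefore set $\sym{\cZ}{1}{\sla{1}{2}}$ equal to this resonant part plus the resonant parts of the lower-order leftovers.

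The leftovers $r$ and $\sym{g}{1}{\mu}(-\ii\vomega(D)\zak)$ are $1$-homogeneous and $\tR$-localized, of orders $\leq 0$ and $\mu$ respectively. The second one is $1$-homogeneous in $\zak$ because $g$ is linear in $\zak$, with $\zak\mapsto -\ii\vomega(D)\zak$ substituted. Its $\mathtt S$-part is smoothing and its resonant part can be absorbed into $\cZ$. Its non-resonant part must be removed by iterating \Cref{homo:solve0}: at step $j$ the new generator has order dropping by $\delta-\sla{1}{2}>\sla{3}{8}$ per step, and its induced leftovers drop correspondingly. After finitely many steps, depending on $\vr$, the remaining symbol has order $<-\vr$ plus $\delta$-derivative losses. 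Using \Cref{lem:smoohomo}, a $1$-homogeneous paradifferential operator with a symbol of sufficiently negative order is in $\wt\cR^{-\vr}_1$, by checking the coefficient bound \eqref{smoocara}. Summing the generators yields $\sym{g}{1}{\mu}$. This is exactly the scheme of Lemma~\ref{equaHomodiago}, with the division by $2\ii\Lambda$ replaced by \Cref{homo:solve0}.

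The main obstacle is the time-derivative term $\sym{g}{1}{\mu}(-\ii\vomega(D)\zak)$. Substituting $\vomega(D)\zak$ costs $\sla{3}{2}$ derivatives in the Fourier coefficients of $g$. These correspond to frequencies $k$ of $\zak$, whereas the symbol order concerns $\xi$. Thus in $\wt\Gamma_1^{\mu}$ the order stays $\mu$ and only the loss $\mu(\sigma)$ in \eqref{bound:homosym} grows, so the iteration does not obviously gain.

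The plan at this point is to use the structure of the symbol in Fourier variables. The $k$-th coefficient of $\sym{g}{1}{\mu}(\vomega(D)\zak)$ is $\pm\Lambda(k)$ times that of $g$, and on the support of $\tilde\chi_k$ one has $|k|\leq\langle\xi\rangle^\nu$. Hence $\Lambda(k)\lesssim\langle\xi\rangle^{\sla{3}{2}\nu}$. Combined with the non-resonant gain $|k\cdot\xi|\gtrsim|k|^{-\tau}\langle\xi\rangle^\delta$, this should show the new symbol has order at most $\mu+\sla{3}{2}\nu<\sla{1}{2}$, provided $\nu$ is small, which \eqref{410} allows. The off-support part is treated as $\mathtt S$ and is smoothing. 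The iteration then gains a fixed positive amount $\delta-\sla{1}{2}-\sla{3}{2}\nu-\mu$ per step, or alternatively one solves the full homological equation including $\vomega(D)$ directly, with divisor $\Lambda(\xi)$-transport plus $\Lambda(k)$. I would first try the iteration and verify that this gain is positive under \eqref{410}.

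Finally, reality and real-to-real structure are preserved at each step. This holds because the cut-offs are even, the operations $\mathtt res$, $\mathtt S$ and $\tg$ preserve reality, and $\{g,\Lambda\}$ is real for real $g$.
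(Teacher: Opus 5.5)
Your scheme is the paper's. You take the generator of \Cref{homo:solve0} for $\sym{a}{1}{\sla{1}{2}}$, send resonant parts into $\sym{\cZ}{1}{\sla{1}{2}}$ and $\mathtt S$-parts into $\bR_1$ via \Cref{lem:deco_homo}, iterate finitely many times on the leftovers and sum the generators, as in Lemma~\ref{equaHomodiago}. The trouble is the step you call the main obstacle: there is no obstacle there, and the remedy you propose fails for some parameters allowed by \eqref{410}.

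You observe that $\sym{g}{1}{\mu}(-\ii\vomega(D)\zak;x,\xi)$ keeps $\xi$-order $\mu$, with only the loss in \eqref{bound:homosym} raised by $\sla{3}{2}$, since $\|\vomega(D)\zak\|_{\sigma+\mu}\lesssim\|\zak\|_{\sigma+\mu+\sla{3}{2}}$. That observation is exactly what makes the iteration gain. As in the paper, set $\tL[a]:=a(-\ii\vomega(D)\zak;x,\xi)+\ii(a\sha{N}\Lambda-\Lambda\sha{N}a)-\{a,\Lambda\}$. Its Moyal part has order $m+\sla{3}{2}-3\delta<m$, so $\tL$ maps $\wt\Gamma_1^{m,\delta}$ into itself. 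The solution map $h$ of \eqref{eq:homologica} instead lowers the order from $m$ to $m+\sla{1}{2}-\delta$. Hence the leftover $\tL[f]$ produced by a generator $f$ has the order of $f$. It therefore sits $\delta-\sla{1}{2}=\sla{1}{2}-\mu$ below the symbol that $f$ removed, for every admissible $\nu$. The paper sets $\sym{f}{1}{\mu}:=h[\sym{a}{1}{\sla{1}{2}}]$ and $\sym{f}{1}{\mu-n(\delta-\sla{1}{2})}:=h[\tL[\sym{f}{1}{\mu-(n-1)(\delta-\sla{1}{2})}]]$. It stops after $\tq$ steps with $\mu-\tq(\delta-\sla{1}{2})\leq-\vr$ and puts the last $\Opbw{\tL[\cdot]}$ into $\wt\cR^{-\vr}_1$. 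The derivative loss in $\zak$ accumulates only $\tq$ times, which the symbol classes allow.

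Your alternative charges $\Lambda(k)\lesssim\langle\xi\rangle^{\sla{3}{2}\nu}$ to the $\xi$-order. That gives a gain of $\delta-\sla{1}{2}-\sla{3}{2}\nu$ per step; your formula has a spurious extra $-\mu$. Your requirement $\mu+\sla{3}{2}\nu<\sla{1}{2}$ is the same condition, $\nu<\tfrac{2\delta-1}{3}$. But \eqref{410} only imposes $\nu<\tfrac{\delta}{\tau+1}$, which tends to $\tfrac{\delta}{3}>\tfrac{2\delta-1}{3}$ as $\tau\to2^+$. For instance $\delta=\tfrac{9}{10}$, $\tau=\tfrac{21}{10}$, $\nu=\tfrac{29}{100}$ is admissible and gives $\mu+\sla{3}{2}\nu=0.535>\sla{1}{2}$, so there is no gain at all. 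Your other option, dividing by the full divisor $\nabla\Lambda(\xi)\cdot k\pm\Lambda(k)$, fails for the same reason. On the support of $(1-\chi_k)\tilde\chi_k$ one only knows $|\nabla\Lambda(\xi)\cdot k|\gtrsim\langle\xi\rangle^{\delta-\sla{1}{2}}|k|^{-\tau}$. This dominates $\Lambda(k)\sim|k|^{\sla{3}{2}}$ only when $\nu(\tau+\sla{3}{2})<\delta-\sla{1}{2}$, so for general admissible parameters the full divisor can vanish there. Drop the $\nu$-based bound and count orders as above; with that correction your argument is the paper's.
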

\begin{proof}
 For any $\tR$-localized, real  symbol $\sym{a}{1}{m}\in \Gamma^{m,\delta}_1$ we define (recall \eqref{Moyal}) the $\tR$-localized, real symbol
    \begin{align*}
        \tL[\sym{a}{1}{m}](\zak;,x,\xi):=  \sym{a}{1}{m}(-\ii \vomega(D)\zak;x,\xi)+ \ii \left[\sym{a}{1}{m}\sha{N} \Lambda(\xi)- \Lambda(\xi) \sha{N} \sym{a}{1}{m}\right]- \left\{ \sym{a}{1}{m}, \Lambda(\xi)\right\}\,.
    \end{align*}
By \eqref{Moyal}--\eqref{est:Moyal} and the fact that the substitution $\zak \mapsto -\ii \vomega(D)\zak$ preserves homogeneity and $\tR$-localization, one has $\tL[\sym{a}{1}{m}] \in \wt \Gamma_1^{m,\delta}$.
     Then the symbol in the left hand side of \eqref{eq:homo_finale1} becomes 
    \begin{align*}
        -\ii \left[\sym{a}{1}{\sla{1}{2}}+ \left\{ \sym{g}{1}{\mu}, \Lambda(\xi)\right\}+ \tL\left[ \sym{g}{1}{\mu}\right]\right].
    \end{align*}
    Let $ \tq\in \N$ such that $ \mu-\tq(\delta- \tfrac12)\leq \vr$ and we define
\begin{align*}
\sym{g}{1}{\mu}&:= \sym{f}{1}{\mu}+\sym{f}{1}{\mu-(\delta-\sla{1}{2})}
+ \ldots+ \sym{f}{1}{\mu-\tq(\delta-\sla{1}{2})}\,, 
\\
\sym{\cZ}{1}{\sla{1}{2}}&:= \sym{a}{1}{\sla{1}{2},\mathtt res}
+\sym{a}{1}{\mu,\mathtt res}+ \ldots+ \sym{a}{1}{\mu-(\tq-1)(\delta-\sla{1}{2}),\mathtt res} \,,
\\
 \sym{b}{1}{\sla{1}{2}, \mathtt S}&:= \sym{a}{1}{\sla{1}{2},\mathtt S}
+\sym{a}{1}{\mu,\mathtt S}+ \ldots
+ \sym{a}{1}{\mu-(\tq-1)(\delta-\sla{1}{2}),\mathtt S}\,.
    \end{align*}
    We now determine inductively the symbols in the expansions above. 
    First we define the $\tR$-localized, real symbol  
    $h\left[\sym{a}{1}{m}\right]$ as the solution in $\wt\Gamma_1^{m-(\delta-\sla{1}{2})}$ 
    of the homological equation \eqref{eq:homologica}. 
Then,  $\sym{f}{1}{\mu-n(\delta-\sla{1}{2})}$ are 
determined inductively by 
\begin{align*}
 \sym{f}{1}{\mu}:= h\left[\sym{a}{1}{\sla{1}{2}}\right]\in \wt \Gamma_1^{\mu,\delta}\,, 
 \qquad 
 \sym{f}{1}{\mu-n(\delta-\sla{1}{2})}:= 
 h\left[\tL\left[\sym{f}{1}{\mu-(n-1)(\delta-\sla{1}{2})}\right]\right]
 \in \wt \Gamma_1^{\mu-n(\delta-\sla{1}{2}), \delta}, \quad n=1, \ldots, \tq\,,
\end{align*}
and $ \sym{a}{1}{\sla{1}{2}, \mathtt res}$  and $ \sym{a}{1}{\sla{1}{2}, \mathtt S}$ are the resonant part and the smoothing part of $\sym{a}{1}{\sla{1}{2}}$ (see \eqref{def:res} and \eqref{def:smoo}) and 
\begin{align*}
     &\sym{a}{1}{\mu-n(\delta-\sla{1}{2}), \mathtt res}:= \sym{\left( \tL\left[ \sym{f}{1}{\mu-n(\delta-\sla{1}{2})}\right]\right)}{}{\mathtt res} \in \wt \Gamma_1^{\mu-n(\delta-\sla{1}{2}), \delta}, \quad n=1, \ldots, \tq-1,\\
     &\sym{a}{1}{\mu-n(\delta-\sla{1}{2}), \mathtt S}:= \sym{\left( \tL\left[ \sym{f}{1}{\mu-n(\delta-\sla{1}{2})}\right]\right)}{}{\mathtt S} \in \wt \Gamma_1^{\mu-n(\delta-\sla{1}{2}), \delta}, \quad n=1, \ldots, \tq-1.
\end{align*}
With these definitions one has 
\begin{align*}
\vOpbw{-\ii \left[\sym{a}{1}{\sla{1}{2}}+ \left\{ \sym{g}{1}{\mu}, \Lambda(\xi)\right\}+ \tL\left[ \sym{g}{1}{\mu}\right]\right]}= \vOpbw{-\ii \sym{\cZ}{1}{\sla{1}{2}}}+ \vOpbw{-\ii \left[\tL\left[\sym{f}{1}{\mu-\tq(\delta-\sla{1}{2})}\right]+ \sym{b}{1}{\sla{1}{2}, \mathtt S}\right] }.
\end{align*}
Since $\tL\left[\sym{f}{1}{\mu-\tq(\delta-\sla{1}{2})}\right]\in \wt \Gamma_1^{-\vr}$ and the operator $ \vOpbw{ -\ii \sym{b}{1}{\sla{1}{2}, \mathtt S}}\in \wt \cR^{-\vr}_1$ by \Cref{item:smoothingS} of \Cref{lem:deco_homo}, we obtain \eqref{eq:homo_finale1} with $ \bR_1(\zak):=\vOpbw{-\ii \left[\tL\left[\sym{f}{1}{\mu-\tq(\delta-\sla{1}{2})}\right]+ \sym{b}{1}{\sla{1}{2}, \mathtt S}\right] }\in \wt \cR^{-\vr}_1$, thus concluding the proof.
\end{proof}

 \begin{lemma}[Quadratic $R$-localized symbols]
    \label{homo:solve0_finale1}
        Fix $ \delta, \nu, \tau, \mu $ as in \eqref{410}. 
        Let $\sym{a}{2}{\sla{3}{2}}$ a $\tR$-localized, real symbol in 
        $\wt \Gamma_2^{\sla{3}{2},\delta}$ then there exists a $\tR$-localized, 
        real symbol  $\sym{g}{2}{1+\mu} \in \wt \Gamma_{2}^{1+\mu,\delta}$ 
        with zero average such that
        \begin{equation}\label{eq:homo_finale2}
        \begin{aligned}
&\vOpbw{-\ii \left[\sym{a}{2}{\sla{3}{2}}
+\di_\zak\sym{g}{2}{1+\mu}[-\ii \vomega(D)\zak]
+ \ii \left(\sym{g}{2}{1+\mu}\sha{4} \Lambda(\xi)
- \Lambda(\xi) \sha{4} \sym{g}{2}{1+\mu}\right)\right]}
\\&
\qquad\qquad\qquad\qquad\qquad
\qquad\qquad\qquad
= \vOpbw{-\ii \left\langle \sym{a}{2}{\sla{3}{2}}\right\rangle -\ii\sym{\cZ}{2}{\sla{3}{2}}}
+\bB_{\geq 2}(\tR;\zak)\,,
        \end{aligned}
       \end{equation}
        where $\sym{\cZ}{2}{\sla{3}{2}} $ is a $\tR$-localized, normal form, 
        real symbol in $\wt \Gamma_2^{\sla{3}{2},\delta}$ and 
        $\bB_{\geq 2}(\tR;\zak)\in \cL(H^s_\R;H^s_\R)$ 
        satisfies the admissible quadratic bound \eqref{est:Bexp}.
    \end{lemma}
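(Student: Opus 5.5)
We mimic the proof of \Cref{homo:solve0_finale0}, with two differences. First, the quadratic symbol has a nonzero $x$-average, which cannot be removed and is kept explicitly. Second, remainders are now quadratic in $\zak$, so they need not be smoothing: it suffices that they are bounded with the admissible estimate \eqref{est:Bexp}. We write
\[
\tL_2[g](\zak;x,\xi):=\di_\zak g[-\ii\vomega(D)\zak]+\ii\big(g\sha{4}\Lambda(\xi)-\Lambda(\xi)\sha{4}g\big)-\{g,\Lambda(\xi)\}.
\]
Since $g\sha{4}\Lambda-\Lambda\sha{4}g$ contains only the odd terms $p_1,p_3$, this is $\{g,\Lambda\}$ plus a symbol of order $m+\sla{3}{2}-3\delta$. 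The substitution term $\di_\zak g[-\ii\vomega(D)\zak]$ keeps the order $m$ of $g$, because the $\xi$-dependence is untouched and $\Lambda(D)$ acting on $\zak$ costs only regularity in $\zak$, absorbed in $\mu(\sigma)$. It is $2$-homogeneous, translation invariant and $\tR$-localized.

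\emph{Step 1: homological equation.} Decompose $\sym{a}{2}{\sla{3}{2}}=\langle \sym{a}{2}{\sla{3}{2}}\rangle+\sym{a}{2}{\sla{3}{2},\mathtt res}+\sym{a}{2}{\sla{3}{2},\mathtt nr}+\sym{a}{2}{\sla{3}{2},\mathtt S}$ as in \Cref{def:45}. Define
\[
\sym{f}{2}{1+\mu}:=\frac{2|\xi|\Lambda(\xi)}{3\kap|\xi|^2+1}\,\tg[\sym{a}{2}{\sla{3}{2}}].
\]
The argument of \Cref{homo:solve0} is purely Fourier-in-$x$ and does not use homogeneity one, so it gives $\sym{a}{2}{\sla{3}{2}}+\{\sym{f}{2}{1+\mu},\Lambda\}=\langle\sym{a}{2}{\sla{3}{2}}\rangle+\sym{a}{2}{\sla{3}{2},\mathtt res}+\sym{a}{2}{\sla{3}{2},\mathtt S}$. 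By \Cref{lem:deco_homo}(ii), $\sym{f}{2}{1+\mu}\in\wt\Gamma_2^{\sla{3}{2}+\sla{1}{2}-\delta,\delta}=\wt\Gamma_2^{1+\mu,\delta}$. It is real, since $\tg$ preserves reality: the factor $-(k\cdot\xi)^{-1}$ is odd in $k$ and the cut-offs are even. It is $\tR$-localized, and it has zero average because $\tg$ only involves $k\neq0$. The smoothing part $\vOpbw{\sym{a}{2}{\sla{3}{2},\mathtt S}}$ is bounded by $\|\zak\|_{s_0}^2\|V\|_s$ by \Cref{lem:deco_homo}(iii), which is admissible for \eqref{est:Bexp}.

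\emph{Step 2: iteration.} The leftover is $\tL_2[\sym{f}{2}{1+\mu}]$. The substitution term of order $1+\mu$ is the main term, and the Moyal correction of order $1+\mu+\sla{3}{2}-3\delta$ is lower, since $\delta>\sla{7}{8}$. We repeat Step 1 on $\tL_2[\sym{f}{2}{1+\mu}]$ and on its successors, each time putting the resonant parts into $\sym{\cZ}{2}{\sla{3}{2}}$, the smoothing parts into $\bB_{\geq2}$, and the averages into the averaged (Fourier multiplier) part. Each step lowers the order by $\delta-\sla{1}{2}$.

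We stop as soon as the order of the leftover $\tL_2[f_{\rm last}]$ is at most $0$. At that point $\vOpbw{\tL_2[f_{\rm last}]}$ is bounded by $\|\zak\|_{s_0}^2$ via \eqref{actionSob}, hence admissible. If some intermediate order is still positive, we use instead \Cref{lem:perdita}, which trades positive order for a factor $\tR^{\sigma}$. With only two powers of $\zak$ available, $\tR^{\sigma}\|\zak\|_{s_0}^2\le (\tR\|\zak\|_{s_0})^{\sigma}\|\zak\|_{s_0}^{2-\sigma}$, and \eqref{est:power} requires $\sigma<2$. This should be exactly why the bound carries the factor $e^{C(\tR\|\zak\|_{s_0})^2}$, so it is important to terminate while the order is below $2$. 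Finally we set $\sym{g}{2}{1+\mu}:=\sum f$, which is real, $\tR$-localized and of zero average.

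\emph{Main obstacle.} The delicate point is the averages produced at later steps. The statement keeps only $\langle\sym{a}{2}{\sla{3}{2}}\rangle$ on the right, so every $\langle\tL_2[f]\rangle$ must either vanish or be admissible. My first attempt is to show $\langle \{f,\Lambda\}\rangle=0$ and that the Moyal terms $p_1,p_3$ have zero average when $\Lambda$ depends only on $\xi$. Both hold because each such term contains an $x$-derivative of $f$, whose integral over $\T^2$ vanishes. The remaining term $\langle\di_\zak f[-\ii\vomega(D)\zak]\rangle$ also vanishes, since $f$ has zero average and averaging commutes with the substitution in $\zak$. Hence no new averages appear, and all later resonant parts, each in normal form, can be absorbed into $\sym{\cZ}{2}{\sla{3}{2}}$.
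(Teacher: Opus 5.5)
Your proof is essentially the paper's. It uses the same generator $\th[\cdot]=\frac{2|\xi|\Lambda(\xi)}{3\kap|\xi|^2+1}\tg[\cdot]$ and the same iteration on $\di_\zak f[-\ii\vomega(D)\zak]$ through the orders $1+\mu$, $\tfrac12+2\mu$, $3\mu$, $-\tfrac12+4\mu$. It also relies on the same observation $\langle\di_\zak f[-\ii\vomega(D)\zak]\rangle=0$, so that only $\langle\sym{a}{2}{\sla{3}{2}}\rangle$ survives. The one difference is in the Moyal correction $\ii(g\sha{4}\Lambda-\Lambda\sha{4}g)-\{g,\Lambda\}$, which has order $-\tfrac12+4\mu\le0$. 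The paper removes it in one step via \eqref{approx_Moyal}, whereas you feed it back into the iteration. That is harmless, but it costs you the extra (correct) check that $p_1,p_3$ have zero average.

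Two slips need fixing.

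First, your reality argument does not work as stated. A real multiplier that is odd in $k$, such as $-(k\cdot\xi)^{-1}$, gives $\overline{\tg[a]}=-\tg[a]$, which is an imaginary symbol. Reality actually comes from the multiplier $(\ii\,k\cdot\xi)^{-1}$, which satisfies $\overline{m_k}=m_{-k}$. This factor $\ii$ is also exactly what makes $\{\th[a],\Lambda\}=-a^{\mathtt{nr}}$ hold; it is missing in the displayed \eqref{def:g_pare}. Use it throughout.

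Second, your aside on \Cref{lem:perdita} is wrong. A $2$-homogeneous $\tR$-localized symbol of order $\sigma>0$ only gives the bound $(\tR\|\zak\|_{s_0})^{\sigma}\|\zak\|_{s_0}^{2-\sigma}$. This is not bounded by $e^{C(\tR\|\zak\|_{s_0})^2}\|\zak\|_{s_0}^2$ uniformly in $\tR$, which is why \eqref{stima:exp_opbw} requires $p\ge 2+m$. The exponential factor in \eqref{est:Bexp} comes from the flows \eqref{stima:phiexp} and from commutators carrying extra powers of $\zak$ (\Cref{lem:stima_comm}), not from this lemma. The correct stopping threshold is order $\le 0$, which is what your main line already uses, so drop the aside.
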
	
    \begin{proof}
We first note that, by \eqref{Moyal}-\eqref{est:Moyal}, one has 
\begin{align*}
    \left|\ii \left[\sym{g}{2}{1+\mu}\sha{4} \Lambda(\xi)
    - \Lambda(\xi) \sha{4} \sym{g}{2}{1+\mu}\right]
    - \{ \sym{g}{2}{1+\mu}, \Lambda(\xi)\}\right|_{-\sla{1}{2}+4\mu, s_0-\mu-\vr}
    \lesssim 
    |  \sym{g}{2}{1+\mu}|_{1+\mu,s_0-\mu}\lesssim \| \zak\|_{s_0}^2\,.
\end{align*}
Then, using also \Cref{thm:action}, one has the expansion  
\begin{align}
    \vOpbw{ \left[\sym{g}{2}{1+\mu}\sha{4} \Lambda(\xi)- \Lambda(\xi) \sha{4} \sym{g}{2}{1+\mu}\right]}= \vOpbw{\tfrac{1}{\ii}\{\sym{g}{2}{1+\mu},\Lambda(\xi)\}}+ \bB_{\geq 2}(\zak),
    \label{approx_Moyal}
\end{align}
where \( \bB_{\geq 2}(\zak) \) satisfies the quadratic bound \eqref{est:Bexp}, and is therefore included on the right-hand side of \eqref{eq:homo_finale2}.
For any symbol $ \sym{a}{2}{m}\in \wt \Gamma_2^m$, we define $\th[ \sym{a}{2}{m}]:= \frac{2|\xi|\Lambda(\xi)}{3\kap |\xi|^2+1 } \tg[ \sym{a}{2}{m}]$, where $\tg[ \sym{a}{2}{m}]$ is defined in \eqref{def:g_pare}. 
We note that 
\begin{align}
    \sym{a}{2}{m}+ \{ \th[ \sym{a}{2}{m}], \Lambda(\xi)\}= \langle \sym{a}{2}{m}\rangle +\sym{a}{2}{m,\mathtt res}+ \sym{a}{2}{m,\mathtt S},
    \label{eq:homoproof2}
\end{align}
where $\langle \sym{a}{2}{m}\rangle$,  $\sym{a}{2}{m,\mathtt res}$  and $\sym{a}{2}{m,\mathtt S}$ are respectively the average, the resonant part and the smoothing part of $\sym{a}{2}{m}$ (see \eqref{def:res} and \eqref{def:smoo}). 
Moreover, thanks to \Cref{item:deco_g} in \Cref{lem:deco_homo} and the fact that $ \frac{2|\xi|\Lambda(\xi)}{3\kap |\xi|^2+1 }\in \wt \Gamma_0^{\sla{1}{2}}$, we have that 
$\mathtt{h}[ \sym{a}{2}{m}]\in \wt \Gamma_2^{m+\sla{1}{2}-\delta}$.  Then we define
\begin{align*}
\sym{g}{2}{1+\mu}&:= 
\sym{f}{2}{1+\mu}+ \sym{f}{2}{\sla{1}{2}+2\mu}+\sym{f}{2}{3\mu}+ \sym{f}{2}{-\sla{1}{2}+4\mu}, 
\\
\sym{\cZ}{2}{\sla{3}{2}}&:= \sym{a}{2}{\sla{3}{2},\mathtt res}
+ \sym{a}{2}{1+\mu,\mathtt res}
+ \sym{a}{2}{\sla{1}{2}+2\mu,\mathtt res}
+\sym{a}{2}{3\mu,\mathtt res}\,,
\\
 \sym{b}{2}{\sla{3}{2}, \mathtt S}&:= 
 \sym{a}{2}{\sla{3}{2},\mathtt S}
 +\sym{a}{2}{1+\mu,\mathtt S}
 + \sym{a}{2}{\sla{1}{2}+2\mu,\mathtt S}+\sym{a}{2}{3\mu,\mathtt S}\,,
 \end{align*}
where, $\sym{f}{2}{1+\mu-n(\delta-\sla{1}{2})}$, $ n=0,1,2,3$,  are determined inductively by 
\begin{equation*}
\begin{aligned}
    \sym{f}{2}{1+\mu}&:= \th\left[\sym{a}{2}{\sla{3}{2}}\right]\in \wt \Gamma_2^{1+\mu,\delta}\,, 
    \\
    \sym{f}{2}{1+\mu-n(\delta-\sla{1}{2})}&:=
    \th\left[\di_\zak\sym{f}{2}{\mu-(n-1)(\delta-\sla{1}{2})}[-\ii \vomega(D)\zak]\right]
    \in \wt \Gamma_2^{1+\mu-n(\delta-\sla{1}{2}), \delta}\,.
\end{aligned} 
\end{equation*}
The symbols $ \sym{a}{2}{\sla{3}{2}, \mathtt res}$  and $ \sym{a}{2}{\sla{3}{2}, \mathtt S}$ are the resonant part and the smoothing part of $\sym{a}{2}{\sla{3}{2}}$  and 
\begin{align*}
&\sym{a}{2}{1+\mu-n(\delta-\sla{1}{2}), \mathtt res}
     := \sym{\left( \di_\zak \sym{f}{2}{1+\mu-n(\delta-\sla{1}{2})}[-\ii \vomega(D)\zak]\right)}{}{\mathtt res} 
     \in \wt \Gamma_2^{\mu-n(\delta-\sla{1}{2}), \delta}\,,
     \quad n=0,1,2\,,
     \\
     &\sym{a}{2}{1+\mu-n(\delta-\sla{1}{2}), \mathtt S}
     := \sym{\left( \di_\zak \sym{f}{2}{1+\mu-n(\delta-\sla{1}{2})}[-\ii \vomega(D)\zak]\right)}{}{\mathtt S} 
     \in \wt \Gamma_2^{\mu-n(\delta-\sla{1}{2}), \delta}\,, \quad n=0,1,2\,.
\end{align*}
With these definitions, and using also \eqref{approx_Moyal} 
and \eqref{eq:homoproof2} and the fact that 
$\langle \di_\zak \sym{f}{2}{1+\mu-n(\delta-\sla{1}{2})}[-\ii \vomega(D)\zak] \rangle=0$, 
we get 
\begin{align*}
 &\vOpbw{\sym{a}{2}{\sla{3}{2}}+\di_\zak\sym{g}{2}{1+\mu}[-\ii \vomega(D)\zak]
 + \ii \left[\sym{g}{2}{1+\mu}\sha{\vr} \Lambda(\xi)- \Lambda(\xi) \sha{\vr} \sym{g}{2}{1+\mu}\right]}
 \\
    &\qquad \qquad\qquad \qquad  
    = \vOpbw{\sym{\cZ}{2}{\sla{3}{2}}}
    +\vOpbw{\di_\zak \sym{f}{2}{-\sla{1}{2}+4\mu}[ -\ii \vomega(D)\zak]
    + \sym{b}{2}{\sla{3}{2},\mathtt S}}+\bB_{\geq 2}(\tR;\zak)\,.
\end{align*}
Applying \Cref{lem:deco_homo} to $\sym{b}{2}{\sla{3}{2},\mathtt S}$  and \Cref{thm:action} to $\di_\zak \sym{f}{2}{-\sla{1}{2}+4\mu}[ -\ii \vomega(D)\zak]\in \wt \Gamma_2^{-\sla{1}{2}+4\mu}\subset \wt \Gamma_2^{0}$, we prove that  the operator $\vOpbw{\di_\zak \sym{f}{2}{-\sla{1}{2}+4\mu}[ -\ii \vomega(D)\zak]+ \sym{b}{2}{\sla{3}{2},\mathtt S}}$  satisfies the quadratic bound \eqref{est:Bexp}. Hence, we include it in the remainder $ \bB_{\geq 2}(\tR;\zak)$ and we establish \eqref{eq:homo_finale2}.
    \end{proof}
    
\begin{lemma}[Cubic $R$-localized symbols]
    \label{homo:solve0_finale2}
        Fix $ \delta, \nu, \tau, \mu $ as in \eqref{410}. Let $ \sym{a}{3}{1+\mu}$ a $\tR$-localized, real symbol in $\wt \Gamma_3^{1+\mu,\delta}$ with zero average, then  there exists a $\tR$-localized real symbol  $\sym{g}{3}{\sla{1}{2}+2\mu} \in \wt \Gamma_{3}^{\sla{1}{2}+2\mu,\delta}$ such that 
\begin{align}
\vOpbw{-\ii \left[\sym{a}{3}{1+\mu}+ \ii \left(\sym{g}{3}{\sla{1}{2}+2\mu}\sha{3} \Lambda(\xi)- \Lambda(\xi) \sha{3} \sym{g}{3}{\sla{1}{2}+2\mu}\right)\right]}
= \vOpbw{-\ii \sym{\cZ}{3}{1+\mu}}+\bB_{\geq 2}(\tR;\zak)\,,
\label{eq:homo_finale3}
\end{align}
        where $\sym{\cZ}{3}{1+\mu} $ is a $\tR$-localized, normal form, real symbol in $\wt \Gamma_3^{1+\mu,\delta}$ and $\bB_{\geq 2}(\tR;\zak)\in \cL(H^s_\R;H^s_\R)$ satisfies the admissible quadratic bound \eqref{est:Bexp}.
    \end{lemma}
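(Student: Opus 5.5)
The plan is to imitate the proof of \Cref{homo:solve0_finale1}, with one simplification. In the cubic case the time-derivative term $\di_\zak \sym{g}{3}{\sla{1}{2}+2\mu}[-\ii\vomega(D)\zak]$ does not appear in \eqref{eq:homo_finale3}. It would be of order $\sla{1}{2}+2\mu$, and in any case it is part of a cubic contribution that can go into the remainder. So only one step of the homological equation is needed, followed by estimates of what is left.

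For a cubic symbol $\sym{a}{3}{m}$ I set $\th[\sym{a}{3}{m}]:=\frac{2|\xi|\Lambda(\xi)}{3\kap|\xi|^2+1}\tg[\sym{a}{3}{m}]$, with $\tg$ as in \eqref{def:g_pare}. I then define
\[
\sym{g}{3}{\sla{1}{2}+2\mu}:=\th\big[\sym{a}{3}{1+\mu}\big],\qquad \sym{\cZ}{3}{1+\mu}:=\sym{a}{3}{1+\mu,\mathtt res}.
\]
The properties of the generator follow from \Cref{lem:deco_homo}, item \ref{item:deco_g}, whose proof applies verbatim for homogeneity $p=3$. Combined with the fact that the prefactor lies in $\wt\Gamma_0^{\sla{1}{2}}$, this gives $\sym{g}{3}{\sla{1}{2}+2\mu}\in\wt\Gamma_3^{1+\mu+\sla{1}{2}-\delta,\delta}=\wt\Gamma_3^{\sla{1}{2}+2\mu,\delta}$. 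The generator is real, because $\tg$ preserves reality: the cut-offs are even and the factor $(k\cdot\xi)^{-1}$ is odd in $k$. It is $\tR$-localized, since multiplying Fourier coefficients in $x$ by functions of $\xi$ does not enlarge the $\xi$-support. The symbol $\sym{\cZ}{3}{1+\mu}$ is real, $\tR$-localized and in normal form by \Cref{def:normalform}, because of the cut-offs $\chi_k\tilde\chi_k$.

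Arguing exactly as in \Cref{homo:solve0}, and using that $\sym{a}{3}{1+\mu}$ has zero average, I get
\[
\sym{a}{3}{1+\mu}+\{\sym{g}{3}{\sla{1}{2}+2\mu},\Lambda(\xi)\}=\sym{a}{3}{1+\mu,\mathtt res}+\sym{a}{3}{1+\mu,\mathtt S}.
\]
Next I replace the Moyal bracket by the Poisson bracket, following \eqref{approx_Moyal}. By \eqref{Moyal}--\eqref{est:Moyal} with $\vr=3$, the difference $\ii(\sym{g}{3}{}\sha{3}\Lambda-\Lambda\sha{3}\sym{g}{3}{})-\{\sym{g}{3}{},\Lambda\}$ is a symbol of order $\sla{1}{2}+2\mu+\sla{3}{2}-3\delta=-1+5\mu<0$. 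Its seminorm is bounded by $|\sym{g}{3}{}|\lesssim\|\zak\|_{s_0}^3\le\|\zak\|_{s_0}^2$ for $r\le1$. By \Cref{thm:action} the corresponding operator is therefore bounded on $H^s$ with a quadratic bound, which is stronger than \eqref{est:Bexp}.

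It remains to treat $\vOpbw{\sym{a}{3}{1+\mu,\mathtt S}}$. By \eqref{lin:cont_smoo} with $N=0$, following item \ref{item:smooOpbw} of \Cref{lem:deco_homo} for $p=3\ge2$, this operator is bounded on $H^s$ with norm $\lesssim_s|\sym{a}{3}{}|_{1+\mu,s_0}\lesssim\|\zak\|_{s_0}^3$. Since these estimates have the form $\|\zak\|_{s_0}^2\|V\|_s$, the tame structure in \eqref{est:Bexp} holds automatically. All these pieces are put into $\bB_{\geq 2}(\tR;\zak)$.

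The main point to check is that the symbol seminorms are taken at the correct regularity. The map $\tg$ loses regularity in $x$ (the index $\sigma_s>s$ in \Cref{lem:decomposition_symbols}), and the Moyal expansion loses a further $\vr$ derivatives. Both losses are absorbed by the definition of $\wt\Gamma_3$, which allows the loss $\mu(\sigma)$ in \eqref{bound:homosym}, so $s_0$ must be enlarged accordingly. No factor $e^{(\tR\|\zak\|)^2}$ arises here, since every order involved is already nonpositive. The exponential allowance in \eqref{est:Bexp} is therefore not needed, though it could also be invoked through \Cref{lem:perdita} if some positive-order piece had to be absorbed.
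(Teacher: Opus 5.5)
Your proposal is the paper's proof. It uses the same generator $\frac{2|\xi|\Lambda(\xi)}{3\kap|\xi|^2+1}\tg[\sym{a}{3}{1+\mu}]$, the same choice $\sym{\cZ}{3}{1+\mu}=\sym{a}{3}{1+\mu,\mathtt{res}}$, and the same absorption of the order $-1+5\mu$ Moyal correction and of $\vOpbw{\sym{a}{3}{1+\mu,\mathtt S}}$ into $\bB_{\geq 2}(\tR;\zak)$.

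One small correction concerns your reality argument, and the paper has the same slip. With $\tg$ taken literally as in \eqref{def:g_pare}, the multiplier $-(k\cdot\xi)^{-1}$ is real and odd in $k$, so it maps real symbols to purely imaginary ones and gives $\xi\cdot\nabla_x\tg[a]=-\ii\,a^{\mathtt{nr}}$. Both the reality of the generator and the homological identity therefore need the multiplier $(\ii\,k\cdot\xi)^{-1}$. Your argument should use that this multiplier is odd in $k$ and purely imaginary; oddness alone is not enough.
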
	
\begin{proof}
    We define $\sym{g}{3}{\sla{1}{2}+2\mu}:= \frac{2|\xi|\Lambda(\xi)}{3\kap |\xi|^2+1 } \tg[ \sym{a}{3}{1+\mu}]$, where $g[ \sym{a}{3}{1+\mu}]$ is defined in \eqref{def:g_pare}. First of all \Cref{item:deco_g}  of \Cref{lem:deco_homo} and the fact that $\frac{2|\xi|\Lambda(\xi)}{3\kap |\xi|^2+1 }\in \wt \Gamma_0^{\sla{1}{2}}$, ensure that $\sym{g}{3}{\sla{1}{2}+2\mu}$ is a real symbol in $\wt \Gamma_3^{\sla{1}{2}+2\mu}$. Moreover, it is $\tR$-localized, since $\sym{a}{3}{1+\mu}$ is $\tR$-localized by hypothesis. 
    The symbol
    \begin{align*}
        \ii \left(\sym{g}{3}{\sla{1}{2}+2\mu}\sha{3} \Lambda(\xi)- \Lambda(\xi) \sha{3} \sym{g}{3}{\sla{1}{2}+2\mu}\right)-\{ \sym{g}{3}{\sla{1}{2}+2\mu},\Lambda(\xi)\} \in \wt \Gamma_{3}^{-1+5\mu}\subset \wt \Gamma_{3}^{0} ,
    \end{align*}
    therefore its corresponding Bony-Weyl operator can be included in the quadratic remainder $\tB_{\geq 2}(\tR;\zak)$, in view of \eqref{stima:exp_opbw}. We note that, since $\sym{a}{3}{1+\mu}$ has zero average, one has the following algebraic identity 
    \begin{align*}
\sym{a}{3}{1+\mu}+ 
\{\sym{g}{3}{\sla{1}{2}+2\mu}, \Lambda(\xi)\}
=\sym{a}{3}{1+\mu,\mathtt{res}}+ \sym{a}{3}{1+\mu,\tS}\,,
    \end{align*}
 where $\sym{a}{3}{1+\mu,\mathtt{res}}$ and 
 $ \sym{a}{3}{1+\mu,\tS}$ are respectively the resonant 
 and smoothing component of $\sym{a}{3}{1+\mu}$ 
 defined as in \eqref{def:res}-\eqref{def:smoo}. 
 Thanks to \Cref{item:smooOpbw} of \Cref{lem:deco_homo}, 
 the symbol $\sym{a}{3}{1+\mu,\tS}$ defines a 
 paradifferential operator which we include in 
 $\bB_{\geq 2}(\tR;\zak)$ 
and the thesis follows with 
$\sym{\cZ}{3}{1+\mu}:= \sym{a}{3}{1+\mu,\mathtt{res}}$.
\end{proof}

	\subsection{Normal form of linear symbol}\label{sec:linsym}
	In this section we put the low-frequencies component of the quadratic quasi-linear symbol $ \sym{b}{1}{\sla{1}{2}}(\zak;x,\xi)$, appearing in \eqref{NuovoParaprod2}, into its quasi-resonant normal form. Precisely, we prove the following. 
	\begin{proposition}
\label{prop:NF_linear}
      Let $\vr>1$ and $\delta$, $\tau$, $\nu$ as in \eqref{410}. 
There is $s_0 >0 $ such that for any $\tR>1$ there is a bounded and invertible transformation $\bF_1(\zak)$ 
such that for any $s\geq s_0$ there is $r=r_s>0$
such that  for any $\zak \in B_{s_0,\R}(r)$ the maps $\bF_1(\zak)$ and $\bF_1^{-1}(\zak)$ belong to $\cL\left( H^s_\R;H^s_\R\right)$ with estimates 
\begin{align}
\|\bF_1(\zak)\|_{\cL(H^s_\R;H^s_\R)}+ \|\bF_1^{-1}(\zak)\|_{{\cL(H^s_\R;H^s_\R)}}\lesssim_s \mathrm{exp}\Big((C\tR \|\zak\|_{s_0})^2 \| \zak\|_{s_0}^\sla{1}{2}\Big), 
\label{stima:admR}
\end{align}
and such that
if $\zak(t) \in B_{s_0, \R}(r)$ solves \eqref{eq:zak} then the variable
  $V_1:= \bF_1(\zak)U_2$, 
  with $U_2$ solving \eqref{NuovoParaprod2},  solves the system
  \begin{align}
\pa_t V_1 &= 
\vOpbw{-\ii 
\left[\Lambda(\xi)+ \sym{\cZ}{1}{1\mkern-2mu/\mkern-1.5mu 2}\right]
-\ii\sym{b}{2}{\sla{3}{2},\tR}
- \ii   \left[ \sym{\Sigma}{\geq 2}{\sla{3}{2}}
+ \sym{b}{\geq 2}{1}+ \sym{b}{1}{\sla{1}{2}}\right]\left(1-\theta\left(\tR^{-1}\langle\xi\rangle\right)\right)} V_1\notag
\\
&+ \bR_1(\zak)V_1+ \bB_{\geq 2}(\tR;\zak)V_1,
\label{QR-linear_normalform}
    \end{align}
    where   $\sym{\Sigma}{\geq 2}{\sla{3}{2}}\in \Gamma_{\geq 2}^{\sla{3}{2}}[r]$  in 
    \eqref{def:sdBIS},
    $ \sym{b}{\geq 2}{1} \in \Gamma_{\geq 2}^{\sla{1}{2}}[r]$ 
    in \eqref{def:b1mag} and $\sym{b}{1}{\sla{1}{2}}\in \wt \Gamma_1^{\sla{1}{2}}$ 
    are the real symbols in  appearing in \eqref{NuovoParaprod2} and
    \begin{itemize}
     \item  $\sym{\cZ}{1}{\sla{1}{2}}$ is a  $1$-homogeneous, $\tR$-localized, real symbol in $\wt \Gamma_1^{\sla{1}{2}, \delta}$, in normal form (see \Cref{def:normalform});
     \item $\sym{b}{2}{\sla{3}{2},\tR}$ is a $2$-homogeneous, $ \tR$-localized,  real symbol in $\wt \Gamma_2^{\sla{3}{2},\delta}$; 
        \item $\bR_1(\zak)$ is a matrix of real-to-real smoothing operators in $ \wt \cR_1^{-\vr}$;
        \item $\bB_{\geq 2}(\tR;\zak)$ is a family of  matrices of real-to-real bounded operators, satisfying \eqref{est:Bexp}.
    \end{itemize}
	\end{proposition}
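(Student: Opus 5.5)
We first split every symbol in \eqref{NuovoParaprod2} into a low and a high frequency part using the cut-off $\theta(\tR^{-1}\langle\xi\rangle)$. We write $\sym{\Sigma}{}{\sla{3}{2}}=\Lambda(\xi)+\sym{\Sigma}{\geq 2}{\sla{3}{2}}$ and multiply $\sym{\Sigma}{\geq 2}{\sla{3}{2}}+\sym{b}{\geq 2}{1}+\sym{b}{1}{\sla{1}{2}}$ by $\theta+(1-\theta)$. The high-frequency pieces, carrying the factor $1-\theta(\tR^{-1}\langle\xi\rangle)$, are left untouched and give the third bracket in \eqref{QR-linear_normalform}. The low-frequency pieces are $\tR$-localized by \Cref{def:Rloc}. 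By \Cref{lem:perdita} and \eqref{stima:exp_opbw}, every $\tR$-localized symbol of homogeneity $\geq 3$ and order $\leq 1$ yields a bounded operator of the form \eqref{est:Bexp}. The same holds for the part of $\sym{b}{2}{1}\theta$ beyond order zero, since for $p=2$ we have $p\geq 2+m$ only when $m\le 0$. We therefore keep the quadratic low-frequency pieces $\sym{\Sigma}{2}{\sla{3}{2}}\theta$ and $\sym{b}{2}{1}\theta$, both of order $\le\sla{3}{2}$, and collect them into the symbol $\sym{b}{2}{\sla{3}{2},\tR}$. We also keep the linear piece $\sym{a}{1}{\sla{1}{2}}:=\sym{b}{1}{\sla{1}{2}}\theta(\tR^{-1}\langle\xi\rangle)$, which is real, $\tR$-localized and lies in $\wt\Gamma_1^{\sla{1}{2},\delta}$.

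Next we apply \Cref{homo:solve0_finale0} to $\sym{a}{1}{\sla{1}{2}}$. This gives a real $\tR$-localized generator $\sym{g}{1}{\mu}\in\wt\Gamma_1^{\mu,\delta}$ and a normal form symbol $\sym{\cZ}{1}{\sla{1}{2}}$. We define $\bF_1(\zak):=\Phi^\tau(\zak)|_{\tau=1}$, the flow with generator $\vOpbw{\ii\sym{g}{1}{\mu}}$. Since $p=1$ and $m=\mu<\sla{1}{2}$, the hypothesis $p>m+\sla{1}{2}$ of \Cref{lem:flussoR} holds, and that lemma gives \eqref{stima:admR} directly. We then conjugate \eqref{NuovoParaprod2} term by term with the Lie expansions \eqref{Lie:1}--\eqref{Lie:3}, as in the proof of \Cref{prop:nfquadtra}. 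For $\vOpbw{-\ii\Lambda(\xi)}$ we use \Cref{compoparapara0} with $\vr\leadsto N$ and the exact commutator $\vOpbw{\sym{g}{1}{\mu}\sha{N}\Lambda-\Lambda\sha{N}\sym{g}{1}{\mu}}$ plus smoothing terms in $\wt\cR^{-\vr}_1$, using \eqref{eq:Gomega}--\eqref{eq:omegaG}. For the time derivative, $\pa_t\bF_1\bF_1^{-1}$ gives $\vOpbw{\ii\sym{g}{1}{\mu}(-\ii\vomega(D)\zak)}$ plus quadratic terms. Combining these with $\sym{a}{1}{\sla{1}{2}}$ produces exactly the left-hand side of \eqref{eq:homo_finale1}, which equals $\vOpbw{-\ii\sym{\cZ}{1}{\sla{1}{2}}}+\bR_1(\zak)$.

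The main obstacle is showing that every remaining commutator either fits the bound \eqref{est:Bexp} or is a quadratic $\tR$-localized symbol that can be absorbed into $\sym{b}{2}{\sla{3}{2},\tR}$. The second-order commutator $\mathrm{Ad}^2_{\bG}[\vOpbw{\Lambda}]$ has leading symbol $\{\sym{g}{1}{\mu},\{\sym{g}{1}{\mu},\Lambda\}\}$. It is $2$-homogeneous, $\tR$-localized and of order $\sla{3}{2}+2\mu-2\delta<\sla{3}{2}$, so we put it into $\sym{b}{2}{\sla{3}{2},\tR}$; its reality follows from the Weyl symmetry \eqref{prop:ov}. The cubic and higher Lie terms are controlled by \eqref{stima:AdpG} and \Cref{lem:stima_comm} with $\delta>\tfrac56$, together with \eqref{stima:phiexp} for the flow, which gives the factor $\exp(C(\tR\|\zak\|_{s_0})^2)$.

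The commutators of $\bG$ with the high-frequency operators need separate treatment. These are $\vOpbw{(\sym{\Sigma}{\geq2}{\sla{3}{2}}+\dots)(1-\theta)}$, and their symbols have supports overlapping that of $g$ only on the annulus $10\tR\le\langle\xi\rangle\le 11\tR$. Those symbols are at least quadratic, so \Cref{lem:stima_comm} again puts these commutators in \eqref{est:Bexp}. The one place where a linear piece survives is $\{\sym{g}{1}{\mu},\sym{b}{1}{\sla{1}{2}}(1-\theta)\}$. It is quadratic of order $\mu+\sla{1}{2}-\delta<1$, so for $p=2$ it is not directly bounded; we plan to add it to $\sym{b}{2}{\sla{3}{2},\tR}$, using that it is also $\tR$-localized through the factor $g$. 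Finally, conjugating the old $\bR_1$ and $\bB_{\geq2}$ produces terms of the same classes, up to the exponential factor, by \eqref{stima:phiexp} and \eqref{eq:stima_comm_smoothing}.
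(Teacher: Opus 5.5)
Your proposal is correct and follows the paper's proof. It uses the same generator $\sym{g}{1}{\mu}$, obtained from \Cref{homo:solve0_finale0} applied to $\sym{b}{1}{\sla{1}{2}}\theta(\tR^{-1}\langle\xi\rangle)$, the flow bound of \Cref{lem:flussoR}, and the same Lie expansions controlled by \Cref{lem:stima_comm} and \eqref{stima:phiexp}.

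The one bookkeeping difference concerns $\mathrm{Ad}^2_{\bG}[-\ii\vomega(D)]$ and the commutator of $\bG$ with $\vOpbw{-\ii\sym{b}{1}{\sla{1}{2}}}$. Their orders are $\sla{3}{2}+2\mu-2\delta=4\mu-\sla{1}{2}<0$ and $\mu+\sla{1}{2}-\delta=2\mu-\sla{1}{2}<0$, not merely $<\sla{3}{2}$ and $<1$. So \eqref{eq:stima_comm} with $\sigma=0$ places them directly in $\bB_{\geq 2}(\tR;\zak)$, which is what the paper does. Your claim that the second commutator is ``not directly bounded'' is therefore mistaken, though absorbing these real, $\tR$-localized, $2$-homogeneous terms into $\sym{b}{2}{\sla{3}{2},\tR}$ is harmless.
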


\begin{proof}
We define the flow
		\be \label{flow_1}
		\begin{cases}
			\pa_\tau \Phi_{g_1}^\tau(\zak) =  \bG(\zak)\Phi^\tau_{g_1}(\zak)\\
			\Phi^0(\zak)=\id
		\end{cases}, \quad \bG(\zak):= \vOpbw{\ii \sym{g}{1}{\mu}(\zak;x,\xi)},
		\ee
        where $\sym{g}{1}{\mu}(\zak;x,\xi)\in \wt \Gamma_1^{\mu,\delta}$ is the $\tR$-localized symbol obtained by applying \Cref{homo:solve0_finale0} to the $\tR$-localized symbol $\sym{a}{1}{\sla{1}{2}}:=\sym{b}{1}{\sla{1}{2}} \theta(\tR \langle \xi \rangle) \in \wt \Gamma_1^{\sla{1}{2}}$. 
		We define $ \bF_1(\zak):=  \Phi_{g_1}^\tau(\zak)_{|\tau=1}$.
        Thanks to \Cref{lem:flussoR}, $\bF_1(\zak)$ is a bounded and invertible transformation, satisfying \eqref{stima:admR}.  
        
        Since $U_2$ solves \eqref{NuovoParaprod2}, the variable $V_1=\bF_1(\zak)U_2$  solves 
		\begin{align} 
			\partial_t V_1= \bF_1(\zak) \bA(\zak)\bF_1(\zak)^{-1} V_1+\pa_t \bF_1(\zak)\bF_1(\zak)^{-1}V_1+\bF_1(\zak)\big[\bR_1(\zak)+\bB_{\geq 2}(\zak)\big]\bF_1(\zak)^{-1}V_1
            \label{proof:V1:1}. 
		\end{align}
 where 
        \begin{align*}
     \bA(\zak)&=
\vOpbw{-\ii \Lambda(\xi)}
+\vOpbw{ -\ii \sym{b}{1}{\sla{1}{2}}}
+ \vOpbw{-\ii \left[\sym{\Sigma}{\geq 2}{\sla{3}{2}} 
+ \sym{b}{\geq 2}{1}\right]}\notag
\\
&= -\ii \vomega(D) +\bA_1(\zak)+\bA_2(\zak)\,.
\label{def:AV1}
\end{align*}
We analyze equation \eqref{proof:V1:1} expanding each term with its Lie expansion.
        
\smallskip
\noindent
{\bfseries Expansion of $\bF_1(\zak) [-\ii \vomega(D)] \bF_1(\zak)^{-1} $.}
In order to expand 
$\bF_1(\zak) [-\ii \vomega(D)] \bF_1(\zak)^{-1} $ , 
we use  the  second order 
Lie's expansion \eqref{Lie:2}, getting  
\begin{equation}\label{esp:V1omega}
   \begin{aligned}
 \bF_1(\zak) [-\ii \vomega(D)] \bF_1(\zak)^{-1}&=  
 -\ii \vomega(D) 
 + \left[ \bG(\zak),  -\ii \vomega(D) \right] 
 \\&+\int_{0}^1 (1-\theta) \Phi_{g_1}^\theta(\zak){\mathrm{Ad}}_{\bG}^2[ -\ii \vomega(D)]  
 \Phi_{g_1}^{-\theta}(\zak)\, \di \theta\,.
        \end{aligned}
\end{equation}
Then we expand the commutator $\left[ \bG(\zak),  -\ii \vomega(D) \right] $ using \eqref{eq:Gomega}-\eqref{eq:omegaG} with $N\geq \vr +\mu+\tfrac32$, obtaining  
\begin{align}
 \left[ \bG(\zak),  -\ii \vomega(D) \right]= \vOpbw{-\ii \left[\ii \sym{g}{1}{\mu}\sha{N} \Lambda(\xi)- \Lambda(\xi) \sha{N} \ii \sym{g}{1}{\mu} \right]}   + \bR_1(\zak), \qquad \bR_1 \in \wt \cR^{-\vr}_1.
 \label{primo:V1}
\end{align}
Then applying \Cref{lem:stima_comm} to the symbol  
$-\ii \left[\ii \sym{g}{1}{\mu}\sha{N} \Lambda(\xi)
- \Lambda(\xi) \sha{N} \ii \sym{g}{1}{\mu} \right] \in \wt \Gamma_1^{\sla{1}{2}+2\mu}$, 
the $\tR$-localized symbol $\sym{g}{1}{\mu}\in \tilde \Gamma_1^{\mu,\delta}$ 
and   parameters $p_1=p_2=1$, $m_1= \mu$, 
$m_2= \tfrac{1}{2}+2\mu$ and 
$\sigma:= \max\{0, -\tfrac{1}{2}+4\mu\}=0$, 
obtaining 
\begin{equation}\label{doppio:V1}
\begin{aligned}
\left[\bG(\zak), [ \bG(\zak), -\ii \vomega(D)] \right]
&=
\left[\bG(\zak), \vOpbw{-\ii \left[\ii \sym{g}{1}{\mu}\sha{N} \Lambda(\xi)
- \Lambda(\xi) \sha{N} \ii \sym{g}{1}{\mu} \right]}\right]
+ [ \bG(\zak), \bR_1(\zak)]
\\&=\bB_{\geq 2}(\zak)\,,
\end{aligned}
\end{equation}
satisfying \eqref{est:Bexp}. Gathering \eqref{esp:V1omega}, \eqref{primo:V1} 
and \eqref{doppio:V1}  and using the bound \eqref{stima:phiexp} for 
$\Phi_{\fun{g}{1}}^\theta$, we get 
\begin{align}
     \bF_1(\zak) [-\ii \vomega(D)] \bF_1(\zak)^{-1}
     = \vOpbw{-\ii \Lambda(\xi)-\ii \left[\ii \sym{g}{1}{\mu}\sha{N} \Lambda(\xi)
     - \Lambda(\xi) \sha{N} \ii \sym{g}{1}{\mu} \right]}+\bR_1(\zak)+ \bB_{\geq 2}(\zak)\,,
     \label{conj:omeghino_mu}
\end{align}
where $\bR_1\in \wt\cR^{-\vr}_1$ and  $\bB_{\geq 2}$ satisfies \eqref{est:Bexp}.

\smallskip        
\noindent
{\bfseries Expansion of $\bF_1(\zak) \bA_1(\zak)\bF_1(\zak)^{-1} $.}
We use \eqref{Lie:1} getting 
\begin{align}
\bF_1(\zak) [\bA_1(\zak)] \bF_1(\zak)^{-1} 
= \vOpbw{-\ii \sym{b}{1}{\sla{1}{2}}}+ \int_0^1\Phi_{g_1}^\theta (\zak)
\left[ \vOpbw{\sym{g}{1}{\mu}},\vOpbw{-\ii \sym{b}{1}{\sla{1}{2}}}\right]
\Phi_{g_1}^{-\theta}(\zak)\, \di \theta\,.
\end{align}
 Then we apply \Cref{lem:stima_comm} with $ m_1=\mu$, $m_2= \frac12$ and $\sigma= \max\{0, \mu+\frac12-\delta\}=0$ and we get that $\left[ \vOpbw{\sym{g}{1}{\mu}},\vOpbw{-\ii \sym{b}{1}{\sla{1}{2}}}\right] $ satisfies \eqref{eq:stima_comm}, then, using also the bound \eqref{stima:phiexp} for $\Phi_{\fun{g}{1}}^\theta$, we obtain
 \begin{align}
     \bF_1(\zak) [\bA_1(\zak)] \bF_1(\zak)^{-1}= \Opbw{-\ii \sym{b}{1}{\sla{1}{2}}}+ \bB_{\geq 2}(\zak),
     \label{conj:b12_mu}
 \end{align}
 where $ \bB_{\geq 2}(\zak)$ is a bounded quadratic map satisfying \eqref{est:Bexp}. 

\smallskip 
\noindent
{\bfseries Expansion of $\bF_1(\zak) \bA_2(\zak)\bF_1(\zak)^{-1} $.}
We introduce the notation  
$\sym{b}{\geq 2}{\sla{3}{2}}:= \sym{\Sigma}{\geq 2}{\sla{3}{2}}+ \sym{b}{\geq 2}{1}$. 
Then, using \eqref{Lie:1}, get 
\begin{align}
\bF_1(\zak) [\bA_2(\zak)] \bF_1(\zak)^{-1} = \vOpbw{-\ii \sym{b}{\geq 2}{\sla{3}{2}}}+ \int_0^1\Phi_{g_1}^\theta (\zak)\left[ \vOpbw{\sym{g}{1}{\mu}},\vOpbw{-\ii \sym{b}{\geq 2}{\sla{3}{2}}}\right]\Phi_{g_1}^{-\theta}(\zak)\, \di \theta.
\end{align}
 Then we apply \Cref{lem:stima_comm} with $p_1=1$, $p_2=2$, 
 $ m_1=\mu$, $m_2= \frac32$ and 
 $\sigma= \mu+\frac32-\delta=\frac12+2\mu >0$. Since $p_1+p_2=3 > 2+ \sigma$, 
 we get that $\left[ \vOpbw{\sym{g}{1}{\mu}},\vOpbw{-\ii \sym{b}{\geq 2}{\sla{3}{2}}}\right] $ 
 satisfies \eqref{eq:stima_comm}. 
 Then, using the bound \eqref{stima:phiexp} for $\Phi_{\fun{g}{1}}^\theta$, we obtain
 \begin{align}
 \bF_1(\zak) [\bA_2(\zak)] \bF_1(\zak)^{-1}
 = \Opbw{-\ii [\sym{\Sigma}{\geq 2}{\sla{3}{2}}+ \sym{b}{\geq 2}{1}]}
 + \bB_{\geq 2}(\tR;\zak)\,,
 \label{conj:sigma_mu}
 \end{align}
 where $ \bB_{\geq 2}(\tR;\zak)$ is a bounded quadratic map satisfying \eqref{est:Bexp}. 

\smallskip
\noindent
{\bfseries Expansion of $\pa_t \bF_1(\zak) \bF_1(\zak)^{-1} $.} 
We apply the first order Lie's expansion \eqref{Lie:dt}. 
To this end we first compute the time derivative $\pa_t \bG$ of the generator, 
using that $\bG$ is linear with respect to the variable $\zak$ and the equation \eqref{eq:zak}-\eqref{X:zak_esp} for $\pa_t\zak$, we get 
\begin{align}
    \pa_t \bG(\zak)= \vOpbw{\ii \sym{g}{1}{\mu}(\hamvec{\cH}(\zak);x,\xi)}
    =\vOpbw{\ii \left[\sym{a}{1}{\mu}+ \sym{a}{2}{\mu}+\sym{a}{\geq 3}{\mu} \right]} \,,
    \label{dtGmu}
\end{align}
	where
	\begin{equation}\label{def:apiccolini}
    \begin{aligned}
        \sym{a}{1}{\mu}(\zak;x,\xi)&:= \sym{g}{1}{\mu}(-\ii \vomega(D)\zak;x,\xi)
        \in \wt \Gamma_1^{\mu,\delta}\,, 
        \qquad 
        \sym{a}{2}{\mu}(\zak;x,\xi):= 
        \sym{g}{1}{\mu}(\hamvec{2}(\zak);x,\xi)\in \wt \Gamma_2^{\mu,\delta}\,,
        \\ 
        \sym{a}{\geq 3}{\mu}(\zak;x,\xi)&:=
        \sym{g}{1}{\mu}(\hamvec{\geq 3}(\zak);x,\xi) 
        \in \Gamma_{\geq 3}^{\mu,\delta}[r]\,,
    \end{aligned}
    \end{equation}
    are $\tR$-localized symbols. 
Then, using the expansion \eqref{dtGmu} and  
\Cref{lem:stima_comm}, we get that 
\begin{align}
[ \bG, \pa_t \bG]= \bB_{\geq 2}(\tR;\zak),
\label{comm:dtGGmu}
\end{align}
satisfies the quadratic bound \eqref{est:Bexp}.
Then, by the Lie's expansion \eqref{Lie:dt} and \eqref{dtGmu}, \eqref{comm:dtGGmu} 
and noting also that $\vOpbw{\ii \sym{a}{\geq 3}{\mu}}$ satisfies 
\eqref{est:Bexp} thanks to \eqref{stima:exp_opbw}, we eventually get 
\begin{align}
    \pa_t \bF_1(\zak)\bF_1(\zak)^{-1}= \vOpbw{\ii \left[\sym{a}{1}{\mu}
    + \sym{a}{2}{\mu}\right]}+ \bB_{\geq 2}(\tR;\zak)\,.
    \label{conj:dt_mu}
\end{align}

 \smallskip
\noindent{\bfseries Expansion of $\bF_1(\zak) \bR_1(\zak)\bF_1(\zak)^{-1} $.}
 Using \eqref{Lie:1}, get 
\begin{align}
\bF_1(\zak) [\bR_1(\zak)] \bF_1(\zak)^{-1} =& \bR_1(\zak)+ \int_0^1\Phi_{g_1}^\theta (\zak)\left[ \vOpbw{\sym{g}{1}{\mu}},\bR_1(\zak)\right]\Phi_{g_1}^{-\theta}(\zak)\, \di \theta\notag \\
=&\bR_1(\zak)+ \bB_{\geq 2}(\zak),
\label{conj:R_mu}
\end{align}
where $ \bB_{\geq 2}(\zak)$ satisfies \eqref{est:Bexp}.

\smallskip
\noindent
{\bfseries Expansion of $\bF_1(\zak) \bB_{\geq 2}(\zak)\bF_1(\zak)^{-1} $.} Using the bound \eqref{stima:phiexp} for $\bF_1^\pm(\zak) $ , we get 
\begin{align}
\| \bF_1(\zak) \bB_{\geq 2}(\zak)\bF_1(\zak)^{-1} V\|_{s}
&\lesssim_s 
e^{C_s(\tR \| \zak\|_{s_0})^2} 
\| \bB_{\geq 2}(\zak)\bF_1(\zak)^{-1} V\|_{s}\notag 
\\
&\lesssim_s 
e^{C_s(\tR \| \zak\|_{s_0})^2}\left( 
\| \zak\|_{s_0}^2 \|\bF_1(\zak)^{-1} V\|_s
+ \| \zak\|_{s_0}\|\zak\|_s
\| \bF_1(\zak)^{-1} V\|_{s_0}\right) \notag 
\\
&\lesssim_s 
e^{C_s(\tR \| \zak\|_{s_0})^2}\left( 
\| \zak\|_{s_0}^2 
\| V\|_s+ \| \zak\|_{s_0}\|\zak\|_s
\| V\|_{s_0}\right)\,, 
\label{bounf:B2_mu}
\end{align}
proving that 
$\bF_1(\zak) \bB_{\geq 2}(\zak)\bF_1(\zak)^{-1}$ 
can be included in $ \bB_{\geq 2}(\tR;\zak)$.
Finally, collecting \eqref{proof:V1:1}, \eqref{conj:omeghino_mu}, 
\eqref{conj:b12_mu}, \eqref{conj:sigma_mu}, \eqref{conj:dt_mu}, 
\eqref{def:apiccolini}, \eqref{conj:R_mu}, and \eqref{bounf:B2_mu}, 
and applying \eqref{eq:homo_finale1} to the $\tR$-localized symbol 
$\sym{b}{1}{\sla{1}{2}}\theta(\tR^{-1}\langle \xi\rangle)$, we obtain
\begin{align}
\pa_t V_1&= \vOpbw{-\ii \left[ \Lambda(\xi)+ \sym{\Sigma}{\geq 2}{\sla{3}{2}}+ \sym{b}{\geq 2}{1}
+ \sym{a}{2}{\mu}+ \sym{b}{1}{\sla{1}{2}}\left(1-\theta(\tR^{-1}\langle \xi\rangle)\right)\right]}V_1
+ \bR_1(\zak)V_1+ \bB_{\geq 2}(\tR;\zak)V_1
\notag
\\
&+ \vOpbw{-\ii \left[ 
\sym{b}{1}{\sla{1}{2}}\theta(\tR^{-1}\langle \xi \rangle)
+ \sym{a}{1}{\mu}
+ \ii \left( \sym{g}{1}{\mu}\sha{\vr} \Lambda(\xi)
- \Lambda(\xi) \sha{\vr}\sym{g}{1}{\mu}\right)\right]}   
\notag
\\
&\stackrel{\eqref{eq:homo_finale1}}{=}
\vOpbw{-\ii \left[\Lambda(\xi)
+ \sym{\cZ}{1}{\sla{1}{2}}\right]
-\ii\left[ \sym{\Sigma}{2}{\sla{3}{2}}
+ \sym{b}{2}{1}\right]\theta(\tR^{-1}\langle\xi\rangle)
- \ii   \left[ \sym{b}{\geq 2}{\sla{3}{2}}
+ \sym{b}{1}{\sla{1}{2}}\right]\left(1-\theta\left(\tR^{-1}\langle\xi\rangle\right)\right)} 
V_1
\notag
\\
&+ \bR_1(\zak)V_1+ \bB_{\geq 2}(\tR;\zak)V_1\,.
\nonumber
\end{align}
This proves the \Cref{prop:NF_linear} having defined 
in \eqref{QR-linear_normalform} 
$\sym{b}{2}{\sla{3}{2},\tR}:= 
\sym{\Sigma}{2}{\sla{3}{2}}+ \sym{b}{2}{1}$.
\end{proof}

\subsection{Normal form of the quadratic quasi-linear symbol}\label{sec:quadsym}
In order to obtain the final equation \eqref{QR-normalform}, we construct a transformation that puts the low-frequency quadratic quasilinear symbol $\sym{b}{2}{\sla{3}{2},\tR}$, from the intermediate equation \eqref{QR-linear_normalform}, into quasi-resonant normal form.
This transformation is generated by a quadratic symbol of positive order, 
obtained by solving a quadratic homological equation
given in Lemma \ref{homo:solve0_finale1}.
The main result of this subsection is the normal form proposition for the quadratic 
quasilinear symbol
below.
	\begin{proposition}
\label{prop:NF_quadratic}
Fix $\delta, \nu, \tau, \mu$ as in \eqref{410} and let $\vr > 2$.  
There exists $s_0 > 0$ such that, for any $\tR > 1$, there exists a bounded and invertible transformation $\bF_2(\zak)$ with the following property: for any $s \geq s_0$ there exists $r = r_s > 0$ such that, for any $\zak \in B_{s_0,\R}(r)$, the maps $\bF_2(\zak)$ and $\bF_2^{-1}(\zak)$ belong to 
$\cL(H^s_\R; H^s_\R)$ and satisfy the estimate \eqref{stima:admR}.  

Moreover, if $\zak(t) \in B_{s_0,\R}( r)$ solves \eqref{eq:zak}, then the variable $V_2 := \bF_2(\zak)V_1$,
with $V_1$ solving \eqref{QR-linear_normalform}, solves the system

\begin{align}
\pa_t V_2 &= \vOpbw{-\ii 
\left[\Lambda(\xi)
+\sym{\cZ}{1}{\sla{1}{2}}
+\sym{\tm}{2}{\sla{3}{2}}(\xi)+\sym{\cZ}{ 2}{\sla{3}{2}}\right]} V_2
+ \vOpbw{-\ii\,  \sym{b}{3}{1+\mu, \tR}}V_2\notag +\bR_1(\zak)V_2
\\
&+\vOpbw{- \ii  \left[\sym{\Sigma}{\geq 2}{\sla{3}{2}}
+\sym{b}{\geq 2}{1} 
+ \sym{b}{1}{\sla{1}{2}}\right]
\left(1-\theta\left(\tR^{-1}|\xi|\right)\right)}V_2
+ \bB_{\geq 2}(\tR;\zak)V_2\,,
         \label{QR-quadratic_normalform}
\end{align}
    where $\sym{\Sigma}{\geq 2}{\sla{3}{2}}\in \Gamma_{\geq 2}^{\sla{3}{2}}[r]$ is in 
    \eqref{def:sdBIS},
    $ \sym{b}{\geq 2}{1} \in \Gamma_{\geq 2}^{\sla{1}{2}}[r]$ 
   is  in \eqref{def:b1mag} and $\sym{b}{1}{\sla{1}{2}}\in \wt \Gamma_1^{\sla{1}{2}}$ is in  \eqref{NuovoParaprod2} and 
    \begin{itemize}
\item  $\sym{\tm}{2}{\sla{3}{2}}$ is a $2$-homogeneous, $x$-independent,  $\tR$-localized,  real symbol in $\wt \Gamma_2^{\sla{3}{2}, \delta}$;
\item  $\sym{\cZ}{2}{\sla{3}{2}}$ is a $2$-homogeneous, $\tR$-localized,  real symbol in $\wt \Gamma_2^{\sla{3}{2}, \delta}$, in normal form (see \Cref{def:normalform});
        \item $\sym{b}{3}{1+\mu,\tR}$ is a  $3$-homogeneous, $\tR$-localized, real symbol in $\wt \Gamma_2^{\sla{3}{2}, \delta}$, with zero average;
        \item $\bR_1(\zak)$ is a matrix of smoothing operators in $ \wt \cR_1^{-\vr}$;
        \item $\bB_{\geq 2}(\tR;\zak)$ is a family of  matrices of real-to-real quadratic bounded operators, satisfying \eqref{est:Bexp}.
    \end{itemize}
	\end{proposition}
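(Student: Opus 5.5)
We mirror the proof of \Cref{prop:NF_linear}, now with a quadratic generator. We apply \Cref{homo:solve0_finale1} to the $\tR$-localized, real symbol $\sym{a}{2}{\sla{3}{2}}:=\sym{b}{2}{\sla{3}{2},\tR}$ appearing in \eqref{QR-linear_normalform}. This produces a $\tR$-localized, real, zero-average generator $\sym{g}{2}{1+\mu}\in\wt\Gamma_2^{1+\mu,\delta}$. We define $\bF_2(\zak):=\Phi^\tau(\zak)_{|\tau=1}$, where $\Phi^\tau$ is the flow \eqref{flusso2para} with $\bG(\zak):=\vOpbw{\ii\sym{g}{2}{1+\mu}}$. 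Since $p=2>1+\mu+\tfrac12$, \Cref{lem:flussoR} gives that $\bF_2^{\pm1}$ is bounded and satisfies \eqref{stima:admR}. We set $\sym{\tm}{2}{\sla{3}{2}}:=\langle\sym{b}{2}{\sla{3}{2},\tR}\rangle$; it is $x$-independent, real and $\tR$-localized. The symbol $\sym{\cZ}{2}{\sla{3}{2}}$ is the normal form part furnished by \eqref{eq:homo_finale2}.

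Next we write the equation for $V_2$ as in \eqref{proof:V1:1} and expand each term by Lie expansions, treating the pieces one at a time.

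\emph{Conjugation of $-\ii\vomega(D)$.} The Lie expansion \eqref{Lie:2} together with \eqref{eq:Gomega}--\eqref{eq:omegaG} produces the Moyal term $\ii(\sym{g}{2}{1+\mu}\sha{4}\Lambda-\Lambda\sha{4}\sym{g}{2}{1+\mu})$ plus a smoothing remainder. That remainder is quadratic, so it goes into $\bB_{\geq2}$.

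\emph{Third-order term.} The term $\frac12\mathrm{Ad}^2_\bG[-\ii\vomega(D)]$ is quartic in $\zak$ and of order $2(1+\mu)+\tfrac32-2\delta=\tfrac32+4\mu$. This is the term that does \emph{not} obviously fit \eqref{est:Bexp}. We absorb it by spending powers of $\tR$ through \Cref{lem:perdita}: we write $\tR^{\sigma}\|\zak\|^{4}\le(\tR\|\zak\|)^{\sigma}\|\zak\|^{2}$ with $\sigma=\tfrac32+4\mu<2$, and then use \eqref{est:power}. \Cref{lem:stima_comm} packages exactly this bookkeeping ($p_1+p_2\ge 2+\sigma$), so we apply it with $m_1=1+\mu$ and with $m_2=\tfrac12+2\mu$, the order of the first commutator.

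\emph{Conjugation of $\bA_1=\vOpbw{-\ii\sym{\cZ}{1}{\sla{1}{2}}}$.} This produces the commutator $[\bG,\bA_1]$, which is cubic and of order $1+\mu+\tfrac12-\delta=\tfrac12+2\mu$. This is not small enough to discard. After Moyal expansion its principal symbol is $\{\sym{g}{2}{1+\mu},\sym{\cZ}{1}{\sla{1}{2}}\}$, which is real, $\tR$-localized and cubic. Since the spatial average of a Poisson bracket vanishes, this symbol has zero average. It will contribute to $\sym{b}{3}{1+\mu,\tR}$ (which is of order at most $1+\mu$), and we leave it unnormalized here; it is handled in the cubic step of \Cref{homo:solve0_finale2}. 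The second commutator $\mathrm{Ad}^2_\bG[\bA_1]$ is quartic, so it goes into $\bB_{\geq2}$ via \Cref{lem:stima_comm}.

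\emph{Conjugation of the remaining terms.} The conjugates of $\vOpbw{-\ii\sym{b}{2}{\sla{3}{2},\tR}}$ and of the high-frequency symbols $[\cdots](1-\theta)$ produce quartic commutators of orders $\tfrac32+1+\mu-\delta=\tfrac32+2\mu<2$. \Cref{lem:stima_comm} with $p_1+p_2=4\ge 2+\sigma$ sends these to $\bB_{\geq2}$. Likewise, the conjugates of $\bR_1$ and of $\bB_{\geq2}$ go into $\bB_{\geq2}$, exactly as in \eqref{conj:R_mu} and \eqref{bounf:B2_mu}.

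\emph{Time derivative.} We have $\pa_t\bG=\vOpbw{\ii\,\di_\zak\sym{g}{2}{1+\mu}[\hamvec{\cH}(\zak)]}$. Its quadratic part $\di_\zak\sym{g}{2}{1+\mu}[-\ii\vomega(D)\zak]$ is precisely the term that enters \eqref{eq:homo_finale2}. Its cubic part $\di_\zak\sym{g}{2}{1+\mu}[\hamvec{2}(\zak)]$ has order $1+\mu$ and zero average, because $\sym{g}{2}{1+\mu}$ has zero average. It is $\tR$-localized, and we add it to $\sym{b}{3}{1+\mu,\tR}$. The contributions of $\hamvec{\geq3}$ and of $[\bG,\pa_t\bG]$ are at least quartic and go into $\bB_{\geq2}$ by \eqref{stima:exp_opbw} and \Cref{lem:stima_comm}.

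Finally, we collect the quadratic low-frequency terms, namely $\sym{b}{2}{\sla{3}{2},\tR}$, the Moyal term and the $\pa_t$ term, and apply \eqref{eq:homo_finale2}. This yields $\sym{\tm}{2}{\sla{3}{2}}+\sym{\cZ}{2}{\sla{3}{2}}$ plus $\bB_{\geq2}$, which proves \eqref{QR-quadratic_normalform}.

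The main obstacle is verifying that every quartic commutator has order strictly below $2$, so that the $\tR^\sigma$ loss is absorbed by $e^{(\tR\|\zak\|)^2}$. This hinges on $\delta>\tfrac78$, and a delicate order count is needed for $\mathrm{Ad}^2_\bG[\Lambda]$. A second, more routine check is that the real-valuedness and translation invariance of the generated symbols are preserved.
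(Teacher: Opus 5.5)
Your overall scheme is the paper's: the generator from \Cref{homo:solve0_finale1}, the Lie expansions, $\sym{\tm}{2}{\sla{3}{2}}=\langle\sym{b}{2}{\sla{3}{2},\tR}\rangle$, and the cubic part $\di_\zak\sym{g}{2}{1+\mu}[\hamvec{2}(\zak)]$ of $\pa_t\bG$ as $\sym{b}{3}{1+\mu,\tR}$. Your treatment of $[\bG,\vOpbw{-\ii\sym{\cZ}{1}{\sla{1}{2}}}]$, however, is wrong in two ways.

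\textbf{The commutator can be discarded.} This cubic commutator has order $\sigma=\tfrac12+2\mu$, and it goes into $\bB_{\geq 2}(\tR;\zak)$. Apply \Cref{lem:stima_comm} with $p_1=2$, $p_2=1$, $m_1=1+\mu$, $m_2=\tfrac12$; then $p_1+p_2=3\ge\tfrac52+2\mu=2+\sigma$. Concretely, $\tR$-localization of $\sym{g}{2}{1+\mu}$ gives a bound $\tR^{\sigma}\|\zak\|_{s_0}^3=(\tR\|\zak\|_{s_0})^{\sigma}\|\zak\|_{s_0}^{3-\sigma}$. Since $3-\sigma\ge 2$, this fits \eqref{est:Bexp}. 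This is exactly what the paper does. The same argument covers the commutator with $\sym{b}{1}{\sla{1}{2}}(1-\theta(\tR^{-1}\langle\xi\rangle))$. You call that one quartic, but it is cubic and of the same order $\tfrac12+2\mu$. So your proposal is also internally inconsistent: it discards one such commutator and keeps an identical one.

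\textbf{The zero-average claim is false.} Your reason for putting $\{\sym{g}{2}{1+\mu},\sym{\cZ}{1}{\sla{1}{2}}\}$ into $\sym{b}{3}{1+\mu,\tR}$ does not hold: the $x$-average of a Poisson bracket need not vanish. Integrating by parts in $x$ alone gives $\langle\{a,b\}\rangle(\xi)=\nabla_\xi\cdot\langle a\,\nabla_x b\rangle(\xi)$. This vanishes only after a further integration in $\xi$. For example, $a=f(\xi)\cos x_1$, $b=h(\xi)\sin x_1$ give average $\tfrac12\partial_{\xi_1}(fh)$.

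The support structure does not rescue the claim either. The product $\hat g(-k,\xi)\hat{\cZ}(k,\xi)$ is in general nonzero on the transition region where $0<\chi_k(\xi)<1$, since there both the non-resonant cut-off $1-\chi_k$ and the resonant cut-off $\chi_k$ are active. Hence your $\sym{b}{3}{1+\mu,\tR}$ generally has nonzero average. This contradicts the statement, and it breaks the zero-average hypothesis of \Cref{homo:solve0_finale2} that you invoke for the next step. The fix is simply to absorb this commutator into $\bB_{\geq 2}(\tR;\zak)$ as above.

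A minor point: in the double commutator $\mathrm{Ad}^2_\bG[-\ii\vomega(D)]$, the first commutator has order $m_2=\tfrac32+2\mu$, not $\tfrac12+2\mu$. Only with this value do you get your stated $\sigma=\tfrac32+4\mu$. The check $4\ge 2+\sigma$ still holds because $\mu<\tfrac18$.
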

	\begin{proof}
We define the flow
\begin{equation*}
\begin{cases}
\pa_\tau \Phi_{g_2}^\tau(\zak) =  \bG(\zak)\Phi^\tau_{g_2}(\zak)
\\
\Phi^0(\zak)=\id
\end{cases}\,, 
\qquad 
\bG(\zak):= \vOpbw{\ii \sym{g}{2}{1+\mu}(\zak;x,\xi)}\,,
\end{equation*}
        where $\sym{g}{2}{1+\mu}\in \wt \Gamma_2^{1+\mu,\delta}$ is the $\tR$-localized symbol obtained by applying \Cref{homo:solve0_finale1} to the $\tR$-localized, real symbol $\sym{a}{2}{\sla{3}{2}}:=\sym{b}{2}{\sla{3}{2},\tR} \in \wt \Gamma_2^{\sla{3}{2}}$. 
We define $ \bF_2:=  \Phi_{g_2}^\tau(\zak)_{|\tau=1}$. 
Since $V_1$ solves \eqref{QR-linear_normalform}, the variable $V_2=\bF_2(\zak)V_1$  solves 
		\begin{align} 
\partial_t V_2= \bF_2(\zak) \bA(\zak)\bF_2(\zak)^{-1} V_2
+\pa_t \bF_2(\zak)\bF_2(\zak)^{-1}V_2
+\bF_2(\zak)\big[\bR_1(\zak)+\bB_{\geq 2}(\zak)\big]
\bF_2(\zak)^{-1}V_2
\label{proof:V2:1}
\end{align}
 where 
\begin{align*}
\bA(\zak)=&-\ii \vomega(D)
+\vOpbw{ -\ii\left[\sym{\cZ}{1}{\sla{1}{2}}
+ \sym{b}{1}{\sla{1}{2}}(1-\theta\left(\tR^{-1} \langle \xi \rangle\right)\right] }
+ \vOpbw{-\ii \left[\sym{b}{2}{\sla{3}{2},\tR}
+ \sym{b}{\geq 2}{\sla{3}{2}}(1-\theta\left(\tR^{-1}\langle \xi \rangle\right))\right]}\notag
\\
=& -\ii \vomega(D) +\bA_1(\zak)+\bA_2(\zak)\,,
\end{align*}
where (see \eqref{QR-linear_normalform})
\begin{equation}\label{bb32mag}
\sym{b}{\geq 2}{\sla{3}{2}}:=\sym{\Sigma}{\geq 2}{\sla{3}{2}}+ \sym{b}{\geq 2}{1}\,.
\end{equation}
We analyze equation \eqref{proof:V2:1} 
expanding each term with its Lie expansion.
        
        \smallskip
		\noindent{\bfseries Expansion of $\bF_2(\zak) [-\ii \vomega(D)] \bF_2(\zak)^{-1} $.}
        In order to expand $\bF_2(\zak) [-\ii \vomega(D)] \bF_2(\zak)^{-1} $ , we use  the  second order Lie's expansion \eqref{Lie:2}, getting  
 \begin{equation} \label{esp:V2omega}
  \begin{aligned}
   \bF_2(\zak) [-\ii \vomega(D)] \bF_2(\zak)^{-1}&=  
   -\ii \vomega(D) + \left[ \bG(\zak),  -\ii \vomega(D) \right] 
   \\&+\int_{0}^1 (1-\theta) \Phi_{g_2}^\theta(\zak){\mathrm{Ad}}_{\bG}^2
   [ -\ii \vomega(D)]  \Phi_{g_2}^{-\theta}(\zak)\, \di \theta\,.
        \end{aligned}
        \end{equation}
Then we expand the commutator $\left[ \bG(\zak),  -\ii \vomega(D) \right] $ using \Cref{compoparapara0}, 
obtaining 
\begin{align}
 \left[ \bG(\zak),  -\ii \vomega(D) \right]= 
 \vOpbw{-\ii \left[\ii \sym{g}{2}{1+\mu}\sha{4} \Lambda(\xi)
 - \Lambda(\xi) \sha{4} \ii \sym{g}{2}{1+\mu} \right]}   + \bR_2(\zak)\,, 
 \qquad 
 \bR_2 \in \wt \cR^{-\sla{3}{2}+\mu}_2\,.
 \label{primo:V2}
\end{align}
Then, in order to estimate the double commutator in \eqref{esp:V2omega}, we apply \Cref{lem:stima_comm}-(i) to the $\tR$-localized symbol
\[
\sym{g}{2}{1+\mu}\in \wt \Gamma_2^{1+\mu,\delta}\,,
\]
and to the symbol
\[
-\ii\big[\ii \sym{g}{2}{1+\mu}\sha{4}\Lambda(\xi)-\Lambda(\xi)\sha{4}\ii \sym{g}{2}{1+\mu}\big]
\in \wt \Gamma_2^{\sla{3}{2}+2\mu}\,.
\]
We choose the parameters
\[
p_1=p_2=2\,,\qquad m_1=1+\mu\,,\qquad m_2=\tfrac32+2\mu\,,
\qquad 
\sigma:=\tfrac32+4\mu>0\,.
\]
Moreover, we apply \Cref{lem:stima_comm}-(ii) to the commutator $[\bG(\zak),\bR_2(\zak)]$, with
\[
\vr:=\tfrac32-\mu\ge m_2:=1+\mu \,.
\]
As a result, we obtain
\begin{align}
 \left[\bG(\zak), [ \bG(\zak), -\ii \vomega(D)] \right]&=
 \left[\bG(\zak), 
 \vOpbw{-\ii \left[\ii \sym{g}{2}{1+\mu}\sha{4} \Lambda(\xi)
 - \Lambda(\xi) \sha{4} \ii \sym{g}{2}{1+\mu} \right]}\right]
 + [ \bG(\zak), \bR_2(\zak)]\nonumber
 \\
   &= \bB_{\geq 2}(\tR;\zak)\,,
    \label{doppio:V22}
\end{align}
satisfying \eqref{est:Bexp}. Gathering \eqref{esp:V2omega}, \eqref{primo:V2} 
and \eqref{doppio:V22}  and using the bound 
\eqref{stima:phiexp} for $\Phi_{\fun{g}{2}}^\theta$, we get 
\begin{align}
     \bF_2(\zak) [-\ii \vomega(D)] \bF_2(\zak)^{-1}= 
     \vOpbw{-\ii \Lambda(\xi)-\ii \left[\ii \sym{g}{2}{1+\mu}\sha{4} \Lambda(\xi)
     - \Lambda(\xi) \sha{4} \ii \sym{g}{2}{1+\mu} \right]}+ \bB_{\geq 2}(\tR;\zak)\,,
     \label{conj:omeghino_mu2}
\end{align}
        where $ \bB_{\geq 2}(\tR;\zak)$ satisfies \eqref{est:Bexp}.

        \medskip
        
\noindent{\bfseries Expansion of $\bF_2(\zak) \bA_1(\zak)\bF_2(\zak)^{-1} $.}
We use \eqref{Lie:1} getting 
\begin{align*}
\bF_2(\zak) [\bA_1(\zak)] \bF_2(\zak)^{-1} &= 
\vOpbw{-\ii\left[\sym{\cZ}{1}{\sla{1}{2}}
+ \sym{b}{1}{\sla{1}{2}}(1-\theta\left(\tR^{-1} \langle \xi \rangle\right)\right] }
\\&\!\!\!\!\!\!
+ \int_0^1\Phi_{g_2}^\theta (\zak)\left[ \vOpbw{\sym{g}{2}{1+\mu}},
\vOpbw{-\ii\left[\sym{\cZ}{1}{\sla{1}{2}}
+ \sym{b}{1}{\sla{1}{2}}(1-\theta\left(\tR^{-1} \langle \xi \rangle\right)\right] }\right]
\Phi_{g_2}^{-\theta}(\zak)\, \di \theta\,.
\end{align*}
 Then we apply \Cref{lem:stima_comm} with $ m_1=1+\mu$, $m_2= \frac12$ and $\sigma=  \frac12+2\mu>0$ and we get that the commutator 
 \[
 \left[ \vOpbw{\sym{g}{2}{1+\mu}},\vOpbw{-\ii\left[\sym{\cZ}{1}{\sla{1}{2}}+ \sym{b}{1}{\sla{1}{2}}(1-\theta\left(\tR^{-1} \langle \xi \rangle\right)\right] }\right]
 \]
 satisfies \eqref{eq:stima_comm}. Then, using also the bound \eqref{stima:phiexp} for $\Phi_{\fun{g}{2}}^\theta$, we obtain
 \begin{align}
     \bF_2(\zak) [\bA_1(\zak)] \bF_2(\zak)^{-1}= \Opbw{-\ii\left[\sym{\cZ}{1}{\sla{1}{2}}+ \sym{b}{1}{\sla{1}{2}}(1-\theta\left(\tR^{-1} \langle \xi \rangle\right)\right]}+ \bB_{\geq 2}(\tR;\zak),
     \label{conj:b12_(1+mu)}
 \end{align}
 where $ \bB_{\geq 2}(\tR;\zak)$ is a bounded quadratic map satisfying \eqref{est:Bexp}. 
 
 \smallskip
\noindent
{\bfseries Expansion of $\bF_2(\zak) \bA_2(\zak)\bF_2(\zak)^{-1} $.}
Using \eqref{Lie:1}, get 
\begin{equation*}
\begin{aligned}
\bF_2(\zak) [\bA_2(\zak)] &\bF_2(\zak)^{-1} = 
\vOpbw{-\ii \left[\sym{b}{2}{\sla{3}{2},\tR}
+ \sym{b}{\geq 2}{\sla{3}{2}}(1-\theta\left(\tR^{-1}\langle \xi \rangle\right))\right]}
\\&
+ \int_0^1\Phi_{g_2}^\theta (\zak)\left[ \vOpbw{\sym{g}{2}{1+\mu}},
\vOpbw{-\ii \left[\sym{b}{2}{\sla{3}{2},\tR}
+ \sym{b}{\geq 2}{\sla{3}{2}}(1-\theta\left(\tR^{-1}\langle \xi \rangle\right))\right]}\right]
\Phi_{g_2}^{-\theta}(\zak)\, \di \theta\,.
\end{aligned}
\end{equation*}
 Then we apply \Cref{lem:stima_comm} with $p_1=p_2=2$, $ m_1=1+\mu$, $m_2= \tfrac32$ and $\sigma= \tfrac32+2\mu>0$. Since $p_1+p_2=4 > 2+ \sigma$, we get that $\left[ \vOpbw{\sym{g}{1}{\mu}},\vOpbw{-\ii \sym{b}{\geq 2}{\sla{3}{2}}}\right] $ satisfies \eqref{eq:stima_comm}. Then, using the bound \eqref{stima:phiexp} for $\Phi_{\fun{g}{1}}^\theta$, we obtain
 \begin{align}
 \bF_2(\zak) [\bA_2(\zak)] \bF_2(\zak)^{-1}= \Opbw{-\ii \left[\sym{b}{2}{\sla{3}{2},\tR}+ \sym{b}{\geq 2}{\sla{3}{2}}(1-\theta\left(\tR^{-1}\langle \xi \rangle\right))\right]}+ \bB_{\geq 2}(\tR;\zak)\,,
 \label{conj:sigma_(1+mu)}
 \end{align}
 where $ \bB_{\geq 2}(\tR;\zak)$ is a bounded quadratic map satisfying \eqref{est:Bexp}. 

\smallskip
\noindent{\bfseries Expansion of $\pa_t \bF_2(\zak) \bF_2(\zak)^{-1} $.} We apply the first order Lie's expansion \eqref{Lie:dt}. To this end we first compute the time derivative $\pa_t \bG$ of the generator, using that $\bG$ is $2$-homogeneous with respect to the variable $\zak$ and the equation \eqref{eq:zak}-\eqref{X:zak_esp} for $\pa_t\zak$, we get 
\begin{align}
    \pa_t \bG(\zak)= \vOpbw{\ii \di_\zak \sym{g}{2}{1+\mu}[\hamvec{\cH}(\zak)]}=\vOpbw{\ii \left[\di_\zak \sym{g}{2}{1+\mu}[-\ii \vomega(D)\zak]+ \sym{b}{3}{1+\mu,\tR}+\sym{b}{\geq 4}{1+\mu} \right]} 
    \label{dtGmu2}
\end{align}
	where
\begin{gather}
         \di_\zak\sym{g}{2}{1+\mu}[-\ii \vomega(D)\zak]\in 
         \wt \Gamma_2^{1+\mu,\delta}\,, 
         \qquad 
         \sym{b}{3}{1+\mu,\tR}(\zak;x,\xi):= 
         \di_\zak\sym{g}{2}{1+\mu}[\hamvec{2}(\zak)]\in \wt \Gamma_3^{1+\mu,\delta}\,,
         \notag\\
        \qquad 
        \sym{b}{\geq 4}{1+\mu}(\zak;x,\xi)
        :=\di_\zak\sym{g}{2}{1+\mu}[\hamvec{\geq 3}(\zak)] 
        \in \Gamma_{\geq 4}^{1+\mu,\delta}[r]\,,\notag
    \end{gather}
    They are $\tR$-localized, real-valued symbols with zero average, as is $\sym{g}{2}{1+\mu}$. 
Then, using the expansion \eqref{dtGmu2} and  \Cref{lem:stima_comm}, we get that 
\begin{align}
[ \bG, \pa_t \bG]= \bB_{\geq 2}(\tR;\zak),
\label{comm:dtGGmu2}
\end{align}
satisfies the quadratic bound \eqref{est:Bexp}.
Then, by the Lie's expansion \eqref{Lie:dt} and \eqref{dtGmu2}, \eqref{comm:dtGGmu2} and noting also that $\vOpbw{\ii \sym{a}{\geq 4}{1+\mu}}$ satisfies \eqref{est:Bexp} thanks to \eqref{stima:exp_opbw}, we eventually get 
\begin{align}
    \pa_t \bF_2(\zak)\bF_2(\zak)^{-1}= \vOpbw{\ii  \left[\di_\zak \sym{g}{2}{1+\mu}[-\ii \vomega(D)\zak]+ \sym{b}{3}{1+\mu,\tR}\right]}+ \bB_{\geq 2}(\tR;\zak)\,.
    \label{conj:dt_(1+mu)}
\end{align}
 
 \smallskip
\noindent
{\bfseries Expansion of $\bF_2(\zak) \bR_1(\zak)\bF_2(\zak)^{-1} $.}
 Using \eqref{Lie:1} and \eqref{stima:phiexp}, we get 
\begin{align}
\bF_2(\zak) [\bR_1(\zak)] \bF_2(\zak)^{-1} &= \bR_1(\zak)+ \int_0^1\Phi_{g_2}^\theta (\zak)\left[ \vOpbw{\sym{g}{2}{1+\mu}},\bR_1(\zak)\right]\Phi_{g_2}^{-\theta}(\zak)\, \di \theta\notag 
\\
&=\bR_1(\zak)+ \bB_{\geq 2}(\tR;\zak),
\label{conj:R_mu2}
\end{align}
where $ \bB_{\geq 2}(\tR;\zak)$ satisfies \eqref{est:Bexp}.

\smallskip
\noindent{\bfseries Expansion of $\bF_2(\zak) \bB_{\geq 2}(\tR;\zak)\bF_2(\zak)^{-1} $.} Arguing as in  
    \eqref{bounf:B2_mu}
one prove that $\bF_2(\zak) \bB_{\geq 2}(\tR;\zak)\bF_2(\zak)^{-1}$ can be included in $ \bB_{\geq 2}(\tR;\zak)$.

Finally, gathering \eqref{proof:V2:1}, \eqref{conj:omeghino_mu2}, \eqref{conj:b12_(1+mu)}, \eqref{conj:sigma_(1+mu)}, \eqref{conj:dt_(1+mu)} and  \eqref{conj:R_mu2} , we get 
\begin{align*}
    \pa_t V_2=& \vOpbw{-\ii \left[ \Lambda(\xi)+ \sym{\cZ}{
    1}{\sla{1}{2}}+ \sym{b}{3}{1+\mu,\tR}+ [\sym{b}{\geq 2}{\sla{3}{2}}+\sym{b}{1}{\sla{1}{2}}]\left(1-\theta(\tR^{-1}\langle \xi\rangle)\right)\right]}V_2+ \bR_1(\zak)V_2
    + \bB_{\geq 2}(\tR;\zak)V_2\\
    &+ \vOpbw{-\ii \left[ \sym{b}{2}{\sla{3}{2},\tR}+ 
    \di_\zak \sym{g}{2}{1+\mu}[-\ii \vomega(D)\zak]
      + \ii \left( \sym{g}{2}{1+\mu}\sha{4} \Lambda(\xi)- \Lambda(\xi) \sha{4}\sym{g}{2}{1+\mu}\right)\right]} \\
    \stackrel{\eqref{eq:homo_finale2}}{=}& \vOpbw{-\ii \left[ \Lambda(\xi)+\langle \sym{b}{2}{\sla{3}{2},\tR}\rangle(\xi)+ \sym{\cZ}{
    1}{\sla{1}{2}}+\sym{\cZ}{
    2}{\sla{3}{2}}+ \sym{b}{3}{1+\mu,\tR}+ [\sym{b}{\geq 2}{\sla{3}{2}}+\sym{b}{1}{\sla{1}{2}}]\left(1-\theta(\tR^{-1}\langle \xi\rangle)\right)\right]}V_2+ \notag\\
         &+ \bR_1(\zak)+ \bB_{\geq 2}(\tR;\zak)V_2,
\end{align*}
proving \eqref{QR-quadratic_normalform} recalling 
\eqref{bb32mag} and setting 
$\sym{\tm}{2}{\sla{3}{2}}(\xi):=
\langle \sym{b}{2}{\sla{3}{2},\tR}\rangle(\xi)\in \wt \Gamma_2^{\sla{3}{2},\delta}$.
	\end{proof}

	\subsection{Normal form of the cubic super-linear symbol}\label{sec:cubicsym}
In order to obtain the final equation \eqref{QR-normalform}, we construct a transformation that puts the low-frequency quadratic quasilinear symbol $\sym{b}{3}{\sla{3}{2},\tR}$, from the intermediate equation \eqref{QR-quadratic_normalform}, into quasi-resonant normal form.
This transformation is generated by a cubic symbol of positive order, obtained by solving a quadratic homological equation given in Lemma \ref{homo:solve0_finale2}.

The main result of this subsection is
the following cubic super-linear normal form result.
    
	\begin{proposition}
\label{prop:NF_cubicSIMBOLI}
    Fix $\delta, \nu, \tau, \mu$ as in \eqref{410} and let $\vr > 2$.  
There exists $s_0 > 0$ such that, for any $\tR > 1$, there exists a bounded and invertible transformation $\bF_3(\zak)$ with the following property: for any $s \geq s_0$ there exists $r = r_s > 0$ such that, for any $\zak \in B_{s_0,\R}(r)$, the maps $\bF_3(\zak)$ and $\bF_3^{-1}(\zak)$ belong to 
$\cL(H^s_\R; H^s_\R)$ and satisfy the estimate \eqref{stima:admR}.  

Moreover, if $\zak(t) \in B_{s_0,\R}( r)$ solves \eqref{eq:zak}, then the variable $V_3 := \bF_3(\zak)V_2$,
with $V_2$ solving \eqref{QR-quadratic_normalform}, solves the system
  \begin{align}
         \pa_t V_3 &= \vOpbw{-\ii \left[\Lambda(\xi)+\sym{\tm}{2}{\sla{3}{2}}(\xi)+\sym{\cZ}{ 2}{\sla{3}{2}}+\sym{\cZ}{ 3}{1+\mu}+\sym{\cZ}{1}{\sla{1}{2}}\right]} V_3 +\bR_1(\zak)V_3\notag
         \\
         &+\vOpbw{- \ii  \left[\sym{\Sigma}{\geq 2}{\sla{3}{2}}+ \sym{b}{\geq 2}{1} + \sym{b}{1}{\sla{1}{2}}\right]\left(1-\theta\left(\tR^{-1}\langle \xi\rangle\right)\right)}V_3+ \bB_{\geq 2}(\tR;\zak)V_3,
         \label{QR-cubic_normalform}
    \end{align}
    where 
    \begin{itemize}
        \item  $\sym{\cZ}{3}{1+\mu}$ is a  $\tR$-localized, $3$-homogeneous real symbol in normal form (see \Cref{def:normalform}), belonging  to  $\wt \Gamma_3^{1+\mu, \delta}$;
        \item $\bR_1(\zak)$ is a matrix of real-to-real smoothing operators in $ \wt \cR_1^{-\vr}$;
        \item $\bB_{\geq 2}(\tR;\zak)$ is a family of  matrices of real-to-real bounded operators, satisfying \eqref{est:Bexp}.
    \end{itemize}
	\end{proposition}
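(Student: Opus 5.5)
We follow the pattern of \Cref{prop:NF_linear} and \Cref{prop:NF_quadratic}. Apply \Cref{homo:solve0_finale2} to the $\tR$-localized, real, zero-average symbol $\sym{a}{3}{1+\mu}:=\sym{b}{3}{1+\mu,\tR}\in\wt\Gamma_3^{1+\mu,\delta}$ from \eqref{QR-quadratic_normalform}. This yields a $\tR$-localized real generator $\sym{g}{3}{\sla{1}{2}+2\mu}\in\wt\Gamma_3^{\sla{1}{2}+2\mu,\delta}$ and the normal form symbol $\sym{\cZ}{3}{1+\mu}$. Set $\bG(\zak):=\vOpbw{\ii\sym{g}{3}{\sla{1}{2}+2\mu}}$ and $\bF_3(\zak):=\Phi^\tau(\zak)_{|\tau=1}$. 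Since $p=3>m+\tfrac12$ with $m=\tfrac12+2\mu\le 2$, \Cref{lem:flussoR} gives well-posedness, invertibility and the bound \eqref{stima:admR}. Then $V_3=\bF_3(\zak)V_2$ solves
\[
\pa_tV_3=\bF_3\bA\bF_3^{-1}V_3+\pa_t\bF_3\,\bF_3^{-1}V_3+\bF_3\big[\bR_1+\bB_{\geq2}\big]\bF_3^{-1}V_3,
\]
with $\bA=-\ii\vomega(D)+\bA_1+\bA_2$, where $\bA_1$ collects the symbols of order $\tfrac12$ and $\bA_2$ those of order $\tfrac32$ (including $\sym{b}{3}{1+\mu,\tR}$).

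For $-\ii\vomega(D)$ we use the second order Lie expansion \eqref{Lie:2}. The first commutator is computed by \Cref{compoparapara0} with $\sha{3}$. It gives exactly the term $\ii(\sym{g}{3}{}\sha{3}\Lambda-\Lambda\sha{3}\sym{g}{3}{})$ appearing in \eqref{eq:homo_finale3}, plus a remainder. That remainder is cubic and of order $\tfrac12+2\mu+\tfrac32-3\delta<0$, so it is bounded by $\|\zak\|_{s_0}^3$ and can be absorbed into $\bB_{\geq2}(\tR;\zak)$; no $\wt\cR_1$ part arises, since the generator is cubic. The double commutator is handled by \Cref{lem:stima_comm}(i) with $p_1=p_2=3$, $m_1=\tfrac12+2\mu$ and $m_2=\tfrac32+2\mu$. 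Then $\sigma=1+7\mu$, so $p_1+p_2=6\ge 2+\sigma$, and the double commutator satisfies \eqref{est:Bexp}. The conjugates of $\bA_1$ and $\bA_2$ are treated by \eqref{Lie:1} and \Cref{lem:stima_comm}. Here $p_1=3$, $p_2\ge1$, and $\sigma\le \tfrac12+2\mu+\tfrac32-\delta=1+3\mu<2$, so $p_1+p_2\ge4>2+\sigma$. Hence only the original symbols survive, up to errors in $\bB_{\geq2}(\tR;\zak)$.

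For $\pa_t\bF_3\bF_3^{-1}$ we use \eqref{Lie:dt}. The symbol of $\pa_t\bG$ is $\ii\,\di_\zak\sym{g}{3}{}[\hamvec{\cH}(\zak)]$. It is cubic in $\zak$ (quartic beyond), $\tR$-localized, and of order $\tfrac12+2\mu$. By \Cref{lem:perdita}, namely \eqref{stima:exp_opbw} with $p=3\ge 2+m$, it is absorbed into \eqref{est:Bexp}; the commutator $[\bG,\pa_t\bG]$ is absorbed likewise. Conjugating $\bR_1$ gives $\bR_1$ plus a commutator that is at least quartic. Its bound follows from \eqref{eq:stima_comm_smoothing} together with \eqref{stima:phiexp}, so it goes into $\bB_{\geq2}$. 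The conjugate of $\bB_{\geq2}$ is handled exactly as in \eqref{bounf:B2_mu}.

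Collecting the terms, the symbols of order $\tfrac32$ and $1+\mu$ combine into $\sym{b}{3}{1+\mu,\tR}+\ii(\sym{g}{3}{}\sha{3}\Lambda-\Lambda\sha{3}\sym{g}{3}{})$. By \eqref{eq:homo_finale3} this equals $\sym{\cZ}{3}{1+\mu}$ modulo $\bB_{\geq2}(\tR;\zak)$, which gives \eqref{QR-cubic_normalform}. The main obstacle is the bookkeeping of orders versus homogeneity in \Cref{lem:stima_comm}. Each commutator involving the positive-order generator must have the loss $\tR^\sigma$ compensated by $p_1+p_2-2\ge\sigma$ powers of $\|\zak\|_{s_0}$, and this must fit inside the exponential factor in \eqref{est:Bexp}. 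The check is most delicate for the $\Lambda$ double commutator, where $\sigma=1+7\mu$ must satisfy $\sigma\le 4$; since $\mu<\tfrac18$, it does.
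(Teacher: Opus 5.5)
Your proposal is correct and follows essentially the same route as the paper: the same generator from \Cref{homo:solve0_finale2}, the same Lie expansions, and the same use of \Cref{lem:stima_comm}, \eqref{stima:exp_opbw} and \eqref{eq:stima_comm_smoothing} to put everything except $\sym{b}{3}{1+\mu,\tR}$ and the first commutator with $-\ii\vomega(D)$ into $\bB_{\geq 2}(\tR;\zak)$. The only cosmetic difference is that you use $\sha{3}$ and absorb the cubic, negative-order composition remainder directly, where the paper uses $\sha{\vr}$ and a cubic smoothing remainder; also, a harmless arithmetic slip: the commutator symbol $\mbra[3]{\sym{g}{3}{\sla{1}{2}+2\mu}}{\Lambda}$ has order $1+3\mu$, so $\sigma=\tfrac12+6\mu$ (even with your $m_2=\tfrac32+2\mu$ one gets $1+5\mu$, not $1+7\mu$), and any of these values still satisfies $\sigma\le 4$.
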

    \begin{proof}
We define the flow
		\begin{equation*}
		\begin{cases}
			\pa_\tau \Phi_{g_3}^\tau(\zak) =  \bG(\zak)\Phi^\tau_{g_3}(\zak)
			\\
			\Phi^0(\zak)=\id
		\end{cases}\,, 
		\qquad \bG(\zak):= \vOpbw{\ii \sym{g}{3}{\sla{1}{2}+2\mu}(\zak;x,\xi)}\,,
		\end{equation*}
        where $\sym{g}{3}{\sla{1}{2}+2\mu}\in \wt \Gamma_3^{\sla{1}{2}+2\mu,\delta}$ is the $\tR$-localized symbol obtained by applying \Cref{homo:solve0_finale2} to the $\tR$-localized, real, zero average symbol $\sym{a}{3}{1+\mu}:=\sym{b}{3}{1+\mu,\tR} \in \wt \Gamma_3^{1+\mu, \delta}$ appearing in \eqref{QR-quadratic_normalform}. 
		We define $ \bF_3(\zak):=  \Phi_{g_3}^\tau(\zak)_{|\tau=1}$. Since $V_2$ solves \eqref{QR-quadratic_normalform}, the variable $V_3=\bF_3(\zak)V_2$  solves 
		\begin{align} 
			\partial_t V_3= \bF_3(\zak) \bA(\zak)\bF_3(\zak)^{-1} V_3+\pa_t \bF_3(\zak)\bF_3(\zak)^{-1}V_3+\bF_3(\zak)\big[\bR_1(\zak)+\bB_{\geq 2}(\zak)\big]\bF_3(\zak)^{-1}V_3
            \label{proof:V3:1}. 
		\end{align}
 where 
 \begin{equation*}
        \begin{aligned}
        \bA(\zak)&=-\ii \vomega(D)+\vOpbw{ -\ii\left[\sym{\cZ}{1}{\sla{1}{2}}+ \sym{b}{1}{\sla{1}{2}}(1-\theta\left(\tR^{-1} \langle \xi \rangle\right)\right] }
        \\
        &+ \vOpbw{-\ii \left[\sym{\tm}{2}{\sla{3}{2}}(\xi)+\sym{\cZ}{2}{\sla{3}{2}}+ \sym{b}{3}{1+\mu,\tR}+\sym{b}{\geq 2}{\sla{3}{2}}(1-\theta\left(\tR^{-1}\langle \xi \rangle\right))\right]}
        \\
            &= -\ii \vomega(D) +\bA_1(\zak)+\bA_2(\zak)\,,
        \end{aligned}
\end{equation*}
with $\sym{b}{\geq 2}{\sla{3}{2}}$ defined in \eqref{bb32mag}.
We analyze equation \eqref{proof:V3:1} expanding each term with its Lie expansion.
  
  \smallskip
\noindent
{\bfseries Expansion of $\bF_3(\zak) [-\ii \vomega(D)] \bF_3(\zak)^{-1} $.}
        In order to expand $\bF_3(\zak) [-\ii \vomega(D)] \bF_3(\zak)^{-1} $, 
        we use  the  second order Lie's expansion \eqref{Lie:2}, getting  
        \begin{equation}\label{esp:V3omega}
        \begin{aligned}
          \bF_3(\zak) [-\ii \vomega(D)] \bF_3(\zak)^{-1}&=  
          -\ii \vomega(D) + \left[ \bG(\zak),  -\ii \vomega(D) \right] 
          \\&+\int_{0}^1 (1-\theta) \Phi_{g_3}^\theta(\zak){\mathrm{Ad}}_{\bG}^2
          [ -\ii \vomega(D)]  \Phi_{g_3}^{-\theta}(\zak)\, \di \theta\,.
        \end{aligned}
        \end{equation}
Then we expand the commutator 
$\left[ \bG(\zak),  -\ii \vomega(D) \right] $ using \Cref{compoparapara}, obtaining 
\begin{align}
 \left[ \bG(\zak),  -\ii \vomega(D) \right]= 
 \vOpbw{-\ii \left[\ii \sym{g}{3}{\sla{1}{2}+2\mu}\sha{\vr} \Lambda(\xi)
 - \Lambda(\xi) \sha{\vr} \ii \sym{g}{3}{\sla{1}{2}+2\mu} \right]}   
 + \bR_3(\zak), \qquad \bR_3 \in \wt \cR^{-\vr}_3\,.
 \label{primo:V3}
\end{align}
Then applying \Cref{lem:stima_comm} to the $\tR$-localized symbol $ \sym{g}{3}{1+\mu}\in \wt \Gamma_3^{\sla{1}{2}+2\mu,\delta}$ and to the $3$-homogeneous commutator symbol  $-\ii \left[\ii \sym{g}{3}{\sla{1}{2}+2\mu}\sha{\vr} \Lambda(\xi)- \Lambda(\xi) \sha{\vr} \ii \sym{g}{3}{\sla{1}{2}+2\mu} \right] \in \wt \Gamma_3^{1+3\mu}$ and parameters $p_1=p_2=3$, $m_1= \frac12+2\mu$, $m_2= 1+3\mu $ and $\sigma:=\frac12+6\mu>0$, obtaining 
\begin{align}
   {\mathrm{Ad}}_{\bG}^2[ -\ii \vomega(D)]  &=
   \left[\bG(\zak), \vOpbw{-\ii \left[\ii \sym{g}{3}{\sla{1}{2}+2\mu}\sha{\vr} \Lambda(\xi)
   - \Lambda(\xi) \sha{\vr} \ii \sym{g}{3}{\sla{1}{2}+2\mu} \right]}\right]
   + [ \bG(\zak), \bR_3(\zak)]\nonumber
   \\&=\bB_{\geq 2}(\tR;\zak)\,,
    \label{doppio:V2}
\end{align}
satisfying \eqref{est:Bexp}. Gathering \eqref{esp:V3omega}, \eqref{primo:V3} and \eqref{doppio:V2}  and using the bound \eqref{stima:phiexp} for $\Phi_{\fun{g}{3}}^\theta$, we get 
\begin{align}
     \bF_3(\zak) [-\ii \vomega(D)] \bF_3(\zak)^{-1}
     = \vOpbw{-\ii \Lambda(\xi)-\ii \left[\ii \sym{g}{3}{\sla{1}{2}+2\mu}
     \sha{\vr} \Lambda(\xi)- \Lambda(\xi) \sha{\vr} \ii \sym{g}{3}{\sla{1}{2}+2\mu} \right]}
     + \bB_{\geq 2}(\tR;\zak)\,,
     \label{conj:omeghino_mu3}
\end{align}
        where $ \bB_{\geq 2}(\tR;\zak)$ satisfies \eqref{est:Bexp}.
        
        \smallskip
\noindent
{\bfseries Expansion of $\bF_3(\zak) \bA_1(\zak)\bF_3(\zak)^{-1} $.}
We use \eqref{Lie:1} getting 
\begin{equation*}
\begin{aligned}
\bF_3(\zak) [\bA_1(\zak)] &\bF_3(\zak)^{-1} = 
\vOpbw{-\ii\left[\sym{\cZ}{1}{\sla{1}{2}}
+ \sym{b}{1}{\sla{1}{2}}(1-\theta\left(\tR^{-1} \langle \xi \rangle\right)\right] }
\\&
+ \int_0^1\Phi_{g_3}^\theta (\zak)\left[ \vOpbw{\sym{g}{3}{\sla{1}{2}+2\mu}},
\vOpbw{-\ii\left[\sym{\cZ}{1}{\sla{1}{2}}
+ \sym{b}{1}{\sla{1}{2}}(1-\theta\left(\tR^{-1} \langle \xi \rangle\right)\right] }\right]
\Phi_{g_3}^{-\theta}(\zak)\, \di \theta\,.
\end{aligned}
\end{equation*}
 Then we apply \Cref{lem:stima_comm} with $ m_1=\frac12+2\mu$, $m_2= \frac12$ and $\sigma=  3\mu>0$ and we get that the commutator
 \[
 \left[ \vOpbw{\sym{g}{3}{\sla{1}{2}+2\mu}},
 \vOpbw{-\ii \left[\sym{\cZ}{1}{\sla{1}{2}}
 + \sym{b}{1}{\sla{1}{2}}(1-\theta\left(\tR^{-1} \langle \xi \rangle\right)\right] }\right]
 \]
 satisfies \eqref{eq:stima_comm}. 
 Then, using also the bound \eqref{stima:phiexp} 
 for $\Phi_{\fun{g}{3}}^\theta$, we obtain
 \begin{align}
     \bF_3(\zak) [\bA_1(\zak)] \bF_3(\zak)^{-1}= 
     \Opbw{-\ii\left[\sym{\cZ}{1}{\sla{1}{2}}+ \sym{b}{1}{\sla{1}{2}}(1-\theta\left(\tR^{-1} 
     \langle \xi \rangle\right)\right]}
     + \bB_{\geq 2}(\tR;\zak)\,,
     \label{conj:b12_(12+2mu)}
 \end{align}
 where $ \bB_{\geq 2}(\tR;\zak)$ is a bounded quadratic map satisfying \eqref{est:Bexp}. 
 
 \smallskip
\noindent
{\bfseries Expansion of $\bF_3(\zak) \bA_2(\zak)\bF_3(\zak)^{-1} $.}
Using \eqref{Lie:1}, get 
\begin{align}
&\bF_3(\zak) [\bA_2(\zak)] \bF_3(\zak)^{-1} 
= \vOpbw{-\ii \left[\sym{\tm}{2}{\sla{3}{2}}(\xi)
+\sym{\cZ}{2}{\sla{3}{2}}+\sym{b}{3}{1+\mu,\tR}
+ \sym{b}{\geq 2}{\sla{3}{2}}(1-\theta\left(\tR^{-1}\langle \xi \rangle\right))\right]}\notag
\\&
+ \!\!\int_0^1\!\!\Phi_{g_3}^\theta (\zak)\left[ \vOpbw{\sym{g}{3}{\sla{3}{2}+2\mu}},
\vOpbw{-\ii \left[\sym{\tm}{2}{\sla{3}{2}}(\xi)+\sym{\cZ}{2}{\sla{3}{2}}
+ \sym{b}{3}{1+\mu,\tR}+ \sym{b}{\geq 2}{\sla{3}{2}}(1-\theta\left(\tR^{-1}
\langle \xi \rangle\right))\right]}\right]\Phi_{g_3}^{-\theta}(\zak)\, \di \theta\,.
\notag
\end{align}
 Then we apply \Cref{lem:stima_comm} with $p_1=3$, $p_2=2$, 
 $ m_1=\frac12+2\mu$, $m_2= \frac32$ and $\sigma= 1+3\mu>0$. 
 Since $p_1+p_2=5 > 2+ \sigma$, 
 we get that the commutator 
\[
 \left[ \vOpbw{\sym{g}{3}{\sla{1}{2}+2\mu}},
 \vOpbw{-\ii \left[\sym{\tm}{2}{\sla{3}{2}}(\xi)+\sym{\cZ}{2}{\sla{3}{2}}
 +\sym{b}{3}{1+\mu,\tR}
 + \sym{b}{\geq 2}{\sla{3}{2}}(1-\theta\left(\tR^{-1}\langle \xi \rangle\right))\right]}\right]\,,
 \]
 satisfies \eqref{eq:stima_comm}. Then, using the bound \eqref{stima:phiexp} for $\Phi_{\fun{g}{3}}^\theta$, we obtain
 \begin{equation}\label{conj:sigma_(12+2mu)}
 \begin{aligned}
 \bF_3(\zak) [\bA_2(\zak)] &\bF_3(\zak)^{-1}
 \\&= \vOpbw{-\ii \left[\sym{\tm}{2}{\sla{3}{2}}(\xi)+\sym{\cZ}{2}{\sla{3}{2}}
 + \sym{b}{3}{1+\mu,\tR}
 +\sym{b}{\geq 2}{\sla{3}{2}}(1-\theta\left(\tR^{-1}\langle \xi \rangle\right))\right]}
 + \bB_{\geq 2}(\tR;\zak)\,,
 \end{aligned}
 \end{equation}
 where $ \bB_{\geq 2}(\tR;\zak)$ is a bounded quadratic map satisfying \eqref{est:Bexp}. 

\smallskip
\noindent
{\bfseries Expansion of $\pa_t \bF_3(\zak) \bF_3(\zak)^{-1} $.} We apply the first order Lie's expansion \eqref{Lie:dt}. To this end we first compute the time derivative $\pa_t \bG$ of the generator, using that $\bG$ is $3$-homogeneous with respect to the variable $\zak$ and the equation \eqref{eq:zak}-\eqref{X:zak_esp} for $\pa_t\zak$, we get 
\begin{align}
    \pa_t \bG(\zak)= \vOpbw{\ii \di_\zak \sym{g}{3}{\sla{1}{2}+2\mu}[\hamvec{\cH}(\zak)]}, \qquad \di_\zak \sym{g}{3}{\sla{1}{2}+2\mu}[\hamvec{\cH}(\zak)] \in \Gamma_{\geq 3}^{\sla{1}{2}+2\mu}.
    \label{dtGmu3}
\end{align}
	
Then, using the expansion \eqref{dtGmu3} and  \Cref{lem:stima_comm}, we get that 
\begin{align}
[ \bG, \pa_t \bG]= \bB_{\geq 2}(\tR;\zak),
\label{comm:dtGGmu3}
\end{align}
satisfies the quadratic bound \eqref{est:Bexp}.
Then, by the Lie's expansion \eqref{Lie:dt} and \eqref{dtGmu2}, \eqref{comm:dtGGmu3} and noting also that $\vOpbw{\ii\di_\zak \sym{g}{3}{\sla{1}{2}+2\mu}[\hamvec{\cH}(\zak)]}$ satisfies \eqref{est:Bexp} thanks to \eqref{stima:exp_opbw}, we eventually get 
\begin{align}
    \pa_t \bF_3(\zak)\bF_3(\zak)^{-1}=  \bB_{\geq 2}(\tR;\zak).
    \label{conj:dt_(12+2mu)}
\end{align}
 
 \smallskip
\noindent
{\bfseries Expansion of $\bF_3(\zak) \bR_1(\zak)\bF_3(\zak)^{-1} $.}
 Using \eqref{Lie:1} and \eqref{stima:phiexp}, we get 
\begin{align}
\bF_3(\zak) [\bR_1(\zak)] \bF_3(\zak)^{-1} =& \bR_1(\zak)+ \int_0^1\Phi_{g_3}^\theta (\zak)\left[ \vOpbw{\sym{g}{3}{\sla{1}{2}+2\mu}},\bR_1(\zak)\right]\Phi_{g_3}^{-\theta}(\zak)\, \di \theta\notag \\
=&\bR_1(\zak)+ \bB_{\geq 2}(\tR;\zak),
\label{conj:R_mu3}
\end{align}
where $ \bB_{\geq 2}(\tR;\zak)$ satisfies \eqref{est:Bexp}.

\smallskip
\noindent
{\bfseries Bound of $\bF_3(\zak) \bB_{\geq 2}(\tR;\zak)\bF_3(\zak)^{-1} $.} 
Arguing as in  
    \eqref{bounf:B2_mu}
one prove that $\bF_3(\zak) \bB_{\geq 2}(\tR;\zak)\bF_3(\zak)^{-1}$ 
can be included in $ \bB_{\geq 2}(\tR;\zak)$.

\noindent
Finally, gathering \eqref{proof:V3:1}, \eqref{conj:omeghino_mu3}, 
\eqref{conj:b12_(12+2mu)}, \eqref{conj:sigma_(12+2mu)}, 
\eqref{conj:dt_(12+2mu)} and  \eqref{conj:R_mu3}, we get 
\begin{equation*}
\begin{aligned}
    \pa_t V_3&= \vOpbw{-\ii \left[ \Lambda(\xi)
    + \sym{\cZ}{
    1}{\sla{1}{2}}+\sym{\tm}{2}{\sla{3}{2}}(\xi)
    +\sym{\cZ}{2}{\sla{3}{2}}+ [\sym{b}{\geq 2}{\sla{3}{2}}
    +\sym{b}{1}{\sla{1}{2}}]\left(1-\theta(\tR^{-1}\langle \xi\rangle)\right)\right]}V_3
    \\
    &\quad + \vOpbw{-\ii \left[  \sym{b}{3}{1+\mu,\tR}
    + \ii \left( \sym{g}{3}{\sla{1}{2}+2\mu}\sha{\vr} \Lambda(\xi)
    - \Lambda(\xi) \sha{\vr}\sym{g}{3}{\sla{1}{2}+2\mu}\right)\right]}  
    + \bR_1(\zak)V_3+ \bB_{\geq 2}(\tR;\zak)V_3
    \\
   & \stackrel{\eqref{eq:homo_finale3}}{=}
    \vOpbw{-\ii \left[ \Lambda(\xi)+ \sym{\tm}{2}{\sla{3}{2}}(\xi)
    + \sym{\cZ}{
    1}{\sla{1}{2}}+\sym{\cZ}{
    2}{\sla{3}{2}}+ \sym{\cZ}{3}{1+\mu}+ [\sym{b}{\geq 2}{\sla{3}{2}}
    +\sym{b}{1}{\sla{1}{2}}]\left(1-\theta(\tR^{-1}\langle \xi\rangle)\right)\right]}V_3
    \\&
    \quad+ \bR_1(\zak)V_3+ \bB_{\geq 2}(\tR;\zak)V_3\,,
\end{aligned}
\end{equation*}
which implies, recalling \eqref{bb32mag},
formula \eqref{QR-cubic_normalform}.
    \end{proof}
	\subsection{Birkhoff normal form of the smoothing remainder}\label{sezione birkhoff smoothing}
In order to obtain the final equation \eqref{QR-normalform}, we construct a transformation that puts the quadratic smoothing remainder  $ \bR_1(\zak)V_3$
 appearing in \eqref{QR-cubic_normalform} into normal form. 
 The absence of three-wave interactions, proved in \Cref{lem:small_divisors}, 
 implies that no resonant quadratic terms occur, 
 so that the normal form is identically zero and the smoothing 
 term is completely eliminated. 
The main result of this subsection is the following.

	\begin{proposition}
\label{prop:NF_cubic}
     Let $\vr\geq 4$ and $\delta$, $\nu$, $\tau$ fixed as in \eqref{410}. 
     Let $\kap \in \R_{+}\setminus \mathscr{N}$ where $\mathscr{N}$ is the zero measure set
     given by Lemma \ref{lem:small_divisors}.
There exists $s_0>0$ such that for any $\tR>1$ there is a bounded and invertible transformation $\bF_4(\zak)$ (independent of $\tR$) such that for any $s\geq s_0$ there exists $r=r_s>0$ such that for any $\zak \in B_{s_0,\R}(r)\cap H^{s}_{\R}$ the maps $\bF_4(\zak)$ and $\bF_4^{-1}(\zak)$ belong to $\cL\left( H^s_\R;H^s_\R\right)$ and satisfy the estimates 
\begin{align}
\|\bF_4(\zak)V\|_{s}+ \|\bF_4^{-1}(\zak)V\|_{s}
\lesssim_s
\left[\| V\|_s+ \| \zak\|_{s}\| V\|_{s_0}\right], 
\label{stima:F4}
\end{align}
for any $V \in H^s_\R$. Moreover, if $\zak(t) \in B_{s_0, \R}(r)\cap H^s_\R$ solves \eqref{eq:zak}, then the variable
\[
Z:= \bF_4(\zak)V_3,
\]
with $V_3$ solving \eqref{QR-cubic_normalform}, solves the system
  \begin{align}
         \pa_t Z &= \vOpbw{-\ii \left[\Lambda(\xi)+\sym{\cZ}{1}{\sla{1}{2}}+\sym{\tm}{2}{\sla{3}{2}}(\xi)+\sym{\cZ}{ 2}{\sla{3}{2}}+\sym{\cZ}{ 3}{1+\mu}\right]} Z\notag\\
         &+\vOpbw{- \ii  \left[\sym{\Sigma}{\geq 2}{\sla{3}{2}}+ \sym{b}{\geq 2}{1}
         + \sym{b}{1}{\sla{1}{2}}\right]\left(1-\theta\left(\tR^{-1}\langle \xi\rangle\right)\right)}Z
         + \bB_{\geq 2}(\tR;\zak)Z\,,
         \label{QR-birk_normalform}
    \end{align}
    where $\bB_{\geq 2}(\tR;\zak)$ is a family of  matrices of real-to-real bounded operators, satisfying \eqref{est:Bexp}.
	\end{proposition}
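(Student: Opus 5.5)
We construct $\bF_4(\zak)$ as the time-one flow $\bPhi^\tau(\zak)|_{\tau=1}$ of \eqref{flusso2para} with generator $\bG(\zak)=\bG_1(\zak)$, a real-to-real matrix of $1$-homogeneous smoothing operators in $\wt\cR^{-\vr'}_1$ for a suitable $\vr'$ (of order $\vr-6$). By \Cref{lem:flow_smoo} the flow and its inverse then satisfy the tame bound \eqref{stima:F4}, uniformly in $\tR$, since $\bG_1$ does not depend on $\tR$. The generator is chosen to solve the homological equation
\[
\bR_1(\zak)+\big[\bG_1(\zak),-\ii\vomega(D)\big]+\bG_1(-\ii\vomega(D)\zak)=0 .
\]
This equation kills the smoothing term $\bR_1(\zak)$ in \eqref{QR-cubic_normalform}.

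\textbf{Solving the homological equation.} We use the Fourier representation \eqref{esp:mat_smoo} of \Cref{rem:stima_homo} for $\bR_1$ and the same ansatz for $\bG_1$, with coefficients $G^{\sigma_1,\sigma_2,\sigma}_{k,j}$. The operator $[\,\cdot\,,-\ii\vomega(D)]$ acts on the coefficient in front of $\zetina_k^{\sigma_1}v_j^{\sigma_2}e^{\ii\sigma(\sigma_1k+\sigma_2j)\cdot x}$. So does the substitution $\zak\mapsto-\ii\vomega(D)\zak$, since $\zetina_k^{\sigma_1}\mapsto -\ii\sigma_1\Lambda(k)\zetina_k^{\sigma_1}$. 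Together they multiply it by
\[
-\ii\sigma\big(\sigma_1\Lambda(k)+\sigma_2\Lambda(j)-\Lambda(\sigma_1k+\sigma_2j)\big),
\]
up to sign conventions. Hence we define
\[
G^{\sigma_1,\sigma_2,\sigma}_{k,j}:=\frac{R^{\sigma_1,\sigma_2,\sigma}_{k,j}}{\ii\sigma\big(\sigma_1\Lambda(k)+\sigma_2\Lambda(j)-\Lambda(\sigma_1k+\sigma_2j)\big)}.
\]
This is legitimate for $\kap\notin\mathscr N$. Indeed, \Cref{lem:small_divisors} gives a lower bound on the denominator of $c\,\max\{|k|,|j|\}^{-2}\min\{|k|,|j|\}^{-4}$. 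The case $\sigma_1k+\sigma_2j=0$ does not occur because of zero averages.

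Combining this with \eqref{smoocara} gives
\[
|G_{k,j}|\lesssim \min\{|k|,|j|\}^{\mu+4}\max\{|k|,|j|\}^{-(\vr-2)}.
\]
The converse part of \Cref{lem:smoohomo} then shows $\bG_1\in\wt\cR^{-(\vr-2)}_1$. The reality relation $\ov{G^{\sigma_1,\sigma_2,\sigma}}=G^{-\sigma_1,-\sigma_2,-\sigma}$ follows from that of $R$, because the denominator is odd under flipping all the signs. Thus $\bG_1$ is real-to-real, and it is translation invariant by construction.

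\textbf{Conjugation.} We write the right-hand side of \eqref{QR-cubic_normalform} as $-\ii\vomega(D)+\bA(\zak)+\bR_1(\zak)+\bB_{\geq2}(\tR;\zak)$. We conjugate term by term with the Lie expansions \eqref{Lie:1}, \eqref{Lie:2}, \eqref{Lie:dt}.
\begin{itemize}
\item For $-\ii\vomega(D)$ we use \eqref{Lie:2}. The first commutator is used in the homological equation. For the double commutator $\mathrm{Ad}^2_{\bG_1}[-\ii\vomega(D)]=[\bG_1,-\bR_1-\bG_1(-\ii\vomega(D)\zak)]$, both factors are smoothing, so it is quadratic and bounded, i.e.\ of type \eqref{est:Bexp}.
\item For $\pa_t\bPhi\,\bPhi^{-1}$ we use \eqref{Lie:dt}, inserting $\pa_t\zak=-\ii\vomega(D)\zak+\hamvec{\geq2}(\zak)$. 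The linear term produces $\bG_1(-\ii\vomega(D)\zak)$. The remaining terms involve $\bG_1(\hamvec{\geq2}(\zak))$, which is bounded and quadratic thanks to the tame estimate \eqref{tameest:campo}, the $\vr$-smoothing absorbing the $\tfrac32$ loss. The commutator $[\pa_t\bG_1,\bG_1]$ is also quadratic.
\item Conjugating $\bR_1$ and $\bB_{\geq2}$ only adds quadratic terms, by \eqref{stima:F4}.
\end{itemize}

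\textbf{Main obstacle.} The delicate point is the commutator $[\bG_1(\zak),\bA(\zak)]$, where $\bA$ contains unbounded operators of order up to $\tfrac32$. Some of these have symbols supported at high frequency with only $\Gamma_{\geq2}$ size, and the $\tR$-localized pieces carry exponential factors. For these we invoke the second item of \Cref{lem:stima_comm}, estimate \eqref{eq:stima_comm_smoothing}, which requires $\vr-2\geq\tfrac32$. For the $1$-homogeneous $\tR$-localized symbols $\sym{\cZ}{1}{\sla{1}{2}}$ and $\sym{b}{1}{\sla{1}{2}}$ we use \Cref{thm:action} together with \eqref{bound:smoo}, and the factor $e^{C(\tR\|\zak\|_{s_0})^2}$ from \eqref{stima:exp_opbw} is allowed by \eqref{est:Bexp}. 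Since $\vr\geq4$, the smoothing gain beats the order $\tfrac32$, and all commutators are tame quadratic bounded operators. This yields \eqref{QR-birk_normalform}.
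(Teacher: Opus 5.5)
Your proposal is correct and follows essentially the same route as the paper: the same smoothing generator $\bG_1$ solving the same homological equation (via \Cref{lem:small_divisors} and the converse part of \Cref{lem:smoohomo}, giving $\bG_1\in\wt\cR^{-\vr+2}_1$), the flow bound of \Cref{lem:flow_smoo}, and the same Lie expansions, with \eqref{eq:stima_comm_smoothing} applied under $\vr-2\geq\tfrac32$. The differences are cosmetic: you bound $\mathrm{Ad}^2_{\bG_1}[-\ii\vomega(D)]$ by inserting the homological equation rather than through the mapping properties of $[\bG_1,-\ii\vomega(D)]$, and you explicitly exclude $\sigma_1k+\sigma_2j=0$ (where the divisor vanishes) using zero averages, which the paper leaves implicit; also, the ``$\vr-6$'' in your first paragraph should read $\vr-2$, as you derive later.
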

	
	In order to prove the proposition above, we shall apply a transformation which is 
 generated by a quadratic smoothing operator 
 $\bG_1(\zak)$, obtained by solving a quadratic homological equation; 
 see the following lemma.
	\begin{lemma}
    \label{homo:solve0_finale_birk}
        Fix $\vr>4$ and $\kap$ outside the zero measure set $ \mathscr{N}$ given in \Cref{lem:small_divisors}. Let $\bR_1(\zak)$ a matrix of $1$-homogeneous, real-to-real, smoothing remainders in $ \wt \cR_1^{-\vr}$ then  there exists a matrix of $1$-homogeneous, real-to-real, smoothing remainders $\bG_1(\zak)\in \wt \cR^{-\vr+2}_1$ such that 
        \begin{align}
\bR_1(\zak)+\bG_1(-\ii \vomega(D) \zak)+ \left[ \bG_1(\zak),\, -\ii \vomega(D)\right] =0.
            \label{eq:homo_birk}
        \end{align}
    \end{lemma}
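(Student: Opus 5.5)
We solve \eqref{eq:homo_birk} coefficient by coefficient in Fourier, using the expansion of \Cref{rem:stima_homo}. Write
\[
(\bR_1(\zak)V)^{\sigma}= \sum_{j,k\neq 0,\ \sigma_1,\sigma_2} R_{k,j}^{\sigma_1,\sigma_2,\sigma}\,\zetina_k^{\sigma_1} v_j^{\sigma_2} e^{\ii\sigma(\sigma_1 k+\sigma_2 j)\cdot x},
\]
and look for $\bG_1(\zak)$ with the same structure and unknown coefficients $G_{k,j}^{\sigma_1,\sigma_2,\sigma}$. The operator $-\ii\vomega(D)$ acts diagonally: on the component $\zetina^{\sigma_1}_k$ the substitution $\zak\leadsto -\ii\vomega(D)\zak$ multiplies by $-\ii\sigma_1\Lambda(k)$. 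In the commutator $[\bG_1(\zak),-\ii\vomega(D)]$, the right factor acting on $v_j^{\sigma_2}$ gives $-\ii\sigma_2\Lambda(j)$, while the left factor acting on the output mode $e^{\ii\sigma(\sigma_1k+\sigma_2j)\cdot x}$ in the $\sigma$-component gives $-\ii\sigma\Lambda(\sigma_1k+\sigma_2j)$.

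Collecting terms, \eqref{eq:homo_birk} reduces to the scalar equations
\[
R_{k,j}^{\sigma_1,\sigma_2,\sigma} - \ii\,G_{k,j}^{\sigma_1,\sigma_2,\sigma}\big(\sigma_1\Lambda(k)+\sigma_2\Lambda(j)-\Lambda(\sigma_1k+\sigma_2j)\big)=0
\]
after normalizing the sign in $\sigma$; the case $\sigma=-$ is the complex conjugate. When $\sigma_1k+\sigma_2j=0$ there is no output mode, because the operators act on zero-average spaces, so we set $G=0$. Otherwise we define
\[
G_{k,j}^{\sigma_1,\sigma_2,\sigma}:=\frac{R_{k,j}^{\sigma_1,\sigma_2,\sigma}}{\ii\big(\sigma_1\Lambda(k)+\sigma_2\Lambda(j)-\Lambda(\sigma_1k+\sigma_2j)\big)}.
\]
The real-to-real relation $\ov{G^{\sigma_1,\sigma_2,\sigma}_{k,j}}=G^{-\sigma_1,-\sigma_2,-\sigma}_{k,j}$ follows from the corresponding relation for $R$, together with the fact that the divisor changes sign under $(\sigma_1,\sigma_2)\to(-\sigma_1,-\sigma_2)$ while $\ii$ is conjugated. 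Translation invariance holds automatically, since $\bG_1$ has the structure \eqref{Mp}.

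The main step is the estimate. Since $\kap\notin\mathscr{N}$, \Cref{lem:small_divisors} bounds the divisor below by $c\max\{|j|,|k|\}^{-2}\min\{|j|,|k|\}^{-4}$. By \eqref{smoocara} for $\bR_1$, there is $\mu\ge0$ with
\[
|G_{k,j}|\le C c^{-1}\min\{|k|,|j|\}^{\mu+4}\max\{|k|,|j|\}^{-\vr+2}.
\]
This is \eqref{smoocara} with $\mu\leadsto\mu+4$ and $\vr\leadsto\vr-2$. The converse part of \Cref{lem:smoohomo} then gives $\bG_1\in\wt\cR_1^{-\vr+2}$; here $\vr>4$ guarantees $\vr-2>2$, which is enough for the subsequent flow lemma \Cref{lem:flow_smoo}.

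The only delicate bookkeeping is matching the sign conventions between the $\zak$-substitution and the two sides of the commutator, so that the three frequencies combine exactly into the three-wave divisor $\sigma_1\Lambda(k)+\sigma_2\Lambda(j)-\Lambda(\sigma_1k+\sigma_2j)$ of \eqref{eq:lower_bound}. No small-divisor accumulation occurs, because the loss $\max^{2}$ is absorbed by the smoothing order $\vr$.
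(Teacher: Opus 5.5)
Your proposal is correct and is essentially the paper's proof: Fourier-expand $\bR_1$, divide each coefficient by $\ii$ times the three-wave divisor, bound the result via \Cref{lem:small_divisors} and \eqref{smoocara}, and conclude with the converse part of \Cref{lem:smoohomo}. Your explicit exclusion of the output mode $\sigma_1k+\sigma_2j=0$ is a detail the paper leaves implicit; there the divisor can vanish, e.g.\ for $\sigma_1=-\sigma_2$, $j=k$. One small bookkeeping slip: in the $\sigma$-component the divisor is $\sigma_1\Lambda(k)+\sigma_2\Lambda(j)-\sigma\Lambda(\sigma_1k+\sigma_2j)$, and it is this $\sigma$-dependent expression, not the $\sigma$-independent one you display, that changes sign under $(\sigma_1,\sigma_2,\sigma)\to(-\sigma_1,-\sigma_2,-\sigma)$; it is cleanest to do as the paper does and define only the $+$ component, setting the $-$ component by complex conjugation.
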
	
    \begin{proof}
        We first Fourier expand the smoothing remainder $ \bR_1(\zak)$ as 
        in \eqref{esp:mat_smoo}, using \Cref{lem:smoohomo} 
        \begin{equation*}
            (\bR_1(\zak)V)^+= 
            \sum_{j,k \in \Z^2\setminus\{0\}} R_{j,k}^{\sigma_1,\sigma_2} 
            \zetina_j^{\sigma_1}v_k^{\sigma_2}e^{\ii (\sigma_1 j+\sigma_2 k)\cdot x}\,, 
            \qquad 
            (\bR_1(\zak)V)^-= \ov{(\bR_1(\zak)V)^+}\,,
        \end{equation*}
        where the coefficients $R_{j,k}^{\sigma_1,\sigma_2}$ satisfy  \eqref{smoocara}. 
        Then we define 
        \begin{equation*}
        \begin{aligned}
        (\bG_1(\zak) V)^+&:= \sum_{j,k \in \Z^2\setminus\{0\}} G_{j,k}^{\sigma_1,\sigma_2}\zetina_j^{\sigma_1}v_k^{\sigma_2}e^{\ii (\sigma_1 j+\sigma_2 k)\cdot x}\,, 
 \\
  G_{j,k}^{\sigma_1,\sigma_2}&:= 
        -\frac{R_{j,k}^{\sigma_1,\sigma_2}}{-\ii \left[\sigma_1 \Lambda(j)
        + \sigma_2\Lambda(k)- \Lambda(\sigma_1 j+ \sigma_2 k)\right]}\, .
        \end{aligned}
        \end{equation*}
        Explicit computation shows that $\bG_1(\zak)$ solves the equation \eqref{eq:homo_birk} as well as the real-to-real property in \eqref{esp:mat_smoo}. 
        By the bound \eqref{smoocara} for  $ \bR_1(\zak)$  and \eqref{eq:lower_bound} we get 
        \begin{align}
            \left|G_{j,k}^{\sigma_1,\sigma_2} \right|\lesssim \frac{\min\{|j|,|k|\}^{\mu}\max\{|j|,|k|\}^{-\vr} }{|\sigma_1 \Lambda(j)+ \sigma_2\Lambda(k)- \Lambda(\sigma_1 j+ \sigma_2 k)|}\lesssim \min\{|j|,|k|\}^{\mu+4}\max\{|j|,|k|\}^{-\vr+2},
            \notag
        \end{align}
        which, using again \Cref{lem:smoohomo}, proves that $\bG_1(\zak)$ is a real-to-real matrix of smoothing operators belonging to the class $ \wt \cR^{-\vr+2}_1$.
    \end{proof}	
	
    \begin{proof}[{\bf Proof of Proposition \ref{prop:NF_cubic}}]
We define the flow
		\be \label{flow_smoo}
		\begin{cases}
			\pa_\tau \Phi^\tau(\zak) =  \bG_1(\zak)\Phi^\tau(\zak)\\
			\Phi^0(\zak)=\id
		\end{cases}, \quad \text{where} \quad \bG_1(\zak)\in \cR^{-\vr+ 2}_1\,,
		\ee
        is the $1$-homogeneous matrix of real-to-real smoothing 
        remainders obtained by applying \Cref{homo:solve0_finale_birk} to the 
        $1$-homogeneous, matrix of real-to-real smoothing remainders 
        $\bR_1(\zak) \in \cR^{-\vr}_1$ of \eqref{QR-cubic_normalform}. 
        We define $ \bF_4(\zak):=  \Phi^\tau(\zak)_{|\tau=1}$. 
        Thanks to \Cref{lem:flow_smoo}, the map $\bF_4(\zak)$ 
        satisfies the required bound \eqref{stima:F4}. Since $V_3$ 
        solves \eqref{QR-cubic_normalform}, the variable $Z=\bF_4(\zak)V_3$  solves 
\begin{equation}\label{proof:V4:1} 
\partial_t Z= \bF_4(\zak) \bA(\zak)\bF_4(\zak)^{-1} Z
+\pa_t \bF_4(\zak)\bF_4(\zak)^{-1}Z
+\bF_4(\zak)\big[\bR_1(\zak)+\bB_{\geq 2}(\zak)\big]\bF_4(\zak)^{-1}Z\,,
\end{equation}
 where (recall \eqref{bb32mag})
 \begin{equation} \label{def:AV4}
    \begin{aligned}
        \bA(\zak)&= -\ii \vomega(D) +\bA_1(\zak)\,,
        \\ 
        \bA_1&:=\vOpbw{-\ii \left[
        \sym{\cZ}{ 2}{\sla{3}{2}}+\sym{\tm}{2}{\sla{3}{2}}(\xi)+\sym{\cZ}{ 3}{1+\mu}
        +\sym{\cZ}{1}{\sla{1}{2}}+\left[\sym{b}{\geq 2}{\sla{3}{2}} 
        + \sym{b}{1}{\sla{1}{2}}\right]
        \left(1-\theta\left(\tR^{-1}\langle \xi\rangle\right)\right)\right]}
        \\&
        = \vOpbw{-\ii \sym{b}{\geq 1}{\sla{3}{2},\delta}},
        \end{aligned}
        \end{equation}
where we have collected in 
$\sym{b}{\geq 1}{\sla{3}{2},\delta} 
\in \Gamma_{\geq 1}^\sla{3}{2}[r]$ 
all the symbols in the definition of $\bA_1$.

\noindent
We analyze equation \eqref{proof:V4:1} expanding 
each term with its Lie expansion.
        
\smallskip
\noindent
{\bfseries Expansion of $\bF_4(\zak) [-\ii \vomega(D)] \bF_4(\zak)^{-1} $.}
In order to expand $\bF_4(\zak) [-\ii \vomega(D)] \bF_4(\zak)^{-1} $, 
we use  the  second order Lie's expansion \eqref{Lie:2}, getting  
\begin{equation}\label{esp:V4omega}
\begin{aligned}
\bF_4(\zak) [-\ii \vomega(D)] \bF_4(\zak)^{-1}&=  
-\ii \vomega(D) + \left[ \bG_1(\zak),  -\ii \vomega(D) \right] 
\\&
+\int_{0}^1 (1-\theta) \Phi^\theta(\zak){\mathrm{Ad}}_{\bG_1}^2[ -\ii \vomega(D)]  
\Phi^{-\theta}(\zak)\, \di \theta\,.
\end{aligned}
\end{equation}
Since $\Lambda(\xi) \in \wt \Gamma_0^\sla{3}{2}$, one has 
\begin{equation}
\label{stima:omegaD}
    \| -\ii \vomega(D)\|_{\cL(H^s_\R;H^s_\R)}\lesssim 1\,, \qquad \forall s \in \R\,.  
\end{equation}
Thus, applying also the estimate \eqref{bound:smoo} for $\bG_1 \in \wt \cR^{-\vr +2}_1$,  
we get the corresponding estimate for the commutator 
$\left[ \bG_1(\zak),  -\ii \vomega(D) \right]$ 
proving that it belongs to $ \wt \cR^{-\vr+\sla{7}{2}}_1$. 
In the same way, we easily get the bound   
\[
\|{\mathrm{Ad}}_{\bG_1}^2[ -\ii \vomega(D)]V\|_{s+\vr-\sla{7}{2}}
\lesssim_s \| \zak\|_{s_0}^2 \| V\|_s + \| \zak\|_{s_0}\| \zak\|_s \| V\|_{s_0}\,.
\]
As $\vr\geq 4>\tfrac72$, ${\mathrm{Ad}}_{\bG_1}^2[ -\ii \vomega(D)]$ 
satisfies the quadratic bound \eqref{est:Bexp}. 
Using also the bound \eqref{stima:flussosmoothing} for $\Phi^\theta(\zak)$, we get 
\begin{align}
     \bF_4(\zak) [-\ii \vomega(D)] \bF_4(\zak)^{-1}=-\ii \vomega(D) 
     + \left[ \bG_1(\zak),  -\ii \vomega(D) \right] +\bB_{\geq 2}(\zak)\,,
     \label{conj:omeghino_birk}
\end{align}
        where $ \bB_{\geq 2}(\zak)$ satisfies \eqref{est:Bexp}.

\smallskip        
\noindent
{\bfseries Expansion of $\bF_4(\zak) \bA_1(\zak)\bF_4(\zak)^{-1} $.}
We use \eqref{Lie:1} getting 
\begin{align}
\bF_4(\zak) [\bA_1(\zak)] \bF_4(\zak)^{-1} = &\vOpbw{-\ii \left[
        \sym{\cZ}{ 2}{\sla{3}{2}}+\sym{\tm}{2}{\sla{3}{2}}(\xi)+\sym{\cZ}{ 3}{1+\mu}+\sym{\cZ}{1}{\sla{1}{2}}+\left[\sym{b}{\geq 2}{\sla{3}{2}} + \sym{b}{1}{\sla{1}{2}}\right]\left(1-\theta\left(\tR^{-1}\langle \xi\rangle\right)\right)\right]}\notag\\
&+ \int_0^1\Phi^\theta (\zak)\left[ \bG_1,\vOpbw{-\ii \sym{b}{\geq 1}{\sla{3}{2}}}\right]\Phi^{-\theta}(\zak)\, \di \theta.\notag
\end{align}
 Then we apply \eqref{eq:stima_comm_smoothing} with $\vr \leadsto \vr-2 \geq \frac{3}{2}$ and we obtain that the commutator $\left[ \bG_1,\vOpbw{-\ii \sym{b}{\geq 1}{\sla{3}{2}}}\right]$ satisfies \eqref{est:Bexp}. Then, using also the bound \eqref{stima:flussosmoothing} for $\Phi^\theta$, 
 we obtain
 \begin{equation}\label{conj:b32_smoo}
 \begin{aligned}
   & \bF_4(\zak) [\bA_1(\zak)] \bF_4(\zak)^{-1}
    \\&\!\!\!\!
    =\vOpbw{-\ii \left[
        \sym{\cZ}{ 2}{\sla{3}{2}}+\sym{\tm}{2}{\sla{3}{2}}(\xi)+\sym{\cZ}{ 3}{1+\mu}
        +\sym{\cZ}{1}{\sla{1}{2}}+\left[\sym{b}{\geq 2}{\sla{3}{2}} 
        + \sym{b}{1}{\sla{1}{2}}\right]
        \left(1-\theta\left(\tR^{-1}\langle \xi\rangle\right)\right)\right]}
        + \bB_{\geq 2}(\zak)\,,
 \end{aligned}
 \end{equation}
 where $ \bB_{\geq 2}(\zak)$ is a bounded quadratic map satisfying \eqref{est:Bexp}.

\smallskip
\noindent
{\bfseries Expansion of $\pa_t \bF_4(\zak) \bF_4(\zak)^{-1} $.} We apply the first order Lie's expansion \eqref{Lie:dt}. To this end we first compute the time derivative $\pa_t \bG_1$ of the generator, using that $\bG_1$ is $1$-homogeneous with respect to the variable $\zak$ and the equation \eqref{eq:zak}-\eqref{X:zak_esp} for $\pa_t\zak$, we get 
\begin{align}
    \pa_t \bG_1(\zak)= \bG_1(\hamvec{\cH}(\zak))=\bG_1(-\ii \vomega(D)\zak)+\bG_1(\hamvec{\geq 2}(\zak)) , 
    \label{dtGmu4}
\end{align}
where since $\bG_1(\zak)\in \wt \cR^{-\vr+2}_1$,  $\bG_1(-\ii \vomega(D)\zak)$ belongs to $\wt  \cR^{-\vr+\sla{7}{2}}_1$ by \eqref{stima:omegaD}. Moreover, combining \eqref{bound:smoo} for  $\bG_1(\bigcdot)$ and \eqref{tameest:campo} for $\hamvec{\geq 2}(\zak)$, we have that $\bG_1(\hamvec{\geq 2}(\zak))$ satisfies the quadratic bound \eqref{est:Bexp}. Using in addition estimate \eqref{est:XH} for $\hamvec{\cH}(\zak)$, we have that  $\pa_t \bG_1$ is a smoothing operator satisfying

\begin{equation*}
    \| \pa_t \bG_1 V \|_{s+\vr -\sla{7}{2}}\lesssim_s \| \zak\|_{s_0}\| V\|_{s}+ \| \zak\|_{s}\| V\|_{s_0}\,.
\end{equation*}
Then, combining the above estimate with the estimate \eqref{bound:smoo} for $ \bG_1$,  we get that 
\begin{align}
[ \bG, \pa_t \bG]= \bB_{\geq 2}(\tR;\zak)\,,
\label{comm:dtGGmu4}
\end{align}
satisfies the quadratic bound \eqref{est:Bexp}.
Then, by the Lie's expansion 
\eqref{Lie:dt} and \eqref{dtGmu4}, \eqref{comm:dtGGmu4} 
and using also the bound \eqref{stima:flussosmoothing} for 
$\Phi^\theta$, we eventually get 
\begin{align}
    \pa_t \bF_4(\zak)\bF_4(\zak)^{-1}= \bG_1(-\ii \vomega(D)\zak)+ \bB_{\geq 2}(\tR;\zak)\,.
    \label{conj:dt_G}
\end{align}
 
 \smallskip
\noindent
{\bfseries Expansion of $\bF_4(\zak) \bR_1(\zak)\bF_4(\zak)^{-1} $.}
 Using \eqref{Lie:1} and \eqref{stima:phiexp}, 
 we get 
 \begin{equation}\label{conj:R_mu4}
\begin{aligned}
\bF_4(\zak) [\bR_1(\zak)] \bF_4(\zak)^{-1} &= \bR_1(\zak)
+ \int_0^1\Phi^\theta (\zak)\left[\bG_1(\zak),
\bR_1(\zak)\right]\Phi^{-\theta}(\zak)\, \di \theta
\\
&= \bR_1(\zak)+ \bB_{\geq 2}(\tR;\zak)\,,
\end{aligned}
\end{equation}
where $ \bB_{\geq 2}(\tR;\zak)$ satisfies \eqref{est:Bexp}.

\smallskip
\noindent
{\bfseries Bound of $\bF_3(\zak) \bB_{\geq 2}(\tR;\zak)\bF_3(\zak)^{-1} $.} Arguing as in  
    \eqref{bounf:B2_mu}
one prove that $\bF_3(\zak) \bB_{\geq 2}(\tR;\zak)\bF_3(\zak)^{-1}$ can be included in $ \bB_{\geq 2}(\tR;\zak)$.

\smallskip
\noindent
Finally, gathering \eqref{proof:V4:1}, \eqref{conj:omeghino_birk}, \eqref{conj:b32_smoo}, \eqref{conj:dt_G} and  \eqref{conj:R_mu4} , we get 

\begin{equation*}
\begin{aligned}
    \pa_t Z&= 
    \vOpbw{-\ii \left[ \Lambda(\xi)+ \sym{\cZ}{ 2}{\sla{3}{2}}+\sym{\cZ}{ 3}{1+\mu}
    +\sym{\cZ}{1}{\sla{1}{2}}+\left[\sym{b}{\geq 2}{\sla{3}{2}} 
    + \sym{b}{1}{\sla{1}{2}}\right]\left(1-\theta\left(\tR^{-1}\langle \xi\rangle\right)\right)\right]}Z
    \\&\qquad\quad
    + \bB_{\geq 2}(\tR;\zak)Z
    + \bR_1(\zak)Z+\bG_1(-\ii \vomega(D)\zak) + \left[ \bG_1(\zak),  -\ii \vomega(D) \right] 
 \\&
    \stackrel{\eqref{eq:homo_birk}}{=} 
    \vOpbw{-\ii \left[ \Lambda(\xi)+ \sym{\cZ}{ 2}{\sla{3}{2}}+\sym{\cZ}{ 3}{1+\mu}
    +\sym{\cZ}{1}{\sla{1}{2}}+\left[\sym{b}{\geq 2}{\sla{3}{2}} 
    + \sym{b}{1}{\sla{1}{2}}\right]\left(1-\theta\left(\tR^{-1}\langle \xi\rangle\right)\right)\right]}Z
    \\&\qquad \quad+ \bB_{\geq 2}(\tR;\zak)Z\,,
\end{aligned}
\end{equation*}
which implies \eqref{QR-birk_normalform}. This concludes the proof.
\end{proof}

\smallskip
\begin{proof}[{\bf Proof of Proposition \ref{prop:NF_finale}}.]
By applying Propositions \ref{prop:NF_linear}, \ref{prop:NF_quadratic}, \ref{prop:NF_cubicSIMBOLI},
\ref{prop:NF_cubic}
we define
\[
{\bf T}(\zak)[\cdot]:=\bF_4(\zak)\circ \bF_3(\zak)\circ \bF_2(\zak)\circ \bF_1(\zak)[\cdot]\,.
\]
In view of the estimates \eqref{stima:admR} on the maps ${\bf F}_{i}(\zak)$, $i=1,2,3$,
and \eqref{stima:F4} on $\bF_4(\zak)$, we deduce the bound
\eqref{stima:admR_nonspec} for the composition map ${\bf T}(\zak)$.
Now by setting 
  $Z:= \bT(\zak)U_2$, 
where $U_2$ solves the problem \eqref{NuovoParaprod2} (see Proposition \ref{prop:nfquadtra}),
formula \eqref{QR-birk_normalform} implies 
\eqref{QR-normalform}.
\end{proof}

\section{ Energy Estimates for the Quasi-Resonant Equation}\label{sez-energy-estimate}
In the previous  sections we showed, under suitable regularity and smallness assumption, 
that if $\zak$ solves \eqref{eq:zak} and $U$ solves the complex water waves system 
\eqref{eq:U}, then
the function (see Propositions \ref{prop:blockdecoupling},  \ref{prop:nfquadtra} and 
\ref{prop:NF_finale})
\begin{equation}\label{varZfinale}
Z:=\bT(\zak)\circ  \mathbf{\Psi}_2\circ \bF(\zak)U
\end{equation}
solves the problem (see \eqref{QR-normalform})
	\be 
	\partial_t Z= \vOpbw{-\ii \left[\Lambda(\xi)+  \sym{\ta}{\geq 2}{\sla{3}{2}}(\zak;x,\xi)+ \sym{\tb}{1}{\sla{1}{2}}(\zak;x,\xi)\right] }Z+ \bB_{\geq 2 }(\tR;\zak)Z\,,
	\label{eq:finale_nuova}
	\ee
	where
    \begin{align}
        &\sym{\ta}{\geq 2}{\sla{3}{2}}=  \sym{\tm}{2}{\sla{3}{2}}(\xi)
        +\sym{\cZ}{ 2}{\sla{3}{2}}+\sym{\cZ}{ 3}{1+\mu} +
      (\sym{\Sigma}{\geq 2}{\sla{3}{2}} 
         + \sym{b}{\geq 2}{1})  \left(1-\theta\left(\tR^{-1}\langle \xi\rangle\right)\right)  
        \in \Gamma^{\sla{3}{2},\delta}_{\geq 2}\label{def:ta32}
        \\
&\sym{\tb}{1}{\sla{1}{2}}=  \sym{\cZ}{ 1}{\sla{1}{2}}
+\sym{b}{1}{\sla{1}{2}}\left(1-\theta\left(\tR^{-1}\langle \xi\rangle\right)\right)  
\in \Gamma^{\sla{1}{2},\delta}_{1}\,.
        \label{def:tb12}
    \end{align} 
    Recall that, in view of \Cref{prop:NF_finale,prop:nfquadtra} and \eqref{stima:diffb3}
    (see also \eqref{def:sdBIS}), 
    the symbol  $\sym{\ta}{\geq 2}{\sla{3}{2}}$ is differentiable 
    with respect to $ \zak$ and there are $s_0,\mu>0$ 
    such that for any $\s\geq s_0$ one has the estimates 
    \begin{equation}\label{stima:derivataa32}
 \Big| 
 \di_\zak \sym{\ta}{\geq 2}{\sla{3}{2}}(\zak;x,\xi)[\wh \zak]
 \Big|_{\sla{3}{2}, \sigma-\mu }
        \lesssim_\s 
        \| \zak\|_{\s}\| \wh \zak\|_{\s}\,.
    \end{equation}
    Moreover we recall that  $\bB_{\geq 2}(\tR;\zak)$ satisfies \eqref{est:Bexp}.
    In this section we prove cubic energy estimate for the 
    equation \eqref{eq:finale_nuova}.

\begin{proposition}[Main energy estimate]
\label{prop:main_est}
Let $T>0$, $I:=[0,T]$, $K>1$ and 
$\kap \in \R_{+}\setminus \mathscr{N}$ (see Lemma \ref{lem:small_divisors}). 
Then there exists $s_0>0$ such that, 
for any $s\ge s_0$, there exist $\und{\vare}_0=\und{\vare}_0(s,K)>0$ 
and two constants $\und{C}_s>0$ (depending only on $s$) and 
$\und{C}_{s,K}>0$ (depending on both $s$ and $K$) such that the following holds. 
For any $0\le \vare\leq \vare_0$ and any solution $\zak \in  C^0(I; H^s_\R)$ with
$\zak(t)\in B_{s_0,\R}(K\vare)$ for any $t \in I$ and
$\|\zak(0)\|_{s_0}\leq \vare$, the variable $Z=Z(t)$  in \eqref{varZfinale}
obtained by applying \Cref{prop:NF_finale} 
with parameter $\tR:=\vare^{-1}$ satisfies 
\begin{align}
	\| Z(t)\|_{s_0}^2 &\leq \und{C} \vare^2+ T \und{C}_{K}\vare^4\,,
    \label{en:est}
    \\
    \| Z(t)\|_{s}^2&\leq \und{C}_s \|Z(0)\|_s^2
    + T \und{C}_{s,K}\vare^2 \sup_{\tau\in [0,T]}\| Z(\tau)\|_s^2\,,   \label{en:est2}
\end{align}
for any $t \in [0,T]$. Moreover, there is $C_s>0$ such that one has the equivalence
  \begin{align}
   C_s^{-1} \| \zak(t)\|_s \leq \|Z(t)\|_s \leq C_s \| \zak(t)\|_s\,, 
   \qquad \forall t \in [0,T]\,.
   \label{equivalenza:Zzak}
   \end{align}
\end{proposition}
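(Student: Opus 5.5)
The plan follows the strategy sketched in the introduction: a low/high frequency splitting of $Z$ with threshold $\tR=\vare^{-1}$, and a separate almost conserved energy for each part. First the equivalence \eqref{equivalenza:Zzak}. We compose the bounds \eqref{stima:adm} for $\bF$ and $\mathbf{\Psi}_2$, the equivalence \eqref{equiv:U_etaomega} between $U$ and $\zak$, and the bound \eqref{stima:admR_nonspec} for $\bT$. With $\tR=\vare^{-1}$ and $\|\zak\|_{s_0}\le K\vare$, the exponential factor $\exp((C\tR\|\zak\|_{s_0})^2\|\zak\|_{s_0}^{1/2})\le \exp(C^2K^2(K\vare)^{1/2})$ is $\le 2$ once $\vare\le\und{\vare}_0(K)$. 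The tame term $\|\zak\|_s\|V\|_{s_0}$ is absorbed by applying the bound first at $s_0$ and then at $s$ with smallness. Likewise every factor $e^{C(\tR\|\zak\|_{s_0})^2}$ in \eqref{est:Bexp} is a constant $C_K$, so that $\|\bB_{\geq2}(\tR;\zak)Z\|_s\lesssim_{s,K}\vare^2\|Z\|_s+\vare\|\zak\|_s\|Z\|_{s_0}$.

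\textbf{Low modes.} We use the partition $\cP=\{\Omega_\alpha\}$ recalled in the introduction and the energy $\tE^{(s)}(Z)=\sum_{\alpha\in\mathbb I_\vare}\tM_\alpha^{2s}\|\Pi_{\Omega_\alpha}Z\|_{L^2}^2$. For $|\xi|\le 10\tR$ we have $1-\theta=0$, so on these blocks the symbol in \eqref{eq:finale_nuova} reduces to $\Lambda+\sym{\tm}{2}{\sla{3}{2}}+\sym{\cZ}{2}{\sla{3}{2}}+\sym{\cZ}{3}{1+\mu}+\sym{\cZ}{1}{\sla{1}{2}}$. These are Fourier multipliers and real normal form symbols. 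By the block invariance \eqref{block_invariant-intro} and the inclusion \eqref{inclu-intro}, each $\Pi_{\Omega_\alpha}$ commutes with these operators. Reality of the symbols and the Weyl adjoint rule then make their contribution to $\frac{d}{dt}\|\Pi_{\Omega_\alpha}Z\|^2$ vanish. The cut-off terms with $1-\theta$ are supported in $|\xi|\ge10\tR$, so they only hit the low blocks through the paradifferential cut-off $\chi$. With the $\delta_0$ support condition and \eqref{inclu-intro}, these contributions reduce to either zero or terms leaking $Z$ from $|\xi|\gtrsim\tR$. There is a delicate point here: blocks near $|\xi|\sim\tR$ must either be left untouched by $\theta$, or be handled by enlarging the threshold constant. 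Hence $|\frac{d}{dt}\tE^{(s)}|\lesssim \langle\bB_{\geq2}Z,Z\rangle_s\lesssim_{s,K}\vare^2\|Z\|_s^2$.

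\textbf{High modes.} We use the modified energy $\cE^{(s)}(Z)=\langle\vOpbw{L^{4s/3}\chi_\vare(L)}Z,Z\rangle$ with $L=\Lambda+\sym{\ta}{\geq2}{\sla{3}{2}}+\sym{\tb}{1}{\sla{1}{2}}$, and $\chi_\vare$ vanishing for $|\xi|\lesssim\tR$. On the support of $\chi_\vare$ the symbol $\sym{\tb}{1}{\sla{1}{2}}$ has size $\vare|\xi|^{1/2}\lesssim\vare^2|\xi|^{3/2}$, so $L^{4s/3}\chi_\vare=|\xi|^{2s}(1+O(\vare^2))\chi_\vare$, which gives $\cE^{(s)}\sim\|Z^\perp_\vare\|_s^2$. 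Differentiating, the principal commutator $[\vOpbw{L^{4s/3}\chi_\vare},\vOpbw{-\ii L}]$ has vanishing Poisson bracket, since both symbols are functions of $L$. The remaining terms are of the following kinds. Third-order Moyal terms are bounded. The time derivative $\pa_t L$ is controlled by \eqref{stima:derivataa32} and \eqref{est:XH}, giving size $\vare^2|\xi|^{3/2}$ paired against order $2s-3/2$ on frequencies $\ge\tR$. Cross terms between $\sym{\tb}{1}{\sla{1}{2}}$ and the $\xi$-derivatives of $\chi_\vare$ gain $\tR^{-1}$ from the support. The term $\bB_{\geq2}$ gives $\vare^2\|Z\|_s^2$. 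Altogether $|\frac{d}{dt}\cE^{(s)}|\lesssim_{s,K}\vare^2\sup\|Z\|_s^2$, while the linear-in-$\vare$ piece always carries a $\tR^{-1}=\vare$ gain.

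\textbf{Conclusion.} Integrating both energies over $[0,T]$ and summing yields \eqref{en:est2}. Setting $s=s_0$ and using $\|Z\|_{s_0}\lesssim_K\vare$ (from \eqref{equivalenza:Zzak} and the bootstrap hypothesis) yields \eqref{en:est}, with $\und{C}$ independent of $K$ coming from $\|Z(0)\|_{s_0}\le C\vare$.

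The main obstacle is the high-mode energy derivative. Two points need care there. The first is that the $\xi$-limited regularity classes $\cN^{m,\delta}$ with $\delta<1$ give only $\delta$-gains per derivative, so one must check that the Moyal remainders are genuinely of order $\le 2s$ with $\vare^2$ size. The second is that the transition region $|\xi|\sim\tR$, where $\chi_\vare$ and $\theta$ overlap, must produce no unmatched $\vare\,|\xi|^{1/2}$ term. There I would first try choosing $\chi_\vare$ supported where $\theta\equiv0$, and treat the leftover band through the low-mode block energy.
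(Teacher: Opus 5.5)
Your plan is the paper's: the splitting at $\tR=\vare^{-1}$, the block energy $\tE^{(s)}$ over $\mathbb{I}_\vare$, the high-frequency energy $\langle \Opbw{L^{(2s)}\chi_\vare(L)}z,z\rangle$ whose principal commutator vanishes, and the equivalence derived from the bounds on $\bF,\mathbf{\Psi}_2,\bT$ using $e^{C(\tR\|\zak\|_{s_0})^2}\le e^{CK^2}$. The main line is sound. The low-mode worry also disappears: blocks in $\mathbb{I}_\vare$ lie in $B_{\Z^2}(2\vare^{-1})$ by \eqref{inclu}, and the $(1-\theta)$-symbols live on $\langle\xi\rangle\ge 10\vare^{-1}$. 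By \eqref{vanish_high_freq} they therefore vanish identically on these blocks. Nothing leaks, and no threshold needs adjusting.

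The obstacle you single out on the high modes, however, is misdiagnosed, and the remedy you propose would fail. There is no transition problem. Since the energy symbol is a function of the full $L$, one has $\{F(L),L\}=0$ whatever pieces of $L$ are $\tR$-localized normal-form symbols or carry $(1-\theta)$. Moreover, on $\{|\xi|\gtrsim\vare^{-1}\}$ every order-$\frac12$ term of size $\vare$, including $\partial_t$ of the normal-form part, is already $\lesssim K\vare^2|\xi|^{3/2}$. The paper therefore lets the cut-off switch on deep inside the region $\{\theta\equiv 1\}$. Indeed, $\wt\theta(L^{(3/2)}/\Omega(1/\vare))$ is supported in $|\xi|\ge \frac{1}{36\vare}$ and equals one for $|\xi|\ge\vare^{-1}$ (see \eqref{specloc:theta_tilde}), so this single energy covers the band $10\tR\le\langle\xi\rangle\le 11\tR$. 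Your fallback does the opposite: it pushes $\chi_\vare$ to $\{\theta\equiv0\}$ and covers the leftover band with block energies. That breaks down because on that band the order-$\frac32$ symbols $(\Sigma_{\ge2}+b_{\ge2}+b_1)(1-\theta)$ are not in normal form. The blocks are not invariant under them, and the block weight is not smooth in $\xi$. Consequently $\frac{d}{dt}\tE^{(s)}$ would contain a commutator of order $2s+\frac32$ and size $\vare^2$, costing $\tR^{3/2}$ at these frequencies.

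The step you leave open is exactly where the paper does its work. Saying that third-order Moyal terms are bounded is not enough; you need them to be $O(\vare^2)$ as maps $H^s\to H^{-s}$. The paper obtains this in three steps:
\begin{enumerate}
\item One has $p_3\bigl(\Lambda^{4s/3}\wt\theta(\Lambda/\Omega(1/\vare)),\Lambda\bigr)=0$, so every surviving term contains a factor of size $O(K\vare)$.
\item The order of $p_3$ is at most $2s+\frac32-3\delta<2s-\frac98$, because $\delta>\frac78$ by \eqref{410}.
\item The support in $|\xi|\gtrsim\vare^{-1}$ converts these $\frac98$ orders into a factor $\vare^{9/8}$.
\end{enumerate}
Together this gives $K\vare^{17/8}\le \vare^{2+1/16}$, as in \eqref{est:p3}. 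The composition remainder is handled the same way, by measuring the energy symbol in order $2s+r$, where its norm is $\vare^r$.
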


In order to prove \Cref{prop:main_est} we make a low-high 
frequencies splitting of the Sobolev norm of $Z$, namely
\begin{align}
\| Z(t)\|_s^2= \| \Pi_{\leq \frac{1}{\vare}} Z(t)\|_s^2
+\| \Pi_{\leq \frac{1}{\vare}}^\bot Z(t)\|_s^2
= \sum_{|j|\in [1,\vare^{-1}]} |j|^{2s} |z_j(t)|^2 
+\sum_{|j|>\vare^{-1}} |j|^{2s} |z_j(t)|^2 \,.
\label{splitting:low_high}
\end{align}
We estimate each term separately: in \Cref{sec:low_est} 
we prove an upper bound for the low frequencies part 
$ \| \Pi_{\leq \frac{1}{\vare}} Z(t)\|_s^2$ while in 
\Cref{sec:high_est} we prove a bound for the high 
frequencies part $ \| \Pi_{\leq \frac{1}{\vare}}^\bot Z(t)\|_s^2$.

\subsection{Low frequencies quasi-resonant block decomposition}\label{sec:low_est}
    In this section we estimate the Sobolev norms of the low frequencies part of the solution $z$ of \eqref{eq:finale_nuova}. The main result is the following.
    \begin{proposition}[Low-frequencies energy estimate]
    \label{prop:stima_low}
		Let $T>0$, $I:=[0,T]$, $K>1$ and
		$\kap \in \R_{+}\setminus \mathscr{N}$ (see Lemma \ref{lem:small_divisors}). 
		Then there exists $s_0>0$ such that, 
		for any $s\ge s_0$, there exist $ {\vare}_0= {\vare}_0(s,K)>0$ 
		and two constants ${C}_s>0$ (depending only on $s$) and $ {C}_{s,K}>0$ 
		(depending on both $s$ and $K$) such that the following holds. 
		For any $0\le \vare\leq \vare_0$ and any solution $\zak\in  C^0(I; H^s_\R)$ with
		$\zak(t)\in B_{s_0,\R}(K\vare)$ for any $t \in I$ and 
		 $\|\zak(0)\|_{s_0}\leq \vare$, 
		the variable $Z=Z(t)$ in \eqref{varZfinale} 
		obtained by applying \Cref{prop:NF_finale} with parameter $\tR:=\vare^{-1}$ satisfies 
\begin{equation}\label{stimascema_low}
\|\Pi_{\leq \frac{1}{\vare}} Z(t)\|_s^2
\leq 
C_s\|Z(0)\|_s^2+ T C_{s,K}\vare^2 \sup_{\tau\in [0,T]}\|Z(\tau)\|_s^2 \,.
\end{equation}
\end{proposition}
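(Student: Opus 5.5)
The plan is to work with the block energy $\tE^{(s)}(Z):=\sum_{\alpha\in\mathbb{I}_\vare}\tM_\alpha^{2s}\|\Pi_{\Omega_\alpha}Z\|_{L^2}^2$. It uses the partition $\cP=\{\Omega_\alpha\}$ of $\Z^2$ recalled in the introduction (Corollary 5.11 of \cite{BML2}, Lemma 4.6 of \cite{BML3}), built with the parameters $\delta,\nu,\tau$ fixed in \eqref{410}. Here $\mathbb{I}_\vare$ is the set of indices in \eqref{I_vare-intro}, with $\tR=\vare^{-1}$.

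First, the dyadic property \eqref{size:block-intro} gives $\tE^{(s)}(Z)\sim_s\|\Pi_{\mathcal{B}}Z\|_s^2$, where $\mathcal{B}:=\bigcup_{\alpha\in\mathbb{I}_\vare}\Omega_\alpha$. By \eqref{inclu-intro},
\[
B_{\Z^2}(\vare^{-1})\subset\mathcal{B}\subset B_{\Z^2}(2\vare^{-1}).
\]
Hence $\|\Pi_{\le 1/\vare}Z\|_s^2\lesssim_s \tE^{(s)}(Z(t))$ and $\tE^{(s)}(Z(0))\lesssim_s\|Z(0)\|_s^2$. It therefore suffices to prove
\[
\Big|\tfrac{d}{dt}\tE^{(s)}(Z)\Big|\lesssim_{s,K}\vare^2\|Z\|_s^2
\]
and then integrate in time.

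Next I differentiate using \eqref{eq:finale_nuova}. I only need the first component $z$, and I use that $\Pi_{\Omega_\alpha}$ commutes with Fourier multipliers. The contributions split as follows.

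(a) The term $\Lambda(D)$ and the $x$-independent symbol $\sym{\tm}{2}{\sla{3}{2}}(\xi)$ are real Fourier multipliers. They commute with $\Pi_{\Omega_\alpha}$ and are skew-adjoint after multiplication by $-\ii$, so they contribute nothing.

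(b) The normal form symbols $\sym{\cZ}{1}{\sla{1}{2}}$, $\sym{\cZ}{2}{\sla{3}{2}}$, $\sym{\cZ}{3}{1+\mu}$ are real and in normal form (\Cref{def:normalform}). By \eqref{block_invariant-intro} their Bony--Weyl quantizations commute with each $\Pi_{\Omega_\alpha}$, and by \Cref{rem:symbols_modozero} they are self-adjoint. Therefore
\[
\Re\langle -\ii\Opbw{\cZ}\Pi_{\Omega_\alpha}z,\Pi_{\Omega_\alpha}z\rangle=0,
\]
and these terms also contribute nothing. This is the block-diagonal structure that prevents derivative loss.

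(c) The symbols multiplied by $1-\theta(\tR^{-1}\langle\xi\rangle)$ are supported in $|\xi|\ge 10\tR=10\vare^{-1}$. The paradifferential cutoff \eqref{cut off defin} only couples $j,k$ with $|j-k|\le\delta_0\langle (j+k)/2\rangle$. If output frequencies lie in $\mathcal{B}\subset B(2\vare^{-1})$, the input frequencies must have $|(j+k)/2|\lesssim 3\vare^{-1}<10\vare^{-1}$, so the symbol vanishes there. Thus $\Pi_{\mathcal{B}}$ annihilates these terms exactly. This support bookkeeping is the step I expect to need the most care.

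(d) The remainder satisfies $|\langle \Pi_{\Omega_\alpha}\bB_{\ge2}Z,\Pi_{\Omega_\alpha}Z\rangle|$, weighted and summed, bounded by $\|\bB_{\ge2}Z\|_s\|Z\|_s$. Using \eqref{est:Bexp} together with $\tR\|\zak\|_{s_0}\le K$, the exponential factor becomes a constant $C_K$. Combined with $\|\zak\|_{s_0}\le K\vare$ and the equivalence between $\|\zak\|_s$ and $\|Z\|_s$ (proved via the transformation bounds \eqref{stima:admR_nonspec}, \eqref{stima:adm}), this gives the bound $C_{s,K}\vare^2\|Z\|_s^2$.

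Integrating over $[0,t]$ gives \eqref{stimascema_low}.

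The hard point is (b) combined with (c). One has to check that the partition is genuinely invariant under the Bony--Weyl quantization with its cutoff, and not only under the pure Weyl one. I would verify this by noting that the resonant Fourier coefficients $\hat{\cZ}(k,\xi)$ only connect $j$ and $j-k$ with $|k|\le\langle\xi\rangle^\nu$ and $|k\cdot\xi|\le\langle\xi\rangle^\delta|k|^{-\tau}$. These are exactly the pairs that the block construction of \cite{BML2} places in the same $\Omega_\alpha$. Multiplying by $\chi$ can only remove couplings, so invariance is preserved.
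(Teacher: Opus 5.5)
Your proposal is correct and follows essentially the same route as the paper. The paper also works with the block energy $\tE^{(s)}$ and proves $\|\Pi_{\le 1/\vare}u\|_s^2\le \tE^{(s)}(u)\le C_s\|u\|_s^2$; it annihilates the symbols supported in $|\xi|\ge 9\vare^{-1}$ under $\Pi_{\Omega_\alpha}$, $\alpha\in\mathbb{I}_\vare$, by the same support argument, and removes the real normal-form contributions via block invariance and self-adjointness. It then bounds the remainder through \eqref{est:Bexp} with $\tR\|\zak\|_{s_0}\le K$ and integrates in time.
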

    From Corollary 5.11 of \cite{BML2} and Lemma 4.6 in \cite{BML3}, we have the following geometric block decomposition.
    \begin{lemma}
       Let $\delta,\, \tau,\, \nu$ as in \eqref{410}.  
       There is a partition $\cP= \{ \Omega_\alpha\}_{\alpha \in \N_0}$ of 
       $\Z^2$ such that
        \begin{enumerate}
            \item {\bfseries Dyadic blocks:} There 
            is a constant $R=R(\delta)>0$ such that
            \begin{align}
                \Omega_0 \subset B_{\Z^2}(R)\,, 
                \qquad  
                \max_{\xi \in  \Omega_\alpha} |\xi|\leq 2 \min_{\xi \in  \Omega_\alpha}|\xi|\,, 
                \qquad \text{for any }\alpha \in \N\,;
                \label{size:block}
            \end{align}
            \item {\bfseries Invariance of normal form operators:} Let $m\in \R$ 
            and $\cZ(x,\xi) \in \cN^{m,\delta}_{s_0}$ 
            a normal form symbol according to Definition \ref{def:normalform}, 
            then 
            \begin{align}
            \Pi_{\Omega_{\alpha}} \Opbw{\cZ(x,\xi)}= 
            \Opbw{\cZ(x,\xi)}\Pi_{\Omega_{\alpha}}\,, 
            \label{block_invariant}
            \end{align}
        \end{enumerate}
    \end{lemma}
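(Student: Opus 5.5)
The plan is to follow the Bourgain-type clustering argument of \cite{BML2,BML3}. The blocks $\Omega_\alpha$ will be the equivalence classes of an equivalence relation on $\Z^2$ generated by the resonant interactions that a normal form symbol can produce. From \eqref{BWnon}, $\Opbw{\cZ}$ maps $e^{\ii k'\cdot x}$ into a combination of $e^{\ii j\cdot x}$ with coefficient $\chi(j-k',\tfrac{j+k'}{2})\hat\cZ(j-k',\tfrac{j+k'}{2})$. So I would declare $j\sim_0 j'$ whenever either $j=j'$, or $k:=j-j'\neq 0$ and $\xi:=\tfrac{j+j'}{2}$ satisfy the normal form constraints of \Cref{def:normalform}:
\[
|k\cdot\xi|\le\langle\xi\rangle^{\delta}|k|^{-\tau},\qquad |k|\le\langle\xi\rangle^{\nu}.
\]
I then let $\sim$ be the transitive closure of $\sim_0$ and set $\cP$ to be the set of its classes, with $\Omega_0$ the union of all classes meeting a fixed ball $B_{\Z^2}(R)$.

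With this definition, the invariance \eqref{block_invariant} is immediate. The matrix entry of $\Opbw{\cZ}$ between $j$ and $j'$ vanishes unless $j\sim_0 j'$, hence unless $j$ and $j'$ lie in the same block. Therefore $\Opbw{\cZ}$ is block diagonal, which is the same as saying it commutes with every $\Pi_{\Omega_\alpha}$. All the work therefore goes into the geometric property \eqref{size:block}, namely that each class is contained in a dyadic annulus.

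\textbf{Step 1: resonant directions are unique.} For $|\xi|\ge R$, I would show that all admissible $k$ are parallel. Suppose $k_1,k_2$ are linearly independent, both satisfy the constraints at $\xi$, and $|\det(k_1,k_2)|\ge 1$. Solving the $2\times 2$ system for $\xi$ from $k_1\cdot\xi$ and $k_2\cdot\xi$ gives
\[
|\xi|\lesssim (|k_1|+|k_2|)\,\langle\xi\rangle^{\delta}\lesssim\langle\xi\rangle^{\delta+\nu}.
\]
This is impossible for large $|\xi|$ provided $\delta+\nu<1$. Here I expect to need the finer constraint $|k|^{-\tau}$ together with $\nu<\delta/(\tau+1)$ from \eqref{410}. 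The argument must also be uniform for nearby base points $\xi$, since along a chain the base point moves.

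\textbf{Step 2: chains stay in a dyadic shell.} Along one step $j'=j+k$ one has
\[
|j'|^2-|j|^2=2k\cdot\tfrac{j+j'}{2}.
\]
The resonance condition makes this of size at most $\langle\xi\rangle^{\delta}$. So a single step changes $|j|$ by $O(|j|^{\delta-1})$, which is negligible. The danger is accumulation along long chains, and I expect this to be the main obstacle. By Step 1, a chain can only move along one lattice direction $k_0$ (all its steps are parallel, at least locally). Along the line $j+\Z k_0$, the resonance condition $|k_0\cdot\xi|\lesssim\langle\xi\rangle^{\delta}$ confines $\xi$ to a strip of width $\langle\xi\rangle^{\delta}/|k_0|$ around the line orthogonal to $k_0$. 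Hence the chain lies in a segment of length $O(\langle\xi\rangle^{\delta}/|k_0|)\ll|\xi|$ that is roughly tangent to the circle of radius $|\xi|$. A bootstrap on the length of the chain should then show that the chain stays in the region where the direction is unchanged and $\max|\xi|\le(1+o(1))\min|\xi|$. The delicate point is a chain switching direction; I would rule this out by reapplying Step 1 at every intermediate point.

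Finally, I would choose $R=R(\delta)$ large enough that Steps 1 and 2 hold for $|\xi|\ge R/2$. The classes meeting $B_{\Z^2}(R)$ then remain inside a slightly larger ball, so they can be merged into $\Omega_0$ while keeping $\Omega_0$ bounded. Every other class satisfies the ratio bound $\max|\xi|\le 2\min|\xi|$. This gives \eqref{size:block}, and by construction the classes partition $\Z^2$. For the quantitative details, in particular the uniformity of Step 1 along chains, I would rely on \cite[Corollary 5.11]{BML2} and \cite[Lemma 4.6]{BML3}.
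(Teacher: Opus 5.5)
The gap is in Step~1, and it cannot be repaired from \eqref{410} alone. Uniqueness of the resonant direction needs $\delta+\nu<1$, and \eqref{410} does not imply this (take $\delta=\tfrac{9}{10}$, $\tau=\tfrac{21}{10}$, $\nu=\tfrac15$). Neither the weight $|k|^{-\tau}$ nor the condition $\nu<\delta/(\tau+1)$ can substitute for it, because one of the two directions may be a unit vector. Take $k_1=(1,0)$ and $k_2=(b,1)$ with $N^{1-\delta}\ll|b|\le\tfrac12N^{\nu}$, which is possible exactly when $\nu>1-\delta$. At $\xi=(-N/b,N)$ (with $b\mid N$) one has $|k_1\cdot\xi|=N/|b|\le N^{\delta}$ and $k_2\cdot\xi=0$.

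In this regime the conclusion of the lemma itself fails. Start at $j=(j_1,n)$, $n\ge N$, where $j_1$ is the integer nearest to $-(n+\tfrac{b^2+1}{2})/b$. The step $j\to j+k_2$ has midpoint $\xi$ with $|k_2\cdot\xi|\le|b|/2\ll\langle\xi\rangle^{\delta}|k_2|^{-\tau}$; this is where $\nu(\tau+1)<\delta$ enters. Then about $|b|$ unit steps along $\pm k_1$ lead to the analogous point at height $n+1$. These steps are all admissible, because the first coordinates stay of size $\lesssim n/|b|\ll\langle\xi\rangle^{\delta}$. Iterating, a single class of your relation contains points of modulus $N$ and $2N$, and in fact up to $\sim|b|^{1/(1-\delta)}$.

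Now take any admissible pair $(j,j+k)$ and a bump $\phi$ supported near its midpoint, small enough that the normal form inequalities hold on its support. Then $\Opbw{\phi(\xi)\cos(k\cdot x)}$ is a normal form operator coupling $j$ and $j+k$. So each block of any partition satisfying \eqref{block_invariant} is a union of your classes, and \eqref{size:block} is impossible. The statement therefore needs an extra restriction such as $\nu<1-\delta$. This is harmless for the rest of the paper, since \eqref{410} allows $\nu$ arbitrarily small. Deferring the quantitative details to \cite{BML2,BML3} cannot close the argument under \eqref{410} alone.

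Under $\nu<1-\delta$ your construction does work, and more simply than you anticipate.
\begin{itemize}
\item If $j\sim_0 j+k$ with midpoint $\xi=j+k/2$, then $|k|^2\le\langle\xi\rangle^{2\nu}$ and $2\nu<\delta$ give $|k\cdot j|\lesssim\langle j\rangle^{\delta}$ and $|k|\lesssim\langle j\rangle^{\nu}$. So the condition can be read at the lattice point $j$ rather than at the moving midpoint.
\item Two independent such $k_1,k_2$ would give $|j|\le|k_1\cdot j|\,|k_2|+|k_2\cdot j|\,|k_1|\lesssim\langle j\rangle^{\delta+\nu}$. Hence every $|j|\ge R$ has a unique admissible direction.
\item Direction switching is excluded by symmetry: $-k$ is admissible at $j+k$ with the same midpoint, so the direction at $j+k$ is again that of $k$.
\item A class avoiding $B_{\Z^2}(R)$ therefore lies on one line, at distance $h$ from the origin. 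The bound $|k_0\cdot j'|\lesssim\langle j'\rangle^{\delta}$ confines it to a segment of length $O(h^{\delta})$ around the foot of the perpendicular, so $h\le|j'|\le h\,(1+Ch^{2\delta-2})^{1/2}$.
\item An excursion outside $B_{\Z^2}(R)$ of a class meeting $B_{\Z^2}(R)$ starts with a step from a point of $B_{\Z^2}(R)$. It thus runs along a line with $h<R$ and stays in $B_{\Z^2}(CR)$, so these classes can be merged into $\Omega_0$.
\end{itemize}
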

We define the set of indices corresponding to 
blocks that intersect the ball $ B_{\mathbb{Z}^2}\big(1/\varepsilon\big)$ 
as
\begin{align}\label{I_vare}
\mathbb{I}_\vare := \left\{ \alpha \in \mathbb{N}_0 \,\middle|\, 
\Omega_\alpha \cap B_{\mathbb{Z}^2}\big( {\varepsilon^{-1}} \big) 
\neq \emptyset \right\}\,.
\end{align}

    \begin{lemma}
    \label{lem:barriera}
There is $\vare_0>0$ such that for any $ 0<\vare \leq \vare_0$ and  any 
$\alpha \in \mathbb{I}_\vare$ one has 
        \begin{align}\label{inclu}
            \Omega_\alpha \subset B_{\mathbb{Z}^2}\big(2\varepsilon^{-1}\big)\,.
        \end{align}
    \end{lemma}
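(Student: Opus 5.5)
The plan is to read \eqref{inclu} off directly from the two structural properties \eqref{size:block} of the partition $\cP=\{\Omega_\alpha\}_{\alpha\in\N_0}$, treating the exceptional block $\Omega_0$ and the dyadic blocks $\Omega_\alpha$, $\alpha\in\N$, separately. No analytic input is needed. The only place where the smallness of $\vare$ enters is $\Omega_0$, so I fix the threshold right away as $\vare_0:=\min\{1,2/R\}$, with $R=R(\delta)$ the constant in \eqref{size:block}.

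\emph{Case $\alpha=0$.} By \eqref{size:block}, $\Omega_0\subset B_{\Z^2}(R)$. For $0<\vare\le\vare_0$ we have $R\le 2\vare^{-1}$, hence $\Omega_0\subset B_{\Z^2}(2\vare^{-1})$. This holds whether or not $0\in\mathbb{I}_\vare$.

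\emph{Case $\alpha\in\mathbb{I}_\vare\cap\N$.} By the definition \eqref{I_vare} there is some $\xi_*\in\Omega_\alpha\cap B_{\Z^2}(\vare^{-1})$, so $\min_{\xi\in\Omega_\alpha}|\xi|\le|\xi_*|\le\vare^{-1}$. The dyadic property in \eqref{size:block} then gives
\[
\max_{\xi\in\Omega_\alpha}|\xi|\le 2\min_{\xi\in\Omega_\alpha}|\xi|\le 2|\xi_*|\le 2\vare^{-1},
\]
that is, $\Omega_\alpha\subset B_{\Z^2}(2\vare^{-1})$. The inequalities stay strict if $B_{\Z^2}(r)$ is understood as an open ball, so the argument does not depend on that convention. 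I am implicitly using that each $\Omega_\alpha$ is a finite nonempty set, so that the max and min exist; this is part of the meaning of \eqref{size:block}.

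There is no real obstacle. The only point needing care is to remember that $\Omega_0$ is not dyadic, which is exactly why the statement requires $\vare\le\vare_0$ depending on $R(\delta)$.
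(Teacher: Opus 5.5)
Your proof is correct and is the same argument as the paper's: $\Omega_0\subset B_{\Z^2}(R)\subset B_{\Z^2}(2\vare^{-1})$ once $\vare\le 2R^{-1}$, and for $\alpha\in\mathbb{I}_\vare\cap\N$ you pick a point of $\Omega_\alpha$ in $B_{\Z^2}(\vare^{-1})$ and apply the dyadic bound $\max_{\Omega_\alpha}|\xi|\le 2\min_{\Omega_\alpha}|\xi|$. Your additional $\min\{1,\cdot\}$ in $\vare_0$ and the remarks on open versus closed balls and on the existence of the max and min are harmless refinements.
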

    \begin{proof}
    First of all, by \eqref{size:block}, there is $R>0$ such that, 
    if $\vare\leq 2 R^{-1}=:\vare_0$, 
    $\Omega_0\subset B_{\mathbb{Z}^2}\big(2\varepsilon^{-1}\big)$.
    Moreover, by its definition in \eqref{I_vare}, we have that if 
    $\alpha\not=0$ belongs to $\mathbb{I}_\vare$, then there is 
    $\xi_\alpha\in \Omega_\alpha$ with $|\xi_\alpha|\leq\vare^{-1}$. 
    Then, for any $\xi \in \Omega_\alpha$, we have
        \begin{align*}
            |\xi|\leq \max_{\xi \in \Omega_\alpha} |\xi| \leq 2  \min_{\xi \in \Omega_\alpha} |\xi| 
            \leq 
            2 |\xi_\alpha|\leq {2}{\vare^{-1}}\,.
        \end{align*}
        This proves that $\xi \in B_{\mathbb{Z}^2}\big(2\varepsilon^{-1}\big) $, 
        thus proving the inclusion in \eqref{inclu}.
    \end{proof}

\begin{lemma}
    Let $s_0>1$ and $a(x,\xi) \in \cN^\sla{3}{2}_{s_0}$ a 
    real symbol such that 
    \begin{equation} \label{ass_Lem}
    \supp a(x,\xi) \subset \left\{ |\xi|\geq{9}{\vare^{-1}}\right\}\,.
    \end{equation} 
    There is $\vare_0>0$ such that for any 
    $ 0<\vare \leq \vare_0$ and  any $\alpha \in \mathbb{I}_\vare$ 
    one has 
     \begin{align}
         \Pi_{\Omega_\alpha} \Opbw{a(x,\xi)}
         \equiv
         \Opbw{a(x,\xi)}\Pi_{\Omega_\alpha}\equiv 0\,.
         \label{vanish_high_freq}
     \end{align}
\end{lemma}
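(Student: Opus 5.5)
The plan is to compute the Fourier matrix elements of $\Opbw{a}$ directly from \eqref{BWnon} and to show that every entry touching a block $\Omega_\alpha$ with $\alpha\in\mathbb{I}_\vare$ vanishes. Recall that
\[
\Opbw{a}u=\sum_{j,k}\chi\big(j-k,\tfrac{j+k}{2}\big)\,\hat a\big(j-k,\tfrac{j+k}{2}\big)\,u_k\,e^{\ii j\cdot x}.
\]
A coefficient can be nonzero only if two conditions hold simultaneously. First, $|j-k|\le\delta_0\langle\tfrac{j+k}{2}\rangle$ with $\delta_0\le\tfrac1{10}$, by \eqref{cut off defin}. Second, $|\tfrac{j+k}{2}|\ge 9\vare^{-1}$, by the support assumption \eqref{ass_Lem}; note that $\supp a$ in $\xi$ is independent of $x$-Fourier modes, so $\hat a(\cdot,\xi)=0$ whenever $|\xi|<9\vare^{-1}$.

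First I treat $\Opbw{a}\Pi_{\Omega_\alpha}$. Here $k\in\Omega_\alpha$, so \Cref{lem:barriera} gives $|k|\le 2\vare^{-1}$. Set $\xi=\tfrac{j+k}{2}$, so that $j=2\xi-k$ and $|j-k|=2|\xi-k|\ge 2(|\xi|-|k|)$. On the support, the cut-off gives $2(|\xi|-|k|)\le\delta_0\langle\xi\rangle\le\tfrac1{10}(|\xi|+1)$. Hence $|\xi|(2-\tfrac1{10})\le 2|k|+\tfrac1{10}\le 4\vare^{-1}+\tfrac1{10}$, which yields $|\xi|\le\tfrac{40}{19}\vare^{-1}+1<9\vare^{-1}$ for $\vare\le\vare_0$ small. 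This contradicts $|\xi|\ge 9\vare^{-1}$, so every such coefficient vanishes.

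The case $\Pi_{\Omega_\alpha}\Opbw{a}$ is symmetric: now $j\in\Omega_\alpha$ with $|j|\le 2\vare^{-1}$, and the same estimate applied to $|j-k|=2|\xi-j|$ gives the same contradiction. The $\vare_0$ needed is the minimum of the one from \Cref{lem:barriera} and the one making $\tfrac{40}{19}\vare^{-1}+1<9\vare^{-1}$.

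The only delicate point is the bookkeeping of constants: the factor 9 in \eqref{ass_Lem} must dominate the block bound 2 from \eqref{inclu}, after accounting for the cut-off spreading of size $\delta_0\le\tfrac1{10}$. The estimate above shows it does, with ample room.
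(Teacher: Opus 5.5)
Your proof is correct and follows essentially the paper's argument: the cut-off forces $|\tfrac{j+k}{2}|$ to be comparable to the frequency lying in $\Omega_\alpha\subset B_{\Z^2}(2\vare^{-1})$, and this contradicts the support condition $|\tfrac{j+k}{2}|\geq 9\vare^{-1}$. The only difference is how the second identity is obtained. The paper deduces $\Opbw{a}\Pi_{\Omega_\alpha}\equiv 0$ from $\Pi_{\Omega_\alpha}\Opbw{a}\equiv0$ by taking adjoints, using that $a$ is real so $\Opbw{a}^*=\Opbw{a}$. You instead repeat the estimate with the roles of $j$ and $k$ exchanged, which does not need the reality of $a$.
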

\begin{proof}
We fix $\vare_0>0$ as in \Cref{lem:barriera}
     Since $\alpha \in \mathbb{I}_\vare$, we obtain 
    \begin{align*}
       j\in \Omega_\alpha 
       \quad 
       \implies 
       \quad  
       \chi\Big(j-k,\frac{j+k}{2}\Big)
       \hat{a}\left(j-k, \frac{j+k}{2}\right)=0\,,
    \end{align*}
    where $\chi$ is in \eqref{cut off defin}.
    Indeed, assume by contradiction that 
    $ \chi\Big(j-k,\frac{j+k}{2}\Big)
       \hat{a}\left(j-k, \frac{j+k}{2}\right)\neq0$,
    then $|k|\leq 2|j|$ and,  by \eqref{inclu}, we get
    \begin{align*}
         \frac{|j+k|}{2}\leq \frac{3}{2} | j| \leq {3}{\vare^{-1}}\,.
    \end{align*}
    On the other hand by the assumption \eqref{ass_Lem} 
    we have also $\frac{|j+k|}{2}\geq {9}{\vare^{-1}}$, 
    which is a contradiction. 
    Thus, for any $u \in H^s(\T^2;\C)$ we have 
    \begin{align*}
        \Pi_{\Omega_\alpha} \Opbw{a(x,\xi)}u= 
        \sum_{\substack{j\in \Omega_\alpha
        \\
        k \in \Z^2\setminus \{0\}}} 
        \chi\Big(j-k,\frac{j+k}{2}\Big)
        \widehat{a}\left(j-k, j-\frac{j-k}{2}\right)
        u_k e^{\ii j\cdot x}=0\,.
    \end{align*}
 This proves that $\Pi_{\Omega_\alpha} \Opbw{a(x,\xi)}\equiv 0$ and also
 $  \Opbw{a(x,\xi)}\Pi_{\Omega_\alpha}\equiv 
 \Big( \Pi_{\Omega_\alpha}\Opbw{a(x,\xi)}\Big)^*\equiv 0$.
\end{proof}

	We define the block invariant modified Sobolev norms 
	\begin{align}
	\tE^{(s)}(u):= \sum_{\alpha\in \mathbb{I}_\vare} \tM_\alpha^{2s} 
	\| \Pi_{\Omega_\alpha}u\|_{L^2}^2\,, 
	\qquad 
	\tM_{\alpha}:= \max |\Omega_\alpha|\,.
    \label{def:Energy_low}
	\end{align}
	One has the following equivalence
    \begin{lemma}
        For any $s\geq 0$ there is a constant $C_s>0$ such that 
        \begin{align}
            \| \Pi_{\leq \frac{1}{\vare}}u\|_s^2
            \leq 
            \tE^{(s)}(u)\leq C_s\|u\|_s^2\,, 
            \qquad \forall u \in \dot H^s(\T^2;\C)\,.
            \label{stima:lem:equi}
        \end{align}
    \end{lemma}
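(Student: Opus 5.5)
The plan is to argue mode by mode in Fourier space, using only that $\cP=\{\Omega_\alpha\}$ is a partition of $\Z^2$ together with the dyadic property \eqref{size:block}. Since $u\in \dot H^s$ has zero average, only modes $j\neq 0$ occur. I will measure the Sobolev norm with the homogeneous weight $|j|^{2s}$, as in the splitting \eqref{splitting:low_high}; on $\dot H^s$ this is equivalent to \eqref{Sobnorm} up to a factor $2^{s}$. Because the blocks are disjoint, orthogonality gives
\begin{equation*}
\tE^{(s)}(u)=\sum_{\alpha\in\mathbb{I}_\vare}\ \sum_{j\in\Omega_\alpha} \tM_\alpha^{2s}|u_j|^2 .
\end{equation*}
Both inequalities in \eqref{stima:lem:equi} then reduce to comparing the weight $\tM_\alpha^{2s}$ with $|j|^{2s}$ for $j\in\Omega_\alpha$.

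For the lower bound, take any $j$ with $1\le |j|\le \vare^{-1}$. It lies in exactly one block $\Omega_\alpha$. That block meets $B_{\Z^2}(\vare^{-1})$, so $\alpha\in\mathbb{I}_\vare$ by \eqref{I_vare}. By definition $\tM_\alpha=\max_{\xi\in\Omega_\alpha}|\xi|\ge |j|$, and since $s\ge0$ this gives $|j|^{2s}|u_j|^2\le \tM_\alpha^{2s}|u_j|^2$. Summing over all such $j$, and discarding the remaining nonnegative contributions of $\tE^{(s)}(u)$ (modes in blocks of $\mathbb{I}_\vare$ lying outside the ball), yields $\|\Pi_{\le \frac1\vare}u\|_s^2\le \tE^{(s)}(u)$.

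For the upper bound, fix $\alpha\in\mathbb{I}_\vare$ and $j\in\Omega_\alpha$ with $j\ne0$, and split into two cases.
\begin{itemize}
\item If $\alpha\ge1$, then \eqref{size:block} gives $\tM_\alpha\le 2\min_{\xi\in\Omega_\alpha}|\xi|\le 2|j|$, so $\tM_\alpha^{2s}\le 4^s|j|^{2s}$.
\item If $\alpha=0$, then $\tM_0\le R=R(\delta)$ while $|j|\ge1$, so $\tM_0^{2s}\le R^{2s}|j|^{2s}$.
\end{itemize}
The zero mode, which may belong to $\Omega_0$, does not contribute since $u_0=0$. Summing over the disjoint blocks, and enlarging the sum to all of $\Z^2\setminus\{0\}$, gives $\tE^{(s)}(u)\le \max\{4^s,R^{2s}\}\|u\|_s^2$, i.e. the claim with $C_s:=\max\{4^s,R^{2s}\}$, which is independent of $\vare$.

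There is no real obstacle here. The only points needing care are the exceptional block $\Omega_0$, which is handled by its boundedness and the zero-average condition, and the consistent choice of the homogeneous weight, which is what makes the lower bound hold with constant exactly $1$.
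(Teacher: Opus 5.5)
Your proof is correct and follows the paper's argument: the paper also compares $\tM_\alpha$ with $|k|$ mode by mode, packaging the dyadic property and the boundedness of $\Omega_0$ into a single bound $|k|\le \tM_\alpha\le R|k|$ for $0\neq k\in\Omega_\alpha$, $\alpha\in\mathbb{I}_\vare$, and using the covering $B_{\Z^2}(\vare^{-1})\subset\bigcup_{\alpha\in\mathbb{I}_\vare}\Omega_\alpha$ for the lower bound. Your constant $\max\{4^s,R^{2s}\}$ is the correct one (the paper writes $R^s$ where $R^{2s}$ is meant), and your remark that the homogeneous weight $|j|^{2s}$ is what gives the lower bound with constant $1$ matches what the paper uses implicitly.
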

    \begin{proof}
    Thanks to \eqref{size:block}, there is a constant $R\geq 2$ 
    such that for any $\alpha\in \mathbb{I}_\vare$ one has 
        \begin{align}
            |k |\leq \tM_\alpha\leq R |k|\,, 
            \qquad 
            \text{for any }0\not=k \in \Omega_\alpha\,.
            \label{equi_tM}
        \end{align}
        Moreover, recalling the definition 
        \eqref{I_vare} of $\mathbb{I}_\vare$, one has 
        \begin{align}
            B_{\Z^2}(\vare^{-1})
            \subset 
            \bigcup_{\alpha \in \mathbb{I}_\vare} \Omega_\alpha\,. 
        \end{align}
        Then, using also \eqref{equi_tM}, we get
        \begin{align*}
 \| \Pi_{\leq {\vare^{-1}}}u\|_s^2= 
 \sum_{|k|\leq\vare^{-1}}|k|^{2s} |u_k|^2 
 \leq 
 \sum_{\alpha \in \mathbb{I}_\vare}
 \sum_{k \in \Omega_\alpha}|k|^{2s}|u_k|^2 
 \leq \tE^{(s)}(u)\,,
        \end{align*}
        which is the first inequality in \eqref{stima:lem:equi}.
        Moreover, using the second inequality in \eqref{equi_tM}, we get 
        \begin{align*}
            \tE^{(s)}(u)=
            \sum_{\alpha \in \mathbb{I}_\vare}
            \sum_{k \in \Omega_\alpha\setminus\{0\}}
            \tM_\alpha^{2s}|u_k|^2\leq R^s 
            \sum_{\alpha \in \mathbb{I}_\vare}
            \sum_{k \in \Omega_\alpha}|k|^{2s}|u_k|^2
            \leq R^s \| u\|_s^2\,,
        \end{align*}
       implying the second inequality in \eqref{stima:lem:equi}. This concludes the proof.
    \end{proof}
    
    We finally prove the main low-frequencies energy estimate.
    \begin{lemma}\label{lem:est_dt_E_low}
        Let $T>0$,  $ I:=[0,T]$, $K>1$ and
        $\kap \in \R_{+}\setminus \mathscr{N}$ (see Lemma \ref{lem:small_divisors}).  
        There are $s_0>0 $  
        such that for any $s\geq s_0$ there are $\vare_0=\vare(s,K)>0$ 
        and  a constant $C_{s,K}>0$  such that for any  
        $0\leq \vare\leq \vare_0$ and any solution $ \zak\in  C(I;H^s_\R)$ with
        $ \zak(t)\in B_{s_0,\R}(K\vare)$, $t \in I$ and $\| \zak(0) \|_{s_0} \leq \varepsilon$, 
        the variable $Z=Z(t)$ in \eqref{varZfinale}
        obtained applying  \Cref{prop:NF_finale} with parameter $\tR:=\vare^{-1}$, 
        satisfies 
   \begin{equation}\label{est_dt_E_low}
            \Big|\frac{d}{d t } \tE^{(s)}[Z](t)\Big|
            \leq 
            C_{s,K} \varepsilon^2 \| Z(t)\|_s^2\,.
    \end{equation}
    \end{lemma}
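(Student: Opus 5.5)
We compute the derivative block by block. Since $\tE^{(s)}[Z]=\sum_{\alpha\in\mathbb{I}_\vare}\tM_\alpha^{2s}\|\Pi_{\Omega_\alpha}z\|_{L^2}^2$, where $z$ is the first component of $Z$, equation \eqref{eq:finale_nuova} gives
\[
\frac{d}{dt}\tE^{(s)}[Z]=\sum_{\alpha\in\mathbb{I}_\vare}\tM_\alpha^{2s}\,2\Re\big\langle \Pi_{\Omega_\alpha}\Opbw{-\ii(\Lambda+\sym{\ta}{\geq 2}{\sla{3}{2}}+\sym{\tb}{1}{\sla{1}{2}})}z+\Pi_{\Omega_\alpha}(\bB_{\geq 2}Z)^+,\ \Pi_{\Omega_\alpha}z\big\rangle .
\]
We split the symbol into three groups according to \eqref{def:ta32}--\eqref{def:tb12}.

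\textbf{Group (a): normal form terms.} This group consists of $\Lambda(\xi)$, $\sym{\tm}{2}{\sla{3}{2}}(\xi)$, $\sym{\cZ}{1}{\sla{1}{2}}$, $\sym{\cZ}{2}{\sla{3}{2}}$ and $\sym{\cZ}{3}{1+\mu}$. Each of these is real and either a Fourier multiplier or in normal form. By \eqref{block_invariant}, the operator commutes with $\Pi_{\Omega_\alpha}$. Since the symbol is real, $\Opbw{\cdot}$ is self-adjoint by the Weyl property in \Cref{rem:symbols_modozero}. Hence
\[
\Re\langle -\ii\,\Opbw{\cdot}\Pi_{\Omega_\alpha}z,\Pi_{\Omega_\alpha}z\rangle=0,
\]
and these terms give no contribution. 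This is the structural heart of the estimate, and it is exact: no commutator appears.

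\textbf{Group (b): high-frequency cut terms.} This group consists of $(\sym{\Sigma}{\geq 2}{\sla{3}{2}}+\sym{b}{\geq 2}{1}+\sym{b}{1}{\sla{1}{2}})(1-\theta(\tR^{-1}\langle\xi\rangle))$ with $\tR=\vare^{-1}$. Here $1-\theta$ vanishes for $\langle\xi\rangle\le 10\vare^{-1}$, so the symbol is supported in $\{|\xi|\ge 9\vare^{-1}\}$. Applying \eqref{vanish_high_freq} with $\alpha\in\mathbb{I}_\vare$ gives $\Pi_{\Omega_\alpha}\Opbw{\cdot}=0$, so these terms also vanish identically. To invoke that lemma, we must check that the symbol belongs to $\cN^{\sla{3}{2}}_{s_0}$, with the $\delta$ class being harmless, and that $\vare\le\vare_0$.

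\textbf{Group (c): the remainder.} The only contribution left comes from $\bB_{\geq 2}(\tR;\zak)$. Using $\tM_\alpha\le R|k|$ from \eqref{equi_tM}, the almost-orthogonality of the blocks, and Cauchy--Schwarz, we bound it by $C_s\|\bB_{\geq 2}Z\|_s\|Z\|_s$. By \eqref{est:Bexp} with $\tR\|\zak\|_{s_0}\le\vare^{-1}K\vare=K$, the exponential factor becomes $e^{CK^2}$, a constant depending on $K$. This gives
\[
\|\bB_{\geq 2}Z\|_s\lesssim_{s,K}\vare^2\|Z\|_s+\vare\|\zak\|_s\|Z\|_{s_0}.
\]

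\textbf{Main obstacle.} The delicate point is the second term of this bound, which contains $\|\zak\|_s$ rather than $\|Z\|_s$. To close the estimate we must use the equivalence $\|\zak\|_s\sim_s\|Z\|_s$, a reversed tame bound. This follows from the bounds \eqref{stima:admR_nonspec}, \eqref{stima:adm} and \eqref{equiv:U_etaomega} together with smallness at level $s_0$, namely
\[
\|\zak\|_s\lesssim_{s,K}\|Z\|_s+\|\zak\|_s\|Z\|_{s_0},
\]
after which the last term is absorbed for $\vare$ small. We also need $\|Z\|_{s_0}\lesssim K\vare$, which follows from the same map bounds at level $s_0$. With these two facts we obtain $|\frac{d}{dt}\tE^{(s)}|\le C_{s,K}\vare^2\|Z\|_s^2$, which is \eqref{est_dt_E_low}.
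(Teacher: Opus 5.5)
Your proposal is correct and follows essentially the paper's own argument. The real Fourier-multiplier and normal-form symbols drop out by the block invariance \eqref{block_invariant} together with self-adjointness. The $(1-\theta(\tR^{-1}\langle\xi\rangle))$-cut symbols vanish on the low blocks by \eqref{vanish_high_freq}. The remainder is bounded via Cauchy--Schwarz, \eqref{equi_tM} and \eqref{est:Bexp} with $\tR\|\zak\|_{s_0}\le K$. The one difference is that you derive $\|\zak\|_s\sim_{s,K}\|Z\|_s$ from the map bounds and an absorption argument instead of quoting \eqref{equivalenza:Zzak}; this correctly uses only the stated hypothesis $\zak(t)\in B_{s_0,\R}(K\vare)$, whereas the paper's text invokes $B_{s,\R}(K\vare)$.
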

    \begin{proof}
        First of all, from the definition \eqref{def:Energy_low} 
        and the equation \eqref{eq:finale_nuova} 
        satisfied by $Z(t)$ 
        with $\tR:=\vare^{-1}$, we get
   \begin{align}
\frac{d}{d t } \tE^{(s)}[Z]&= 
\sum_{\alpha \in \mathbb{I}_\vare}\tM_\alpha^{2s}
2 \Re \langle 
\Pi_{\Omega_\alpha}\Opbw{-\ii \left[\Lambda(\xi)
+  \ta_{\geq 2}^{(\sla{3}{2})}(\zak;x,\xi)
+ \tb_1^{(\sla{1}{2})}(\zak;x,\xi)\right]}z,\Pi_{\Omega_\alpha}z
\rangle_{L^2}
\notag
\\&\qquad
+ \sum_{\alpha \in \mathbb{I}_\vare}\tM_\alpha^{2s}
2 \Re \langle 
\Pi_{\Omega_\alpha}\big(\bB_{\geq 2}(\vare^{-1};\zak)Z\big)^+,\Pi_{\Omega_\alpha}z
\rangle_{L^2}
\notag
\\&
\stackrel{\eqref{vanish_high_freq}}{=} 
\sum_{\alpha \in \mathbb{I}_\vare}\tM_\alpha^{2s}
2 \Re \langle 
\Pi_{\Omega_\alpha}\Opbw{-\ii \left[  \cZ_{\geq 2}^{(\sla{3}{2})}(\zak;x,\xi)
+ \cZ_1^{(\sla{1}{2})}(\zak;x,\xi)\right]}z,\Pi_{\Omega_\alpha}z
\rangle_{L^2}
\notag
\\&\qquad
+  \sum_{\alpha \in \mathbb{I}_\vare}\sum_{k \in \Omega_\alpha}\tM_\alpha^{2s}
2 \Re \Big[\big(\bB_{\geq 2}(\vare^{-1};\zak)Z\big)^+_k\bar{z_k}\Big]
\notag
\\&
\stackrel{\eqref{block_invariant}}{=}
\sum_{\alpha \in \mathbb{I}_\vare}\tM_\alpha^{2s}
2 \Re \langle 
\Opbw{-\ii \left[  \cZ_{\geq 2}^{(\sla{3}{2})}(\zak;x,\xi)
+ \cZ_1^{(\sla{1}{2})}(\zak;x,\xi)\right]}\Pi_{\Omega_\alpha}z,\Pi_{\Omega_\alpha}z
\rangle_{L^2}
\label{en:normal_form}
\\&\qquad
+  \sum_{\alpha \in \mathbb{I}_\vare}
\sum_{k \in \Omega_\alpha}\tM_\alpha^{2s}
2 \Re \Big[\big(\bB_{\geq 2}(\vare^{-1};\zak)Z\big)^+_k\bar{z_k}\Big] \,.
\label{en:resto}
\end{align}
        Since $\cZ_{\geq 2}^{(\sla{3}{2})}$ and $ \cZ_1^{(\sla{1}{2})} $ 
        are real we have that \eqref{en:normal_form} is identically zero. Indeed, the corresponding paradifferential operators are self-adjoint, hence their contribution is purely imaginary and vanishes after taking the real part.
        We then estimate \eqref{en:resto} using \eqref{equi_tM},  
        Cauchy–Schwarz inequality, \eqref{est:Bexp} with 
        $\tR=\vare^{-1}$, the hypothesis $ \zak(t)
        \in B_{s,\R}(K\vare)$ for $t \in I$ and \eqref{equivalenza:Zzak}, getting
  \begin{align*}
 | \eqref{en:resto}| &\leq 
 2  R^s  \sum_{\alpha \in \mathbb{I}_\vare}
 \sum_{k \in \Omega_\alpha}|k|^{2s}
 \Big|\big(\bB_{\geq 2}(\vare^{-1};\zak)Z\big)^+_k\Big|\,|z_k|
 \\&
 \leq  2\cdot  R^s \| z\|_s\| \bB_{\geq 2}(\vare^{-1};\zak)Z\|_s
 \leq
 C_{s,K}  \vare^2 \|Z(t)\|_s^2\,.
        \end{align*}
        This concludes the proof of \eqref{est_dt_E_low}.
    \end{proof}
    \paragraph{Proof of \Cref{prop:stima_low}.} Using first \eqref{stima:lem:equi} to bound $\| \Pi_{\leq \frac{1}{\vare}}z\|_s^2$,  
    one gets
    \begin{align*}
        \| \Pi_{\leq \frac{1}{\vare}}z\|_s^2 
        \leq \tE^{(s)}(z)
        \leq \tE^{(s)}(z(0))
        + \int_0^T  \Big| \frac{\di}{\di t}\tE^{(s)}(z)\Big| \,\di t\,,
    \end{align*}
    which, combined with  \eqref{stima:lem:equi} 
    and  \eqref{est_dt_E_low} for the time derivative implies
 \eqref{stimascema_low}.

\subsection{The large frequencies energy estimate}\label{sec:high_est}
The main result of this section is the following energy estimate for 
the high frequencies portion of $Z$ (recall the splitting \eqref{splitting:low_high}). 
\begin{proposition}[High-frequencies energy estimate]
\label{prop:stima_high}
Let $T>0$, $I:=[0,T]$, $K>1$ and 
$\kap \in \R_{+}\setminus \mathscr{N}$ (see Lemma \ref{lem:small_divisors}). 
 Then there exists $s_0>0$ 
such that, for any $s\ge s_0$, there exist $ {\vare}_0= {\vare}_0(s,K)>0$ 
and two constants ${C}_s>0$ (depending only on $s$) and 
$ {C}_{s,K}>0$ (depending on both $s$ and $K$) 
such that the following holds. 
For any $0\le \vare\leq \vare_0$ and any solution $\zak\in  C^0(I;H^s_\R)$
with $\zak(t)\in B_{s_0,\R}(K\vare)$, $t \in I$ and $\|\zak(0)\|_{s_0}\leq \vare$, 
the variable $Z=Z(t)$ in \eqref{varZfinale} obtained by applying 
\Cref{prop:NF_finale} with parameter $\tR:=\vare^{-1}$ satisfies 
		\begin{align}
			\|\Pi_{\leq \frac{1}{\vare}}^\bot Z(t)\|_s^2
			\leq 
			C_s\| Z(0)\|_s^2+ C_s K \vare \| Z(t)\|_s^2 
			+ T C_{s,K}\vare^2 \sup_{\tau \in [0,T]} \| Z(\tau)\|_s^2\,.
            \label{stimascema_high}
		\end{align}
        \end{proposition}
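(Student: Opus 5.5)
The plan is to build a quasilinear modified energy adapted to the high-frequency part of equation \eqref{eq:finale_nuova}, as sketched in the introduction. Set
\[
L(\zak;x,\xi):=\Lambda(\xi)+\sym{\ta}{\geq 2}{\sla{3}{2}}(\zak;x,\xi)+\sym{\tb}{1}{\sla{1}{2}}(\zak;x,\xi),
\]
which is real by \Cref{prop:NF_finale}. Define
\[
\cE^{(s)}(Z):=\big\langle \Opbw{L^{\frac43 s}\,\chi_\vare}z,z\big\rangle_{L^2},
\]
where $\chi_\vare$ is a real cutoff equal to $1$ for $|\xi|\geq 12\vare^{-1}$ and vanishing for $|\xi|\leq 11\vare^{-1}$. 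Choose $\chi_\vare$ either as a function of $\xi$ alone or as a function of $L$ itself, whichever makes the commutator computation cleaner; I would try $\chi_\vare=\chi_\vare(\xi)$ first. All the normal form symbols $\sym{\tm}{2}{\sla{3}{2}}$, $\cZ_p$ are $\tR$-localized in $|\xi|\leq 11\vare^{-1}$. So on the support of $\chi_\vare$ one has $L=\Lambda+(\sym{\Sigma}{\geq2}{\sla{3}{2}}+\sym{b}{\geq2}{1})+\sym{b}{1}{\sla{1}{2}}$. Because $|\xi|\gtrsim\vare^{-1}$ there and $\|\zak\|_{s_0}\lesssim K\vare$, this gives $L^{\frac43s}\chi_\vare=|\xi|^{2s}(1+r)\chi_\vare$ with $|r|\lesssim K\vare$. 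Here the linear term contributes $\vare|\xi|^{-1}\lesssim\vare^2$, and the quadratic one contributes $\vare^2|\xi|^{3/2}/|\xi|^{3/2}$.

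\textbf{Step 1: equivalence of energy and high-frequency norm.} By Gårding/\Cref{thm:action} and \Cref{compoparapara0}, the first step gives
\[
\big|\cE^{(s)}(Z)-\|\Pi^\bot_{\leq\vare^{-1}}Z\|_s^2\big|\lesssim_s K\vare\|Z\|_s^2+\|\Pi_{[\vare^{-1},12\vare^{-1}]}Z\|_s^2 .
\]
The intermediate shell is already controlled by \Cref{prop:stima_low}, or it can be absorbed by placing the cutoff inside $|\xi|\leq\vare^{-1}$ and re-running the low estimate on slightly larger blocks using \Cref{lem:barriera}. This yields the term $C_sK\vare\|Z(t)\|_s^2$ in \eqref{stimascema_high}.

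\textbf{Step 2: time derivative.} Next I compute $\frac{d}{dt}\cE^{(s)}$ using \eqref{eq:finale_nuova}. There are three contributions.
\begin{enumerate}
\item The commutator $\big[\Opbw{L^{\frac43s}\chi_\vare},\Opbw{-\ii L}\big]$. Its principal symbol is the Poisson bracket $\{L^{\frac43s}\chi_\vare,L\}$. This bracket vanishes except for the part $\{\chi_\vare,L\}L^{\frac43s}$, which is supported where only $\Lambda$ plus the high pieces of $L$ live. Since $\chi_\vare$ and $\Lambda$ depend only on $\xi$, the bracket reduces to $-\frac43 s L^{\frac43s-1}\chi_\vare\,\partial_x L\cdot\partial_\xi(\cdot)$-type terms. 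These involve only $x$-derivatives of the non-normal-form pieces, which have order $\frac32$ and size $\vare^2$, or order $\frac12$ and size $\vare$, times $\partial_\xi$-gain. The resulting operator has order $2s$ with coefficient $\lesssim \vare^2+\vare\,\vare\,(|\xi|\gtrsim\vare^{-1})$, i.e. $\lesssim\vare^2$. Subprincipal terms from the Moyal expansion \eqref{Moyal} are handled by \eqref{est:Moyal}, gaining a further $|\xi|^{-2\delta}$ against $\vare|\xi|^{1/2}$, which is fine on $|\xi|\gtrsim\vare^{-1}$.
\item The time derivative $\Opbw{\partial_t(L^{\frac43s}\chi_\vare)}$. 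This is bounded using \eqref{stima:derivataa32} and $\|\partial_t\zak\|_{s_0-\frac32}\lesssim\vare$ from \eqref{est:XH}. For the $\sym{\tb}{1}{\sla{1}{2}}$ part, $\partial_t$ produces $\vare$ times order $\frac12$ relative to $\Lambda$, i.e. a relative size $\vare|\xi|^{-1}\lesssim\vare^2$. For the quadratic part it produces $\vare^2$.
\item The remainder $\bB_{\geq2}$ contributes $e^{C K^2}\vare^2\|Z\|_s^2$ by \eqref{est:Bexp} with $\tR=\vare^{-1}$.
\end{enumerate}
The self-adjoint parts cancel after taking the real part, since $L$ is real and the operators in question are Weyl-quantized. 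Altogether
\[
\Big|\tfrac{d}{dt}\cE^{(s)}\Big|\leq C_{s,K}\vare^2\|Z\|_s^2 .
\]
Integrating over $[0,T]$ and using Step 1 gives \eqref{stimascema_high}.

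\textbf{Main obstacle.} The delicate point is item 1: checking that the commutator is genuinely of order $2s$ with an $O(\vare^2)$ coefficient, and that nothing of order $2s+\frac12$ survives. This rests on two facts. First, the principal part of $L^{\frac43s}\chi_\vare$ commutes with $\Lambda$. Second, the $x$-dependence on the support of $\chi_\vare$ comes only from symbols whose size, measured relative to $\Lambda$, is $\vare^2$ once $|\xi|\gtrsim\vare^{-1}$. I would also need to check the limited $\xi$-regularity of the $\delta$-class symbols, via the $\tR$-localization trick of \Cref{lem:perdita}. The $\sym{b}{1}{\sla{1}{2}}(1-\theta)$ term is the tightest case: its bracket with $\Lambda^{\frac43s}$ has size $\vare|\xi|^{2s-\frac12+\frac12}$, and it only closes at $O(\vare^2)$ because $|\xi|^{-1}\leq\vare$ on its support.
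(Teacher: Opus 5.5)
\textbf{Gap in Step 2, item 1.} With the cutoff you prefer, $\chi_\varepsilon=\chi_\varepsilon(\xi)$, the commutator term does not close. Since $\{L^{\frac43 s},L\}=0$, the principal symbol of $\ii[\Opbw{L^{\frac43 s}\chi_\varepsilon},\Opbw{L}]$ is the real symbol $L^{\frac43 s}\,\nabla_\xi\chi_\varepsilon\cdot\nabla_x L$ (up to sign). It is supported on the transition shell $|\xi|\sim\varepsilon^{-1}$.

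On that shell $|\nabla_\xi\chi_\varepsilon|\sim\varepsilon$. On the other hand, $|\nabla_x \sym{b}{1}{\sla{1}{2}}|\lesssim K\varepsilon|\xi|^{1/2}\sim K\varepsilon^{1/2}$ and $|\nabla_x(\sym{\Sigma}{\geq 2}{\sla{3}{2}}+\sym{b}{\geq 2}{1})|\lesssim K^2\varepsilon^2|\xi|^{3/2}\sim K^2\varepsilon^{1/2}$. So this is an order-$2s$ symbol of size $K^2\varepsilon^{3/2}$, not $\varepsilon^2$. Your count loses a factor $|\xi|^{1/2}\sim\varepsilon^{-1/2}$. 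The same slip appears in your ``main obstacle'' paragraph: $\{\Lambda^{\frac43 s},\sym{b}{1}{\sla{1}{2}}\}$ has size $\varepsilon|\xi|^{2s-\frac12}$, i.e.\ $\varepsilon^{3/2}|\xi|^{2s}$ on $|\xi|\sim\varepsilon^{-1}$.

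The resulting operator is self-adjoint, so nothing cancels when you take real parts. It contributes $\varepsilon^{3/2}\|Z\|_s^2$ to $\frac{d}{dt}\cE^{(s)}$, hence $\varepsilon^{-1/2}\|Z\|_s^2$ after integrating over $T\sim\varepsilon^{-2}$. Moving the transition does not help. At scale $\varepsilon^{-1}$ the count is the same. At higher frequencies the quadratic quasilinear term makes it worse. At lower frequencies the $x$-dependent normal form symbols $\cZ_p$ enter the bracket with size at least $\varepsilon^{3/2}$. Items 2 and 3 are fine.

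\textbf{The missing idea} is exactly the option you set aside: the cutoff must be a function of $L$, not a matter of convenience. The paper uses $L^{(2s)}\widetilde\theta\big(L^{(3/2)}/\Omega(1/\varepsilon)\big)$ with $\Omega(y)=\sqrt{\kap y^3+y}$. The energy symbol is then $F(L)$ with $F(y)=y^{\frac43 s}\widetilde\theta(y/\Omega(1/\varepsilon))$. Hence $\{F(L),L\}=F'(L)\{L,L\}=0$ identically, whatever $x$-dependence $L$ has on the support, normal form pieces included.

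Only the odd Moyal term $2p_3(a,L)$ and a smoothing remainder survive. Write $a$ and $L$ as Fourier multipliers plus perturbations. Then use $p_3(\text{multiplier},\Lambda)=0$, the high-frequency gain \eqref{eta:L2sdiff1}--\eqref{eta:L2sdiff2} (each extra order costs only $\varepsilon$ on $|\xi|\gtrsim\varepsilon^{-1}$), and $3\delta>\frac98+\frac32$, which is where $\delta>\frac78$ enters. This yields $O(\varepsilon^{2+1/16})$.

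This choice also repairs Step 1, which as written has a hole. The shell $\varepsilon^{-1}\le|\xi|\le 12\varepsilon^{-1}$ is not controlled by \Cref{prop:stima_low}, which only sees $|\xi|\le\varepsilon^{-1}$. Neither of your fixes works:
\begin{itemize}
\item blocks meeting $B_{\Z^2}(12\varepsilon^{-1})$ reach the support of $1-\theta(\varepsilon\langle\xi\rangle)$, which breaks \eqref{vanish_high_freq};
\item a $\xi$-cutoff moved inside $|\xi|\le\varepsilon^{-1}$ meets the $\cZ_p$ and reproduces the bracket problem above.
\end{itemize}
With $\widetilde\theta(L^{(3/2)}/\Omega(1/\varepsilon))$ the transition lies in $\frac{1}{36\varepsilon}\le|\xi|\le\frac1\varepsilon$, and $\widetilde\theta(\Lambda/\Omega(1/\varepsilon))\equiv1$ for $|\xi|\ge\varepsilon^{-1}$. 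Comparing with the Fourier multiplier energy then costs only $C_sK\varepsilon\|Z\|_s^2$, which is the middle term in \eqref{stimascema_high}.
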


    The rest of this section is devoted to the proof of \Cref{prop:stima_high}. To this end,
	for $s\geq 0$, we define the modified $s$ power 
	of the Laplacian's symbol (see \eqref{eq:finale_nuova}-\eqref{def:tb12})
	\begin{align}
	L^{(2s)}=L^{(2s)}(\zak;x,\xi):= 
	\left(\Lambda(\xi)+  \ta_{\geq 2}^{(\sla{3}{2})}(\zak;x,\xi)
	+ \tb_1^{(\sla{1}{2})}(\zak;x,\xi)\right)^{\sla{4}{3}s}\,.
    \label{def:L2s}
\end{align}
We define also a large frequencies cut-off and the dispersion relation function 
\begin{align}
	\tilde \theta(y)= \begin{cases}0 & |y|\leq \frac{1}{3} \\ 1 & | y|\geq \frac{5}{12}.
	\end{cases}\qquad \Omega(y):= \sqrt{\kap y^3+y}\,, \ y\geq 0.
\label{def:tilde_theta}
\end{align}
	We collect in the following lemma some fundamental 
	properties of the modified $s$ power of 
	the Laplacian in \eqref{def:L2s}.
\begin{lemma}
  Let $T>0$,  $ I:=[0,T]$ and $K>1$. There is $s_0>1 $  
  such that for any $s\geq\s \geq s_0$ there are 
  $\vare_0=\vare(s,K)>0$ and  two constants $C_{s},\,\mu>0$ (depending only on $s$)  
  such that for any  $0\leq \vare\leq \vare_0$ and any solution 
  $ \zak(t)\in B_{\s,\R}(K\vare)$, $t \in I$, one has
\begin{enumerate}
\item \label{item:L2s} {\bfseries Ellipticity:} $ L^{(2s)}\in \cN_{\s-\mu}^{2s,\delta}$ 
with bounds
\begin{align}
\left|L^{(2s)}\right|_{2s,\s-\mu}\leq C_s\,, 
\qquad \left|L^{(2s)}- \Lambda(\xi)^{\sla{4}{3}s} \right|_{2s, \s-\mu}\leq C_s K \varepsilon\,.
       \label{est:L32}
       \end{align}
       
     \item \label{item:specloc} {\bfseries Spectral equivalence:}
      let  $\wt \theta(\cdot )$ be the cut-off in \eqref{def:tilde_theta}, 
      then $\wt\theta\Big( \frac{L^{(\sla{3}{2})}}{\Omega(1/\vare)} \Big) \in \cN_{\s-\mu}^0$, 
      and one has
\begin{align}
\left|\wt\theta\Big( \frac{L^{(\sla{3}{2})}}{\Omega(1/\vare)} \Big)
-\wt\theta\left( \frac{ \Lambda(\xi)}{\Omega(1/\vare)}\right)\right|_{0,\s-\mu}
\leq C_s K \varepsilon\,, 
\qquad 
\supp \wt\theta\Big( \frac{L^{(\sla{3}{2})}}{\Omega(1/\vare)} \Big) \subset 
\left\{ |\xi| \geq \frac{1}{ 36\vare}\right\}\,.
 \label{specloc:theta_tilde}
 \end{align}
Moreover, for any $r\geq 0$, 
\begin{align}
\left|L^{(2s)}\wt\theta\Big( 
\frac{L^{(\sla{3}{2})}}{\Omega(1/\vare)}
\Big)\right|_{2s+r,\s-\mu}
&\leq C_s \vare^{r}\,, 
\label{eta:L2sdiff1}
\\
\left|L^{(2s)}\wt\theta\Big( 
\frac{L^{(\sla{3}{2})}}{\Omega(1/\vare)}\Big)
- \Lambda(\xi)^{\sla{4}{3}s}\wt\theta\left( 
\frac{\Lambda(\xi)}{\Omega(1/\vare)}\right) \right|_{2s+r, \s-\mu}
&\leq C_s K\varepsilon^{1+r}\,.
\label{eta:L2sdiff2}
\end{align}

\item {\bfseries Derivative:} $L^{(2s)}$ 
is differentiable with respect to $ \zak$ with estimates 
			\begin{align}
			\left|\di_\zak \left(L^{(2s)} \tilde \theta \Big(\frac{L^{(\sla{3}{2})}}{\Omega(1/\vare)}\Big)\right)[ \wh \zak]\right|_{2s,\s-\mu}\leq C_s K \varepsilon \|\wh \zak\|_{\s}.
            \label{est:diffL2s}
			\end{align}
		\end{enumerate}
	\end{lemma}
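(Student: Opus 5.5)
The plan is to treat $L^{(2s)}$ as a smooth function of the scalar symbol $L^{(\sla{3}{2})}=\Lambda(\xi)+\ta_{\geq 2}^{(\sla{3}{2})}+\tb_1^{(\sla{1}{2})}$ and to reduce everything to bounds on the perturbation $p:=\ta_{\geq 2}^{(\sla{3}{2})}+\tb_1^{(\sla{1}{2})}$ relative to $\Lambda(\xi)$.

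The first and main step is to show that $p/\Lambda(\xi)$ is small in $\cN^{0,\delta}_{\s-\mu}$, namely $|p/\Lambda|_{0,\s-\mu}\lesssim_s K\vare$. Since $\tR=\vare^{-1}$, I would split $p$ according to \eqref{def:ta32}--\eqref{def:tb12}. The $\tR$-localized normal form pieces $\sym{\tm}{2}{\sla{3}{2}}$, $\sym{\cZ}{2}{\sla{3}{2}}$ and $\sym{\cZ}{3}{1+\mu}$ are estimated with \Cref{lem:perdita}: the loss of one derivative costs a factor $\tR\|\zak\|\lesssim K$, against homogeneity at least $2$, which gives $\lesssim K^2\vare$ relative to $\langle\xi\rangle^{\sla{1}{2}}$, hence to $\Lambda$. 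The high-frequency pieces $(\Sigma_{\geq2}+b_{\geq2})(1-\theta)$ are quadratic of order $\sla{3}{2}$, so they are $O(K^2\vare^2)\Lambda$. The term $\sym{\cZ}{1}{\sla{1}{2}}+\sym{b}{1}{\sla{1}{2}}(1-\theta)$ has order $\sla12$ and size $K\vare$, so it is $\lesssim K\vare\langle\xi\rangle^{-1}\Lambda$. For $\xi$-derivatives I will use the $\delta$-class, noting that $\xi$-derivatives of $\Lambda^{-1}$ only gain. Once this is done, $1+p/\Lambda$ stays in a small neighbourhood of $1$, and a Moser/Faà di Bruno composition estimate in $\cN^{0,\delta}_{\s-\mu}$ for $y\mapsto(1+y)^{\sla{4}{3}s}$ gives \eqref{est:L32}, since $L^{(2s)}=\Lambda^{\sla{4}{3}s}(1+p/\Lambda)^{\sla{4}{3}s}$. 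I expect this step to be the main obstacle: the bookkeeping of the $\tR$-loss against homogeneity in the $\xi$-derivative seminorms is delicate, and the finite $\xi$-regularity of the symbol classes must be tracked, which is why $\s-\mu$ appears.

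For the spectral equivalence, $\wt\theta$ is smooth. I would write
\[
\frac{L^{(\sla32)}}{\Omega(1/\vare)}=\frac{\Lambda}{\Omega(1/\vare)}\,(1+p/\Lambda)
\]
and use the mean value theorem to get the first bound in \eqref{specloc:theta_tilde}. Derivatives of $\wt\theta$ are supported where $\Lambda\sim\Omega(1/\vare)$, that is $|\xi|\sim\vare^{-1}$, so the factors from differentiating in $\xi$ stay $\lesssim\langle\xi\rangle^{-\delta|\beta|}$. For the support: if $\wt\theta\neq0$, then $L^{(\sla32)}\geq\Omega(1/\vare)/3$, and since $L^{(\sla32)}\le 2\Lambda(\xi)$, this forces $\Lambda(\xi)\geq\Omega(1/\vare)/6$. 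Because $\Omega(y)\sim y^{\sla32}$ for large $y$, this implies $|\xi|\geq(1/6)^{2/3}\vare^{-1}\geq\vare^{-1}/36$. Then \eqref{eta:L2sdiff1} follows from this support property, because on $|\xi|\gtrsim\vare^{-1}$ we have $\langle\xi\rangle^{-r}\lesssim\vare^{r}$, combined with the product estimate for symbols. For \eqref{eta:L2sdiff2} I would decompose
\[
L^{(2s)}\wt\theta(\cdot)-\Lambda^{\sla43 s}\wt\theta(\cdot)=\big(L^{(2s)}-\Lambda^{\sla43 s}\big)\wt\theta(L/\Omega)+\Lambda^{\sla43 s}\big(\wt\theta(L/\Omega)-\wt\theta(\Lambda/\Omega)\big),
\]
with both terms supported in $|\xi|\gtrsim\vare^{-1}$, and apply the bounds already obtained.

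For the derivative estimate, I would differentiate with the chain rule:
\[
\di_\zak L^{(2s)}=\tfrac43 s\,(L^{(\sla32)})^{\sla43 s-1}\,\di_\zak p,
\]
and similarly for $\wt\theta$. The term $\di_\zak\ta_{\geq2}$ is controlled by \eqref{stima:derivataa32}, which gives $\lesssim K\vare\|\wh\zak\|_\s\,\Lambda$. The linear pieces $\di_\zak\tb_1=\tb_1(\wh\zak)$ are of order $\sla12$, and with $\|\zak\|\lesssim K\vare$ this yields the stated $C_sK\vare\|\wh\zak\|_\s$ bound in \eqref{est:diffL2s}.
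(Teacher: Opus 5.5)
Your treatment of items 1 and 2 follows the paper's route: a small relative perturbation $p/\Lambda$, Fa\`a di Bruno, the bound $\frac12\Lambda\le L^{(\sla{3}{2})}\le 2\Lambda$ to localize $\supp\wt\theta$, and then products and Taylor expansions on $|\xi|\gtrsim\vare^{-1}$. Factoring out $\Lambda^{\sla{4}{3}s}$ and composing with $(1+y)^{\sla{4}{3}s}$ near $y=0$ is only a cosmetic variant of the paper's direct Fa\`a di Bruno on $(L^{(\sla{3}{2})})^{\sla{4}{3}s}$. The gap is in the derivative estimate \eqref{est:diffL2s}, where the source of the factor $K\vare$ is misidentified.

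The linear piece $\di_\zak\tb_1^{(\sla{1}{2})}[\wh\zak]=\tb_1^{(\sla{1}{2})}(\wh\zak;x,\xi)$ does not involve $\zak$, so $\|\zak\|_\s\lesssim K\vare$ gives it no smallness. Its contribution $\tfrac43 s\,(L^{(\sla{3}{2})})^{\sla{4}{3}s-1}\,\tb_1^{(\sla{1}{2})}(\wh\zak)$ to $\di_\zak L^{(2s)}[\wh\zak]$ has size $\|\wh\zak\|_\s\langle\xi\rangle^{2s-1}$. Because $\sym{\cZ}{1}{\sla{1}{2}}$ lives at low frequencies, $|\di_\zak L^{(2s)}[\wh\zak]|_{2s,\s-\mu}$ is in general only $O(\|\wh\zak\|_\s)$. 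You may be conflating this with the Taylor difference in item 1: there the direction is $\zak$ itself, so $\|\zak\|_\s\le K\vare$ does supply the factor.

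The missing ingredient is the frequency localization you proved in step 2. Both $L^{(2s)}\wt\theta(\cdot)$ and $\wt\theta'(\cdot)$ are supported in $|\xi|\ge(36\vare)^{-1}$, where $\langle\xi\rangle^{-1}\lesssim\vare$. This turns $\|\wh\zak\|_\s\langle\xi\rangle^{2s-1}$ into $\lesssim\vare\|\wh\zak\|_\s\langle\xi\rangle^{2s}$. Likewise, in $\wt\theta'\big(L^{(\sla{3}{2})}/\Omega(1/\vare)\big)\,\Omega(1/\vare)^{-1}\,\di_\zak L^{(\sla{3}{2})}[\wh\zak]$ the $\tb_1$-part is $\|\wh\zak\|_\s\langle\xi\rangle^{\sla{1}{2}}/\Omega(1/\vare)\lesssim\vare\|\wh\zak\|_\s$, but only because $\langle\xi\rangle\sim\vare^{-1}$ on $\supp\wt\theta'$. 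This is how the paper passes from \eqref{est.dl2s} and \eqref{est:dtildetheta} to \eqref{est:diffL2s}. It is also why the estimate is stated for the cut-off product and not for $L^{(2s)}$ alone.

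Two smaller points:

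\begin{itemize}
\item Routing $\sym{\tm}{2}{\sla{3}{2}}$ and $\sym{\cZ}{2}{\sla{3}{2}}$ through \Cref{lem:perdita} costs a factor $\tR$ and yields only $K^2\vare$. That is not $\le C_sK\vare$ with $C_s$ independent of $K$. Use the order-$\sla{3}{2}$ homogeneity bound directly instead: $|\sym{\cZ}{2}{\sla{3}{2}}|_{\sla{3}{2},\s-\mu}\lesssim\|\zak\|_\s^2\le K^2\vare^2\le K\vare$ once $K\vare\le1$.
\item For the support claim, use the exact inequality $\Omega(36y)\ge 6\,\Omega(y)$, which follows from $\Omega(36y)^2\ge 36\,\Omega(y)^2$, rather than the asymptotics $\Omega(y)\sim y^{\sla{3}{2}}$.
\end{itemize}
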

	\begin{proof}
We first prove \Cref{item:L2s}.    
First of all, by \eqref{def:ta32}, \eqref{def:tb12}, 
\eqref{stima:derivataa32} and the estimate 
\eqref{bound:homosym} for $\sym{\tb}{1}{\sla{1}{2}}\in \wt \Gamma_1^{\sla{1}{2},\delta}$, 
one has that  the map 
\begin{align*}
 B_{\s,\R}(K\vare) \ni \zak \to 
 L^{(\sla{3}{2})}(\zak;x,\xi)= 
 \Lambda(\xi)+\sym{a}{\geq 2}{\sla{3}{2}}(\zak;x,\xi)
 + \sym{b}{1}{\sla{1}{2}}(\zak;x,\xi) \in C^{1}\left(B_{\s,\R}(K\vare); 
 \cN_{\s-\mu}^{\sla{3}{2},\delta}\right)\,,
        \end{align*}
satisfies the estimates 
\begin{equation}\label{est:diffproof}
\begin{aligned}
   \left|L^{(\sla{3}{2})}(\zak;x,\xi)\right|_{\sla{3}{2},\s-\mu}&\lesssim_s 1\,, 
\\   \max_{|\alpha|+|\beta|\leq \s-\mu} 
   \left| 
   \pa_x^\alpha \pa_\xi^\beta \di_{\zak}L^{(\sla{3}{2})}(\zak;x,\xi)[\hat \zak]\right|
   &\lesssim_s 
   K \vare \| \hat \zak\|_{\s} \langle \xi \rangle^{\sla{3}{2}-\delta | \beta|} 
   + \| \hat \zak\|_{\s} \langle \xi\rangle^{\sla{1}{2}-\delta |\beta|}\,.
\end{aligned}
\end{equation}
Moreover, by construction, we have 
$ L^{(2s)}= \left( L^{(\sla{3}{2})}\right)^{\sla{4}{3} s}$. 
Then Faà di Bruno formula gives the following point-wise 
estimates: for 
$(x,\xi)\in \T^2\times \R^2$ and $|\alpha|+ |\beta|\leq \s-\mu$ one has
\begin{equation}\label{est:L2sproof}
\begin{aligned}
\left| \pa_x^\alpha \pa_\xi^\beta  L^{(2s)}(x,\xi)\right| 
&\lesssim_s  
\max_{0\leq d\leq |\alpha|+ |\beta|} 
\left( \left|L^{(\sla{3}{2})}(x,\xi)\right|\right)^{\sla{4}{3} s- d}
\max_{ \substack{\alpha_1+ \ldots +\alpha_d=\alpha\\\beta_1+ \ldots +\beta_d=\beta}}\ \prod_{j=1}^d \left| \pa_x^{\alpha_j}\pa_\xi^{\beta_j}  L^{(\sla{3}{2})}(x,\xi)\right|
\\
    &\lesssim_s  
    \left|L^{(\sla{3}{2})}\right|_{\sla{3}{2},\s-\mu}^{\sla{4}{3} s} \ \max_{0\leq d\leq |\alpha|+ |\beta|} 
    \langle \xi\rangle^{2s- \sla{3}{2} d}
    \max_{ \substack{\alpha_1+ \ldots +\alpha_d=\alpha\\ \beta_1+ \ldots +\beta_d=\beta}}
    \ \prod_{j=1}^d \langle \xi\rangle^{\sla{3}{2}- \delta |\beta_j|}
    \\
   & \lesssim_s 
    \langle\xi\rangle^{2s- \delta |\beta|}\,,
\end{aligned}
\end{equation}
where we also used that $d\leq |\alpha|+ |\beta|\leq \s-\mu \leq s\leq \tfrac43 s$. This proves the first estimate in \eqref{est:L32}.
Moreover one has 
\begin{align*}
    \di_\zak L^{(2s)}(\zak;x,\xi)[\hat \zak]=\tfrac{4}{3}s L^{(2s-\sla{3}{2})}(\zak;x,\xi)\di_{\zak}L^{(\sla{3}{2})}(\zak;x,\xi)[\hat \zak].
\end{align*}
Then, by \eqref{est:diffproof} and \eqref{est:L2sproof}, for any 
$ |\alpha|+|\beta|\leq \s-\mu$, one has 
\begin{equation}\label{est.dl2s}
\begin{aligned}
    \left|\pa_x^\alpha\pa_\xi^\beta \di_\zak L^{(2s)}(\zak;x,\xi)[\hat \zak]\right|
    &\lesssim_s \max_{\substack{\alpha_1+\alpha_2=\alpha \\ \beta_1+ \beta_2=\beta}} 
    \left| \pa_x^{\alpha_1}\pa_\xi^{\beta_1}L^{(2s-\sla{3}{2})}\right|\ 
    \left| \pa_x^{\alpha_2}\pa_\xi^{\beta_2} \di_{\zak}L^{(\sla{3}{2})}(\zak;x,\xi)[\hat \zak]\right|
    \\
   & \lesssim_s 
   K\vare  \| \hat \zak\|_{\s} \langle \xi\rangle^{2s-\delta|\beta|}
   + \| \hat \zak\|_{\s} \langle \xi\rangle^{2s-1-\delta|\beta|}\ .
\end{aligned}
\end{equation}
A first order Taylor's expansion together with the bound \eqref{est.dl2s}, 
proves also the second bound in \eqref{est:L32}.   
Next we prove \Cref{item:specloc}. 
We first note that (recalling \eqref{def:L2s} 
and \Cref{rem:symbols_modozero})
        \begin{align*}
      L^{(\sla{3}{2})}
    = \Lambda(\xi) \Big( 1+ \sym{a}{\geq 1}{0}\Big)\,, 
    \qquad 
    \sym{a}{\geq 1}{0}:=\frac{\sym{\ta}{\geq 2}{\sla{3}{2}}
    + \sym{\tb}{1}{\sla{1}{2}}}{\Lambda(\xi)}\in \cN^0_{\s-\mu}\,,
        \end{align*}
        with estimates $\left|\sym{a}{\geq 1}{0}\right|_{0, \s-\mu}\lesssim_s \vare$. 
        This implies, for $\vare$ small enough, that
        \begin{align*}
            \frac12 \Lambda(\xi) \leq L^{(\sla{3}{2})} \leq 2 \Lambda(\xi)\,.
        \end{align*}
    Then, recalling \eqref{def:dispersion}, 
    we have that, on the support of 
    $\wt \theta \Big(\frac{L^{(\sla{3}{2})}}{\Omega(1/\vare)}\Big)$, 
    \begin{align*}
         \Omega(36|\xi|)=
         \Lambda(36\x)\geq 6\Lambda(\xi)\geq {3\Omega(1/\vare)}
         \frac{L^{(\sla{3}{2})}}{\Omega(1/\vare)}\geq {\Omega(1/\vare)}\,.
    \end{align*}
    Since $\Omega(\cdot)$ is increasing we get  
    \begin{align}\label{supportotildetheta}
       \supp\left(\wt \theta \Big(\frac{L^{(\sla{3}{2})}}{\Omega(1/\vare)}\Big)\right)
       \subseteq 
       \Big\{  |\xi|\geq \frac{1}{36 \vare}\Big\}\,.
    \end{align}
    This proves the second property in \eqref{specloc:theta_tilde}.
    Moreover Faà di Bruno formula gives 
 \begin{align}
\left| \pa_x^\alpha \pa_\xi^\beta\wt \theta 
\Big(\frac{L^{(\sla{3}{2})}}{\Omega(1/\vare)}\Big) 
\right|
&\lesssim 
\max_{0\leq d\leq |\alpha|+ |\beta|} \wt \theta^{(d)}
\Big(\frac{L^{(\sla{3}{2})}}{\Omega(1/\vare)}\Big)\Omega(1/\vare)^{-d}
\max_{ \substack{\alpha_1+ \ldots +\alpha_d=\alpha\\\beta_1+ \ldots +\beta_d=\beta}}\ 
\prod_{j=1}^d \left| \pa_x^{\alpha_j}\pa_\xi^{\beta_j}  L^{(\sla{3}{2})}\right|
\nonumber
\\
&\lesssim 
\max_{0\leq d\leq |\alpha|+ |\beta|} \wt \theta^{(d)}
\Big(\frac{L^{(\sla{3}{2})}}{\Omega(1/\vare)}\Big)\Omega(1/\vare)^{-d}
\langle \xi \rangle^{\sla{3}{2} d-\delta |\beta|}
\lesssim_\kap \langle \xi \rangle^{-\delta |\beta|}\,.
 \label{thetatildino}
\end{align}
 By combining estimates \eqref{est:L2sproof} and \eqref{thetatildino}
   with \eqref{supportotildetheta}, and arguing as in \eqref{est.dl2s}, one deduces 
   the  bound in \eqref{eta:L2sdiff1}.
   Let us now check the first  in \eqref{specloc:theta_tilde} and the  bound in
   \eqref{eta:L2sdiff2}.
    By using  that
\begin{align*}
\di_\zak \wt \theta \Big(\frac{L^{(\sla{3}{2})}}{\Omega(1/\vare)}\Big)[ \hat \zak]
= 
\wt \theta' \Big(\frac{L^{(\sla{3}{2})}}{\Omega(1/\vare)}\Big)\Omega(1/\vare)^{-1}
\di_\zak L^{(\sla{3}{2})}(\zak;x,\xi)[\hat \zak]\,,
\end{align*}
 we get the estimates
    \begin{align}
 \left| \pa_x^\alpha \pa_\xi^\beta\di_\zak \wt \theta 
 \Big(\frac{L^{(\sla{3}{2})}}{\Omega(1/\vare)}\Big)[ \hat \zak]\right|
 &\lesssim 
 \max_{\substack{\alpha_1+\alpha_2=\alpha \\ \beta_1+ \beta_2=\beta}} 
 \left| \pa_x^{\alpha_1}\pa_\xi^{\beta_1} \wt \theta' 
 \Big(\frac{L^{(\sla{3}{2})}}{\Omega(1/\vare)}\Big)\right|\ 
 \left| \pa_x^{\alpha_2}\pa_\xi^{\beta_2} 
 \di_{\zak}L^{(\sla{3}{2})}(\zak;x,\xi)[\hat \zak]\right|\Omega(1/\vare)^{-1}
  \nonumber
  \\&
 \lesssim_\kap  \left(K\vare   \langle \xi\rangle^{-\delta|\beta|}
 + \langle \xi\rangle^{-1-\delta|\beta|}\right) \| \hat \zak\|_{\s}\,.
        \label{est:dtildetheta}
    \end{align}
    Gathering \eqref{est:L2sproof},\eqref{est.dl2s} and \eqref{thetatildino}, 
    \eqref{est:dtildetheta} we get \eqref{est:diffL2s}. Finally the first of 
    \eqref{specloc:theta_tilde} and  \eqref{eta:L2sdiff2} follow by a first 
    order Taylor expansion and \eqref{est:dtildetheta}, \eqref{est:diffL2s}, 
    using also 
    \eqref{supportotildetheta}.
    \end{proof}
	In order to prove Proposition \ref{prop:stima_high}
	we define the energy
	\begin{align}
	\cE^{(s)}(\zak)[Z]:= 
	\left\langle \Opbw{L^{(2s)}\wt\theta
	\Big( \frac{L^{(\sla{3}{2})}}{\Omega(1/\vare)}\Big)} z, 
	z\right\rangle_{L^2(\T^2;\C)}
	= 
	\int_{\T^2}\Opbw{L^{(2s)}\wt\theta
	\Big( \frac{L^{(\sla{3}{2})}}{\Omega(1/\vare)}\Big)} z\,\cdot \, \overline{z}\, \di x \,.
    \label{def:mod_ene}
	\end{align}
	In the next lemma we analyse an important property of the 
	modified energy introduced above.

    \begin{lemma}\label{lem:est_dt_E}
         Let $T>0$,  $ I:=[0,T]$ and $K>1$. There are $s_0>0 $  
         such that for any $s\geq s_0$ there are $\vare_0=\vare(s,K)>0$ 
         and  a constant $C_{s,K}>0$  such that for any  $0\leq \vare\leq \vare_0$ 
         and any solution $ \zak\in  C^0(I;H^s_\R)$ with $ \zak(t)\in B_{s_0,\R}(K\vare)$, $t \in I$, 
         the variable $Z=Z(t)$ in \eqref{varZfinale}
         obtained applying  \Cref{prop:NF_finale} 
         with parameter $\tR:=\vare^{-1}$, satisfies 
        \begin{align}
            \Big|\frac{d}{d t } \cE^{(s)}(\zak)[Z](t)\Big|\leq C_{s,K}\varepsilon^2 \|Z(t)\|_{s}^2.
            \label{est:lem_dtE}
        \end{align}
    \end{lemma}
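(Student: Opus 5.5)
The plan is to differentiate \eqref{def:mod_ene} in time along \eqref{eq:finale_nuova}, writing $\tA:=\Lambda(\xi)+\sym{\ta}{\geq 2}{\sla{3}{2}}+\sym{\tb}{1}{\sla{1}{2}}$, so that $L^{(\sla{3}{2})}=\tA$ and $L^{(2s)}=\tA^{\sla{4}{3}s}$. Set $\mathtt{E}:=L^{(2s)}\wt\theta\big(L^{(\sla{3}{2})}/\Omega(1/\vare)\big)$, which is real. Then
\[
\frac{d}{dt}\cE^{(s)}(\zak)[Z]=\big\langle \Opbw{\di_\zak \mathtt{E}[\pa_t\zak]}z,z\big\rangle
+\big\langle \ii\big[\Opbw{\tA},\Opbw{\mathtt{E}}\big]z,z\big\rangle
+2\Re\big\langle \Opbw{\mathtt{E}}z,(\bB_{\geq 2}(\vare^{-1};\zak)Z)^+\big\rangle .
\]
This uses that $\Opbw{\mathtt{E}}$ is self-adjoint, because $\mathtt{E}$ is real. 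I then estimate the three terms separately.

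The remainder term is the easiest. By \eqref{actionSob} together with \eqref{est:L32} and \eqref{eta:L2sdiff1}, $\Opbw{\mathtt{E}}$ maps $H^s$ to $H^{-s}$ with bound $C_s$. Combining this with \eqref{est:Bexp}, $\tR=\vare^{-1}$, $\|\zak\|_{s_0}\le K\vare$, and \eqref{equivalenza:Zzak} (to trade $\|\zak\|_s$ for $\|Z\|_s$), the term is at most $C_s e^{CK^2}K^2\vare^2\|Z\|_s^2$. This is the point where the choice $\tR=\vare^{-1}$ matters: it makes the exponential factor $O_K(1)$.

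For the time-derivative term I use $\pa_t\zak=\hamvec{\cH}(\zak)$, so that $\|\pa_t\zak\|_{\s}\lesssim\|\zak\|_{\s+\sla{3}{2}}\lesssim K\vare$, taking $s_0$ large, by \eqref{est:XH}. Then \eqref{est:diffL2s} gives $|\di_\zak\mathtt{E}[\pa_t\zak]|_{2s,\s-\mu}\lesssim K^2\vare^2$, which bounds this term by $K^2\vare^2\|Z\|_s^2$. The difficulty is that $\sym{\tb}{1}{\sla{1}{2}}$ is only linear in $\zak$, so the naive bound is only $O(\vare)$. The saving observation is that $\mathtt{E}$ is supported in $|\xi|\gtrsim\vare^{-1}$ by \eqref{specloc:theta_tilde}. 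There the derivative contribution from $\tb_1$, of order $\sla{1}{2}$ relative to $\sla{3}{2}$, gains a factor $\langle\xi\rangle^{-1}\lesssim\vare$. This is exactly the $\langle\xi\rangle^{-1}$ term in \eqref{est.dl2s} and \eqref{est:dtildetheta}. I would redo \eqref{est:diffL2s} in this localized form to get the extra $\vare$.

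The commutator term is the main obstacle. By \Cref{compoparapara0} and \eqref{Moyal}, $\ii[\Opbw{\tA},\Opbw{\mathtt{E}}]=\Opbw{-\{\tA,\mathtt{E}\}+r}+\cQ$ with $r$ of order $\sla{3}{2}+2s-3\delta$. Since $\mathtt{E}=F(\tA)$ is a function of $\tA$, the Poisson bracket $\{\tA,F(\tA)\}$ vanishes identically; this is the reason the energy is built from $L$ itself. The remaining symbol $r$ has order $2s+\sla{3}{2}-3\delta<2s$, because $\delta>\sla{7}{8}$ (for instance $-3\delta+\sla{3}{2}<-1$). It involves two $x$-derivatives of non-constant parts of $\tA$, hence its size is at least $O(\vare)$. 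Localization to $|\xi|\gtrsim\vare^{-1}$ then gives a gain $\langle\xi\rangle^{-1}$, i.e. another $\vare$, via \eqref{eta:L2sdiff1} with $r=1$. I must check that every term in $r$ is either quadratic in $\zak$ or carries a decaying power of $\xi$ on the high-frequency support. The $\tb_1$–$\tb_1$ terms are quadratic, and the $\Lambda$–$\mathtt{E}$ odd terms of order $\ge 3$ have order $\le 2s-\sla{3}{2}\cdot\delta$ and carry the linear factor $K\vare$ times $\vare^{1}$. The paradifferential remainder $\cQ$ is handled by choosing $\vr$ large and using \eqref{calma1}; since its symbols are differences from Fourier multipliers, it is also $O(\vare^2)$ after using \eqref{eta:L2sdiff2}. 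Summing the three contributions gives \eqref{est:lem_dtE}.
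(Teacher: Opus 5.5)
Your proposal is correct and follows the paper's own proof. You use the same three-term splitting of $\frac{d}{dt}\cE^{(s)}$. In the commutator term, the Poisson bracket of $L^{(3/2)}$ with a function of itself vanishes, and the surviving odd Moyal terms and composition remainder are made $O(\varepsilon^2)$ by combining the linear smallness $K\varepsilon$ with the high-frequency gain from \eqref{eta:L2sdiff1}--\eqref{eta:L2sdiff2}. The derivative term is handled by \eqref{est:diffL2s}, whose localized form makes the contribution of the linear symbol small, and the remainder term by \eqref{est:Bexp} with $\tR=\varepsilon^{-1}$. The differences are only cosmetic: the paper takes $\varrho=4$ and exploits the full slack $3\delta-\tfrac32>\tfrac98$ to get $\varepsilon^{2+1/16}$. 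It also bounds the composition remainder directly through \eqref{eta:L2sdiff1} with $r=2$, rather than by splitting off the Fourier-multiplier parts.
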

    \begin{proof}
  By \eqref{def:L2s}, and fixing $\tR=\varepsilon^{-1}$, the equation \eqref{eq:finale_nuova} can be rewritten as
		\begin{equation}
			\pa_t Z =  \vOpbw{-\ii L^{(\sla{3}{2})}(\zak;x,\xi)}Z+ B_{\geq 2}(\vare^{-1};\zak)Z\,.
            \label{eq:L32}
		\end{equation} 
	We then compute the time derivative of the modified energy $\cE^{(s)}$. 
	Using \eqref{eq:L32}, \eqref{def:mod_ene} and the 
	water waves equation \eqref{eq:zak} for $ \zak$, 
	we get  
	\begin{align}
	\frac{d}{d t } \cE^{(s)}(\zak)[Z]= &\left\langle \ii \left[ \Opbw{L^{(2s)}\wt \theta \Big(\frac{L^{(\sla{3}{2})}}{\Omega(1/\vare)}\Big)},\Opbw{L^{(\sla{3}{2})}}\right]   z, 
	z\right\rangle_{L^2(\T^2;\C)}\label{dt:comm}
	\\&
	+ \left\langle \Opbw{\di_\zak\left[L^{(2s)}\wt \theta 
	\Big(\frac{L^{(\sla{3}{2})}}{\Omega(1/\vare)}\Big)\right]
	[ \hamvec{\cH}(\zak)]} z, z\right\rangle_{L^2(\T^2;\C)}
	\label{dt:dt}
	\\&
	+ 2 \Re\left\langle  \Opbw{L^{(2s)}\wt \theta 
	\Big(\frac{L^{(\sla{3}{2})}}{\Omega(1/\vare)}\Big)}  
	(\bB_{\geq 2}(\vare^{-1};\zak)Z)^+, z\right\rangle_{L^2(\T^2;\C)}\,.
	\label{dt:resto}
	\end{align}
    We estimate each summand in the r.h.s. above separately.
    First of all we claim that 
    	\begin{align}
	    \left\| \left[ \Opbw{L^{(2s)}\wt\theta\Big( 
	    \frac{L^{(\sla{3}{2})}}{\Omega(1/\vare)}\Big)},\Opbw{L^{(\sla{3}{2})}}\right]   
	    z\right\|_{-s} \leq C_s \vare^{2+\sla{1}{16}} \| z\|_s\,.
        \label{est:comm}
	\end{align}
    Therefore, by Cauchy-Schwartz,  \eqref{est:comm} and \eqref{equivalenza:Zzak}, 
    we get 
    \begin{align}
        |\eqref{dt:comm}|\leq &  \left\| 
        \left[ \Opbw{L^{(2s)}\wt\theta\Big( \frac{L^{(\sla{3}{2})}}{\Omega(1/\vare)}\Big)},
        \Opbw{L^{(\sla{3}{2})}}\right]   z\right\|_{-s} \| z\|_s
        \lesssim_s 
        \vare^{2+\sla{1}{16}} \| z\|_s^2\,.
        \label{est:dt:comm}
    \end{align}
    Moreover, by Cauchy-Schwartz, \Cref{thm:action},  
    \eqref{est:diffL2s}, \eqref{est:XH} and \eqref{equivalenza:Zzak}, we get 
\begin{align}
    |\eqref{dt:dt}|\lesssim_s 
    K\vare \| \hamvec{\cH}(\zak)\|_{\s_0+\mu} \| z\|_{s}^2
    \lesssim_s K^2\vare^2 \|z\|_s^2\,,
    \label{est:dt:dt}
\end{align}
   for some $\s_0>1$. Here, in the last inequality we also used that 
   $\| \hamvec{\cH}(\zak)\|_{\s_0+\mu}\lesssim 
   \| \zak\|_{\s_0+\mu+\sla{3}{2}}\lesssim K\vare $, 
   for $s_0$ sufficiently large. 
    Finally, by Cauchy-Schwartz, \Cref{thm:action}, \eqref{eta:L2sdiff1}-\eqref{eta:L2sdiff2} 
    and \eqref{est:Bexp} with $\tR:=\vare^{-1}$, we have
    \begin{align}
        |\eqref{dt:resto}|\leq C_s\| \bB_{\geq 2}(\vare^{-1};\zak)Z\|_s\| z\|_s  
        \leq C_{s,K} \vare^2 \|z\|_s^2\,. 
        \label{est:dt_Bione}
    \end{align}
    Gathering \eqref{est:dt:comm}, \eqref{est:dt:dt} and \eqref{est:dt_Bione} 
    we get the claimed \eqref{est:lem_dtE}.

  \smallskip
  \noindent  
    In order to conclude the proof it remains to prove
    the claim \eqref{est:comm}.
    We denote by $\ta:=L^{(2s)}\wt\theta\left( \varepsilon^{\sla{3}{2}} L^{(\sla{3}{2})}\right)$ and $\tb:=L^{(\sla{3}{2})}$. First, by \eqref{eta:L2sdiff1}-\eqref{eta:L2sdiff2} (with $r=2+\sla{1}{16}$, $\ta \in \cN_{s_0-\mu}^{2s+2+\sla{1}{16}}$ and $\tb \in \cN_{s_0-\mu}^{\sla{3}{2}}$.
	   By the composition \Cref{compoparapara0} applied with $\vr=4$, we have
        \begin{align}
            \left[ \Opbw{L^{(2s)}\wt\theta\Big(\frac{L^{(\sla{3}{2})}}{\Omega(1/\vare)}\Big)},
            \Opbw{L^{(\sla{3}{2})}}\right]=\overbrace{\Opbw{\frac{1}{\ii} \big\{\ta, 
            \tb \big\}}}^{=0} +\Opbw{ 2p_3(\ta, \tb)}+ R(\ta,\tb)\,,
            \label{comm:esp}
        \end{align}
        where $p_3$ is defined in \eqref{def:pk} and 
        $R(\ta,\tb):= \cQ(\ta, \tb)-\cQ(\tb,\ta)$ satisfies, for $\sigma_0>0$ 
        large enough and taking $s_0\geq \s_0+\mu$, 
        \begin{align}
            \| R(\ta,\tb)z\|_{-s}\lesssim_s 
            | \ta|_{2s+2+\sla{1}{16},\s_0}| \tb|_{\sla{3}{2}, \s_0} 
            \| z\|_{s}\lesssim_s | \ta|_{2s+2+\sla{1}{16},s_0-\mu}
            | \tb|_{\sla{3}{2}, s_0-\mu} \| z\|_{s}
            \stackrel{\eqref{eta:L2sdiff1}}{\lesssim_s}K^2 \vare^2 \| z\|_s\,.
            \label{est:Rab}
        \end{align}
        Next we denote by $ \ta_{\geq 1}:= \ta-\Lambda(\xi)^{\sla{4}{3}s}\wt\theta\left( \frac{\Lambda(\xi)}{\Omega(1/\vare)}\right)$ and $ \tb_{\geq 1}:= \tb- \Lambda(\xi)$. 
        By its expression \eqref{def:pk}, we note that 
        \begin{align*}
            p_3(\ta,\tb)=p_3\Big(\ta_{\geq 1}  ,\tb\Big) + p_3\Big(\Lambda(\xi)^{\sla{4}{3}s}\wt\theta\left(\frac{\Lambda(\xi)}{\Omega(1/\vare)}\right) , \tb_{\geq 1}\Big)+\overbrace{p_3\Big(\Lambda(\xi)^{\sla{4}{3}s}\wt\theta
            \Big( \frac{\Lambda(\xi)}{\Omega(1/\vare)}\Big) , \Lambda(\xi)\Big)}^{=0}\,.
        \end{align*}
        Then, using also \eqref{est:pk}, \eqref{eta:L2sdiff1}-\eqref{eta:L2sdiff2}, \eqref{est:L32} and the fact that $3\delta> \frac98+\frac32$, we get 
 \begin{align}
 \big| p_3(\ta,\tb)\big|_{2s, 3}
 &\leq \big| p_3(\ta,\tb)\big|_{2s+\sla{9}{8}+\sla{3}{2}-3\delta, 3}
 \nonumber
\\
&\leq 
            \big|p_3\big(\ta_{\geq 1} ,\tb\big)  \big|_{2s+\sla{9}{8}+\sla{3}{2}-3\delta, 3}
            + \Big| p_3\Big(\Lambda(\xi)^{\sla{4}{3}s}\wt\theta
            \Big( \frac{\Lambda(\xi)}{\Omega(1/\vare)}\Big), 
            \tb_{\geq 1}\Big)\Big|_{2s+\sla{9}{8}+\sla{3}{2}-3\delta, 3}
            \notag
            \\
            &\lesssim 
            \Big| \ta_{\geq 1}\Big|_{2s+\sla{9}{8},3+q} |\tb|_{\sla{3}{2}, 3+q}
            +\Big| \Lambda(\xi)^{\sla{4}{3}s}\wt\theta
            \Big( \frac{\Lambda(\xi)}{\Omega(1/\vare)}\Big)\Big)\Big|_{2s+\sla{9}{8},3+\tq}\Big| 
            \tb_{\geq 1}\big|_{\sla{3}{2},3+q}
            \notag
            \\
            &\lesssim_s  \ K \vare^{1+\sla{9}{8}}\lesssim_s \vare^{2+\sla{1}{16}} \,,
            \label{est:p3}
        \end{align}
        provided $ \vare^{\sla{1}{16}}K\leq 1$ and $s_0\geq 3+\tq$.
        Then, eventually applying \Cref{thm:action} and \eqref{est:p3}, we get 
        \begin{align}
           \|  \Opbw{ 2p_3(a, b)}z\|_{-s}\lesssim_{s}
           | p_3(\ta, \tb)|_{2s, s_0}\| z\|_{s}\lesssim_s \vare^{2+\sla{1}{16}} \| z\|_{s}\,.
           \label{est:opp3}
        \end{align}
In view of \eqref{comm:esp}, we gather \eqref{est:opp3} and 
\eqref{est:Rab} to obtain \eqref{est:comm}. 
This concludes the proof of the Lemma.
	\end{proof}
    \paragraph{End of the proof of \Cref{prop:stima_high}.}
In order to obtain estimate \eqref{stimascema_high}
we first show that
		\begin{align}
			\|\Pi_{\leq \frac{1}{\vare}}^\bot Z(t)\|_s^2\leq 
			C_s\left( \|Z(0)\|_s^2+ K\vare \|Z(t)\|_s^2
			+\int_0^t \Big|\frac{d}{d t } \cE^{(s)}(\zak)[Z](\tau)\Big|\,\di \tau 
			\right)\,,
            \label{stimascema}
		\end{align}
	where $\cE^{(s)}$
 is the modified energy introduced in \eqref{def:mod_ene}. 
We begin by proving the first inequality. 
By monotonicity of 
$ \Omega(\cdot)$, for any $\xi \in \Z^2$ with 
$ |\xi|\geq \frac{1}{\vare}$, 
we have $\Lambda(\xi)\geq \Omega(1/\vare)$ 
and thus $\wt\theta
\Big( \frac{\Lambda(\xi)}{\Omega(1/\vare)}\Big)\equiv 1$. 
Then 
\begin{align*}
\|\Pi_{\leq \frac{1}{\vare}}^\bot Z\|_s^2 
&= 
\sum_{|\xi|>\frac{1}{\vare}} |\xi|^{2s} |\hat z (\xi)|^2
= \sum_{|\xi|>\frac{1}{\vare}} |\xi|^{2s} 
\wt\theta\Big( \frac{\Lambda(\xi)}{\Omega(1/\vare)}\Big) 
|\hat z (\xi)|^2
\\&
\lesssim_{s,\kap}  \sum_{\xi \in \Z^2} 
\Lambda(\xi)^{\sla{4}{3} s} 
\wt\theta\Big( \frac{\Lambda(\xi)}{\Omega(1/\vare)}\Big) 
|\hat z (\xi)|^2
= \langle \Opbw{\Lambda(\xi)^{\sla{4}{3} s}
\wt\theta\Big( \frac{\Lambda(\xi)}{\Omega(1/\vare)}\Big)} 
z, z\rangle\,.
\end{align*}
Then, using \eqref{eta:L2sdiff1}-\eqref{eta:L2sdiff2}, 
we get 
\begin{equation}\label{est:energia}
\begin{aligned}
 \langle &\Opbw{\Lambda(\xi)^{\sla{4}{3} s}
\wt\theta
\Big( \frac{\Lambda(\xi)}{\Omega(1/\vare)}\Big)} 
z, z\rangle 
\\&= 
\cE^{(s)}(Z)
+ \langle 
\Opbw{\Lambda(\xi)^{\sla{4}{3} s}\wt\theta
\Big( \frac{\Lambda(\xi)}{\Omega(1/\vare)}\Big)
- L^{(2s)}\wt\theta
\Big( \frac{L^{(\sla{3}{2})}}{\Omega(1/\vare)}\Big) } z, 
z \rangle 
    \\ & 
    \lesssim_s \cE^{(s)}(Z) 
+ \Big\| \Opbw{\Lambda(\xi)^{\sla{4}{3} s}\wt\theta
\Big( \frac{\Lambda(\xi)}{\Omega(1/\vare)}\Big)
- L^{(2s)}\wt\theta
\Big( \frac{L^{(\sla{3}{2})}}{\Omega(1/\vare)}\Big) } 
z \Big\|_{- s} \| z \|_s 
    \\
 &  \lesssim_s \cE^{(s)}(Z)
 + \left| \Lambda(\xi)^{\sla{4}{3} s}\wt\theta
\Big( \frac{\Lambda(\xi)}{\Omega(1/\vare)}\Big)
- L^{(2s)}\wt\theta
\Big( \frac{L^{(\sla{3}{2})}}{\Omega(1/\vare)}\Big) 
\right|_{2s, s_0} \| z \|_s^2
 \\
 & \lesssim_s  \cE^{(s)}(Z)+ K \vare \| z\|_s^2\,,
 \end{aligned}
 \end{equation}
where in the last inequality we used \eqref{equivalenza:Zzak}. 
Then, imposing $  K^3 \vare_0\leq 1 $ and  
using \eqref{est:L32}, we have 
\begin{equation}\label{FTC}
\begin{aligned}
\cE^{(s)}(Z)= &  \left\langle \Opbw{\Big[L^{(2s)}
\wt\theta\Big( \frac{L^{(\sla{3}{2})}}{\Omega(1/\vare)}\Big)
\Big]_{|t=0}} z_0, z_0\right\rangle
+ \int_0^t \frac{\di}{\di t} \cE^{(s)}(Z)\, \di \tau
\\&
\lesssim_s 
\|\Opbw{\Big[L^{(2s)}\wt\theta\Big( 
\frac{L^{(\sla{3}{2})}}{\Omega(1/\vare)}\Big)\Big]_{|t=0}} 
z_0\|_{-s}
\|z_0\|_s
+\int_0^t \frac{\di}{\di t} \cE^{(s)}(Z)\, \di \tau
\\&
\lesssim_s 
\|Z(0)\|_s^2
+ \int_0^t \frac{\di}{\di t} \cE^{(s)}(Z)\, \di \tau\,.
\end{aligned}
\end{equation}
        Gathering \eqref{est:energia} and \eqref{FTC}, we get \eqref{stimascema}.
         Combining 
   \eqref{stimascema} and \eqref{est:lem_dtE} we get \eqref{stimascema_high}.
    \qed

	
	\begin{proof}[{\bf Proof of \Cref{prop:main_est}}]. We 
	first define $\und{C}_s$ as twice the 
	maximum of the constants $C_s$ given in \Cref{prop:stima_low} 
	and \Cref{prop:stima_high}. In the 
	same way we define $\und{C}_{s,K}$ as twice the 
	maximum of the constants $C_{s,K}$ given in 
	\Cref{prop:stima_low} and \Cref{prop:stima_high}. 
	Then, by \eqref{splitting:low_high}, \eqref{stimascema_low} 
	and \eqref{stimascema_high}, we get \eqref{en:est}-\eqref{en:est2}.
	The bound \eqref{equivalenza:Zzak}
	simply follows 
	   By  \Cref{prop:blockdecoupling,prop:nfquadtra,prop:NF_finale} 
   with estimates \eqref{equiv:U_etaomega}, \eqref{stima:adm} 
   and \eqref{stima:admR_nonspec}.
	\end{proof}

\subsection{Main bootstrap and proof of Theorem \ref{thm:main}}\label{sezione bootstrap argument}
The next bootstrap Proposition \ref{prop:boot}  is the main ingredient 
for the proof of the long time existence Theorem \ref{thm:main}.
 Proposition \ref{prop:boot} is a consequence of a priori energy estimates of 
Proposition  \ref{prop:main_est}.

\begin{proposition}[Bootstrap]\label{prop:boot}
   Let $\kap \in \R_{+}\setminus \mathscr{N}$, where 
$\mathscr{N}$ is the zero measure set given by Lemma \ref{lem:small_divisors}. 
There exist $s_0>0$, $\tK>1$ and $c>0$ with the following property.

Let $\vare_0=\vare_0(s_0,\tK)>0$ be given by Proposition \ref{prop:main_est} (with $s=s_0$). 
For any $0\le \bar{\vare}\le \vare_0$, assume that $\zak(t)$ is a solution of \eqref{eq:zak} satisfying
\begin{equation}\label{ipobootstrap}
\| \zak(0)\|_{s_0} \le \bar{\vare}, 
\qquad  
\| \zak(t)\|_{s_0}\le \tK \bar{\vare}, 
\quad \forall t \in [0,T], 
\quad T\le c\,\bar{\vare}^{-2}.
\end{equation}
Then the following improved estimate holds
\begin{align}
\| \zak(t)\|_{s_0}\le \frac{\tK}{2}\,\bar{\vare}, 
\quad \forall t \in [0,T].
\label{improve:boot}
\end{align}
Moreover, for any $s\ge s_0$, there exist constants $\tK_s>1$ and $c_s>0$ such that the following holds.
Let $\vare_0=\vare_0(s,\tK_s)>0$ be given by Proposition \ref{prop:main_est}. 
If $0\le \bar{\vare}\le \vare_0$ and, in addition to \eqref{ipobootstrap},
\begin{equation}\label{ipobootstraps}
\zak(0)\in H^s_\R, \qquad
\| \zak(t)\|_{s}\le \tK_s \|\zak(0)\|_s,
\quad \forall t \in [0,T], 
\quad T\le c_s\,\bar{\vare}^{-2},
\end{equation}
then
\begin{align}
\| \zak(t)\|_{s}\le \frac{\tK_s}{2}\,\|\zak(0)\|_s,
\quad \forall t \in [0,T].
\label{improve:boots}
\end{align}
\end{proposition}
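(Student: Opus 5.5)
The plan is to feed the bootstrap hypothesis \eqref{ipobootstrap} into \Cref{prop:main_est} with $K:=\tK$, $\vare:=\bar\vare$ and $\tR:=\bar\vare^{-1}$, and then choose $\tK$ and $c$ so that the resulting a priori bound closes. Since $\zak(t)\in B_{s_0,\R}(\tK\bar\vare)$ for all $t\in[0,T]$ and $\|\zak(0)\|_{s_0}\le\bar\vare$, \Cref{prop:main_est} applies. Combining \eqref{en:est} with the equivalence \eqref{equivalenza:Zzak} at $s=s_0$ gives, for every $t\in[0,T]$,
\[
\|\zak(t)\|_{s_0}^2\le C_{s_0}^2\|Z(t)\|_{s_0}^2\le C_{s_0}^2\big(\und{C}\,\bar\vare^2+T\,\und{C}_{\tK}\bar\vare^4\big).
\]
First fix $\tK$ independently of $T$ and $c$: take $\tK^2\ge 8C_{s_0}^2\und{C}$. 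This is possible because $\und{C}$ and $C_{s_0}$ depend only on $s_0$ and not on $K$. (If one prefers the sharper initial constant, use \eqref{en:est2} at $s=s_0$ together with $\|Z(0)\|_{s_0}\le C_{s_0}\bar\vare$.) Next choose $c$ so small that $c\,C_{s_0}^2\und{C}_{\tK}\le \tK^2/8$. Then $T\le c\bar\vare^{-2}$ gives $\|\zak(t)\|_{s_0}^2\le \tK^2\bar\vare^2/4$, which is \eqref{improve:boot}.

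For the higher norm the procedure is the same. Apply \Cref{prop:main_est} at level $s$, with the same $K=\tK$, since the small-norm hypothesis is controlled by the first part. Using \eqref{en:est2} and \eqref{equivalenza:Zzak}, we get
\[
\|\zak(t)\|_s^2\le C_s^2\Big(\und{C}_sC_s^2\|\zak(0)\|_s^2+T\,\und{C}_{s,\tK}\bar\vare^2C_s^2\sup_{[0,T]}\|\zak\|_s^2\Big)\le C_s^4\big(\und{C}_s+T\und{C}_{s,\tK}\bar\vare^2\tK_s^2\big)\|\zak(0)\|_s^2,
\]
where the last step uses \eqref{ipobootstraps}. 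Take $\tK_s^2\ge 8C_s^4\und{C}_s$, and then $c_s\le c$ with $c_s C_s^4\und{C}_{s,\tK}\tK_s^2\le \tK_s^2/8$; this yields \eqref{improve:boots}. The only subtlety is the order of the constants: the proposition allows $\vare_0$ to depend on $\tK_s$, so one must check that the constant multiplying $T\bar\vare^2$ depends on $\tK$ (the $s_0$-level bound) and not on $\tK_s$. This holds because the $K$ in \Cref{prop:main_est} measures only the $s_0$-norm.

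The main obstacle is ensuring there is no circularity, since \eqref{en:est} contains the $T$-growth term $T\und{C}_{K}\vare^4$ whose constant depends on $K=\tK$. The fix is the order of choices above: fix $\tK$ from $K$-independent constants only, and only then fix $c$. One must also check that \eqref{equivalenza:Zzak} has a constant $C_s$ independent of $K$ and of $\tR=\bar\vare^{-1}$. This follows from \eqref{stima:admR_nonspec}, since $\exp((C\tR\|\zak\|_{s_0})^2\|\zak\|_{s_0}^{1/2})\le \exp(C^2\tK^{5/2}\bar\vare^{1/2})\le 2$ once $\bar\vare_0$ is small depending on $\tK$. That smallness is allowed, because $\vare_0=\vare_0(s_0,\tK)$.
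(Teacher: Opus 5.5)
Your proposal is correct and follows essentially the same route as the paper. You apply \Cref{prop:main_est} with $K=\tK$, pass to $\zak$ via \eqref{equivalenza:Zzak}, fix $\tK$ from the $K$-independent constants $\und{C}$ and $C_{s_0}$, choose $c$ small depending on $\und{C}_{\tK}$, and repeat the argument with \eqref{en:est2} for the higher norm. Your constant bookkeeping is in fact more careful than the paper's sketch: you track the squared equivalence constants, you check that $C_s$ is independent of $K$ using the exponential factor in \eqref{stima:admR_nonspec}, and you note that $c_s$ depends on $\tK$ rather than on $\tK_s$.
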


\begin{proof}
We fix $ s_0>0$ as the one given by  \Cref{prop:main_est}\,.
Consider $\und{C}>0$ given by \Cref{prop:main_est} 
and $C=C_{s_0}>0$ given in \eqref{equivalenza:Zzak} (with $s=s_0$).
 Then we define $\tK:= \sqrt{8C\, \und{C}}$. 
Then, applying \Cref{prop:main_est} 
with 
$ K\equiv \tK$ and $s=s_0$, there is $\vare_0=\vare_0(s_0,\mathtt{K})>0$  such that, if \eqref{ipobootstrap}
 holds, then (see \eqref{en:est})
 \[
\| \zak(t)\|_{s_0}^2 \leq C \und{C} \bar{\vare}^2+ T C \und{C}_{\tK} \mathtt{K}^{2} \bar{\vare}^{4}
=\frac{\tK^2}{8} \bar{\vare}^2+T C \und{C}_{\tK} \mathtt{K}^2\bar{\vare}^{4}, \qquad \forall t \in [0,T].
\]
We now define $c:= \frac{1}{8 C \und{C}_{\tK}}>0$ 
so that the above bound gives 
\begin{align*}
    \| \zak(t)\|_{s_0}^2 \leq \frac{\tK^2}{4} \bar{\vare}^2\,, 
    \qquad \forall t \in [0, c\vare^{-2}]\,,
\end{align*}
    as claimed in \eqref{improve:boot}. The proof of the second part 
    of the statement follows in the same way, using also \eqref{en:est2}.
    \end{proof}

We now prove the long-time existence  Theorem \ref{thm:main}.
Let us consider $s_0>0$,  
$ \varepsilon_0>0$, $\tK>1$ and $c>0$ given by 
Proposition \ref{prop:boot},
and let $\kappa\in \mathscr{N}$ where $\mathscr{N}$ 
is the set of parameters provided by Lemma
\ref{lem:small_divisors}.
By the assumption \eqref{piccolezzaDati}, local existence theory
for the Water Waves system 
   \eqref{eq:1.2} (see \cite{ABZ2011_2} and \cite{ABZ2014})
guarantees the existence of a time $T_{\mathrm{loc}}>0$ 
and a unique classical solution 
$(\eta,\psi)\in C^{0}([0,T_{\mathrm{loc}}]; 
H_0^{s+\frac{1}{4}}(\T^2;\R)\times\dot{H }^{s-\frac{1}{4}}(\T^2;\R))$  
of \eqref{eq:1.2}, with initial data as in \eqref{piccolezzaDati}, 
such that 
\begin{equation}
\label{prLTE1}
       \sup_{t \in [0,T_{\rm loc}]} \Big(
       \|  \eta(t)\|_{H^{s_0+\sla{1}{4}}_0(\T^2;\R)}
       + \| \psi(t)\|_{\dot H^{s_0-\sla{1}{4}}(\T^2;\R)}\Big)
       \leq C\vare\,.
\end{equation}
Passing to the complex coordinates 
\eqref{zak}, by \eqref{prLTE1} we deduce that $\zak=\mathcal{M}(\eta,\psi)^{T}$
solves the problem \eqref{eq:zak} (see Proposition \eqref{prop:compZak})
and that (see \eqref{Sobnorm})
\[
\begin{aligned}
\|\zak(0)\|_{s_0}&\leq c_s \Big( \|  \eta_0\|_{H^{s_0+\sla{1}{4}}_0(\T^2;\R)}
       + \| \psi_0\|_{\dot H^{s_0-\sla{1}{4}}(\T^2;\R)}\Big)\stackrel{\eqref{piccolezzaDati}}{\leq}
       c_{s}\varepsilon\,,
       \\
    \|\zak(t)\|_{s_0}&\leq c_s
      \Big(
       \|  \eta(t)\|_{H^{s_0+\sla{1}{4}}_0(\T^2;\R)}
       + \| \psi(t)\|_{\dot H^{s_0-\sla{1}{4}}(\T^2;\R)}\Big)
       \stackrel{\eqref{prLTE1}}{\leq} c_{s}C\vare\,,\qquad t\in[0,T_{loc}]\,,
    \end{aligned}
\]
for some $c_s>0$. We now choose $\e_0$ in \eqref{piccolezzaDati} in such a way that,
for $0\leq \vare\leq \e_0$ we have
$c_{s}\vare \leq \bar{\vare}$
 and $c_{s}C\vare\leq \mathtt{K}\bar{\vare}$.

 Moreover we take $s\geq s_0$, therefore the two
 assumption in \eqref{ipobootstrap}
hold
with $\bar{\vare} = \vare \max\{2c_s,c_{s}C\mathtt{K}^{-1} \} $ on the time interval 
$ [0, T_{ loc}] $.
Then Proposition \ref{prop:boot} 
and  a standard bootstrap argument guarantee that 
$ \zak(t)  $ can be extended up to a time 
$
T_\vare:= c\bar{\vare}^{\,-2} 
\, ,
$
consistently with \eqref{timetime}, 
and that
\begin{equation}
\sup_{t\in[0,T_{\vare}]}  \| \zak(t)\|_{s_0}\leq \tK \bar{\vare}\,.
\label{stimafinaletta}
\end{equation}
The latter bound, together with the equivalence \eqref{equiequi},
implies \eqref{stimaFinalemain}. 
Moreover, \eqref{stimafinaletta} and \eqref{equiequi}, combined with the same bootstrap
argument based on \eqref{ipobootstraps}–\eqref{improve:boots},
yields the high-energy estimate \eqref{stimaFinalemain2}. 
This concludes the proof of Theorem \ref{thm:main}.
     
    \appendix
\addtocontents{toc}{\protect\setcounter{tocdepth}{1}}

\section{ The Dirichlet-Neumann Operator}\label{app:DN}
We begin by recalling some fundamental properties of the Dirichlet–Neumann operator in \eqref{eq:112aINTRO} that will be used throughout the paper. Our main focus is on its paradifferential structure.
This aspect has been extensively studied in the literature (see, for instance, \cite{AlDe} and the monograph by Lannes \cite{Lannes} and the references therein).
For our purposes, however, we require a more refined description of $G(\eta)$, which is stated in Proposition \ref{para:DN}.

	Let $\Phi$ the harmonic extension of $\psi$ in the domain 
    \begin{align}
        \Omega= \{ (x,y)\in \T^2\times \R \colon y\leq \eta(x)\},
        \label{Omega_domain}
    \end{align}
    namely
\begin{align}
	\begin{cases} 
		\Delta_{x,y} \Phi(x,y)=0 , & \text{on } y<\eta(x);\\
		\Phi(x, \eta(x))=\psi(x);\\
		\pa_y\Phi(x,y)\to 0 & \text{as } y\to -\infty.
	\end{cases}
    \label{original_elliptic}
	\end{align}
	Then the Dirichlet-Neumann operator 
	is defined as 
	\begin{equation*}
	G(\eta)\psi:= \pa_y \Phi\,(x,\eta(x)) -\nabla \eta(x)\cdot \nabla_x \Phi \, (x, \eta(x)) = \tB-\tV\cdot \nabla \eta\,.
	\end{equation*}
	Note that one has 
\begin{align}
	G(\eta)\psi= (1+|\nabla \eta|^2) \tB- \nabla \eta\cdot \nabla \psi= (1+|\nabla \eta|^2) \pa_y \Phi_{|y=\eta(x)}- \nabla \eta\cdot \nabla \psi.
    \label{eq:112a}
\end{align}
Define the good unknown of Alinhac:
		\be 
		\omega:= \psi - \Opbw{\tB} \eta.
        \label{def:good_unknown}
		\ee
    The main result of this section is the following paralinearization formula.
    \begin{proposition}
    \label{para:DN}
       Let  $ \vr \geq 0$. There are $s_0>1$ and $r>0$ such that for any $ \eta \in B_{s_0}(r)$ one has 
       \begin{align}
           G(\eta)\psi= \Opbw{\lambda(\eta;x,\xi)}\omega
           + \Opbw{-\ii \tV\cdot \xi- \frac12\div(\tV)}\eta
           + R_{\geq 1}(\eta)\psi
           \label{eq_para:DN}
       \end{align}
       where 
       \begin{itemize}
           \item   The symbol $\lambda(\eta;x,\xi) $, recalling \eqref{def:lam1} and \eqref{def:lam0}, expands as 
            \begin{align}
            \label{lambda:DN}
           \lambda(\eta;x,\xi)= \sym{\lambda}{}{1}(\eta;x,\xi)+ \sym{\lambda}{}{0}(\eta;x,\xi)+\sym{\lambda}{}{-1}(\eta;x,\xi),
            \end{align}
           where $\sym{\lambda}{}{-1} \in \Gamma_{\geq 1}^{-1}[r]$, namely there is $\mu>0$ such that 
           \begin{align}
           \lambda^{(-1)}= \lambda^{(-1)}_1+\lambda^{(-1)}_{\geq 2}\,, 
           \qquad 
           \lambda^{(-1)}_1\in \wt \Gamma_1^{-1}\,, 
           \qquad 
               \big| \lambda^{(-1)}_{\geq 2}\big|_{-1,\s-\mu}
               \lesssim \| \eta\|_{\s+\frac14}^2\,.
               \label{esp:lambda-1}
           \end{align}
           Moreover the symbol $ \sym{\lambda}{1}{-1}$ is real valued.
           \item There is $s_0>0$ such that one has  $R_{\geq 1}(\eta)[\bigcdot]\colon B_{s_0}(r)\cap H^s \mapsto\cL(H^s;H^{s+\vr})$ for any $ s\geq s_0$ with estimates 
           \begin{align*}
               \| R_{\geq 1}(\eta)\psi\|_{{s+\vr}}\lesssim_s \| \eta\|_{s_0}\| \psi\|_s+ \| \eta\|_s\| \psi\|_{s_0}.
           \end{align*}
           \item There is $s_0>0$ such that one has the expansion 
           \begin{align}\label{est_R:DN2}
               R_{\geq 1}(\eta)= R_{1}(\eta)+R_{\geq 2}(\eta)\,, 
               \qquad \wt R_{1}(\eta)\in \cR^{-\vr}_1\,,
           \end{align}
        and $R_{\geq 2}(\eta)$ belongs to $\cL(H^s;H^{s+\vr})$ 
        for any $s\geq s_0$ with estimates
           \begin{align}
           \label{est_R:DN}
               \| R_{\geq 2}(\eta)\psi\|_{s+\vr}
               \lesssim_s 
               \| \eta\|_{s_0}^2 \| \psi\|_{s}+ \| \eta\|_{s_0}\|\eta \|_{s}\|\psi\|_{s_0}\,.
           \end{align}
       \end{itemize}
    \end{proposition}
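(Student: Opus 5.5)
We plan to follow the Alinhac–Alazard–Burq–Zuily approach: flatten the fluid domain, paralinearize the resulting elliptic equation, factor it into forward and backward parabolic paradifferential evolutions, and read off $G(\eta)\psi$ from the boundary trace. Concretely, we use the change of variables $(x,z)\mapsto(x,z+\eta(x))$ (or a smoothed version $z+\rho_\delta(x,z)$ to keep regularity in $z$) on $\T^2\times(-\infty,0]$. We set $v(x,z):=\Phi(x,z+\eta(x))$, so that $v$ solves
\[
(1+|\nabla\eta|^2)\partial_z^2 v+\Delta_x v-2\nabla\eta\cdot\nabla_x\partial_z v-\Delta\eta\,\partial_z v=0,\qquad v|_{z=0}=\psi,
\]
and \eqref{eq:112a} gives $G(\eta)\psi=\big[(1+|\nabla\eta|^2)\partial_z v-\nabla\eta\cdot\nabla_x v\big]_{z=0}$. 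The first step is the paralinearization of this equation, using Bony's paraproduct decomposition together with the tame product estimates. We introduce the good unknown $w:=v-\Opbw{\partial_z v}\,\eta$, which produces $w|_{z=0}=\omega$ as in \eqref{def:good_unknown}, since $\partial_z v|_{z=0}=\tB$. In terms of $w$ we get
\[
\partial_z^2 w+\Opbw{\alpha}\partial_z w+\Opbw{\beta}w=F,
\]
where $\alpha=-2\ii\,\tb\,\nabla\eta\cdot\xi-\tb\,\Delta\eta$ and $\beta=-\tb|\xi|^2$, with $\tb$ as in \eqref{funz:bpiccolo}. The source $F$ contains the paraproducts in $\eta$ and a smoothing remainder.

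The second step is the factorization
\[
\partial_z^2+\Opbw{\alpha}\partial_z+\Opbw{\beta}=\big(\partial_z-\Opbw{a}\big)\big(\partial_z-\Opbw{A}\big)+\text{smoothing},
\]
with $a=a^{(1)}+a^{(0)}+\dots$ and $A=A^{(1)}+A^{(0)}+\dots$ of order $1$. We solve for the symbols order by order using the composition expansion $\sha{\vr}$ of Lemma \ref{compoparapara0}. At the principal level this is the quadratic equation $a^{(1)}A^{(1)}=\beta$, $a^{(1)}+A^{(1)}=-\alpha^{(1)}$, whose roots are $\tb\big(\ii\nabla\eta\cdot\xi\pm\sqrt{(1+|\nabla\eta|^2)|\xi|^2-(\nabla\eta\cdot\xi)^2}\big)$. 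We take $\Re A^{(1)}>0$, so that $\Re a^{(1)}<0$. The lower orders are obtained by solving linear equations recursively, and we stop at order $-\vr$. Combining $A$ with the boundary formula then gives $\lambda=(1+|\nabla\eta|^2)A-\ii\nabla\eta\cdot\xi$. This reproduces $\sym{\lambda}{}{1}$ in \eqref{def:lam1}, and the next order gives the Poisson-bracket formula \eqref{def:lam0}. The paraproduct terms coming from $\Opbw{\partial_z v}\eta$ and $\nabla\eta\cdot\nabla_x v$ combine, through $\tV=\nabla\psi-\tB\nabla\eta$, into $\Opbw{-\ii\tV\cdot\xi-\tfrac12\div\tV}\eta$. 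Here the Weyl quantization is what produces the $\tfrac12\div$ correction.

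The third step, which we expect to be the main obstacle, is to show that $(\partial_z-\Opbw{A})w|_{z=0}$ is smoothing. Since $\Re a^{(1)}\le -c|\xi|$, the equation $(\partial_z-\Opbw{a})W=F$ is a backward parabolic problem that is well posed toward $z\to-\infty$. Parabolic estimates on a strip $[-1,0]$, together with a cutoff in $z$, then give $W|_{z=0}\in H^{s+\vr}$ with tame bounds. Closing this requires the elliptic regularity of $v$, namely $\partial_z^k v\in C^0_zH^{s-k}$, which follows from the variational estimates of \cite{BMV22}. It also requires tracking tameness, so that each remainder is bounded by $\|\eta\|_{s_0}\|\psi\|_s+\|\eta\|_s\|\psi\|_{s_0}$.

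Finally, for the expansions \eqref{lambda:DN}–\eqref{est_R:DN} we use the analyticity of $\eta\mapsto G(\eta)$ from \cite{BMV22}. Every symbol built above is an explicit analytic function of $\nabla\eta$ and its derivatives, so we can Taylor expand it. Its linear part $\sym{\lambda}{1}{-1}$ is real because the symbols are real up to $\ii\xi$-odd terms, which are removed by the Weyl symmetrization. The remainder is split as $R_{\geq1}=R_1+R_{\geq2}$ with $R_1:=d_\eta R_{\geq1}(0)$. This $R_1$ satisfies the smoothing bound, and it is translation invariant because the whole construction commutes with translations. The quadratic estimate \eqref{est_R:DN} for $R_{\geq2}$ then follows from the Taylor formula with integral remainder.
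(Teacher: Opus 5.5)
Your architecture (flattening, good unknown, factorization into $(\partial_z-\Opbw{a})(\partial_z-\Opbw{A})$, parabolic problem for $W=(\partial_z-\Opbw{A})w$, read-off at $z=0$) is the same as the paper's. There is, however, a genuine gap in the interior equation for $w$.

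You state that the source $F$ ``contains the paraproducts in $\eta$''. Suppose terms such as $\Opbw{g}\eta$, $\Opbw{g}\cdot\nabla\eta$ or $\Opbw{g}\Delta\eta$, with $g$ built from $\partial_z^kv$ and $\eta$ carrying the high frequency, really remained in $F$. Then $F$ would not be a smoothing remainder. Estimating it in the norm needed to return $W|_{z=0}\in H^{s+\vr}$ requires $\eta$ in a Sobolev space whose index grows with $\vr$, while the remainder estimate allows only $\|\eta\|_s$. The parabolic step does not remove such terms either. They would survive in $W|_{z=0}$ as an extra paradifferential term in $\eta$ that does not appear in \eqref{eq_para:DN}. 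So, as written, your third step cannot give $W|_{z=0}\in H^{s+\vr}$ with the bound $\|\eta\|_{s_0}\|\psi\|_s+\|\eta\|_s\|\psi\|_{s_0}$.

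The missing idea is that the good unknown is designed precisely to make these terms disappear. Substitute $v=w+\Opbw{\partial_zv}\eta$ and move $\partial_z$, $\nabla$, $\Delta$ through the paraproducts by Leibniz' rule. All terms in which $\eta$ carries the high frequency then combine into $\Opbw{f}\eta$ with
\[
f=\partial_z\big[(1+|\nabla\eta|^2)\partial_z^2v+\Delta v-2\nabla\eta\cdot\nabla\partial_zv-\Delta\eta\,\partial_zv\big],
\]
and $f\equiv0$ because $v$ solves the flattened equation for every $z$ (this is \eqref{crucial2} in the paper). Only after this cancellation does $F$ consist of paraproduct and composition remainders, which are $\vr$-smoothing and tame.

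The same use of the equation is needed at the boundary. There it turns the coefficient $(1+|\nabla\eta|^2)\partial_z^2v-\nabla\eta\cdot\nabla\partial_zv$ of the paraproduct in $\eta$ into $-\div(\nabla v-\partial_zv\nabla\eta)$, whose trace is $-\div\tV$. Without it the boundary terms do not combine into $\Opbw{-\ii\tV\cdot\xi-\frac12\div\tV}\eta$.

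Several smaller points follow.
\begin{itemize}
\item With $\Re a^{(1)}\le-c|\xi|$ the equation $\partial_zW=\Opbw{a}W+F$ is dissipative for increasing $z$. It must therefore be solved upward, from $z=-1$ after a cutoff or from $z=-\infty$ with $W\to0$, not ``toward $z\to-\infty$''.
\item With a cutoff $\chi(z)$, the commutator term $\chi'(z)W$ has only the regularity of $W$ itself. Reaching an arbitrary gain $\vr$ then needs an iteration on nested strips or a smoothing estimate for the paradifferential flow. The paper avoids this by working on the whole half-line in the weighted spaces $\mathcal H^{s,a}$. There the forcing is smoothing globally, and only the $H^s$ bound \eqref{guardian1} of the flow is used, its growth $e^{Cr^2\zeta}$ being absorbed by the weight.
\item Your use of analyticity to split $R_{\geq1}=R_1+R_{\geq2}$ is a reasonable alternative to the paper's explicit bookkeeping. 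The quadratic bound must then come from Cauchy estimates on complex balls, provided your bounds hold for complex $\eta$. The Taylor formula alone would need a tame bound on $d_\eta^2R_{\geq1}$, which you do not have.
\item Your argument for the reality of $\lambda^{(-1)}_1$ is heuristic. The paper proves it a posteriori from $G(\eta)^\top=G(\eta)$ and $\overline{G(\eta)}=G(\eta)$. These identities force the part of $\lambda^{(-1)}_1$ that is not real and even to produce an operator in $\wt\cR^{-\vr}_1$, which can be moved into $R_1(\eta)$.
\item In Weyl quantization $\lambda=(1+|\nabla\eta|^2)\sha{\vr}A-\ii\nabla\eta\cdot\xi+\frac12\Delta\eta$. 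Moreover, the lower-order parts of $\alpha$ and $\beta$ differ from yours; for instance, the zero-order part of the coefficient of $\partial_z$ is $\nabla\tb\cdot\nabla\eta$, not $-\tb\Delta\eta$. With your formulas, \eqref{def:lam0} would not come out.
\end{itemize}
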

The following lemma is a direct consequence of \cite[Theorem 1.2]{BMV22}. Before turning to the proof of \Cref{para:DN}, we record it here.
 
\begin{lemma}
\label{lem:an_DN}
Let $s_0>3/2$. For any $s\geq s_0+3/2$ there is $r=r(s)>0$
such that the following holds.
Let $\eta\in B_{s_0+3/2}(r)\cap H^{s}$, $\psi\in H^{s}$.
Then the Dirichlet-Neumann $G(\eta)[\psi]$ in \eqref{eq:112aTRIS} and the functions $\tV,\tB$ in 
 \eqref{def:V-B} are analytic in $\eta$, linear in $\psi$ and satisfy
\begin{align}
\|G(\eta)[\psi]\|_{s-1}, \|\tV\|_{s-1}, \|\tB\|_{s-1}
&\lesssim_s \| \psi \|_{s} + \| \eta \|_{s} \| \psi \|_{s_0+3/2}\,,
\label{stimaDNa perdere}
\end{align}
\end{lemma}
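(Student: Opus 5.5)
The plan is to deduce the analyticity and the tame bounds \eqref{stimaDNa perdere} directly from \cite[Theorem 1.2]{BMV22}, which gives analyticity of $\eta\mapsto G(\eta)$ as a map from a small ball of $H^{s_0+3/2}\cap H^s$ into $\cL(H^s;H^{s-1})$, together with the tame estimate
\[
\|G(\eta)\psi\|_{s-1}\lesssim_s \|\psi\|_s+\|\eta\|_s\|\psi\|_{s_0+3/2}.
\]
Linearity in $\psi$ is immediate from the linearity of the elliptic problem \eqref{original_elliptic}. So the first step is simply to check that our hypotheses match those of \cite{BMV22}: infinite depth, periodic horizontal variable, $\eta$ small in $H^{s_0+3/2}$ with $s_0>3/2$. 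This gives the bound for $G(\eta)\psi$.

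Next, I pass to $\tB$ and $\tV$ through the explicit formulas \eqref{def:V-B}:
\[
\tB=\frac{G(\eta)\psi+\nabla\eta\cdot\nabla\psi}{1+|\nabla\eta|^2},\qquad \tV=\nabla\psi-\tB\nabla\eta.
\]
Analyticity in $\eta$ follows because these are compositions of analytic maps: products in the algebra $H^{s-1}$ (which is an algebra since $s-1>1$), and the analytic map $y\mapsto (1+|y|^2)^{-1}$ applied to $\nabla\eta$, which is small in $L^\infty$.

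For the tame estimates, I use the product estimate \eqref{tameHsx} and Moser's composition estimate
\[
\|(1+|\nabla\eta|^2)^{-1}-1\|_{s-1}\lesssim_s\|\eta\|_s\|\eta\|_{s_0+3/2},
\]
exactly as in the proof of \Cref{V_B:lemma}. Products such as $\nabla\eta\cdot\nabla\psi$ are bounded by
\[
\|\eta\|_{s_0+3/2}\|\psi\|_s+\|\eta\|_s\|\psi\|_{s_0+3/2},
\]
and the first term is absorbed using the smallness $\|\eta\|_{s_0+3/2}\le r$. The estimate for $\tV$ then follows from the one for $\tB$ by one more product estimate.

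The only delicate point is bookkeeping of the low norms. One must ensure that every factor measured in a low norm appears at index at most $s_0+3/2$. One must also ensure that the constant does not produce terms like $\|\eta\|_s\|\eta\|_s$, which would not be tame. This is handled by always placing the high norm on a single factor and using $r$ small to absorb powers of $\|\eta\|_{s_0+3/2}$.
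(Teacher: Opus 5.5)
Your proposal is correct and follows the paper's route. The paper records the lemma as a direct consequence of \cite[Theorem 1.2]{BMV22}, and your derivation of the bounds for $\tB$ and $\tV$ from the formulas \eqref{def:V-B}, via product and Moser estimates, is exactly the argument the paper carries out in the proof of \Cref{V_B:lemma}, with the low norms taken at index $s_0+\frac12$ so that $\|G(\eta)\psi\|_{s_0+1/2}\lesssim\|\psi\|_{s_0+3/2}$ can be used and powers of $\|\eta\|_{s_0+3/2}$ absorbed by the smallness of $r$.
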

 \begin{remark}[Momentum condition]\label{rem:trans}
By \eqref{def:good_unknown} the variable 
$\omega$ can be expressed as 
     $\omega= \omega(\eta)[\psi]$
     where $\omega(\eta)$ satisfies 
     the translation invariant property \eqref{tra:smoo} 
\[
\tau_\upsilon \omega(\eta)= 
\omega(\tau_\upsilon\eta)\tau_\upsilon\,.
\]
Then once one proves that the paradifferential 
operators in \eqref{eq_para:DN} satisfy the 
translation invariant property \eqref{def:sym_mome}, 
the translation invariant property on $R(\eta)$ 
can be deduced a posteriori by difference, 
since $G(\eta)$ is itself translation invariant, i.e. 
$\tau_\upsilon G(\eta)= G(\tau_\upsilon\eta)\tau_\upsilon$. 
For this reason, in the remainder of the section 
we shall refer to smoothing remainders 
in the class $\wt \cR^{-\vr}_1$ just 
verifying the estimate \eqref{bound:smoo}.
 \end{remark}

\subsection{Preliminaries}
In order to study the elliptic problem \eqref{original_elliptic}, 
the first step is to perform the change of variables 
\begin{equation*}
    z  = y-\eta(x)
\end{equation*}
    to reduce the problem to the fixed half-space
\[
\T^2\times \R_{\leq 0}=\{(x,z)\in\T^2\times\R\,:\, z\leq0\}\,.
\]
Then the boundary condition in \eqref{original_elliptic} becomes $\varphi=\psi(x)$ on $\{z=0\}$.
Moreover the interior equation in \eqref{original_elliptic} reads 
\begin{equation} \label{perturba}
\Delta_{x,z} \varphi + \tF(\eta)[\varphi]=0\,, 
\quad 
\tF(\eta)[\varphi]:= |\nabla  \eta|^2  \partial_z ^2 \varphi
 -2 \nabla  \eta \cdot\nabla_x \partial_z \varphi - \partial_z \varphi \Delta\eta\,.
\end{equation}
In order to study the elliptic problem \eqref{perturba} we  need some preliminary results collected in this section.

\subsubsection{Space of Functions on the Strip}\label{sec:funzionistrip}
We introduce 
appropriate spaces (and norms) of functions defined on the strip.
We define for any $s \in \N\cup\{0\}$,
and $a\geq0$ the space
\begin{equation*}
\begin{aligned}
& {\cal H}^{s,a}  := \Big\{ u(y,x)=\sum_{j\in \Z}u_{j}(y)e^{\ii jx}: 
(- \infty, 0] \times \T^2 \to \C :    \| u \|_{{\cal H}^{s,a}} 
 < + \infty \Big\}\,, 
 \\
& \| u \|_{{s,a}}^2 := 
 \sum_{k = 0}^s \| \partial_y^k u \|^2_{L^{2,a}_y H_{x}^{s - k}}
=\sum_{k = 0}^s \int_{- \infty}^0 \| \partial_y^k u(y, \cdot) \|_{s - k}^2 e^{-2ay}\, d y  \,.
\end{aligned}
\end{equation*}
With abuse of notation, for $a=0$, we shall write $\mathcal{H}^{s}$ 
instead of $\mathcal{H}^{s,0}$. We denote also with 
\begin{align*}
    \C\oplus{\cal H}^{s,a}:= 
    \left\{ c+ u\colon c \in \C, \ u \in {\cal H}^{s,a}\right\}\,, 
    \qquad 
    \Pi: \C\oplus {\cal H}^{s,a}\to {\cal H}^{s,a}, \quad \Pi[c+u]=u \,.
\end{align*}
For $\phi= c+u\in \C\oplus {\cal H}^{s,a} $ we define the norm 
$ \| \phi\|_{s,a}:= |c| +\| u\|_{s,a}.$ 
\begin{lemma}\label{algebra striscia}
Let $s_0 > 1$, $s \geq s_0$, $a\geq 0$, $v\equiv v(x,y)$, $u\equiv u(x,y)\in {\cal H}^{s,a}$ 
and $g\equiv g(x) \in H^{s_0}_x$.
Then $vu$ and $g u$ are in $ {\cal H}^{s,a}$  and
\begin{align}
\| v u  \|_{s,a} &\lesssim_s 
\| v \|_{s,0} \| u\|_{s_0,a} + \| v \|_{s_0,0} \| u \|_{s,a}\,,\label{stimatamecalBMV}
\\
\| g u  \|_{s,a} &\lesssim_s \| g\|_{s_0}
\| u \|_{s,a}  + \| g \|_{s} \| u \|_{s_0,a} \,.\label{stimatamecalH}
\end{align}
\end{lemma}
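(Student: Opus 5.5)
The plan is to reduce both estimates to the tame product estimate \eqref{tameHsx} on $\T^2$, applied slice by slice in $y$, after expanding the $\mathcal{H}^{s,a}$ norm through the Leibniz rule. I treat \eqref{stimatamecalH} first, since it is the model case, and then adapt the argument to \eqref{stimatamecalBMV}.

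\emph{Proof of \eqref{stimatamecalH}.} Since $g$ does not depend on $y$, we have $\partial_y^k(gu)=g\,\partial_y^k u$ for every $0\le k\le s$. For fixed $y$, I apply \eqref{tameHsx} with Sobolev index $s-k$, and I split into two cases.
\begin{itemize}
\item When $s-k\ge s_0$, \eqref{tameHsx} gives
\[
\|g\,\partial_y^k u(y)\|_{s-k}\lesssim \|g\|_{s-k}\|\partial_y^k u(y)\|_{s_0}+\|g\|_{s_0}\|\partial_y^k u(y)\|_{s-k}.
\]
\item When $s-k<s_0$, I use the algebra/multiplier bound $\|g f\|_{\sigma}\lesssim \|g\|_{s_0}\|f\|_\sigma$ for $0\le\sigma\le s_0$, with $s_0>1$; this is standard via duality and interpolation.
\end{itemize}
I then square, multiply by $e^{-2ay}$ and integrate in $y$. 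The terms $\|g\|_{s_0}\|\partial_y^k u\|_{L^{2,a}H^{s-k}}$ sum to $\|g\|_{s_0}\|u\|_{s,a}$. The terms $\|g\|_{s-k}\|\partial_y^k u\|_{L^{2,a}H^{s_0}}$ are handled by interpolation: $\|g\|_{s-k}$ lies between $\|g\|_{s_0}$ and $\|g\|_s$, and $\|\partial_y^k u\|_{L^{2,a}H^{s_0}}$ is controlled by $\|u\|_{s_0+k,a}$. Since $k+s_0\le s$, the weighted Young inequality $A^{\theta}B^{1-\theta}\le A+B$ lets me bound these terms by $\|g\|_s\|u\|_{s_0,a}+\|g\|_{s_0}\|u\|_{s,a}$.

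\emph{Proof of \eqref{stimatamecalBMV}.} Here I write $\partial_y^k(vu)=\sum_{j\le k}\binom{k}{j}\partial_y^j v\,\partial_y^{k-j}u$ and apply \eqref{tameHsx} to each summand at fixed $y$. The delicate point is controlling the $L^2_y$ norm of a product. I place the factor with fewer derivatives in $L^\infty_y$, using the one-dimensional embedding $\|f\|_{L^\infty_y H^\sigma}\lesssim \|f\|_{L^2_yH^{\sigma+1/2}}+\|\partial_y f\|_{L^2_yH^{\sigma-1/2}}$, which is controlled by the $\mathcal{H}^{\sigma+1,0}$ norm. This is valid on the half-line because the functions decay as $y\to-\infty$. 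The exponential weight $e^{-2ay}$ is always kept on the $u$ factor; since $a\ge0$, the $v$ factor only needs unweighted norms, consistent with $\|v\|_{s,0}$ appearing in the statement. When $v$ carries $L^\infty_y$, the weight falls on $u$ in $L^{2,a}$. When $u$ carries $L^\infty_y$, I use $\sup_y e^{-ay}|f|\lesssim$ the weighted Sobolev norm, which follows from the same embedding applied to $e^{-ay}u$, together with $\|e^{-ay}\partial_y u\|\lesssim \|u\|_{\cdot,a}$ up to a factor of $a$.

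\emph{Main obstacle.} The hard step is the derivative bookkeeping in the mixed terms. The half-derivative lost in the $L^\infty_y$ embedding, together with the sharing of $x$- and $y$-derivatives, must still yield the form $\|v\|_{s,0}\|u\|_{s_0,a}+\|v\|_{s_0,0}\|u\|_{s,a}$. I expect this to close by an interpolation argument of Gagliardo--Nirenberg type in the combined index, as in \cite{BMV22}, possibly at the cost of enlarging $s_0$ by a fixed amount. If the interpolation does not close cleanly, the fallback is to cite the corresponding lemma of \cite{BMV22} directly.
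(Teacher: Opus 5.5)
Your argument for \eqref{stimatamecalH} is correct, and it follows a genuinely different route from the paper's. The paper also treats $k=0$ slice by slice. For $\partial_y^k(gu)=g\,\partial_y^k u$, however, it inducts on $k$ and splits with a cutoff $\chi(y)$ equal to $1$ near $y=0$:
\begin{itemize}
\item the piece $g\chi\,u$ is handled by invoking \eqref{stimatamecalBMV} with $v=g\chi$;
\item the remaining piece is handled through $f=e^{a|y|}(1-\chi)u$, extended to $y\in\R$, using the Fourier transform in $(x,y)$;
\item derivatives falling on $\chi$ and on the weight are absorbed by the induction.
\end{itemize}
You instead use only \eqref{tameHsx} at fixed $y$, plus the multiplier bound for $\sigma<s_0$, and then interpolation. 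This is more elementary and independent of \eqref{stimatamecalBMV}. It also makes clear that only the two-dimensional threshold $s_0>1$ matters here, since $g$ does not depend on $y$ and no $L^\infty_y$ control is needed.

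You should state explicitly the interpolation inequality $\|u\|_{s_0+k,a}\lesssim\|u\|_{s_0,a}^{1-\theta}\|u\|_{s,a}^{\theta}$, with $\theta=k/(s-s_0)$, on the weighted half-space scale. For $k>s_0$ this is genuine interpolation in $y$. It holds, for example, by the following steps:
\begin{itemize}
\item pass to $w=e^{-ay}u$, for which $\|u\|_{t,a}\sim_{a,t}\|w\|_{t,0}$;
\item extend $w$ to $\T^2\times\R$ with a Stein or Seeley extension bounded on $H^t$ for $0\le t\le s$;
\item apply H\"older in the Fourier variables $(\ell,\zeta)$, where $\|w\|_{t,0}^2$ corresponds to the weight $(\langle\ell\rangle^2+\zeta^2)^t$.
\end{itemize}

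For \eqref{stimatamecalBMV}, the paper gives no argument beyond citing \cite[Lemma 2.4]{BMV22}. Your fallback is therefore exactly the paper's proof, and with it the proposal is complete; but the fallback is needed, not optional. Your own Leibniz sketch does not prove the inequality as stated, and the loss you anticipate is real. With the same $s_0$ in \eqref{tameHsx} and in the $\mathcal{H}$-norms, placing a factor in $L^\infty_yH^{s_0}_x$ costs half a derivative. The low norms that come out are then $\|\cdot\|_{s_0+1,\cdot}$ in the integer scale, and enlarging $s_0$ does not give the lemma as written.

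This loss is structural: $\mathcal{H}^{s,a}$ is a three-dimensional Sobolev scale, so the natural threshold for \eqref{stimatamecalBMV} is $s_0>\tfrac32$. The hypothesis $s_0>1$ is harmless only because the indices of $\mathcal{H}^{s,a}$ are integers, forcing $s_0\ge2$. If fractional indices were allowed, taking $s=s_0\in(1,\tfrac32]$ and $a=0$ would make $\mathcal{H}^{s_0,0}$ an algebra in dimension three, which fails.

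A clean self-contained proof avoids the Leibniz bookkeeping altogether. Write $e^{-ay}vu=v\,w$ with $w=e^{-ay}u$, extend $v$ and $w$ to $\T^2\times\R$, and apply the standard tame product estimate in $H^t(\T^2\times\R)$, valid for $s_0>\tfrac32$. This yields \eqref{stimatamecalBMV} with constants depending on $s$ and $a$.
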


\begin{proof}
First of all, estimate \eqref{stimatamecalBMV} follows from \cite[Lemma 2.4]{BMV22}. 
So we prove \eqref{stimatamecalH}
Using the tame estimates for the product \eqref{tameHsx}, we have 
\begin{align*}
\normLa{\| g u\|_{s}}&\lesssim_s 
\normLa{\| g\|_{s_0}\| u\|_{s}} +
\normLa{\| g\|_{s}\| u\|_{s_0}}\\
&\lesssim_s  \|  g \|_{s_0}\|u\|_{s,a} +
\|  g \|_{s}\|  u\|_{s_0,a}.
\end{align*}
Then we estimate the higher order derivatives  
$ \pa_y^k \big(g u \big)= g(\pa_y^k u)$ by induction on $ k=1, \ldots, s$. 
First we define the step function
\[
\chi(y)= \begin{cases}
1 & y\geq -1 \\ 0 & y \leq -2
\end{cases}\,, 
\qquad \chi \in C^\infty (\R;\R)\,.
\]
Then, using that $\left| \pa_y^j \chi(y)\right|\lesssim 1$, 
$\left| \pa_y^j\left( e^{|y| a} (1-\chi(y))\right)\right|\lesssim e^{|y| a}$ 
and denoting $f:= e^{ya}\left( 1-\chi(y)\right)u$, we get 
\begin{align}
 \normLa{\| g (\pa_y^k u)\|_{s-k}}
 &\leq 
 \normLa{\left\| g(x) \chi(y)  (\pa_y^k u)\right\|_{s-k}}
 +\normLa{\left\| g(x) \big(1-\chi(y)\big)  (\pa_y^k u)\right\|_{s-k}} \notag
 \\
&\lesssim  \| g(x) \chi(y) u\|_{s,a}
+ \left\| \left\| g(x) (\pa_y^k f)\right\|_{s-k}\right\|_{L^2_y}
+ \sum_{j=1}^k\normLa{\left\| g(x)   (\pa_y^{k-j} u)\right\|_{s-k}}\notag
\\
&=: I+II+III\,.\notag
\end{align}
We bound each of the three summands above separately. 
First, 
we apply \eqref{stimatamecalBMV} with $ v \leadsto g(x) \chi(y) \in \cH^{s, a}$, getting 
\begin{align*}
I \lesssim 
\| g(x)\chi(y) \|_{s,0} \| u\|_{s_0,a} 
+ \| g(x)\chi(y)\|_{\fs_0,0} \| u \|_{s,a}
\lesssim_s  \| g\|_{s_0}
\| u \|_{s,a}  + \| g \|_{s} \| u \|_{s_0,a}\,.
\end{align*}
On the other hand, by inductive hypothesis, we get 
\begin{align*}
III 
\lesssim \sum_{j=1}^k\normLa{\left\| g(x)   (\pa_y^{k-j} u)\right\|_{s-(k-j)}}
\lesssim_s \| g\|_{s_0}
\| u \|_{s,a}  + \| g \|_{s} \| u \|_{s_0,a}\,.
\end{align*}
Finally we note that $  f $ extends to the whole 
real line as a function in  $L^2\left(\R;H^s(\T^2;\C)\right)$, 
then by Young's inequality for product we get 
\begin{equation}\label{III.est}
\begin{aligned}
 II&\leq 
\Big(\int_\R \sum_{ \ell \in \Z^2 } | \zeta|^{2k} \langle \ell \rangle^{2(s-k)}
 \Big| \sum_{j\in \Z^2} g_{\ell-j} \hat f_j(\zeta) \Big|^2\, \di \zeta  \Big)^\frac12
 \lesssim_s 
 \| g(x) \pa_y^s f\|_{L^2_{x,y}}+ \left\| \left\| g(x) f \right\|_{s}\right\|_{L^2_y}
 \\
&\lesssim  
\| g \|_{s_0} \| f\|_{s,0} 
+ \| g \|_{s} \| f\|_{s_0,0}
\lesssim_s  \| g \|_{s_0} \| u\|_{s,a} + \| g \|_{s} \| u\|_{s_0,a}\,,
\end{aligned}
\end{equation}
where, in \eqref{III.est}, we used the notation
\[
\hat f_j(\zeta):= 
\frac{1}{(2\pi)^3} \int_{\R}\, e^{- \ii \zeta y}\int_{ \T^2} f(y, x) 
e^{-\ii j\cdot x } \, \di x \,\di y\,.
\]
This concludes the proof of \eqref{stimatamecalH}.
\end{proof}

\begin{lemma}\label{lem:techcal}
One has the following.

\noindent
$(i)$ For $s\in \mathbb{N}$, $a\geq 0$ one has 
\begin{equation}\label{abbiategrasso}
\begin{aligned}
\|\pa_{x_i}u\|_{\mathcal{H}^{s-1,a}}&\lesssim \|u\|_{\mathcal{H}^{s,a}}\,,
\qquad 
\|\pa_{y}u\|_{\mathcal{H}^{s-1,a}}\lesssim \|u\|_{\mathcal{H}^{s,a}}\,,\qquad \forall\,u\in \mathcal{H}^{s,a}\,,
\end{aligned}
\end{equation}
$i=1,2$.

\noindent
$(ii)$ For $s\in \R$ one has
\begin{equation}\label{abbiategrasso2}
\sup_{y\leq0}\|u\|_{H_{x}^{s}}\leq \|u\|_{L_{y}^2H_{x}^{s+\frac{1}{2}}}
+ \|\pa_{y}u\|_{L_{y}^2H_{x}^{s-\frac{1}{2}}}\,.
\end{equation}
\end{lemma}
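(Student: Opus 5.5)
Both items are elementary facts about the weighted strip norms, so the plan is to prove them directly from the definition of $\|\cdot\|_{s,a}$, with no paradifferential machinery.

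\emph{Item (i).} Write out the norm of $\pa_{x_i}u$ in $\mathcal{H}^{s-1,a}$:
\[
\|\pa_{x_i}u\|_{s-1,a}^2=\sum_{k=0}^{s-1}\int_{-\infty}^0\|\pa_y^k\pa_{x_i}u(y,\cdot)\|_{s-1-k}^2e^{-2ay}\,dy .
\]
Since $\pa_{x_i}$ commutes with $\pa_y^k$ and is bounded from $H^{s-k}_x$ to $H^{s-1-k}_x$, each summand is controlled by the corresponding summand of $\|u\|_{s,a}^2$ with the same $k$. For $\pa_y u$ I reindex: the $k$-th summand of $\|\pa_y u\|_{s-1,a}^2$ equals
\[
\int\|\pa_y^{k+1}u\|_{s-(k+1)}^2e^{-2ay}\,dy ,
\]
which is exactly the $(k+1)$-th summand of $\|u\|_{s,a}^2$. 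Summing over $k$ gives \eqref{abbiategrasso}.

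\emph{Item (ii).} I work Fourier mode by mode in $x$. For fixed $j\in\Z^2$ and $y\le 0$, the fundamental theorem of calculus, together with the decay $u_j(y')\to0$ as $y'\to-\infty$, gives
\[
|u_j(y)|^2=\int_{-\infty}^y \pa_{y'}|u_j(y')|^2\,dy'\le 2\int_{-\infty}^0|u_j|\,|\pa_yu_j|\,dy'.
\]
Multiplying by $\langle j\rangle^{2s}$ and splitting the weight as $\langle j\rangle^{s+\frac12}\langle j\rangle^{s-\frac12}$, the inequality $2ab\le a^2+b^2$ yields
\[
\langle j\rangle^{2s}|u_j(y)|^2\le\int\big(\langle j\rangle^{2s+1}|u_j|^2+\langle j\rangle^{2s-1}|\pa_yu_j|^2\big)\,dy'.
\]
Summing over $j$ and taking the supremum over $y\le0$ gives
\[
\sup_{y\le 0}\|u\|_{H^s_x}^2\le\|u\|^2_{L^2_yH^{s+\frac12}_x}+\|\pa_yu\|^2_{L^2_yH^{s-\frac12}_x},
\]
and \eqref{abbiategrasso2} follows since $\sqrt{A+B}\le\sqrt A+\sqrt B$.

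The only delicate point is justifying the decay $u_j(y)\to0$ as $y\to-\infty$. For that I would note that $u_j\in H^1(-\infty,0)$ whenever the right-hand side is finite, so $u_j$ is continuous and vanishes at $-\infty$. Alternatively, the argument can be run first for $u$ smooth with compact support in $y$, and the general case obtained by density.
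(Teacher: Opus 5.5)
Your proof is correct. For (i), the termwise comparison, with the index shift $k\mapsto k+1$ for $\pa_y$, gives \eqref{abbiategrasso} even with constant one. For (ii), the mode-by-mode identity $|u_j(y)|^2=\int_{-\infty}^y\pa_{y'}|u_j|^2\,dy'$ combined with $2ab\le a^2+b^2$ yields \eqref{abbiategrasso2} with constant one for every $s\in\R$. You also justify correctly that $u_j\to0$ at $-\infty$, via $u_j\in H^1(-\infty,0)$.

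The paper gives no argument of its own and simply cites Lemmata 2.2 and 2.3 of \cite{BMV22}; your self-contained argument is the standard proof behind that citation.
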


\begin{proof}
It follows from Lemmata $2.2$ and $2.3$ in \cite{BMV22}.
\end{proof}

\begin{remark}{\bfseries (Trace).}\label{rmk:trace}
By \eqref{abbiategrasso2} we deduce that the trace operator
\[
T(u):=u(\cdot,0):=u_{|y=0}\,,
\]
is, for $s\in \N$, $a\geq 0$ a bounded linear map from ${H}^{1,a}_y H^{s+\frac12}_x$ to $H_{x}^{s}$ and 
\[
\|T(u)\|_{s}\leq \|u\|_{H^{1,a}_y H^{s+\frac12}_x}\,.
\]
\end{remark}

\medskip
\noindent
{\bfseries The Dirichlet-Neumann problem at the flat surface.}
At $\eta=0$ the problem \eqref{original_elliptic} becomes
\begin{equation}\label{elliptic0}
\left\{\begin{aligned}
\Delta_{y,x} \vphi_0&=0\,,\qquad x\in\T^2\,,\;\; -\infty<y<0\,,\\
 \vphi_0(x,0)&=\psi(x)\,,\\
(\pa_{y}  \vphi_0)(x,y)&\to0\,,\;\;\;y\to-\infty
\end{aligned}\right.
\end{equation}
By an explicit computation one can check that the solution of the problem above is 
given by
\begin{equation}\label{def vphi 0}
\vphi_0(x,y)=\mathcal{L}_0[\psi]\,,\quad 
\mathcal{L}_0[\cdot]:=\mathcal{L}_0(y)[\cdot]:=e^{y|D|}[\cdot]\,.
\end{equation}

We have the following result.
\begin{lemma}\label{lemma stima cal L0 sol omogenea laplace}
For any $s \geq 0$, $\psi \in H^{s }_x$ ($s$ such that $s+1/2\in \N$), one has  $\mathcal{L}_0[\psi]\in \C\oplus\cH^{s+\frac12,a}$ with estimates 
\begin{align}
\| {\cal L}_0 [ \psi] \|_{s+\frac{1}{2} ,a } \lesssim \| \psi \|_{s}\,, \quad a\in(0,1)\,.
\label{stima:omognea}
\end{align}
\end{lemma}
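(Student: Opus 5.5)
The plan is to work directly on the Fourier side in $x$. Writing $\psi=\sum_{j}\psi_j e^{\ii j\cdot x}$, the solution is explicitly
\[
\mathcal{L}_0[\psi](x,y)=\psi_0+\sum_{j\neq 0}e^{y|j|}\psi_j e^{\ii j\cdot x}.
\]
We split off the zero mode: $c:=\psi_0\in\C$ is the constant component, with $|c|\le\|\psi\|_s$. The remainder $u:=\Pi_0^\bot\mathcal{L}_0[\psi]$ is the part that must be shown to lie in $\cH^{s+\frac12,a}$. This decomposition is exactly why the statement places $\mathcal{L}_0[\psi]$ in $\C\oplus\cH^{s+\frac12,a}$ rather than in $\cH^{s+\frac12,a}$ itself: the constant mode does not decay as $y\to-\infty$ and is not square integrable against the weight $e^{-2ay}$.

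Next we compute each term of the norm $\|u\|_{s+\frac12,a}^2=\sum_{k=0}^{s+\frac12}\|\pa_y^k u\|^2_{L^{2,a}_yH^{s+\frac12-k}_x}$. Since $\pa_y^k u=\sum_{j\neq0}|j|^k e^{y|j|}\psi_j e^{\ii j\cdot x}$, Parseval in $x$ gives
\[
\|\pa_y^k u\|^2_{L^{2,a}_yH^{s+\frac12-k}_x}=\sum_{j\neq0}\langle j\rangle^{2(s+\frac12-k)}|j|^{2k}|\psi_j|^2\int_{-\infty}^0 e^{2y|j|}e^{-2ay}\,dy .
\]
For $j\neq 0$ we have $|j|\ge1>a$, so the $y$-integral equals $\frac{1}{2(|j|-a)}\le\frac{1}{2(1-a)}\frac{1}{|j|}$.

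Substituting this bound, and using $|j|\sim\langle j\rangle$ for $j\neq0$, each summand is controlled by $C_a\langle j\rangle^{2s+1-1}|\psi_j|^2=C_a\langle j\rangle^{2s}|\psi_j|^2$. Summing over $j$ and over the finitely many $k\le s+\frac12$ then yields $\|u\|_{s+\frac12,a}\lesssim_a\|\psi\|_s$. Combined with the bound on $|c|$, this gives \eqref{stima:omognea}.

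The only delicate point is the role of the weight. The restriction $a<1$ is used precisely to ensure $|j|-a\ge 1-a>0$ uniformly over all nonzero lattice frequencies, so the implicit constant degenerates as $a\to1$. The gain of half a derivative comes entirely from the factor $1/|j|$ produced by the $y$-integration. Beyond this, the argument involves no real obstacle.
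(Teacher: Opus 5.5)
Your argument is correct and complete: splitting off $\psi_0$ as the $\C$-component and bounding $\int_{-\infty}^0 e^{2y(|j|-a)}\,dy=\frac{1}{2(|j|-a)}\le\frac{1}{2(1-a)|j|}$ for $j\neq 0$ gives exactly the half-derivative gain. The restriction $a<1$ enters only to keep $|j|-a$ uniformly positive on the nonzero lattice. The paper gives no proof of its own; it simply cites Lemma 2.5 of \cite{BMV22}, and your explicit Fourier computation is the standard argument behind that statement, so you have supplied the details the paper omits.
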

\begin{proof}
See Lemma $2.5$ in \cite{BMV22}.
\end{proof}

\subsubsection{Para-differential calculus on the strip}\label{sec:basicpara}
We collect here some results about para-differential calculus for periodic symbols introduced in \Cref{quantizationtotale} and study its properties when they act on functions introduced in \Cref{sec:funzionistrip}.
We follow the notation introduced in \cite{FI2022}, \cite{BMM2021}. 
For a symbol $ a(x,\xi)\in \cN_{s_0}^m$ we define the truncated paradifferential operator
\begin{align*}
    \cOpbw{a(x,\xi)}u:= \sum_{j,k \in \Z^2} \chi\left(\frac{j-k}{j+k}\right) 
    \hat a_{j-k}\left( \frac{j+k}{2}\right) u_k\,  e^{\ii j\cdot x}\,.
\end{align*}

The following is the classical theorem that explains how paradifferential operators act on the scale of Sobolev spaces.

\begin{lemma}{\bfseries (Action on Sobolev spaces).}\label{azioneSimboo}
Let ${s}_0>2$, $ m \in \R$ and $a\geq 0$.
 Consider the linear map 
\begin{align*}
    \cN_{s_0}^m\to \cL(H^{s}; H^{s-m}), \quad  \ta \mapsto \Opbw{\ta}\,, 
\end{align*}
which is continuous thanks to \eqref{actionSob}.

For any $s\in \R$, one has:
\begin{enumerate}[(i)]
    \item  $\Opbw{\ta}$ extends to an operator in 
    $\cL\left(L^{2,a}_y H^{s-m}_x;L^{2,a}_y H^{s-m}_x\right)$ 
    with estimates
    \begin{align}
        \| \cOpbw{\ta}u\|_{L^{2,a}_y H^{s-m}_x} 
        \lesssim_{s}   
        |\ta|_{m, s_0} \| u\|_{L^{2,a}_y H^{s}_x}\,, 
        \qquad \forall u \in L^{2,a}_y H^{s}_x\,.
        \label{action:striscia1}
    \end{align}
    \item If $\ta\equiv \ta(x,y)$ belongs to $ \cH^{s_0,a}$ 
    then $\cOpbw{\ta}$ belongs to $\cL\left(H^{s}; L^{2,a}_y H^{s}_x\right)$ 
    with estimates
    \begin{align}
         \| \cOpbw{\ta}h\|_{L^{2,a}_y H^{s}_x} 
         \lesssim_{s}   
         \|\ta\|_{L^{2,a}_y H^{s_0}_x} \|h\|_{s}\,, 
         \qquad \forall h \in H^{s}.
         \label{action:striscia2}
    \end{align}
\end{enumerate}
 \end{lemma}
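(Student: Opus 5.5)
The plan is to reduce both items to Fourier-side computations, since $\cOpbw{\ta}$ acts only in the horizontal variable $x$ and $y$ enters as a parameter. For fixed $y\le 0$ we write the operator in Fourier coefficients, and then integrate in $y$ against the weight $e^{-2ay}$.

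For item $(i)$, I will apply \eqref{actionSob} of \Cref{thm:action} pointwise in $y$. Since $\ta=\ta(x,\xi)$ does not depend on $y$, for every fixed $y$ we get
\[
\|\cOpbw{\ta}u(y,\cdot)\|_{s-m}\lesssim_s|\ta|_{m,s_0}\|u(y,\cdot)\|_s .
\]
Squaring, multiplying by $e^{-2ay}$ and integrating over $(-\infty,0]$ gives \eqref{action:striscia1} directly; the constant is uniform in $y$ because it depends only on $s$ and on the seminorm of $\ta$. Measurability in $y$ is automatic from the linearity and continuity of $u\mapsto\cOpbw{\ta}u$.

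For item $(ii)$, the symbol $\ta(x,y)$ is a function, that is, of order $0$ and independent of $\xi$, but it now depends on $y$, while $h=h(x)$ does not. For fixed $y$, \eqref{actionSob} with $m=0$ gives
\[
\|\cOpbw{\ta(\cdot,y)}h\|_s\lesssim_s|\ta(\cdot,y)|_{0,s_0}\|h\|_s .
\]
The only delicate point is that $|\ta(\cdot,y)|_{0,s_0}$ is an $L^\infty$-based (Hölder) seminorm in $x$, whereas the right-hand side of \eqref{action:striscia2} involves $\|\ta(\cdot,y)\|_{H^{s_0}_x}$. I would handle this by invoking the paraproduct bound in Sobolev form: for a function $\ta$,
\[
\|\cOpbw{\ta}h\|_s\lesssim\sum_\ell\Big|\sum_k\chi\,\hat\ta_{\ell-k}h_k\Big|\langle\ell\rangle^s ,
\]
and on the support of the cut-off $\chi$ we have $\langle\ell\rangle\sim\langle k\rangle$. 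Young's inequality then gives the bound $\|\hat\ta\|_{\ell^1}\|h\|_s\lesssim\|\ta\|_{H^{s_0}_x}\|h\|_s$, since $s_0>2>1$ guarantees $\sum_k\langle k\rangle^{-2s_0}<\infty$ in dimension two. Alternatively one can use the Sobolev embedding $H^{s_0}\hookrightarrow L^\infty$ directly, since the order-$0$ paraproduct only needs $\|\ta\|_{L^\infty}$ up to the Littlewood–Paley structure. Squaring, weighting by $e^{-2ay}$ and integrating in $y$ yields \eqref{action:striscia2}.

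The main (mild) obstacle is precisely this replacement of the Hölder seminorm by the $L^2_yH^{s_0}_x$ norm in $(ii)$. The $y$-integration is otherwise harmless, and the weight $e^{-2ay}$ passes through because the operator is local in $y$.
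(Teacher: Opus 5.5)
Your argument is correct and follows the paper's proof: apply \eqref{actionSob} for each fixed $y\le 0$ and integrate against the weight $e^{-2ay}$. In $(ii)$ you are in fact more careful than the paper, which cites \eqref{actionSob} directly even though that estimate only gives the $L^\infty$-based seminorm $|\ta(\cdot,y)|_{0,s_0}$; your bound through $\|\hat\ta(\cdot,y)\|_{\ell^1}\lesssim\|\ta(\cdot,y)\|_{H^{s_0}_x}$ is the right justification, provided the displayed sum over $\ell$ is read as the $\ell^2$ norm $\big(\sum_\ell\langle\ell\rangle^{2s}\big|\sum_k\chi\,\hat\ta_{\ell-k}h_k\big|^2\big)^{1/2}$, since Young's inequality applied to the $\ell^1$ sum you wrote would not return $\|h\|_s$.
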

 \begin{proof}
  We first prove  \eqref{action:striscia1}. By \eqref{actionSob} one has 
 \begin{align*}
      \left\| e^{|y| a} \|  \cOpbw{\ta}u\|_{s-m} \right\|_{L^2_y}
      \lesssim_s 
      | \ta |_{m,s_0} \left\| e^{|y| a} \|  u\|_{s} \right\|_{L^2_y}
      = | \ta |_{m,s_0} \| u\|_{L^{2,a}_y H^{s}_x}\,.
 \end{align*}
 In the same way, using again \eqref{actionSob}, we have 
 \begin{align*}
      \left\| e^{|y| a} \|  \cOpbw{\ta}h \|_{s-m} \right\|_{L^2_y}
      \lesssim_s 
      \left\| e^{|y| a} \|  \ta\|_{s_0}\| h\|_s \right\|_{L^2_y}
      = \| \ta\|_{L^{2,a}_y H^{s_0}_x}\| h\|_s\,,
 \end{align*}
 which implies \eqref{action:striscia2}.
 \end{proof}
We have the following result.
 \begin{lemma}{\bfseries (Products).}\label{lem:prodotto}
Let $ s_0>1$ and $\ta\in H^{s}$, $\tb\in H^{r}$ with $s+r\geq0$.
Then
\begin{align}
    \ta\cdot \tb=\cOpbw{\ta}\tb+\cOpbw{\tb}\ta+\mathcal{R}(\ta,\tb)\,,
    \label{alg:paraprod}
\end{align}
where the bilinear operator $\mathcal{R}: H^{s}\times H^{r}\to H^{s+r-s_0}$ 
is symmetric and satisfies the estimate
\begin{equation}\label{stimarestoparaproduct}
\|\mathcal{R}(\ta,\tb)\|_{s+r-s_0}\lesssim_{s,r}\|\ta\|_{s}\|\tb\|_{r}\,.
\end{equation}
Moreover $\mathcal{R}(\ta,\tb)=\mathcal{R}(\Pi_{0}^{\perp}\ta,\Pi_0^{\perp}\tb)-\ta_0\tb_0$ 
and 
\[
\|\Pi_0^{\perp}\mathcal{R}(\ta,\tb)\|_{s+r-s_0}
\lesssim_{s,r}\|\Pi_0^{\perp}\ta\|_{s}\|\Pi_0^{\perp}\tb\|_{r}\,.
\]
Moreover, for any $a \geq 0$, the bilinear operator extends as 
$\cR: L^{2,a}_y H^{s}_x\times H^{r}\to L^{2,a}_y H^{s+r-s_0,a}_x$ with estimates 
\begin{equation}\label{stimarestoparaproduct:striscia}
\|\mathcal{R}(u,\tb)\|_{L^{2,a}_y H^{s+r-s_0,a}_x}
\lesssim_{s,r}
\|u\|_{L^{2,a}_y H^{s,a}_x}\|\tb\|_{r}\,
\qquad \forall u \in \cH^{s,a},\, \tb \in H^{r}\,.
\end{equation}
\end{lemma}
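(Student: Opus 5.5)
The statement is the paraproduct decomposition (Lemma \ref{lem:prodotto}). I would prove it by working directly on Fourier coefficients, since the truncated quantization is explicit. Writing $\ta=\sum_k \ta_k e^{\ii k\cdot x}$ and $\tb=\sum_j \tb_j e^{\ii j\cdot x}$, one has $(\ta\tb)_\ell=\sum_{k+j=\ell}\ta_k\tb_j$.

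\textbf{Step 1: the remainder.} A function $\ta=\ta(x)$ is a symbol independent of $\xi$, so $\hat\ta(k,\xi)=\ta_k$. Hence
\[
(\cOpbw{\ta}\tb)_\ell=\sum_{k+j=\ell}\chi\Big(k,\ell-\tfrac{k}{2}\Big)\ta_k\tb_j ,
\]
and similarly for $\cOpbw{\tb}\ta$ with the roles of $k$ and $j$ exchanged. I would therefore define
\[
\mathcal{R}(\ta,\tb)_\ell:=\sum_{k+j=\ell}\Big(1-\chi\big(k,j+\tfrac{k}{2}\big)-\chi\big(j,k+\tfrac{j}{2}\big)\Big)\ta_k\tb_j .
\]
With this definition \eqref{alg:paraprod} holds by construction. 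Symmetry of $\mathcal{R}$ is immediate from the symmetry of the weight under $(k,\ta)\leftrightarrow(j,\tb)$.

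\textbf{Step 2: localization of the weight.} By the support properties \eqref{cut off defin} (with $\delta_0\le\frac1{10}$), at most one of the two cutoffs is nonzero. If $|k|\le \frac{\delta_0}{4}\langle j\rangle$, then $\chi(k,j+k/2)=1$, so the weight vanishes; the symmetric statement holds when $|j|$ is small relative to $\langle k\rangle$. Consequently the weight is bounded and is supported where $|k|\sim|j|$, i.e. $c\langle j\rangle\le\langle k\rangle\le C\langle j\rangle$. The mode $k=j=0$ deserves separate care: there $\chi(0,0)=1$ for both cutoffs, so the weight equals $-1$. This produces the term $-\ta_0\tb_0$. Any other term involving a zero mode, say $k=0$ with $j\neq0$, has one cutoff equal to $1$ and the other equal to $0$, since $|j|>\delta_0\langle 0\rangle$. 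Such terms therefore vanish, which gives $\mathcal{R}(\ta,\tb)=\mathcal{R}(\Pi_0^\perp\ta,\Pi_0^\perp\tb)-\ta_0\tb_0$.

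\textbf{Step 3: the bound.} This is the main point. On the support of the weight, $\langle \ell\rangle\lesssim\langle k\rangle\sim\langle j\rangle$. Using $s+r\ge0$,
\[
\langle\ell\rangle^{s+r-s_0}\lesssim \langle k\rangle^{s}\langle j\rangle^{r}\langle k\rangle^{-s_0}.
\]
Note that $\langle\ell\rangle^{s+r}\lesssim\langle k\rangle^{s+r}$ precisely because $s+r\ge0$; this is where the hypothesis enters. The negative power $s+r-s_0-(s+r)=-s_0$ is absorbed since $\langle\ell\rangle^{-s_0}\le1$. The estimate then becomes
\[
\|\mathcal{R}\|_{s+r-s_0}\lesssim\Big\|\sum_{k+j=\ell}\langle k\rangle^{-s_0}\,\big|\langle k\rangle^s\ta_k\big|\,\big|\langle j\rangle^r\tb_j\big|\Big\|_{\ell^2_\ell}.
\]
By Cauchy--Schwarz in $k$, with $\sum_k\langle k\rangle^{-2s_0}<\infty$ because $s_0>1$ in dimension two, this is $\lesssim\|\ta\|_s\|\tb\|_r$. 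The Young-type convolution step is the one requiring care. It must use the $\ell^2$ norm of one factor together with the summable weight $\langle k\rangle^{-s_0}$, rather than an $\ell^1$ norm, because $\ta$ has only $H^s$ regularity with $s$ possibly negative.

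\textbf{Step 4: the strip version.} The projected estimate for $\Pi_0^\perp\mathcal{R}$ follows from the same computation. For \eqref{stimarestoparaproduct:striscia}, I would apply \eqref{stimarestoparaproduct} pointwise in $y$ to $u(\cdot,y)$ and $\tb$, multiply by $e^{-ay}$, and take the $L^2_y$ norm, exactly as in the proof of Lemma \ref{azioneSimboo}.
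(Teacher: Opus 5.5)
Your route differs from the paper's. The paper does not prove the decomposition itself: it quotes [BMM2021, Lemma~2.7], adjusts the zero mode, and only argues the strip extension, exactly as in your Step~4. A direct Fourier proof is a legitimate, self-contained alternative. Steps~1, 2 and 4 are essentially fine, and the computation has the advantage of producing the correction $-\ta_0\tb_0$ explicitly, consistent with $\cR(1,a)=-a_0$ used in Appendix~B. Two small slips in Step~2 should be corrected:
\begin{itemize}
\item The two cutoffs are simultaneously nonzero exactly at $k=j=0$, not never.
\item For $k=0\neq j$ the second cutoff is $\chi(j,j/2)$. It vanishes because $|j|>\delta_0\langle j/2\rangle$ for $j\neq0$, not because $|j|>\delta_0\langle 0\rangle$.
\end{itemize}

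Step~3, which you identify as the main point, rests on a false inequality. From $\langle\ell\rangle\lesssim\langle k\rangle\sim\langle j\rangle$ and $s+r\geq0$ you correctly get $\langle\ell\rangle^{s+r}\lesssim\langle k\rangle^{s}\langle j\rangle^{r}$. Bounding $\langle\ell\rangle^{-s_0}\leq1$ then gives only $\langle\ell\rangle^{s+r-s_0}\lesssim\langle k\rangle^{s}\langle j\rangle^{r}$; the extra factor $\langle k\rangle^{-s_0}$ you write does not follow. It is in fact false whenever $s+r<s_0$. Take $\ell=(1,0)$ and $j=\ell-k$ with $|k|\to\infty$. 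There both cutoffs vanish, so the weight equals $1$, while $\langle\ell\rangle^{s+r-s_0}\sim1$ and $\langle k\rangle^{s+r-s_0}\to0$. These high--high$\to$low interactions are exactly what the remainder contains, and they carry no decay in $k$. Hence your $\ell^1*\ell^2$ Young step with weight $\langle k\rangle^{-s_0}$ is unjustified.

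The fix is to place the decay on the output frequency $\ell$. For each fixed $\ell$, Cauchy--Schwarz in $k$ gives
\[
\sum_{k+j=\ell}\langle k\rangle^{s}|\ta_k|\,\langle j\rangle^{r}|\tb_j|\leq\|\ta\|_{s}\|\tb\|_{r},
\]
so $\langle\ell\rangle^{s+r-s_0}|\cR(\ta,\tb)_\ell|\lesssim\langle\ell\rangle^{-s_0}\|\ta\|_{s}\|\tb\|_{r}$. Square-summing in $\ell$ then uses $\sum_{\ell\in\Z^2}\langle\ell\rangle^{-2s_0}<\infty$, which is where $s_0>1$ in dimension two enters. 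The mechanism is $\ell^2\times\ell^2\to\ell^\infty$ in $\ell$, followed by the weight $\langle\ell\rangle^{-s_0}$. With this replacement the rest of your argument goes through unchanged, including the projected estimate and the pointwise-in-$y$ extension to $L^{2,a}_yH^{s}_x$.
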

\begin{proof}
In \cite[Lemma 2.7]{BMM2021} it is proven that 
\begin{equation*}
a\cdot b=\Opbw{a}b+\Opbw{b}a+\breve{R}(a,b)\,,
\end{equation*}
where $\breve{R}(a,b)$ is a remainder that satisfies \eqref{stimarestoparaproduct} 
and 
$\breve{R}(a,b)=\breve{R}(\Pi_{0}^{\perp}a,\Pi_0^{\perp}b)
-a_0b_0 $.  
Then \eqref{alg:paraprod} follows from defining 
defining $\cR(a,b):= \breve{R}(a,b)+2 a_0 b_0$. 
Finally the proof of \eqref{stimarestoparaproduct:striscia} 
follows the same lines as the proof of \eqref{action:striscia1} 
and \eqref{action:striscia2} 
using \eqref{stimarestoparaproduct} instead of \eqref{actionSob}.
\end{proof}

The following lemma is crucial.

\begin{lemma}{\bfseries (Composition on the strip).}\label{compoparapara}
Let $\vr\geq 0$ and $ a\geq 0$. There is $s_0 > 0$ large enough such that for any
$\ta \in H^{s_0+\vr}$ and $ u \in L^{2,a}_y H^{s_0+\vr}_x $, one has:
\begin{enumerate}[(i)]
    \item The operator $\cQ(\ta,u)$ defined as in \eqref{def:Q} 
    extends as an operator in $
\mathcal{L}\big(H^{s}_x; L^{2,a}_y H^{s+\vr}_x \big)$, 
with estimates 
\begin{align}
    \|\mathcal{Q}(\ta,u)[h]\|_{L^{2,a}_y H^{s+\vr}_x}\lesssim_{s,\vr}
\|\ta\|_{s_0+\vr}\|\tb\|_{L^{2,a}_y H^{s_0+\vr}_x}
\|h\|_{s}\,,
\qquad 
\forall\, h\in H^{s}(\T^2;\C)\,.
\label{calma2:striscia1}
\end{align}
 \item The operator $\cQ(\ta,\tb)$ defined as in \eqref{def:Q} extends as an operator in $
\mathcal{L}\big( L^{2,a}_y H^{s}_x; L^{2,a}_y H^{s+\vr}_x \big)$, with estimates 
\begin{align}
    \|\mathcal{Q}(\ta,\tb)[\phi]\|_{L^{2,a}_y H^{s+\vr}_x}\lesssim_{s,\vr}
\|\ta\|_{s_0+\vr}\|\tb\|_{s_0+\vr}
\|\phi\|_{ L^{2,a}_y H^{s}_x}\,,
\qquad 
\forall\, \phi\in L^{2,a}_y H^{s}_x\,.
\label{calma2:striscia2}
\end{align}
\end{enumerate}
\end{lemma}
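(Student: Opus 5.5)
The plan is to reduce both items to the flat-torus composition result, \Cref{compoparapara0}, applied pointwise in the vertical variable $y$, and then to integrate in $y$ against the weight $e^{2a|y|}$. This is the same strategy used for \Cref{azioneSimboo}. The remainder $\cQ(\ta,\tb)$ is given explicitly by the Fourier expression obtained from \eqref{BWnon} and \eqref{espansione2}. In particular, for fixed $y$ it is a bilinear operator whose kernel involves only the Fourier coefficients of the two symbols. Since the symbols here are $\xi$-independent functions, the remainder is defined for each $y\le0$ and depends on $y$ only through $u(\cdot,y)$ or $\phi(\cdot,y)$.

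For item (ii), $\ta,\tb$ are independent of $y$ and belong to $H^{s_0+\vr}\subset\cN^{0}_{s_0}$ once $s_0$ is taken large; here we use Sobolev embedding $H^{s_0+\vr}\hookrightarrow W^{s_0,\infty}$ for $s_0+\vr$ large. We then apply \eqref{calma1} with $m_1=m_2=0$ and $\delta=1$ to $\phi(\cdot,y)$ for each fixed $y$. This gives
\[
\|\cQ(\ta,\tb)[\phi(\cdot,y)]\|_{s+\vr}\lesssim_s|\ta|_{0,s_0}|\tb|_{0,s_0}\|\phi(\cdot,y)\|_s .
\]
Multiplying by $e^{a|y|}$ and taking the $L^2_y$ norm yields \eqref{calma2:striscia2}. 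Measurability in $y$ is clear from the Fourier series representation.

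For item (i), the second symbol $u(\cdot,y)$ depends on $y$, while the input $h$ does not. For each $y$ we again apply \eqref{calma1}, now with symbols $\ta$ and $u(\cdot,y)$, both viewed in $\cN^0_{s_0}$. We obtain
\[
\|\cQ(\ta,u(\cdot,y))[h]\|_{s+\vr}\lesssim_s\|\ta\|_{s_0+\vr}\,\|u(\cdot,y)\|_{s_0+\vr}\,\|h\|_s .
\]
The weighted $L^2_y$ norm of the right-hand side is exactly $\|\ta\|_{s_0+\vr}\|u\|_{L^{2,a}_yH^{s_0+\vr}_x}\|h\|_s$, which is \eqref{calma2:striscia1}; the $\tb$ appearing on its right-hand side is a typo for $u$. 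Bilinearity, together with the fact that the map $y\mapsto \cQ(\ta,u(\cdot,y))[h]$ is measurable (again because it is given by a convergent Fourier sum), shows that the extension is well defined.

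The only point that needs care is making sure the constants in \eqref{calma1} are uniform in $y$. This holds because \eqref{calma1} depends only on the seminorms $|\cdot|_{0,s_0}$, and these are controlled by $\|\cdot\|_{s_0+\vr}$ uniformly once $s_0$ is large. With that checked, no genuine obstacle remains beyond this bookkeeping.
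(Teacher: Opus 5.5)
Your proposal is correct and follows the paper's argument: apply \eqref{calma1} for each fixed $y$, exactly as \eqref{actionSob} is used in the proof of \Cref{azioneSimboo}, then take the $L^2_y$ norm with weight $e^{a|y|}$. Your bookkeeping is fine: the Sobolev embedding controlling $|\cdot|_{0,s_0}$, the typo $\tb\leadsto u$ in \eqref{calma2:striscia1}, and measurability in $y$. One small point: \Cref{compoparapara0} is stated for $\delta\in(0,1)$ rather than $\delta=1$, but this is harmless for $\xi$-independent symbols, since you can take any $\delta<1$ and apply the lemma with a proportionally larger $\vr$, at the cost of enlarging $s_0$.
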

\begin{proof}
    In order to prove \eqref{calma2:striscia1} and \eqref{calma2:striscia2} one can follows the same line of the proof of \eqref{action:striscia1} and \eqref{action:striscia2} using estimate \eqref{calma1} instead of \eqref{actionSob}.
\end{proof}

\begin{remark}
    \label{rem:strip_symbols}
Similarly to \Cref{rem:symbols_modozero}, the action of a paradifferential operator $\Opbw{a(x,\xi)}$ on $L^{2,a}_y H^s_x$ depends only on the value $\hat a(0,0)$ and  $a(x,\xi)$ for $|\xi| \geq \tfrac{1}{2}$. Accordingly, we identify symbols up to this equivalence.
\end{remark}
\subsection{Tame estimates for the Laplace equation}\label{sec:tameLaplace}
Consider the new vertical variable $y \in \R_{\leq0}:=(- \infty, 0]$
\begin{equation}\label{change1}
z=\eta(x)+y\,,\quad -\infty< y\leq 0\,,
\end{equation}
so that the closure of $\Omega$ in \eqref{Omega_domain} becomes
\begin{equation*}
\mathcal{S}_1=\{(x,y)\in \T^2 \times \R\; :\; -\infty< y\leq 0\}\,.
\end{equation*}
Consider the function
\begin{equation}\label{func:phi}
\varphi(x,y):=\Phi(x,y+\eta(x))\,.
\end{equation}
We have the following result.
\begin{lemma}
If $\Phi(x,z)$ solves the problem \eqref{original_elliptic} then the function
$\varphi(x,y)$ in
\eqref{func:phi} solves 
\begin{equation}
\left\{\begin{aligned}
\mathcal{L}\varphi&=0\,,\qquad (x,y)\in\cS_1\,,\\
\varphi(x,0)&=\psi(x)\,,\\
(\pa_{y}\varphi)(x,y)&\to0\,,\;\;\;y\to-\infty
\end{aligned}\right.
\label{elliptic2}
\end{equation}
where
\begin{align}\label{op:L}
{\mathcal{L}}:={\mathcal{L}}(\eta):=\pa_{yy}+\tb\Delta_x-2\tb\nabla\eta\cdot\nabla_x\pa_{y}
-\tb\Delta\eta\pa_{y}\,,
\end{align}
with
\begin{equation}
\tb\equiv \tb(\eta;x):=\frac{1}{1+|\nabla\eta|^2}\,.
 \label{formaalphai}
\end{equation}
We define also the coefficients 
\begin{align*}
    \beta_1:= \tb-1= - \tb |\nabla \eta|^2 \,, 
    \qquad 
    \beta_2:= -2\tb \nabla \eta, \qquad \beta_3:= -\tb \Delta \eta\,.
\end{align*}
\noindent
Let $s_0>1$. 
For any $s\geq s_0+2$ there is $r:=r(s)\ll1$
such that, for any $ \eta \in B_{\mathfrak{s}_0+2}(r)\cap H^{s}$, one has the bounds
\begin{equation}\label{stimealphai}
\begin{aligned}
\| \beta_1(\eta)\|_{s-1}
\lesssim_s 
\|\eta\|_{s_0+1}\|\eta\|_{s}\,, 
\qquad\|\beta_{2}(\eta)\|_{s-1}
+\|\beta_{3}(\eta)\|_{s-2}
&\lesssim_{s}\|\eta\|_{H^{s}}\,.
\end{aligned}
\end{equation}

\end{lemma}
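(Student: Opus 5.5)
The lemma has two parts: the transformed elliptic problem \eqref{elliptic2}--\eqref{op:L}, and the bounds \eqref{stimealphai}. I will prove them in that order.

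\emph{Derivation of \eqref{elliptic2}.} Write $\Phi(x,z)=\varphi(x,z-\eta(x))$ and differentiate with the chain rule. One gets $\pa_z\Phi=\pa_y\varphi$ and $\pa_z^2\Phi=\pa_{yy}\varphi$. For the horizontal derivatives,
\[
\nabla_x\Phi=\nabla_x\varphi-\nabla\eta\,\pa_y\varphi,
\]
\[
\Delta_x\Phi=\Delta_x\varphi-2\nabla\eta\cdot\nabla_x\pa_y\varphi+|\nabla\eta|^2\pa_{yy}\varphi-\Delta\eta\,\pa_y\varphi .
\]
Summing, $\Delta_{x,z}\Phi=0$ becomes
\[
(1+|\nabla\eta|^2)\pa_{yy}\varphi+\Delta_x\varphi-2\nabla\eta\cdot\nabla_x\pa_y\varphi-\Delta\eta\,\pa_y\varphi=0 .
\]
This is exactly \eqref{perturba} in the flattened variable. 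Multiplying by $\tb=(1+|\nabla\eta|^2)^{-1}$, which is positive, gives $\mathcal L\varphi=0$ with $\mathcal L$ as in \eqref{op:L}. The boundary condition is immediate, since $\varphi(x,0)=\Phi(x,\eta(x))=\psi(x)$. The decay condition holds because $\pa_y\varphi(x,y)=(\pa_z\Phi)(x,y+\eta(x))$ and $\eta$ is bounded.

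\emph{Bounds \eqref{stimealphai}.} The main tools are the tame product estimate \eqref{tameHsx} and a Moser composition estimate for $F(w)=1/(1+w)$ near $w=0$, applied with $w=|\nabla\eta|^2$. By \eqref{tameHsx},
\[
\||\nabla\eta|^2\|_{s-1}\lesssim_s\|\eta\|_{s_0+1}\|\eta\|_s .
\]
Since $F$ is analytic near $0$ and $\|w\|_{s_0}\lesssim\|\eta\|_{s_0+1}^2\le r^2$ is small, Moser gives
\[
\|\tb-1\|_{s-1}\lesssim_s\|w\|_{s-1}\lesssim_s\|\eta\|_{s_0+1}\|\eta\|_s .
\]
The same bound with $s\leadsto s_0+1$ gives $\|\tb\|_{s_0+1}\lesssim1$; here $s\ge s_0+2$ guarantees $s_0+1\le s-1$, so all indices are admissible.

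For $\beta_1=-\tb|\nabla\eta|^2$, apply \eqref{tameHsx} to the product of $\tb=1+(\tb-1)$ with $|\nabla\eta|^2$. The contribution of $\tb-1$ is quadratically small in $\|\eta\|_{s_0+1}$, so $\beta_1$ inherits the bound $\|\eta\|_{s_0+1}\|\eta\|_s$. For $\beta_2=-2\tb\nabla\eta$ and $\beta_3=-\tb\Delta\eta$, the tame estimate gives
\[
\|\tb\,\nabla\eta\|_{s-1}\lesssim_s\|\tb\|_{s_0}\|\eta\|_s+\|\tb-1\|_{s-1}\|\nabla\eta\|_{s_0},
\]
and the same with $\Delta\eta$ at index $s-2$. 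The constant part $1$ of $\tb$ contributes trivially. The second term on the right is at most $\|\eta\|_{s_0+1}^2\|\eta\|_s$, hence $\lesssim\|\eta\|_s$ by smallness.

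\emph{Main obstacle.} The only delicate step is the Moser estimate for $\tb$, in particular keeping the index bookkeeping consistent: every factor must be placed at a regularity $\le s-1$ (or $s-2$ for $\beta_3$), with low norms controlled by $\|\eta\|_{s_0+2}\le r$. This works because, on the ball $B_{s_0+2}(r)$, the function $w=|\nabla\eta|^2$ stays in a small neighbourhood of $0$ in $H^{s_0}$, where $F$ is analytic.
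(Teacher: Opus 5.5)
Your proposal is correct and follows the paper's own argument: the chain rule applied to $\Phi(x,z)=\varphi(x,z-\eta(x))$, multiplication of the resulting equation by $\tb$, and the bounds on $\beta_1,\beta_2,\beta_3$ via the tame product estimate \eqref{tameHsx}, where your Moser step for $\tb-1$ spells out what the paper leaves to ``direct inspection''. The only slip is harmless: $\|\tb\|_{s_0+1}\lesssim 1$ comes from your bound with $s\leadsto s_0+2$ rather than $s_0+1$, and in any case only $\|\tb\|_{s_0}\lesssim 1$ is needed afterwards.
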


\begin{proof}
By \eqref{func:phi} and \eqref{change1} we shall write
\[
\Phi(x,z)=\varphi(x,z-\eta(x))\,.
\]
Therefore at $y=z-\eta(x)$ we obtain 
\begin{align*}
\pa_{z}\Phi(x,z)&=(\pa_{y}\varphi)(x,y)\,,
\qquad \pa_{zz}\Phi(x,z)=(\pa_{yy}\varphi)(x,y)\,,
\\
\nabla_x\Phi(x,z)&=(\nabla_x\varphi)(x,y)-\nabla\eta(\pa_{y}\varphi)(x,y)\,,
\\
\Delta_x\Phi(x,z)&=(\Delta_x\varphi)(x,y)-2\nabla\eta\cdot(\pa_{y}\nabla_x\varphi)(x,y)
-\Delta\eta(\pa_{y}\phi)(x,y)+|\nabla\eta|^{2}(\pa_{yy}\phi)(x,y)\,.
\end{align*}
Moreover
\[
\varphi(x,0)=\Phi(x,\eta)=\psi\,,\qquad
(\pa_{y}\varphi)(x,y)\to 0\;\;{\mathrm{as}}\;\;y\to-\infty\,.
\]
Hence by  \eqref{original_elliptic} we deduce
\[
\begin{aligned}
0&=(\pa_{zz}+\Delta)\Phi
=(1+|\nabla\eta|^{2})(\pa_{yy}\phi)+\Delta\phi
-2\nabla\eta\cdot(\pa_{y}\nabla\phi)
-\Delta\eta(\pa_{y}\phi)\,,
\end{aligned}
\]
which implies \eqref{elliptic2}-\eqref{op:L}-\eqref{formaalphai}.
The estimates \eqref{stimealphai} follow by direct inspection using the explicit definitions in 
\eqref{formaalphai} and the tame estimate \eqref{tameHsx}.
\end{proof}

\begin{remark}
Notice that formula \eqref{eq:112a} 
becomes
\begin{equation}\label{eq:112aTRIS}
 G(\eta)\psi = \Big[
(1+|\nabla\eta|^2)(\pa_{y}\varphi)(x,y)-\nabla\eta\cdot(\nabla_x\varphi)(x,y)\Big]_{|y=0}\,.
\end{equation}
Moreover, formulas\,  \eqref{def:V-B}
become (recall \eqref{func:phi})
    \begin{align} 
\label{def:VBIS}
& \tV =    (\nabla_x\Phi(x,z))_{|z=\eta(x)}  = (\nabla_x\varphi(x,y))_{|y=0}-\nabla\eta(x)(\pa_{y}\varphi(x,y))_{|y=0} \,,
\\
\label{form-of-BBIS}
& \tB = (\pa_z \Phi(x,z))_{|z=\eta(x)} = (\pa_{y}\varphi(x,y))_{|y=0}\,.
\end{align}
\end{remark}

\vspace{0.5em}
\noindent
{\bfseries Perturbative setting.}
The solution $\varphi(x,y)$
of the problem \eqref{elliptic2}  can be written as 
\begin{equation}\label{solphiphi}
\varphi (x, y) = \vphi_0(x, y) + u(x, y)
\end{equation}
where $\vphi_0(x,y)$ solves \eqref{elliptic0}
while 
$u(x, y)$ solves the problem
\begin{equation}\label{prob omogeneop forzato}
\begin{cases}
\partial_{yy} u +  \Delta_x u + F(\eta)[u]= f \\
u( x,0) = 0 \\
\partial_y u( x,y) \to  0\,,\;\;\;y\to-\infty
\end{cases}
\end{equation}
where we defined the differential operator  (recall \eqref{op:L})
\begin{equation*}
\begin{aligned}
F(\eta):={F}(\eta;y)[\cdot ] 
&:= 
-\Big( 2b\nabla\eta\cdot \nabla_x +b\Delta\eta\Big) \partial_y   +(b-1)\Delta_x  
\\&=
\beta_{1}(\eta)\cdot\nabla_x\pa_{y}
+\beta_{2}(\eta)\pa_{y}+\beta_3(\eta)\Delta_x\,,
\end{aligned}
\end{equation*}
and the ``error''
\begin{equation*}
\begin{aligned}
& f := - {F}(\eta)[\vphi_0] =-{\mathtt{L}}[\psi]\,,
\end{aligned}
\end{equation*}
where recalling \eqref{def vphi 0} we defined
\begin{equation*}
{\mathtt{L}}:={\mathtt{L}}(\eta,y):={F}(\eta;y)\circ\mathcal{L}_0(y)[\cdot]\,.
\end{equation*}

\begin{proposition}\label{stima cal Kn equazione di laplace}
Let $N\in \N_0$ and $a\geq 0$.
There is $s_0> 0$ such that for any $s\geq s_0+3/2$ there are $r:=r(s)>0$ and a map 
\[
\varphi_{\geq 1}(\eta;y)[\cdot]: B_{s_0+3/2}(r)\cap H^{s+\frac12}
\to 
\mathcal{L}(H^{s}, \C\oplus\mathcal{H}^{s+1/2,a})
\]
such that $u:=\varphi_{\geq 1}(\eta;y)[\psi]$,  
is the unique solution of \eqref{prob omogeneop forzato}.
Moreover one has the estimate
\begin{align}
\|  \Pi \varphi_{\geq 1}(\eta;y)[\psi] \|_{s+1/2,a} 
&\lesssim_s  
 \| \eta\|_{s_0  + 3/2}
\| \psi \|_{s} + \| \eta \|_{s+\frac12} \| \psi \|_{s_0+3/2}\,.\label{bottle10}
\end{align}
As a consequence, the function (see \eqref{solphiphi}) 
\begin{align}
\varphi(x,y)=e^{y|D|}\psi (x)+\varphi_{\geq 1}(\eta;y)[\psi]
\label{def:varphi}
\end{align}
is the unique solution of 
\eqref{elliptic2} and satisfies, for any $s\geq s_0+3/2$, $m=0,1,2,3$,
\begin{equation}\label{bottle110}
\|  \pa_{x_i ,y}^m\varphi\|_{s+1/2-m, a} 
\lesssim_s  
\| \psi \|_{s} + \| \eta \|_{s+\frac12} \| \psi \|_{s_0+3/2}\,.
\end{equation}
\end{proposition}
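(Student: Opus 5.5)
We solve \eqref{prob omogeneop forzato} by a fixed point argument in the weighted strip space, treating $F(\eta)$ as a small perturbation of the flat Laplacian $\pa_{yy}+\Delta_x$. The first step is to invert the flat operator with homogeneous Dirichlet data. Writing $u=\sum_j u_j(y)e^{\ii j\cdot x}$ on the half line, with $u(x,0)=0$ and $\pa_y u\to0$ as $y\to-\infty$, each mode with $j\neq0$ satisfies $u_j''-|j|^2u_j=f_j$. We invert this with the explicit Green kernel $-\frac{1}{2|j|}\big(e^{-|j||y-y'|}-e^{|j|(y+y')}\big)$. For $j=0$ the equation reads $u_0''=f_0$, and we integrate twice from $-\infty$; this produces an additive constant, which explains why the target space is $\C\oplus\cH^{s+1/2,a}$.

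We then prove the elliptic estimate
\[
\|\Pi\,\Delta_0^{-1}f\|_{s+\frac12,a}\lesssim_s\|f\|_{\cH^{s-\frac32,a}},
\]
where $\Delta_0^{-1}$ denotes the flat solution operator. The $L^2_y$ regularity gain of two $x$-derivatives is mode by mode, with uniform bounds. The weight $e^{-2ay}$ with $a<1$ is harmless because $|j|\geq1$, so the kernel decays like $e^{-|j||y-y'|}$, faster than the weight grows. Derivatives in $y$ are recovered from the equation itself, $\pa_{yy}u=f-\Delta_xu$. This is the analogue of Lemma 2.6 in \cite{BMV22}, and we would cite or adapt it.

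The second step rewrites the problem as
\[
u=\Delta_0^{-1}\big(f-F(\eta)[u]\big)=:\Phi(u).
\]
By the form of $F(\eta)$ and the bounds \eqref{stimealphai}, combined with the product estimate \eqref{stimatamecalH} and \eqref{abbiategrasso}, we get the tame bound
\[
\|F(\eta)u\|_{s-\frac32,a}\lesssim_s\|\eta\|_{s_0+\frac32}\|u\|_{s+\frac12,a}+\|\eta\|_{s+\frac12}\|u\|_{s_0+\frac12,a}.
\]
The coefficients lose at most two $x$-derivatives of $\eta$, and the operator is second order in $u$, which fixes the index bookkeeping. Similarly, Lemma \ref{lemma stima cal L0 sol omogenea laplace} gives
\[
\|f\|_{s-\frac32,a}=\|F(\eta)\mathcal L_0\psi\|_{s-\frac32,a}\lesssim_s\|\eta\|_{s_0+\frac32}\|\psi\|_s+\|\eta\|_{s+\frac12}\|\psi\|_{s_0+\frac32}.
\]
For $\|\eta\|_{s_0+3/2}<r$ small, $\Phi$ is a contraction at the low index $s_0+\frac12$. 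Since $\Phi$ is affine in $u$ and linear in $\psi$, the fixed point $\varphi_{\geq1}(\eta)[\psi]$ is linear in $\psi$. The high-norm estimate \eqref{bottle10} then follows by the standard tame bootstrap. We estimate $\|u\|_{s+1/2,a}\leq C_s r\|u\|_{s+1/2,a}+C_s\|\eta\|_{s+1/2}\|u\|_{s_0+1/2,a}+\|f\|_{s-3/2,a}$, absorb the first term into the left side, and bound $\|u\|_{s_0+1/2,a}$ by the low-norm estimate. Uniqueness follows from the contraction, or equivalently from the uniqueness of the harmonic extension \eqref{original_elliptic}.

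Finally, \eqref{bottle110} follows from \eqref{def:varphi}, from \eqref{stima:omognea} applied to $e^{y|D|}\psi$, and from \eqref{bottle10}, using \eqref{abbiategrasso} to trade each derivative $\pa_{x_i}$ or $\pa_y$ for one unit of regularity; the constant mode is annihilated by any derivative.

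The main obstacle is the flat elliptic estimate in the weighted spaces, in particular the zero mode and the term $\beta_1\nabla_x\pa_y$, where one must check that the constants are uniform in $j$ and compatible with the weight. The rest is routine tame bookkeeping.
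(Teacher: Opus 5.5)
Your proposal is correct and is essentially the paper's argument. The paper likewise inverts the flat Laplacian with zero Dirichlet data by citing \cite{BMV22}, for $a\in(0,1)$; note that $a>0$ is also exactly what your zero-mode integration from $-\infty$ requires. It proves the same tame bound for $F(\eta)$, using $F(\eta)=F(\eta)\Pi$, and then solves $(\mathrm{Id}+P(\eta))v=-P(\eta)[e^{y|D|}\psi]$ for $v=\Pi\varphi_{\geq 1}$, where $P(\eta)=\Pi\circ L\circ F(\eta)$.

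The only difference is the inversion step: the paper uses a Neumann series controlled by an inductive tame bound on $\|P(\eta)^j v\|_{s+\frac12,a}$, rather than your low-norm contraction followed by absorption. The two are equivalent, but the Neumann-series bound shows directly that the solution lies in $\mathcal{H}^{s+\frac12,a}$, which your absorption of $C_s r\|u\|_{s+\frac12,a}$ tacitly presupposes.
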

\begin{proof}
The proof follows the line of \cite{BMV22} and is divided in three steps. 

\noindent{\bfseries Step 1:} 
Let $s_0=4$ and $ r=1$. If $ \eta \in H^{s+\frac12}(\T^2)\cap B_{s_0}(r)$ then 
\begin{equation}\label{step1}
F(\eta): \C \oplus \cH^{s+\frac12,a}\mapsto \cH^{s-\frac32,a}\,,
\quad 
\| F(\eta)[ \phi]\|_{s-\frac32,a}
\lesssim_{s} 
\| \eta\|_{s_0}\| \phi \|_{s+\frac12 ,a}+\| \eta\|_{s+{\frac12}}\| \phi \|_{s_0,a}\,.
\end{equation}
To prove \eqref{step1}, we estimate the three terms in \eqref{perturba} separately. 

\noindent
First, by \eqref{stimatamecalH}, 
tame estimates for the product and \eqref{abbiategrasso}, we have 
\begin{align}
\| |\nabla \eta|^2 \pa_z^2 \phi\|_{s-\frac32,a}
&\lesssim_s  \|  \eta \|_{H^{3}_x}^2\| \pa_z^2 \phi\|_{s-\frac32, a} +
\|  \eta \|_{H^{s-\frac12}_x}\| \eta\|_{H^3_x}\| \pa_z^2 \phi\|_{2, a}\notag 
\\
&\lesssim_s 
\|  \eta \|_{H^{3}_x}\| \phi\|_{s+\frac12, a} +
\|  \eta \|_{H^{s-\frac12}_x}\| \phi\|_{4, a}\,,\label{step1_1}
\\
\|  \Delta \eta \pa_z \phi\|_{s-\frac32,a}
&\lesssim_s \|  \eta \|_{H^{4}_x}\| \pa_z \phi\|_{s-\frac32, a} +
\|  \eta \|_{H^{s+\frac12}_x}\| \pa_z \phi\|_{2, a}\notag
\\&
\lesssim_{s}  \|  \eta \|_{H^{4}_x}\|  \phi\|_{{s-\frac12},a} +
\|  \eta \|_{H^{s+\frac12}_x}\|  \phi\|_{{3},a}\,, \label{step1_2}
\\
\| \nabla  \eta \cdot\nabla_x \partial_z \phi\|_{s-\frac32,a}
&\lesssim_s  
\|  \eta \|_{H^{3}_x}\|  \nabla_x \partial_z\|_{{s-\frac32},a} +
\|  \eta \|_{H^{s-\frac12}_x}\| \nabla_x \partial_z\phi\|_{{2},a}\notag 
\\
&\lesssim_s   \|  \eta \|_{H^{3}_x}\|  \phi\|_{{s+\frac12},a} +
\|  \eta \|_{H^{s-\frac12}_x}\| \phi\|_{{4},a}\,.\label{step1_3}
\end{align}
Gathering \cref{step1_1,step1_2,step1_3}, 
in view of \eqref{perturba}, we get \eqref{step1}.

\noindent{\bfseries Step 2:} 
For $a\in (0,1)$ and  any $g=g(x,z) \in \cH^{s-\frac32,a}$, the elliptic problem 
\[
\begin{cases} 
\Delta_{x,z} u(x,z) = g (x,z)\\ 
u(x,0)=0= \lim_{z\to -\infty} \pa_z u 
\end{cases}
\]
has a unique solution $u=L(g) \in \C \oplus \cH^{s+\frac12,a}$ 
with estimate 
\begin{align}
L: \cH^{s-\frac32,a}\mapsto \C\oplus\cH^{s+\frac12,a}\,, 
\qquad 
\| L(g)\|_{s+\frac12,a}\lesssim \|g\|_{s-\frac32,a}\,.
\label{Linv}
\end{align}
This is  \cite[Lemma 2.10]{BMV22}. 
Then, using also $ F(\eta) = F(\eta) \Pi $, we deduce that  $\varphi_{\geq 1}$ solves 
    \begin{equation*}
\begin{cases}
\Big(\uno + L\circ F(\eta)\Pi \Big)[ \varphi_{\geq 1}]
= -L\circ  F(\eta)[ e^{z|D_x|}\psi(x)]
\\
\varphi_{\geq 1}( \eta;x,0) =0
 \\ 
\lim_{z\to -\infty} \pa_z\varphi_{\geq 1}( \eta;x,z)=0\,.
\end{cases}
    \end{equation*}
By defining $ v = \Pi \varphi_{\geq 1} $, we have
\[
\Big(\uno + P(\eta) \Big)[ v]= -P(\eta) [ e^{z|D_x|}\psi(x)]\,,
\]
where, combining \eqref{Linv} and \eqref{step1}, 
one has
\begin{align}
P(\eta)&:=\Pi\circ L\circ F(\eta): \cH^{s+\frac12,a} \mapsto  \cH^{s+\frac12,a}\,,\notag
 \\
 \| P(\eta)[\phi]\|_{s+\frac12,a}&\lesssim_{s} \| \eta\|_{4}\| \Pi\phi \|_{s+\frac12 ,a}
 +\| \eta\|_{s+{\frac12}}\| \Pi\phi \|_{4,a}\, ,
 \label{altona}
 \\ 
\| P(\eta)^j[\phi] \|_{ 4, a} &\leq \big(C'(s_0) \| \eta \|_{4}\big)^j \|\Pi\phi \|_{ 4, a}\,.  \label{indbassa}
 \end{align}

 \begin{remark}
 The variable $\varphi_{\geq 1} $ is recovered by $ v$ solving 
\[
\varphi_{\geq 1}= -L\circ F(\eta)[  [ e^{z|D_x|}\psi(x)]+v]\,.
\]
From the view point of the estimate 
we only need to bound  $v=\Pi \varphi_{\geq 1}$.
\end{remark}

 \noindent{\bfseries Step 3:} 
 It follows directly from \eqref{altona} and \eqref{indbassa}  and an induction argument, 
 that for any $j\in \N$, one has the estimate
 \begin{equation}\label{indalta}
\| P(\eta)^j[v] \|_{ s+\frac12, a} 
\leq 
C(s)^j \|\eta \|_{4}^{j-1}  \big(  \| \eta \|_{4}\|v \|_{ s+\frac12, a}\!
+\!j\| \eta \|_{{s+{\frac12}}}\|v \|_{4, a} \big) \, ,
\end{equation}
where $C(s)>0$ is the maximum between $C'(s_0)$ in \eqref{indbassa} 
and the constant in estimate \eqref{altona}. 
Enlarging $C(s)$, if needed, so that $ C(s)\geq 2$, 
from \eqref{indalta} one deduce also
\begin{align*}
\| P(\eta)^j[v] \|_{ s+\frac12, a} 
\leq 
C(s)^{2j} \|\eta \|_{4}^{j-1}  \big(  \| \eta \|_{4}\|v \|_{ s+\frac12, a}\!
+\| \eta \|_{{s+{\frac12}}}\|v \|_{4, a} \big) \, .
\end{align*}
Therefore, if $0<r<\frac{1}{2 C(s)^2}$, 
for any  $ \eta \in B_r(H^{4})\cap H^{s+\frac12}$, the operator
 $ \uno + P(\eta)$ is invertible and 
\begin{align}\label{invfin}
\|\big(\uno + P(\eta)\big)^{-1}P(\eta)[\phi]\|_{s+\frac12,a}
\leq 
2 C(s)^2\left(\| \eta \|_{4}\|\Pi \phi \|_{ s+\frac12, a}
 +\| \eta \|_{{s+{\frac12}}}\|\Pi \phi \|_{4, a} \right)\,.
\end{align}
Finally we set 
$ v =\Pi \varphi_{\geq 1}= \big(\uno + P(\eta)\big)^{-1}P(\eta)[e^{y |D_x|}\psi] $. 
By \eqref{invfin} with $\phi= e^{y |D_x|}\psi \in \C\oplus \cH^{s+\frac12,a}$ 
and by \eqref{stima:omognea}, we get
 \begin{align*}
\| v \|_{s+\frac12,a}
\lesssim   
\| \eta \|_{4}\|\Pi e^{y |D_x|}\psi \|_{ s+\frac12, a}
+\| \eta \|_{{s+{\frac12}}}\|\Pi e^{y |D_x|}\psi\|_{4, a} 
\lesssim 
\| \eta \|_{4}\|\psi \|_{ s}+\| \eta \|_{{s+{\frac12}}}\|\psi\|_{4, a}\,.
\end{align*}
This concludes the proof.
\end{proof}

\begin{remark}\label{rmk:phielliptic}
Notice that, in view of Proposition 
\ref{stima cal Kn equazione di laplace}, 
we shall write
$\varphi(x,y)=\varphi(\eta;y)[\psi]$
with 
$\varphi(\eta;y)[\psi]:=e^{y|D|}\psi +\varphi_{\geq 1}(\eta;y)[\psi]$. 
\end{remark}

\subsection{Para-differential structure of the Dirichlet-Neumann operator}
In this section we shall also show that actually the 
Dirichlet-Neumann operator \eqref{eq:112a} 
(see also \eqref{eq:112aTRIS})
has a para-differential structure.

\paragraph{Good unknown}
We introduce the variable
\begin{equation}\label{good1}
w=\varphi-\cOpbw{\pa_{y}\varphi}[\eta]\,,\qquad y\leq 0\,.
\end{equation}
\begin{remark}{\bfseries (Good unknown of Alinhac).}\label{rmk:goodali}
Recalling \eqref{def:V-B} and \eqref{func:phi}
we note that the good unknown in \eqref{def:good} 
correspond to the trace on $\{y=0\}$ of $w$.
Indeed
\[
\begin{aligned}
w(x,0)&=\varphi(x,0)-\cOpbw{(\pa_{y}\varphi)(x,0)}[\eta]
\\&
\stackrel{\eqref{func:phi}}{=}\Phi(x,\eta(x))-\cOpbw{(\pa_{z}\Phi)(x,\eta(x))}[\eta]
{=}\psi-\cOpbw{B(\eta,\psi)}[\eta]
\stackrel{\eqref{def:good}}{=}\omega\,.
\end{aligned}
\]
\end{remark}
\noindent
Moreover, in view of \Cref{rmk:phielliptic} 
and \eqref{good1} one has 
\begin{align*}
w= e^{y|D|}\psi+ w_{\geq 1}(\eta;y)[\psi]\,, 
\qquad 
w_{\geq 1}(\eta;y)[\psi]:= 
-\cOpbw{\pa_{y}\varphi(\eta;y)[\psi]}[\eta]+ \varphi_{\geq 1}(\eta;y)[\psi]\,.
\end{align*}
In particular there is $\sigma_0>0$ such that for any $ s\geq \sigma_0$ there is $r>0$ such that  
\begin{align*}
B_{s_0+\sigma}(r)\cap H^{s+\frac12} \ni \eta 
\mapsto 
w_{\geq 1}(\eta;y)[\psi] \in \mathcal{L}(H^s_x, \C\oplus H^{1,a}_{y}H_x^{s-\frac12})\,.
\end{align*}
Moreover, by
using \eqref{bottle10}, \eqref{bottle110} and \eqref{action:striscia2}, 
one has the estimates
\begin{align}\label{stima:w}
\| w_{\geq 1}(\eta;y)[\psi]\|_{H^{1,a}_y H^{s-\frac12}_x}
\lesssim 
\| \eta\|_{s_0+\frac32}\| \psi\|_s+ \| \eta\|_{s+\frac12}\|\psi\|_{s_0+\frac32}\,. 
\end{align}
We prove the following important preliminary result.

\begin{proposition}\label{prop:eq:112aquatuor}
Let $\vr\geq 0$, $s_0>3/2$ and $\s:=2+3/2+\vr$. 
For any $s\geq s_0+\s$ 
there is $\delta=\delta(s)$ such that for any
$\eta\in B_{\delta}(H^{s_0+\s})\cap H^{s}$,  $\psi\in H^{s}$ the following holds.
There exists a map
\[
\begin{aligned}
\mathcal{S}_{\geq 1}: B_{\delta}(H^{s_0+\s})\cap H^{s}
&\longrightarrow
\mathcal{L}(H^{s}_x, H^{1,a}_y H^{s+\vr}_x)
\\
\qquad \eta\;\;\;&\mapsto\;\; \mathcal{S}_{\geq 1}(\eta;y)[\cdot]
\end{aligned}
\]
such that 
 the Dirichlet-Neumann operator in 
 \eqref{eq:112a}-\eqref{eq:112aTRIS}
can be written as
\begin{equation}\label{eq:112aquatuor}
\begin{aligned}
G(\eta)\psi  =& \Big[
\cOpbw{1+|\nabla\eta|^2}[\pa_{y}w]-\cOpbw{\nabla\eta}[\nabla w]
-\cOpbw{\nabla\eta-\nabla\eta\pa_{y}\varphi}[\nabla\eta]\Big]_{|y=0}
\\
&+
\Big[-\cOpbw{\div(\nabla\varphi-\nabla\eta\pa_{y}\varphi)}[\eta]
+\mathcal{S}_{\geq 1}(\eta;y)[\psi]\Big]_{|y=0}\,,
\end{aligned}
\end{equation}
where $w$ is in \eqref{good1}, $\phi$ solve the problem \eqref{elliptic2}. 
Moreover one has the expansion
\begin{align}\label{esp:sgeq1}
\mathcal{S}_{\geq 1}(\eta;y)[\cdot]= 
\mathcal{S}_1(\eta;y)[\cdot]+\mathcal{S}_{\geq 2}(\eta;y)[\cdot]\,, 
\end{align}
where 
$\mathcal{S}_1(\eta;y)[\cdot]$ is linear with respect to $\eta$ and it 
satisfies  the estimate
\begin{equation}
\| \mathcal{S}_1(\eta;y)[\psi]\|_{H^{1,a}_y H_{x}^{s+\vr}}
\lesssim_{s}
\|\psi\|_{s}
\|\eta\|_{s_0+\s}+
\|\eta\|_{s}\|\psi\|_{s_0+\s}\,,
\label{leggelegge1}
\end{equation}
while $\mathcal{S}_{\geq 2}(\eta;y)[\cdot]$ 
is non-homogeneous and it satisfies 
\begin{align}
\| \mathcal{S}_{\geq 2}(\eta;y)[\psi]\|_{H^{1,a}_y H_{x}^{s+\vr}}
\lesssim_{s}
\|\psi\|_{s}
\|\eta\|_{\fs_0+\s}^2+
\|\eta\|_{s}\| \eta\|_{s_0+\s}\|\psi\|_{s_0+\s}\,.
    \label{leggelegge2}
\end{align}
In particular one has 
\begin{align}
G(\eta)[\psi]
&=\cOpbw{1+|\nabla\eta|^2}[(\pa_{y}w)(x,0)]
-\cOpbw{\nabla\eta}[\nabla \omega]-\cOpbw{\tV}[\nabla\eta]  
\notag
\\
&\quad-\cOpbw{\div(\tV)}[\eta]
+\mathcal{S}_{\geq 1}(\eta;0)[\psi]\,,
\label{eq:112aquatuorBIS}
\end{align}
where $\tV,\, \tB$ are in \eqref{def:V-B} 
and $\omega$ is in \eqref{def:good}.
\end{proposition}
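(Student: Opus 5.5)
We start from the flattened expression \eqref{eq:112aTRIS},
\[
G(\eta)\psi=\big[(1+|\nabla\eta|^2)\pa_y\varphi-\nabla\eta\cdot\nabla_x\varphi\big]_{|y=0},
\]
and paralinearize each product with Bony's decomposition \eqref{alg:paraprod}, working on the strip for $y\le 0$ and taking the trace only at the end (\Cref{rmk:trace}). For the first term we write
\[
(1+|\nabla\eta|^2)\pa_y\varphi=\cOpbw{1+|\nabla\eta|^2}[\pa_y\varphi]+\cOpbw{\pa_y\varphi}[|\nabla\eta|^2]+\cR(|\nabla\eta|^2,\pa_y\varphi).
\]
We then paralinearize $|\nabla\eta|^2$ once more as $2\cOpbw{\nabla\eta}\cdot\nabla\eta$ plus a smoothing remainder. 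For the second term we write
\[
\nabla\eta\cdot\nabla_x\varphi=\cOpbw{\nabla\eta}[\nabla\varphi]+\cOpbw{\nabla\varphi}[\nabla\eta]+\cR(\nabla\eta,\nabla\varphi).
\]
Next we substitute $\varphi=w+\cOpbw{\pa_y\varphi}[\eta]$ from \eqref{good1} inside $\cOpbw{1+|\nabla\eta|^2}[\pa_y\,\cdot\,]$ and $\cOpbw{\nabla\eta}[\nabla\,\cdot\,]$. This produces the main terms $\cOpbw{1+|\nabla\eta|^2}[\pa_y w]-\cOpbw{\nabla\eta}[\nabla w]$, together with commutator-type contributions.

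The key algebraic step is the treatment of these commutator-type contributions. We compute $\pa_y\cOpbw{\pa_y\varphi}[\eta]=\cOpbw{\pa_{yy}\varphi}[\eta]$ and $\nabla\cOpbw{\pa_y\varphi}[\eta]=\cOpbw{\pa_y\varphi}[\nabla\eta]+\cOpbw{\nabla\pa_y\varphi}[\eta]$. The elliptic equation \eqref{elliptic2} gives $(1+|\nabla\eta|^2)\pa_{yy}\varphi=-\Delta_x\varphi+2\nabla\eta\cdot\nabla_x\pa_y\varphi+\Delta\eta\,\pa_y\varphi$. With this identity, the terms $\cOpbw{1+|\nabla\eta|^2}\cOpbw{\pa_{yy}\varphi}[\eta]$ and $-\cOpbw{\nabla\eta}\cOpbw{\nabla\pa_y\varphi}[\eta]$ combine, via the composition \Cref{compoparapara}, into $-\cOpbw{\div(\nabla\varphi-\nabla\eta\,\pa_y\varphi)}[\eta]$ plus remainders. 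The terms acting on $\nabla\eta$, namely $\cOpbw{\pa_y\varphi}[2\nabla\eta\cdot\nabla\eta]$, $-\cOpbw{\nabla\varphi}[\nabla\eta]$ and $-\cOpbw{\nabla\eta}\cOpbw{\pa_y\varphi}[\nabla\eta]$, assemble into $-\cOpbw{\nabla\varphi-\nabla\eta\pa_y\varphi}[\nabla\eta]$. This matches the structure of \eqref{eq:112aquatuor} (the term written there as $\nabla\eta-\nabla\eta\pa_y\varphi$ is, by \eqref{def:VBIS}, the velocity $\tV$ at $y=0$). Everything else, namely the paraproduct remainders $\cR(\cdot,\cdot)$ and the composition remainders $\cQ$, is collected into $\mathcal S_{\geq1}$. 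Taking the trace at $y=0$ and using \eqref{def:VBIS}, \eqref{form-of-BBIS} and \Cref{rmk:goodali}, namely $w(\cdot,0)=\omega$ and $\nabla w(\cdot,0)=\nabla\omega$, then yields \eqref{eq:112aquatuorBIS}.

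The main obstacle is the tame $H^{1,a}_yH^{s+\vr}_x$ estimate of the remainders. This requires the strip versions of the estimates: \eqref{stimarestoparaproduct:striscia} for the paraproduct remainders, and \eqref{calma2:striscia1}--\eqref{calma2:striscia2} for the composition remainders. In each remainder we put the symbol depending on $\eta$ (for instance $\nabla\eta$ or $|\nabla\eta|^2$) in the low-regularity slot measured in $H^{s_0+\s}$. The derivatives of $\varphi$ are controlled by \eqref{bottle110}, and one $\pa_y$ derivative is handled through the elliptic equation again. The choice $\s=2+3/2+\vr$ is made to absorb the loss: up to two derivatives of $\varphi$ cost $m\le 2$ in \eqref{bottle110}, the factor $3/2$ is the one appearing there, and a further $\vr$ is gained by the smoothing.

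For the splitting \eqref{esp:sgeq1}, we define $\mathcal S_1$ by replacing $\varphi$ with $e^{y|D|}\psi$ in all terms that are linear in $\eta$. All other contributions go into $\mathcal S_{\geq2}$, and they carry an extra factor $\|\eta\|_{s_0+\s}$ through the bound \eqref{bottle10} on $\varphi_{\geq1}$ and the bound \eqref{stimealphai} on $\tb-1$. This gives \eqref{leggelegge1} and \eqref{leggelegge2}.
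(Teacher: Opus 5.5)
Your proposal is correct and follows the paper's proof essentially step by step. The paper uses the same Bony decomposition of $(1+|\nabla\eta|^2)\pa_y\varphi-\nabla\eta\cdot\nabla\varphi$, the same substitution $\varphi=w+\cOpbw{\pa_y\varphi}[\eta]$, and the same elliptic identity $(1+|\nabla\eta|^2)\pa_{yy}\varphi-\nabla\eta\cdot\nabla\pa_y\varphi=-\div(\nabla\varphi-\pa_y\varphi\nabla\eta)$; it also takes $\mathcal S_1(\eta;y)[\psi]=-\cR(\nabla\eta,\nabla e^{y|D|}\psi)$, the only remainder linear in $\eta$, and you are right that the coefficient $\nabla\eta-\nabla\eta\pa_y\varphi$ in the statement should read $\nabla\varphi-\nabla\eta\pa_y\varphi$, which equals $\tV$ at $y=0$.
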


\begin{proof}
Consider formula \eqref{eq:112aTRIS} and
define 
\begin{equation}\label{accendino2}
\mathcal{Z}(y):=(1+|\nabla\eta|^2)(\pa_{y}\varphi)(x,y)
-\nabla\eta\cdot(\nabla\varphi)(x,y)\,,\qquad y\leq 0\,.
\end{equation}
By applying Lemmata \ref{lem:prodotto}  and \ref{lem:prodotto} one gets
\begin{equation}
\begin{aligned}
\mathcal{Z}(y)&=\cOpbw{1+|\nabla\eta|^2}[\pa_{y}\varphi]
+2\cOpbw{\pa_{y}\varphi\nabla\eta}[\nabla\eta]
-\cOpbw{\nabla\eta}[\nabla\varphi]-\cOpbw{\nabla\varphi}[\nabla\eta]
\\
&\quad+Q_1+Q_2+Q_3+Q_4
\end{aligned}
\label{Zparalinearizzato}
\end{equation}
with
\begin{equation}\label{resti1234}
\begin{aligned}
Q_1&:=-\mathcal{R}(\nabla\eta,\nabla\varphi)\,,
\qquad 
Q_2:=\mathcal{R}(|\nabla\eta|^2,\pa_{y}\varphi)\,,
\qquad 
Q_{3}:=\cOpbw{\pa_{y}\varphi}\circ \mathcal{R}(\nabla\eta,\nabla\eta)\,,
\\
Q_{4}&:=2\mathcal{Q}(\pa_{y}\varphi,\nabla\eta)[\nabla\eta]\,,
\end{aligned}
\end{equation}
where $\mathcal{R}(\cdot,\cdot)$ denotes the remainders coming 
form para-product Lemma \ref{lem:prodotto},
while 
$\mathcal{Q}(\cdot,\cdot)$ denotes the remainders 
coming form the composition
Lemma \ref{compoparapara}.
Note that, by \eqref{good1} we have
\[
\nabla\varphi=\nabla w+\cOpbw{\nabla\pa_{y}\varphi}[\eta]
+\cOpbw{\pa_{y}\varphi}[\nabla\eta]\,,
\qquad
\pa_{y}\varphi=\pa_{y}w+\cOpbw{\pa_{yy}\varphi}[\eta]\,.
\]
By inserting the latter formulas\, in \eqref{Zparalinearizzato} 
and using again the composition
Lemma \ref{compoparapara}, we obtain
\begin{equation}\label{accendino1}
\begin{aligned}
\mathcal{Z}(y)&
=\cOpbw{1+|\nabla\eta|^2}[\pa_{y}w]-\cOpbw{\nabla\eta}[\nabla w]
\\&
-\cOpbw{\nabla\varphi-\pa_{y}\varphi\nabla\eta}[\nabla\eta]
+\cOpbw{(1+|\nabla\eta|^2)\pa_{yy}\varphi-\nabla\eta\cdot\nabla\pa_{y}\varphi}[\eta]
\\&
+Q_1+Q_2+Q_3+Q_4+Q_5+Q_6+Q_7
\end{aligned}
\end{equation}
where
\begin{equation}\label{resti567}
Q_{5}:=\mathcal{Q}(|\nabla\eta|^2,\pa_{yy}\varphi)[\eta]\,,
\qquad
Q_{6}:=\mathcal{Q}(\nabla\eta,\nabla\pa_{y}\varphi)[\eta]\,,
\qquad
Q_{7}:=-\mathcal{Q}(\nabla\eta,\pa_{y}\varphi)[\nabla\eta]\,.
\end{equation}

\noindent
Now, recalling that $\varphi$ solves the problem \eqref{elliptic2} 
(see also \eqref{op:L}-\eqref{formaalphai}),
we note that 
\[
(1+|\nabla\eta|^2)\pa_{yy}\varphi-\nabla\eta\cdot\nabla\pa_{y}\varphi
=
-\div\big(\nabla\varphi-\pa_{y}\varphi\nabla\eta\big)\,.
\]
Hence, by \eqref{accendino1}, we get
\begin{align*}
\mathcal{Z}(y)&=
\cOpbw{1+|\nabla\eta|^2}[\pa_{y}w]-\cOpbw{\nabla\eta}[\nabla w]
-\cOpbw{\nabla\varphi-\pa_{y}\varphi\nabla\eta}[\nabla\eta]
\\
&-
\cOpbw{\div\big(\nabla\varphi-\pa_{y}\varphi\nabla\eta\big)}[\eta]+
\mathcal{T}_{\geq 1}(\eta;y)[\varphi]
\end{align*}
where (recalling \eqref{resti1234},  \eqref{resti567}) we defined
\[
\mathcal{T}_{\geq 1}(\eta)[\varphi]
=
\mathcal{T}_{ 1}(\eta)[\varphi]+\mathcal{T}_{\geq 2}(\eta)[\varphi]\,, 
\qquad 
\mathcal{T}_{ 1}(\eta)[\varphi]:= Q_1\,, 
\qquad 
\mathcal{T}_{\geq 2}(\eta)[\varphi]:=\sum_{j=2}^{7}Q_j\,.
\]
First of all, since $\mathcal{T}_{\geq 1}(\eta)[\varphi]$
depends only on the derivatives of $\varphi$, we note that 
$\mathcal{T}_{\geq 1}(\eta;y)[\varphi]=\mathcal{T}_{\geq 1}(\eta;y)[\Pi \varphi]$. 
Moreover, in view of \Cref{stima cal Kn equazione di laplace} (see formula \eqref{def:varphi}), 
we define  
\begin{align*}
\mathcal{S}_{\geq 1}(\eta;y)[\psi]&:= 
\mathcal{S}_{ 1}(\eta;y)[\psi]+ \mathcal{S}_{\geq 2}(\eta;y)[\psi]\,,
\\
\mathcal{S}_{ 1}(\eta;y)[\psi]&:= \mathcal{T}_{1}(\eta)[ e^{y |D|}\psi]\,, 
\qquad 
\mathcal{S}_{\geq 2}(\eta;y)[\psi]:= 
\mathcal{T}_{1}(\eta)[ \Pi \varphi_{\geq 1}(\eta;y)[\psi]]
+\mathcal{T}_{\geq 2}(\eta)[\Pi \varphi[\psi]]\,.
\end{align*}
The discussion above, together with \eqref{accendino2} and \eqref{eq:112aTRIS}
implies formula
\eqref{eq:112aquatuor}.
The bounds \eqref{leggelegge1} and \eqref{leggelegge2} follow by using the explicit definitions of the remainders $Q_j$ in \eqref{resti1234} and \eqref{resti567}, \Cref{lem:prodotto,compoparapara} and together with the bounds \eqref{stima:omognea} and \eqref{bottle10}.
 The thesis on the operator $\mathcal{S}_1$ and the bound 
\eqref{leggelegge1} follow by recalling Remark \ref{rmk:phielliptic}.
Finally, in view of \eqref{def:VBIS}-\eqref{form-of-BBIS}
we have that 
\[
\big(\nabla\varphi-\pa_{y}\varphi\nabla\eta\big)_{|y=0}=\tV=\tV(\eta,\psi)\,.
\]
with $\tV$ in \eqref{def:V-B}, hence 
 \eqref{eq:112aquatuorBIS} follows.
\end{proof}
In view of the Proposition above we will concentrate in the study of the structure of
$w(x,y)$ introduced in \eqref{good1}.

\paragraph{Paralinearization of the interior equation}
In the following we show that 
$w$ in \eqref{good1}, with $\varphi$ solving \eqref{elliptic2}, satisfies a \emph{para-differential} equation.

\begin{proposition}\label{prop:interior}
Let $\vr\geq 0$, $s_0>3/2$. There is $\mu>0$ such that for any 
$s\geq s_0+\mu$ 
there is $\delta=\delta(s)$ such that for any
$\eta\in B_{s_0+\s}(\delta)\cap H^{s}$,  
$\psi\in H^{s}$ the following holds.
There exists a map
\[
\begin{aligned}
\breve{\mathcal{S}}_{\geq 1}: B_{s_0+\mu}(\delta)\cap H^{s}
&\longrightarrow\mathcal{L}(H^s_x, H^{1,a}_{y}H_x^{s+\vr})
\\
\qquad \eta\;\;\;&\mapsto\;\; \breve{\mathcal{S}}_{\geq 1}(\eta;y)[\cdot]
\end{aligned}
\]
such that
the good unknown $w$ in \eqref{good1} satisfies the equation
\begin{equation}\label{eq:paraw}
\pa_{y}^2w+\cOpbw{1+\beta_{1}}\Delta w+\cOpbw{\beta_{2}}\nabla\pa_{y}w
+\cOpbw{\beta_3}\pa_{y}w
=\breve{\mathcal{S}}_{\geq 1}(\eta;y)[\psi]
\end{equation}
where the functions $\beta_i$, $i=1,2,3$ are in \eqref{formaalphai}. 
Moreover, one has the expansion
\[
\breve{\mathcal{S}}_{\geq 1}(\eta;y)[\cdot]=\breve{\mathcal{S}}_{ 1}(\eta;y)[\cdot]
+\breve{\mathcal{S}}_{\geq 2}(\eta;y)[\cdot]\,,
\]
where $ \breve{\mathcal{S}}_{ 1}(\eta;y)[\cdot]$ 
is linear with respect to $ \eta$ and it
satisfies,
for $s\geq s_0+\s$,  the estimate
\begin{align}
\|\breve{\mathcal{S}}_{ 1}(\eta;y)[\psi]\|_{H^{1,a}_yH_{x}^{s+\vr}}
&\lesssim_{s}
\|\psi\|_{s}
\|\eta\|_{s_0+\mu}+
\|\eta\|_{s}\|\psi\|_{s_0+\mu}\,,
\label{legge1}
\end{align}
while $\breve{\mathcal{S}}_{\geq 2}(\eta;y)[\cdot]$ 
is non-homogeneous and satisfies 
\begin{align}
\| \breve{\mathcal{S}}_{\geq 2}(\eta;y)[\psi]\|_{H^{1,a}_y H_{x}^{s+\vr}}
\lesssim_{s}
\|\psi\|_{s}
\|\eta\|_{s_0+\mu}^2+
\|\eta\|_{s}\| \eta\|_{s_0+\mu}\|\psi\|_{s_0+\mu}\,.
    \label{legge2}
\end{align}
\end{proposition}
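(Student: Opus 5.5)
The plan is to paralinearize the elliptic equation $\mathcal{L}(\eta)\varphi=0$ of \eqref{elliptic2}, written in the form
\[
\pa_{yy}\varphi+(1+\beta_1)\Delta\varphi+\beta_2\cdot\nabla\pa_y\varphi+\beta_3\pa_y\varphi=0 .
\]
We then substitute $\varphi=w+\cOpbw{\pa_y\varphi}[\eta]$ from \eqref{good1}. The argument is the strip analogue of the classical Alinhac computation, done with the tools of Section \ref{sec:basicpara}.

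\textbf{Step 1: paraproduct decomposition.} Each product $\beta_1\Delta\varphi$, $\beta_2\cdot\nabla\pa_y\varphi$, $\beta_3\pa_y\varphi$ is split with Lemma \ref{lem:prodotto}, for instance
\[
\beta_1\Delta\varphi=\cOpbw{\beta_1}\Delta\varphi+\cOpbw{\Delta\varphi}\beta_1+\mathcal{R}(\beta_1,\Delta\varphi).
\]
The remainders $\mathcal{R}(\cdot,\cdot)$ are estimated on the strip via \eqref{stimarestoparaproduct:striscia}, with $\varphi$ controlled by \eqref{bottle110} and the $\beta_i$ by \eqref{stimealphai}. Since these remainders are as smoothing as we like, provided $s_0+\mu$ is large, they go into $\breve{\mathcal S}_{\geq1}$. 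The terms $\cOpbw{\pa^{\alpha}\varphi}\beta_i$ do not lose regularity in $\varphi$, but they lose up to two derivatives of $\eta$, since $\beta_3\sim\Delta\eta$.

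\textbf{Step 2: substitution of the good unknown.} Insert $\varphi=w+\cOpbw{\pa_y\varphi}[\eta]$ into the paradifferential part. We have
\[
\pa_{yy}\cOpbw{\pa_y\varphi}\eta=\cOpbw{\pa_y^3\varphi}\eta .
\]
The terms $\cOpbw{1+\beta_1}\Delta\cOpbw{\pa_y\varphi}\eta$ and $\cOpbw{\beta_2}\nabla\pa_y\cOpbw{\pa_y\varphi}\eta$ are expanded by the composition Lemma \ref{compoparapara}, with remainders controlled by \eqref{calma2:striscia1}--\eqref{calma2:striscia2}. Their principal parts are
\[
\cOpbw{\pa_y\varphi}\Delta\eta+2\cOpbw{\nabla\pa_y\varphi}\cdot\nabla\eta+\cOpbw{\Delta\pa_y\varphi}\eta
\]
plus $\beta$-weighted analogues. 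The main algebraic cancellation is then the following. The $\eta$-terms of top order, $\Delta\eta$ and $\nabla\eta$, coming from the good unknown must cancel the terms $\cOpbw{\pa_y\varphi}\beta_3$, $\cOpbw{\nabla\pa_y\varphi}\cdot\beta_2$ and $\cOpbw{\Delta\varphi}\beta_1$ of Step 1. To see this, linearize $\beta_3=-\tb\Delta\eta$ and $\beta_2=-2\tb\nabla\eta$ paradifferentially, i.e. $\beta_3=\cOpbw{-\tb}\Delta\eta+\ldots$. The zero-order term in $\eta$ reads
\[
\cOpbw{\pa_{yy}\pa_y\varphi+(1+\beta_1)\Delta\pa_y\varphi+\ldots}\eta=\cOpbw{\pa_y(\mathcal{L}\varphi)+[\text{commutator of }\pa_y\text{ with coefficients}]}\eta .
\]
It vanishes up to terms with smooth coefficients because $\mathcal{L}\varphi=0$; since the $\beta_i$ are $y$-independent, the commutator is actually zero.

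\textbf{Step 3: bookkeeping of homogeneity.} Every leftover term is either a paraproduct/composition remainder, hence smoothing, or has an extra factor $\eta$ in a low norm. Split $\varphi=e^{y|D|}\psi+\varphi_{\geq1}$ as in \eqref{def:varphi}. The contributions linear in $\eta$ and evaluated on $e^{y|D|}\psi$ form $\breve{\mathcal S}_1$, and \eqref{legge1} follows from \eqref{stima:omognea}. Everything else forms $\breve{\mathcal S}_{\geq2}$, with \eqref{legge2} coming from \eqref{bottle10}, \eqref{stima:w} and the product estimate \eqref{stimatamecalH}.

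I expect the main obstacle to be Step 2. The difficulty is verifying that all terms of the form $\cOpbw{\pa^\alpha\varphi}[\pa^2\eta]$ and $[\pa\eta]$, which lose derivatives of $\eta$, really cancel rather than merely being of lower order. Producing the $H^{1,a}_y$ (not only $L^{2,a}_y$) bound also requires differentiating in $y$ once more, which costs a further derivative of $\varphi$; this is absorbed by enlarging $\mu$ and using \eqref{bottle110} with $m\le3$.
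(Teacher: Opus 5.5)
Your overall scheme is viable: paralinearize, insert $\varphi=w+\cOpbw{\pa_y\varphi}[\eta]$, cancel everything that loses derivatives of $\eta$ using the equation, and sort the smoothing leftovers by homogeneity. However, Step 2 does not close as you describe it.

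Because you paralinearize $\mathcal{L}$ itself, the $\beta_i$ contain $\tb$. In your linearization $\beta_3=\cOpbw{-\tb}\Delta\eta+\ldots$, the dots hide $\cOpbw{\Delta\eta}(\tb-1)$, and similarly $\beta_2$ produces $\cOpbw{\nabla\eta}(\tb-1)$. The $\Delta\eta$ terms cancel, and so do the terms $\pm2\cOpbw{\tb\nabla\pa_y\varphi}\cdot\nabla\eta$. You are then left with
\[
-2\cOpbw{\tb\,\pa_y^2\varphi\,\nabla\eta}\cdot\nabla\eta
+\cOpbw{\Delta\varphi}(\tb-1)
-\cOpbw{\pa_y\varphi\,\Delta\eta}(\tb-1)
-2\cOpbw{\nabla\eta\cdot\nabla\pa_y\varphi}(\tb-1).
\]
The first term comes from $\cOpbw{\beta_2}\cdot\nabla\pa_y$ acting on the correction $\cOpbw{\pa_y\varphi}[\eta]$. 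In particular, nothing produced by the good unknown matches $\cOpbw{\Delta\varphi}\beta_1$, contrary to what you state.

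Now insert $\tb-1=-2\cOpbw{\tb^2\nabla\eta}\cdot\nabla\eta$, up to a smoothing remainder. All four terms become $\cOpbw{c}\cdot\nabla\eta$ modulo admissible errors. They are quadratic in $\eta$ but lose one derivative of $\eta$, so they cannot go into $\breve{\mathcal{S}}_{\geq2}$, which must be controlled in $H^{s+\vr}_x$ by $\|\eta\|_s$. They do cancel, because
\[
c=-2\tb^2\nabla\eta\,\big[(1+|\nabla\eta|^2)\pa_y^2\varphi+\Delta\varphi-2\nabla\eta\cdot\nabla\pa_y\varphi-\Delta\eta\,\pa_y\varphi\big]=-2\tb\,\nabla\eta\,\mathcal{L}\varphi=0 .
\]
This is a second use of the interior equation, undifferentiated. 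It comes in addition to the use of $\pa_y(\mathcal{L}\varphi)=0$ for the zero-order coefficient, which you identify correctly. With this identity added, your route works.

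The paper avoids this step by first passing to
\[
\widetilde{\mathcal{L}}=\tb^{-1}\mathcal{L}=(1+|\nabla\eta|^2)\pa_{yy}+\Delta-2\nabla\eta\cdot\nabla\pa_y-\Delta\eta\,\pa_y,
\]
whose coefficients are polynomial in $\nabla\eta$ and $\Delta\eta$. Writing $|\nabla\eta|^2=2\cOpbw{\nabla\eta}\cdot\nabla\eta+\mathcal{R}(\nabla\eta,\nabla\eta)$, the top-order $\eta$-terms cancel identically in pairs. This needs no paralinearization of $\tb$ and no use of the equation. The equation enters only through $f=\pa_y(\widetilde{\mathcal{L}}\varphi)\equiv0$.

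At the end the paper applies $\cOpbw{\tb}$ to normalize the coefficient of $\pa_{yy}w$ to one, and absorbs the composition errors $Q_{10}$--$Q_{12}$ into $\breve{\mathcal{S}}_{\geq2}$. Your remaining steps match the paper: the paraproduct and composition remainders, and the split $\varphi=e^{y|D|}\psi+\varphi_{\geq1}$ separating $\breve{\mathcal{S}}_1$ from $\breve{\mathcal{S}}_{\geq2}$.
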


\begin{proof}
Recalling \eqref{op:L}-\eqref{formaalphai} we note that $\varphi$ 
solves the equivalent equation (recall \eqref{elliptic2})
\[
\widetilde{\mathcal{L}}\varphi=0\,,
\qquad y\leq0\,,
\qquad 
\widetilde{\mathcal{L}}=(1+|\nabla\eta|^2)\pa_{yy}+\Delta
-2\nabla\eta\cdot\nabla\pa_{y}-\Delta\eta\pa_{y}= \tb^{-1} \cL\,.
\]
By applying Lemma \ref{lem:prodotto} to 
$\widetilde{\mathcal{L}}\varphi$,
and using 
$|\nabla\eta|^{2}=\cOpbw{2\nabla\eta}[\nabla\eta]+R(\nabla\eta,\nabla\eta)$, 
we get
\begin{align}
\widetilde{\mathcal{L}}\varphi&=
\cOpbw{1+|\nabla\eta|^{2}}\pa_{yy}\varphi
+\Delta\varphi-2\cOpbw{\nabla\eta}\cdot\nabla\pa_{y}\varphi
-\cOpbw{\Delta\eta}\pa_{y}\varphi
\nonumber
\\&
\qquad
+\cOpbw{\pa_{y}^2\varphi}\cOpbw{2\nabla\eta}\big[\nabla\eta]
-2\cOpbw{\nabla\pa_{y}\varphi}\big[\nabla\eta\big]
-\cOpbw{\pa_{y}\varphi}\big[\Delta\eta\big]\nonumber 
\\&\qquad +Q_1+Q_2+Q_3+Q_{4}\,,
\nonumber
\\&
\stackrel{\eqref{good1}}{=}
\cOpbw{1+|\nabla\eta|^{2}}\pa_{yy}w
+\Delta w-2\cOpbw{\nabla\eta}\cdot\nabla\pa_{y}w
-\cOpbw{\Delta\eta}\pa_{y}w
\label{winston1}
\\&
\qquad
+\cOpbw{1+|\nabla\eta|^{2}}\cOpbw{\pa_{y}^3\varphi}[\eta]
+\Delta \cOpbw{\pa_{y}\varphi}[\eta]\notag \\
& \qquad -2\cOpbw{\nabla\eta}\cdot\nabla \cOpbw{\pa_{y}^2\varphi}[\eta]
-\cOpbw{\Delta\eta}\cOpbw{\pa_{y}^2\varphi}[\eta]
\label{winston2}
\\&
\qquad
+2\cOpbw{\pa_{y}^2\varphi}\circ \cOpbw{\nabla\eta}[\nabla\eta]
-2\cOpbw{\nabla\pa_{y}\varphi}\big[\nabla\eta\big]-\cOpbw{\pa_{y}\varphi}\big[\Delta\eta\big]
\label{winston3}
\\&
\qquad
+Q_1+Q_2+Q_3+Q_4\,,
\label{winston4}
\end{align}
where the remainders in \eqref{winston4} are defined as
\begin{equation}\label{primorestoQ}
\begin{aligned}
Q_1&:=\mathcal{R}(\pa_{y}\varphi,\Delta\eta)\,,
\qquad \;\;
Q_2:=\mathcal{R}(\nabla\eta,\nabla\pa_{y}\varphi)\,,
\\
Q_3&:=\mathcal{R}(\pa_{yy}\varphi,|\nabla\eta|^2)\,,
\qquad\qquad
Q_{4}:=\cOpbw{\pa_{y}^2\varphi}\big[\mathcal{R}(\nabla\eta,\nabla\eta)\big]\,.
\end{aligned}
\end{equation}
By the composition
Lemma \ref{compoparapara} and using also the 
Leibniz rule for the differential operators $ \Delta$ and $ \nabla$, 
we deduce
\[
\begin{aligned}
\cOpbw{1+|\nabla\eta|^{2}}\cOpbw{\pa_{y}^3\varphi}[\cdot]&=
\cOpbw{(1+|\nabla\eta|^{2})\pa_{y}^3\varphi}
+\mathcal{Q}(|\nabla\eta|^2,\pa_{y}^{3}\varphi)\,,
\\
\Delta \cOpbw{\pa_{y}\varphi}[\cdot]&=\cOpbw{\pa_{y}\varphi}\Delta+2\cOpbw{\nabla\pa_{y}\varphi}\cdot\nabla+\cOpbw{\Delta\pa_{y}\varphi}\,,
\\
\nabla \cOpbw{\pa_{y}^2\varphi}[\cdot]&=\cOpbw{\pa_{y}^2\varphi}\nabla
+\cOpbw{\nabla\pa_{y}^2\varphi}\,,
\\
-2\cOpbw{\nabla\eta}\cdot\nabla \cOpbw{\pa_{y}^2\varphi}[\cdot]&=
-2\cOpbw{\nabla\eta\pa_{y}^{2}\varphi}\nabla-2\cOpbw{\nabla\eta\cdot\nabla\pa_{y}^2\varphi}
\\&\quad
+\mathcal{Q}(-2\nabla\eta,\pa_{y}^2\varphi)\nabla
+\mathcal{Q}(-2\nabla\eta,\nabla\pa_{y}^2\varphi)\,,
\\
-\cOpbw{\Delta\eta}\cOpbw{\pa_{y}^2\varphi}[\cdot]&=-\cOpbw{\Delta\eta\pa_{y}^2\varphi}+\mathcal{Q}(\Delta\eta,\pa_{y}^2\varphi)\,,
\\
2\cOpbw{\pa_{y}^2\varphi}\circ \cOpbw{\nabla\eta}[\nabla\cdot]&=2\cOpbw{\nabla\eta\pa_{y}^2\varphi}\nabla
+\mathcal{Q}(2\nabla\eta,\pa_{y}^2\varphi)\nabla\,.
\end{aligned}
\]
where $\mathcal{Q}(\cdot,\cdot)$  denotes remainders 
coming from the Lemma \ref{compoparapara}
between para-differential operators.
As a consequence we get
\[
\begin{aligned}
\eqref{winston2}+\eqref{winston3}&=
\cOpbw{(1+|\nabla\eta|^{2})\pa_{y}^3\varphi}\eta
+\cOpbw{\pa_{y}\varphi}\Delta\eta+2\cOpbw{\nabla\pa_{y}\varphi}\cdot\nabla\eta
+\cOpbw{\Delta\pa_{y}\varphi}\eta
\\&
-2\cOpbw{\nabla\eta\pa_{y}^{2}\varphi}\nabla\eta
-2\cOpbw{\nabla\eta\cdot\nabla\pa_{y}^2\varphi}\eta
-\cOpbw{\Delta\eta\pa_{y}^2\varphi}\eta
+2\cOpbw{\nabla\eta\pa_{y}^2\varphi}\nabla\eta
\\&
-2\cOpbw{\nabla\pa_{y}\varphi}\big[\nabla\eta\big]-\cOpbw{\pa_{y}\varphi}\big[\Delta\eta\big]+\Gamma
\\&
=\cOpbw{f}[\eta]+\Gamma
\end{aligned}
\]
where
$f=f(y,,\eta,\psi;x,\x)$ is the symbol 
\begin{equation}\label{crucial1}
f=(1+|\nabla\eta|^{2})\pa_{y}^3\varphi+\Delta\pa_{y}\varphi
-2\nabla\eta\cdot\nabla\pa_{y}^2\varphi-\Delta\eta\pa_{y}^2\varphi\,,
\end{equation}
and the remainder $\Gamma$
is given by
\begin{align}
\Gamma&:=\sum_{i=5}^{9}Q_{i}\,,
\notag\\
Q_5&:=\mathcal{Q}(|\nabla\eta|^2,\pa_{y}^{3}\varphi)[\eta]\,,
\qquad
Q_6:=\mathcal{Q}(-2\nabla\eta,\pa_{y}^2\varphi)[\nabla\eta]\,,
\qquad 
Q_7:=\mathcal{Q}(-2\nabla\eta,\nabla\pa_{y}^2\varphi)[\eta]\,,\notag\\
Q_{8}&:=\mathcal{Q}(\Delta\eta,\pa_{y}^2\varphi)[\eta]\,,
\qquad \;\;\;\,Q_{9}:=\mathcal{Q}(2\nabla\eta,\pa_{y}^2\varphi)[\nabla\eta]\,.
\label{terzorestoQ}
\end{align}
Notice that, since $\varphi$ solves \eqref{elliptic2} (and $\eta$ is independent of $y$), 
we have the crucial cancellation
\begin{equation}
f\stackrel{\eqref{crucial1}}{=}
\pa_{y}\Big[(1+|\nabla\eta|^{2})\pa_{y}^2\varphi+\Delta\varphi
-2\nabla\eta\cdot\nabla\pa_{y}\varphi-\Delta\eta\pa_{y}\varphi\Big]
\stackrel{\eqref{elliptic2}}{\equiv}0\,.
\label{crucial2}
\end{equation}
Now recalling $\tb=(1+|\nabla\eta|^{2})^{-1}$ in \eqref{formaalphai}, formulas\, 
\eqref{winston1}-\eqref{winston4}, the cancellation \eqref{crucial2}, 
the identity $ \tb-1= -|\nabla \eta|^2 \tb$ and using 
Lemma \ref{compoparapara} we get
\begin{align}
\cOpbw{\tb}\circ\widetilde{\mathcal{L}}\varphi
&=
\pa_{yy}w
+\cOpbw{\tb}\Delta w-2\cOpbw{\tb\nabla\eta}\cdot\nabla\pa_{y}w
-\cOpbw{\tb\Delta\eta}\pa_{y}w\notag
\\
&\quad+\cOpbw{\tb}[\sum_{i=1}^{9}Q_i]+Q_{10}+Q_{11}+Q_{12}\,,
\label{ostiense3}
\end{align}
where $Q_j$, $j=1,\ldots,9$ are in \eqref{primorestoQ}-\eqref{terzorestoQ} 
and where we defined (recall again \eqref{good1})
\begin{equation}\label{quartorestoQ}
\begin{aligned}
Q_{10}&:=
\mathcal{Q}(\tb,|\nabla\eta|^2)[\pa_{yy}\varphi-\cOpbw{\pa_{y}^3\varphi}[\eta]]\,,
\\
Q_{11}&:=\mathcal{Q}(\tb|\nabla \eta|^2,2\nabla\eta)\big[\nabla\pa_{y}\varphi-\nabla\circ \cOpbw{\pa_{y}^2\varphi}[\eta]\big]\,,
\\
Q_{12}&:=\mathcal{Q}(|\nabla \eta|^2\tb,\Delta\eta)\big[\pa_{y}\varphi-\cOpbw{\pa_{y}^2\varphi}[\eta]\big]\,.
\end{aligned}
\end{equation}
Since $\mathcal{L}\varphi=0$ if and only if $\widetilde{\mathcal{L}}\varphi=0$, 
we have that \eqref{ostiense3} implies
\[
\cOpbw{\tb}\Delta w-2\cOpbw{\tb\nabla\eta}\cdot\nabla\pa_{y}w
-\cOpbw{\tb\Delta\eta}\pa_{y}w
=\mathcal{T}_{\geq1}(\eta)[\varphi]\,,
\]
where (recalling \eqref{primorestoQ},  \eqref{terzorestoQ} and \eqref{quartorestoQ}) 
we have defined
\begin{gather*}
\mathcal{T}_{\geq 1}(\eta)[\varphi]=\mathcal{T}_{ 1}(\eta)[\varphi]+\mathcal{T}_{\geq 2}(\eta)[\varphi]\,,
\\
 \mathcal{T}_{ 1}(\eta)[\varphi]:= -(Q_1+Q_2)= 
 -\mathcal{R}(\pa_{y}\varphi,\Delta\eta)-\mathcal{R}(\nabla\eta,\nabla\pa_{y}\varphi)\,,
 \\
  \mathcal{T}_{\geq 2}(\eta)[\varphi]:=- \cOpbw{|\nabla \eta|^2 \tb }[Q_1+Q_2]
  -\cOpbw{\tb}\left[\sum_{j=3}^{9}Q_j\right]-Q_{10}-Q_{11}-Q_{12}\,.
\end{gather*}
First we note that, as $\mathcal{T}_{\geq 1}(\eta)[\varphi]$ depends only on the derivatives of $\varphi$, 
we have $\mathcal{T}_{\geq 1}(\eta;y)[\varphi]=\mathcal{T}_{\geq 1}(\eta;y)[\Pi \varphi]$. 
Moreover, in view of \Cref{stima cal Kn equazione di laplace} 
(see formula \eqref{def:varphi}), we define  
\begin{align*}
\breve{\mathcal{S}}_{\geq 1}(\eta;y)[\psi]&:= 
\breve{\mathcal{S}}_{ 1}(\eta;y)[\psi]
+ \breve{\mathcal{S}}_{\geq 2}(\eta;y)[\psi]\,,
\\
\breve{\mathcal{S}}_{ 1}(\eta;y)[\psi]&:= \mathcal{T}_{1}(\eta)[ e^{y |D|}\psi]\,, 
\qquad 
\mathcal{S}_{\geq 2}(\eta;y)[\psi]:= 
\mathcal{T}_{1}(\eta)[ \Pi \varphi_{\geq 1}(\eta;y)[\psi]]
+\mathcal{T}_{\geq 2}(\eta)[\Pi \varphi[\psi]]\,.
\end{align*}
The discussion prove the formula \eqref{eq:paraw}.
The bounds \eqref{legge1} and \eqref{legge2} follow 
by using the explicit definitions of the remainders 
$Q_j$ in \cref{primorestoQ,terzorestoQ,quartorestoQ}, \Cref{lem:prodotto} 
and \Cref{compoparapara,azioneSimboo} 
and also \eqref{tameHsx} and 
together with the bounds \eqref{stima:omognea} and \eqref{bottle10}.
\end{proof}

\paragraph{Reduction to the boundary.}
Recalling \eqref{eq:paraw} we define
\begin{equation}
\begin{aligned}
\mathcal{D}&:=\pa_{y}^2+\cOpbw{1+\beta_{1}}\Delta 
+\cOpbw{\beta_{2}}\nabla\pa_{y}+\cOpbw{\beta_3}\pa_{y}
\\&
\stackrel{\eqref{formaalphai}}{=}
\pa_{y}^2+\cOpbw{\tb}\Delta -\cOpbw{2\tb\nabla\eta}\nabla\pa_{y}-\cOpbw{\tb\Delta\eta}\pa_{y}
\\&=
\pa_{yy}-\cOpbw{\tb|\x|^2+\ii \nabla \tb \cdot \xi +\tfrac{1}{4} \Delta \tb }-
\cOpbw{2\ii b\nabla\eta\cdot\x
- \nabla \tb\cdot \nabla \eta}\pa_{y}\,.
\end{aligned}
\label{splittingbello2}
\end{equation}
We prove the following decomposition.

\begin{lemma}\label{divo1}
Fix $s_0>3/2$, $\vr\geq0$. There is $\mu>0$ such that, 
for any $s\geq s_0+\mu$ there is $r=r(s)>0$ such that if 
$\eta \in B_{s_0+\mu}(r)$ then the following holds.
There exist  maps

\[
\begin{aligned}
\sym{a}{}{j},\, \sym{A}{}{j}: B_{s_0+\mu}(r)\cap H^{s}
&\longrightarrow \mathcal{N}_{s+j-2}^{j}\,,
\\
\breve{\mathcal{R}}_{\geq 1}: B_{s_0+\mu}(r)&\longrightarrow\mathcal{L}(H^{s'}, H^{1,a}_y H^{s'+\vr}_x)\,,
\quad \forall s'\in \R\, \,,
\end{aligned}
\]
for $j=1,0,-1,\ldots,-\vr-1$\,, 
such that one has (see \eqref{splittingbello2})
\begin{equation*}
(\pa_{y}+\cOpbw{a})\circ (\pa_{y}-\cOpbw{A})[w]=
\mathcal{D}[w]
+\breve{\mathcal{R}}_{\geq 1}(\eta;y)[\psi]\,,
\end{equation*}
where we set
\begin{equation}\label{laghetto3}
a:=a(\eta;x,\x):=\sum_{j=-\vr}^{1}\sym{a}{}{j}\,,\qquad
A:=A(\eta;x,\x):=\sum_{j=-\vr}^{1}\sym{A}{}{j}\,.
\end{equation}
Moreover, one has the following.

\noindent
(i) \emph{(Symbols).} One has the expansion
\begin{equation}\label{simboliordine1}
\begin{aligned}
\sym{a}{}{1}&:=  \mathtt{b} \left( - \ii\nabla\eta\cdot\x
+   \sym{\lambda}{}{1}\right)(1-\chi(\xi))\,, 
\qquad 
\sym{A}{}{1}:= \mathtt{b} \left(\ii\nabla\eta\cdot\x
+  \sym{\lambda}{}{1}\right)(1-\chi(\xi))\,,
\\
\sym{A}{}{0}&:=\left(\frac{\ii \nabla \tb \cdot \xi 
+\{   \tb \nabla \eta \cdot \xi, \tb \sym{\lambda}{}{1}\} 
-\ii \tb (\nabla \eta \cdot \xi ) \nabla \tb \cdot \nabla \eta}{2 \tb \sym{\lambda}{}{1}}
-\frac{\nabla\tb \cdot \nabla \eta}{2}\right) (1-\chi(\xi))\,,
\\
\sym{a}{}{0}&:=A^{(0)}+\nabla\tb \cdot \nabla \eta
\,,
\qquad
\sym{\lambda}{}{1}:=\sqrt{(1+|\nabla \eta|^2)|\x|^{2}-(\nabla\eta\cdot\x)^{2}}\,.
\end{aligned}
\end{equation}
Moreover $\sym{A}{}{1}-|\xi|, \, \sym{a}{}{1}-|\xi| \in \Sigma \Gamma_1^{1}[r,4]$, 
$\sym{\lambda}{}{1}- |\xi| \in \Sigma\Gamma_2^1[r,4]$, 
$ \sym{a}{}{0},\, \sym{A}{}{0} \in \Sigma \Gamma_1^0[r,4]$  
and for any $j=-\vr, \ldots, -1$ the symbols 
$A^{(j)}$ and $ a^{(j)}$ belong to $ \Sigma\Gamma_1^{j}[r,2]$ 
verifying \Cref{def:sfr}  with $\mu:=3+\vr$.

\noindent
$(ii)$ \emph{(Remainder).}
The operator $\breve{\mathcal{R}}_{\geq 1}(\eta)$, 
satisfies the expansion
\begin{align}\label{def:Rbreve}
\breve{\mathcal{R}}_{\geq 1}(\eta;y)[\cdot]
=
\breve{\mathcal{R}}_{ 1}(\eta;y)[\cdot]
+\breve{\mathcal{R}}_{\geq 2}(\eta;y)[\cdot]\,,
\end{align}
where $ \breve{\mathcal{R}}_{ 1}(\eta;y)[\cdot]$ is linear 
with respect to $ \eta$ and 
satisfies,
for  any $s'\in \R$,  the estimate
\begin{align}
\|\breve{\mathcal{R}}_{ 1}(\eta;y)[\psi]\|_{H^{1,a}_yH_{x}^{s'+\vr}}&\lesssim_{s'}
\|\eta\|_{{s_0+\s}}\|\psi\|_{{s'}}\,,
\label{pettirosso1}
\end{align}
while $\breve{\mathcal{R}}_{\geq 2}(\eta;y)[\cdot]$ is 
non-homogeneous and  satisfies 
\begin{align}
\| \breve{\mathcal{R}}_{\geq 2}(\eta;y)[\psi]\|_{H^{1,a}_y H_{x}^{s'+\vr}}
\lesssim_{s'}
\|\eta\|_{\fs_0+\s}^2
\|\psi\|_{{s'}}\,.
    \label{pettirosso2}
\end{align}

\end{lemma}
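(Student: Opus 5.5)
The plan is to expand the composition symbolically and match orders against $\mathcal D$ in \eqref{splittingbello2}. By Lemma \ref{compoparapara} (composition on the strip), and since $a,A$ are independent of $y$,
\[
(\pa_y+\cOpbw{a})(\pa_y-\cOpbw{A})=\pa_{yy}+\cOpbw{a-A}\pa_y-\cOpbw{a\sha{\vr'}A}-\mathcal Q(a,A),
\]
with $\vr'$ large enough that $\mathcal Q(a,A)$ gains $\vr+2$ derivatives. Equating with $\mathcal D$ gives two symbolic equations, to be solved modulo symbols of order $-\vr-1$:
\begin{align*}
a-A&=-2\ii\tb\nabla\eta\cdot\xi+\nabla\tb\cdot\nabla\eta,\\
a\sha{\vr'}A&=\tb|\xi|^2+\ii\nabla\tb\cdot\xi+\tfrac14\Delta\tb .
\end{align*}
The first fixes $a=A+\nabla\tb\cdot\nabla\eta-2\ii\tb\nabla\eta\cdot\xi$ exactly. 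Substituting into the second gives an equation for $A$ alone, which I would solve order by order.

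At order $2$ one needs $A^{(1)}\big(A^{(1)}-2\ii\tb\nabla\eta\cdot\xi\big)=\tb|\xi|^2$. This quadratic has the root $A^{(1)}=\tb(\ii\nabla\eta\cdot\xi+\lambda^{(1)})$, since $\tb^2\big(\lambda^{(1)2}+(\nabla\eta\cdot\xi)^2\big)=\tb|\xi|^2$ by the definition of $\lambda^{(1)}$. Multiplying by $1-\chi(\xi)$ handles the singularity at $\xi=0$, as in Remark \ref{rem:symbols_modozero}. This reproduces $a^{(1)},A^{(1)}$ in \eqref{simboliordine1}. At order $1$ the unknown $A^{(0)}$ enters linearly, through the coefficient $a^{(1)}+A^{(1)}=2\tb\lambda^{(1)}$. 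The remaining data are the term $\frac{1}{2\ii}\{a^{(1)},A^{(1)}\}$, the cross term $(\nabla\tb\cdot\nabla\eta)A^{(1)}$ and $\ii\nabla\tb\cdot\xi$. Dividing by $2\tb\lambda^{(1)}$ gives the explicit $A^{(0)}$ in \eqref{simboliordine1}, and then $a^{(0)}=A^{(0)}+\nabla\tb\cdot\nabla\eta$. For $j\le -1$ one inductively obtains $A^{(j)}=(2\tb\lambda^{(1)})^{-1}\times$ (a polynomial in previously found symbols and their derivatives through $p_n$). Since $\tb\lambda^{(1)}\gtrsim|\xi|$ for $\eta$ small, these belong to $\mathcal N^{j}$. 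Each step costs finitely many $x$-derivatives of $\eta$, which explains the regularity $\mathcal N^j_{s+j-2}$ and the loss $\mu$.

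For the homogeneity statements, I would Taylor expand $\tb=1-|\nabla\eta|^2+\dots$ and $\lambda^{(1)}=|\xi|+O(|\nabla\eta|^2)$. This gives $\lambda^{(1)}-|\xi|\in\Sigma\Gamma_2^1$. Since $\tb$ is even and quadratic, the $\ii\nabla\eta\cdot\xi$ term is the only linear piece, so $A^{(1)}-|\xi|,\,a^{(1)}-|\xi|\in\Sigma\Gamma_1^1$. Each $A^{(j)}$ with $j\le 0$ contains at least one factor of $\nabla\eta$ or its derivatives, hence belongs to $\Sigma\Gamma_1^j$. The estimates of Definition \ref{def:sfr} follow from tame and Moser estimates for $\tb$ and $\lambda^{(1)}$.

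The main obstacle is the remainder. $\breve{\mathcal R}_{\ge1}$ collects the truncation error $\cOpbw{r^{(-\vr-1)}}\,w$ and $\mathcal Q(a,A)w$, and both must be applied to $w=e^{y|D|}\psi+w_{\ge1}$. I would use \eqref{calma2:striscia2} together with Lemma \ref{azioneSimboo} in the strip norm $H^{1,a}_yH^{s'}_x$, and the bounds \eqref{stima:omognea}, \eqref{stima:w} for $w$. To get \eqref{pettirosso1}--\eqref{pettirosso2} with only the low norm of $\eta$, all error symbols must be at least linear in $\eta$. This holds because at $\eta=0$ one has $a=A=|\xi|$ exactly, $\mathcal Q$ vanishes on Fourier multipliers, and $(\pa_y+|D|)(\pa_y-|D|)=\pa_{yy}+\Delta$. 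Splitting off the part linear in $\eta$, and pairing it with $e^{y|D|}\psi$, gives $\breve{\mathcal R}_1$. Everything else (the $\ge2$ terms in $\eta$, and linear terms times $w_{\ge1}$) goes into $\breve{\mathcal R}_{\ge2}$ via \eqref{stima:w}.
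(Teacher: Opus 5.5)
Your proposal is correct and follows the paper's proof essentially step by step. You use the same factorization $\pa_{yy}+\cOpbw{a-A}\pa_y-\cOpbw{a}\cOpbw{A}$ and the same order-by-order solution of \eqref{laghetto10}--\eqref{laghetto12}, obtaining the same formulas \eqref{simboliordine1}; eliminating $a$ through the linear equation first is just how the paper's coupled system is solved anyway, since $a^{(p)}-A^{(p)}$ is prescribed at every order. You also treat the remainder exactly as the paper does: you split the composition error by homogeneity in $\eta$, using that it vanishes at $\eta=0$ because Fourier multipliers compose exactly. The linear part applied to $e^{y|D|}\psi$ gives $\breve{\mathcal R}_1$, and everything else, applied to $w$ or $w_{\ge1}$, goes into $\breve{\mathcal R}_{\ge2}$, controlled via \eqref{calma2:striscia2}, \eqref{stima:omognea} and \eqref{stima:w}.
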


\begin{proof}
We look for symbols $a,A$ of the form \eqref{laghetto3}. 
Moreover we note that
\begin{gather}
(\pa_{y}+\cOpbw{a})\circ (\pa_{y}-\cOpbw{A})=\pa_{yy}+(\cOpbw{a}
-\cOpbw{A})\pa_{y}-\cOpbw{a}\circ \cOpbw{A}\,,\notag
\\
\cOpbw{a}\circ \cOpbw{A}=\sum_{j,k=-\vr}^{1}\cOpbw{a^{(j)}}\circ \cOpbw{A^{(k)}}\,.
\label{destinazione1}
\end{gather}
By applying  Lemma \ref{compoparapara} with 
$\vr\rightsquigarrow j+k+\vr$, 
and denoting the remainder in \eqref{calma2:striscia2}  
$\cQ^{(j+k+\vr)}(\cdot, \cdot)\equiv \cQ(\cdot , \cdot) $, 
we make the expansion
\begin{align*}
&\cOpbw{a^{(j)}}\circ \cOpbw{A^{(k)}}
=\cOpbw{\sum_{n=0}^{j+k+\vr-1} p_n(a^{(j)},A^{(k)})}+
\cQ^{(j+k+\vr)}(a^{(j)},A^{(k)})\,.
\end{align*}
Therefore, we shall write
\begin{equation}\label{laghetto1}
\begin{aligned}
\cOpbw{a}\circ \cOpbw{A}&=
\sum_{j,k=-\vr-1}^{1}\cOpbw{a^{(j)}}\circ \cOpbw{A^{(k)}}=\Opbw{ c_{\vr}}+\mathcal{Q}
\end{aligned}
\end{equation}
where
\begin{equation}\label{laghetto2}
\begin{aligned}
c_{\vr}&:=
\sum_{\substack{
-\vr\leq j,k\leq 1
\\
j+k\geq -\vr+1}
}
\sum_{n=0}^{j+k+\vr-1} p_{n}(a^{(j)},A^{(k)})\,,
\\
\mathcal{Q}&:=
\sum_{\substack{
-\vr\leq j,k\leq 1
\\
j+k\leq  -\vr}
}\cOpbw{a^{(j)}}\circ \cOpbw{A^{(k)}}
+
\sum_{\substack{
-\vr\leq j,k\leq 1
\\
j+k\geq -\vr+1}
}\cQ^{(j+k+\vr)}(a^{(j)},A^{(k)})\,.
\end{aligned}
\end{equation}
In view of \eqref{destinazione1} we look for symbols which solve the equations
\[
\begin{aligned}
c_{\vr}&=\tb|\x|^{2}+ \ii \nabla \tb \cdot \xi+ \frac14 \Delta \tb \,,
\\
a-A&=-2\ii \tb\nabla\eta\cdot\x+ \nabla \tb\cdot \nabla \eta\,.
\end{aligned}
\]
Recalling \eqref{laghetto3} the equations above
can be written as
\begin{align}
{\mathrm{ord.}\,1}\;&
\left\{
\begin{aligned}
a^{(1)} A^{(1)}&=\tb|\x|^2
\\
a^{(1)}-A^{(1)}&=-2\ii \tb\nabla\eta\cdot\x
\end{aligned}
\right.
\label{laghetto10}
\\
{\mathrm{ord.}\,0}\;&
\left\{
\begin{aligned}
a^{(0)} A^{(1)}+a^{(1)}A^{(0)}+p_1(a^{(1)},A^{(1)})&=\ii \nabla \tb \cdot \xi
\\
a^{(0)}-A^{(0)}&=\nabla\tb\cdot \nabla\eta
\end{aligned}
\right.
\label{laghetto11}
\\
{\mathrm{ord.}\,p}\;&
\left\{
\begin{aligned}
a^{(p)} A^{(1)}+a^{(1)}A^{(p)}&= \frac{\delta_{p,-1}}{4} \Delta \tb - \sum_{
\substack{j+k-n=p+1
\\ p+1\leq j,k\leq 1}
}p_n(a^{(j)},A^{(k)})
\\
a^{(p)}-A^{(p)}&=0
\end{aligned}
\right.\qquad -\vr\leq p\leq -1\,,
\label{laghetto12}
\end{align}
where the Kronecker delta $ \delta_{p,-1}=1$ if $p=-1$, and $0$ otherwise. 
We iteratively solve the equations above, 
taking into account the identification in \Cref{rem:strip_symbols}. 
\vspace{0.5em}

\noindent
{\bfseries Case $p=1$.} The solutions of 
\eqref{laghetto10} are given by the symbols $A^{(1)}$ and $a^{(1)}$ 
defined in \eqref{simboliordine1}.

\noindent
We note that $a^{(1)}$ and $A^{(1)}$ solve the equation \eqref{laghetto10} 
in the sense of \Cref{rem:strip_symbols}, 
namely both sides of the equation define the same paradifferential operator.
By using the explicit expressions of the symbols in 
\eqref{simboliordine1} and recalling  \Cref{def:sfr} 
one  directly check that 
actually $A^{(1)}-|\xi|, \, a^{(1)}-|\xi| \in \Sigma \Gamma_1^{1}[r,4]$, $\sym{\lambda}{}{1}- |\xi| \in \Sigma\Gamma_2^1[r,4]$.

\vspace{0.5em}
\noindent
{\bfseries Case $p=0$.} 
Consider \eqref{laghetto11} which reads
\[
\begin{aligned}
A^{(0)}&=a^{(0)}-\nabla\tb\cdot \nabla \eta
\\
a^{(0)} A^{(1)}+a^{(1)}A^{(0)}&=\ii \nabla \tb \cdot \xi +\{   \tb \nabla \eta \cdot \xi, \tb \sym{\lambda}{}{1}\}\,.
\end{aligned}
\]
Moreover, by \eqref{simboliordine1} and using 
the smallness condition on $\eta$, one can note that
\[
2{\Re}(a^{(1)})=a^{(1)}+A^{(1)}=2\tb \sym{\lambda}{}{1}\gtrsim |\x|\,.
\]
Therefore the solutions of 
\eqref{laghetto11} are given by the symbols $A^{(0)}$ and $a^{(0)}$ defined in \eqref{simboliordine1}.
We note that $a^{(0)}$ and $A^{(0)}$ solve the equation \eqref{laghetto11} 
in the sense of \Cref{rem:strip_symbols}, namely both sides 
of the equation define the same paradifferential operator.
 Moreover $ {A}^{(0)}\equiv 0$ when $ |\xi|\leq \frac14$. 
By using the explicit expressions of the symbols in 
\eqref{simboliordine1} and recalling  \Cref{def:sfr} 
one  directly check that 
actually $A^{(0)}, \, a^{(0)} \in \Sigma \Gamma_1^{0}[r,4]$.

\vspace{0.5em}
\noindent
{\bfseries Case $p\leq -1$.} First of all \eqref{laghetto12}
implies $a^{(p)}=A^{(p)}$. Therefore a solution is given by
\[
a^{(p)}=A^{(p)}=\left(\frac{\delta_{p,-1}}{8\sym{\lambda}{}{1}} \Delta \tb -\frac{1}{2\sym{\lambda}{}{1}}\sum_{
\substack{j+k-n=p+1
\\ p+1\leq j,k\leq 1}
}p_n(a^{(j)},A^{(k)})\right) (1-\chi(\xi))\,.
\]
We note that $a^{(p)}$ and $A^{(p)}$ solve the equation \eqref{laghetto12} 
in the sense of \Cref{rem:strip_symbols}, 
namely both sides of the equation define the same paradifferential operator, 
using also that $ \widehat{(\Delta \tb)}(0)=0$.
Since the right hand side of the equation above depends only 
on symbols $a^{(j)},A^{(j)}$ with $j\geq p+1$, 
the claim follows by induction on $j \geq p+1$, 
using \eqref{def:pk} and \Cref{rmk:inclusioneclassi}.

\noindent
By the discussion above, recalling \eqref{destinazione1}, \eqref{laghetto1},
and \eqref{splittingbello2} we have obtained
\[
(\pa_{y}+\cOpbw{a})\circ (\pa_{y}-\cOpbw{A})[w]=\mathcal{D}[w]-\mathcal{Q}[w]\,,
\]
with $\mathcal{Q}$ in  \eqref{laghetto2}. 
We now prove the Taylor expansion of $\cQ$. 
First of all we Taylor expand the symbols $a_j$ and $A_k$ as 
\begin{align}
   &\sym{a}{}{j}= \sym{a}{0}{j}+ \sym{a}{1}{j}+ \sym{a}{2}{j}\,, 
   \qquad 
   \sym{a}{0}{j}\in \wt \Gamma_{0}^j\,,\  
   \sym{a}{1}{j}\in \wt \Gamma_{1}^j\,,\ \sym{a}{2}{j}\in  \Gamma_{\geq 2}^j[r]\,;
   \\
   & \sym{A}{}{k}= \sym{A}{0}{k}+ \sym{A}{1}{k}+ \sym{A}{2}{k}\,, 
   \qquad 
   \sym{A}{0}{k}\in \wt \Gamma_{0}^k\,,\  
   \sym{A}{1}{k}\in \wt \Gamma_{1}^k\,,\ \sym{A}{2}{k}\in  \Gamma_{\geq 2}^k[r]\,.
\end{align}
Then we note that $\sym{a}{0}{j}=\sym{A}{0}{k}=0$ whenever 
$j,k \not=1$ and $ \sym{a}{0}{1}= \sym{A}{0}{1}= |\xi|$. 
Plugging this expansion in \eqref{laghetto2}, we get 
\begin{align*}
    \mathcal{Q}&:= \cQ_0+\cQ_1+ \cQ_{\geq 2}\,, 
    \\
\cQ_p&:= \sum_{p_1+p_2=p}\sum_{\substack{
-\vr\leq j,k\leq 1
\\
j+k\leq  -\vr}
}\cOpbw{\sym{a}{p_1}{j}}\circ \cOpbw{\sym{A}{p_2}{k}}
+
\sum_{p_1+p_2=p}
\sum_{\substack{
-\vr\leq j,k\leq 1
\\
j+k\geq -\vr+1}
}\cQ^{(j+k+\vr)}(\sym{a}{p_1}{j},\sym{A}{p_2}{k})\,, \quad p=0,1\,;
\\
 \cQ_{\geq 2}&:= \sum_{p_1+p_2\geq 2}\sum_{\substack{
-\vr\leq j,k\leq 1
\\
j+k\leq  -\vr}
}\cOpbw{\sym{a}{p_1}{j}}\circ \cOpbw{\sym{A}{p_2}{k}}
+
\sum_{p_1+p_2\geq 2}
\sum_{\substack{
-\vr\leq j,k\leq 1
\\
j+k\geq -\vr+1}
}\cQ^{(j+k+\vr)}(\sym{a}{p_1}{j},\sym{A}{p_2}{k})\,.
\end{align*}
By \Cref{rem:compo_Fou} one has $\cQ_0\equiv 0$. In addition we define
$\breve \cR_{\geq 1}$ as in \eqref{def:Rbreve} where, recalling \Cref{rmk:goodali}, we set
\begin{align*}
 \breve \cR_1(\eta;y)[\psi]:= \cQ_1 [e^{y|D|}\psi]\,, 
 \qquad  
 \breve \cR_{\geq 2}(\eta;y)[\psi]:= \cQ_2 [w(\eta;y)[\psi]]+ \cQ_1[ w_{\geq 1}(\eta;y)[\psi]]\,.
\end{align*}
Then estimates \eqref{pettirosso1} and \eqref{pettirosso2} 
follow combining  \eqref{calma2:striscia2} and  \eqref{action:striscia1} 
with estimates \eqref{stima:omognea} and \eqref{stima:w}.
\end{proof}

\noindent
In view of Lemma \ref{divo1} and recalling \eqref{splittingbello2}, 
 we shall rewrite problem \eqref{eq:paraw} as
\begin{equation}\label{elliptic44}
\left\{\begin{aligned}
(\pa_{y} + \cOpbw{a})\circ(\pa_{y}-\cOpbw{A})w&= \breve \cR_{\geq 1}(\eta;y)[\psi] 
+\breve{\mathcal{S}}_{\geq 1}(\eta)[\psi] \,,
\qquad x\in\T^{2}\,,\;\; y<0\,,
\\
w(x,0)&=\omega\,,
\end{aligned}\right.
\end{equation}
where $w$ is given  in \eqref{good1}.
We recall again that we need to provide a precise 
structure of the normal derivative 
$\pa_{y}w$ close to the boundary $y\sim0$. 
To do this we introduce 
the function
\begin{equation}\label{funz:ww}
\mathcal{W}:=\big(\pa_{y}-\cOpbw{A}\big)w\,,
\end{equation}
and note that
\[
(\pa_{y}w)(x,0)={(\Opbw{A} w)_{|y=0}}+\mathcal{W}(x,0)\,.
\]
Then, recalling that $w(x,0)=\omega$  (see also Remark \ref{rmk:goodali}), we have
\begin{equation}\label{pontecoperto}
(\pa_{y}w)(x,0)=\cOpbw{A}[\omega]+\mathcal{W}(x,0)\,.
\end{equation}
\noindent
A direct computation implies the following simple lemma.
\begin{lemma}\label{lem:strutturaDNconw}
Let $\vr\geq 0$, $s_0>3/2$. There is $\mu>0$ such that  for any $s\geq s_0+\mu$ 
there is $r=r(s)>0$ such that for any
$\eta\in B_{s_0+\mu}(r)\cap H^{s}$,  $\psi\in H^{s}$ the following holds.
There exists a smoothing remainder 
\begin{align}
S_{\geq 1}(\eta)=S_{1}(\eta)+S_{\geq 2}(\eta)\,, 
\qquad 
S_1(\eta)\in\wt\cR^{-\vr}_1\,, \;\; S_{\geq 2}(\eta)\in \cR^{-\vr}_{\geq 2}[r]\,,
    \label{resto:DNsemifinale}
\end{align}
such that 
the Dirichlet-Neumann operator in \eqref{eq:112aquatuor}
(see also 
\eqref{eq:112a}, \eqref{eq:112aTRIS}) has the form
\begin{equation}\label{eq:112TOTALE}
\begin{aligned}
G(\eta)[\psi]
&=\cOpbw{(1+|\nabla\eta|^2)\sha{\vr} A-\ii\nabla\eta\cdot\x+  \frac12 \Delta \eta}[\omega]+
\cOpbw{1+|\nabla\eta|^2}[\mathcal{W}]_{|y=0}
\\&-\cOpbw{\tV}[\nabla\eta]-\cOpbw{\div(\tV)}[\eta]
+S_{\geq 1}(\eta)[\psi]\,,
\end{aligned}
\end{equation}
where $A$ is the symbol given by Lemma \ref{divo1}, $\mathcal{W}$ is given in \eqref{funz:ww}.
\end{lemma}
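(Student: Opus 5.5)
The plan is to start from formula \eqref{eq:112aquatuorBIS} of Proposition \ref{prop:eq:112aquatuor} and substitute the boundary identity \eqref{pontecoperto}, namely $(\pa_y w)(x,0)=\Opbw{A}[\omega]+\mathcal{W}(x,0)$. This gives
\[
G(\eta)\psi=\Opbw{1+|\nabla\eta|^2}\Opbw{A}[\omega]+\Opbw{1+|\nabla\eta|^2}[\mathcal{W}]_{|y=0}-\Opbw{\nabla\eta}[\nabla\omega]-\Opbw{\tV}[\nabla\eta]-\Opbw{\div(\tV)}[\eta]+\mathcal{S}_{\geq1}(\eta;0)[\psi].
\]
The remaining work is to rewrite the two paradifferential terms acting on $\omega$ as a single Bony--Weyl operator. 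The three terms in $\eta$, $\mathcal{W}$ and $\tV$ are already in the form required by \eqref{eq:112TOTALE}.

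For the first term I will use Lemma \ref{compoparapara0} with regularity index $\vr$. This gives $\Opbw{1+|\nabla\eta|^2}\Opbw{A}=\Opbw{(1+|\nabla\eta|^2)\sha{\vr}A}+\mathcal{Q}(1+|\nabla\eta|^2,A)$. Since $A$ has order $1$, the remainder maps $H^{s}$ into $H^{s+\vr-1}$. I therefore apply the lemma with $\vr+1$ in place of $\vr$ and keep the extra term in the expansion, or simply relabel $\vr$. For the second term I convert the Fourier multiplier to Weyl form. The operator $\nabla$ has symbol $\ii\xi$, and the Weyl composition of the function $\nabla\eta$ with it is exact up to a first order term: $\Opbw{\nabla\eta}\cdot\nabla=\Opbw{\ii\nabla\eta\cdot\xi-\tfrac12\Delta\eta}$ plus nothing further, because $\pa_\xi^2(\ii\xi)=0$ (cf.\ the remark after Lemma \ref{compoparapara0}). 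With the minus sign this produces the symbol $-\ii\nabla\eta\cdot\xi+\tfrac12\Delta\eta$ appearing in \eqref{eq:112TOTALE}.

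Next I collect the smoothing remainders. Set
\[
S_{\geq1}(\eta)[\psi]:=\mathcal{S}_{\geq1}(\eta;0)[\psi]+\mathcal{Q}(1+|\nabla\eta|^2,A)[\omega],
\]
with $\omega=\psi-\Opbw{\tB}\eta$.

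\textbf{Homogeneity split.} The first piece splits as $\mathcal{S}_1+\mathcal{S}_{\geq2}$ by \eqref{esp:sgeq1}, and the trace bounds at $y=0$ come from Remark \ref{rmk:trace} together with \eqref{leggelegge1}--\eqref{leggelegge2}. For the composition remainder, I expand $A=|\xi|+A_1+A_{\geq2}$ using Lemma \ref{divo1}, and expand $1+|\nabla\eta|^2=1+O(\eta^2)$. The term $\mathcal{Q}(1,A)$ vanishes. Hence the linear part is $\mathcal{Q}(1,\cdot)=0$ as well, and the remainder is at least quadratic in $\eta$. Finally, $\omega=\psi+O(\eta\psi)$ with tame bounds by \eqref{action:striscia2} and \eqref{stimaDNa perdere}. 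This places the remainder in $\cR^{-\vr}_{\geq2}[r]$.

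\textbf{Translation invariance.} This property of $S_1$ follows by difference, as in Remark \ref{rem:trans}.

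\textbf{Main obstacle.} The delicate step is the bookkeeping of tame estimates for the composition remainder. Its seminorm must be controlled by $\|\eta\|_{s_0+\mu}^2$ when acting on $\omega\in H^s$, while keeping the high-norm dependence only in the form $\|\eta\|_{s}\|\psi\|_{s_0}$. To get this I will use \eqref{calma1} with the symbol seminorms evaluated at the low index $s_0+\mu$, choosing $\mu$ large enough to absorb the $\vr+1$ derivatives lost in the expansion.
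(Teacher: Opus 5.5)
Your proposal is correct and matches the paper's proof step by step. The paper substitutes \eqref{pontecoperto} into \eqref{eq:112aquatuorBIS}, composes $\Opbw{1+|\nabla\eta|^2}\circ\Opbw{A}$ via Lemma \ref{compoparapara0} (using $\mathcal{Q}(1,A)=0$, so the remainder is the quadratic term $\mathcal{Q}(|\nabla\eta|^2,A)[\omega]$), rewrites $-\Opbw{\nabla\eta}\cdot\nabla$ exactly as $\Opbw{-\ii\nabla\eta\cdot\xi+\tfrac12\Delta\eta}$, and sets $S_1=\mathcal{S}_1(\eta;0)$ and $S_{\geq 2}=\mathcal{Q}(|\nabla\eta|^2,A)[\omega]+\mathcal{S}_{\geq 2}(\eta;0)$. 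You also note that the order-one symbol $A$ costs one derivative in the composition remainder, so $\vr$ must be relabeled; the paper passes over this silently, and it is harmless since $\vr$ is arbitrary.
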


\begin{proof}
By recalling \eqref{eq:112aquatuorBIS} in
Proposition \ref{prop:eq:112aquatuor}, we get
\begin{equation*}
\begin{aligned}
G(\eta)[\psi]
&=\cOpbw{1+|\nabla\eta|^2}[(\pa_{y}w)(x,0)]-\cOpbw{\nabla\eta}[\nabla \omega]
\\&\qquad -\cOpbw{\tV}[\nabla\eta]-\cOpbw{\div(\tV)}[\eta]+\mathcal{S}_1(\eta;0)[\psi]
\\&
\stackrel{\eqref{pontecoperto}}{=}
\cOpbw{1+|\nabla\eta|^2}\circ \cOpbw{A}[\omega]-\cOpbw{\nabla\eta}[\nabla \omega]
-\cOpbw{\tV}[\nabla\eta]-\cOpbw{\div(\tV)}[\eta]\\
&\qquad\qquad+
\cOpbw{1+|\nabla\eta|^2}[\mathcal{W}]_{y=0}
+\mathcal{S}_{\geq 1}(\eta;0)[\psi]
\\&=
\cOpbw{(1+|\nabla\eta|^2)\sha{\vr} A-\ii\nabla\eta\cdot\x+ \frac12 \Delta \eta}[\omega]-\cOpbw{\tV}[\nabla\eta]-\cOpbw{\div(\tV)}[\eta]
\\& \qquad \qquad
+\cOpbw{1+|\nabla\eta|^2}[\mathcal{W}]_{y=0}+\mathcal{Q}(|\nabla\eta|^2,A)[\omega]
+\mathcal{S}_{\geq 1}(\eta;0)[\psi]\,.
\end{aligned}
\end{equation*}
Then formula \eqref{eq:112TOTALE} follows setting (recall \eqref{esp:sgeq1})
\[
{S}_1(\eta)[\psi]:={\cS}_1(\eta;0)[\psi]\,, 
\qquad 
S_{\geq 2}(\eta)[\psi]:= \mathcal{Q}(|\nabla\eta|^2,A)[\omega]+\mathcal{S}_{\geq 2}(\eta;0)[\psi]\,.
\]
The thesis in \eqref{resto:DNsemifinale} follows by \Cref{compoparapara0}, 
the properties of $A$ given in \Cref{divo1}, 
estimates \eqref{leggelegge1} and \eqref{leggelegge2} 
for $\cS_{\geq 1}$ combined with \Cref{rmk:trace}.
\end{proof}

\noindent
Our aim is to 
prove 
a formula like
\begin{equation*}
\mathcal{W}(x,0)=\mathrm{is \; a \; smoothing \;remainder}\,.
\end{equation*}
By \eqref{elliptic44}-\eqref{funz:ww}  the function $\mathcal{W}$ 
solves the (parabolic) problem
\begin{equation}\label{Cauchy problem backward calore}
\left\{\begin{aligned}
&(\pa_{y} + \Opbw{a})\mathcal{W}= F\,,
\\ 
&\mathcal{W}(x,y)\to 0\,,\; \mathrm{as}\; y\to-\infty\,,
\end{aligned}\right.
\end{equation}
where the forcing $F(x,y)$ is given by 
\begin{equation}\label{forcingFF}
F(x, y)  =\breve{\mathcal{R}}_{\geq 1}(\eta;y)[\psi] +\breve{\mathcal{S}}_{\geq 1}(\eta;y)[\psi]\,,
\end{equation}
with $\breve{\mathcal{R}}_{\geq 1}$ given in Lemma \ref{divo1}
and $\breve{\mathcal{S}}_{\geq 1}$ in Proposition \ref{prop:interior}.
The solution of \eqref{Cauchy problem backward calore} has the form 
\begin{equation}\label{soluzione parametrica dirichlet Neumann}
\mathcal{W}(\cdot, y) = \int_{- \infty}^y{\cal U}( \eta; y-z) F(\cdot, z)\, d z\,,\qquad \forall\,y \in (-\infty, 0]\,,
\end{equation}
where the operator ${\cal U}(\eta;\bigcdot)$,
is the  flow of the (homogeneous) parabolic equation 
\begin{equation}\label{calore striscia}
\begin{cases}
\partial_r {\cal U}(\eta;\zeta) = -\cOpbw{a } {\cal U}(\eta; \zeta)\,, 
\quad \zeta\in[0,+\infty)  \\
{\cal U}(\eta;  0) = { \id} \,, 
\end{cases}
\end{equation}
where 
\begin{align}
a= |\xi|+ a_{\geq 1}(\eta;x,\xi)\,, 
\qquad 
a_{\geq 1}(\eta;x,\xi)\in \Sigma \Gamma_1^1[r,2]\,,
\label{esp:a1}
\end{align}
is the symbol given in \Cref{divo1}.
We prove the following a priori estimate for the operator $\cU$.
\begin{proposition}[A priori estimates]\label{prop:apriori} 
Let  $s\geq0$, 
$ {s}_0>1$. There is $\mu>0$ such that for any $s\geq 0$ there is 
$ r_0=r_0(s)>0$ such that 
\[
B_{s_0+\mu}(r_0) \cap H^s \ni \eta \mapsto \cU(\eta;\zeta) \in \cL\left(H^s;C^0(I;H^{s}) \right)\,, 
\qquad I:=[0,+\infty)
\]
is bounded with the following estimates: 
for any $r\in (0,r_0)$ and $\eta \in B_{s_0+\mu}(r)\cap H^s$, one has
\begin{align}
\|\mathcal{U}(\eta; \zeta) g\|_{{s}}\lesssim_{s} e^{C r^2 \zeta} \|g\|_{{s}}\,,
\qquad 
\forall g \in H^s\,. \label{guardian1}
\end{align}
Moreover one has the expansion 
\begin{align}
\mathcal{U}(\eta; \zeta)= e^{-\zeta|D|}
+ \cU_{\geq 1}(\eta;\zeta)\,, 
    \label{esp:cU}
\end{align}
where $ \cU_{\geq 1}(\eta;\zeta)$ fulfills the quadratic estimates:  
for any $r\in (0,r_0)$ and $\eta \in B_{s_0+\mu}(r)\cap H^s$,   
\begin{align}
\|\cU_{\geq 1}(\eta;\zeta)[g]\|_{{s-1}}\lesssim_{s} \zeta e^{Cr^2 \zeta}
\| \eta\|_{s_0+\mu}\|g\|_{s},\qquad \forall g \in H^s\,.
\label{guardian2}
\end{align}
\end{proposition}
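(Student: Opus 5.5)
We will prove \eqref{guardian1} by an energy estimate for the parabolic flow \eqref{calore striscia}, and \eqref{guardian2} by a Duhamel comparison with the flat flow $e^{-\zeta|D|}$.

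\emph{Step 1: energy estimate.} Set $\Lambda_s:=\langle D\rangle^s$ and $g(\zeta):=\cU(\eta;\zeta)g$. Then
\[
\tfrac{d}{d\zeta}\|\Lambda_s g\|_{L^2}^2=-2\Re\langle \Lambda_s\cOpbw{a}g,\Lambda_s g\rangle .
\]
We split $a=|\xi|+a_{\geq1}$ as in \eqref{esp:a1}, and we also use $\Re a\gtrsim|\xi|$, which comes from $\Re a^{(1)}=\tb\lambda^{(1)}$ in \eqref{simboliordine1}. Commute $\Lambda_s$ past $\cOpbw{a}$. By \Cref{compoparapara0}, $[\Lambda_s,\cOpbw{a_{\geq1}}]\Lambda_s^{-1}$ has order $0$, with norm controlled by $|a_{\geq1}|_{1,s_0+\mu}\lesssim\|\eta\|_{s_0+\mu}\le r$.

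The main term is $\Re\langle\cOpbw{a}h,h\rangle$ with $h=\Lambda_sg$. We write
\[
\Re\cOpbw{a}=\cOpbw{\Re a}\quad\text{(Weyl quantization)},
\]
and apply a sharp Gårding inequality, obtained by writing $\Re a=b\sha{\vr}b+r^{(0)}$ with $b=\sqrt{\Re a}$ of order $\tfrac12$. This gives
\[
\Re\langle\cOpbw{a}h,h\rangle\ge c\|h\|_{1/2}^2-C\|\eta\|_{s_0+\mu}\|h\|_{L^2}^2 .
\]
The commutator term contributes $Cr\|h\|_{1/2}\|h\|_{-1/2}$ or better, and is absorbed into the good term. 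Gronwall then gives $\|g(\zeta)\|_s\le e^{C\cdot(\text{error})\zeta}\|g\|_s$.

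\emph{Main obstacle.} The statement asks for the rate $r^2$ rather than $r$. The linear-in-$\eta$ part of the zeroth-order error has to be handled without costing a factor $e^{Cr\zeta}$. We would try three things, in this order:
\begin{enumerate}
\item check whether the imaginary part $\mp\ii\tb\nabla\eta\cdot\xi$ (skew, odd) and the order-zero terms cancel at linear order after taking real parts, since $\Re a^{(0)}$ contains $\nabla\tb\cdot\nabla\eta$, which is quadratic;
\item if they do not cancel, use the surplus dissipation $c\|h\|_{1/2}^2$ to absorb $r\|h\|_0^2$ on high frequencies, and Young's inequality at low frequencies;
\item failing that, conjugate first by a bounded order-$0$ transformation removing the linear zeroth-order term.
\end{enumerate}
Continuity in $\zeta$ (the $C^0(I;H^s)$ statement) follows from standard ODE/semigroup theory once the estimate holds, by solving on regularized data.

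\emph{Step 2: expansion \eqref{esp:cU}.} Define $\cU_{\geq1}:=\cU-e^{-\zeta|D|}$. It solves
\[
\partial_\zeta\cU_{\geq1}=-|D|\,\cU_{\geq1}-\cOpbw{a_{\geq1}}\,\cU ,
\]
with zero initial datum. By Duhamel,
\[
\cU_{\geq1}(\zeta)=-\int_0^\zeta e^{-(\zeta-z)|D|}\cOpbw{a_{\geq1}}\,\cU(z)\,dz .
\]
We use that $e^{-t|D|}$ is contractive on $H^{s-1}$, that $\|\cOpbw{a_{\geq1}}\|_{\cL(H^s;H^{s-1})}\lesssim\|\eta\|_{s_0+\mu}$ by \Cref{thm:action}, and \eqref{guardian1}. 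Together these give the bound $\zeta e^{Cr^2\zeta}\|\eta\|_{s_0+\mu}\|g\|_s$, which is \eqref{guardian2}.
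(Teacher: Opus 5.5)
There is a genuine gap. The proposal does not prove \eqref{guardian1} with the rate $e^{Cr^2\zeta}$. Your G\aa{}rding--Gronwall argument gives at best $e^{Cr\zeta}$, and the three remedies are only listed, not carried out.

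The G\aa{}rding bound you state is also false with the inhomogeneous norm $\|\cdot\|_{1/2}$ of \eqref{Sobnorm}. At $\eta=0$ one has $\cOpbw{a}=|D|$, which annihilates constants, so $\Re\langle\cOpbw{a}1,1\rangle=0<c\|1\|_{1/2}^2$. This failure at the zero mode is exactly where the missing idea lies.

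Your option (1) cannot work. The real order-zero symbol $a^{(0)}$ has the nonvanishing linear part $-\xi\cdot\nabla^2\eta\,\xi/(2|\xi|^2)$, which comes from the Poisson bracket in $A^{(0)}$. Option (2) is the right instinct, but on $\T^2$ the only ``low frequency'' is $k=0$. There is no dissipation there, and Young's inequality cannot absorb a diagonal term $\hat a(0,0)|v_0|^2$.

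What you need is the torus structure, which the paper uses as follows. By \eqref{comm_copbw}, $\Pi_0\cOpbw{a}=\cOpbw{a}\Pi_0=\hat a(0,0)\Pi_0$, so the flow decouples into two pieces.
\begin{itemize}
\item \emph{Zero mode.} It solves the scalar ODE $\partial_\zeta\Pi_0 v=-\hat a(0,0)\Pi_0 v$. Every component of $a$ except $\nabla\tb\cdot\nabla\eta$ carries the cut-off $1-\chi(\xi)$, so $\hat a(0,0)=\int_{\T^2}\nabla\tb\cdot\nabla\eta\,dx$. This is at least quadratic in $\eta$, and it is the only source of the factor $e^{Cr^2\zeta}$.
\item \emph{Zero-mean part.} Here $|k|\geq1$, so the dissipation dominates the whole first-order perturbation. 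With the homogeneous weight $|D|^{2s}$, \Cref{thm:action} and $|a_{\geq1}|_{1,s_0}\lesssim r$ give, for $h=\Pi_0^\perp h$,
\[
\begin{aligned}
2\Re\langle |D|^{2s}\cOpbw{a}h,h\rangle&=2\|\,|D|^{s+\frac12}h\|_{L^2}^2+2\Re\langle |D|^{s-\frac12}\cOpbw{a_{\geq1}}h,|D|^{s+\frac12}h\rangle\\
&\geq(2-C_s r)\|\,|D|^{s+\frac12}h\|_{L^2}^2\geq0\,.
\end{aligned}
\]
This is \Cref{lemm:supergard}. Hence $\|\Pi_0^\perp v(\zeta)\|_{\dot H^s}$ is non-increasing, with no G\aa{}rding inequality, no commutator with $\langle D\rangle^s$, and no leftover lower-order $L^2$ error.
\end{itemize}

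Your Step 2 (the Duhamel comparison with $e^{-\zeta|D|}$) coincides with the paper's argument.
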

We first need the following preliminary result whose proof is postponed. 
\begin{lemma}\label{lemm:supergard}
Let $\vr\geq0$, ${s}_0>1$. There is $\mu>0$ such that 
for  any $s>0$ there is $r=r(s)>0$ such that if 
$\eta \in B_{s_0+\mu}(r)$,
then one has
\begin{equation}\label{eq:supergard}
2{ \Re}\langle |D|^{2s}\cOpbw{a}\Pi_0^\perp h, \Pi_0^\perp h\rangle_{L^2_x}\geq 0
\end{equation}
for any $h\in H^{s}_x$.
\end{lemma}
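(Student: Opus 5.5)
The plan is a sharp G\r{a}rding-type argument: we exploit that the principal part of $a$ is real and elliptic, and treat everything else as a small perturbation. By \eqref{esp:a1} we write $a=|\xi|+a_{\geq1}$, and by \eqref{simboliordine1} the real part of the principal symbol is $\Re a^{(1)}=\tb\,\sym{\lambda}{}{1}(1-\chi(\xi))$, which satisfies $\Re a^{(1)}\geq c|\xi|$ for $|\xi|\geq \tfrac12$ once $\eta$ is small. The imaginary part of $a^{(1)}$ is $-\ii\tb\nabla\eta\cdot\xi$, of order one and size $O(\|\eta\|_{s_0+\mu})$.

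Set $v:=\Pi_0^\perp h$. Since paradifferential operators commute with $\Pi_0^\perp$ (see \eqref{comm_copbw}), we may write $|D|^{2s}=|D|^s|D|^s$. Conjugating by $|D|^s$ gives
\[
\langle |D|^{2s}\cOpbw{a}v,v\rangle=\langle \cOpbw{a}|D|^s v,|D|^s v\rangle+\langle [|D|^s,\cOpbw{a}]v,|D|^s v\rangle .
\]
By \Cref{compoparapara0}, the commutator $[|D|^s,\cOpbw{a_{\geq1}}]$ has order $s$ and norm $\lesssim_s\|\eta\|_{s_0+\mu}$. It is therefore controlled by $C\|\eta\|_{s_0+\mu}\| |D|^{s}v\|_{L^2}^2$ (after a harmless adjustment using the order-$\tfrac12$ splitting $|D|^{s+\frac12}$ on each side if needed). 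The Fourier-multiplier part $|\xi|$ commutes exactly with $|D|^s$.

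It remains to show, with $u:=|D|^s v$,
\[
2\Re\langle\cOpbw{a}u,u\rangle\geq c\||D|^{\frac12}u\|_{L^2}^2-C\|\eta\|_{s_0+\mu}\|u\|_{H^{\frac12}}^2 .
\]
Since the Bony--Weyl quantization satisfies $\cOpbw{a}^*=\cOpbw{\ov a}$, we have $2\Re\langle\cOpbw{a}u,u\rangle=\langle\cOpbw{2\Re a}u,u\rangle$. The purely imaginary part $-\ii\tb\nabla\eta\cdot\xi$ disappears exactly, and the imaginary parts of lower order symbols contribute at most $O(\|\eta\|)\|u\|_{L^2}^2$. Now $2\Re a=2|\xi|+r$ with $r$ of order one and $|r|_{1,s_0}\lesssim\|\eta\|_{s_0+\mu}$. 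The term $\langle 2|D|u,u\rangle=2\||D|^{\frac12}u\|^2$ is exact. For the remaining term, write $\cOpbw{r}=|D|^{\frac12}\cOpbw{r|\xi|^{-1}}|D|^{\frac12}$ up to a bounded remainder obtained from \Cref{compoparapara0} with $\delta=1$ (an operator of order $0$, since $r\in\cN^1$); the remainder is estimated by \eqref{actionSob}. This gives $|\langle\cOpbw{r}u,u\rangle|\lesssim\|\eta\|_{s_0+\mu}\||D|^{\frac12}u\|^2$, where we use that $u$ has zero mean, so $\|u\|_{L^2}\leq\||D|^{\frac12}u\|$.

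Collecting these bounds, the quantity in \eqref{eq:supergard} is at least $(2-C_s r)\||D|^{s+\frac12}v\|^2\geq0$ for $r=r(s)$ small. The main obstacle is the commutator step: an order-$s$ commutator is not immediately absorbed by the order-$(s+\tfrac12)$ coercive term unless we split it symmetrically as $|D|^{s+\frac12}$ on each side of the inner product. The first thing to try is precisely this symmetric splitting, checking the order count via \Cref{compoparapara0} with $\vr=1$.
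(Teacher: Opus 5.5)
Your argument is correct, and its core is the same as the paper's: write $a=|\xi|+a_{\geq1}$ with $|a_{\geq1}|_{1,s_0}\lesssim\|\eta\|_{s_0+\mu}$, note that the multiplier part gives exactly $2\|\Pi_0^\perp h\|_{s+\frac12}^2$, and absorb the small order-one remainder. The difference is that the paper does the absorption in one step, by Cauchy--Schwarz with the orders split as $s-\frac12$ and $s+\frac12$:
\[
\bigl|\langle |D|^{s}\cOpbw{a_{\geq 1}}v,|D|^{s}v\rangle\bigr|\leq \|\cOpbw{a_{\geq1}}v\|_{s-\frac12}\,\|v\|_{s+\frac12}\lesssim_s\|\eta\|_{s_0+\mu}\|v\|_{s+\frac12}^2 .
\]
This uses only \Cref{thm:action}. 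None of your additional steps is required: the commutator with $|D|^s$, the passage to $2\Re a$ via $\cOpbw{a}^*=\cOpbw{\ov a}$, the factorization through $|D|^{\frac12}$, and the ellipticity of $\Re a^{(1)}$. The entire difference $a-|\xi|$, including its imaginary part, is small in $\cN^1$, so it can be absorbed as a whole. Your sharp G{\aa}rding-type machinery would only pay off if the principal symbol were a genuinely non-small variable-coefficient perturbation of $|\xi|$ with merely positive real part.

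Also, the ``main obstacle'' you flag at the end is not an obstacle. You had already shown that the commutator term is bounded by $C\|\eta\|_{s_0+\mu}\||D|^s v\|^2$. For zero-mean $v$ this is at most $C\|\eta\|_{s_0+\mu}\||D|^{s+\frac12}v\|^2$, so it is absorbed without any symmetric splitting.
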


\begin{proof}[{\bfseries Proof of Proposition \ref{prop:apriori}}]
We denote with $ v(\zeta):= \mathcal{U}(\eta;\zeta)g$.
In view of \Cref{rem:symbols_modozero}, equation \eqref{calore striscia} 
splits into two independent equations: one for $\Pi_0 v$ 
and the other for $\Pi_0^\bot v$.
First, thanks to \eqref{comm_copbw} and \eqref{calore striscia}, we have
\begin{align}
\pa_r \Pi_0 v(\zeta) = -\hat a (0,0) \Pi_0 v(\zeta)
= -\left(\int_{\T^2} \nabla \tb \cdot \nabla \eta\, \di x \right)\Pi_0 v(\zeta)\,.
\label{modozero}
\end{align}
Since $ \| \eta\|_{s_0+\mu} \leq r$, one has 
\begin{align*}
| \Pi_0 v(\zeta)|^2 \leq e^{Cr^2 \zeta} |\Pi_0 v(0)|^2\,.
\end{align*}
Then it remains to bound 
\begin{align*}
\| \Pi_0^\bot v(\zeta)\|_s^2= \langle |D|^{2s} v(\zeta), v(\zeta)\rangle_{L^2}\,.
\end{align*}
Using \eqref{calore striscia} we get
\begin{equation}
\begin{aligned}
\pa_{\zeta}\|\Pi_0^\bot v(\zeta)\|^2_{s}&=
2{\Re}\langle |D|^{2s}\pa_r{v}(\zeta),v(\zeta)\rangle_{L^2}
=
-2{\Re}\langle | D|^{2s}\cOpbw{a} v(\zeta), v(\zeta)\rangle_{L^2}
\stackrel{\eqref{eq:supergard}}{\leq}0 \,.
\end{aligned}
\label{gard1}
\end{equation}
Then, being $\zeta \mapsto \|\Pi_0^\bot v(\zeta)\|_{s}$ 
a decreasing map, it follows that 
\begin{align*}
\|\Pi_0^\bot v(\zeta)\|_{s}\leq \| \Pi_0^\bot v(0)\|_s =  \| \Pi_0^\bot g\|_s \leq \| g\|_s\,.
\end{align*}
Therefore the estimate \eqref{guardian1} follows 
by \eqref{modozero} and \eqref{gard1}.
We now prove \eqref{esp:cU}. We use \eqref{esp:a1} and 
the Duhamel formula to obtain the expansion 
\begin{align*}
\cU(\eta;\zeta)g= e^{-\zeta |D|} g
+ \underbrace{\int_0^\zeta e^{-(\zeta- \tau)|D|} 
\Opbw{\fun{a}{\geq 1}(\eta;x,\xi)}\cU(\eta;\tau)g\, \di \tau\,.}_{:= \, \cU_{\geq 1}(\eta;\zeta)g}
\end{align*}
As the Fourier multiplier $e^{-(\zeta- \tau)|\xi|}\leq 1$ for any $\xi \in \R$, 
we use the bound in \Cref{thm:action}, 
the bound \eqref{stima:nonhom} for $\fun{a}{\geq 1}\in \Gamma_{\geq 1}^1[r]$ 
and the bound \eqref{guardian1} for $\cU(\eta;\tau)$ 
to obtain the estimate \eqref{guardian2} for $\cU_{\geq 1}(\eta;\zeta)g$.
\end{proof}

\begin{proof}[{\bfseries Proof of Lemma \ref{lemm:supergard}}]\label{lemmagarding}
Recalling \eqref{laghetto3}-\eqref{simboliordine1} we write
\begin{equation*}
a= |\xi|+ a_{\geq 1}(\eta;x,\xi),\qquad a_{\geq 1} \in \Sigma \Gamma_1^1[r]\,.
\end{equation*}
In particular there is $\mu>0$ such that  
\begin{equation}\label{stimesimboliAAAppendix}
|a_{\geq 1}|_{1,s_0}\lesssim_{s}\|\eta\|_{s_0+\mu}\,.
\end{equation}
Secondly, we note that 
\begin{align*}
2{\Re}( |D|^{s}\Opbw{a}&\Pi_0^\perp h,|D|^{s}\Pi_0^\perp h)_{L^2_x} 
\\
&=  2\big( | D|^{s+1}  \Pi_0^\perp h , |D|^s \Pi_0^\perp h \big)_{L^2_x}  
+ 2\Re\big(| D|^s\Opbw{a_{\geq 1}}\Pi_0^\bot  h ,| D|^s\Pi_0^\bot h \big)_{L^2_x}   
\\ 
&= 2 \| \Pi_0^\bot h \|_{s+\frac12}^2
-  2\Re\big(| D|^s\Opbw{a_{\geq 1}}\Pi_0^\bot  h ,| D|^s\Pi_0^\bot h \big)_{L^2_x}\,. 
\end{align*}    
Using Cauchy-Schwartz, \Cref{thm:action} 
and \eqref{stimesimboliAAAppendix} we get 
\begin{align*}
\left|2\Re\big(| D|^s\Opbw{a_{\geq 1}}\Pi_0^\bot  h ,| D|^s\Pi_0^\bot h \big)_{L^2_x}\right|
\lesssim_s 
\| \eta\|_{s_0+\mu}\| \Pi_0^\bot h \|_{s+\frac12}^2\,,
\end{align*}
which implies 
\begin{align*}
2{\Re}( &|D|^{s}\Opbw{a}\Pi_0^\perp h,|D|^{s}\Pi_0^\perp h)_{L^2_x}
\geq 
(2- C_s r) \| \Pi_0^\bot h \|_{s+\frac12}^2\geq 0\,,
\end{align*}
 provided $C_s r\leq 2$. This proves \eqref{eq:supergard}.
 \end{proof}

\paragraph{Conclusions}

In this section we analyze the structure of the term $\mathcal{W}(x,y)$ 
in \eqref{funz:ww}, by exploiting the result on the flow 
$\mathcal{U}(x,y)$ in \Cref{prop:apriori}.
More precisely we prove the following.

\begin{proposition}[Expansion of $\mathcal{W}_{|y=0}$]\label{lem:WWsmoothing}
Fix $\vr\geq 1$, $s_0>3/2$. There is $\mu>0$ such that,  
for any $s\geq s_0+\mu$ there is $r=r(s)>0$ such that if 
$\eta\in B_{s_0+\mu}(r)$ 
 then the following holds. 
 One has that (recall \eqref{funz:ww}-\eqref{soluzione parametrica dirichlet Neumann})
 \begin{equation}\label{provaclaimsuWW}
\mathcal{W}(x,0)
=
\sym{\mathcal{R}}{}{\mathcal{W}}(\eta)[\psi]
=\sym{\mathcal{R}}{1}{\mathcal{W}}(\eta)[\psi]
+\sym{\mathcal{R}}{\geq 2}{\mathcal{W}}(\eta)[\psi] \,,
\qquad 
\sym{\mathcal{R}}{1}{\mathcal{W}}(\eta)\in \wt \cR^{-\vr}_1\,,\; 
\sym{\mathcal{R}}{\geq 2}{\mathcal{W}}(\eta)\in \cR^{-\vr}_{\geq 2}[r]\,.
\end{equation}
\end{proposition}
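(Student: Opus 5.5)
We plan to evaluate the Duhamel representation \eqref{soluzione parametrica dirichlet Neumann} at $y=0$:
\[
\mathcal{W}(\cdot,0)=\int_{-\infty}^0 \mathcal{U}(\eta;-z)F(\cdot,z)\,dz ,
\]
with $F=\breve{\mathcal{R}}_{\geq 1}(\eta;y)[\psi]+\breve{\mathcal{S}}_{\geq 1}(\eta;y)[\psi]$ as in \eqref{forcingFF}. The point is that $F$ is already a smoothing expression of $\psi$ with values in $H^{1,a}_yH^{s+\vr}_x$. By Lemma \ref{divo1} and Proposition \ref{prop:interior}, each piece splits into a part linear in $\eta$ and a quadratic non-homogeneous part. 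We will therefore define
\[
\sym{\mathcal{R}}{1}{\mathcal{W}}(\eta)[\psi]:=\int_{-\infty}^0 e^{z|D|}\big(\breve{\mathcal{R}}_1+\breve{\mathcal{S}}_1\big)(\eta;z)[\psi]\,dz ,
\]
using the expansion \eqref{esp:cU} $\mathcal{U}(\eta;\zeta)=e^{-\zeta|D|}+\mathcal{U}_{\geq1}(\eta;\zeta)$. All remaining terms go into $\sym{\mathcal{R}}{\geq 2}{\mathcal{W}}$: these are $e^{z|D|}$ applied to the quadratic forcings, together with $\mathcal{U}_{\geq 1}$ applied to the whole forcing.

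For the linear part, $e^{z|D|}$ is a contraction on every $H^{s}$ with $z\leq0$. Writing $F(\cdot,z)=e^{az}\,e^{-az}F(\cdot,z)$, Cauchy--Schwarz in $z$ gives
\[
\Big\|\int_{-\infty}^0 e^{z|D|}G(z)\,dz\Big\|_{s+\vr}\le \Big(\int_{-\infty}^0 e^{2az}dz\Big)^{1/2}\|G\|_{L^{2,a}_yH^{s+\vr}_x}\lesssim_a \|G\|_{L^{2,a}_yH^{s+\vr}_x}.
\]
Combined with \eqref{pettirosso1} and \eqref{legge1}, this yields the tame estimate \eqref{bound:smoo} for $\sym{\mathcal{R}}{1}{\mathcal{W}}$. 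Linearity in $\eta$ is clear. Translation invariance is inherited from that of the forcing, as in Remark \ref{rem:trans}, or can be obtained a posteriori by difference. Hence $\sym{\mathcal{R}}{1}{\mathcal{W}}\in\wt\cR^{-\vr}_1$; one may run everything with $\vr+1$ in place of $\vr$ to absorb small losses.

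The quadratic part is where the main obstacle lies, because the bound \eqref{guardian1} on $\mathcal{U}$ grows like $e^{Cr^2\zeta}$. We will choose $r$ so small that $Cr^2<a$, where $a\in(0,1)$ is the weight of the strip spaces. The factor $e^{-az}$ carried by the forcing then beats the growth of the flow, since $e^{Cr^2|z|}e^{a z}=e^{-(a-Cr^2)|z|}$ is integrable; the linear factor $\zeta$ in \eqref{guardian2} is harmless for the same reason.

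The term $\int e^{z|D|}(\breve{\mathcal{R}}_{\geq2}+\breve{\mathcal{S}}_{\geq2})\,dz$ is treated exactly as the linear part, using \eqref{pettirosso2} and \eqref{legge2}. For $\int \mathcal{U}_{\geq1}(\eta;-z)F(z)\,dz$ we apply \eqref{guardian2}. This costs one derivative but gains a factor $\|\eta\|_{s_0+\mu}$, and multiplying it by the linear bounds on $F$ gives the quadratic bound defining $\cR^{-\vr}_{\geq 2}[r]$ (with $\vr$ replaced by $\vr-1$, so we start from $\vr+1$). To keep the estimates tame, we will apply \eqref{guardian1}–\eqref{guardian2} at both levels $s$ and $s_0$, getting the required form $\|\eta\|_{s_0}^2\|\psi\|_s+\|\eta\|_{s_0}\|\eta\|_s\|\psi\|_{s_0}$. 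If the tame form of \eqref{guardian2} in high norms is not directly available, we will rederive it from the Duhamel formula in the proof of Proposition \ref{prop:apriori} using \eqref{actionSob} tamely.
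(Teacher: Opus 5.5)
Your proposal is correct and follows the paper's proof essentially step by step: the same splitting $\mathcal{U}=e^{-\zeta|D|}+\mathcal{U}_{\geq 1}$ defines $\sym{\mathcal{R}}{1}{\mathcal{W}}$ and $\sym{\mathcal{R}}{\geq 2}{\mathcal{W}}$, the same weighted Cauchy--Schwarz in $z$ is used, and the same smallness condition $Cr^2<a$ beats the exponential growth of the parabolic flow. Your handling of the one-derivative loss in \eqref{guardian2}, compensated by starting from $\vr+1$, is slightly more careful than the paper's displayed estimate; your extra tameness concern is unnecessary, since \eqref{guardian2} involves only the low norm $\|\eta\|_{s_0+\mu}$.
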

\begin{proof}
Recalling \eqref{forcingFF}-\eqref{soluzione parametrica dirichlet Neumann}
we write
\[
\begin{aligned}
&\mathcal{W}(x, 0)= \int_{- \infty}^{0}{\cal U}(-z) F(\cdot, z)\, d z
=\int_{- \infty}^{0}{\cal U}(-z) \big(\breve{\mathcal{R}}_{\geq 1}(\eta;z)[\psi] 
+\breve{\mathcal{S}}_{\geq 1}(\eta;z)[\psi]\big)\, d z
\\
&\stackrel{\eqref{esp:cU}}{=} \overbrace{\int_{-\infty}^0 e^{z|D|}
\left( \breve{\mathcal{R}}_{ 1}(\eta;z)[\psi] +\breve{\mathcal{S}}_{ 1}(\eta;z)[\psi]\right)\, \di z}^{:= \sym{\cR}{1}{\cW}}
\\
&\underbrace{+ \int_{-\infty}^0 e^{z|D|}\left( \breve{\mathcal{R}}_{ \geq 2}(\eta;z)[\psi] 
+\breve{\mathcal{S}}_{ \geq 2}(\eta;z)[\psi]\right)\, \di z
+ \int_{-\infty}^0 \cU_{\geq 1}(\eta;-z)\left( \breve{\mathcal{R}}_{\geq 1}(\eta;z)[\psi] 
+\breve{\mathcal{S}}_{ \geq 1}(\eta;z)[\psi]\right)\, \di z.}_{:= \sym{\cR}{\geq 2}{\cW}}
\end{aligned}
\]
Then the expansion \eqref{provaclaimsuWW} follows. 
The fact that $\sym{\mathcal{R}}{1}{\mathcal{W}}(\eta)\in \cR^{-\vr}_1$ and 
$ \sym{\mathcal{R}}{\geq 2}{\mathcal{W}}(\eta)\in \cR^{-\vr}_{\geq 2}[r]$ 
(recall \Cref{def:smoothing}) follow combining estimates \eqref{stima:omognea} for $e^{z|D|}$, 
\eqref{pettirosso1} and  \eqref{pettirosso2} for $\breve{\mathcal{R}}_{\geq 1}(\eta)$, 
\eqref{legge1} and \eqref{legge2} for $\breve{\mathcal{S}}_{\geq 1}(\eta)$ 
and estimate \eqref{esp:cU} and \eqref{guardian2} for the parabolic flow $ \cU$. 
For example we detail the estimate for the last term
\begin{align*}
\left\| \int_{-\infty}^0 \cU_{\geq 1}(\eta;-z)
\left( \breve{\mathcal{S}}_{ \geq 1}(\eta;z)[\psi]\right)\, \di z\right\|_{s+\vr}
\leq &\int_{-\infty}^0 \left\| \cU_{\geq 1}(\eta;-z)
\left( \breve{\mathcal{S}}_{ \geq 1}(\eta;z)[\psi]\right)\right\|_{s+\vr}\, \di z
\\
&\lesssim_s  
\|\eta\|_{s_0+\mu} \int_{-\infty}^0 |z| e^{C r^2 |z|}\|\breve{\mathcal{S}}_{ \geq 1}(\eta;z)[\psi]\|_{s+\vr}\, \di z
\\
&\lesssim_s  
\|\eta\|_{s_0+\mu}\| \breve{\mathcal{S}}_{ \geq 1}(\eta;z)[\psi]\|_{L^{2,a}_zH^{s+\vr}_x} 
\left(\int_{-\infty}^0 |z| e^{C r^2 |z|}e^{-a|z|}\, \di z\right)^{\frac12}
\\
&\lesssim_s 
\|\eta\|_{s_0+\mu}^2 \| \psi\|_s + \|\eta\|_{s_0+\mu}\| \eta\|_s\| \psi\|_{s_0+\mu}\,,
\end{align*}
where to prove that the integral of $|z| e^{C r^2 |z|}e^{-a|z|}$ is finite 
we used the restriction $Cr^2<a$.
 \end{proof}

We are now in position to prove our main result on the Dirichlet-Neuman operator.

\begin{proof}[{\bfseries Proof of Theorem \ref{para:DN}}]
In view of Lemma \ref{lem:strutturaDNconw} and recalling  \eqref{funz:ww} and \eqref{provaclaimsuWW},
 we have
\begin{equation}\label{pornichet20}
\begin{aligned}
G(\eta)[\psi]&=\Opbw{(1+|\nabla\eta|^2)\sha{\vr} A-\ii\nabla\eta\cdot\x+\tfrac12 \Delta \eta}[\omega]
-\cOpbw{\tV}[\nabla\eta]-\cOpbw{\div(\tV)}[\eta]
\\&
\qquad+\cOpbw{1+|\nabla\eta|^2}[\sym{\mathcal{R}}{}{\mathcal{W}}(\eta)[\psi]]+S_{\geq 1}(\eta)\psi\,.
\end{aligned}
\end{equation}
We now expand $(1+|\nabla\eta|^2)\sha{\vr} A$ recalling \eqref{espansione2} and the formulas \,\eqref{laghetto3}-\eqref{simboliordine1} for $A$.
By explicit computation, using also that  $\tb^{-1}=(1+|\nabla\eta|^2)$ , one gets
\begin{align}
(1+|\nabla\eta|^2)\sha{\vr} A-\ii\nabla\eta\cdot\x+\tfrac12 \Delta \eta=\lambda&:=\lambda(\eta;x,\x)=\sym{\lambda}{}{1}+\sym{\lambda}{}{0}+\sym{\lambda}{}{-1}\,,
\end{align}
where
\begin{align}
\sym{\lambda}{}{1}&:=\tb^{-1}A^{(1)}-\ii\nabla\eta\cdot\x= \sqrt{(1+|\nabla \eta |^2)|\xi|^2 - (\nabla \eta \cdot \xi)^2}\,,\label{def:lambda1}\\ 
\sym{\lambda}{}{0}&:=\tb^{-1} A^{(0)}+ \frac{1}{2 \ii}\big\{\tb^{-1}, A^{(1)}\big\} +\tfrac12 \Delta \eta= \frac{\big\{ \tb \nabla \eta \cdot \xi, \tb \sym{\lambda}{}{1}\big \}}{2 \tb^2 \sym{\lambda}{}{1}} + \tfrac12 \Delta \eta\,,\label{def:lambda0}\\
\sym{\lambda}{}{-1}&:= \sum_{\substack{ 0\leq n\leq \vr-1 \\
\\ 
-\vr \leq j\leq 1 \\ j- n \leq -1 }} p_n( \tb^{-1}, A^{(j)})=\underbrace{\sum_{
-\vr \leq j\leq -1  } \sym{A}{1}{j}}_{:= \sym{\lambda}{1}{-1}}+\underbrace{\sum_{
-\vr \leq j\leq -1  } \sym{A}{\geq 2}{j}+\sum_{\substack{ 0\leq n\leq \vr-1 \\
\\ 
-\vr \leq j\leq 1 \\ j- n \leq -1 }} p_n( |\nabla \eta|^2, A^{(j)})}_{:= \sym{\lambda}{\geq 2}{-1}}\,.
\label{def:lambda-1}
\end{align}
We note that the equality above are meant in the sense 
of \Cref{rem:strip_symbols}, namely both sides 
of the equations define the same paradifferential operator.
Therefore, by \eqref{pornichet20}, one gets
\[
G(\eta)[\psi]=\cOpbw{\lambda}[\omega]-\cOpbw{\tV}[\nabla\eta]
-\cOpbw{\div(\tV)}[\eta]+R_{\geq 1}(\eta)[\psi]\,,
\]
where we defined 
\[
R_{\geq 1}(\eta):=S_{\geq 1}(\eta)
+ \cOpbw{1+|\nabla\eta|^2}[\sym{\mathcal{R}}{}{\mathcal{W}}(\eta)
= R_1(\eta)[\psi]+ R_{\geq 2}(\eta),
\]
and 
\[
R_{1}(\eta)= S_1(\eta)+ \sym{\mathcal{R}}{1}{\mathcal{W}}(\eta)\,, 
\qquad 
R_{\geq 2}(\eta)= S_{\geq 2}(\eta) + \Opbw{|\nabla \eta|^2}\sym{\mathcal{R}}{}{\mathcal{W}}(\eta)
+ \sym{\mathcal{R}}{\geq 2}{\mathcal{W}}(\eta)\,.
\]
This implies the expansion
\eqref{eq_para:DN}.
To prove that $R_{1}(\eta)\in \wt\cR^{-\vr}_1$ and 
$R_{\geq 2}(\eta)\in \cR_{\geq 2}^{-\vr}[r]$ (cfr. \Cref{def:smoothing}), 
we prove their bounds using 
\Cref{lem:strutturaDNconw} for $S_{\geq 1}(\eta)$, \Cref{lem:WWsmoothing} for $\sym{R}{}{\cW}$ 
and \Cref{thm:action} for $\Opbw{|\nabla \eta|^2}$ 
and we deduce the translation invariant property \eqref{tra:smoo} 
as explained in \Cref{rem:trans}. 
Formulas \, \eqref{def:lambda1}-\eqref{def:lambda-1} 
imply the expansion \eqref{lambda:DN}, 
see the explicit expressions of $\sym{\lambda}{}{1}$ and 
$\sym{\lambda}{}{0}$ in \eqref{def:lam1}, \eqref{def:lam0}. 
In view of \Cref{divo1}, $\sym{\lambda}{\geq 2}{-1}$ 
belongs to $\Gamma_{\geq2}^{-1}[r]$ and $\sym{\lambda}{1}{-1}$ belongs to $\wt \Gamma_1^{-1}$. 
It remains to prove the reality of $\sym{\lambda}{1}{-1}$ 
which is the content of the following a posteriori lemma. 
\end{proof}
\begin{lemma}\label{lem:lambda-1}
The symbol $ \sym{\lambda}{1}{-1}\in \wt \Gamma_1^{-1}$ in \eqref{esp:lambda-1} 
is real and even in $\xi$, namely 
        \begin{align}
            \begin{array}{cc}
                  \ov{\sym{\lambda}{1}{-1}(\eta;x,\xi)}&= \sym{\lambda}{1}{-1}(\eta;x,\xi) 
                  \\
                  \sym{\lambda}{1}{-1}(\eta;x,-\xi)&= \sym{\lambda}{1}{-1}(\eta;x,\xi)
            \end{array}\,, 
            \qquad 
            \forall \eta \in H^{2}(\T^2;\R)\,, \, (x,\xi) \in \T^2\times \R^2\,.
        \end{align}
    \end{lemma}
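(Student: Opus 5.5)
We prove both properties together by an a posteriori argument based on the two structural features of the Dirichlet--Neumann operator: it is real (it maps real functions to real functions) and symmetric with respect to the $L^2$ pairing. The idea is to compare the linear-in-$\eta$ part of the paralinearization \eqref{eq_para:DN} with its own adjoint and complex conjugate. The smoothing remainders do not see symbols of order $-1$ at leading order, so matching forces the order $-1$ linear symbol to have the required symmetries.

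\textbf{Step 1 (linear expansion).} We take the first derivative in $\eta$ at $\eta=0$ of \eqref{eq_para:DN}, written as $G_1(\eta)\psi$. Since $\omega=\psi-\Opbw{\tB}\eta$ and $\tB_1=|D|\psi$, this has the form
\[
G_1(\eta)\psi=\Opbw{\sym{\lambda}{2}{1}+\sym{\lambda}{1}{0}+\sym{\lambda}{1}{-1}}\psi-\Opbw{|\xi|}\Opbw{|D|\psi}\eta+\Opbw{-\ii\nabla\psi\cdot\xi-\tfrac12\Delta\psi}\eta+R_1(\eta)\psi .
\]
Only the first bracket is paradifferential in $\psi$ with coefficients depending on $\eta$; since $\sym{\lambda}{2}{1}$ is quadratic, this bracket reduces to $\sym{\lambda}{1}{0}+\sym{\lambda}{1}{-1}$. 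The remaining terms are paraproducts $\Opbw{f(\psi)}\eta$, which act on $\psi$ as smoothing operators in the regime where $\psi$ is the high frequency. They can therefore be absorbed into a remainder of order $-\vr$ with respect to $\psi$, using the paraproduct Lemma \ref{lem:prodotto} with roles exchanged.

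\textbf{Step 2 (reality).} Since $G(\eta)$ is real for real $\eta$, we have $\ov{G_1(\eta)\psi}=G_1(\eta)\ov\psi$. Conjugating a Bony--Weyl operator gives $\ov{\Opbw{a}u}=\Opbw{\ov{a^\vee}}\ov u$. Comparing the two sides at order $-1$, modulo smoothing terms, yields $\ov{\sym{\lambda}{1}{-1}(x,-\xi)}=\sym{\lambda}{1}{-1}(x,\xi)$. Here we use that a paradifferential operator which is smoothing of order $-\vr$ has a symbol vanishing up to order $-\vr$; this follows from testing against high-frequency modes $e^{\ii j\cdot x}$ and reading off Fourier coefficients.

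\textbf{Step 3 (symmetry).} Since $G(\eta)$ is self-adjoint on $L^2$ (Green's formula), $G_1(\eta)$ is self-adjoint as well. Moreover $\Opbw{a}^*=\Opbw{\ov a}$. Matching the order $-1$ symbols gives $\ov{\sym{\lambda}{1}{-1}}=\sym{\lambda}{1}{-1}$, which is reality. Combining this with Step 2 then gives evenness in $\xi$.

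\textbf{Main obstacle.} The delicate point is the identification in Step 1. The identity $\omega=\psi-\Opbw{\tB}\eta$ mixes terms paradifferential in $\eta$ with terms paradifferential in $\psi$. We must show that the adjoint of the $\eta$-part, $\Opbw{f(\psi)}\eta$, and of $R_1(\eta)$, produces in the $\psi$-variable only operators of order $\le -2$, and hence does not contaminate the order $-1$ symbol.

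The first thing to try is the explicit Fourier-coefficient representation of Lemma \ref{lem:smoohomo}, which splits the frequency regions into $|k_\eta|\ll|j_\psi|$ and the complementary region. In the region $|k_\eta|\ll|j_\psi|$, the paradifferential operators in $\eta$ are supported where $|\xi|\lesssim|k_\eta|$, so their contribution is negligible there.

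If this direct argument fails, there is an alternative route. The symbol $\sym{\lambda}{1}{-1}=\sum_j\sym{A}{1}{j}$ is given explicitly by the recursion \eqref{laghetto12}. At linear order, $\tb$ is constant ($\tb=1+O(\eta^2)$) and $\sym{a}{}{1},\sym{A}{}{1}$ contain only $\mp\ii\nabla\eta\cdot\xi$ at first order. One can then check by induction on the recursion that each $\sym{A}{1}{j}$ is real and even, tracking the parity of the number of $\xi$-derivatives in the terms $p_n$.
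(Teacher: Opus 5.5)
Your Steps 1--3 follow the paper's route. You use $G(\eta)=\ov{G(\eta)}=G(\eta)^\top$ at linear order and read off the order $-1$ symbol from the Fourier coefficients in the region $|j|\ll|k|$, where $j$ is the frequency of $\eta$ and $k$ that of $\psi$; cf.\ \eqref{lam:restr}. Two points need repair.

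First, the paraproducts $\Opbw{f(\psi)}\eta$ are \emph{not} remainders of order $-\vr$ in $\psi$. Their coefficients are of size $|k||j|$, with $\eta$ at the high frequency, so they lose derivatives on $\eta$. Exchanging roles in Lemma \ref{lem:prodotto} only trades them for the product $f(\psi)\eta$, which is not smoothing. What makes them harmless is their frequency support. They live in $|k|\ll|j|$, and their transposes and conjugates in $|j+k|\ll|j|\sim|k|$. Both regions are disjoint from \eqref{lam:restr}, which is what \eqref{bound:L} encodes. This is the fix you yourself sketch under ``main obstacle''.

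Second, matching modulo smoothing operators does not give exact identities. It only shows that $\sym{\lambda}{1}{-1}-\ov{\sym{\lambda}{1}{-1}}$ and $\sym{\lambda}{1}{-1}-\ov{\sym{\lambda}{1}{-1}}^{\,\vee}$ have coefficients $O(|j|^{C}\langle\xi\rangle^{-\vr})$ for $|j|\ll|\xi|$. Since $\vr$ is fixed, and the symbol itself depends on $\vr$ through \eqref{def:lambda-1}, you cannot let $\vr\to\infty$, so this is not the exact equality you state. The paper closes the gap by replacing $\sym{\lambda}{1}{-1}$ with its real and even part and moving the difference into $R_1(\eta)$. There the lemma is really a normalization, and your Steps 2--3 need that sentence to reach the stated conclusion.

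Your fallback is a genuinely different route, and it is correct. It proves the statement literally for the symbol in \eqref{def:lambda-1}, with no normalization. At linear order the ingredients are:
\begin{itemize}
\item $\tb\equiv 1$, and $a^{(1)}_1=-A^{(1)}_1=-\ii(\nabla\eta\cdot\xi)(1-\chi(\xi))$;
\item $a^{(p)}_1=A^{(p)}_1$ for $p\le 0$, since the difference $\nabla\tb\cdot\nabla\eta$ in \eqref{simboliordine1} is cubic;
\item $A^{(0)}_1=-\tfrac{\xi\cdot\nabla^2\eta\,\xi}{2|\xi|^2}(1-\chi(\xi))$, which is real and even.
\end{itemize}
In \eqref{laghetto12}, with $|\xi|$ standing for $|\xi|(1-\chi(\xi))$, the linear contributions come in two kinds of pairs. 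The pairs $p_n(|\xi|,c)+p_n(c,|\xi|)$ carry the prefactor $(1+(-1)^n)(2\ii)^{-n}$. The pairs $p_n(|\xi|,\ii\nabla\eta\cdot\xi)+p_n(-\ii\nabla\eta\cdot\xi,|\xi|)$ carry $\ii(1-(-1)^n)(2\ii)^{-n}$. For the surviving $n$ these prefactors are real and the $\xi$-parity is even, so induction gives the claim; it even shows $A^{(-1)}_1=0$.

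The comparison is this. The structural argument uses only $G=G^\top=\ov G$ and does not depend on the details of Lemma \ref{divo1}, but it gives the symmetries only modulo smoothing. The computation depends on the explicit recursion, but it gives exact reality and evenness of the symbol as defined.
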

    \begin{proof}
        It is well-known that the Dirichlet-Neumann operator 
        is a self-adjoint and real operator, 
        namely $G(\eta)^\top= G(\eta)$ and $G(\eta)= \ov{G(\eta)}$.  
        Moreover, since it is analytic, we expand it as
        \begin{align*}
        G(\eta)= |D|+ G_1(\eta)+ G_{\geq 2}(\eta)\,.    
        \end{align*}
        Expanding also the right hand side of \eqref{eq_para:DN} we get 
        \begin{align*}
            G_1(\eta)= \Opbw{\sym{\lambda}{1}{0}+ \sym{\lambda}{1}{-1}} + L_1(\eta) + R_1(\eta)\,,
        \end{align*}
        where 
        \begin{align*}
            L_1(\eta)\psi:=|D|\Opbw{|D|\psi}\eta+ \Opbw{-\ii \nabla \psi\cdot \xi- \frac12 \Delta \psi}\eta\,.
        \end{align*}
        By definition one has
\begin{align*}
L_1(\eta)\psi= \sum_{j,k\in \Z^{2}\setminus\{0\}} L_{j,k}\eta_j \psi_k e^{\ii (j+k)\cdot x}\,, 
\qquad 
L_{j,k}:= \chi\left( \frac{| k|}{\langle k+2j\rangle}\right) \left[ |k|^2 + k\cdot j -|k| |j+k| \right]\,.
\end{align*}
In the same way, we expand the transpose operator $ L_1(\eta)^\top$ as
        \begin{equation*}
        \begin{aligned}
            L_1(\eta)^\top\psi &= 
            \sum_{j,k\in \Z^{2}\setminus\{0\}} 
            (L^\top)_{j,k}\eta_j \psi_k e^{\ii (j+k)\cdot x}\,, 
            \\
            (L^\top)_{j,k}&:= 
            L_{j,-(j+k)}= \chi
            \left( \frac{| j+k|}{\langle j-k\rangle}\right) 
            \left[ |k|^2 + k\cdot j -|k| |j+k| \right]\,.
        \end{aligned}
        \end{equation*}
Thanks to the cut-off $\chi$ one has the restrictions 
\begin{align*}
     L_{j,k}\not=0 \implies  |k| \ll |j|\sim |j+k|\,, 
     \qquad  
     (L^\top_1)_{j,k}\not= 0 \implies |j+k|\ll | j| \sim |k|\,.
\end{align*}
As a consequence one has the bounds 
\begin{equation}\label{bound:L}
\begin{aligned}
|L_{j,k}| &\lesssim \min\{ |j|, |j+k|\}^{2+\vr}\max\{ |j|, |j+k|\}^{-\vr}\,, 
\\
|(L^\top_1)_{j,k}| &\lesssim \min\{ |j|, |k|\}^{2+\vr}\max\{ |j|, |k|\}^{-\vr}\,.
\end{aligned}
\end{equation}
Then we Fourier expand the smoothing remainder and its transpose, 
\begin{align}
R_1(\eta)\psi= \sum_{j,k\in \Z^{2}\setminus\{0\}} R_{j,k}\eta_j \psi_k e^{\ii (j+k)\cdot x}\,, 
\qquad   
R_1(\eta)^\top \psi =\sum_{j,k\in \Z^{2}\setminus\{0\}} (R^\top)_{j,k}\eta_j \psi_k e^{\ii (j+k)\cdot x} \,,
\label{bound:R}
\end{align}
where $(R^\top)_{j,k}:= R_{j,-(j+k)}$. 
Since $R_1(\eta) \in \wt \cR^{-\vr}_1$ 
one has the bounds
\begin{align*}
|(R^\top)_{j,k}| \lesssim \min\{ |j|, |j+k|\}^{2+\vr}\max\{ |j|, |j+k|\}^{-\vr}\,, 
\qquad 
|R_{j,k}| \lesssim \min\{ |j|, |k|\}^{2+\vr}\max\{ |j|, |k|\}^{-\vr}\,.
 \end{align*}
We note that, as $G_1(\eta)$ is real, $L_1(\eta)$ 
is real  as well as $ \Opbw{\sym{\lambda}{1}{0}}$, 
by difference, the operator 
\[
\Opbw{\sym{\lambda}{1}{-1}}+ R_1(\eta)
\]
is real. Then, replacing if necessary 
$\sym{\lambda}{1}{-1}\leadsto\frac{\sym{\lambda}{1}{-1}+\ov{\sym{\lambda}{1}{-1}}^\vee}{2}$ 
and $R(\eta)\leadsto \frac{R_1(\eta)+ \ov{R_1(\eta)}}{2}$, 
we can always assume that 
\begin{align*}
    \sym{\lambda}{1}{-1}= \ov{\sym{\lambda}{1}{-1}}^\vee.
\end{align*}
Moreover, since $ G(\eta)^\top= G(\eta)$ and 
$ \Opbw{\sym{\lambda}{1}{0}}^\top=\Opbw{\sym{\lambda}{1}{0}}$, 
we get 
\begin{align}
\Opbw{\sym{\lambda}{1}{-1}- [\sym{\lambda}{1}{-1}]^\vee}
= L_1(\eta)^\top- R_1(\eta)+R_1(\eta)^\top-L_1(\eta)\,.
    \label{eq:GGtop}
\end{align}
We denote 
$\sym{\lambda}{1}{-1}- [\sym{\lambda}{1}{-1}]^\vee=: \sym{a}{1}{-1}(\eta;x,\xi)
= \sum_{j\in \Z^{2}\setminus\{0\}} a_j(\xi) \eta_j e^{\ii j\cdot x}$. 
Then 
\begin{align}
 \Opbw{\sym{\lambda}{1}{-1}- [\sym{\lambda}{1}{-1}]^\vee}\psi= \sum_{j,k\in \Z^{2}\setminus\{0\}} \Theta_{j,k}\eta_j \psi_k e^{\ii (j+k)\cdot x}, \qquad \Theta_{j,k}:= \chi\left( \frac{|j|}{\langle j+2k\rangle}\right)a_j\left(k+\frac{j}{2}\right).
 \label{def:lambdone}
\end{align}
On the one hand, from the identity \eqref{eq:GGtop}, one has 
\begin{align}
    \Theta_{j,k}= (L^\top)_{j,k}- R_{j,k}+(R^\top)_{j,k}-L_{j,k}\,.
    \label{lam:LR}
\end{align}
On the other hand, by \eqref{def:lambdone}, we have the restriction 
\begin{align}
    \Theta_{j,k}\not=0 \;\;\; \implies\;\;\; |j|\ll |k| \sim |j+k|\,. 
    \label{lam:restr}
\end{align}
We then estimate the coefficients $\Theta_{j,k}$ for all indices 
$j,k \in \Z^{2}\setminus\{0\}$ which satisfy \eqref{lam:restr} using \eqref{lam:LR}. 
By \eqref{bound:L} and \eqref{bound:R} and 
the restriction \eqref{lam:restr}, we have 
\begin{align*}
     |\Theta_{j,k}| &\lesssim  \min\{ |j|, |j+k|\}^{2+\vr}\max\{ |j|, |j+k|\}^{-\vr}
     +\min\{ |j|, |k|\}^{2+\vr}\max\{ |j|, |k|\}^{-\vr}
     \\
     &\lesssim  \min\{ |j|, |k|\}^{2+\vr}\max\{ |j|, |k|\}^{-\vr}\,,,
\end{align*}
proving that $\Opbw{\sym{\lambda}{1}{-1}- [\sym{\lambda}{1}{-1}]^\vee} \in \wt\cR^{-\vr}_1$. 
Then, without loss of generality, one can replace if necessary
\begin{align*}
\Opbw{\sym{\lambda}{1}{-1}}\leadsto 
\Opbw{\frac{\sym{\lambda}{1}{-1}
+(\sym{\lambda}{1}{-1})^\vee }{2}}
\end{align*}
by including $\Opbw{\frac{\sym{\lambda}{1}{-1}-(\sym{\lambda}{1}{-1})^\vee }{2}} $ in the smoothing remainder $R_1(\eta)$.
    \end{proof}

\section{ Proof of the Paralinearization  Proposition \ref{prop:paraWW}}\label{app:paraWW}
In this section, we apply paradifferential calculus and the paralinearization formula \eqref{eq_para:DN} to obtain a paralinearized form of the full water waves system \eqref{eq:1.2}.

\begin{proof}[Proof of \Cref{prop:paraWW}]
The equation \eqref{eq:eta} for $\pa_t\eta$ follows directly from the 
paralinearization of the Dirichlet-Neumann operator in \eqref{eq_para:DN}. 
Then it only remains to study the equation \eqref{eq:omega} for 
\begin{align}
     \pa_t \omega= \pa_t\psi- \Opbw{\pa_t \tB}\eta-\Opbw{\tB}\pa_t \eta
     =\pa_t\psi- \Opbw{\pa_t \tB}\eta-\Opbw{\tB}G(\eta)\psi\,.
     \label{pat:omega}
\end{align}
In view of \eqref{pat:omega}, we start to paralinearize 
the second equation in \eqref{eq:1.2} for $\pa_t\psi$.

\noindent {\bf Paralinearization of $\tb$:}
One has 
\begin{gather}
    \tb= 1- \Opbw{\tb^2}|\nabla\eta|^2+ \sym{r}{\geq 2}{\tb}(\eta;x)\label{para:tb}
    \\
    \sym{r}{\geq 2}{\tb}(\eta;x):=-\frac{1}{(2\pi)^2}\int_{\T^2} \frac{|\nabla \eta|^2}{1+|\nabla \eta|^2}\, \di x
    + \Opbw{\tb}\cR(\tb,|\nabla\eta|^2)- \tQ(\tb,\tb)|\nabla \eta|^2- \cQ(\tb,|\nabla \eta|^2)\tb\,. \notag
\end{gather}
By \Cref{thm:action}, \Cref{compoparapara0} and \Cref{lem:prodotto} for any $\vr>0$, 
there is $s_0>\vr $ such that one has the estimates 
\begin{align}
    \| \sym{r}{\geq 2}{\tb}\|_{s+\vr}
    \lesssim_s \| \eta\|_{s_0} \| \eta \|_s\,, 
    \qquad \forall s\geq s_0\,.
    \label{est:rb}
\end{align}
\noindent{\bf Paralinearization of the convective term:}
One has\footnote{Recall that, by \Cref{lem:prodotto}, one has 
$\cR(1,a)=-\frac{1}{(2\pi)^2}\int_{\T^2} a\, \di x$ and by \Cref{compoparapara0} one has $\cQ(1,a)\equiv0$.}
\begin{equation}\label{mattone1}
\begin{aligned}
    -\tfrac12|\nabla \psi|^2 &+  \tfrac12 \frac{(G(\eta)\psi
    + \nabla \eta \cdot\nabla \psi)^2}{1+ | \nabla \eta|^2} = -\tfrac12|\nabla \psi|^2 + \tfrac12 \tb^{-1} \tB^2
    \\
    &= \Opbw{\tb^{-1}\tB}\tB + \Opbw{\tB^2\nabla \eta }\cdot \nabla \eta
    - \Opbw{\nabla \psi}\cdot \nabla \psi + \sym{R}{\geq 2}{E}(\eta,\psi)\psi
    \\
    \sym{R}{\geq 1}{E}(\eta,\psi)\psi&:= -\tfrac{1}{2}\cR(\nabla \psi, \nabla \psi) 
    + \tfrac{1}{2}\Opbw{\tb^{-1}}\cR(\tB,\tB)+\tfrac{1}{2}\cR(\tB^2,|\nabla \eta|^2)
    \\&
    + \tfrac{1}{2}\cQ(|\nabla \eta|^2,\tB)\tB+ \cQ(\tB^2,\nabla \eta)\cdot \nabla \eta
    + \tfrac{1}{2}\Opbw{\tB^2 }\cR(\nabla \eta, \nabla \eta)\,.
\end{aligned}
\end{equation}
Combining \Cref{thm:action}, \Cref{compoparapara0} and \Cref{lem:prodotto}, for any $\vr>0$, there is $s_0>\vr $ such that one has the estimates we get 
\begin{align*}
    \| \sym{R}{\geq 1}{E}(\eta,\psi)\psi\|_{s+\vr}
    \lesssim_s \|\psi\|_{s_0}\| \psi\|_s+ \| \eta\|_s\| \psi\|_{s_0}^2\,, 
    \qquad 
    \forall s\geq s_0\,.
\end{align*}
 We now paralinearize $\tB$ using that by \eqref{def:V-B}, one has 
 $ \tB= \tb (G(\eta)\psi+ \nabla \eta \cdot \nabla \psi)$. By \Cref{lem:prodotto} we get 
 \begin{equation}\label{mattone2}
 \begin{aligned}
     \tB
     &= \Opbw{\tb}G(\eta)\psi+ \Opbw{\tb}\nabla \eta\cdot \nabla \psi
     - \Opbw{\tb \tB}|\nabla \eta|^2  + \sym{\cR}{\geq 1}{\tB}(\eta)\psi\,,
     \\
     \sym{\cR}{\geq 1}{\tB}(\eta)\psi&:= \cR(\tb-1,G(\eta)\psi)
     + \cR(\tb,\nabla \eta\cdot \nabla \psi)+\Opbw{G(\eta)\psi+ \nabla \eta\cdot \nabla \psi}\sym{r}{\geq 2}{\tb} 
     \\
    &
    + \frac{1}{(2\pi)^2}\int_{\T^2} \nabla \eta\cdot \nabla \psi \, \di x
    + \cQ(\tb^{-1}\tB, \tb^2) |\nabla \eta |^2\,,
 \end{aligned}
 \end{equation}
 where we used that $\int_{\T^2} G(\eta)\psi\, \di x=0$.
Thanks to \Cref{lem:prodotto} and \Cref{lem:an_DN} we have the estimates
\begin{align*}
\|\sym{\cR}{\geq 1}{\tB}(\eta)\psi\|_{s+\vr}
\lesssim_s 
\| \eta \|_{s_0} \|\psi\|_s + \| \eta \|_s \| \psi\|_{s_0}\,, 
\qquad \forall s\geq s_0 \,.
\end{align*}
By collecting \eqref{mattone1}-\eqref{mattone2} 
and using \Cref{compoparapara0,lem:prodotto} 
we get
\begin{equation}\label{mattone3}
\begin{aligned}
 -\tfrac12|\nabla \psi|^2 + \tfrac12 \tb^{-1} \tB^2 
 &=
 \Opbw{\tB}G(\eta)\psi
 -\Opbw{\underbrace{\nabla\psi-\tB\nabla\eta}_{\equiv\tV}}\nabla\psi
 \\&
 +\Opbw{\underbrace{\tB\nabla\psi-\tB^2\nabla\eta}_{\equiv\tV \tB}}\nabla\eta
 +\tR_{\geq1}(\eta,\psi)\psi\,,
\end{aligned}
\end{equation}
where
\[
\begin{aligned}
\tR_{\geq1}(\eta,\psi)\psi&:=
\tQ(\tb^{-1}\tB,\tb)G(\eta)\psi+
\tQ(\tb^{-1}\tB,\tb)\nabla\eta\cdot\nabla\psi+
\tQ(\tb^{-1}\tB,\tB\tb)|\nabla\eta|^2+\sym{\cR}{\geq1}{\tE}(\eta,\psi)\psi
\\&
+\Opbw{\tb^{-1}\tB}\sym{\cR}{\geq1}{\tB}(\eta)\psi+\Opbw{\tB}\cR(\nabla\eta,\nabla\psi)
+\Opbw{\tB^2}\cR(\nabla\eta,\nabla\eta)
\\&+\cQ(\tB,\nabla\eta)\nabla\psi+\cQ(\tB,\nabla\psi)\nabla\eta+2\cQ(\tB^2,\nabla\eta)\nabla\eta\,.
\end{aligned}
\]
Recalling \eqref{def:good} (and also Remark \ref{rem:compo_Fou}) we get
\[
\begin{aligned}
\Opbw{\tV}\nabla\psi&=\Opbw{\tV}\nabla\omega+\Opbw{\tV}\nabla\Opbw{\tB}\eta
\\&
=\Opbw{\tV}\nabla\omega+\Opbw{\tV}\Opbw{\tB}\nabla\eta
+\Opbw{\tV}\Opbw{\nabla\tB}\eta
\\
&=
\Opbw{\tV}\nabla\omega+\Opbw{\tV\tB}\nabla\eta+
\Opbw{V\nabla\tB}\eta
+\tQ(\tV,\tB)\nabla\eta+\tQ(\tV,\nabla\tB)\eta
\\&
=\Opbw{\ii \tV\cdot\x-\tfrac{1}{2}\div(\tV)}\omega +
\Opbw{\tV\tB}\nabla\eta+
\Opbw{\tV\nabla\tB}\eta
+\tQ(\tV,\tB)\nabla\eta+\tQ(\tV,\nabla\tB)\eta\,,
\end{aligned}
\]
which, together with \eqref{mattone3}, implies
\begin{equation}\label{mattone4}
\begin{aligned}
 -\tfrac12|\nabla \psi|^2 + \tfrac12 \tb^{-1} \tB^2 &=
  \Opbw{\tB}G(\eta)\psi
  \\&-\Opbw{\ii \tV\cdot\x-\tfrac{1}{2}\div(\tV)}\omega-\Opbw{\tV\cdot \nabla\tB}\eta
  +\sym{\tR}{\geq 1}{\mathtt{good}}(\eta,\psi)\psi\,,
\end{aligned}
\end{equation}
with
\[
\sym{\tR}{\geq 1}{\mathtt{good}}(\eta,\psi)\psi=\tR_{\geq1}(\eta,\psi)\psi
+\tQ(\tV,\tB)\nabla\eta+\tQ(\tV,\nabla\tB)\eta\,.
\]
\noindent{\bfseries Paralinearization of the capillary term:} recalling \eqref{eq:1.2}, 
we now paralinearize the capillary term $ \div( \tb^{\sla{1}{2}} \nabla \eta)$.  
First of all using  \Cref{lem:prodotto,compoparapara0} and \eqref{para:tb} 
we have 
\begin{align*}
    \tb^{\sla{1}{2}}-1=& -\Opbw{\tb^{\sla{3}{2}}\nabla \eta}\nabla \eta+ \sym{\breve{r}}{\geq 2}{\tb}\,,
    \\
    \sym{\breve{r}}{\geq 2}{\tb}:=& -\tfrac{1}{2}\Opbw{\tb^{-\frac{1}{2}}}\cR(\tb^{\sla{1}{2}}-1,\tb^{\sla{1}{2}}-1) - \tfrac12\Opbw{\tb^{-\sla{1}{2}}}\sym{r}{\geq 2}{\tb} 
    \notag
    \\
    &- \tfrac{1}{2}\cQ(\tb^{-\sla{1}{2}}, \tb^{\sla{1}{2}}) [\tb^{\sla{1}{2}}-1]
    - \tfrac12\cQ(\tb^{-\sla{1}{2}},\tb^2)|\nabla \eta|^2
    \notag
    \\&
    - \tfrac12 \Opbw{\tb^{\sla{3}{2}}}\cR(\nabla \eta,\nabla \eta)- \cQ(\tb^{\sla{3}{2}}, \nabla \eta) \nabla \eta\,.
    \notag
\end{align*}
Moreover, using again \Cref{thm:action}, \Cref{compoparapara0} 
and \Cref{lem:prodotto} we have that $\sym{\breve{r}}{\geq 2}{\tb}$ 
satisfies the same estimate of $\sym{r}{\geq 2}{\tb}$ in \eqref{est:rb}. 
In view of the above formula we have
\begin{equation*}
\begin{aligned}
\div\left( \tb^{\sla{1}{2}}\nabla \eta\right)
&=\div\left(\Opbw{\tb^{\sla{1}{2}}}\nabla \eta 
+ \Opbw{\nabla \eta}[\tb^{\sla{1}{2}}-1]\right)+ \div\Big( \cR( \tb^{\sla{1}{2}}-1, \nabla \eta)\Big)
\\
&= \div\left(\Opbw{\tb^{\sla{1}{2}}\ii\xi- \tfrac12\nabla(\tb^{\sla{1}{2}})
- \tb^{\sla{3}{2}}\nabla \eta (\nabla \eta\cdot \ii \xi) 
+\tfrac12 \nabla \eta \div(\tb^{\sla{3}{2}}\nabla \eta)
+\tfrac14 \tb^{\sla{3}{2}}\nabla (|\nabla \eta|^2)}\eta \right)
\\
&+ \div\left(\Opbw{\nabla \eta}\sym{\breve{r}}{\geq 2}{\tb} + \cR(\tb^{\sla{1}{2}}-1, \nabla \eta) 
- \cQ\Big(\nabla \eta, \tb^{\sla{3}{2}} (\nabla \eta\cdot \ii \xi)
- \tfrac12\div(\tb^{\sla{3}{2}}\nabla \eta)\Big) \right)\,. 
    \end{aligned}
        \end{equation*}
    Therefore applying also 
    $\div\left(\Opbw{\vec a (x,\xi)}\eta \right)= \Opbw{\vec a\cdot \ii \xi+ \tfrac12 \div(\vec a)}\eta$ 
    to the formula above,
    we obtain that 
\begin{gather}
\div\left( \tb^{\sla{1}{2}}\nabla \eta\right)=    
\Opbw{- \sym{\th}{}{2}+ \th^{(0)}_{\geq 2}(\eta;x) }\eta
+ \sym{\cR}{\geq 2}{\mathtt{div}}(\eta)\eta\,,
\label{mattone5}
\\
\sym{\th}{}{2}:=
\left(\tb^{\sla{1}{2}} |\xi|^2-\tb^{\sla{3}{2}}(\nabla \eta\cdot\xi)^2\right)
= \tb^{\sla{3}{2}}[\sym{\lambda}{}{1}]^2\,,
 \label{def:h2}
\\
\sym{\th}{\geq 2}{0}(\eta;x):=
- \tfrac14 \Delta (\tb^{\sla{1}{2}})
+ \tfrac14 \div\left(  \nabla \eta \div( \tb^{\sla{3}{2}}\nabla \eta)\right) 
+\tfrac18\div\left(   \tb^{\sla{3}{2}}\nabla (|\nabla \eta|^2) \right)\,, 
\label{def:h0}
\\
\sym{\cR}{\geq 2}{\mathtt{div}}(\eta)\eta
:= \div\left(\Opbw{\nabla \eta}\sym{\breve{r}}{\geq 2}{\tb} 
+ \cR(\tb^{\sla{1}{2}}-1, \nabla \eta) 
- \cQ\Big(\nabla \eta, \tb^{\sla{3}{2}} (\nabla \eta\cdot \ii \xi)- \tfrac12\div(\tb^{\sla{3}{2}}\nabla \eta)\Big) \right)\,.
\notag
\end{gather}
Note that, for any $\vr>1$, there is $s_0>0$ such that, for $\s,s\geq s_0$, one has 
\begin{align*}
\| \th^{(0)}_{\geq 2}\|_\s 
\lesssim_\s \| \eta \|_{\s+3}^2\,, 
\\
\| \sym{\cR}{\geq 2}{\mathtt{div}}(\eta)\eta\|_{s+\vr}
\lesssim_s \| \eta\|_{s_0}^2\| \eta\|_s\,.
\end{align*}
Moreover, note that \eqref{def:h2}, together with \eqref{funz:bpiccolo} 
and \eqref{def:lam1}, implies formula \eqref{def:h}.
Then, gathering \eqref{mattone4}, \eqref{mattone5} 
and recalling the definition of $\ta$ in \eqref{def:a-Tay}, 
we eventually get  
    \begin{align*}
        \pa_t \omega &=  \pa_t \psi- \Opbw{\pa_t \tB } \eta - \Opbw{\tB}\pa_t \eta 
        \\
        &= - \Opbw{\kap \th^{(2)}+ 1+ \ta +  \th^{(0)}_{\geq 2} }\eta
        +\Opbw{-\ii \tV\cdot \xi + \tfrac12 \div(\tV)} \omega
        +\sym{\tR}{\geq 1}{\mathtt{good}}(\eta,\psi)\psi
        + \sym{\cR}{\geq 2}{\mathtt{div}}(\eta)\eta\,.
    \end{align*}
Recalling the properties of   $ \sym{\tR}{\geq 1}{\mathtt{good}}$ and  $\sym{\cR}{\geq 2}{\mathtt{div}}$ and inserting formula \eqref{def:good}, we obtain \eqref{eq:omega}.
\end{proof}

\begin{footnotesize}
\addcontentsline{toc}{section}{References}

  \end{footnotesize}

\end{document}